\documentclass[10pt]{article}%

\usepackage{marvosym}


\usepackage{booktabs}
\usepackage{multirow}
\usepackage{diagbox}
\usepackage{colortbl}
\renewcommand{\arraystretch}{1.3}
\usepackage{csquotes}

\usepackage{caption}
\usepackage{subcaption}

\usepackage{fancyvrb}

\usepackage[show]{ed}

\usepackage{tikz}
\usepackage{pgf}
\usetikzlibrary{arrows}
\usetikzlibrary{calc}
\usetikzlibrary{decorations.pathmorphing}

\usepackage{soul}
\setstcolor{red}
\setulcolor{red}
\usepackage{bbm}

\sloppy

\usepackage{rotating} 

\usepackage{mathrsfs}
\DeclareMathAlphabet{\mathpzc}{OT1}{pzc}{m}{it}




\usepackage{color}
\usepackage{amsmath}
\usepackage{graphicx}
\usepackage{amsfonts}
\usepackage{amssymb}%
\usepackage{latexsym}
\usepackage{psfrag}
\usepackage{accents}

\setcounter{MaxMatrixCols}{30}

\newtheorem{theorem}{Theorem}[section]
\newtheorem{corollary}[theorem]{Corollary}
\newtheorem{conjecture}[theorem]{Conjecture}
\newtheorem{definition}[theorem]{Definition}
\newenvironment{proof}[1][Proof]{\noindent \emph{#1.} }
{\hfill \ \rule{0.5em}{0.5em}}
\newtheorem{lemma}[theorem]{Lemma}
\newtheorem{proposition}[theorem]{Proposition}

\newtheorem{assumption}[theorem]{Assumption}

\numberwithin{equation}{section}
\numberwithin{table}{section}
\numberwithin{figure}{section}

\usepackage[a4paper]{geometry}
\geometry{left={3cm}, right={3cm}, top={3cm}, bottom={3cm}}

\newtheorem{remark}[theorem]{Remark}
\newtheorem{example}[theorem]{Example}


\newcommand{\nc}{\newcommand}

\makeatletter
\newcommand\RedeclareMathOperator{%
  \@ifstar{\def\rmo@s{m}\rmo@redeclare}{\def\rmo@s{o}\rmo@redeclare}%
}
\newcommand\rmo@redeclare[2]{%
  \begingroup \escapechar\m@ne\xdef\@gtempa{{\string#1}}\endgroup
  \expandafter\@ifundefined\@gtempa
     {\@latex@error{\noexpand#1undefined}\@ehc}%
     \relax
  \expandafter\rmo@declmathop\rmo@s{#1}{#2}}
\newcommand\rmo@declmathop[3]{%
  \DeclareRobustCommand{#2}{\qopname\newmcodes@#1{#3}}%
}
\@onlypreamble\RedeclareMathOperator
\makeatother

\nc{\bfx}{\mathbf{x}} 
\nc{\bfy}{\mathbf{y}} 
\nc{\bfz}{\mathbf{z}} 
\nc{\bfu}{\mathbf{u}} 
\nc{\bfv}{\mathbf{v}} 
\nc{\bfw}{\mathbf{w}} 
\nc{\bft}{\mathbf{t}} 
\nc{\bfb}{\mathbf{b}} 
\nc{\bfn}{\mathbf{n}} 
\nc{\bfr}{\mathbf{r}} 
\nc{\bfc}{\mathbf{c}} 

\nc{\bfA}{\mathbf{A}} 
\nc{\bfB}{\mathbf{B}} 
\nc{\bfC}{\mathbf{C}} 
\nc{\bfR}{\mathbf{R}} 
\nc{\bfD}{\mathbf{D}} 
\nc{\bfI}{\mathbf{I}} 
\nc{\bfM}{\mathbf{M}} 
\nc{\bfK}{\mathbf{K}} 
\nc{\bfP}{\mathbf{P}} 
\nc{\bfU}{\mathbf{U}} 
\nc{\bfZ}{\mathbf{Z}} 
\nc{\bfV}{\mathbf{V}} 
\nc{\bfE}{\mathbf{E}} %
\nc{\bfH}{\mathbf{H}} %
\nc{\bfX}{\mathbf{X}} %
\nc{\bfId}{\mathbf{I}_{\mathrm{d}}} 

\nc{\bbN}{\mathbb{N}} 
\nc{\bbR}{\mathbb{R}} 
\nc{\bbP}{\mathbb{P}} 
\nc{\bbC}{\mathbb{C}} 
\nc{\wH}{\widetilde{H}}
\nc{\calS}{\mathcal{S}} %
\nc{\calD}{\mathcal{D}} %
\nc{\calN}{\mathcal{N}} %
\nc{\calT}{\mathcal{T}} %
\nc{\calR}{\mathcal{R}} %
\nc{\calP}{\mathcal{P}} %
\nc{\calV}{\mathcal{V}} %
\nc{\calW}{\mathcal{W}} %
\nc{\calJ}{\mathcal{J}} %
\nc{\calE}{\mathcal{E}} %
\nc{\calA}{\mathcal{A}} %
\nc{\calU}{\mathcal{U}} %
\nc{\calK}{\mathcal{K}} %
\nc{\calL}{\mathcal{L}} %

\nc{\rouge}{\color{red}}
\nc{\bleu}{\color{blue}}
\nc{\cyan}{\color{cyan}}
\nc{\noir}{\color{black}\rm}

\nc\dif{\mathop{}\!\mathrm{d}}   
\DeclareMathOperator{\supp}{supp}   
\DeclareMathOperator{\diam}{diam} 

\newcommand{\vertiii}[1]{{\left\vert\kern-0.25ex\left\vert\kern-0.25ex\left\vert #1
    \right\vert\kern-0.25ex\right\vert\kern-0.25ex\right\vert}} 
\RedeclareMathOperator{\Re}{Re} 
\RedeclareMathOperator{\Im}{Im} 






\newcommand{\cA}{{\cal A}}

\newcommand{\cF}{{\cal F}}
\newcommand{\cH}{{\cal H}}
\newcommand{\cI}{{\cal I}}

\newcommand{\bx}{x}
\newcommand{\by}{y}
\newcommand{\ba}{\hat{a}}





\newcommand{\re}{{\rm e}}
\newcommand{\ri}{{\rm i}}
\newcommand{\rd}{d}

\newcommand{\e}{\epsilon}


\newcommand{\beq}{\begin{equation}}
\newcommand{\eeq}{\end{equation}}
\newcommand{\beqs}{\begin{equation*}}
\newcommand{\eeqs}{\end{equation*}}
\newcommand{\bit}{\begin{itemize}}
\newcommand{\eit}{\end{itemize}}
\newcommand{\ben}{\begin{enumerate}}
\newcommand{\een}{\end{enumerate}}
\newcommand{\bal}{\begin{align}}
\newcommand{\eal}{\end{align}}
\newcommand{\bals}{\begin{align*}}
\newcommand{\eals}{\end{align*}}
\newcommand{\bse}{\begin{subequations}}
\newcommand{\ese}{\end{subequations}}
\newcommand{\bpr}{\begin{proposition}}
\newcommand{\epr}{\end{proposition}}
\newcommand{\bre}{\begin{remark}}
\newcommand{\ere}{\end{remark}}
\newcommand{\bpf}{\begin{proof}}
\newcommand{\epf}{\end{proof}}
\newcommand{\ble}{\begin{lemma}}
\newcommand{\ele}{\end{lemma}}
\newcommand{\bco}{\begin{corollary}}
\newcommand{\eco}{\end{corollary}}
\newcommand{\bex}{\begin{example}}
\newcommand{\eex}{\end{example}}
\newcommand{\bth}{\begin{theorem}}
\newcommand{\enth}{\end{theorem}}

\newcommand{\Rea}{\mathbb{R}}
\newcommand{\Com}{\mathbb{C}}

\newcommand{\Oi}{{\Omega^-}}

\newcommand{\Oe}{{\Omega^+}}

\newcommand{\eps}{\varepsilon}

\newcommand{\pdiff}[2]{\frac{\partial #1}{\partial #2}}



\newcommand{\half}{\frac{1}{2}}

\newcommand{\LtG}{{L^2(\bound)}}

\newcommand{\LtGt}{{\LtG\rightarrow \LtG}}

\newcommand{\HoG}{H^1(\Gamma)}

\newcommand{\tendi}{\rightarrow \infty}





\def\XXint#1#2#3{{\setbox0=\hbox{$#1{#2#3}{\int}$}
     \vcenter{\hbox{$#2#3$}}\kern-.5\wd0}}

\newcommand*{\N}[1]{\left\|#1\right\|}

\allowdisplaybreaks[4]

\usepackage{hyperref}
\definecolor{myblue}{rgb}{0,0,0.6}
\definecolor{lightGray}{RGB}{198,198,198}
\hypersetup{colorlinks=true, linkcolor=myblue,citecolor=myblue,filecolor=myblue,urlcolor=myblue}

\newcommand{\tfa}{\text{ for all }}
\newcommand{\tfor}{\text{ for }}

\newcommand{\tor}{\text{ or }}
\newcommand{\tif}{\text{ if }}

\newcommand{\ton}{\text{ on }}

\newcommand{\tand}{\text{ and }}
\newcommand{\tst}{\text{ such that }}



\newcommand{\FL}{F_{\mathscr{L}}}
\newcommand{\FH}{F_{\mathscr{H}}}
\newcommand{\FLm}{\mathcal{F}_{\mathscr{L}}}
\newcommand{\FHm}{\mathcal{F}_{\mathscr{H}}}





\newcommand{\bound}{\Gamma}


\usepackage{soul}

\definecolor{jwcol}{RGB}{27, 137, 18}  

\definecolor{dalcol}{rgb}{0.8,0,0}

\definecolor{jeffColor}{RGB}{102, 0, 204}

\definecolor{escol}{rgb}{0,0,0.8}
\definecolor{estcol}{rgb}{0,0.5,0}
\definecolor{esnewcol}{rgb}{0,0.5,0}




\newcommand{\Creg}{{C_{\rm osc}}}

\newcommand{\mythmname}[1]{\emph{\textbf{(#1)}}}

\newcommand{\noi}{\noindent}

\newcommand{\hsc}{\hbar}
\newcommand{\Cqo}{C_{\rm qo}}

\newcommand{\Op}{{\rm Op}}
\newcommand{\WF}{{\rm WF}}


\newcommand{\Reg}{S_{\ri k}}

\newcommand{\Breg}{ B_{k, {\rm reg}}}
\newcommand{\Bregp}{ B'_{k, {\rm reg}}}

\newcommand{\DL}{K}
\newcommand{\operator}{\cA}

\newcommand{\pert}{L}
\newcommand{\hDarg}{(|\hsc D'|_g^2)}
\newcommand{\Hsh}{H^s_\hsc(\Gamma)}
\newcommand{\Hrh}{H^r_\hsc(\Gamma)}
\newcommand{\Hshs}{H^s_\hsc}
\newcommand{\Hsk}{{H^s_k(\Gamma)}}
\newcommand{\Hsht}{H^s_\hsc(\Gamma)\to H^s_\hsc(\Gamma)}
\newcommand{\Hshts}{H^s_\hsc\to H^s_\hsc}

\newcommand{\ind}{\mathbbm{1}}
\newcommand{\LN}{\mathfrak{L}}
\newcommand{\TN}{\mathfrak{T}}
\newcommand{\oldK}{n}
\newcommand{\loc}{\operatorname{loc}}
\newcommand{\comp}{\operatorname{comp}}
\newcommand{\mc}[1]{\mathcal{#1}}
\newcommand{\Ell}{\operatorname{Ell}}
\newcommand{\Ang}{\operatorname{Tan}}
\usepackage{titlesec}
\newcommand{\cJ}{\mathcal{J}}
\titleformat{\section}
  {\centering\Large\rmfamily}  
  {\thesection}                
  {1em}                        
  {} 
  
  \titleformat{\subsection}
  {\normalsize\bfseries}  
  {\thesubsection}                
  {1em}                        
  {} 

\newtheorem{question}[theorem]{Question}
  \newcommand{\trig}{\mathscr{T}}

\newcommand{\Psit}{\Psi_{\mathsf{T}}}
\newcommand{\St}{S_{\mathsf{T}}}
\newcommand{\nch}{N_{\textrm{pan}}}
\newcommand{\ngk}{p}

\VerbatimFootnotes

\title{
Helmholtz boundary integral methods and the pollution effect}

\author{
    J. Galkowski\thanks{Department of Mathematics, University College London, 25 Gordon Street, London, WC1H 0AY, UK,
    \tt J.Galkowski@ucl.ac.uk}
    \and
           M.~Rachh\thanks{Department of Mathematics, Indian Institute of Technology Bombay, Powai, Mumbai 400076, India
    \tt mrachh@iitb.ac.in}
    \and
    E.~A.~Spence\thanks{Department of Mathematical Sciences, University of Bath, Bath, BA2 7AY, UK,
      \tt E.A.Spence@bath.ac.uk}}
\date{
    \today
}

\begin{document}

\maketitle

\begin{abstract}

This paper
is concerned with solving the Helmholtz exterior Dirichlet and Neumann problems with large wavenumber $k$ and smooth obstacles
using the standard second-kind boundary integral equations.
We consider Galerkin and collocation methods -- with  subspaces consisting of \emph{either} piecewise polynomials (in 2-d for collocation, in any dimension for Galerkin) \emph{or} trigonometric polynomials (in 2-d) -- as well as a fully discrete quadrature (Nystr\"om) method based on trigonometric polynomials (in 2-d).

For each of these methods, we 
prove -- in many cases for the first time -- rigorous results about 
 the fundamental question:~how quickly must 
 the number of degrees of freedom (the dimension of the approximation space) grow with $k$ to maintain accuracy of the computed solution? 
Importantly, we determine which of these methods suffer from \emph{the pollution effect}. That is, we address the question:~must 
 the number of points per wavelength $\to \infty$  to maintain accuracy as $k\tendi$?
 
\paragraph{Keywords:}
Helmholtz equation, boundary integral equation, Galerkin, collocation, Nystr\"om, high-order, high frequency, pollution effect.

\paragraph{AMS:}
65N35, 65N38, 65R20, 35J05

\end{abstract}

\setcounter{tocdepth}{1}
\tableofcontents

%
%
%
%
%
%
%

\section{Introduction}

\subsection{Informal discussion of the main results, their context, and their novelty}

\paragraph{Boundary integral equations for scattering problems.}

The fundamental solution of a linear partial differential equation (PDE) can be used to convert 
boundary value problems for that PDE to integral equations
on the boundary of the domain; i.e., a boundary integral equation (BIE).
If the
original domain is the exterior of a bounded obstacle, then the problem is converted
from one on a $d$-dimensional infinite domain, to one on a $(d-1)$-dimensional finite
domain.
In particular, radiation conditions at infinity (ensuring that the PDE problem is well-posed) are automatically imposed via an appropriate choice of fundamental solution in the boundary integral operators. 
 Together with the fact that the fundamental solutions of the corresponding PDEs are easy to evaluate, these features make BIEs 
 a popular way to compute approximations to acoustic, electromagnetic, and elastic scattering problems with constant wave speed and bounded scatterer. 

In this paper, we consider solving the Helmholtz equation $\Delta u+k^2u=0$ posed in the exterior of a smooth obstacle, with either Dirichlet or Neumann boundary conditions, using the standard second-kind BIEs, i.e., those of the form ``identity plus compact". The main attraction of these second-kind equations is that, 
with $N$ the number of degrees of freedom (i.e., the dimension of the approximation space), in the limit $N\to\infty$ with $k$ fixed the condition number is bounded independent of $N$  \cite[\S3.6]{At:97} (which is not the case for first-kind equations \cite[\S4.5]{SaSc:11}).

We study Galerkin and collocation methods -- with  subspaces consisting of \emph{either} piecewise polynomials (in 2-d for collocation, in any dimension for Galerkin) \emph{or} trigonometric polynomials (in 2-d) -- as well as a fully discrete quadrature (a.k.a., Nystr\"om) method based on trigonometric polynomials (in 2-d). 
The reasons we study these three methods together, rather than separately, are that (i) the Galerkin and collocation methods both fall into an abstract projection-method framework, so that proving results about these two methods together involves essentially no extra work to considering them separately, (ii) the results about the Nystr\"om method use in an essential way the collocation results, and (iii) there is large interest from the numerical-analysis and engineering communities in each of these three methods.

\paragraph{The question studied in this paper.}
For the second-kind boundary integral equations described above, we study the fundamental question:~how quickly must $N$ (the dimension of the approximation space) grow with $k$ to maintain accuracy of the computed solution as $k\tendi$?
The error analysis of these methods in the limit $N\to\infty$ with $k$ fixed is well understood (see, e.g., the books 
\cite{At:97, SaVa:02, HsWe:08, St:08, SaSc:11}). However, there are relatively few existing error analyses valid as $k\to \infty$ \cite{BaSa:07, LoMe:11, GrLoMeSp:15, SpKaSm:15, GaMuSp:19, GaSp:22}. This is in contrast to the large literature on the analogous question for finite element methods, starting from the late 1980s \cite{AzKeSt:88} and 1990s \cite{IhBa:95a, IhBa:97, Me:95} to today \cite{BeChMe:25, GS3, LiWu:25} (see, e.g., \cite[\S3.1--3.2]{GSAN} for a more-detailed review).
Furthermore, there are no $k$-explicit error analyses for either collocation or Nystr\"om boundary integral methods, and none for the Galerkin method applied to the standard second-kind Neumann BIEs. 

\paragraph{The results of this paper.}

The first main result of this paper gives sufficient conditions on $N$ for $k$-uniform accuracy of boundary integral methods as $k\to\infty$ for each of the Galerkin, collocation, and Nystr\"om methods.
These conditions are summarised in Table \ref{f:summary} below, and the bounds control 
both the relative error and the error in terms of the best approximation error.

A key question is whether these BIE methods suffer from \emph{the pollution effect}; i.e., 
must the number of points per wavelength $\to \infty$ to maintain accuracy as $k\to\infty$, or, equivalently, 
must $N\gg k^{d-1}$ to maintain accuracy as $k\to\infty$? (To see where the $k^{d-1}$ threshold comes from, recall that, by the Nyquist--Shannon--Whittaker sampling theorem, approximating an arbitrary function oscillating with frequency $\lesssim k$ in $d-1$ dimensions requires $k^{d-1}$ degrees of freedom.)

The sufficient conditions on $N$ summarised in Table \ref{f:summary} show that for all the methods involving trigonometric polynomials, given $\e>0$, at most $\sim k^{d-1+\e}$ degrees of freedom are required to maintain accuracy, even for trapping obstacles. Moreover, in some cases only $\sim k^{d-1}$ degrees of freedom are required; i.e., there is no pollution.

The most commonly-used approximation spaces in both the finite-element and boundary-element method are piecewise polynomials of fixed degree,
and finite-element methods with these spaces
famously suffer from the pollution effect; i.e. $\gg k^d$ degrees of freedom are required to maintain accuracy~\cite{IhBa:95a, IhBa:97, BaSa:00} (where now the exponent is $d$ because finite-element methods compute the solution in a subset of $\Rea^d$). 

There is a common belief in both engineering~\cite{Ma:02,Ma:17,Ma:16a} and numerical analysis~\cite{BaSa:07, LoMe:11,GrLoMeSp:15} that boundary element methods do \emph{not} suffer from the pollution effect. Indeed, it is standard in engineering to ask whether a specific number of points per wavelength (e.g., six~\cite{Ma:02}) suffices to obtain accurate solutions.  
However, the sufficient conditions on $N$ summarised in Table \ref{f:summary} for piecewise-polynomial spaces leave open the possibility that boundary integral methods using these spaces suffer from the pollution effect. 

The second main result of the paper is that, for obstacles that trap geometric-optic rays, the Galerkin method with piecewise polynomials of fixed degree applied to the standard second-kind Dirichlet BIEs \emph{suffers from the pollution effect}; i.e., accuracy cannot be maintained as $k\to\infty$ for \emph{any} fixed number of points per wavelength. This result is illustrated by numerical experiments demonstrating pollution for two trapping situations; see Figures \ref{f:bReallyCoolPicture}--\ref{f:cavityPollution} below. 
Furthermore, we prove that the Galerkin method with piecewise polynomials of fixed degree applied to Neumann BIEs suffers from the pollution effect \emph{even for nontrapping obstacles}. These two results establishing pollution show that the sufficient conditions for control of the Galerkin error summarised in Table \ref{f:summary} are indeed necessary. 
We note that our results show that, just as for the finite-element method, increasing the polynomial degree substantially reduces the amount of pollution.

We highlight that our proof that the Galerkin method suffers from the pollution effect is the first rigorous proof of pollution for a numerical method applied to a Helmholtz problem with non-empty scatterer (recall that the FEM results in  \cite{BaSa:00} consider the Helmholtz equation with constant coefficients posed on a 2-d infinite domain with no boundaries, and hence no scattering). Furthermore, our results about the Nystr\"om method are, to our knowledge, the first $k$-explicit convergence results about a fully-discrete boundary integral method applied to the Helmholtz equation.

\paragraph{Overview of the ingredients in the proofs of these results.}
The sufficient conditions on $N$ for $k$-uniform accuracy are obtained by combining the following:
\bit
\item[(i)] Results from the first author's PhD thesis \cite{Ga:19} showing that the  high-frequency components of boundary integral operators are ``well behaved" in the sense that they are semiclassical pseudodifferential operators (i.e., pseudodifferential operators with a large parameter $k$ built in, tailoring the calculus to study functions oscillating at frequency $k$). 
\footnote{Recall that the integral operators' structure as homogeneous -- as opposed to semiclassical -- pseudodifferential operators is classically used to obtain results for fixed $k$ as $N\to \infty$; see, e.g., the books \cite{SaVa:02, HsWe:08, GwSt:18} and the references therein.}
\item[(ii)] A new abstract projection-method argument for second-kind equations; the main novelties in this argument are that it carefully analyses the high-to-high, high-to-low, low-to-high, and low-to-low frequency norms of the best-approximation-to-error map and, furthermore, uses a norm that weights high and low frequencies differently, with the weighting tailored to the dimension of the approximation space and $k$. 
\eit
The results proving that pollution occurs for the Galerkin method with piecewise polynomials are obtained by combining Point (i) above with the following: 
\bit
\item[(a)] The fact that the second-kind structure of the integral operators allows one to write the Galerkin error concretely in terms of the discrete inverse and the best approximation error 
(see Lemma \ref{lem:QOabs} below); this is used to show that 
demonstrating pollution reduces to approximately solving $(I-P_N)\widetilde{f}=f$, where $P_N$ is the Galerkin projection, 
for particular BIE data $f$ such that the (continuous) BIE inverse applied to $f$ is large (along with some additional properties). 
\item[(b)] Recent lower bounds on piecewise-polynomial approximation of oscillatory functions from \cite{Ga:25}; these are used to approximately solve the problem $(I-P_N)\widetilde{f}=f$.
\item[(c)] The relationship between the inverses of the integral operators and interior and exterior Helmholtz solution operators  \cite[Theorem 2.33]{ChGrLaSp:12}, \cite[Lemma 7.4]{GaMaSp:21N}; this is used to reduce the problem of finding the data $f$ (on the boundary) to finding particular Helmholtz solutions in the exterior domain.
\item[(d)] PDE results about how the high-frequency behaviour of Helmholtz solutions is dictated by the geometric-optic rays (usually known as ``propagation of singularities" results 
\cite{MeSj:78,MeSj:82,Ho:07}); these are used to construct the Helmholtz solutions required in (c).
\eit
These ideas behind the proofs are discussed in more detail in \S\ref{s:discussion}.

\subsection{
Second-kind boundary integral equations for the Helmholtz equation.}
\label{sec:BIEs}

Let $\Oi \subset \Rea^d$, $d\geq 2$ be a bounded open set such that its open complement $\Oe :=\Rea^d \setminus \overline{\Oi }$ is connected. Let $\Gamma:= \partial \Oi $ and assume that $\Gamma$ is $C^\infty$. The second-kind BIE formulations reformulate solving the scattering problem as: given $f\in L^2(\Gamma)$, find $v\in L^2(\Gamma)$ such that
\begin{equation}
\label{e:basicForm}
\operator v=f,\qquad \operator := c_0 (I+\pert),\qquad c_0\in\mathbb{C}\setminus \{0\},
\end{equation}
where the operator $\pert:\LtGt$ is compact and depends on a parameter $k$. 
Theorem \ref{thm:BIEs} below recaps the standard result that the Helmholtz exterior Dirichlet and Neumann problems can be reformulated as integral equations of the form~\eqref{e:basicForm}
where $f$ is given in terms of the known Dirichlet/Neumann boundary data and $\operator$ is one of the boundary integral operators (BIOs)
\beq\label{e:DBIEs}
A_k':= \half I + \DL'_k - \ri\eta_D S_k
    \quad\tand\quad
A_k:= \half I + \DL_k - \ri\eta_D S_k
       \eeq
for the Dirichlet problem and
\beq\label{e:NBIEs}
    \Breg := \ri\eta_N \left(\dfrac{1}{2}I-\DL_k\right) +  \Reg H_k
    \quad\tand\quad
    \Breg' := \ri\eta_N \left(\dfrac{1}{2}I-\DL_k'\right) +  H_k\Reg
       \eeq
       for the Neumann problem. The operators $S_k, \DL_k, \DL'_k$, and $H_k$ are the single-, double-, adjoint-double-layer and hypersingular operators defined by \eqref{e:SD'} and \eqref{e:DH}. Standard mapping properties of these operators (see, e.g., \cite[Theorems 2.17 and 2.18]{ChGrLaSp:12}) imply that $A'_k, A_k, \Breg, \Bregp$ are bounded on $\LtG$. Furthermore, if $\eta_D,\eta_N\in \Rea\setminus\{0\}$, then each of  $A'_k, A_k, \Breg, \Bregp$ is invertible on $\LtG$; moreover they are each equal to a multiple of the identity plus a compact operator on $\LtG$ (see, e.g., \cite[\S2.6]{ChGrLaSp:12} for $A'_k, A_k$ and \cite[Theorem 2.2]{GaMaSp:21N} for $\Breg, \Bregp$). \footnote{In the real-valued $L^2(\Gamma)$ inner product, $A'_k$ and $A_k$ are each other's adjoints (hence the $'$ notation); similarly for $\Breg, \Bregp$.}    
       
\bre[The history of the BIEs \eqref{e:DBIEs} and \eqref{e:NBIEs}]
The BIEs involving the operators $A_k'$ and $A_k$ were introduced in \cite{BrWe:65, Le:65, Pa:65}. The subscript ``reg" on the Neumann boundary-integral operators indicates that these are not the 
``combined-field'' (or ``combined-potential") Neumann BIEs introduced by \cite{BuMi:71} (denoted by $B'_k$ and $B_k$ in, e.g., \cite[\S2.6]{ChGrLaSp:12}); the  BIOs introduced in \cite{BuMi:71} are given by \eqref{e:NBIEs} with $S_{\ri k}$ removed, and thus are not bounded on $\LtG$ because $H_k: \LtG\to H^{-1}(\Gamma)$. The idea of preconditioning $H_k$ with an order $-1$ operator goes back to \cite{Bu:76} (see, e.g., the discussion in \cite{AmHa:90}),
with the use of $S_{\ri k}$ proposed in \cite{BrElTu:12}, and then advocated for in
\cite{BoTu:13,ViGrGi:14} (for more details, see the discussion in, e.g., \cite[\S2.1.1]{GaMaSp:21N}).
In fact, the results of the present paper hold for a wider class of regularising operators, of which $S_{\ri k}$ is the prototypical example; see \cite[Assumption 1.1]{GaMaSp:21N}. For simplicity, however, here we only consider $\Breg$ and $\Bregp$ defined by \eqref{e:NBIEs} involving $S_{\ri k}$.
\ere

   \begin{assumption}[The parameters $\eta_D$ and $\eta_N$]
   \label{ass:parameters}
  There exists $C>0$ such that $C^{-1}\leq  |\eta_N|\leq C$, $C^{-1}\leq k^{-1}|\eta_D|\leq C$, where $\eta_N,\eta_D\in \mathbb{R}$ are the parameters in 
  $A_k$, $A_k'$ \eqref{e:DBIEs}, $\Breg$ and $\Bregp$ \eqref{e:NBIEs}.
   \end{assumption}
   
     \begin{remark}[Choosing the parameters $\eta_D$ and $\eta_N$]
     The question of how to choose the parameters $\eta_D$ and $\eta_N$ has been the subject of much research, starting with    \cite{KrSp:83, Kr:85, Am:90} and then continuing with \cite{BaSa:07, ChMo:08, ChGrLaLi:09, BeChGrLaLi:11, BaSpWu:16, GaMaSp:21N}. The choices in Assumption~\ref{ass:parameters} are the most commonly-used and most rigorously-justified, with our results below contributing to this -- Theorem \ref{t:dirichletDisk} below shows that other choices of $\eta_D$ are worse from the point of view of the pollution effect.
     \end{remark}

Since the operator $\operator$ is a compact perturbation of the identity, $\|\operator^{-1}\|_{\LtGt}$ is bounded below by a $k$-independent constant (see Lemma \ref{lem:inversebound} below). We make the following assumption on the growth of $\|\operator^{-1}\|_{\LtGt}$.

\begin{assumption}[Polynomial boundedness of $\operator^{-1}$]\label{ass:polyboundintro}
There exists $P_{\rm inv}\geq 0$ and $\cJ\subset [0,\infty)$ such that, given $k_0>0$, there exists $C>0$ such that
\beq\label{e:rho}
\rho(k):=\|\operator^{-1}\|_{\LtGt}\leq C k^{P_{\rm inv}} \quad\tfa k\geq k_0 \text{ with } k\in \Rea\setminus \cJ.
\eeq
\end{assumption}

\bre[When is Assumption \ref{ass:polyboundintro} satisfied?]
When the obstacle $\Oi$ is nontrapping, Assumption \ref{ass:polyboundintro} holds with $\cJ= \emptyset$, with $P_{\rm inv} =0$ for the Dirichlet BIEs \cite{BaSpWu:16} and $P_{\rm inv} =2/3$ for the Neumann BIEs \cite[Theorem 2.3]{GaMaSp:21N}. 
In contrast, when the obstacle $\Oi$ is strongly trapping, Assumption \ref{ass:polyboundintro} does not hold for $\cJ=\emptyset$. Indeed, the norm $\rho(k)$ grows exponentially through an increasing sequence of $k$s (see \cite[Theorem 2.8]{BeChGrLaLi:11} for the Dirichlet BIEs and \cite[Theorem 2.6]{GaMaSp:21N} for the Neumann BIEs).
However, this assumption is satisfied for any $\Oi$ for ``most'' frequencies by the results of \cite{LSW1}. More precisely, for any smooth (or even Lipschitz) $\Oi$, given $k_0, \delta>0$, there exists $\cJ\subset [k_0,\infty)$ with $|\cJ|\leq \delta$ such that Assumption \ref{ass:polyboundintro} holds for $k\in\Rea\setminus \cJ $ with $P_{\rm inv}$ independent of $\delta$; see \cite[Corollary 1.4]{LSW1} for this stated for the Dirichlet BIEs and \cite[Theorem 2.3(iii)]{GaMaSp:21N} for the Neumann BIEs. Moreover, the results of \cite[Theorem 3.5]{LSW1} given $N>0$ there exists $P_{\rm inv}$, $\cJ$, and $C>0$ such that Assumption \ref{ass:polyboundintro} holds and $|\cJ \cap (k,\infty)|\leq C k^{-N}$.
\ere

\subsection{Solving the second-kind BIEs via projection methods}

We consider solving the second-kind BIEs~\eqref{e:basicForm} via projection methods; i.e., given $s\geq 0$, a finite dimensional space $V\subset L^2(\Gamma)$ with $\dim V=:N$, and projection $P_V:H^s(\Gamma)\to V$, we approximate the solution of the problem~\eqref{e:basicForm} by:~given $f\in H^s(\Gamma)$, find $v_N\in V$ such that 
\begin{equation}
\label{e:projectionForm}
(I+P_V\pert )v_{N}=c_0^{-1}P_Vf.
\end{equation}
In this paper we study two classic projections -- the Galerkin and collocation projections (defined in \S\ref{s:proj} below) -- and two classic choices of $V$:~piecewise polynomials and (in 2-d) trigonometric polynomials.

The practical implementation of \eqref{e:projectionForm} involves a further approximation of $\pert$ acting on elements of $V$; i.e., \eqref{e:projectionForm} is replaced by:~given $f\in L^2(\Gamma)$, find $v_N\in V$ such that 
\begin{equation}
\label{e:nystromForm}
(I+P_V\pert_V)v_{ N}=c_0^{-1}P_Vf,
\end{equation}
where $\pert_V:V\to H^s(\Gamma)$ is a discrete approximation of $\pert.$ When $P_V$ is the collocation projection, \eqref{e:nystromForm} is known as a \emph{Nystr\"om method.}

We 
study the question:
 \begin{question}
 \label{q:main}
\noi 
Given $\pert=\pert(k)$, a family of approximation spaces $V$, and the corresponding projections $P_V$, how quickly must $N$ increase with $k$ for the solution $v_N$ of \eqref{e:projectionForm}/\eqref{e:nystromForm} to accurately approximate the solution $v$ of \eqref{e:basicForm} as $k\tendi$?
\end{question}

When $\pert$ is fixed and $N\tendi$, the answer to Question \ref{q:main} is well understood; see, e.g., the books \cite{At:97,Kr:14, SaVa:02}. 
In contrast, the only rigorous answers to Question~\ref{q:main} in the literature have been for the Galerkin method
\eqref{e:projectionForm} with piecewise-polynomials applied to the Dirichlet BIEs; see \cite{BuSa:06, BaSa:07, LoMe:11, Me:12, GrLoMeSp:15, GaMuSp:19, GaSp:22}.


In this paper we give answers to Question \ref{q:main} for 
\bit
\item \eqref{e:projectionForm} with the Galerkin projection and subspaces consisting of either piecewise polynomials or, in 2-d, trigonometric polynomials, 
\item \eqref{e:projectionForm} in 2-d with the collocation projection and subspaces consisting of either piecewise polynomials or trigonometric polynomials,
\item \eqref{e:nystromForm} in 2-d with the collocation projection, trigonometric polynomials, and $L_V$ approximated by a commonly-used quadrature method (so-called ``Kress quadrature" \cite{Ma:63, Ku:69, Kr:91, Kr:95}).
\eit
We note that 
\bit
\item[--] our analysis of \eqref{e:projectionForm} is done in an abstract projection-method framework, covering both Galerkin and collocation projections, as well as the two choices of subspaces, simultaneously, and
\item[--] our results about the Nystr\"om method \eqref{e:nystromForm} build on the analysis of \eqref{e:projectionForm} with the collocation projection and trigonometric polynomials.
\eit

\bre
The present paper focuses exclusively on 
Question \ref{q:main}. 
We highlight that important considerations in the implementation of \eqref{e:projectionForm} and \eqref{e:nystromForm} for the standard second-kind Helmholtz BIEs are that 
the kernels of the integral operators are singular and long-range -- thus requiring high-order quadrature methods for their accurate discretization, and fast algorithms for applying or inverting the resulting dense linear systems. 
Over the last four decades, a variety of high order quadrature methods
(see, e.g.,  
\cite{Kr:91,
erichsen1998quadrature,
ying,
bremer_2013,
klockner2013quadrature,
bruno_garza_2020,
Siegel2018ALT,
wu2023unified,
af2021accurate,stein2022quadrature,
ding2021quadrature})
fast algorithms for applying the discretized matrices 
(see, e.g., 
\cite{
rokhlin1990rapid, greengard-1997,
bruno1, RjSt:07,liubook,
martinsson-book, jiang2025dual})
and fast direct methods for constructing compressed representations of the matrix and/or its inverse
(see, e.g., \cite{
hackbusch-h-1999,
hackbusch-h-2000,
Beb-hlu-2005,
martinsson-2005,
Borm-h2-2010,
gillman-dir_hss-2012,
ho-2012,
coulier-ifmm_prec-2017,
sushnikova-ce-2018,
jiang2022skel})
have been developed, resulting in accurate, robust, and highly performant linear complexity (up to log factors) computational tools for solving time harmonic wave scattering problems.
\ere

\subsection{The pollution effect}

For a scattering problem, the solution $v$ to the BIE \eqref{e:basicForm} 
oscillates at frequency $\lesssim k$ with 
little additional structure.  The Weyl law (see e.g.~\cite[Chapter 14]{Zw:12}) or the Nyquist--Shannon--Whittaker sampling theorem then implies that the dimension of this space is $\sim k^{d-1}$. Thus, it is certainly not possible to achieve accuracy for general scattering data with a space of dimension $\ll k^{d-1}$. Motivated by this, we say that a numerical BIE method \emph{suffers from the pollution effect} if $N\gg k^{d-1}$ is required to maintain accuracy of the computed solutions as $k\tendi.$ We make this precise via the following definition (we work in $\LtG$ for concreteness, but note that the definition for higher-order spaces on $\Gamma$ is analogous).

\begin{definition}[The pollution effect in $\LtG$]\label{d:pollution}
Let $\mathcal{V}$ be a collection of subspaces of $\LtG$. For $V\in \mathcal{V}$ and $k>0$, let $\operator^{-1}_V(k) : \LtG\to V$ an approximation of $\operator^{-1}(k)$. 
The pair $(\mathcal{V}, \{\operator^{-1}_V\}_{V\in \mathcal{V}})$ \emph{does not suffer from the pollution effect in $\LtG$} if 
there exists $k_0, \Lambda,$ and $\Cqo$ such that for all $k\geq k_0$, $V\in \mathcal{V}$ with $\dim V\geq \Lambda k^{d-1},$ and $f\in \LtG$, 
\beq\label{e:Cqo}
 \N{\operator^{-1} f-\operator^{-1}_V f}_{\LtG} \leq \Cqo \min_{w \in V} \N{ \operator^{-1} f-w}_{\LtG}.
\eeq
\end{definition}
That is $(\mathcal{V}, \{\operator^{-1}_V\}_{V\in \mathcal{V}})$ does not suffer from the pollution effect if $k$-uniform quasi-optimality is achieved (for all possible data) with a choice of subspace dimension proportional to $k^{d-1}$.

By taking the negation of \eqref{e:Cqo}, we see that the pair $(\mathcal{V}, \{\operator^{-1}_V\}_{V\in \mathcal{V}})$ suffers from the pollution effect in $\LtG$ if 
\begin{align}
\label{e:qo_pollution}
&\inf_{\Lambda>0}\limsup_{k\to \infty}\sup_{\substack{V\in \mathcal{V}\\  \dim V \geq \Lambda k^{d-1}}} 
\sup_{f\in V'} \inf \bigg\{ \Cqo : \N{\operator^{-1} f-\operator^{-1}_V f}_{\LtG} 
\leq \Cqo \min_{w \in V} \N{ \operator^{-1} f-w}_{\LtG}\bigg\}
=\infty.
\end{align} 

The most commonly-used approximation spaces in both the finite-element and boundary-element method are piecewise polynomials of fixed degree. Finite element methods with these spaces 
famously suffer from the pollution effect; i.e. $\gg k^d$ degrees of freedom are required to maintain accuracy~\cite{IhBa:95a, IhBa:97, BaSa:00}. 
In earlier work~\cite{GaSp:22}, the first and third authors showed that for Dirichlet BIEs for a nontrapping obstacle 
(in which case $\rho$ defined by \eqref{e:rho} is bounded uniformly in $k$)
the Galerkin method with piecewise polynomials does not suffer from the pollution effect. 
Indeed, there is a common belief in both engineering~\cite{Ma:02,Ma:17,Ma:16a} and numerical analysis~\cite{BaSa:07, LoMe:11,GrLoMeSp:15} that boundary-element methods do not suffer from the pollution effect. In fact, it is standard in engineering to ask whether a specific number of points per wavelength (e.g., six~\cite{Ma:02}) suffices to obtain accurate solutions.  

In this article, we both improve the analysis in \cite{GaSp:22} -- giving less stringent conditions for $k$-uniform quasioptimality -- and show that i) for trapping obstacles the Galerkin method with piecewise polynomials applied to Dirichlet BIEs \emph{suffers from the pollution effect} and, ii) even for nontrapping obstacles, the Galerkin method with piecewise polynomials applied to Neumann BIEs \emph{suffers from the pollution effect}. We also present numerical experiments demonstrating the pollution effect for the Dirichlet BIEs for two trapping situations.

\bre[Removing pollution by increasing the polynomial degree with $k$]
Under a polynomial bound on the solution operator analogous to Assumption \ref{ass:polyboundintro} and strong regularity assumptions such as analyticity one can prove that increasing the polynomial degree logarithmically with $k$ removes the pollution effect in both the finite-element method  \cite{MeSa:10, MeSa:11, LSW3, LSW4, GLSW1, BeChMe:25} and Galerkin boundary element method \cite{LoMe:11}.
\ere

\renewcommand{\arraystretch}{1.4}
\begin{table}\begin{tabular}{|c|c|c|c|c|c|}

\hline
\bf Method&\bf Scatterer& \bf Approx. Space&$d$&\bf{$N\gtrsim$}&$C_{\rm qo}\lesssim$\\
\hline
Galerkin&any& poly.~degree $p$&$\geq 2$&$ k^{(d-1)}\rho^{\frac{d-1}{2(p+1)}}$&$ 1+\rho(k/N^{\frac{1}{d-1}})^{p+1}$\\
\hline
Galerkin&any& trig.~poly. &$2$&$ k$&$ 1$\\
\hline
Collocation&any& poly.~degree $p$ &$2$&$ k\rho^{\frac{1}{p+1}}$&$ 1+\rho(k/N)$\\
\hline
Collocation&any& trig.~poly.&$2$&$ k$&$ 1+ \rho (k/N)^{\infty}$\\
\hline
Dirichlet Nystr\"om& convex&trig.~poly.&$2$&$ k$&$ 1$\\
\hline
Dirichlet Nystr\"om& nontrapping&trig.~poly.&$2$&$ k(\log k)^{+0}$&$ 1$\\
\hline
Dirichlet Nystr\"om& any&trig.~poly.&$2$&$ k^{1+0}$&$ 1$\\
\hline
Neumann Nystr\"om& any&trig.~poly.&$2$&$k^{1+0}$&$ 1$\\
\hline
\end{tabular}
\caption{\label{f:summary}In the table above we summarize the results estimating the quasioptimality constant $C_{\rm qo}$ in \eqref{e:Cqo} in various settings (for Galerkin methods \eqref{e:Cqo} holds in $\LtG$, but for collocation and Nystr\"om methods our results establish the analogue of \eqref{e:Cqo} in $H^s_k(\Gamma)$ for certain $s$). In the above expressions $+0$ represents the fact that for any $\epsilon>0$ the statement holds with $+0$ replaced by $+\epsilon$. Similarly, $(\cdot)^\infty$ means that $\infty$ can be replaced by any power $M>0$.
Finally, recall that $\rho$ appearing in the expressions  is the norm of the inverse of the boundary integral operator (see  \eqref{e:rho}).}
 \end{table}

\section{Statement of the main results}\label{sec:main}

We study three methods for solving BIEs numerically:~the Galerkin method, the collocation method, and the Nystr\"om method. Our results typically depend on several things:~the choice of the approximation space, the growth of the solution operator, and, occasionally, the geometry of the scatterer. A summary of our results is given in Table~\ref{f:summary}.


\subsection{Choices of projections}\label{s:proj}
The most natural choice of projection from the Hilbert space structure of $\LtGt$ is the Galerkin projection.
\begin{definition}[Galerkin projection and Galerkin solution]
Given a subspace $V\subset L^2(\Gamma)$, the \emph{Galerkin projection onto $V$}, $P_{V}^G:L^2(\Gamma)\to V$ is the orthogonal projection onto $V$; in particular,
\beq\label{e:Galerkin_def}
\N{(I-P^G_V)v}_{\LtGt}= \min_{w\in V}\N{v-w}_{\LtG}.
\eeq 
We call the solution to~\eqref{e:projectionForm} with $P_V=P_V^G$ the \emph{Galerkin solution of~\eqref{e:basicForm}}.
\end{definition}
Computing the Galerkin projection involves numerically approximating integrals. It is therefore usually less computationally expensive to use a projection based on point values. 

To define this type of projection, we need to use points that determine the finite dimensional space $V$ in an appropriate sense.
\begin{definition}
Let $V\subset C^0(\Gamma)$ and $\{x_j\}_{j=1}^N\subset \Gamma$. We say that $\{x_j\}_{j=1}^N$ are \emph{unisolvent for $V$} if for any $\{a_j\}_{j=1}^N\subset \mathbb{C}$, there is a unique $v_h\in V$ such that $v_h(x_j)=a_j$ for $j=1,\dots, N$.  
\end{definition}
We can now define the collocation projection.
\begin{definition}
\label{d:collocation}
Given a subspace $V\subset C^0(\Gamma)$ and points $\{x_j\}_{j=1}^N\subset \Gamma$ that are unisolvent for $V$, we define the \emph{collocation projection for $\{x_j\}_{j=1}^N$}, $P_V^C:C^0(\Gamma)\to V$, by
$$
(P_V^Cv)(x_j)=v(x_j),\quad j=1,\dots, N.
$$
Note that $P_N^C$ is well defined since $\{x_j\}_{j=1}^N$ are unisolvent for $V$.  We call the solution to~\eqref{e:projectionForm} with $P_V=P_V^C$ the \emph{collocation solution of~\eqref{e:basicForm}}.

\end{definition}

\subsection{Sufficient conditions for quasi-optimality with piecewise-polynomial approximation spaces}

Our results about piecewise polynomial spaces require some further technical assumptions that, roughly speaking, guarantee that the piecewise-polynomial spaces are maximally dense in Sobolev spaces. 

\begin{assumption}[Assumption on piecewise-polynomial $V_h$ for the Galerkin method]\label{ass:ppG}

\

(i) $(\mathcal{T}_h)_{0<h\leq h_0}$ is a sequence of meshes of $\Gamma$ (in the sense of, e.g., \cite[Definition 4.1.2]{SaSc:11}), such that, for all $\tau \in \mathcal{T}_h$ there exists a reference map $\chi_\tau: \widehat{\tau}\to \tau$ where $\widehat{\tau}$ is a reference element.

(ii)
\beqs
V_h:= \Big\{ v : \Gamma \to \mathbb{C} : \tfa \tau \in \mathcal{T}_h, \,v \circ \chi_\tau \text{ is a polynomial of degree $p$} \Big\}.
\eeqs

(iii)
Given $p\geq 0$ there exists $C>0$ such that for all $0<h\leq h_0$
there exists a operator $\cI_h$ such that, for $0\leq t\leq p+1$, $\cI_h:H^t(\Gamma)\to V_N$ with
\beq\label{e:assppG}
\N{v-\cI_h v}_{\LtG} \leq C h^{t} \N{v}_{H^t(\Gamma)}.
\eeq
\end{assumption}

To state our results, we use the $k$-weighted Sobolev space norms; i.e., the standard Sobolev space $H^s_k(\Gamma)$ but with each derivative of weighted by $k^{-1}$ (see \S\ref{sec:SC1} below for a precise definition) -- these weighted Sobolev spaces are the natural spaces in which to study functions oscillating with frequency $\sim k^{-1}$ (such as Helmholtz solutions). Note that $H^0_k(\Gamma)=\LtG$.

\begin{theorem}[Galerkin method with piecewise polynomials]\label{thm:PG}
Suppose $\operator$ is given by one of~\eqref{e:DBIEs} or~\eqref{e:NBIEs}, Assumptions~\ref{ass:parameters} and~\ref{ass:polyboundintro} hold. Let $p\geq 0$,  $V_h\subset L^2(\Gamma)$ satisfy Assumption~\ref{ass:ppG}, and  $k_0>0$. Then there are $c,C>0$, such that if
\beq\label{e:Pthres}
(hk)^{2p+2}\rho\leq c ,\qquad hk\leq c,
\eeq
and $k\geq k_0$, $k\notin\cJ$ the Galerkin solution, $v_h$, to~\eqref{e:basicForm} exists, is unique, and satisfies the quasi-optimal error bound
\beq\label{e:PPMR1}
\N{v-v_h}_\LtG\leq \Big(1+C(hk)^{p+1}\rho+Chk+Ck^{-1}\Big)
\N{(I-P^G_{V_h})v}_{\LtG}.
\eeq
Moreover, if the right-hand side $f$ corresponds to plane-wave scattering (i.e., is given as in Theorem \ref{thm:BIEs}), then
\beq\label{e:PPMR2}
\frac{\N{v-v_h}_\LtG}{\N{v}_{\LtG}} \leq \Big(1+C(hk)^{p+1}\rho+Chk+Ck^{-1}\Big)C (hk)^{p+1}
\eeq
(i.e., the relative error can be made arbitrarily-small by decreasing $(hk)^{2p+2}\rho$).
\end{theorem}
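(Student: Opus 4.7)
The plan is to follow the Schatz-type framework for sign-indefinite Galerkin methods developed in~\cite{GaSp:22}:~combine the standard Galerkin error equation with a refined duality argument exploiting both $V_h^\perp$-membership and the $k$-explicit smoothing properties of $\pert$ in $k$-weighted Sobolev spaces. The key insight is that this yields a quasi-optimality constant of the form $1+C(hk)^{p+1}\rho+\text{lower order}$, rather than the naive $C\rho$ that a crude Neumann-series argument would give, and the improvement is precisely what permits the weaker threshold $(hk)^{2p+2}\rho\le c$ (rather than $(hk)^{p+1}\rho\le c$).

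First I would establish the Galerkin error identity $(I+P^G_{V_h}\pert)(v-v_h)=(I-P^G_{V_h})v$, equivalently $(I+\pert)(v-v_h)=(I-P^G_{V_h})v+(I-P^G_{V_h})\pert(v-v_h)$, where both right-hand summands lie in $V_h^\perp$. The task then reduces to bounding $(I+\pert)^{-1}$ on the subspace $V_h^\perp$ together with $(I-P^G_{V_h})\pert$. The crucial $k$-explicit input is that for each of the operators $A_k',A_k,\Breg,\Bregp$ one has $\pert:H^s_k(\Gamma)\to H^{s+1}_k(\Gamma)$ bounded uniformly in $k$ for $0\le s\le p$. For $A_k',A_k$ this reflects the order $-1$ semiclassical symbols of $S_k,K_k,K_k'$; for $\Breg,\Bregp$ the preconditioner $S_{\ri k}$ from~\cite{GaMaSp:21N} is needed to tame the order $+1$ hypersingular operator~$H_k$.

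The heart of the proof is a refined duality lemma. For $\eta\in V_h^\perp$ and $w\in\LtG$, set $z=(I+\pert^*)^{-1}w$ (so $\|z\|_{\LtG}\le C\rho\|w\|$). Since $\eta\perp V_h$ and $I-P^G_{V_h}$ is self-adjoint,
\begin{equation*}
\langle(I+\pert)^{-1}\eta,w\rangle=\langle\eta,z\rangle=\langle\eta,(I-P^G_{V_h})z\rangle,
\end{equation*}
so the game is to control $\|(I-P^G_{V_h})z\|_{\LtG}$ better than the trivial $\rho\|w\|$. Iterating $z=w-\pert^*z$ $p+1$ times produces an expansion $z=\sum_{j=0}^{p}(-\pert^*)^jw+R$ with $R\in H^{p+1}_k(\Gamma)$ and $\|R\|_{H^{p+1}_k}\le C\rho\|w\|$. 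Applying $I-P^G_{V_h}$ term by term, and using~\eqref{e:assppG} together with $\|\cdot\|_{H^t}\le k^t\|\cdot\|_{H^t_k}$, bounds $\|(I-P^G_{V_h})z\|_{\LtG}$ by $\|(I-P^G_{V_h})w\|_{\LtG}+C(hk)^{p+1}\rho\|w\|_{\LtG}$ plus $Chk+Ck^{-1}$ corrections arising from the subprincipal pieces of the symbol; the key point is that only the final remainder $R$ carries the factor $\rho$, and $R$ has the full $H^{p+1}_k$ regularity needed to extract $(hk)^{p+1}$ from~\eqref{e:assppG}. Cauchy--Schwarz and $\|(I-P^G_{V_h})w\|\le\|w\|$ then yield $\|(I+\pert)^{-1}\eta\|_{\LtG}\le(1+C(hk)^{p+1}\rho+Chk+Ck^{-1})\|\eta\|_{\LtG}$. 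Applying this to the two summands of the error identity, and bounding $\|(I-P^G_{V_h})\pert(v-v_h)\|_{\LtG}$ by a parallel duality expansion, produces an inequality of the form
\begin{equation*}
\|v-v_h\|_{\LtG}\le\bigl(1+C(hk)^{p+1}\rho+Chk+Ck^{-1}\bigr)\|(I-P^G_{V_h})v\|_{\LtG}+\bigl(C(hk)^{2p+2}\rho+Chk\bigr)\|v-v_h\|_{\LtG},
\end{equation*}
whose absorbable term is $\le 1/2$ under~\eqref{e:Pthres} with $c$ small; absorbing gives~\eqref{e:PPMR1}, and existence/uniqueness of $v_h$ then follows from finite-dimensional Fredholm theory. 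The relative bound~\eqref{e:PPMR2} is then immediate from~\eqref{e:PPMR1} once one observes that, for plane-wave scattering data, $v=c_0^{-1}f-\pert v$ and iterated smoothing give $\|v\|_{H^{p+1}_k(\Gamma)}\le C\|v\|_{\LtG}$, so~\eqref{e:assppG} yields $\|(I-P^G_{V_h})v\|_{\LtG}\le C(hk)^{p+1}\|v\|_{\LtG}$.

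The main obstacle will be executing the iterative duality expansion cleanly:~each application of $z=w-\pert^*z$ trades one factor of $\pert^*$ for one of either $w$ (no $\rho$ cost) or $\pert^*z$ ($\rho$ cost), and one must track the "$\rho$-budget" carefully so that only the final remainder -- which sits in $H^{p+1}_k$ -- carries $\rho$, while the intermediate pieces contribute merely the $Chk+Ck^{-1}$ corrections through their order-zero/first-order symbol remainders. This bookkeeping is precisely what makes the threshold quadratic in $(hk)^{p+1}$ rather than linear, and any loss of sharpness here would force the more restrictive condition $(hk)^{p+1}\rho\le c$.
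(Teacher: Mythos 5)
Your argument rests on the claim that $\pert$ maps $H^s_k(\Gamma)$ to $H^{s+1}_k(\Gamma)$ with norm bounded uniformly in $k$, which you attribute to $S_k$, $K_k$, $K_k'$ having order $-1$ semiclassical symbols. This is false: only the high-frequency cutoffs $(I-\chi(|\hsc D'|_g^2))\pert$ lie in $\Psi^{-1}_\hsc(\Gamma)$ -- this is precisely Theorem~\ref{thm:HFSD} and Part~(ii) of Assumption~\ref{ass:abstract1}, and the paper deliberately states the assumption only in this form. The low-frequency part, in particular the glancing region $|\xi'|_g\sim 1$, carries a $k$-growing contribution; on the unit disk, for example, the eigenvalue of $A'_{k}$ at modes $m\sim k$ is of order $k^{1/3}$ (Airy asymptotics; see \S\ref{s:disk}), so $\|\pert\|_{\LtGt}\gtrsim k^{1/3}$, and therefore $\|\pert\|_{L^2(\Gamma)\to H^1_k(\Gamma)}$ also grows with $k$.

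Once this uniform smoothing is lost, the iterated duality expansion $z=\sum_{j=0}^{p}(-\pert^*)^{j}w + (-\pert^*)^{p+1}z$ does not deliver the claimed refinement: the intermediate terms $(-\pert^*)^jw$ have $H^j_k$-norm of order $\|\pert\|_{\LtGt}^j\,\|w\|_{\LtG}$, not $\|w\|_{\LtG}$, and the remainder has $H^{p+1}_k$-norm of order $\|\pert\|^{p+1}_{\LtGt}\,\rho\,\|w\|_{\LtG}$, not $\rho\,\|w\|_{\LtG}$. Hence the key bound $\|(I+\pert)^{-1}\eta\|_{\LtG}\le(1+C(hk)^{p+1}\rho+Chk+Ck^{-1})\|\eta\|_{\LtG}$ for $\eta\in V_h^\perp$ does not follow, and neither does the threshold \eqref{e:Pthres}.

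The paper's proof handles this by separating low and high frequencies via the spectral projectors $\Pi_{\mathscr{L}}=1_{[-1-\e,1+\e]}(-\hsc^2\Delta_\Gamma)$ and $\Pi_{\mathscr{H}}=I-\Pi_{\mathscr{L}}$ of~\eqref{e:highandlow}. The $H^s_\hsc\to H^{s+1}_\hsc$ smoothing of $\pert$ is used only on the high-frequency blocks, where it is available by Assumption~\ref{ass:abstract1}(ii); the low-frequency blocks carry the solution-operator norm $\rho$, but also gain $(hk)^{p+1}$ from the approximation property~\eqref{e:assppG} because $\Pi_{\mathscr{L}}$ is smoothing to arbitrary order. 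Balancing the off-diagonal entries of the resulting $2\times 2$ block decomposition of $(P_N-I)\pert(I+\pert)^{-1}$ then requires the conjugation by $\mathcal{C}$ in~\eqref{e:newNorm}, and the Neumann-series inversion under~\eqref{e:MATthres} produces exactly the threshold $(hk)^{2p+2}\rho\le c$. See Lemmas~\ref{lem:split2} and~\ref{lem:split1} and \S\ref{sec:proofMAT}, applied with $s=0$, $t_{\max}=p+1$, $q_{\min}=-p-1$ via Lemma~\ref{lem:GalerkinPP}. Your duality route could be repaired by interleaving the high/low frequency split at each step of the expansion, but doing so essentially reconstructs the paper's block mechanism rather than offering an independent argument.
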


\bre[Preasymptotic $h$-BEM and $h$-FEM error estimates]
\label{rem:preasymptotic}
Theorem \ref{thm:PG}, which is shown to be sharp by Theorem \ref{t:pollutionIntro} below, is the $h$-BEM analogue of preasymptotic error estimates for the $h$-FEM. Indeed, for the $h$-FEM, if $(hk)^{2p} \mathscr{R}$ is sufficiently small, then the Galerkin solution exists, is unique, and satisfies a quasi-optimal error estimate with constant proportional to $1+ (hk)^p \mathscr{R}$, where $\mathscr{R}$ is the $L^2\to L^2$ norm of the solution operator, and such that $\mathscr{R}\sim k$ for nontrapping problems. Thus, for $k$-oscillating data, the relative error of the $h$-FEM is controllably small when $(hk)^{2p} \mathscr{R}$ is small. This threshold for bounded relative error was famously identified for 1-d problems in \cite{IhBa:95a, IhBa:97}, with the associated bound first proved in \cite{DuWu:15} for constant-coefficient Helmholtz problems in 2-d and 3-d with an impedance boundary condition, and then proved for general Helmholtz problems in \cite{GS3}. Theorem \ref{thm:PG} is the \emph{full} analogue of these results, with $(hk)^{2p}\mathscr{R}$ replaced by $(hk)^{2p+2}\rho$. Moreover, \eqref{e:PPMR1} shows that the 
 in the limit $k\to \infty$ with $h$ chosen so that $(hk)^{p+1}\rho \to 0$, the Galerkin solution is asymptotically \emph{optimal}; i.e., $\|v-v_h\|_{\LtG}\to \|(I-P^G_{V_h})v\|_{\LtG}$.
\ere

For the collocation solution of~\eqref{e:basicForm} with piecewise polynomial spaces, we specialize to $d=2$ (this is for technical reasons related to the failure of the Sobolev embedding $H^1\to L^\infty$ when $d\geq 3$).
We also require an analogue of Assumption~\ref{ass:ppC}.

\begin{assumption}[Assumption on piecewise-polynomial $V_h$ for the collocation method]\label{ass:ppC}

$d=2$, Parts (i) and (ii) of Assumption \ref{ass:ppG} hold and
%

(iii)
Given $p\geq 1$ 
and points $\{x_j\}_{j=1}^N\subset \Gamma$ that are unisolvent for $V_h$ 
there exists $C>0$
such that for all $0<h\leq h_0$ 
the collocation projection $P_V^C$ satisfies
for $0\leq q \leq 1$ and  
$1\leq t\leq p+1$,
\beq\label{e:assppC}
\big|(I-P_{V_h}^C)v\big|_{H^q(\Gamma)} \leq C h^{t-q} \N{v}_{H^t(\Gamma)}.
\eeq
\end{assumption}

\begin{theorem}[Collocation method with piecewise polynomials]\label{thm:PC}
Suppose $\operator$ is given by one of~\eqref{e:DBIEs} or~\eqref{e:NBIEs} and Assumptions~\ref{ass:parameters} and~\ref{ass:polyboundintro} hold. Let 
$p\geq 1,$ 
$V_h\subset L^2(\Gamma)$ satisfy Assumption \ref{ass:ppC}, $P^C_{V_h}$ be the collocation projection for $\{x_j\}_{j=1}^{N_h}$, 
 and $k_0>0$. There exist $c,C>0$ such that if
\beq\label{e:Pthres3}
(hk)^{p+1}
\rho\leq c,\qquad hk\leq c
\eeq
and $k\geq k_0$, $k\notin \cJ$ then the collocation solution, $v_h$, to~\eqref{e:basicForm} exists, is unique, and satisfies the quasi-optimal error bound
\beq\label{e:PPMR4}
\N{v-v_h}_{H^1_k(\Gamma)}\leq \Big(\|I-P_{V_h}^C\|^2_{H^1_k(\Gamma)\to H^1_k(\Gamma)}+Chk\rho+Ck^{-1} \Big) \N{(I-P^C_{V_h})v}_{H^1_k(\Gamma)}.
\eeq
Moreover, if the right-hand side $f$ corresponds to plane-wave scattering (i.e., is given as in Theorem \ref{thm:BIEs}), then 
\beq\label{e:PPMR5}
\frac{\N{v-v_h}_{H^1_k(\Gamma)}}{\N{v}_{H^1_k(\Gamma)}} \leq C\Big(\|I-P_{V_h}^C\|^2_{H^1_k(\Gamma)\to H^1_k(\Gamma)}+Chk\rho+Ck^{-1}\Big)(hk)^p
\eeq
(i.e., the relative error can be made arbitrarily-small by decreasing $(hk)^{p+1}\rho$).
\end{theorem}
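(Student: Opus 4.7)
The plan is to adapt the standard projection-method template for compact perturbations of the identity to the collocation setting in $H^1_k(\Gamma)$, and then refine it with a Schatz-type duality argument to extract the sharpened quasi-optimality constant appearing in \eqref{e:PPMR4}. Because the collocation projection requires point values and $H^1(\Gamma) \hookrightarrow C^0(\Gamma)$ only when $\Gamma$ is a curve, the whole analysis has to be performed in $H^1_k(\Gamma)$ (not $L^2$), which forces $d=2$.

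First I would derive the basic error identity. Writing $\operator = c_0(I + \pert)$ and subtracting the equations satisfied by $v$ and $v_h$ (using $P^C_{V_h} v_h = v_h$) gives
\begin{equation*}
(I + P^C_{V_h}\pert)(v - v_h) = (I - P^C_{V_h})v.
\end{equation*}
The key technical input is that, on a smooth curve, each of the layer potentials appearing in $\pert$ --- namely $S_k, \DL_k, \DL_k'$ and (for the regularised Neumann operators) $S_{\ri k} H_k$ and $H_k S_{\ri k}$ --- is a semiclassical pseudodifferential operator of order $-1$, so $\pert : H^s_k(\Gamma) \to H^{s+1}_k(\Gamma)$ is bounded uniformly in $k$. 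Combining this with the interpolation bound \eqref{e:assppC} (with $q=1$, $t=2$) and converting to semiclassical norms yields
\begin{equation*}
\big\|(I - P^C_{V_h})\pert\big\|_{H^1_k(\Gamma)\to H^1_k(\Gamma)} \leq C hk.
\end{equation*}
An analogous semiclassical elliptic bootstrap upgrades Assumption \ref{ass:polyboundintro} from an $\LtG$-bound to $\|\operator^{-1}\|_{H^1_k\to H^1_k} \leq C\rho$. Writing $I + P^C_{V_h}\pert = (I + \pert)\big(I - (I + \pert)^{-1}(I - P^C_{V_h})\pert\big)$, a Neumann-series argument under the smallness condition $(hk)^{p+1}\rho \leq c$ (combined with $hk\leq c$) produces existence and uniqueness of $v_h$ and the preliminary bound $\|(I + P^C_{V_h}\pert)^{-1}\|_{H^1_k\to H^1_k} \leq C\rho$.

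This preliminary bound alone only yields a quasi-optimality constant proportional to $\rho$, whereas \eqref{e:PPMR4} asks for the sharper $\|I - P^C_{V_h}\|^2_{H^1_k\to H^1_k} + Chk\rho + Ck^{-1}$. To upgrade, I would apply a Schatz-type duality argument. Rewriting the error identity as
\begin{equation*}
v - v_h = (I - P^C_{V_h})v - P^C_{V_h}\pert(v - v_h)
\end{equation*}
reduces the task to estimating $\|P^C_{V_h}\pert(v - v_h)\|_{H^1_k}$. I would pair against arbitrary $\psi$ in the dual space and introduce the adjoint solution $z = \operator^{-*}\psi$, using that $\pert^*$ is also semiclassically order $-1$ so that $z$ gains two $k$-weighted derivatives over $\psi$. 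The collocation analogue of Galerkin orthogonality, namely $P^C_{V_h}\operator(v - v_h) = 0$, then permits insertion of $(I - P^{C,*}_{V_h})$ on the dual side; the two factors of $(I - P_{V_h})$-type (one from the primal $(I - P^C_{V_h})v$, one from the dual $(I - P^{C,*}_{V_h})z$) generate the squared projection norm $\|I - P^C_{V_h}\|^2_{H^1_k\to H^1_k}$, while the semiclassical regularity of $z$ combined with the smoothing of $\pert$ produces the perturbative $Chk\rho$ term, and the $Ck^{-1}$ remainder arises from the identity piece $c_0 I$ of $\operator$.

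Finally, the relative-error bound \eqref{e:PPMR5} follows by applying Assumption \ref{ass:ppC}(iii) with $t = p+1, q = 1$ to the plane-wave trace $v$, using that for plane-wave scattering $\|v\|_{H^{p+1}_k(\Gamma)}$ is comparable to $\|v\|_{H^1_k(\Gamma)}$ by standard semiclassical elliptic estimates on the boundary (as in Theorem \ref{thm:BIEs}'s setup). \emph{The hardest step in the plan is the Schatz duality}: unlike the Galerkin case (Theorem \ref{thm:PG}) where self-adjointness of $P^G_{V_h}$ makes Aubin--Nitsche essentially automatic, here one must carefully identify the adjoint projection $P^{C,*}_{V_h}$, characterise its range as a dual finite-dimensional space, and verify that it supports an $H^1_k$-weighted interpolation estimate analogous to \eqref{e:assppC} so that the two-sided duality works. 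A secondary (but more standard) technical hurdle is verifying the uniform-in-$k$ semiclassical smoothing $\pert : H^1_k \to H^2_k$ for the regularised Neumann operators $\Breg$ and $\Bregp$, which hinges on the ellipticity of $S_{\ri k}$ as an order-$(-1)$ operator ensuring that the compositions $S_{\ri k} H_k$ and $H_k S_{\ri k}$ are of order $0$ on $\Gamma$.
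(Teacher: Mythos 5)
Your proposal diverges substantially from the paper's route and, more importantly, rests on a false analytic claim that cannot be repaired without essentially reinventing the paper's argument.

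The paper does \emph{not} use a Schatz--Aubin--Nitsche duality argument. Instead, Theorem~\ref{thm:PC} is proved by verifying (in Lemma~\ref{lem:collocationPP}) that the collocation projection satisfies the abstract approximation hypothesis~\eqref{e:MATproj} with $s=1$, $t_{\max}=p+1$, $q_{\min}=0$, and then invoking the abstract projection theorem, Theorem~\ref{thm:MAT}. That theorem achieves the sharp constant $\|I-P_N\|^2+Chk\rho+Ck^{-1}$ by decomposing $H^s_\hsc$ into a high-frequency block $\Pi_{\mathscr{H}}H^s_\hsc$ and a low-frequency block $\Pi_{\mathscr{L}}H^s_\hsc$, bounding the four blocks of $(P_N-I)\pert(I+\pert)^{-1}$ and of $(I+\pert)^{-1}(I-P_N)$ separately (Lemmas~\ref{lem:split2} and~\ref{lem:split1}), and rebalancing the off-diagonal blocks via the diagonal conjugation $\mathcal{C}$ in~\eqref{e:newNorm}. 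The squared projection norm arises because $(I-P_N)$ appears twice in the identity~\eqref{e:thething}, landing in the $\mathscr{H}\to\mathscr{H}$ block where it contributes $\|I-P_N\|^2_{\Hshts}$; no adjoint solve is ever introduced. You rightly flagged that the non-self-adjointness of $P^C_{V_h}$ is a serious obstacle for Schatz duality --- the paper sidesteps this entirely.

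The fatal gap is your step asserting that $\pert:H^s_k(\Gamma)\to H^{s+1}_k(\Gamma)$ is bounded \emph{uniformly in $k$}, on the grounds that $S_k,\DL_k,\DL'_k$ and the regularised Neumann products are semiclassical pseudodifferential operators of order $-1$. This is false, and the failure is precisely the source of the pollution effect studied in this paper. What Theorem~\ref{thm:HFSD} says --- and what Assumption~\ref{ass:abstract1}(ii) encodes --- is only that the \emph{high-frequency truncations} $(I-\chi(|\hsc D'|_g^2))\pert$ and $\pert(I-\chi(|\hsc D'|_g^2))$ belong to $\Psi_\hsc^{-1}(\Gamma)$. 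The full operators contain FIO-type contributions (reflected and glancing rays) on the low-frequency region $|\xi'|_g\lesssim 1$ and do not smooth there. Consequently your claimed bound $\|(I-P^C_{V_h})\pert\|_{H^1_k\to H^1_k}\leq Chk$ cannot hold: the low-frequency piece $\chi(|\hsc D'|_g^2)\pert$ is merely $O(1)$ on $L^2$, and $(I-P^C_{V_h})$ gives no gain on low frequencies, so the best one can say without frequency splitting is $\|(I-P^C_{V_h})\pert\|_{H^1_k\to H^1_k}\leq C$. Feeding $O(1)$ into your Neumann series then requires $\rho\lesssim 1$, which fails for trapping geometries and even for the Neumann BIEs on a ball. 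If the global semiclassical smoothing you assert were true, one could prove $k$-uniform quasi-optimality with $N\sim k^{d-1}$ for \emph{all} obstacles, contradicting the pollution results (Theorems~\ref{t:pollutionIntro}, \ref{t:diamond}, \ref{t:neumannDisk}) that form the other half of this paper. Any repair must reintroduce the high/low frequency splitting, at which point you have effectively reconstructed Theorem~\ref{thm:MAT}.

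A secondary inaccuracy: the $Ck^{-1}$ term does not come from the identity piece $c_0 I$. It arises from the $O(\hsc)$ subprincipal correction in the symbol bound for $(I+(I-\chi(|\hsc D'|_g^2))\pert)^{-1}$ in~\eqref{e:invEst}.

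Your proof of~\eqref{e:PPMR5} from~\eqref{e:PPMR4} via Assumption~\ref{ass:ppC}(iii) with $t=p+1$, $q=1$ and the oscillation estimate $\|v\|_{H^{p+1}_k}\lesssim\|v\|_{H^1_k}$ for plane-wave data is correct in outline and matches the paper's Theorem~\ref{thm:oscil}; this part would survive if the earlier gap were closed.
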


\bre[Implementing the operator product for the Neumann BIEs \eqref{e:NBIEs}]
Because of the operator product $S_{\ri k}H_k$ in $\Breg$, in practice, approximations to the solution of $\Breg v= f$ are computing by applying the 
projection 
method, not to $\Breg v= f$, 
but to the system
\begin{align}\label{e:Neumann_system}
\left(
\begin{array}{cc}
\ri \eta_N (\half I - K) & S_{\ri k}\\
H_k & - 1
\end{array}
\right)
\left(
\begin{array}{c}
v\\
w
\end{array}
\right)
=
\left(
\begin{array}{c}
f \\
0
\end{array}
\right);
\end{align}
similarly for $\Bregp$.
For simplicity, when studying $\Breg$ and $\Bregp$ in this paper we consider the idealised situation of \eqref{e:projectionForm} with $P_V= P_V^G$ or $P_V = P_V^C$; i.e., we ignore the issue of discretising the operator product in Galerkin or collocation methods.
We emphasise, however, that we \emph{do} analyse discretising the operator product in the Nystr\"om method.
\ere


\paragraph{Comparison with previous results for the Galerkin method.}
Of the investigations \cite{BaSa:07, LoMe:11, GrLoMeSp:15, SpKaSm:15, GaMuSp:19, GaSp:22} into proving quasioptimality of the Galerkin method with piecewise polynomials applied to $A_k'$ and $A_k$, the best results are the following. The result of \cite[Lemma 3.1]{GaSp:22} that if $\Gamma$ is smooth and
$\rho (hk)$ is sufficiently small, then the Galerkin method is quasioptimal with constant proportional to $\rho$.
The  result of \cite[Theorem 3.17]{LoMe:11} that if $\Gamma$ is analytic then the Galerkin method applied to $A_k'$ is quasioptimal (with constant independent of $k$) if
$(hk)^{p+1} k^5 \rho$
is sufficiently small \cite[Equation 3.22]{LoMe:11}; a similar result holds for $A_k$ with $k^5$ replaced by $k^6$ \cite[Equation 3.26]{LoMe:11}. These quasi-optimality constants are larger than that in \eqref{e:PPMR1} and the thresholds for existence are more-restrictive than that in \eqref{e:Pthres}.

When $\Gamma$ is either a circle or sphere, $k$ is sufficiently large, and $\eta_N \geq C k^{1/3}$ with $C$ sufficiently large, then $\Breg$ and $\Bregp$ are coercive on $\LtG$ with constant independent of $k$ by \cite[Theorems 3.6 and 3.9]{BoTu:13}.
C\'ea's lemma then implies that the Galerkin method applied to these operators is quasioptimal, with 
quasioptimality constant $\gtrsim k^{1/3}$. This is in contrast to Theorem \ref{thm:PG} where the quasioptimality constant $\sim 1$, albeit under the thresholds \eqref{e:Pthres}. 
Note that the choice $\eta_N\sim k^{1/3}$ is \emph{not} used in practice, indeed, \cite[\S5]{BoTu:13} and \cite[Equation 23]{BrElTu:12} recommend using $\eta_N\sim 1$, stating that, out of all the possible choices of $\eta_N$, this gives a ``nearly optimal number''/```small number'' of iterations when GMRES is used to solve the Galerkin linear systems. 

\paragraph{Spaces satisfying Assumption~\ref{ass:ppG} and~\ref{ass:ppC}.}

Boundary element spaces satisfying one of Assumptions \ref{ass:ppG} and \ref{ass:ppC} are constructed when $d=3$ in  \cite[Chapter 4]{SaSc:11}.
Indeed, discontinuous boundary element spaces satisfying Assumption \ref{ass:ppG} are constructed in
\cite[Theorem 4.3.19]{SaSc:11}, and continuous boundary element spaces satisfying Assumption 
\ref{ass:ppC} are constructed in
\cite[Theorem 4.3.22(a)]%
{SaSc:11} 
(a subtlety is that \cite{SaSc:11} work in 3-d, but the constructions and arguments there establish \eqref{e:assppC} when $d=2$ in fact for $0\leq q<3/2$, $t>1/2$, and $q\leq t\leq p+1$).

\subsection{Pollution in the Galerkin method with piecewise polynomials}

\subsubsection{Quasimodes imply pollution}

Recall that \emph{quasimodes} for the exterior Dirichlet/Neumann Helmholtz problem are  functions $u$ in $\Omega^+$
such that, informally, $(-k^{-2}\Delta-1)u$ is ``small" and compactly supported. More precisely, 
\beqs
\|(-k^{-2}\Delta -1)u \|_{L^2(\Omega^+)} =o(k^{-1}) \|u\|_{L^2(B(0,R)\cap \Omega^+)}, \,\text{ either }\gamma^+ u=0 \text{ or }\partial_\nu^+ u=0 \ton \partial \Omega^+, 
\eeqs
and $u$ satisfies the Sommerfeld radiation condition \eqref{e:src}. The factor of $k^{-1}$ in the bound is natural, since $(-k^{-2}\Delta -1)(\chi e^{i k x\cdot \xi_0})=O(k^{-1})$ for any $k$-independent $\chi \in C_c^\infty$ and $\xi_0$ with $|\xi_0|=1$; thus functions $u$ such that $(-k^{-2}\Delta-1)u=O(k^{-1})$ can be constructed for any $\Omega^-$.
A quasimode (in the sense above) exists if and only if the obstacle is trapping \cite{BoBuRa:10}, \cite[Theorem 7.1]{DyZw:19}.

In the BIE context, a quasimode is a function $v\in \LtG$ such that $\|\operator v\|_{\LtG} =o(1) \|v\|_{\LtG}$. For the Dirichlet BIEs (with $\eta_D$ satisfying Assumption \ref{ass:parameters}), quasimodes exists if and only if $\Omega^-$ is trapping. However, for the Neumann BIEs, quasimodes exist even when $\Omega^-$ is the unit ball, since $\| \operator^{-1}\|_{\LtGt} \sim k^{1/3}$ in this case (see, e.g,. \cite[Theorem 2.3]{GaMaSp:21N}). 

Our results showing the existence of pollution are based on the following theorem, which states, roughly, that the existence of a sufficiently good quasimode for the BIE that oscillates at frequency $\sim k$ guarantees pollution. To state this carefully, we let $\Delta_{\Gamma}$ denote the surface Laplacian on $\Gamma$. 
\begin{theorem}[Quasimodes imply pollution]
\label{t:pollutionIntro}
Suppose that  Assumption \ref{ass:parameters} holds.
Let $\Xi_0>1$, $0<\e<\frac{1}{10}$,  $0<\e_0<\Xi_0$, and $\cJ\subset (0,\infty)$ unbounded such that Assumption~\ref{ass:polyboundintro} holds and $V_h\subset L^2(\Gamma)$ satisfy Assumption~\ref{ass:ppG}, $p\geq 0$. Then there is $k_0>0$ such that  for all $\chi \in C_c^\infty((-(1+\e)^2, (1+\e)^2))$ with $\chi \equiv 1$ on $[-1,1]$ there is $c>0$ such that if there are $k_n\to \infty$, $k_0<k_n\notin \cJ$,  $u_n,f_n\in L^2(\Gamma)$, $\beta_n,\alpha_n\in (0,1]$ such that 
$$
\beta_n\|\operator^{-1}(k_n)\|_{\LtGt}\leq c,
$$
and
\begin{gather}\operator(k_n) u_n=f_n,\qquad \|f_n\|_{\LtG}\leq \alpha_n, \qquad \|u_n\|_{\LtG}=1,\label{e:quasiTime}\\
\| (1-\chi(\Xi_0^{-2}k_n^{-2}\Delta_{\Gamma}))f_n\|_{\LtG}\leq \beta_n\|f_n\|_{\LtG},\qquad \|\chi(\e^{-2}_0k_n^{-2}\Delta_{\Gamma})f_n\|_{\LtG}\leq\beta_n\|f_n\|_{\LtG},\label{e:oscillating}\\
\text{ either } \quad p=0 \quad \text{ or }\quad k^{-p-1}\alpha_n\|1_{(0,k_0^2]}(-\Delta_{\Gamma})(\operator^*)^{-1}u_n\|_{\LtG}\leq c,\label{e:normalOk1}
 \end{gather}
then, for any $h$ and $n$ such that and $(I+P^G_h\pert(k_n))$ has an inverse,
\begin{equation}
\label{e:thePollution}
\|(I+P^G_{V_h}\pert(k_n))^{-1}(I-P^G_{V_h})\|_{\LtG\to \LtG}\geq \frac{1}{2}+ c \begin{cases} (hk_n)^{-p-1}, &  1\leq (hk_n)^{2(p+1)}\alpha_n^{-1}\,\\
(hk_n)^{p+1}\alpha_n^{-1},&\alpha_n^{-1}(hk_n)^{2(p+1)}\leq 1.
\end{cases}
\end{equation}
\end{theorem}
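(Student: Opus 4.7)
I will test the operator $B:=(I+P^G_{V_h}\pert(k_n))^{-1}(I-P^G_{V_h})$ directly on the quasimode $u_n$. Let $u_{n,h}\in V_h$ denote the Galerkin solution to $\operator v=f_n$, i.e.\ $(I+P^G_{V_h}\pert)u_{n,h}=c_0^{-1}P^G_{V_h}f_n$. Using $\operator u_n=f_n$ one obtains the familiar Galerkin identity
\[
u_n-u_{n,h}\;=\;B\bigl((I-P^G_{V_h})u_n\bigr),
\]
and the splitting $u_n - u_{n,h} = (I-P^G_{V_h})u_n + (P^G_{V_h}u_n-u_{n,h})$ into its $V_h^\perp$ and $V_h$ pieces is orthogonal, so by Pythagoras
\[
\|u_n-u_{n,h}\|_{L^2(\Gamma)}^2 \;=\; \|(I-P^G_{V_h})u_n\|_{L^2(\Gamma)}^2 + \|u_{n,h}-P^G_{V_h}u_n\|_{L^2(\Gamma)}^2.
\]
Testing $\|B\|$ on the vector $(I-P^G_{V_h})u_n$ therefore yields $\|B\|^2\ge 1+X^2$ with
\[
X\;:=\;\|u_{n,h}-P^G_{V_h}u_n\|_{L^2(\Gamma)}\,/\,\|(I-P^G_{V_h})u_n\|_{L^2(\Gamma)}.
\]
Since $\sqrt{1+X^2}\ge \tfrac12(1+X)$ for $X\ge 0$, it suffices to lower bound $X$ by the case-dependent quantity in~\eqref{e:thePollution}.

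\noindent\textbf{Step 1 (denominator).} The key analytic estimate is
\[
\|(I-P^G_{V_h})u_n\|_{L^2(\Gamma)}\;\le\; C(hk_n)^{p+1}.
\]
By~\eqref{e:assppG} with $t=p+1$, this reduces to showing that $u_n$ is essentially frequency-localised to tangential frequencies $|\xi|\lesssim k_n$ on $\Gamma$. The hypothesis~\eqref{e:oscillating} places the right-hand side $f_n$ in the annulus $|\xi|\in[\e_0 k_n,\Xi_0 k_n]$ up to relative $L^2$-error $\beta_n$, and a pseudodifferential parametrix for $\operator$ on $\Gamma$ transports this localisation to $u_n=\operator^{-1}f_n$; the $L^2$ residual from the commutator with the frequency cutoff is controlled by $\beta_n\rho(k_n)\le c$. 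The hypothesis~\eqref{e:normalOk1} is needed for $p\ge 1$ to absorb the residual low-frequency tail of $u_n$ that the parametrix leaves over; it enters dually through the pairing $\langle u_n,\phi\rangle=\langle f_n,(\operator^*)^{-1}\phi\rangle$ applied to low-frequency test functions $\phi$ produced by the parametrix.

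\noindent\textbf{Step 2 (numerator and case split).} Applying $(I+P^G_{V_h}\pert)|_{V_h}$ to $P^G_{V_h}u_n$ gives the central identity
\[
(I+P^G_{V_h}\pert)|_{V_h}\,P^G_{V_h}u_n\;=\;c_0^{-1}P^G_{V_h}f_n - P^G_{V_h}\pert(I-P^G_{V_h})u_n,
\]
whose right-hand side has $L^2$-norm $\lesssim \alpha_n+(hk_n)^{p+1}$ by the quasimode hypothesis and Step~1; subtracting from $(I+P^G_{V_h}\pert)|_{V_h}u_{n,h}=c_0^{-1}P^G_{V_h}f_n$ gives the clean formula $(I+P^G_{V_h}\pert)|_{V_h}(u_{n,h}-P^G_{V_h}u_n)=P^G_{V_h}\pert(I-P^G_{V_h})u_n$. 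In Case~1 ($\alpha_n\le (hk_n)^{2(p+1)}$) the right-hand side of the central identity is dominated by the $(hk_n)^{p+1}$ piece while $\|P^G_{V_h}u_n\|\ge 3/4$; a dichotomy according to whether $\|u_{n,h}\|\le \tfrac12$ or not shows that either $\|u_{n,h}-P^G_{V_h}u_n\|\ge \tfrac14$ directly, or else $\|u_{n,h}\|\le M_h\alpha_n$ with $M_h:=\|[(I+P^G_{V_h}\pert)|_{V_h}]^{-1}\|$ forces $M_h$ so large that, applied to the distinguished direction $c_0^{-1}P^G_{V_h}f_n-G\approx -G$, the identity produces a lower bound of order one for $\|u_{n,h}-P^G_{V_h}u_n\|$; either way $X\gtrsim (hk_n)^{-(p+1)}$. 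In Case~2 ($(hk_n)^{2(p+1)}\le \alpha_n$) the two terms on the right of the central identity are of comparable size $\sim \alpha_n$, and an algebraic rearrangement using the explicit expression $u_{n,h}-P^G_{V_h}u_n=[(I+P^G_{V_h}\pert)|_{V_h}]^{-1}P^G_{V_h}\pert(I-P^G_{V_h})u_n$ isolates a contribution of size $\gtrsim (hk_n)^{2(p+1)}/\alpha_n$, which upon division by $\|(I-P^G_{V_h})u_n\|\lesssim (hk_n)^{p+1}$ gives $X\gtrsim (hk_n)^{p+1}/\alpha_n$.

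\noindent\textbf{Main obstacle.} The technical heart of the argument is Step~1: transferring the $L^2$-localised frequency content of $f_n$ to an $H^{p+1}_{k_n}(\Gamma)$ bound on $u_n$ under the sole control on the loss factor $\beta_n\rho\le c$. This requires a pseudodifferential commutator estimate for $\operator^{-1}$ at the frequency scale $\Xi_0 k_n$ whose $L^2$ residual is quantified by $\beta_n\rho$, and the low-frequency tail that the parametrix cannot resolve must be absorbed dually through~\eqref{e:normalOk1}. Once Step~1 is in place, Step~2 reduces to careful bookkeeping in the Galerkin identities.
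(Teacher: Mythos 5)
Your proposal takes a genuinely different route from the paper, and the route has a gap that cannot be fixed with the ingredients you have.

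\textbf{The difference.} You test the operator on $(I-P^G_{V_h})u_n$, so that the lower bound reduces to showing the Galerkin error $\|u_n-u_{n,h}\|$ is large for the specific quasimode data. The paper instead constructs a \emph{different} test vector. It first uses the lower bound on piecewise-polynomial approximation of $k$-oscillating functions from \cite{Ga:25} (Lemma~\ref{l:lowerBound}, Corollary~\ref{c:polysAreGreat}) to manufacture a $\tilde f$ with $\|\tilde f\|_{H^{p+1}_k}\lesssim (hk)^{-(p+1)}\alpha_n$ and $(I-P^G_{V_h})\tilde f\approx f_n$ modulo controlled low- and high-frequency errors, and then tests on $(I-P^G_{V_h})v$ with $v:=(I+\pert)^{-1}(I-P^G_{V_h})\tilde f$. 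The identity $(I+P^G_{V_h}\pert)v=(I-P^G_{V_h})v$ inverts this test vector to $v$ \emph{exactly}, so the lower bound is the clean ratio $\|v\|/\|(I-P^G_{V_h})v\|$, with $\|v\|\geq c$ (since $v\approx u_n$) and $\|(I-P^G_{V_h})v\|$ controlled. No analogue of the \cite{Ga:25} lower bound, nor the construction of $\tilde f$, appears anywhere in your plan.

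\textbf{The gap.} Your Step 2 is where the argument breaks. In Case~1 your dichotomy is: either $\|u_{n,h}\|\le 1/2$, in which case $\|u_{n,h}-P^G_{V_h}u_n\|\ge 1/4$ (correct), or else $M_h:=\|[(I+P^G_{V_h}\pert)|_{V_h}]^{-1}\|\gtrsim 1/\alpha_n$ is large, and you claim this forces $\|[(I+P^G_{V_h}\pert)|_{V_h}]^{-1}G\|\gtrsim 1$ for the specific vector $G=P^G_{V_h}\pert(I-P^G_{V_h})u_n$. That inference is false: a large operator norm $M_h$ attained on one direction gives no lower bound on the image of a different vector $G$. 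Moreover, the approximation $c_0^{-1}P^G_{V_h}f_n-G\approx -G$ is useless here because the $O(\alpha_n)$ discrepancy gets multiplied by the same $M_h\gtrsim 1/\alpha_n$, producing an $O(1)$ error that swamps the would-be conclusion. In Case~2, you assert that an "algebraic rearrangement" of $u_{n,h}-P^G_{V_h}u_n=[(I+P^G_{V_h}\pert)|_{V_h}]^{-1}G$ "isolates a contribution of size $\gtrsim (hk_n)^{2(p+1)}/\alpha_n$", but no such lower bound is derived; the formula with $\|G\|\lesssim(hk_n)^{p+1}$ yields only upper bounds. The underlying obstruction is structural: the theorem's conclusion concerns an operator norm, which guarantees \emph{some} data drives the discrete inverse to be large, but there is no a priori reason the quasimode data $u_n$ is that data. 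Producing data aligned with the bad direction is exactly what the paper's $\tilde f$ construction, powered by the \cite{Ga:25} lower bound, accomplishes — and it is exactly what is missing from your proposal.
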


We make the following immediate remarks about Theorem~\ref{t:pollutionIntro}:
\begin{itemize}
\item The assumptions~\eqref{e:quasiTime} and~\eqref{e:oscillating} are the precise versions of, respectively,  the statement that $u_n$ is a sufficiently good quasimode for $\operator$, and that $f_n$ oscillates at frequency $\sim k_n$. 
\item When $p\geq 1$, the assumption~\eqref{e:normalOk1} guarantees that $u_n$ is not close to any solution of $\operator v=\tilde{f}$ where $\tilde{f}$ is in the span of finitely many low frequency eigenfunctions of the surface Laplacian. This finite-dimensional assumption is required for technical reasons when $p\geq 1$ and we expect the theorem to hold without it even in that case.  In the concrete examples below (see Theorems~\ref{t:diamondNew} and~\ref{t:qualitative1}) the second part of~\eqref{e:normalOk1} imposes an extra restriction on growth of the solution operator when $p\geq 1$. Since the space of obstructions is finite-dimensional, this extra assumption could also be avoided by constructing sufficiently many good quasimodes.
\item If $\alpha_n \leq C\rho^{-1}$, i.e. $f_n$ achieves the full growth of the solution operator, then  Theorem~\ref{t:pollutionIntro} implies that
\begin{equation}
\label{e:itIsOptimal}
\|(I+P_{V_h}^G\pert)^{-1}(I-P_{V_h}^G)\|_{\LtG\to \LtG}\geq \frac 12 + c \begin{cases} (hk)^{-(p+1)}, &  1\leq (hk)^{2(p+1)}\rho\,\\
(hk)^{p+1}\rho,&\begin{aligned}
&(hk)^{2(p+1)}\rho\leq c.
\end{aligned}
\end{cases}
\end{equation}
In particular, if~\eqref{e:itIsOptimal} holds, then Theorem~\ref{thm:PG} is optimal since the Galerkin solution, $v_h$ to $\operator v=f$ satisfies
\beqs
v-v_h= (I+P_{V_h}^G \pert)^{-1}(I-P^G_{V_h})(I-P^G_{V_h}) v.
\eeqs
\item  By Definition~\ref{d:pollution}, \eqref{e:thePollution} with $\alpha=o(1)$ implies that the $h$-BEM suffers from the pollution effect. In particular, if $hk=\e$, then~\eqref{e:thePollution} implies that the quasioptimality constant is \emph{at least} $c\e^{-p-1}$ and hence that decreasing $\e$ actually makes the quasioptimality constant worse until the point that 
$\e^{2(p+1)} \alpha^{-1}\lesssim1$; i.e., $\e \lesssim \alpha^{1/2(p+1)}.$

\item The proof of Theorem \ref{t:pollutionIntro} is sketched in \S\ref{s:discussion}; the key technical ingredient is the lower bound from \cite{Ga:25} on how well 
piecewise polynomials 
approximate a function with frequency $\sim k$.
\end{itemize}


While it is relatively straightforward to show that a strong quasimode for the Helmholtz equation implies the existence of a strong quasimode for the BIEs
\cite{BeChGrLaLi:11}, \cite[\S5.6.2]{ChGrLaSp:12} (for the Dirichlet BIEs) and \cite[Theorem 2.6]{GaMaSp:21N} (for the Neumann BIEs), it is more challenging to determine the properties of the BIE quasimode from the properties of the quasimode of the Helmholtz equation; \S\ref{s:quasimodes} below does this in the Dirichlet case, resulting in concrete examples of pollution for the Dirichlet BIEs (Theorems \ref{t:diamond}, \ref{t:diamondNew}, \ref{t:qualitative1}). We expect that the results of \S\ref{s:quasimodes} can be extended to the Neumann case; we do not pursue this here, but show below that pollution occurs for the Neumann BIEs even for the unit disk.

\subsubsection{Pollution for the Dirichlet BIEs}

Throughout this section, $V_h\subset L^2(\Gamma)$ satisfies Assumption~\ref{ass:ppG} for some $p\geq 0$.
We start with an example that shows \emph{quantitative} pollution for the Dirichlet BIEs. 

\begin{definition}[Four-diamond domain]
We say that $\Omega^-$ is a \emph{four-diamond domain} if there exists 
$0<\e<\pi/2$ such that $\Omega^-=\cup_{i=1}^4\Omega_i\Subset \mathbb{R}^2$, where $\Omega_i$ are open, convex, disjoint, and have smooth boundary so that 
\begin{gather*}
\partial \big((-\pi,\pi)\times(-\pi, \pi)\big)\setminus \big(\cup_{a,b\in\pm 1}B((a \pi,b \pi),\e)\big)\subset \Gamma\subset \mathbb{R}^2\setminus \big(\cup_{a,b\in\pm 1}B((a \pi,b \pi),\e/2)\big),\\ 
\Omega^-\cap (-\pi,\pi)\times (-\pi,\pi)=\emptyset.
\end{gather*}
\end{definition}

Figure \ref{f:bReallyCoolPicture} shows an example of a four-diamond domain.

\begin{theorem}[Pollution for the four-diamond domain when $p=0$]
\label{t:diamond}
Let $p=0$, $\Omega^-$ be a four-diamond domain, $\eta_D$ satisfy Assumption~\ref{ass:parameters}, $\cJ$ satisfy Assumption~\ref{ass:polyboundintro} with $\operator=A_{k}$ or $A_k'$, and $k_n:=\sqrt{2}n$. Then there is $c>0$, such that for all $k\in \mathbb{R}\setminus \cJ$ and $n$ such that $k_n^{-1}\leq \delta \leq 1$, $|k-k_n|<\delta^{-1} k_n^{-1}$, and all $h>0$ there is $f\in L^2(\Gamma)$ such that the Galerkin solution, $v_h$, to~\eqref{e:basicForm} with $\operator$ given by either $A_{k}$ or $A_{k}'$, if it exists, satisfies
$$
\frac{\|v_h-v\|_{\LtG}}{\|(I-P_{V_h}^G)v\|_{\LtG}}\geq 
\frac 12 + 
c \begin{cases} (hk_n)^{-1}, &  1\leq (hk_n)^{2} \delta k_n,\\
(hk_n)\delta k_n,&\delta k_n(hk_n)^{2}\leq 1.
\end{cases}
$$
\end{theorem}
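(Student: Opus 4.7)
The approach is to apply the pollution criterion (Theorem~\ref{t:pollutionIntro}) with a BIE quasimode built from the Dirichlet eigenfunctions of the central cavity between the diamonds. The open square $Q:=(-\pi,\pi)^2$ lies in $\Omega^+$ by the defining condition $\Omega^-\cap Q=\emptyset$, and the functions $\phi_n(x,y):=\sin(nx)\sin(ny)$ satisfy $-\Delta\phi_n=k_n^2\phi_n$ in $Q$ with $\phi_n|_{\partial Q}=0$ (eigenvalue $k_n^2=2n^2$). I would choose a $k$-independent cutoff $\chi\in C_c^\infty(Q)$ equal to $1$ outside the corner balls $B((\pm\pi,\pm\pi),\varepsilon)$ and vanishing on the $\varepsilon/2$-balls; since $\Gamma\subset\mathbb R^2\setminus\bigcup B((\pm\pi,\pm\pi),\varepsilon/2)$ and $\partial Q\setminus\bigcup B((\pm\pi,\pm\pi),\varepsilon)\subset\Gamma$, the function $\tilde u_n:=\chi\phi_n$ is supported in $\Omega^+$, vanishes on $\Gamma$, satisfies $\|\tilde u_n\|_{L^2}\asymp 1$, and
\[
F_n:=(\Delta+k^2)\tilde u_n=(k^2-k_n^2)\chi\phi_n+[\Delta,\chi]\phi_n, \qquad \|F_n\|_{L^2(\Omega^+)}\lesssim k_n,
\]
for $|k-k_n|\leq\delta^{-1}k_n^{-1}$ with $\delta\geq k_n^{-1}$. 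The Sommerfeld condition for $\tilde u_n$ is automatic as $\tilde u_n$ has compact support.

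Next I would transfer $\tilde u_n$ to a BIE quasimode following the machinery of \cite{BeChGrLaLi:11} (see also \cite[\S5.6.2]{ChGrLaSp:12}). Setting $\psi_n:=\partial_\nu^+\tilde u_n$ and applying Green's representation in $\Omega^+$ with $\gamma^+\tilde u_n=0$ yields
\[
A_k'\psi_n=-\partial_\nu^+\mathcal{N}_kF_n+i\eta_D\,\gamma^+\mathcal{N}_kF_n,
\]
where $\mathcal{N}_k$ is the free-space outgoing volume potential at wavenumber $k$. Combining free-space resolvent and trace estimates with a near-resonance analysis that exploits the cavity spectral structure of $Q$, I would bound the right-hand side by $\lesssim(\delta k_n)^{-1}\|\psi_n\|_{L^2(\Gamma)}$. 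Since $\|\psi_n\|_{L^2(\Gamma)}\asymp k_n$ (from $\partial_\nu\phi_n|_{\partial Q}\sim\pm n\sin(n\cdot)$ on each straight side), the normalised $u_n:=\psi_n/\|\psi_n\|_{L^2(\Gamma)}$ then satisfies $\|A_k'u_n\|_{L^2(\Gamma)}\leq\alpha_n$ with $\alpha_n\asymp(\delta k_n)^{-1}$. The operator $A_k$ is handled analogously via a double-layer representation instead of a single-layer one.

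The final step is to verify~\eqref{e:quasiTime}--\eqref{e:normalOk1} and apply Theorem~\ref{t:pollutionIntro}. Since $u_n$ is tangentially concentrated at frequency $n\sim k_n$ on each straight segment of $\Gamma$, and $A_k'$ is a pseudodifferential operator of order zero, $f_n:=A_k'u_n$ inherits the same frequency localisation up to residues smaller than any prescribed polynomial $\beta_n$ in $k_n^{-1}$; one then takes $\beta_n$ small enough that $\beta_n\rho(k_n)\leq c$, which is permitted by Assumption~\ref{ass:polyboundintro} for $k_n\in\mathbb R\setminus\cJ$. Condition~\eqref{e:normalOk1} is vacuous because $p=0$. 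Theorem~\ref{t:pollutionIntro} applied with this $(u_n,f_n,\alpha_n,\beta_n)$ then yields exactly the claimed lower bound.

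The principal difficulty will be the second step: extracting the \emph{sharp} bound $\alpha_n\asymp(\delta k_n)^{-1}$ rather than a naive $O(1)$ (or worse) coming from purely free-space estimates. This requires careful tracking of cancellations in $\mathcal{N}_kF_n$ when $k$ is within $\delta^{-1}k_n^{-1}$ of the cavity Dirichlet eigenvalue $k_n$, and separate treatment of the ``resonant'' part $(k^2-k_n^2)\chi\phi_n$ of $F_n$ (whose prefactor is $\lesssim\delta^{-1}$) and the ``commutator'' part $[\Delta,\chi]\phi_n$ (supported in fixed corner regions away from the main interaction zone on $\Gamma$). The frequency-localisation claim in Step~3 also requires a brief pseudodifferential argument, but is standard once the tangential oscillation of $\psi_n$ has been identified.
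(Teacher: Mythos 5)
Your high-level plan is the right one: an interior quasimode concentrated in the central square $Q=(-\pi,\pi)^2$, a transfer to a BIE quasimode, and an application of Theorem~\ref{t:pollutionIntro}, with condition~\eqref{e:normalOk1} vacuous because $p=0$. The gap is in the choice of cutoff, and it is fatal to the quantitative estimate rather than a technicality that a near-resonance argument can absorb. With a spatial cutoff $\chi\in C_c^\infty(Q)$ near the corners, the commutator $[\Delta,\chi]\phi_n = 2\nabla\chi\cdot\nabla\phi_n + (\Delta\chi)\phi_n$ contains the cross term $2\nabla\chi\cdot\nabla\phi_n$, whose $L^2$ norm is of order $n\asymp k_n$ because $\nabla\phi_n = O(k_n)$ on a region of fixed size. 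After the Green-representation transfer and division by $\|\psi_n\|_{L^2(\Gamma)}\asymp k_n$, this contributes $O(1)$ to $\alpha_n$, not $O((\delta k_n)^{-1})$, and Theorem~\ref{t:pollutionIntro} then yields nothing since it requires $\alpha_n\to 0$. The resonant part $(k^2-k_n^2)\chi\phi_n$ does give the wanted $(\delta k_n)^{-1}$ contribution, but the commutator part dominates it throughout the stated range $k_n^{-1}\leq\delta\leq 1$. Your proposed remedy --- that the commutator is ``supported in fixed corner regions away from the main interaction zone on $\Gamma$'' --- has no estimate behind it: the outgoing free resolvent propagates a corner-supported source to the whole of $\Gamma$ with no gain.

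The paper's proof removes the offending term by cutting \emph{along the characteristic directions} rather than radially. Decomposing $-4\sin(nx)\sin(ny)=e^{in(x+y)}-e^{in(x-y)}+e^{-in(x+y)}-e^{in(-x+y)}$, each travelling wave is multiplied by a 1-d cutoff in the transverse variable, e.g.\ $\chi(y-x)e^{in(x+y)}$. Since $\nabla\chi(y-x)\perp\nabla(x+y)$, the dangerous cross term in the commutator vanishes identically; what remains is $\chi''(y-x)e^{in(x+y)}$, which is $O(1)$ in $L^2$, a full factor of $k_n$ smaller than the corresponding term in your proposal. This is precisely why the paper's $\tilde g$ carries an explicit $\hbar_n$ prefactor and why the residual in semiclassical units is $O(\hbar_n^2)$. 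No purely spatial bump function can achieve this, because $\nabla\phi_n$ simultaneously takes all four directions $\pm(1,\pm 1)$ while $\nabla\chi$ is a single vector at each point; the trick is to decompose into travelling waves first and cut each one separately. A secondary gap: the paper's transfer lemmas (Lemmas~\ref{l:bWave}, \ref{l:bWave2New}) are not bare Green-representation estimates; they require the wavefront sets of $u$ and $\tilde g$ to avoid glancing and normal directions on $\Gamma$, which the paper checks explicitly for the characteristic-cutoff construction in \eqref{e:WFgSquare}--\eqref{e:WFuSquare}. Your direct single-layer route sidesteps that bookkeeping, and with the radial cutoff the relevant wavefront set conditions are less clean; but this is subordinate to the quantitative defect above.
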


\begin{theorem}[Pollution for the four-diamond domain when $p\geq 1$]
\label{t:diamondNew}
Let $\Omega^-$ be a four-diamond domain, $\eta_D$ satisfy Assumption~\ref{ass:parameters},
$\operator=A_{k}$ or $A_k'$, and $k_n:=\sqrt{2}n$. 

If there are $C_1>0, k_0>0$ such that 
\beq\label{e:fourDiamondsBound}
\big\|\operator^{-1}\big\|_{\LtGt} \leq C_1 k^2 \quad \tfa k\geq k_0
\eeq
there are $c,C>0$ such that for all $k\geq k_0$ and $n$ such that $Ck_n^{-1}\leq \delta \leq 1$ and $|k-k_n|<\delta^{-1} k_n^{-1}$ and for all $h>0$ there is $f\in L^2(\Gamma)$ such that the Galerkin solution, $v_h$, to~\eqref{e:basicForm} with $\operator$ given by either $A_{k}$ or $A_{k}'$, if it exists, satisfies
$$
\frac{\|v_h-v\|_{\LtG}}{\|(I-P_{V_h}^G)v\|_{\LtG}}\geq \frac 12 +  c \begin{cases} (hk_n)^{-p-1}, &  1\leq (hk_n)^{2(p+1)} \delta k_n,\\
(hk_n)^{p+1}\delta k_n,&\delta k_n(hk_n)^{2(p+1)}\leq 1.
\end{cases}
$$
\end{theorem}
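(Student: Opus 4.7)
I would prove this by invoking Theorem~\ref{t:pollutionIntro} with the same BIE quasimodes that are constructed in the proof of Theorem~\ref{t:diamond}. The hypotheses~\eqref{e:quasiTime} and~\eqref{e:oscillating} (which is what Theorem~\ref{t:diamond} already establishes) come for free from that construction, so the only additional work for $p\geq 1$ is to verify the low-frequency/finite-dimensional condition~\eqref{e:normalOk1}. This is precisely where the hypothesis~\eqref{e:fourDiamondsBound} enters.

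\textbf{Setup.} For each $k$ with $|k-k_n|<\delta^{-1}k_n^{-1}$, the construction in Theorem~\ref{t:diamond} produces $u_n, f_n \in \LtG$ with $\operator(k) u_n = f_n$, $\|u_n\|_{\LtG}=1$, $\|f_n\|_{\LtG}\leq \alpha_n$ with $\alpha_n^{-1}\sim \delta k_n$, and the frequency-localization~\eqref{e:oscillating} with $\beta_n$ decaying super-polynomially in $k_n$. Informally, these quasimodes arise from the Dirichlet eigenfunctions $\phi_n(x,y)=\sin(n(x+\pi))\sin(n(y+\pi))$ of $-\Delta$ on the central square $(-\pi,\pi)^2 \subset \Omega^+$, which is almost enclosed by the four diamonds with leakage only through the four corner gaps of size $\sim\e$; the resulting Helmholtz quasimode is then transferred to a BIE quasimode by the standard argument of \cite[\S5.6.2]{ChGrLaSp:12}. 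In particular, $\beta_n\|\operator^{-1}\|_{\LtGt}\leq c$ is immediate once $k_n$ is large enough, using~\eqref{e:fourDiamondsBound}.

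\textbf{Verification of~\eqref{e:normalOk1}.} Set $w_n:=(\operator^*)^{-1}u_n$. Since $\|(\operator^*)^{-1}\|_{\LtGt}=\|\operator^{-1}\|_{\LtGt}\leq C_1 k^2$ by~\eqref{e:fourDiamondsBound} and $1_{(0,k_0^2]}(-\Delta_{\Gamma})$ is an orthogonal projection on $\LtG$,
\[
\|1_{(0,k_0^2]}(-\Delta_{\Gamma})w_n\|_{\LtG}\leq C_1 k_n^2.
\]
Using $\alpha_n \lesssim (\delta k_n)^{-1}$, this gives
\[
k_n^{-p-1}\alpha_n\|1_{(0,k_0^2]}(-\Delta_{\Gamma})w_n\|_{\LtG}\leq C_1 \alpha_n k_n^{1-p}\lesssim \frac{C_1}{\delta k_n^{p}}.
\]
This is $\leq c$ whenever $\delta\geq C' k_n^{-p}$ for $C'$ large (depending on $C_1$ and on the small constant in Theorem~\ref{t:pollutionIntro}). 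Because $p\geq 1$ and we are given $\delta\geq Ck_n^{-1}$, it suffices to take the hypothesis-constant $C$ large enough, so~\eqref{e:normalOk1} holds.

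\textbf{Conclusion and main obstacle.} With all hypotheses of Theorem~\ref{t:pollutionIntro} verified, substituting $\alpha_n^{-1}\sim \delta k_n$ into~\eqref{e:thePollution} and using the identity $v-v_h=(I+P^G_{V_h}\pert)^{-1}(I-P^G_{V_h})(I-P^G_{V_h})v$ with $f:=\operator u_n$ (so that $v=u_n$) yields the claimed lower bound. The technical heart -- constructing the sharp BIE quasimode and verifying~\eqref{e:oscillating} with small $\beta_n$ -- is already done in the proof of Theorem~\ref{t:diamond} and is simply reused. The genuinely new ingredient, and the only real constraint that forces the polynomial bound~\eqref{e:fourDiamondsBound}, is that~\eqref{e:normalOk1} controls the \emph{low-frequency content} of $(\operator^*)^{-1}u_n$, which a priori can be bounded only through a polynomial estimate on $\|\operator^{-1}\|_{\LtGt}$; without such an estimate the finite-dimensional obstruction noted after Theorem~\ref{t:pollutionIntro} could in principle spoil the argument.
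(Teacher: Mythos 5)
Your proof is correct and follows essentially the same route as the paper: reuse the quasimode constructed for Theorem~\ref{t:diamond} and invoke Theorem~\ref{t:pollutionIntro}, with the only new work being the verification of the finite-dimensional condition~\eqref{e:normalOk1} via the crude bound $\|1_{(0,k_0^2]}(-\Delta_\Gamma)(\operator^*)^{-1}u_n\|_{\LtG}\leq\|\operator^{-1}\|_{\LtGt}\leq C_1 k^2$ together with $\alpha_n\lesssim(\delta k_n)^{-1}$ and $p\geq 1$. The paper leaves this verification implicit (its proof of Theorems~\ref{t:diamond} and~\ref{t:diamondNew} is a one-line deduction from Theorem~\ref{t:pollutionIntro} at the end of \S\ref{s:quasimodes}), and your calculation is exactly the missing step that explains why the polynomial bound~\eqref{e:fourDiamondsBound} with exponent $2$ and the lower bound $\delta\geq Ck_n^{-1}$ (with $C$ chosen large) are what is needed for $p\geq 1$; the only small imprecision is the parenthetical $f:=\operator u_n$ ``so that $v=u_n$'' -- the $v$ that realizes the bound in the proof of Lemma~\ref{l:pollution} is the corrected function from~\eqref{e:finalv}, not $u_n$ itself, though this does not affect the argument.
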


\begin{remark}[Discussion of Theorems \ref{t:diamond} and \ref{t:diamondNew}]

\

\begin{enumerate}
\item 
When $\Omega^-$ consists of two aligned squares (or rounded squares) -- where the trapping is the same nature (\emph{parabolic}) as for a four-diamond domain (albeit on a smaller set in phase space) -- 
the bound \eqref{e:fourDiamondsBound} is proved in \cite[Corollary 1.14]{ChSpGiSm:20}. Furthermore, from both quasimode considerations and 
the bounds of 
\cite{ChWu:13, Ch:18} in the setting of scattering by metrics of revolution, 
\cite{ChSpGiSm:20}
conjecture that for $\Omega^-$ as in Theorem~\ref{t:diamond}, 
\beq\label{e:thisMustBeTrue}
\big\|A_{k}^{-1}\big\|_{\LtGt}+\big\|(A'_{k})^{-1}\big\|_{\LtGt}\leq Ck.
\eeq
If the bound \eqref{e:thisMustBeTrue} holds, then Theorems~\ref{t:diamond} and \ref{t:diamondNew} show that Theorem~\ref{thm:PG} is optimal in this case and $\cJ=\emptyset$. 

\item 
We emphasise that the phenomenon behind Theorems~\ref{t:diamond} and \ref{t:diamondNew} is \emph{not} sparse in frequency. Indeed, 
both Theorems~\ref{t:diamond} and \ref{t:diamondNew}
hold with $\delta=1$ for a set of $k$ with infinite measure (since $\sum_{n=1}^\infty n^{-1}=\infty$). Furthermore, although 
we prove Theorem~\ref{t:diamond} with $k_n=\sqrt{2}n$, it is easy to generalize our construction in the proof of Theorem~\ref{t:diamond} to take $k_n=\sqrt{m_n^2+\ell_n^2}$ for any $m_n,\ell_n\in\mathbb{Z}$ with $c<|\frac{m_n}{\ell_n}|<C$ and such that, if $m_n/\ell_n=p_n/q_n$ with $p_n$ and $q_n$ relatively prime, then $|q_n|\leq C$. 
\item 
Theorems~\ref{t:diamond} and \ref{t:diamondNew} are
 illustrated by numerical experiments in Figures~\ref{f:bReallyCoolPicture} and~\ref{f:aReallyCoolPicture},
with Figures~\ref{f:cavityReallyCoolPicture} and \ref{f:cavityPollution} illustrating pollution for a cavity domain.
 (The set up for all the numerical experiments in this section is described in \S\ref{sec:numerical}.)
\end{enumerate}
\end{remark}

Next, we show that in a wide variety of trapping problems, one has pollution \emph{at all frequencies}. To avoid technicalities, we informally define the 
trapped set, $K$, as
set of points $x$ and directions $\xi$ such that the billiard trajectory through $(x,\xi)$  in $\Omega^+$ remains in a compact set 
for all time. (We refer the reader to \S\ref{s:dynamics} for the careful definitions). 
\begin{theorem}[Qualitative pollution for a large class of trapping domains]
\label{t:qualitative1}
Suppose that Assumptions~\ref{ass:parameters} and~\ref{ass:polyboundintro} hold, that $K\neq \emptyset$ and that for every point $x$ and direction $\xi$ such that $(x,\xi)\in K$, the line
$$
\mathscr{L}_{(x,\xi)}:=x+\mathbb{R}\xi
$$
is not tangent to $\Gamma$.
Let $C_1,k_0>0$. Suppose that 
\beq\label{e:iWantALabel}
\text{ either } \quad p=0 \quad \text{ or } \quad p\geq 1 \,\, \tand\,\,  \big\| A_{k}^{-1}\big\|_{\LtGt} \leq C_1 k^{p+1} \tfa k \geq k_0.
\eeq
 Then for all $k_n\to \infty$ $k_n\notin \cJ$,
and $h>0$ there are $f_n\in L^2(\Gamma)$ and $\alpha_n\to 0$ such that the Galerkin solution, $v_h$, to~\eqref{e:basicForm} with $\operator$ given by $A_{k_n}$, if it exists, satisfies
\begin{equation}
\label{e:qualPollute}
\frac{\|v_h-v\|_{\LtG}}{\|(I-P_{V_h}^G)v\|_{\LtG}}\geq \frac 12 + c \begin{cases} (hk_n)^{-p-1}, &  1\leq (hk_n)^{2(p+1)}\alpha_n^{-1},\\
(hk_n)^{p+1}\alpha_n^{-1},&\alpha_n^{-1}(hk_n)^{2(p+1)}\leq 1.
\end{cases}
\end{equation}
\end{theorem}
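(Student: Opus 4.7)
\emph{Proof plan.} The strategy is to verify the hypotheses of Theorem~\ref{t:pollutionIntro} for every $k_n\to\infty$ (outside $\cJ$) by constructing, out of the dynamical assumption on $K$, a BIE quasimode $u_n$ with right-hand side $f_n$ that is frequency-localized on $\Gamma$ at scale $k_n$ and whose norm $\alpha_n$ tends to $0$.

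\textbf{Step 1: dynamical quasimode for the Helmholtz problem.} Since $K\neq\emptyset$, classical quasimode constructions for trapping obstacles (as in \cite{BoBuRa:10} and \cite[Theorem~7.1]{DyZw:19}) yield, for each $n$, a function $w_n\in \Holoc(\Omega^+)$ that satisfies the Sommerfeld radiation condition, $\gamma^+ w_n=0$ on $\Gamma$, is microlocalized in an arbitrarily small neighbourhood of a point $(x_\ast,\xi_\ast)\in K$, has $\|w_n\|_{L^2(B_R\cap \Omega^+)}\sim 1$, and satisfies
\[
\|(-k_n^{-2}\Delta-1)w_n\|_{L^2(\Omega^+)} = o(k_n^{-1}).
\]
The parameter $\alpha_n\to 0$ will be essentially this residual.

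\textbf{Step 2: passage to the BIE.} Following the standard argument \cite[\S5.6.2]{ChGrLaSp:12}, \cite[Theorem~2.6]{GaMaSp:21N}, the interior/exterior Calder\'on machinery converts $w_n$ into $u_n:=\partial_\nu^+ w_n\in L^2(\Gamma)$ (for the Dirichlet BIE) together with a right-hand side $f_n=\operator u_n$ whose $L^2(\Gamma)$-norm is $O(\alpha_n)$, while $\|u_n\|_{\LtG}\sim 1$ thanks to the non-zero flux of $w_n$ through $\Gamma$. After rescaling we arrange $\|u_n\|_{\LtG}=1$ and $\|f_n\|_{\LtG}\leq\alpha_n$, which is \eqref{e:quasiTime}; since $\alpha_n\cdot\|\operator^{-1}\|_{\LtGt}=o(1)\cdot k^{P_{\rm inv}}$ can be made $\leq c$ by slightly decreasing $\alpha_n$ if necessary, the threshold $\beta_n\|\operator^{-1}\|\leq c$ in Theorem~\ref{t:pollutionIntro} is also met.

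\textbf{Step 3: frequency localization on $\Gamma$.} This is where the hypothesis that $\mathscr{L}_{(x,\xi)}$ is not tangent to $\Gamma$ for every $(x,\xi)\in K$ is essential. Because $w_n$ is microlocalized near $(x_\ast,\xi_\ast)$ at scale $k_n$ in the interior, its Cauchy data are microlocalized in a neighbourhood of the image of $\xi_\ast$ under pull-back to $T^\ast\Gamma$. Non-tangency means $\xi_\ast$ has a nonzero tangential component of size $\sim 1$, so this image has tangential frequency $\sim k_n$ and is bounded away from both the glancing set and the zero section. A $\Psi$DO/stationary phase argument (applied to the integral representation of $f_n=\operator u_n$ through the layer potentials $S_k$, $K_k$, $K_k'$, $H_k$ composing $\operator$) then gives, for any $\chi\in C_c^\infty$ as in Theorem~\ref{t:pollutionIntro},
\[
\|(1-\chi(\Xi_0^{-2}k_n^{-2}\Delta_\Gamma))f_n\|_{\LtG}+\|\chi(\e_0^{-2}k_n^{-2}\Delta_\Gamma)f_n\|_{\LtG}=O(k_n^{-\infty})\|f_n\|_{\LtG},
\]
by choosing $\Xi_0$ and $\e_0$ to straddle the tangential frequency of $\xi_\ast$. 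Taking $\beta_n$ equal to this $O(k_n^{-\infty})$ bound yields \eqref{e:oscillating}.

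\textbf{Step 4: the low-frequency condition \eqref{e:normalOk1}.} When $p=0$ there is nothing to check. When $p\geq 1$ we use the standing assumption $\|A_k^{-1}\|_{\LtGt}\leq C_1 k^{p+1}$ from \eqref{e:iWantALabel}: since $\operator^\ast$ has the same $L^2$ operator-norm bound, and $1_{(0,k_0^2]}(-\Delta_\Gamma)$ is a bounded projector on $\LtG$,
\[
k^{-p-1}\alpha_n\|1_{(0,k_0^2]}(-\Delta_\Gamma)(\operator^\ast)^{-1}u_n\|_{\LtG}\leq C\alpha_n\|u_n\|_{\LtG}=C\alpha_n\leq c
\]
for $n$ large, which gives \eqref{e:normalOk1}. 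This is precisely why \eqref{e:iWantALabel} enters the hypothesis when $p\geq 1$.

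\textbf{Step 5: conclusion.} All hypotheses of Theorem~\ref{t:pollutionIntro} are now in place, so for every $h>0$ and every $n$ for which $I+P^G_{V_h}\pert(k_n)$ is invertible we obtain \eqref{e:thePollution} with the appropriate $\alpha_n$, and rewriting $v-v_h=(I+P^G_{V_h}\pert)^{-1}(I-P^G_{V_h})v$ translates the operator bound into the quasi-optimality lower bound \eqref{e:qualPollute}. The main obstacle is Step~3: turning the dynamical non-tangency hypothesis into a genuine tangential-frequency localization of $f_n=\operator u_n$ (this is a microlocal/propagation-of-singularities computation rather than a formal manipulation, and is the place where the geometric hypothesis is really used).
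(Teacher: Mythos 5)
Your Steps 1, 2 and 3 substitute a classical quasimode construction (residual $o(k^{-1})$, trace-plus-stationary-phase) for what the paper actually proves, and that is where the gap lies. The paper's route is genuinely different and you have not supplied what replaces the hand-wave in your Step~3.

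Concretely, the paper does \emph{not} start from a Helmholtz quasimode concentrated on the trapped set $K$. It instead works with the \emph{incoming set} $\Gamma_-$: Lemma~\ref{l:incoming} produces a point $q\in\Gamma_-\setminus K$, and Lemma~\ref{l:itGrowsGamma-} shows by a defect-measure argument that for data $g$ microlocalized on a short GBB segment through $q$ one has $\|\chi R_D \hsc g\|_{L^2}\to\infty$ (rate unspecified, which is all that's needed for a qualitative $\alpha_n\to0$). It then passes to the BIE not through the Calder\'on projectors on the Neumann trace of a quasimode, but through the explicit inverse formula $A_k^{-1}=I-P^{-,\eta}_{\rm ItD}(P^+_{\rm DtN}-ik^{-1}\eta)$: Lemma~\ref{l:quasimodeExteriorToDirichlet} shows one can force $\|P^+_{\rm DtN}g_1\|$ large for some $g_1$ with wavefront set in the hyperbolic region away from $\{\xi'=0\}$, and the non-interference Lemma~\ref{l:layerNoCancel} is essential to prevent $P^{-,\eta}_{\rm ItD}$ from cancelling that growth. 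Your proposal has no analogue of this non-interference step; without it, $\|A_k^{-1}f\|$ is not controlled from below by $\|P^+_{\rm DtN}P^{-,\eta}_{\rm ItD}f\|$. After that, Lemma~\ref{l:waveToKOscillate} converts the wavefront set inclusion $\WF(f)\subset\cH\cap\{\xi'\neq0\}$ into the frequency-localization hypotheses of Theorem~\ref{t:pollutionIntro}.

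The other substantive issue: your claim in Step~3 that a ``$\Psi$DO/stationary phase argument applied to the integral representation of $f_n=\operator u_n$'' yields $O(k^{-\infty})$ localization would not work, because $f_n=\operator u_n$ is the quasimode residual and its size and location are governed by the construction of $u_n$, not by mapping properties of the layer operators. The paper gets the wavefront set of $f$ by propagation of singularities for the boundary value problem with modified boundary conditions (Theorem~\ref{t:basicPropagate}, Lemma~\ref{l:approxSolve}), tracking GBBs back from $q\in\Gamma_-\setminus K$ to the hyperbolic region and using the non-tangency of $K$ (via Lemma~\ref{l:incomingIsNotNormal} and hypothesis~\eqref{e:KnoGlance}) to keep those GBBs away from glancing and from $\{\xi'=0\}$. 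This is a propagation-of-singularities computation, not a stationary-phase one, and it is the place where the geometric hypothesis on $\mathscr{L}_{(x,\xi)}$ is actually spent; you have correctly identified this step as the crux, but the route you sketch through it is the wrong one.

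Your Step~4 (low-frequency condition via the $L^2$ operator-norm bound on $A_k^{-1}$ from~\eqref{e:iWantALabel}) is correct and matches the paper. Your Step~5 (the final reduction via Lemma~\ref{lem:QOabs}) is also correct and standard.
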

The assumptions in Theorem~\ref{t:qualitative1} are satisfied, for example, by two aligned squares with rounded corners (since \eqref{e:iWantALabel} holds by the bound \eqref{e:fourDiamondsBound} proved in \cite{ChSpGiSm:20}).

There is an analogous result for $A_{k}'$, but it requires slightly stronger hypotheses that are harder state in a non-technical way so we postpone it to Theorem~\ref{t:qualitativeAp}. 

\subsubsection{Pollution for the Dirichlet and Neumann BIEs on the unit disk}

\begin{theorem}[Pollution for the Neumann BIEs on the unit disk]
\label{t:neumannDisk}
Let $\Omega_-= B(0,1)\subset \mathbb{R}^2$.
For all $C>0$, $k_n\to \infty$,  and  $C^{-1}\leq \eta_N\leq C$ there are $f_n\in L^2(\Gamma)$ such that the Galerkin solution, $v_h$. to~\eqref{e:basicForm} with $\operator$ given by $B_{k_n,\rm{reg}}$ or $B_{k_n,\rm{reg}}'$ satisfies
\begin{equation*}
\frac{\|v_h-v\|_{\LtG}}{\|(I-P_{V_h}^G)v\|_{\LtG}}\geq 
\frac 12 + 
c \begin{cases} (hk_n)^{-p-1}, &  1\leq (hk_n)^{2(p+1)}k_n^{\frac{1}{3}},\\
(hk_n)^{p+1}k_n^{\frac{1}{3}},&k_n^{\frac{1}{3}}(hk_n)^{2(p+1)}\leq 1.
\end{cases}
\end{equation*}
In particular, Theorem~\ref{thm:PG} is optimal in this case.
\end{theorem}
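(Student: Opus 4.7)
} The plan is to apply Theorem~\ref{t:pollutionIntro} (quasimodes imply pollution) with $\operator = B_{k,\rm{reg}}$ (respectively $B_{k,\rm{reg}}'$) and with $\alpha_n$ of order $k_n^{-1/3}$. Since $\|B_{k,\rm{reg}}^{-1}\|_{\LtGt}\sim k^{1/3}$ on the unit disk by \cite[Theorem~2.3]{GaMaSp:21N}, choosing $\alpha_n\asymp k_n^{-1/3}$ saturates the growth of the solution operator, and substituting $\alpha_n^{-1}\asymp k_n^{1/3}$ into \eqref{e:thePollution} produces exactly the two cases in the statement. The final claim that Theorem~\ref{thm:PG} is optimal then follows from the discussion around \eqref{e:itIsOptimal}.

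To build the quasimode, I would exploit the rotational symmetry of $\Gamma = S^1$. Setting $e_m(\theta):= (2\pi)^{-1/2} e^{im\theta}$, the operators $S_k, \DL_k, \DL_k', H_k, S_{\ri k}$ are all simultaneously diagonalised in the basis $\{e_m\}_{m\in\mathbb{Z}}$, with eigenvalues given by explicit products of Bessel and Hankel functions. Consequently $B_{k,\rm{reg}}$ is diagonal, with eigenvalues $\lambda_m(k)$ that I can read off component-wise; the same holds for $B_{k,\rm{reg}}'$, and $(B_{k,\rm{reg}}')^* = B_{k,\rm{reg}}$ lets me treat both operators uniformly. The key analytic step is to pick $m_n$ with $m_n/k_n \to 1$ (in the transition region $m_n = k_n + O(k_n^{1/3})$, where uniform Airy asymptotics for $J_m$ and $H^{(1)}_m$ apply) so that $|\lambda_{m_n}(k_n)|\leq C k_n^{-1/3}$. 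This is exactly the whispering-gallery construction already present in \cite[Theorem~2.3]{GaMaSp:21N} proving the lower bound $\|B_{k,\rm{reg}}^{-1}\|\gtrsim k^{1/3}$, and I would quote that construction. Setting $u_n := e_{m_n}$ and $f_n := \lambda_{m_n}(k_n) e_{m_n}$ gives $\|u_n\|_{\LtG}=1$ and $\|f_n\|_{\LtG}\leq C k_n^{-1/3} =:\alpha_n$, confirming \eqref{e:quasiTime}.

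Verifying the remaining hypotheses of Theorem~\ref{t:pollutionIntro} is essentially free thanks to $f_n$ being a pure Fourier mode on $\Gamma$. Since $-\Delta_\Gamma e_{m_n} = m_n^2 e_{m_n}$ and $m_n/k_n\to 1$, for any fixed $\Xi_0>1$ and any $0<\e_0<1$ (chosen sufficiently small but $\e_0$ independent of $n$) we have, for all large $n$, that $m_n^2/k_n^2\leq 1<\Xi_0^2$ and $m_n^2/(\e_0 k_n)^2$ lies outside the support of $\chi$. Hence
\beqs
(1-\chi(\Xi_0^{-2}k_n^{-2}\Delta_\Gamma))f_n=0 \qquad \text{and}\qquad \chi(\e_0^{-2}k_n^{-2}\Delta_\Gamma)f_n=0,
\eeqs
so \eqref{e:oscillating} holds with $\beta_n=0$, and in particular $\beta_n\|\operator^{-1}\|\leq c$ automatically. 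For the extra condition \eqref{e:normalOk1} needed when $p\geq 1$, the same symmetry gives $(\operator^*)^{-1}u_n = \overline{\lambda_{m_n}(k_n)}^{-1}e_{m_n}$, which is supported in Fourier mode $|m_n|\to\infty$; hence $\mathbf{1}_{(0,k_0^2]}(-\Delta_\Gamma)(\operator^*)^{-1}u_n = 0$ for all large $n$ and \eqref{e:normalOk1} is satisfied vacuously.

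With all hypotheses in place, Theorem~\ref{t:pollutionIntro} applied at $\alpha_n \asymp k_n^{-1/3}$ yields the stated lower bound for $\|(I+P^G_{V_h}L(k_n))^{-1}(I-P^G_{V_h})\|_{\LtG\to\LtG}$, and standard manipulation (as in the bullet after Theorem~\ref{t:pollutionIntro}, using $v - v_h = (I+P^G_{V_h}L)^{-1}(I-P^G_{V_h})(I-P^G_{V_h})v$) converts this into the relative-error bound claimed for the Galerkin solution with data $f := \operator u_n$. The main obstacle is really just the quasimode construction with the sharp rate $k^{-1/3}$; this is genuinely nontrivial analytically (uniform Airy asymptotics of Bessel functions in the grazing regime $m\approx k$), but since it is already available in the literature, the proof reduces to checking that the resulting quasimode has the right frequency-localisation properties on $\Gamma$, and the disk's symmetry makes this verification essentially automatic. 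The argument for $B_{k,\rm{reg}}'$ is identical, since it shares the eigenvalues $\lambda_m(k)$.
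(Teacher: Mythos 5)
Your proposal is correct and follows the same route as the paper: use the rotational symmetry of the disk to diagonalize $B_{k,\mathrm{reg}}$ and $B_{k,\mathrm{reg}}'$ in the Fourier basis, locate an integer $m_n$ near the grazing transition $m_n\approx k_n$ so that the eigenvalue $\mu_{m_n}(k_n)$ is $O(k_n^{-1/3})$, take $u_n=e_{m_n}$, $f_n=\mu_{m_n}e_{m_n}$ as the quasimode pair, check the frequency-localisation hypotheses of Theorem~\ref{t:pollutionIntro} (trivial because $f_n$ and $(\operator^*)^{-1}u_n$ are single Fourier modes), and conclude from \eqref{e:thePollution} with $\alpha_n\asymp k_n^{-1/3}$.

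Two small points worth flagging. First, you state ``$(B_{k,\mathrm{reg}}')^*=B_{k,\mathrm{reg}}$''; this holds only in the \emph{real-valued} $L^2(\Gamma)$ pairing, not the complex one used in Theorem~\ref{t:pollutionIntro}. It is harmless because all you need is that $(\operator^*)^{-1}e_{m_n}$ is a scalar multiple of $e_{m_n}$, which is automatic since both $\Breg$ and $\Bregp$ are Fourier multipliers on the circle with the same eigenvalues $\mu_m$. Second, you propose to ``quote'' the quasimode construction from \cite[Theorem~2.3]{GaMaSp:21N}; that reference supplies the norm bound $\|\Breg^{-1}\|\gtrsim k^{1/3}$ but, to apply Theorem~\ref{t:pollutionIntro}, you additionally need that the corresponding near-kernel mode has $|m_n|\asymp k_n$, so that the conditions \eqref{e:oscillating} with a fixed $\Xi_0>1$, $\e_0>0$ can be met uniformly in $n$ and that for \emph{each} $k_n$ there is an integer $m_n$ making $|\mu_{m_n}(k_n)|\le Ck_n^{-1/3}$. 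The paper closes this by the explicit Bessel/Airy computation in Lemma~\ref{l:neumannBIEDisk} together with the choice $z_j=\zeta^{-1}(-k^{-2/3}\zeta_j)$ and $\bigl||m|-k/z_j\bigr|\le\tfrac12$; you would need to either reproduce that estimate or argue that $|\mu_m(k)|\gtrsim 1$ away from the transition $|m|\sim k$, which forces the small-eigenvalue mode into the required range. Both gaps are easy to fill and the overall strategy, choice of $\alpha_n$, and verification of \eqref{e:oscillating}--\eqref{e:normalOk1} are exactly as in the paper.
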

\begin{remark}
We expect that the more sophisticated microlocal description of boundary layer operators in~\cite{Ga:19} can be used to extend Theorem~\ref{t:neumannDisk} 
to any domain with smooth boundary in any dimension.
\end{remark}

Next, for the Dirichlet problem, we demonstrate pollution for any $k^{-N}\leq \eta_D=o(k)$. 
\begin{theorem}\mythmname{Pollution for the Dirichlet BIEs on the unit disk with non-standard coupling parameters}
\label{t:dirichletDisk}
Let $\Omega_-= B(0,1)\subset \mathbb{R}^2$. 
There is $k_n\to \infty$, such that for all $N>0$, $C>0$, $k^{-N}\leq \eta_D\leq Ck$  there are $f_n\in L^2(\Gamma)$ such that the Galerkin solution, $v_h$. to~\eqref{e:basicForm} with $\operator$ given by $A_{k_n}$ or $A_{k_n}'$ satisfies
\begin{equation*}
\frac{\|v_h-v\|_{\LtG}}{\|(I-P_{V_h}^G)v\|_{\LtG}}\geq 
\frac 12 + 
c \begin{cases} (hk_n)^{-p-1}, &  1\leq (hk_n)^{2(p+1)}k_n\eta_n^{-1},\\
(hk_n)^{p+1}k_n\eta_n^{-1},&k_n\eta_n^{-1}(hk_n)^{2(p+1)}\leq 1.
\end{cases}
\end{equation*}
In particular, Theorem~\ref{thm:PG} is optimal in this case.
\end{theorem}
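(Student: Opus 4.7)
The plan is to apply Theorem~\ref{t:pollutionIntro} to a quasimode built from a single Fourier mode on the unit circle $\Gamma$. Rotation invariance makes $A_k'$ diagonal in $e_n(\theta):=e^{in\theta}/\sqrt{2\pi}$; using the Graf addition theorem and the Wronskian $J_n H_n^{(1)\prime} - J_n' H_n^{(1)} = 2i/(\pi k)$ one computes
\begin{equation*}
A_k' e_n = \lambda_n(k)\, e_n, \qquad \lambda_n(k) := \tfrac{\pi}{2}\, H_n^{(1)}(k)\bigl[\,ik\, J_n'(k) + \eta_D J_n(k)\,\bigr],
\end{equation*}
and the eigenvalues of $A_k$ have the same moduli by transposition and $J_{-n}=(-1)^n J_n$. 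So proving pollution reduces to finding a sequence $(k_n, m_n)$ with $|m_n|\asymp k_n$ along which $|\lambda_{m_n}(k_n)|$ is small.

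First I fix the parameters $\Xi_0>1$, $0<\e<1/10$, $\e_0\in(0,\Xi_0)$ of Theorem~\ref{t:pollutionIntro} together with a ratio $\gamma$ satisfying $\e_0(1+\e)<\gamma<\Xi_0$ (e.g.\ $\gamma=1/2$, $\e_0=1/4$, any $\Xi_0>1$), and set $m_k:=\lfloor\gamma k\rfloor$. The Debye asymptotics, uniform for $n/k$ in a compact subset of $(0,1)$, give envelopes $|J_n|\asymp|Y_n|\asymp|H_n^{(1)}|\asymp(k^2-n^2)^{-1/4}$ and phase derivative $\sqrt{k^2-n^2}/k\asymp 1$; hence $k\mapsto J_{m_k}(k)$ has critical points $k_n\to\infty$ spaced $O(1)$ apart. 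Along this sequence, $J_{m_{k_n}}'(k_n)=0$ and $|J_{m_{k_n}}(k_n)|\asymp|H_{m_{k_n}}^{(1)}(k_n)|\asymp k_n^{-1/2}$, whence
\begin{equation*}
|\lambda_{m_{k_n}}(k_n)| = \tfrac{\pi}{2}\,\eta_D\,|H_{m_{k_n}}^{(1)}(k_n)|\,|J_{m_{k_n}}(k_n)|\asymp\eta_D/k_n.
\end{equation*}
The construction of $k_n$ depends only on Bessel zeros, not on $\eta_D$, explaining the universal quantifier on $k_n$ in the theorem. Combining the same asymptotics with an elementary trigonometric minimisation of $\eta_D^2 J_n^2+k^2(J_n')^2$ over the oscillation phase, and with the exponential decay $J_n(k)=O((ek/(2n))^n)$ for $n\gg k$, gives the matching upper bound $\|(A_k')^{-1}\|_{\LtGt}\le Ck/\eta_D\le Ck^{N+1}$, so Assumption~\ref{ass:polyboundintro} holds with $\cJ=\emptyset$.

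To close the argument, set $u_n:=e_{m_{k_n}}$ and $f_n:=A_{k_n}'u_n = \lambda_{m_{k_n}}(k_n)u_n$, so $\|u_n\|_{\LtG}=1$ and $\alpha_n:=\|f_n\|_{\LtG}\asymp\eta_D/k_n$. Because $-\Delta_\Gamma$ acts on $f_n$ by $m_{k_n}^2$ with $m_{k_n}/k_n\to\gamma$, my choice of $\gamma,\e_0,\Xi_0$ forces $\chi(-\Xi_0^{-2}m_{k_n}^2/k_n^2)=1$ and $\chi(-\e_0^{-2}m_{k_n}^2/k_n^2)=0$ for every admissible $\chi$, so both localisation conditions in \eqref{e:oscillating} hold with $\beta_n$ arbitrarily small; in particular $\beta_n\|(A_{k_n}')^{-1}\|\le c$ is automatic. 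For $p\geq 1$, hypothesis \eqref{e:normalOk1} is likewise automatic because $1_{(0,k_0^2]}(-\Delta_\Gamma)$ annihilates $u_n$, and hence $(A^*)^{-1}u_n$, once $m_{k_n}^2>k_0^2$. Theorem~\ref{t:pollutionIntro} then gives \eqref{e:thePollution} with $\alpha_n^{-1}\asymp k_n/\eta_D$, and $v-v_h=(I+P_{V_h}^G\pert)^{-1}(I-P_{V_h}^G)v$ applied to $v=u_n$ translates this into the stated lower bound on $\|v-v_h\|_{\LtG}/\|(I-P_{V_h}^G)v\|_{\LtG}$; the argument for $A_k$ is identical up to complex conjugation. \emph{The main obstacle} is making the Debye analysis quantitative enough to both locate the critical points $k_n$ of $k\mapsto J_{m_k}(k)$ (with $m_k$ varying with $k$) with sharp two-sided envelope sizes for $|J_{m_{k_n}}(k_n)|$ and $|H_{m_{k_n}}^{(1)}(k_n)|$, and deliver the matching upper bound $\|(A_k')^{-1}\|\lesssim k/\eta_D$, the latter requiring a separate treatment of the evanescent regime $n>k$.
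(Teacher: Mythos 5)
Your strategy is the same as the paper's: diagonalize $A_k'$ and $A_k$ on the disk (as in~\eqref{e:diskOperators}), exhibit a single angular mode $e^{imt}$ with $m/k$ bounded away from $0$ and $1$ and $|\lambda_m|$ small, and apply Theorem~\ref{t:pollutionIntro}; your verification of the localisation conditions~\eqref{e:oscillating} by placing $\gamma\in(\e_0(1+\e),\Xi_0)$ and of~\eqref{e:normalOk1} via $1_{(0,k_0^2]}(-\Delta_\Gamma)e_m=0$ is also the paper's. The paper's Lemma~\ref{l:dirichletBIEDisk} makes $\lambda_m$ small by forcing $|m|^{2/3}\zeta(k/|m|)$ near a zero $-\zeta_j'$ of $Ai'$ through the uniform Olver asymptotics; that leaves an $O(k|m|^{-2})=O(k^{-1})$ residual from the subleading term in the asymptotic, so the lemma delivers $|\lambda_m|\lesssim\e+k^{-1}+|\eta|k^{-1}$. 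Your idea of instead taking $k_n$ to be \emph{exact} zeros of $J_m'(\cdot)$ kills the $ikH_m^{(1)}J_m'$ term identically, leaving $\lambda_m=\tfrac{\pi}{2}\eta H_m^{(1)}(k_n)J_m(k_n)$ with $|H_m^{(1)}J_m|\asymp k_n^{-1}$ on the two-sided Debye envelope, hence $\alpha_n\asymp\eta_D/k_n$ with no residual — for the sub-unit $\eta_D$ the theorem allows, that is actually a cleaner route than the paper's.

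There is, however, a technical bug in your implementation. Setting $m_k=\lfloor\gamma k\rfloor$ and looking at critical points of $k\mapsto J_{m_k}(k)$ does \emph{not} give $J_{m_{k_n}}'(k_n)=0$: the order varies with $k$, so the relevant derivative is $\gamma\,\partial_m J_m + J_m'$, not $J_m'$. On the sub-intervals of length $1/\gamma$ where $m_k$ is constant you do recover zeros of $J_m'$, but the Debye zero spacing $\pi/\sqrt{1-\gamma^2}$ can exceed $1/\gamma$, so a critical point need not land inside the interval where $m_k=m$. The fix is to invert the quantifiers as the paper does: pick $m\to\infty$ first and then choose $k_n$ a zero of $J_m'$ with $m/k_n\in(\e_0(1+\e),\Xi_0)$ (such $k_n$ exist by the Debye count of zeros of $J_m'$ on $[m,C m]$); the rest of your argument then goes through verbatim. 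You should also replace the hand-waved ``elementary trigonometric minimisation plus exponential decay for $n\gg k$'' with the bound the paper actually invokes, namely $\|(A_k')^{-1}\|_{\LtGt}\lesssim|\eta_D|^{-1}$ for the disk, which follows from~\eqref{e:BIEinverse} together with the $\eta$-explicit bound on $P_{\rm ItD}^{-,\eta}$ from \cite[Theorem~4.3]{ChMo:08} and the nontrapping resolvent estimate for $P_{\rm DtN}^+$.
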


\begin{remark}
The specific examples in Theorems~\ref{t:diamond},~\ref{t:qualitative1},~\ref{t:neumannDisk}, and~\ref{t:dirichletDisk} show that Theorem~\ref{t:pollutionIntro} is effective in demonstrating pollution in concrete situations. Moreover, although it is difficult to prove a general statement for all domains simultaneously, given a trapping domain it is usually possible to construct quasimodes that, together with Theorem~\ref{t:pollutionIntro} demonstrate pollution.
\end{remark}

\bre
When $\Gamma$ is the sphere and $\eta = k^{-2/3}$ \cite{BaSa:07} 
conjecture that the Galerkin method applied to $A_k$ and $A_k'$ with piecewise polynomials does not suffer from the pollution effect.
More precisely, \cite[Proposition 3.14]{BaSa:07} show that if a specific bound on a combination of Bessel and Hankel functions holds (\cite[Equation 3.20]{BaSa:07}) then the Galerkin solution is k-uniformly quasioptimal when hk is sufficiently small. 

In contrast, Theorem \ref{t:dirichletDisk} proves that when $\eta=k^{-2/3}$, the Galerkin method applied to $A_k$ and $A_k'$ with piecewise polynomials \emph{does} suffer from the pollution effect. Thus the analogue of the bound
\cite[Equation 3.20]{BaSa:07} for the disk cannot hold.
\ere

\bre
The recent preprint \cite{GiMeAn:25} explores pollution numerically for a variety of Helmholtz integral operators on the disk.
\ere

\begin{figure}[htbp]
\begin{center}
Piecewise constant elements

\begin{tikzpicture}
\node at(0,0){\includegraphics[width=.8\textwidth]{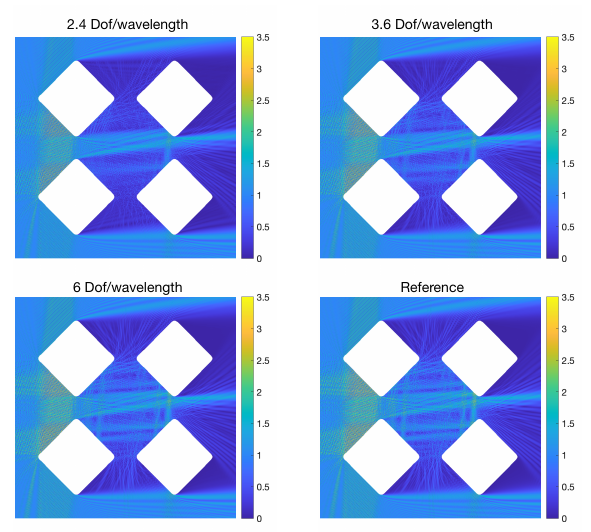}};
\end{tikzpicture}
\end{center}
\caption{\label{f:bReallyCoolPicture} 
The plots depict the absolute value of the total field at $k=40\sqrt{2}$ when the plane wave $e^{ik\langle \omega,x\rangle}$, with $\omega=(\cos (5\pi/180), \sin(5\pi/180))$, is incident on a sound-soft domain consisting of four nearly square obstacles.
The plot shows the Galerkin solutions for the indirect BIE at 2.4 (top left), 3.6 (top right), and 6 (bottom left) points per wavelength and piecewise constant elements (i.e., $p=0$), with the reference solution (bottom right) computed with $p=11$.
These numbers of points per wavelength correspond to points before, on, and after the peak of the quasioptimality constant in Figure~\ref{f:aReallyCoolPicture}. Many of the features of the Galerkin and reference plots are similar. However, in the Galerkin solutions with low numbers of points per wavelength the trapped rays are much less pronounced. 
The plots for piecewise linear elements (i.e., $p=1$) look qualitatively similar.}
\end{figure}

\begin{figure}[htbp]
\begin{center}
\begin{tikzpicture}
\node at(0,0){\includegraphics[width=1\textwidth]{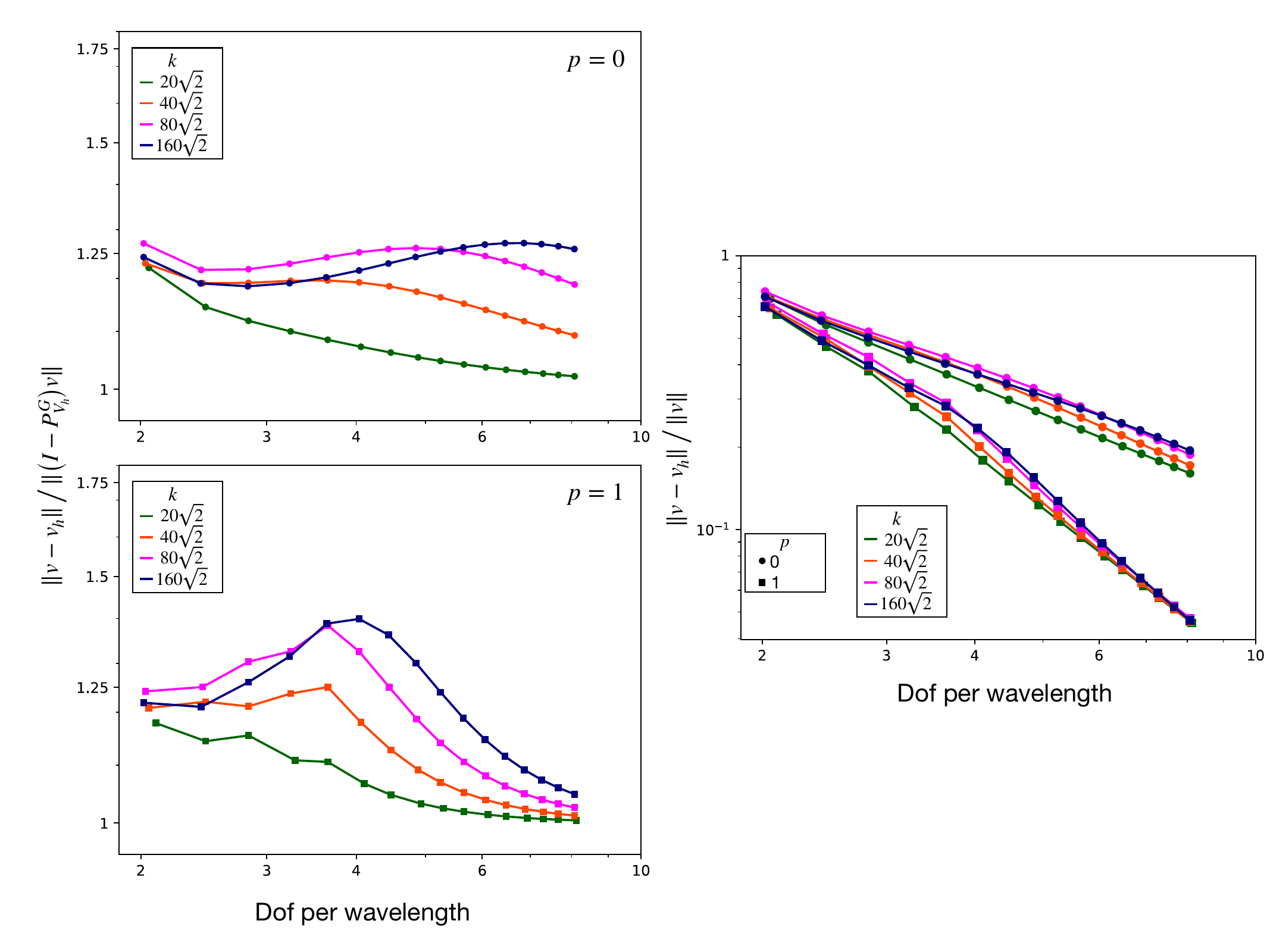}};
\end{tikzpicture}
\end{center}
\caption{\label{f:aReallyCoolPicture} The left plots shows the implied quasioptimality constant for piecewise constant (top) and piecewise linear (bottom) elements when the plane wave $e^{ik\langle \omega,x\rangle}$ with $\omega=(\cos (5\pi/180), \sin(5\pi/180))$  is incident on a sound-soft domain consisting of four nearly square obstacles (see Figure~\ref{f:bReallyCoolPicture}), with this problem solved using the indirect BIE. The right plot shows the corresponding $L^2$ relative error in the density. The reference and computed scattered solutions at $k=40\sqrt{2}$ are shown in Figure~\ref{f:bReallyCoolPicture}. The presence of pollution in this example can be seen from both plots:~in the left hand plot, one observes a peak in the implied quasioptimality constant that shifts up and to the right as $k$ increases. In the right hand plot, the error increases for a fixed number of points per wavelength as $k$ increases. }
\end{figure}

%

\begin{figure}[htbp]
\begin{center}
Piecewise constant elements

\begin{tikzpicture}
\node at(0,0){\includegraphics[width=.8\textwidth]{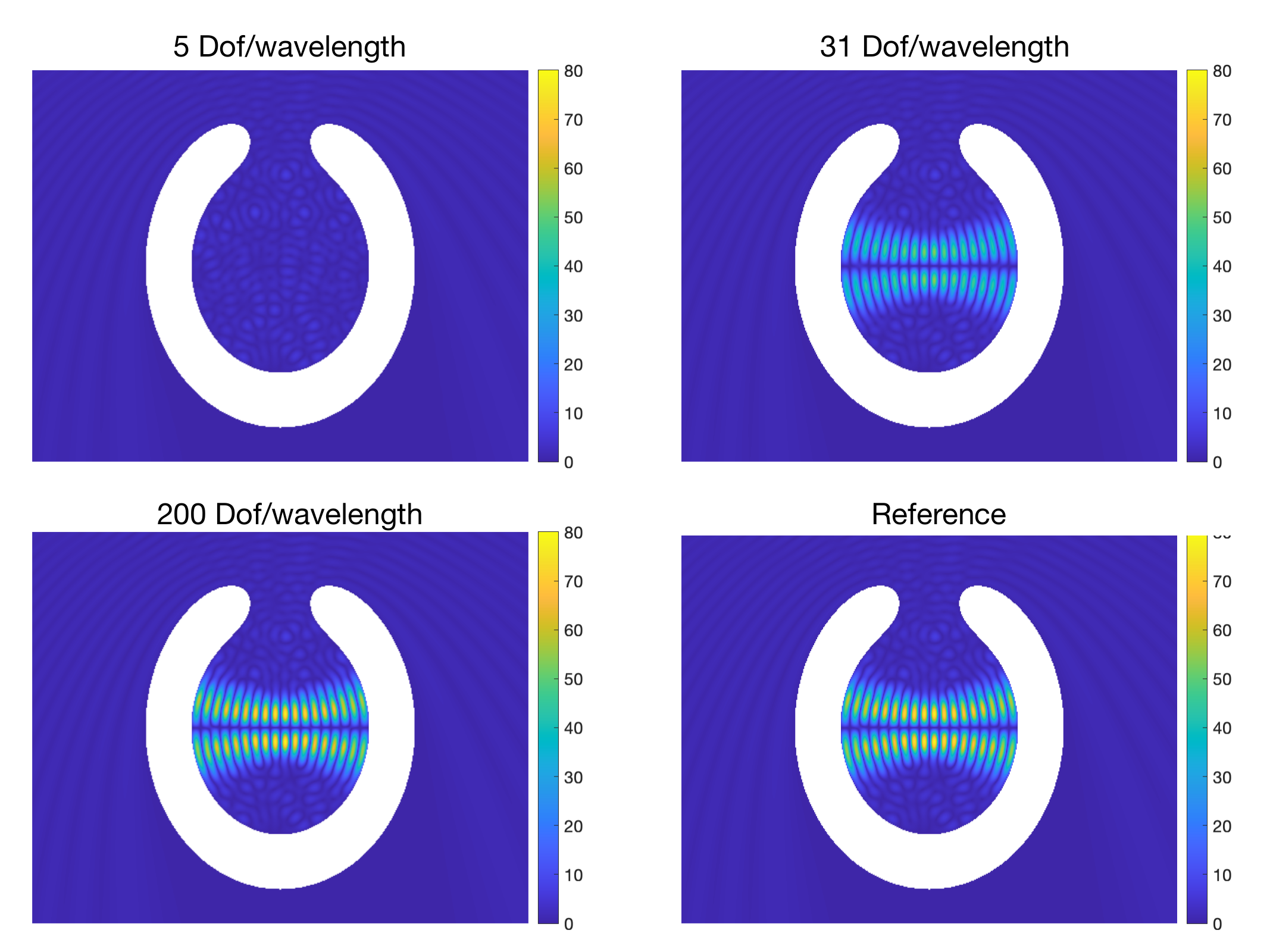}};
\end{tikzpicture}
\bigskip

Piecewise linear elements

\begin{tikzpicture}
\node at(0,0){\includegraphics[width=.8\textwidth]{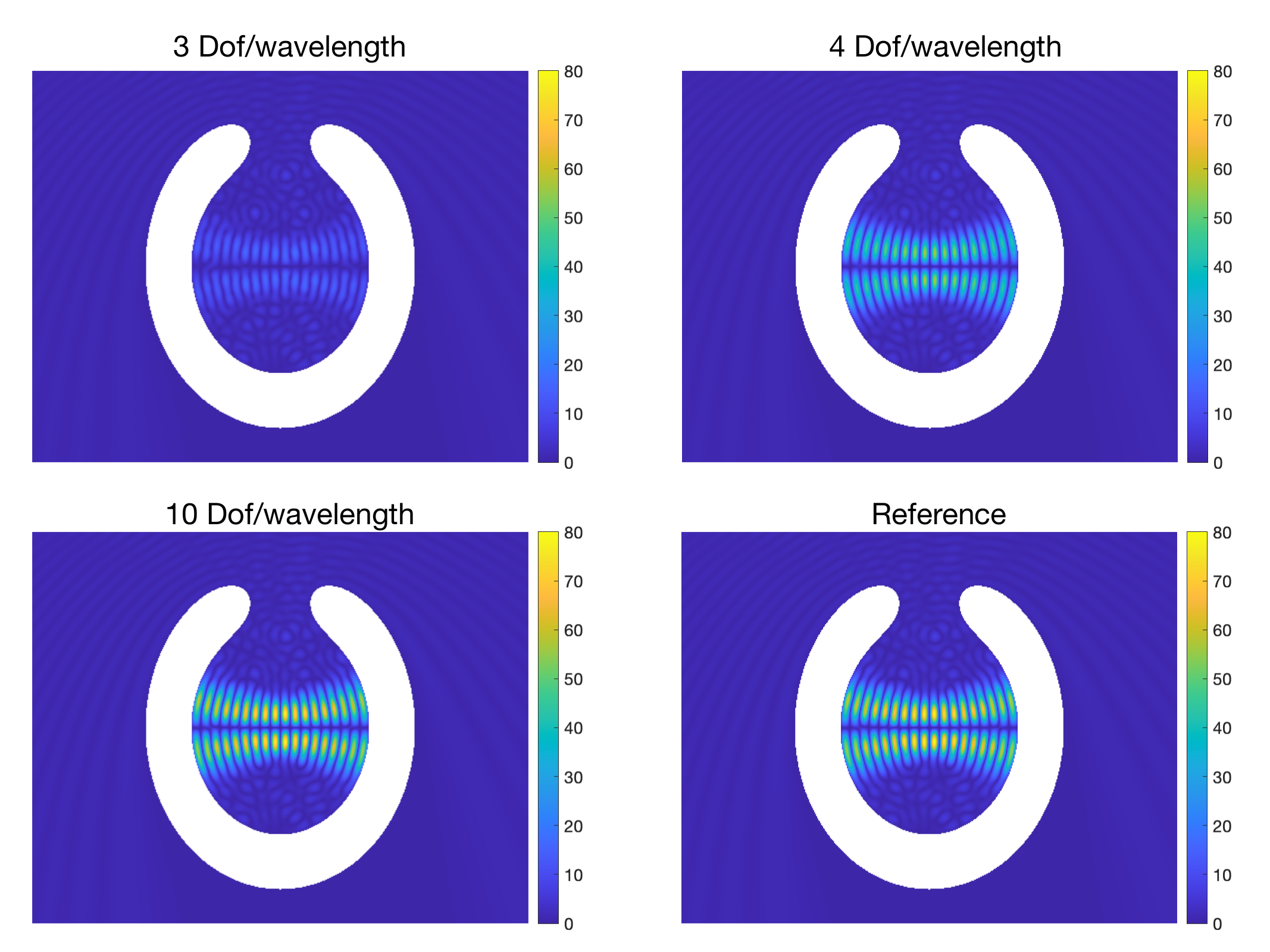}};
\end{tikzpicture}
\end{center}
\caption{\label{f:cavityReallyCoolPicture} The plots show the absolute value of the total field for the reference solution (bottom right) at $k=37.213$ when the plane wave $e^{ik\langle \omega,x\rangle}$ with $\omega=(\cos (-\pi/2+0.2), \sin(-\pi/2+0.2))$ is incident on a sound-soft cavity as well as the corresponding Galerkin solutions (for the indirect BIE) at various numbers of points per wavelength with both piecewise constant (top) and piecewise linear (bottom) elements. These numbers of points per wavelength correspond to points before, on, and after the peak of the quasioptimality constant in Figure~\ref{f:cavityPollution}. At low numbers of points per wavelength the Galerkin solutions fail almost entirely to resolve the behaviour of the total field.}
\end{figure}

\begin{figure}[htbp]
\begin{center}
\begin{tikzpicture}
\node at(0,0){\includegraphics[width=1\textwidth]{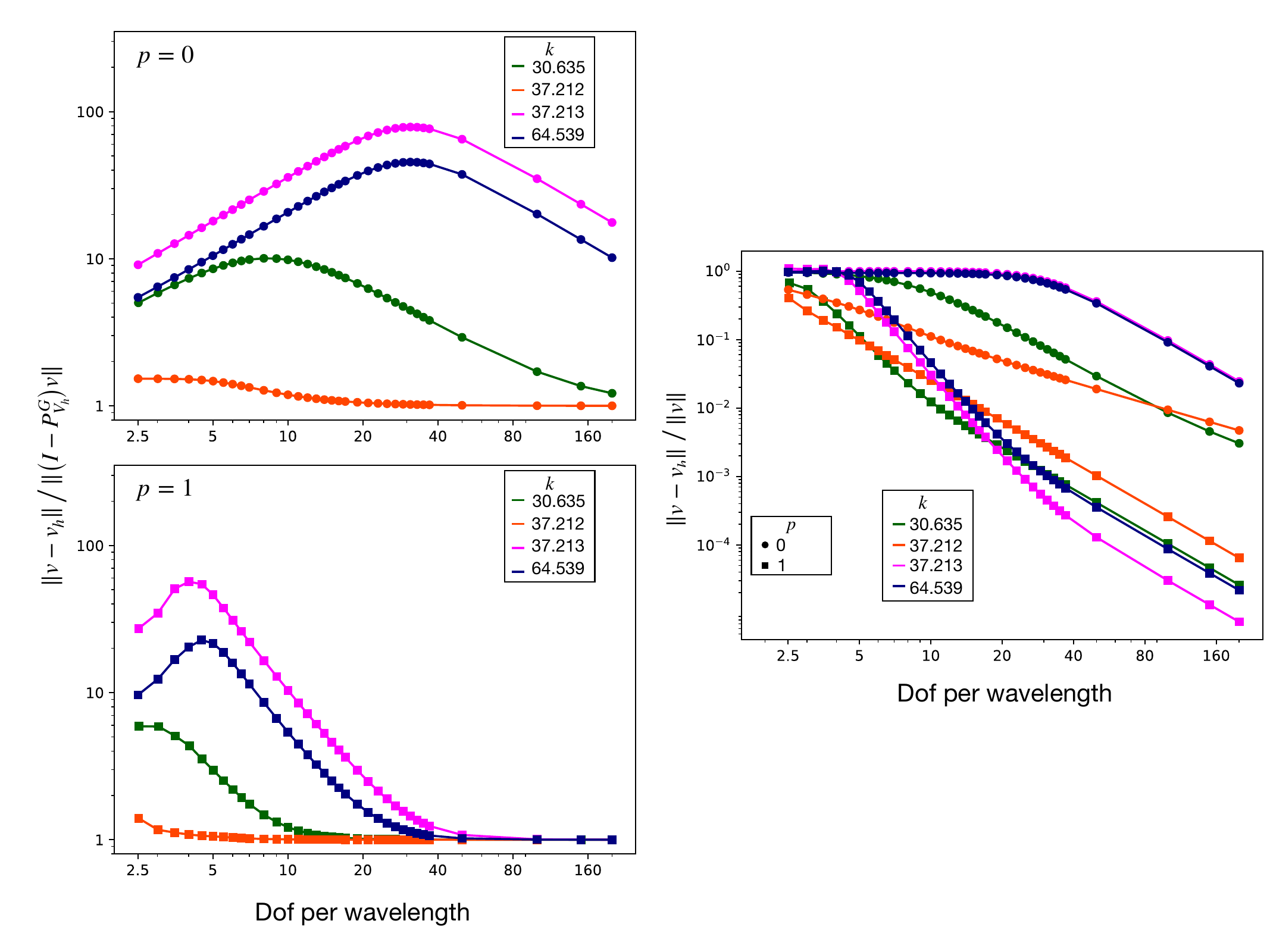}};
\end{tikzpicture}
\end{center}
\caption{\label{f:cavityPollution}The left plots shows the implied quasioptimality constant for piecewise constant (top) and piecewise linear (bottom) elements when the plane wave $e^{ik\langle \omega,x\rangle}$ with $\omega=(\cos (-\pi/2+0.2), \sin(-\pi/2+0.2))$  is incident on a sound-soft cavity domain (see Figure~\ref{f:cavityReallyCoolPicture}),  with this problem solved using the indirect BIE. The right plot shows the corresponding $L^2$ relative error in the density. 
As in Figure \ref{f:aReallyCoolPicture}, the pollution effect
can be seen in both plots.
While the magnitude of the effect can depend very strongly on the frequency (compare the curves at $k=37.212$ and $k=37.213$), the qualitative behavior is much less sensitive to the frequency chosen.}
\end{figure}

\subsection{Sufficient conditions for quasi-optimality with trigonometric-polynomial approximation spaces}

Let $d=2$ and assume that $\Oi$ is connected (the results can be extended to multiply-connected domains in a straightforward way).
Without loss of generality, assume that $\Gamma$ is given by $x=\gamma(t)$ for $\gamma: \mathbb{R}/2\pi \mathbb{Z} \to \mathbb{R}^2$ a smooth curve with $0<|\dot\gamma(t)| \leq c_{\max}$.
The subspace $\trig_N$ is defined in the $t$-variable by
\beq\label{e:V_N_trig}
\trig_N := {\rm span} \big\{ \exp(\ri mt) : |m|\leq N\big\};
\eeq
note that the dimension of $\trig_N$ is then $2N+1$.
We consider the collocation projection $P_{\trig_N}^C$ with evenly-spaced points in the $t$ variable; i.e.,
\beq\label{e:trig_points}
t_j = 2 \pi j/(2N+1), \quad j=0,1, \ldots, 2N.
\eeq
We assume that there are an odd number of points since the explicit expression for $P_{\trig_N}^Cv$ is simpler in this case (see \eqref{e:PNC}), but the results below also hold when there are an even number of points. In both cases the points are unisolvent by, e.g., \cite[\S11.3]{Kr:14}, \cite[\S3.2.2]{At:97}.

In the case of trigonometric polynomials, we change the $L^2$ norm used to define $P_{\trig_N}^G$. In particular, rather than the measure induced from $\mathbb{R}^2$, we use in~\eqref{e:Galerkin_def} the norm
\beq\label{e:newNormGamma}
\|u\|_{L^2(\Gamma)}^2:=\int_0^{2\pi} |(u\circ\gamma)(t)|^2dt.
\eeq

\begin{theorem}[Galerkin method with trigonometric polynomials]\label{thm:FG}
Suppose $d=2$, $\operator$ is given by one of~\eqref{e:DBIEs} or~\eqref{e:NBIEs},  and Assumptions~\ref{ass:parameters} and~\ref{ass:polyboundintro} hold.  Let $V_N$ be given by \eqref{e:V_N_trig}, $k_0>0$, and $\e>0$. Then there exist $C_1,C_2>0$, such that if $\Xi\geq 1$,
\beq\label{e:FGthres}
N\geq (\Xi c_{\max}+\e) k
\eeq
and $k\geq k_0$, $k\notin \cJ$, then the Galerkin solution, $v_N$ to~\eqref{e:basicForm} exists, is unique, and satisfies the quasi-optimal error bound
\beq\label{e:MR1}
\N{v-v_N}_\LtG\leq \big( 1+C_1 \Xi^{-1} +C_2k^{-1}\big) \N{(I-P_{\trig_N}^G)v}_{\LtG}.
\eeq
Moreover, if the right-hand side $f$ corresponds to plane-wave scattering (i.e., is given as in Theorem \ref{thm:BIEs}), then
\beq\label{e:MR2}
\frac{\N{v-v_N}_\LtG}{\N{v}_{\LtG}} = O(k^{-\infty}).
\eeq
\end{theorem}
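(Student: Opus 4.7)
The plan is to combine standard projection-method theory with a semiclassical-microlocal localisation of the operators $L$ in \eqref{e:basicForm}. Writing $w:=v-v_N$ and using $c_0^{-1}\operator=I+L$, the Galerkin equation yields
$$
(I+P_{\trig_N}^G L)w=(I-P_{\trig_N}^G)v\quad\text{and}\quad I+P_{\trig_N}^G L=c_0^{-1}\operator\bigl[I-c_0\operator^{-1}(I-P_{\trig_N}^G)L\bigr].
$$
Existence, uniqueness and a preliminary quasi-optimality bound for $v_N$ will then follow by a Neumann-series argument as soon as $c_0\|\operator^{-1}\|_{L^2\to L^2}\|(I-P_{\trig_N}^G)L\|_{L^2\to L^2}$ is small; Assumption~\ref{ass:polyboundintro} gives $\|\operator^{-1}\|\leq Ck^{P_{\rm inv}}$, so everything reduces to showing that $\|(I-P_{\trig_N}^G)L\|_{L^2\to L^2}$ (and, for the refined constant in \eqref{e:MR1}, also $\|L(I-P_{\trig_N}^G)\|_{L^2\to L^2}$) decays faster than any polynomial in $k$ once $N\geq(\Xi c_{\max}+\e)k$.

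The central input for this decay is that, in the parametrisation $t\mapsto\gamma(t)$ with the norm \eqref{e:newNormGamma}, the projection $P_{\trig_N}^G$ is exactly the Fourier truncation to $|m|\leq N$ in $t$, and each operator $L$ in \eqref{e:DBIEs}--\eqref{e:NBIEs} (minus its identity multiple) is a semiclassical pseudodifferential operator of order $0$ on $\Gamma$ at scale $h=1/k$ whose full symbol is rapidly decaying outside $\{|\xi|_g\leq 1\}\subset T^*\Gamma$. This is standard for $A_k,A_k'$ (see \cite{Ga:19,GaSp:22}) and extends to $B_{k,{\rm reg}},B_{k,{\rm reg}}'$ once $S_{\ri k}H_k$ is written as an order-$0$ semiclassical pseudodifferential operator via the product calculus. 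Hence for a smooth $\psi$ equal to $1$ on $\{|\xi|_g\leq 1\}$ and supported in $\{|\xi|_g\leq 1+\eta\}$, $L=\psi(h|D'|_g)\,L\,\psi(h|D'|_g)+R$ with $\|R\|_{L^2\to L^2}=O(k^{-\infty})$.

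Translating the metric-space cutoff $\psi(h|D'|_g)$ into a Fourier cutoff in $t$ uses $ds=|\dot\gamma(t)|\,dt$ with $|\dot\gamma|\leq c_{\max}$: any function microlocally supported in $\{|\xi|_g\leq 1+\eta\}$ has $t$-Fourier coefficients decaying rapidly for $|m|>(1+\eta)c_{\max}k$. Choosing $\eta$ small enough that $(1+\eta)c_{\max}<\Xi c_{\max}+\e$ gives $\|(I-P_{\trig_N}^G)\psi(h|D'|_g)\|_{L^2\to L^2}=O(k^{-\infty})$ whenever $N\geq(\Xi c_{\max}+\e)k$, hence the desired bounds on both $(I-P_{\trig_N}^G)L$ and $L(I-P_{\trig_N}^G)$. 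To sharpen the preliminary constant to $1+C_1\Xi^{-1}+C_2k^{-1}$ as in \eqref{e:MR1}, I would avoid the crude Neumann bound on the whole of $L^2(\Gamma)$ and instead decompose $v-v_N=(I-P_{\trig_N}^G)v+P_{\trig_N}^G(v-v_N)$; orthogonality of $P_{\trig_N}^G$ gives $\|v-v_N\|^2=\|(I-P_{\trig_N}^G)v\|^2+\|P_{\trig_N}^G(v-v_N)\|^2$, while the Galerkin equation restricted to $\trig_N$ yields $P_{\trig_N}^G(v-v_N)=-(I+P_{\trig_N}^G L|_{\trig_N})^{-1}P_{\trig_N}^G L(I-P_{\trig_N}^G)v$; combined with the smallness of $L(I-P_{\trig_N}^G)$ this produces the stated constant, with $\Xi^{-1}$ arising from the width of the symbol cutoff and $k^{-1}$ from the first-order semiclassical remainder.

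Finally, for \eqref{e:MR2}, when $f$ corresponds to plane-wave scattering the density $v=\operator^{-1}f$ is a semiclassical Lagrangian distribution on $\Gamma$ whose wavefront set lies in $\{|\xi|_g\leq 1\}$, so its $t$-Fourier coefficients decay rapidly for $|m|>c_{\max}k$; the same argument as above gives $\|(I-P_{\trig_N}^G)v\|_{L^2}=O(k^{-\infty})\|v\|_{L^2}$, and inserting this into \eqref{e:MR1} delivers \eqref{e:MR2}. The main obstacle is the microlocal symbol analysis underlying the frequency localisation of $L$: while the result for $A_k,A_k'$ is essentially in the literature, making it quantitative enough to produce the explicit $\Xi^{-1}+k^{-1}$ correction, and extending it to $B_{k,{\rm reg}},B_{k,{\rm reg}}'$ (especially for the composition $S_{\ri k}H_k$), requires careful bookkeeping of subprincipal terms in the semiclassical calculus together with precise control of the $t$-Fourier tails of symbols supported on $\{|\xi|_g\leq 1+\eta\}$ when the parametric speed $|\dot\gamma|$ is nonconstant.
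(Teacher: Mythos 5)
Your proposal hinges on the claim that each perturbation $L$ in \eqref{e:DBIEs}--\eqref{e:NBIEs} can be written as $\psi(\hsc|D'|_g)L\psi(\hsc|D'|_g)+R$ with $\|R\|_{L^2\to L^2}=O(k^{-\infty})$, i.e.\ that the symbol of $L$ is rapidly decaying outside $\{|\xi'|_g\leq 1\}$. This is not true, and it is precisely what makes the theorem nontrivial. By Theorem~\ref{thm:HFSD}, the high-frequency parts of $S_k,\DL_k,\DL_k',H_k$ are genuine (non-residual) semiclassical pseudodifferential operators: e.g.\ $(I-\chi(|\hsc D'|_g^2))S_k$ lies in $\hsc\Psi_\hsc^{-1}(\Gamma)$ with principal symbol $(1-\chi(|\xi'|_g^2))/(2k\sqrt{|\xi'|_g^2-1})$, so after multiplying by $\eta_D\sim k$ the high-frequency part of $L$ is a bona fide order-$(-1)$ operator, \emph{not} $O(\hsc^\infty)_{\Psi^{-\infty}}$. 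Consequently $\|(I-P_{\trig_N}^G)L\|_{L^2\to L^2}$ is $O(1)$ in $k$ (not $O(k^{-\infty})$), and your Neumann-series step does not close.

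The correct mechanism for the $\Xi^{-1}$ term, which the paper uses, is qualitatively different: one observes that $(I-P_{\trig_N}^G)$ is (up to $O(\hsc^\infty)$ error, via Lemma~\ref{l:proj2Pseudo}) a frequency cut-off to $\{|\xi'|_g\gtrsim\Xi\}$, and that the high-frequency part of $L$ has principal symbol bounded by $C\langle\xi'\rangle^{-1}$; restricting to $|\xi'|_g\geq\Xi$ then gives an $L^2\to L^2$ norm $\lesssim\Xi^{-1}+\hsc$ by the semiclassical Calder\'on--Vaillancourt bound \eqref{e:pseudoL2}. No superalgebraic smallness of $L$ itself is used or available. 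You also need the identity \eqref{e:thething} together with the bound $\|(I+L)^{-1}(I-\chi(\hsc D'))\|_{L^2\to L^2}\leq 1+C\hsc$ from Lemma~\ref{lem:HFinverse} to get the leading constant $1$ in \eqref{e:MR1}; your orthogonality decomposition points in the same direction, but without the $\langle\xi'\rangle^{-1}$ decay mechanism it cannot produce the $\Xi^{-1}$ dependence. The microlocalisation argument you give for the plane-wave density $v$ in the final paragraph is, by contrast, essentially the right idea and matches Lemma~\ref{lem:HFv}.
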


\bre
We use the notation that $a= O(k^{-\infty})$ if $a$ decays faster than any algebraic power of $k$; i.e.,
given $k_0, N>0$, there exists $C(N,k_0)$ such that
\beqs
|a| \leq C(N,k_0) \,k^{-N} \quad \tfa k\geq k_0.
\eeqs
\ere

As in the case of piecewise polynomials, our estimates are weaker for collocation than Galerkin.
\begin{theorem}[Collocation method with trigonometric polynomials]\label{thm:FC}
Suppose $d=2$, $\operator$ is given by one of~\eqref{e:DBIEs} or~\eqref{e:NBIEs},  and Assumptions~\ref{ass:parameters} and~\ref{ass:polyboundintro} hold.  Let $V_N$ be given by \eqref{e:V_N_trig}, $P_{\trig_N}^C$ be the collocation projection (with $2N+1$ evenly-spaced points, as described above), $s>1/2$, and $k_0>0$. Then there exists $C>0$ such that if
\beq\label{e:FCthres}
N\geq (c_{\max} + \epsilon)k
\eeq
and $k\geq k_0$, $k\notin \cJ$, then the collocation solution, $v_N$, of~\eqref{e:basicForm} exists, is unique, and satisfies the quasi-optimal error bound
\beq\label{e:MR4}
\N{v-v_N}_{\Hsk}\leq \Big(\|I-P^C_{\trig_N}\|_{\Hshts}+C(k/N)+Ck^{-1}+C\rho\Big(\frac{k}{N}\Big)^{s}\Big) \N{(I-P_{\trig_N}^C)v}_{\Hsk}.
\eeq
Moreover, if the right-hand side $f$ corresponds to plane-wave scattering (i.e., is given as in Theorem \ref{thm:BIEs}), then
\beq\label{e:MR5}
\frac{\N{v-v_N}_\Hsk}{\N{v}_{\Hsk}} = O(k^{-\infty}).
\eeq

\end{theorem}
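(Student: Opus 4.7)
\bigskip
\noindent\textbf{Proof proposal (sketch).}

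My plan is to mirror the strategy used for the Galerkin case (Theorem \ref{thm:FG}), adapting it to accommodate the fact that $P_{\trig_N}^C$ is not an orthogonal projection and is only bounded on $\Hsh$ for $s>1/2$. The starting point is the standard projection-method error identity. Since $v_N\in V_N$ implies $P_{\trig_N}^C v_N=v_N$, subtracting the projected continuous equation $P_{\trig_N}^C v+P_{\trig_N}^C\pert v=c_0^{-1}P_{\trig_N}^C f$ from $v_N+P_{\trig_N}^C\pert v_N=c_0^{-1}P_{\trig_N}^C f$ gives
\begin{equation*}
(I+P_{\trig_N}^C \pert)(v_N-P_{\trig_N}^C v)=P_{\trig_N}^C\pert(I-P_{\trig_N}^C)v,
\end{equation*}
so that
\begin{equation*}
v-v_N=(I-P_{\trig_N}^C)v-(I+P_{\trig_N}^C\pert)^{-1}P_{\trig_N}^C\pert(I-P_{\trig_N}^C)v.
\end{equation*}
The task is therefore to establish uniform invertibility of $I+P_{\trig_N}^C\pert$ on $\Hsh$ under the threshold \eqref{e:FCthres}, and then to bound $\|P_{\trig_N}^C\pert(I-P_{\trig_N}^C)v\|_{\Hsh}$ by the right-hand side of \eqref{e:MR4} times $\|(I-P_{\trig_N}^C)v\|_{\Hsh}$.

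For the invertibility step, I would microlocally split $\pert$ using a smooth semiclassical cut-off $\chi(\hsc^2|D_t|^2)$ with $\hsc=1/k$ into a low-frequency part supported on $|m|\lesssim k$ and a high-frequency remainder. The high-frequency remainder is $O(k^{-1})$ in $\Hshts$ norm because all four BIOs $A_k',A_k,\Breg,\Bregp$ are of principal type $c_0 I$ with their compact parts having symbols that decay rapidly off $|\xi|\lesssim k$ (this is the same fact underlying Theorem \ref{thm:FG}). On the low-frequency part, the hypothesis $N\geq(c_{\max}+\eps)k$ together with the Poisson-summation/aliasing formula for $P_{\trig_N}^C$ — $\widehat{P_{\trig_N}^C u}(m)=\sum_{\ell\in\Z}\widehat{u}(m+(2N+1)\ell)$ for $|m|\leq N$ — implies $\|(I-P_{\trig_N}^C)\chi(\hsc^2|D_t|^2)\pert\|_{\Hshts}=O(k/N)$; combined with the continuous invertibility estimate $\|\operator^{-1}\|_{\Hshts}\lesssim\rho(k)$ and a Neumann-series / perturbation argument (of the type used in \cite{SaVa:02,Kr:14} for projection methods on compact perturbations of the identity), this yields $\|(I+P_{\trig_N}^C\pert)^{-1}\|_{\Hshts}\leq C$ for $k\notin\cJ$. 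The main obstacle here is controlling the low-frequency compact part uniformly in $k$: unlike the Galerkin case, $P_{\trig_N}^C$ is not self-adjoint, so one cannot reduce to an $L^2$-coercivity argument, and one must work directly with the aliasing representation of the collocation projection in weighted Sobolev norms.

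For the quasi-optimality constant, write $P_{\trig_N}^C\pert=\pert+(P_{\trig_N}^C-I)\pert$. The first contribution gives, via $\|\pert\|_{\Hshts}\leq C$ and the identity above, the term $\|I-P_{\trig_N}^C\|_{\Hshts}+Ck^{-1}$ in \eqref{e:MR4}. For the second, one uses that $(I-P_{\trig_N}^C)$ vanishes on $V_N$ and that $\pert$ is smoothing of order $1$ (via the mapping properties of $S_k,K_k,K_k',S_{\ri k}H_k$), so applying $\pert$ to $(I-P_{\trig_N}^C)v$ — which has Fourier energy concentrated at $|m|\gtrsim N$ — followed by $(I-P_{\trig_N}^C)$ yields the gain $(k/N)^s$ after multiplication by $\rho$ arising from the inverse. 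This produces the $C\rho(k/N)^s+C(k/N)$ terms. Finally, the plane-wave bound \eqref{e:MR5} follows because the boundary data for plane-wave scattering is real-analytic on $\Gamma$, so by results such as \cite[Thm.~2.8]{BaSpWu:16} (or direct parametrix arguments for the inverse of $\operator$) the solution density $v$ is also analytic with Fourier coefficients decaying super-algebraically in $|m|/k$; hence $\|(I-P_{\trig_N}^C)v\|_{\Hsh}/\|v\|_{\Hsh}=O(k^{-\infty})$ under \eqref{e:FCthres}, and the quasi-optimality estimate \eqref{e:MR4} propagates this decay to $\|v-v_N\|_{\Hsh}$.
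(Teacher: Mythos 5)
The overall architecture is sound — the error identity you derive is equivalent to the paper's Lemma \ref{lem:QOabs}, and you correctly identify frequency splitting, the smoothing property of $\pert$, a Neumann-series argument, and superalgebraic decay for plane-wave data as the ingredients. But two of the quantitative claims at the heart of the invertibility step are wrong, and the device that makes the correct bound work is missing.

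\textbf{The claimed bound $\|(I-\chi\hDarg)\pert\|_{\Hshts}=O(k^{-1})$ is false.} By Theorem \ref{thm:HFSD}, $(I-\chi\hDarg)S_k\in \hsc\Psi^{-1}_\hsc$ with symbol $\sim \hsc/\sqrt{|\xi'|_g^2-1}$, but $\pert$ for the Dirichlet BIEs contains $\eta_D S_k$ with $\eta_D\sim k=\hsc^{-1}$. The symbol of $(I-\chi\hDarg)\pert$ therefore has modulus $\sim (\eta_D/k)(1-\chi)/\sqrt{|\xi'|_g^2-1}=O(1)$, not $O(k^{-1})$. What is true is $(I-\chi\hDarg)\pert\in\Psi^{-1}_\hsc$, so it is $O(1)$ on $\Hsh$ but gains a derivative, $\|(I-\chi\hDarg)\pert\|_{\Hsh\to H^{s+1}_\hsc}\leq C$; the smallness must come from elsewhere. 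Similarly, $\|(I-P^C_{\trig_N})\chi\hDarg \pert\|_{\Hshts}$ is not $O(k/N)$: for compactly supported $\chi$ and $N\geq(c_{\max}+\e)k$, the output of $\chi\hDarg$ has Fourier coefficients $O(k^{-\infty})$ outside the range resolved exactly by $P^C_{\trig_N}$, so this is $O(k^{-\infty})$. The genuine $O(k/N)$ contribution — which is what actually controls the Neumann series — comes from pairing the approximation estimate $\|I-P^C_{\trig_N}\|_{H^{s+1}_\hsc\to\Hsh}\leq Ck/N$ (Lemma \ref{lem:approx_FC}) with the order-$(-1)$ smoothing of the high-frequency part of $\pert(I+\pert)^{-1}$.

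\textbf{The missing device} is the identity \eqref{e:SVclevertrick}, $I-P^C_{\trig_N}=(I-P^C_{\trig_N})(I-P^G_{\trig_N})$, together with Lemma \ref{l:proj2Pseudo} showing $(I-P^G_{\trig_N})$ is a sharp frequency cutoff matching $1-\chi\hDarg$ up to $O(\hsc^\infty)$. This is what legitimately replaces $(P^C_{\trig_N}-I)\pert(I+\pert)^{-1}$ by $(P^C_{\trig_N}-I)(1-\chi\hDarg)\pert(I+\pert)^{-1}+O(\hsc^\infty)$, after which the Sobolev-gain argument above gives the $O(k/N)$ bound \eqref{e:collocateOK} and hence $\|(I+(P^C_{\trig_N}-I)\pert(I+\pert)^{-1})^{-1}\|_{\Hshts}\leq 1+Ck/N$. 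The aliasing formula you invoke is correct but on its own does not force $(I-P^C_{\trig_N})$ to interact only with frequencies $\gtrsim N$; the Galerkin identity is what does that. You also need the separate high/low split of $(I+L)^{-1}(I-P^C_{\trig_N})$ (as in Lemma \ref{lem:split1}) to produce the $C\rho(k/N)^s$ term in \eqref{e:MR4}; that is not visible in your sketch.

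Finally, the plane-wave estimate \eqref{e:MR5} does not rely on analyticity of $\Gamma$. The paper proves it for smooth $\Gamma$ via Lemma \ref{lem:HFv} and Theorem \ref{thm:superalgebraicFG}, which use semiclassical ellipticity of $\operator$ on high frequencies together with the fact that the plane-wave data has $O(\hsc^\infty)$ mass above frequency $k$ (Lemma \ref{lem:HFf}). For analytic $\Gamma$ one would expect exponential rather than merely superalgebraic decay, as the paper remarks; citing an analyticity-based result here overstates what is needed and would fail for general $C^\infty$ boundaries.
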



\bre[The Nyquist sampling rate]
When $\Gamma$ has length $2\pi$, the number of degrees of freedom per wavelength for $\trig_N$ \eqref{e:V_N_trig} equals $(2N+1)/k$. 
The Nyquist--Shannon--Whittaker sampling theorem then indicates that one requires $N\geq k-1/2$ to recover a function of frequency $\leq k$ using this many degrees of freedom. When $c_{\max}=1$, \eqref{e:FGthres} and \eqref{e:FCthres} become $N\geq (1+\e)k$; i.e., the Nyquist sampling rate is asymptotically sufficient to obtain existence of the Galerkin and collocation solutions.
\ere

%

\subsection{Results for the Nystr\"om method with Kress quadrature}

The quadrature we use for the Nystr\"om method is based on trigonometric polynomials in $2d$ and is described in detail in \S~\ref{sec:Nystrom}. This quadrature falls under the class of quadrature methods
introduced by \cite{Ma:63, Ku:69} and often referred to as ``Kress quadrature'' due to its use by Kress in \cite{Kr:91, Kr:95}.

We first consider the Dirichlet problem with the standard splitting of $kS_k$, $\DL_k$, $\DL_k'$ described in Lemma~\ref{l:standardSplitA} below. In particular, let 
$\operator=\frac{1}{2}(I+\pert)= A_{k}$ or $A_k'$ and for any $N\geq 0$, define $\pert_N$ according to these splittings and Definition~\ref{def:quad}. The discrete approximation of $\pert$ is then given by \eqref{e:nystromForm} with 
\begin{equation}
\label{e:dirichletNystrom}
\pert_V= \pert_{\trig_N}=\pert_N
\end{equation}
where $\trig_N$ is the space of trigonometric polynomials (see~\eqref{e:trig_points}).

\begin{theorem}[Dirichlet Nystr\"om]
\label{t:DNystrom}
Suppose that $d=2$ and Assumption \ref{ass:parameters} holds.  
Consider the Nystr\"om method with Kress quadrature (defined in \S\ref{sec:Nystrom}) for the Dirichlet BIE.

\noindent (i) Suppose $\Gamma$ is convex with non-vanishing curvature.  Then  there is $C_1>0$ such that for all $s>\frac{1}{2}$, there are $C>0$, $k_0>0$ such that if $k\geq k_0$, 
$$
N\geq C_1 k 
$$
the solution, $v_N$, to~\eqref{e:dirichletNystrom} exists, is unique, and satisfies the quasi-optimal error bound
$$
\|v-v_N\|_{\Hsh}\leq C\Big(\big\|(I-P_{\trig_N}^C)v\big\|_{\Hsk}+ \big\|P_{\trig_N}^C(\pert_N-\pert)P_{\trig_N}^Cv\big\|_{\Hsk}\Big).
$$
(ii) Suppose Assumption~\ref{ass:polyboundintro} holds with $P_{\rm inv}=0$. Then for all $\e>0$, there is $s_0>0$ and $C_1>0$ such that for all $s>s_0$, there are $C_2>0$, $k_0>0$ such that if $k\geq k_0$, $k\notin \cJ$,
$$
N\geq C_1 k (\log k)^\e 
$$
the solution, $v_N$, to~\eqref{e:dirichletNystrom}  exists, is unique, and satisfies the quasi-optimal error bound
$$
\|v-v_N\|_{\Hsh}\leq C_2\Big(\big\|(I-P_{\trig_N}^C)v\big\|_{\Hsk}+ \big\|P_{\trig_N}^C(\pert_N-\pert)P_{\trig_N}^Cv\big\|_{\Hsk}\Big).
$$
(iii) Suppose Assumption~\ref{ass:polyboundintro} holds. Then for all $\e>0$, there are $s_0>0$, $C_1>0$ such that for all  $M>0$, $s>s_0$,  there are $C_2>0$, $k_0>0$ such that if $k\geq k_0$, $k\notin \cJ$, 
$$
N\geq C_1 k^{1+\e} 
$$
the solution, $v_N$, to~\eqref{e:dirichletNystrom} exists, is unique, and satisfies the quasi-optimal error bound
$$
\|v-v_N\|_{\Hsh}\leq C_2\Big(\|(I-P_{\trig_N}^C)v\|_{\Hsk}+ \|P_{\trig_N}^C(\pert_N-\pert)P_{\trig_N}^Cv\|_{\Hsk} + k^{-M}\|v\|_{\Hsh}\Big) .
$$
Moreover, in all three cases, if the right-hand side $f$ corresponds to plane-wave scattering (i.e., is given as in Theorem \ref{thm:BIEs}), then
\beq\label{e:MR7}
\frac{\N{v-v_N}_\Hsk}{\N{v}_{\Hsk}} = O(k^{-\infty}).
\eeq

\end{theorem}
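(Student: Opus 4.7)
The plan is to reduce the Nystr\"om analysis to the collocation analysis of Theorem~\ref{thm:FC} via a perturbation argument, then control the quadrature error using the structure of the Kress quadrature applied to the standard splitting of $S_k, K_k, K_k'$ from Lemma~\ref{l:standardSplitA}. First I would write the Nystr\"om equation as $(I+P^C_{\trig_N}\pert_N)v_N=c_0^{-1}P^C_{\trig_N}f$, which differs from the collocation equation only through the perturbation $P^C_{\trig_N}(\pert_N-\pert)$. From Theorem~\ref{thm:FC}, for $N\geq (c_{\max}+\e)k$ and $k\notin\cJ$ the collocation inverse $(I+P^C_{\trig_N}\pert)^{-1}$ exists on $\Hshs$ with norm bounded by $C\bigl(1+\rho(k/N)^s\bigr)$ (plus lower order terms). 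A Neumann series gives existence, uniqueness of $v_N$, and an inverse bound for the Nystr\"om equation, provided
$$
\bigl\|P^C_{\trig_N}(\pert_N-\pert)\bigr\|_{\Hshts}\bigl(1+\rho(k/N)^s\bigr) \leq \tfrac12.
$$

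For the error bound I would use the key observation that $\pert_N w$ depends only on values of $w$ at the collocation nodes, so $\pert_N(I-P^C_{\trig_N})v=0$. Subtracting the two equations and rearranging as in the projection-method calculation underlying Theorem~\ref{thm:FC} gives
$$
v-v_N \;=\; (I+P^C_{\trig_N}\pert_N)^{-1}\Bigl[(I-P^C_{\trig_N})v - P^C_{\trig_N}\pert(I-P^C_{\trig_N})v + P^C_{\trig_N}(\pert_N-\pert)P^C_{\trig_N}v\Bigr].
$$
The first two terms feed into the quasi-optimality constant from Theorem~\ref{thm:FC}; the third is precisely the quadrature-error term appearing in the statement. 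The super-algebraic decay statement \eqref{e:MR7} for plane-wave data follows once quasi-optimality is established, since plane-wave densities are analytic and hence their best $\Hsh$ approximation by $\trig_N$ decays faster than any power of $k^{-1}$, and the quadrature term is then controlled by the decay of Fourier coefficients of $P^C_{\trig_N}v$, which inherit the same analyticity.

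The three cases correspond to three regimes for balancing the above smallness condition. In case (i), convex $\Gamma$ with non-vanishing curvature, the Melrose--Taylor-type semiclassical description of the layer potentials, combined with the mapping properties from \S\ref{sec:SC1}, gives a $k$-uniform bound $\|(I+P^C_{\trig_N}\pert)^{-1}\|_{\Hshts}\lesssim 1$ without invoking Assumption~\ref{ass:polyboundintro}, so the condition $N\geq C_1 k$ alone suffices. In case (ii), nontrapping ($P_{\rm inv}=0$), $\rho\lesssim 1$ but the quadrature-error factor requires absorbing logarithmic losses coming from $(k/N)^{s}$ at the critical scale, necessitating the mild oversampling $N\geq C_1 k(\log k)^\e$. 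In case (iii), a polynomial growth of $\rho$ is compensated by oversampling $N\geq C_1 k^{1+\e}$, which produces the extra $k^{-M}\|v\|_{\Hsh}$ term from remainders truncated below any polynomial threshold in $k$.

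The main obstacle is Step~2: obtaining sharp, $k$-explicit bounds on $\|P^C_{\trig_N}(\pert_N-\pert)\|_{\Hshts}$ that are small precisely in the correct oversampling regime $N\gtrsim k$. This requires a careful Fourier analysis of the Kress-quadrature error applied to the logarithmic-plus-analytic splitting, tracking how the $k$-dependence of the analytic factors interacts with the spectrum of $P^C_{\trig_N}$. In particular, estimates of the form $\bigl\|P^C_{\trig_N}(\pert_N-\pert)u\bigr\|_{\Hsh}\lesssim (k/N)^{s-r}\|u\|_{\Hrh}$ for appropriate $r$ are the crux, and distinguishing the three cases boils down to how many derivatives one can afford to lose against the available smoothing from $P^C_{\trig_N}$ and the $k$-growth of $\rho$.
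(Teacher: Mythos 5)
Your error identity
$$
v-v_N=(I+P^C_{\trig_N}\pert_N)^{-1}\Bigl[(I-P^C_{\trig_N}\pert)(I-P^C_{\trig_N})v+P^C_{\trig_N}(\pert_N-\pert)P^C_{\trig_N}v\Bigr]
$$
is correct and agrees with the paper's Lemma~\ref{lem:QOabs2}, and the key observation that $\pert_N(I-P^C_{\trig_N})v=0$ is right. However, the rest of the argument has several genuine gaps.

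First, the crude Neumann-series condition $\|P^C_{\trig_N}(\pert_N-\pert)\|_{\Hshts}(1+\rho(k/N)^s)\le\tfrac12$ is not the right smallness criterion, and using it would not give the stated thresholds. The paper's abstract result (Theorem~\ref{thm:MAT2}) rests on a more refined decomposition of $(I+P^C_{\trig_N}\pert)^{-1}$ into high- and low-frequency blocks via Lemma~\ref{l:splitCollocate}: on high frequencies $(I-\chi(-\hsc^2\Delta_\Gamma))$ the inverse is bounded by $1+\rho(k/N)^{s+1}$, while on low frequencies $\chi(-\hsc^2\Delta_\Gamma)$ it is bounded by $\rho$ but maps $L^2\to\Hsh$ (and is hence smoothing). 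Multiplying $\rho$ directly against $\|P^C_{\trig_N}(\pert_N-\pert)\|_{\Hshts}$ over-estimates the low-frequency contribution and would impose a stricter condition on $N$ than the one claimed. The conditions that actually enter (the paper's \eqref{e:MATthresa0}--\eqref{e:MATthresb0c}) are phrased in terms of the Fourier-coefficient quantities $\FLm^{s,\e}(N)$, $\FHm^{s,\e}(N)$ of the kernels (Lemma~\ref{lem:mainevent}), not a single operator-norm bound.

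Second, your explanations of the three regimes are misattributed. In Part (i), convexity with non-vanishing curvature is \emph{not} used to bound the inverse (though of course $\rho\sim 1$ there too); it is used in Lemma~\ref{l:genuineSplit} to obtain $\FL^{s,\e}(N,L)\le C$ (no $\sqrt{\log k}$ loss) via a sharper stationary-phase analysis of the kernel $L_1$ whose Hessian determinant is controlled by the curvature. For Parts (ii) and (iii) the $\log^{\e}k$ and $k^{\e}$ oversampling are needed precisely because Lemma~\ref{l:generalEstimates} gives only $\FLm^{s,\e}(N)\le C\sqrt{\log k}$ for general $\Gamma$; this factor, multiplied against the appropriate power of $\rho(k/N)$, drives the thresholds. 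Your attribution to "absorbing logarithmic losses coming from $(k/N)^s$ at the critical scale" is not what happens.

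Third, the claim that plane-wave densities are analytic is not available here --- $\Gamma$ is only assumed smooth, not analytic. The paper proves \eqref{e:MR7} by semiclassical microlocalization: Lemmas~\ref{lem:HFv} and \ref{lem:vlowerbound} show the solution is compactly microlocalized relative to frequency $k$, and Lemma~\ref{l:NystromCompactMicrolocalizedD} (together with the bound \eqref{e:theEnd1}) then yields superalgebraic decay of both the approximation error and the quadrature error. Analyticity would give exponential decay, but that is a stronger conclusion requiring a stronger hypothesis. The overall architecture (reduce to a projection-method identity, bound the perturbed inverse, control the quadrature error) is right, but the mechanisms that produce each regime and the plane-wave decay are different from what you describe, and the crux Fourier estimates on the Kress splitting are entirely absent.
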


We next consider the Neumann problem with  Kress quadrature of $kS_k$, $\DL_k$, $\DL_k'$ described in Lemma~\ref{l:standardSplitA}, that for $kS_{ik}$ $\DL_{ik}$, $\DL_{ik}'$ described in Lemma~\ref{l:standardSplitB}, and that for $k^{-1}(H_k-H_{ik})$ described in Lemma~\ref{l:standardSplitC}. In particular, let 
$\operator=(i\frac{\eta_N}{2}-\frac{1}{4})(I+\pert)=B_{k,\rm reg} $ or $B'_{k,\rm reg}$ and for any $N\geq 0$, define $\pert_N$ according to the splittings above and Definition~\ref{def:quad} with the composition $L_aL_b$ approximated by $\LN_a^NP^C_{\trig_N}\LN_b^N$, where $P_{\trig_N}^C$ denotes the collocation projection with trigonometric polynomials.
The discrete approximation is then given by 
\begin{equation}
\label{e:NeumannNystrom}
\pert_{\trig_N}=\pert_N.
\end{equation}

\begin{theorem}[Neumann Nystr\"om]
\label{t:NeumannNystrom}
Suppose that $d=2$ and Assumptions \ref{ass:parameters} and \ref{ass:polyboundintro} hold.
Consider the Nystr\"om method 
with Kress quadrature (defined in \S\ref{sec:Nystrom}). For all $\e>0$, there are $s_0>0$, $C_1>0$ such that for all  $M>0$, $s>s_0$,  there are $C_2>0$, $k_0>0$ such that if $k\geq k_0$, $k\notin \cJ$,
$$
N\geq C_1 k^{1+\e} 
$$
the solution, $v_N$, to~\eqref{e:NeumannNystrom} exists, is unique, and satisfies the quasi-optimal error bound
\begin{equation}
\label{e:MR8}
\|v-v_N\|_{\Hsh}\leq C_2\Big(\big\|(I-P^C_{\trig_N})v\big\|_{\Hsh}+ \big\|P_{\trig_N}(\pert_N-\pert)P_{\trig_N}v\big\|_{\Hsk} + k^{-M}\|v\|_{\Hsk}\Big) .
\end{equation}
Moreover, if the right-hand side $f$ corresponds to plane-wave scattering (i.e., is given as in Theorem \ref{thm:BIEs}), then \eqref{e:MR7} holds.
\end{theorem}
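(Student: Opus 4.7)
The plan is to follow the strategy for the Dirichlet case (Theorem~\ref{t:DNystrom}(iii)), adapted to handle the new complication that $\Breg$ and $\Bregp$ involve the \emph{composition} $S_{\ri k}H_k$, so $\pert_N$ is built from the Kress-split quadratures of the individual kernels together with a discrete approximation of the product of the form $\LN_a^N P^C_{\trig_N}\LN_b^N$. Writing $\operator = (\ri\eta_N/2 - 1/4)(I+\pert)$, the Nystr\"om equation \eqref{e:NeumannNystrom} is $(I+P^C_{\trig_N}\pert_N)v_N = c_0^{-1} P^C_{\trig_N} f$, and the standard manipulation using $(I+\pert)v = c_0^{-1}f$ gives the error identity
\begin{equation*}
v - v_N = (I+P^C_{\trig_N}\pert_N)^{-1}\bigl[(I-P^C_{\trig_N})v + P^C_{\trig_N}(\pert - \pert_N)v\bigr].
\end{equation*}

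First I would establish, in parallel with the analysis behind Theorem~\ref{thm:FC}, that $(I + P^C_{\trig_N}\pert)$ is invertible on $\Hsh$ for $s>s_0$ and $N\gtrsim k$, with the norm of its inverse bounded by $C\rho$. This uses the mapping properties of $\pert$ in the $k$-weighted Sobolev scale, Assumption~\ref{ass:polyboundintro}, and the trigonometric-interpolation estimate for $P^C_{\trig_N}$ at equally spaced points.

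The core new step is controlling $\|P^C_{\trig_N}(\pert - \pert_N)P^C_{\trig_N}\|_{\Hshts}$ as well as $\|P^C_{\trig_N}(\pert-\pert_N)v\|_{\Hsk}$. For the kernels already in single-operator form (i.e.\ $kS_k$, $\DL_k$, $\DL_k'$, $kS_{\ri k}$, $\DL_{\ri k}$, $\DL_{\ri k}'$, and $k^{-1}(H_k - H_{\ri k})$), the bound reduces to the Dirichlet-type quadrature estimates of Lemmas~\ref{l:standardSplitA}--\ref{l:standardSplitC} combined with $k$-explicit control of derivatives of the smooth parts, giving $C k^{P_{\rm inv}'}(k/N)^{s}$ for $s\geq s_0$ and some fixed $P_{\rm inv}'$. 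For $S_{\ri k}H_k$, decompose the composition error as
\begin{equation*}
(\LN_a^N - L_a)P^C_{\trig_N}\LN_b^N \;+\; L_a(P^C_{\trig_N}-I)\LN_b^N \;+\; L_a(\LN_b^N - L_b),
\end{equation*}
and estimate each piece using the one-derivative gain of $S_{\ri k}$ (as a $\Psi$DO of order $-1$) and the trigonometric interpolation rate on smooth functions, to obtain the same $C k^{P_{\rm inv}'}(k/N)^s$ bound. Choosing $s_0$ so that $\e s_0 > P_{\rm inv} + P_{\rm inv}' + M$, the sampling hypothesis $N\geq C_1 k^{1+\e}$ makes this discretisation error dominated by $k^{-M}$, a Neumann series argument yields invertibility of $(I+P^C_{\trig_N}\pert_N)$ with norm of inverse still $\lesssim \rho$, and combining with the error identity gives \eqref{e:MR8}. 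For plane-wave scattering data, $v$ is microlocalised at frequency $\lesssim k$ (by the standard oscillatory estimates on $\operator^{-1}$ applied to plane-wave-generated data, as used in the Dirichlet case); thus both $\|(I-P^C_{\trig_N})v\|_{\Hsh}$ and the discretisation term decay faster than any power of $k$ once $N/k\to\infty$, giving \eqref{e:MR7}.

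The main obstacle will be the composition estimate. Because $H_k$ loses one $k$-weighted derivative and $P^C_{\trig_N}$ acts like a spectral cutoff at frequency $\sim N$, the middle term $L_a(P^C_{\trig_N}-I)\LN_b^N$ requires borrowing exactly enough regularity from the order-$(-1)$ smoothing of $S_{\ri k}$ to absorb the loss from $H_k$ and still retain the $(k/N)^s$ rate. Tracking the powers of $k$ through this composition so that the final bound carries only a \emph{fixed} algebraic power of $k$ (independent of $s$) is the delicate point, and it is what forces the oversampling $N\gtrsim k^{1+\e}$ rather than the linear rate $N\gtrsim k$ available for convex Dirichlet obstacles in Theorem~\ref{t:DNystrom}(i).
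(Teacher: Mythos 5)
Your overall plan follows the paper's (reduce to the abstract Nystr\"om machinery, verify the Kress-split quadrature bounds per kernel, treat the composition $S_{\ri k}H_k$ separately, and use the microlocal control of $v$ for plane-wave data), but there is a genuine gap in how you close the argument, and it is exactly at the point where the paper's central new technology is needed.

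\paragraph{The gap: the quasi-optimality constant.}
You close by arguing that the discretisation error is $\lesssim k^{-M}$, then invoking a Neumann series to conclude $\|(I+P^C_{\trig_N}\pert_N)^{-1}\|_{\Hsht}\lesssim\rho$, and applying this inverse to the two terms in the error identity. That chain of reasoning yields
$$
\|v-v_N\|_{\Hsh}\;\lesssim\;\rho\Big(\big\|(I-P^C_{\trig_N})v\big\|_{\Hsh}+\big\|(\pert_N-\pert)v\big\|_{\Hsk}\Big),
$$
i.e.\ a quasi-optimality constant of order $\rho$. But the bound \eqref{e:MR8} has an absolute constant $C_2$ in front of the first term -- not $\rho$. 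Under Assumption~\ref{ass:polyboundintro}, $\rho$ can grow like $k^{P_{\rm inv}}$ for $k\notin\cJ$, so your bound is strictly weaker and does not prove \eqref{e:MR8} as stated. You cannot rescue this by pushing the $\rho$ into the $k^{-M}\|v\|$ term, because the $\rho$ also sits in front of $\|(I-P^C_{\trig_N})v\|_{\Hsh}$, which is the main best-approximation quantity. The paper avoids this loss by not using the crude operator-norm bound $\|(I+P^C_{\trig_N}\pert)^{-1}\|\lesssim\rho$. Instead, Lemma~\ref{l:splitCollocate} splits the discrete inverse into high- and low-frequency pieces, proving the inverse is $O(1+\rho(k/N)^{s+1})$ when composed with a high-frequency cutoff, and $O(\rho)$ only when landing on low frequencies; the abstract Theorem~\ref{thm:MAT2} then shows that whenever the $O(\rho)$ branch is used, it is hit by a term carrying a compensating power of $(k/N)$. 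The net constant in front of $\|(I-P^C_{\trig_N})v\|$ is $C(1+\rho(k/N)^s)$, which under $N\gtrsim k^{1+\e}$ and $s$ sufficiently large (depending only on $\e$ and $P_{\rm inv}$) is $O(1)$. This frequency-splitting of the discrete inverse is the crux of the argument and is missing from your proposal.

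\paragraph{Secondary issues.}
First, the quantifier order in your choice "$\e s_0>P_{\rm inv}+P_{\rm inv}'+M$" makes $s_0$ depend on $M$, which contradicts the statement: $s_0$ and $C_1$ must be chosen before $M$ is given. In the paper's proof the $M$-dependence is absorbed into $k_0$ (and $C_2$), not $s_0$. Second, your three-term decomposition of the composition error is a reasonable start, but the paper uses a five-term telescoping decomposition of $T_aT_b - \TN_a^NP^C_{\trig_N}\TN_b^N$ -- it additionally isolates a term of the form $(T_a-\TN_a^N)P^C_{\trig_N}(I-P^C_{\trig_{n'}})T_bP^C_{\trig_n}$, which is then estimated using the pseudodifferential property of the high-frequency part of $T_b$; without that term you cannot exploit the microlocal structure needed for the plane-wave conclusion \eqref{e:MR7}. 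Finally, the error identity the paper uses (Lemma~\ref{lem:QOabs2}) is
$$
v-v_N = (I+P_N\pert_NP_N)^{-1}\Big((I-P_N\pert)(I-P_N)v +P_N(\pert_N-\pert)P_Nv\Big),
$$
with $P_N$ on \emph{both} sides of $(\pert_N-\pert)$; your version has $(\pert-\pert_N)v$ on the right. Since the stated bound \eqref{e:MR8} involves $\|P_{\trig_N}(\pert_N-\pert)P_{\trig_N}v\|_{\Hsk}$, you would need to commute a $P^C_{\trig_N}$ through anyway, and the extra cross term picked up is part of what Theorem~\ref{thm:MAT2} controls.
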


\begin{figure}[htbp]
\begin{tikzpicture}
\node at(-1,0){\includegraphics[width=.5\textwidth]{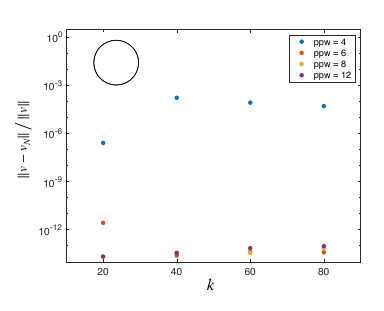}};
\node at(7,0){\includegraphics[width=.5\textwidth]{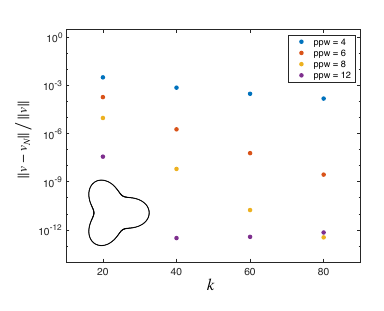}};
\node at(3,-6){\includegraphics[width=.5\textwidth]{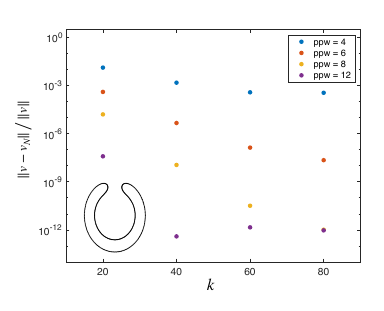}};
\end{tikzpicture}
\caption{The plots show the relative $L^2$ error when the Nystr\"om method is applied to the plane-wave sound-soft scattering problem in the exterior of a circle (top left), a star-shaped domain (top right), and cavity (bottom) at various frequencies and numbers of points per wavelength. As the number of points per wavelength increases or the frequency increases, 
the relative error decreases superalgebraically, 
which is consistent with Theorem~\ref{t:DNystrom}.}
\end{figure}
\paragraph{Comparison with previous results involving trigonometric polynomials.}
The analysis of Galerkin, 
collocation, and Nystrom methods with trigonometric polynomials applied to Helmholtz BIEs on smooth boundaries 
in the limit $N\to \infty$ with $k$ fixed has attracted a lot of attention in the literature; 
see, e.g., 
\cite{KrSl:93}
\cite{McPrWe:93}
\cite{SaVa:96}
\cite[Chapter 7]{At:97}
\cite[Chapters 9 and 10]{SaVa:02}
 and the references therein.
These results prove superalgebraic convergence for smooth data, and Theorems \ref{thm:FG}, \ref{thm:FC}, \ref{t:DNystrom}, and \ref{t:NeumannNystrom} are their $k$-explicit analogues.
Furthermore, e.g., \cite{SaVa:96} shows that the convergence is exponential if both the data and the boundary are analytic. 
For real analytic $\Gamma$, it should be possible to use methods from analytic microlocal analysis to improve 
 Theorems \ref{thm:FG}, \ref{thm:FC}, \ref{t:DNystrom}, and \ref{t:NeumannNystrom} by replacing the superalgebraic decay by exponential decay, but we do not consider this here.

\subsection{Discussion of the proof techniques}\label{s:discussion}

\paragraph{The $k$-explicit upper bounds on the projection-method errors.}

The current state-of-the-art analyses of $k$-explicit convergence of numerical methods for the Helmholtz rely on frequency splitting; specifically one splits into 
high ($\gg k$) and low ($\lesssim k$) frequencies and uses fact that Helmholtz solution operator is ``well behaved" on high frequencies.

This idea was first used in the analysis of the $hp$-FEM for the constant-coefficient Helmholtz equation in \cite{MeSa:10, MeSa:11}, with the frequency splitting carried out via the Fourier transform in $\Rea^d$. This philosophy was then applied to analyse the $hp$-BEM in  \cite{LoMe:11, Me:12}, with frequency splitting in $\Rea^d$ combined with trace operators to produce appropriate frequency splittings on $\Gamma$. 

Semiclassical pseudodifferential operators give a natural and intrinsic way to achieve frequency splittings on general domains. Such operators were used to analyse the $hp$-FEM for the variable-coefficient Helmholtz equation in \cite{LSW3, LSW4, GLSW1}.

The first author's thesis \cite{Ga:19} used the fact that the Helmholtz solution operator is ``well behaved" (more precisely, semiclassically elliptic) on high frequencies to show that 
 the high-frequency components of $S_k, K_k, K_k',$ and $H_k$ are semiclassical pseudodifferential operators. This 
 fact was then used in \cite{GaSp:22} to show that the Galerkin method with piecewise polynomials applied to the Dirichlet BIEs does not suffer from the pollution effect when $\Omega^-$ is nontrapping. Conceptually, these results are linked to earlier convergence analyses of  BIEs that used the integral operators' structure as homogeneous pseudodifferential operators to obtain results for fixed $k$ as $N\to \infty$ (see, e.g., the books \cite{SaVa:02, HsWe:08, GwSt:18} and the references therein).

In the present paper, we use the results of \cite{Ga:19} again 
to give sufficient conditions 
for the Galerkin solution to exist with an optimal bound on 
the quasioptimality constant -- see Theorem \ref{thm:PG}. 
As discussed in Remark \ref{rem:preasymptotic}, Theorem \ref{thm:PG} is analogous to the preasymptotic estimates for the $h$-FEM proved in \cite{GS3}, with these $h$-FEM results using frequency splittings defined by functional calculus and thus intrinsic to the operators considered, together with an elliptic-projection type argument. 
The results in the present paper 
are obtained by carefully analyzing the data to approximate solution maps from high to high, high to low, low to high, and low to low frequencies and using a norm on $L^2$ that is tailored to the meshwidth and the frequency splitting -- see \eqref{e:newNorm} and \eqref{e:blockDecomp} below. Specifically, we weight the high frequency component of the function by $(hk)^{-t}\rho^{-1/2}$ for some $t\geq 0$ tailored to the particular projection method. This weighting allows us to take full advantage of the better behaviour of the BIEs at high frequency.

Our analyses of methods based on trigonometric polynomials use analogous frequency splittings together with the fact that trigonometric polynomials naturally respect frequency decomposition (see Lemma \ref{l:proj2Pseudo} below).

\paragraph{The proofs of pollution for the Galerkin method with piecewise polynomials.} \

We first discuss the proof of Theorem~\ref{t:pollutionIntro} (i.e., the result that quasimodes imply pollution) and then discuss the construction of good quasimodes.

\noindent{\emph{The proof of Theorem~\ref{t:pollutionIntro}.}}

For the projection method 
\beqs
v-v_N= (I+P^G_{V_h} \pert)^{-1}(I-P^G_{V_h}) (I-P^G_{V_h})v
\eeqs
(see \eqref{e:QOabs} below). Therefore, to prove pollution, we need to show that $(I+P^G_{V_h} \pert)^{-1}(I-P^G_{V_h})$ is large. A simple calculation shows that if 
\beq\label{e:introv}
v := (I + \pert)^{-1} (I-P^G_{V_h}) \widetilde{f}
\eeq
(i.e., the data is orthogonal to the approximation space) then $v_N=0$ and 
\beqs
(I+P^G_{V_h} \pert) v= (I-P^G_{V_h}) v =(I-P^G_{V_h})^2 v.
\eeqs
The observation shows that to show that $(I+P^G_{V_h} \pert)^{-1}(I-P^G_{V_h})$ is large 
it is sufficient to find $v$ with $\|v\|_{L^2}=1$ such that \eqref{e:introv} holds and $(I-P^G_{V_h})v$ is ``small". 

Under the assumption that there is a quasimode for the BIE, there is a ``small" $f$ and a $u$ with $\|u \|_{\LtG}=1$ such that $u= (I+\pert)^{-1} f$. Moreover, both $f$ and $u$ are $k$-oscillating and thus 
for the Galerkin projection with a piecewise polynomial space satisfying Assumption~\ref{ass:ppG},
$\| (I-P^G_{V_h})u\|_{\LtG} \leq C (hk)^{p+1} \| u\|_{\LtG}$. If there is $\tilde{f}$ such that $f = (I-P^G_{V_h})\widetilde{f}$, then
\beqs
\big\|(I+ P^G_{V_h} \pert)^{-1} (I-P^G_{V_h})\big\|_{\LtGt}\geq c (hk)^{-p-1}.
\eeqs
However, this cannot be true for all $h$ since it contradicts the upper bound on the quasioptimality constant in Theorem \ref{thm:PG}; i.e., we cannot solve $f = (I-P^G_{V_h})\widetilde{f}$.

Motivated by this discussion, our goal is to find $\widetilde{f}$ that is as close as possible to satisfying $(I-P^G_{V_h})\widetilde{f}=f$. Notice that, since $v=(I+\pert )^{-1}(I-P^G_{V_h})\tilde{f}$ and $(I+ \pert)^{-1}$ is $O(1)$ on high frequencies (see Lemma \ref{lem:HFinverse}), low-frequency errors cost more than high-frequency errors. Therefore, we aim to solve $(I-P^G_{V_h})\tilde{f}=f+e$, where $e$ has as few low-frequencies as possible.

The lower bounds on Galerkin piecewise-polynomial approximations of $k$-oscillating functions from \cite{Ga:25} imply that 
if $f$ has frequencies between $\e_0 k$ and $\Xi_0 k$ then there exists $\widetilde{f}$ such that 
\beq\label{e:blueCar1}
\big\| (I-P^G_{V_h})\widetilde{f} \big\|_{\LtG} \leq C(hk)^{-p-1}\big\| f\big\|_{\LtG}
\eeq
and
\begin{gather*}
(I-P^G_{V_h})\widetilde{f}=f +\chi_{\mathscr{L}}(I-P^G_{V_h})\widetilde{f}+\chi_{\mathscr{H}}(I-P^G_{V_h})\widetilde{f}
\end{gather*}
(see~\eqref{e:weSolvedIt} and Corollary~\ref{c:polysAreGreat}).
Here $\chi_{\mathscr{L}}$ is a finite-rank operator (which can be taken to be zero when $p=0$) and 
\beqs
\big\| \chi_{\mathscr{L}}\big\|_{H^{-M}(\Gamma)\to H^M(\Gamma)} \leq C_M 
\quad\tand\quad 
\big\| (I+ \pert)^{-1} \chi_{\mathscr{H}}\big\|_{\LtGt} \leq C 
\eeqs
(i.e., $\chi_{\mathscr{L}}$ and $\chi_{\mathscr{H}}$ are low- and high-frequency cutoffs, respectively).
Thus 
\begin{align}\label{e:blueCar2}
v &= (I+\pert)^{-1} f + (I+\pert)^{-1} \chi_{\mathscr{L}}(I-P^G_{V_h})\widetilde{f}+(I+\pert)^{-1}\chi_{\mathscr{H}}(I-P^G_{V_h})\widetilde{f}.
\end{align}
In the rest of this sketch, we ignore the contribution from $\chi_{\mathscr{L}}$; including this contribution leads to the finite-dimensional condition \eqref{e:normalOk1}.
Using \eqref{e:blueCar2}, \eqref{e:blueCar1}, and the fact that $u= (I+L)^{-1}f$ (with $\| u\|_{L^2(\Gamma)}= 1$), we see that $\|v\|_{\LtG}\geq 1/2$ if $(hk)^{-p-1} \|f \|_{\LtG} \ll 1$.
Finally, by \eqref{e:introv},
\begin{align*}
(I- P^G_{V_h}) v = (I- P^G_{V_h}) \chi_{\rm H}(I+\pert)^{-1} (I- P^G_{V_h}) \widetilde{f} + (I-P^G_{V_h}) \chi_{\rm L} v
\end{align*}
where $\chi_{\rm H}$ and $\chi_{\rm L}$ are high- and low-frequency cutoffs, so that
\begin{align*}
\big\|(I- P^G_{V_h}) v\big\|_{\LtG} \leq C (hk)^{-p-1} \| f\|_{\LtG} + (hk)^{p+1} \|v\|_{\LtG}.
\end{align*}
In summary, 
\begin{align*}
\big\|(I+ P^G_{V_h} \pert)^{-1} (I-P^G_{V_h})\big\|_{\LtGt}
&\geq
\frac{\|v\|_{\LtG}}{\| (I-P^G_{V_h})v\|_{\LtG}}\\
&\geq 
 c
\begin{cases}
(hk)^{-p-1}, & \|f\|_{\LtG} \leq (hk)^{2(p+1)} ,\\
(hk)^{p+1} \| f\|^{-1}_{\LtG}, & (hk)^{2(p+1)}\leq \|f\|_{\LtG} \ll (hk)^{p+1} .
\end{cases} 
\end{align*}
Since $\|v\|_{\LtG}/\| (I-P^G_{V_h})v\|_{\LtG}\geq 1$, by reducing $c$ if necessary, 
\beqs
\big\|(I+ P^G_{V_h} \pert)^{-1} (I-P^G_{V_h})\big\|_{\LtGt}\geq 
\frac 12 + 
c
\begin{cases}
(hk)^{-p-1}, & 1\leq (hk)^{2(p+1)} \|f\|_{\LtG}^{-1},\\
(hk)^{p+1} \| f\|^{-1}_{\LtG}, & \|f\|_{\LtG}^{-1}(hk)^{2(p+1)}\leq 1  ,
\end{cases} 
\eeqs
which is \eqref{e:thePollution}.

\noindent{\emph{Construction of quasimodes used to prove pollution in concrete settings.} }

With Theorem~\ref{t:pollutionIntro} in hand, it now remains to construct $f$ oscillating between $\e_0 k $ and $\Xi_0k$ such that $\|(I+\pert)^{-1}f\|_{\LtG}\gg \|f\|_{\LtG}$. In the case of the unit disk, this can be done using the explicit representation of the Helmholtz BIEs in the basis of trigonometric polynomials (see \S~\ref{s:disk}). When the geometry is more complicated, we consider only the Dirichlet problem and construct solutions of 
\begin{equation}
\label{e:quasi12345}
(-\Delta -k^2)u=0\text{ in }\Omega^+,\qquad u|_{\Gamma}=g,
\end{equation}
where, for $\chi\in C_c^\infty(\overline{\Omega^+})$, $\|g\|_{\LtG}\ll \|\chi u\|_{L^2(\Omega^+)}$ and $g$ has certain microlocal properties used to guarantee that $f$ is oscillating between $\e_0 k$ and $\Xi_0 k$. These properties are somewhat different for the direct and indirect formulation of the Dirichlet problem because the solution of the indirect equation has more immediate physical meaning (it is the Neumann trace of the scattering solution). To construct $f$ itself from $u$ we then use the description of $(I+\pert)^{-1}$ in terms of the outgoing Dirichlet-to-Neumann map on $\Omega^+$ and the impedance-to-Dirichlet map on $\Omega^-$ (see~\eqref{e:BIEinverse}).

In turn, $u$ in~\eqref{e:quasi12345} is constructed by finding $\tilde{u}$ satisfying
$$
(-\Delta -k^2)\tilde{u}=\tilde{g}\text{ in }\Omega^+,\qquad \tilde{u}|_{\Gamma}=0,
$$
where, for  $\chi\in C_c^\infty(\overline{\Omega^+})$, $\|\tilde{g}\|_{L^2(\Omega^+)}\ll \|\chi \tilde{u}\|_{L^2(\Omega^+)}$, and $\tilde{g}$ has microlocal properties that guarantee that appropriate billiard trajectories avoid the directions normal to $\Gamma$ and tangent to $\Gamma$. The function $u$ is then constructed by applying propagation singularities--type arguments to $\tilde{u}$. 
In fact, the necessary propagation of singularities estimates required to construct $u$ 
are not available in the literature. The technical reasons for this are discussed in Remark~\ref{r:propagation} and the propagation results are then proved in Appendix~\ref{a:propagate}.

\subsection{Outline of the rest of the paper}

\setlength{\leftmargini}{.5em}
\begin{itemize}
\item[]
\S\ref{sec:SCA} recaps standard results about semiclassical pseudodifferential operators.
\item[]
\S\ref{sec:abstract} gives sufficient conditions for quasioptimality of projection method under fairly-general assumptions about $\operator$.
\item[]
\S\ref{sec:PPproofs} proves the Galerkin and collocation results for piecewise polynomials (Theorems \ref{thm:PG} and \ref{thm:PC}).
\item[]
\S\ref{sec:trig} defines the projections for trigonometric polynomials in 2-d.
\item[]
\S\ref{sec:approx} bounds $\|(I-P_{\trig_N}^G)v\|_{H^s_k(\Gamma)}$ when $v$ is the BIE solution corresponding to the plane-wave scattering problem.
\item[]
\S\ref{sec:proofs} proves the Galerkin and collocation results for trigonometric polynomials (Theorems \ref{thm:FG} and \ref{thm:FC}).
\item[]
\S\ref{sec:Nystrom} defines the Nystr\"om method analysed in Theorems \ref{t:DNystrom} and \ref{t:NeumannNystrom}.
\item[]
\S\ref{sec:NystromProof} gives an abstract result about the convergence of the Nystr\"om method and \S\ref{sec:NystromApplied} uses this to prove Theorem \ref{t:DNystrom} and \ref{t:NeumannNystrom}.
\item[]
\S\ref{s:pollution} proves Theorem \ref{t:pollutionIntro}
and 
\S\ref{s:quasimodes} proves Theorems~\ref{t:diamond} and 
\ref{t:qualitative1} and 
\S\ref{s:disk} proves Theorems \ref{t:neumannDisk} and~\ref{t:dirichletDisk}.

\item[]
\S\ref{sec:numerical} describes the set up in the numerical experiments in \S\ref{sec:main}.
\item[]
Appendix \ref{app:A} defines the scattering problems and BIEs in \eqref{e:DBIEs} and \eqref{e:NBIEs}.
\item[]
Appendix \ref{a:propagate} proves the propagation-of-singularities result of Theorem \ref{t:basicPropagate}.
\end{itemize}
\setlength{\leftmargini}{2.5em}

\section{Review of semiclassical pseudodifferential operators}
\label{sec:SCA}

In this section, we review standard results about semiclassical pseudodifferential operators, with our default references being \cite{Zw:12} and \cite[Appendix E]{DyZw:19}. 
Recall that semiclassical pseudodifferential operators are pseudodifferential operators with a large/small parameter, where behaviour with respect to this parameter is explicitly tracked in the associated calculus.
In our case, the small parameter is $k^{-1}$, and we let $\hsc:=k^{-1}$; normally the small parameter is denoted by $h$, but we use $\hsc$ to avoid a notational clash with the meshwidth of the $h$-BEM. The notation $\hsc$ is motivated by the fact that the semiclassical parameter is often related to Planck's constant, which is written as $2\pi\hsc$ see, e.g., \cite[S1.2]{Zw:12}, \cite[Page 82]{DyZw:19}.

The counterpart of ``semiclassical'' involving differential/pseudodifferential operators without a small parameter is
  usually called ``homogeneous'', and the homogeneous analogues of these results can be found in, e.g.,
\cite[Chapter 7]{Ta:96}, \cite[Chapter 7]{SaVa:02}, \cite[Chapters
6]{HsWe:08}.


\subsection{Weighted Sobolev spaces}\label{sec:SC1}

We first define weighted Sobolev spaces on $\Rea^d$, and then use these to define analogous weighted Sobolev spaces on $\Gamma$.
The \emph{semiclassical Fourier transform} is defined by
$$
(\mathcal F_{\hsc}u)(\xi) := \int_{\mathbb R^d} \exp\big( -\ri x \cdot \xi/\hsc\big)
u(x) \, \rd x,
$$
with inverse
\beq\label{e:SCFTinverse}
(\mathcal F^{-1}_{\hsc}u)(x) := (2\pi \hsc)^{-d} \int_{\mathbb R^d} \exp\big( \ri x \cdot \xi/\hsc\big)
u(\xi)\, \rd \xi;
\eeq
see \cite[\S3.3]{Zw:12}; i.e., the semiclassical Fourier transform is just the usual Fourier transform with the transform variable scaled by $\hsc$. These definitions imply that, with $D:= -\ri \partial$,
\beqs
\cF_\hsc \big( (\hsc D)^\alpha) u\big) = \xi^\alpha \cF_\hsc u \quad \tand\quad \N{u}_{L^2(\Rea^d)} = \frac{1}{(2\pi \hsc)^{d/2}}\N{\cF_\hsc u}_{L^2(\Rea^d)};
\eeqs
see, e.g., \cite[Theorem 3.8]{Zw:12}.
Let
\beq\label{e:Hsk}
H_\hsc^s(\Rea^d):= \Big\{ u\in \mathcal{S}'(\Rea^d) \,\tst\, \langle \xi\rangle^s (\cF_\hsc u) \in L^2(\Rea^d) \Big\},
\eeq
where $\langle \xi \rangle := (1+|\xi|^2)^{1/2}$, $\mathcal{S}(\Rea^d)$ is the Schwartz space (see, e.g., \cite[Page 72]{Mc:00}), and $\mathcal{S}'(\Rea^d)$ its dual.
Define the norm
\beq\label{e:Hhnorm}
\Vert u \Vert_{H_\hsc^m(\Rea^d)} ^2 = \frac{1}{(2\pi \hsc)^{d}} \int_{\Rea^d} \langle \xi \rangle^{2m}
|\mathcal F_\hsc u(\xi)|^2 \, \rd \xi;
\eeq
for example, with $m=1$,
\beq\label{e:H1norm}
\Vert u \Vert_{H_\hsc^1(\Rea^d)} ^2 =  \hsc^2 \N{\nabla u}^2_{L^2(\Rea^d)}+ \N{u}^2_{L^2(\Rea^d)}=  k^{-2} \N{\nabla u}^2_{L^2(\Rea^d)}
+\N{u}^2_{L^2(\Rea^d)}.
\eeq
Working in a weighted $H^1$ norm with the derivative weighted by $k^{-1}$ is ubiquitous in the literature on the numerical analysis of the Helmholtz equation, except that usually one works with the weighted $H^1$ norm squared being $\|\nabla u\|^2_{L^2}+k^2 \|u\|_{L^2}$.
Here we work with \eqref{e:H1norm}/\eqref{e:Hhnorm} since weighting the $j$th derivative by $k^{-j}$ is easier to keep track of than weighting it by $k^{-j+1}$, especially when working with higher-order derivatives.


We define the norm for weighted Sobolev spaces on $\Gamma$ as follows
\beq\label{e:weightedNormGamma}
\|u\|_{H_{\hsc}^s(\Gamma)}:= \|(1-\hsc^2\Delta_{\Gamma})^{s/2}u\|_{\LtG},
\eeq
where $\Delta_{\Gamma}$ denotes the Laplacian on $\Gamma$ and we use the spectral theorem to define powers of $(1-\hsc^2\Delta_{\Gamma})$. For example,
$$
\|u\|_{H_{\hsc}^1(\Gamma)}^2=\hsc^2\N{\nabla_{\Gamma}u}_{\LtG}^2+\N{u}_{\LtG}^2,
$$
where $\nabla_\Gamma$ is the surface gradient operator, defined in terms of a parametrisation of the boundary by, e.g., \cite[Equation A.14]{ChGrLaSp:12}.
Note that these same norms can be defined via interpolation from the integer powers of $(1-\hsc^2\Delta_\Gamma)$.
The weighted spaces $H^s_\hsc(\Gamma)$ 
can also be defined by charts; see, e.g., \cite[Pages 98 and 99]{Mc:00} for the unweighted case and \cite[\S5.6.4]{Ne:01} or \cite[Definition E.20]{DyZw:19} for the weighted case (but note that \cite[\S5.6.4]{Ne:01} uses a different weighting with $k$ to us); these other definitions all lead to equivalen norms.

%
%
The unweighted $H^s(\Gamma)$ norms are defined as above with $\hsc=1$.
We use below that, given $\hsc_0>0$ and $s>0$, there exists $C>0$ such that, for all $0<\hsc\leq \hsc_0$,
\beq\label{e:weightedineq}
\hsc^s\N{w}_{H^s(\Gamma)}\leq C \N{w}_{H^s_\hsc(\Gamma)}.
\eeq


\subsection{Phase space, symbols, quantisation, and semiclassical pseudodifferential operators}\label{sec:331}

For simplicity of exposition, we begin by discussing semiclassical pseudodifferential operators on $\Rea^d$, and then
outline in \S\ref{sec:ReatoGamma} below how to extend the results from $\Rea^d$ to $\Gamma$.

The set of all possible positions $x$ and momenta (i.e.~Fourier variables) $\xi$ is denoted by $T^*\Rea^d$; this is known informally as ``phase space''. Strictly, $T^*\Rea^d :=\Rea^d \times (\Rea^d)^*$, i.e. the cotangent bundle to $\mathbb{R}^d$, but
for our purposes, we can consider $T^*\Rea^d$ as $\{(x,\xi) : \bx\in \Rea^d, \xi\in\Rea^d\}$.

A symbol is a function on $T^*\Rea^d$ that is also allowed to depend on $\hsc$, and can thus be considered as an $\hsc$-dependent family of functions.
Such a family $a=(a_\hsc)_{0<\hsc\leq\hsc_0}$, with $a_\hsc \in C^\infty({T^*\mathbb R^d})$,
is a \emph{symbol
of order $m$}, written as $a\in S^m(T^*\Rea^d)$,
if for any multiindices $\alpha, \beta$
\beq\label{e:Sm}
| \partial_x^\alpha \partial^\beta_\xi a_\hsc(x,\xi) | \leq C_{\alpha, \beta}
\langle \xi\rangle^{m -|\beta|}
\quad\tfa (x,\xi) \in T^* \Rea^d \text{ and for all } 0<\hsc\leq \hsc_0,
\eeq
(where recall that $\langle\xi\rangle:= (1+ |\xi|^2)^{1/2}$) and
$C_{\alpha, \beta}$ does not depend on $\hsc$; see \cite[p.~207]{Zw:12}, \cite[\S E.1.2]{DyZw:19}.

For $a \in S^m$, we define the \emph{semiclassical quantisation} of $a$, denoted by $a(x,\hsc D):\mathcal{S}(\mathbb{R}^d)\to \mathcal{S}(\mathbb{R}^d)$, by 
\beq \label{e:quant}
a(x,\hsc D) v(x) := (2\pi \hsc)^{-d} \int_{\Rea^d} \int_{\Rea^d} 
\exp\big(\ri (x-y)\cdot\xi/\hsc\big)\,
a(x,\xi) v(y) \,\rd y  \rd \xi
\eeq
where $D:= -\ri \partial$; see, e.g., \cite[\S4.1]{Zw:12} \cite[Page 543]{DyZw:19}.
We also write $a(x,\hsc D)= \Op_\hsc(a)$. The integral in \eqref{e:quant} need not converge, and can be understood \emph{either} as an oscillatory integral in the sense of \cite[\S3.6]{Zw:12}, \cite[\S7.8]{Ho:83}, \emph{or} as an iterated integral, with the $y$ integration performed first; see \cite[Page 543]{DyZw:19}.

Conversely, if $A$ can be written in the form above, i.\,e.\ $A =a(x,\hsc D)$ with $a\in S^m(T^*\Rea^d)$, we say that $A$ is a \emph{semiclassical pseudo-differential operator of order $m$} and
we write $A \in \Psi_{\hsc}^m(\Rea^d)$. We use the notation $a \in \hsc^l S^m$  if $\hsc^{-l} a \in S^m$; similarly
$A \in \hsc^l \Psi_\hsc^m$ if
$\hsc^{-l}A \in \Psi_\hsc^m$. We define $\Psi^{-\infty}_\hsc := \cap_m \Psi^{-m}_\hsc$.

\begin{theorem}\mythmname{Composition and mapping properties of
semiclassical pseudo-differential operators \cite[Theorem 8.10]{Zw:12}, \cite[Propositions E.17, E.19, and E.24]{DyZw:19}}\label{thm:basicP} If $A\in \Psi_{\hsc}^{m_1}$ and $B  \in \Psi_{\hsc}^{m_2}$, then
\begin{itemize}
\item[(i)]  $AB \in \Psi_{\hsc}^{m_1+m_2}$.
\item[(ii)]  For any $s \in \mathbb R$, $A$ is bounded uniformly in $\hsc$ as an operator from $H_\hsc^s$ to $H_\hsc^{s-m_1}$.
\end {itemize}
\end{theorem}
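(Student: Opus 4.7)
The plan is to prove (i) via the standard composition formula, and then reduce (ii) to $L^2$-boundedness of zeroth-order operators using (i) together with the fact that $(1-\hsc^2\Delta)^{s/2}\in\Psi^s_\hsc$ is invertible on semiclassical Sobolev spaces.

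For (i), I would compute $(AB)u$ for $u\in \mathcal{S}(\mathbb{R}^d)$ using the definition \eqref{e:quant} of the quantisation, inserting the formula for $Bu$ into $A(Bu)$. After a change of variables this produces a quadruple integral whose phase is linear in one frequency variable; performing that integration produces a delta function and reduces the expression to
\beqs
(AB)u(x)=(2\pi\hsc)^{-d}\int\int e^{\ri(x-y)\cdot\xi/\hsc}\,c_\hsc(x,y,\xi)\,u(y)\,\rd y\,\rd\xi,
\eeqs
where $c_\hsc(x,y,\xi)=a(x,\xi)b(y,\xi)+O(\hsc)$ in the appropriate symbol class. The remaining step is to reduce such a ``double'' symbol $c_\hsc(x,y,\xi)$ to a standard left symbol $c(x,\xi)$ by stationary phase in the $(y,\xi)$ variables around the diagonal $y=x$. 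This gives the asymptotic expansion $c\sim \sum_\alpha \hsc^{|\alpha|}(\alpha!)^{-1}\partial_\xi^\alpha a\, D_x^\alpha b$ with each term in $\hsc^{|\alpha|}S^{m_1+m_2-|\alpha|}$, and in particular $c\in S^{m_1+m_2}$, establishing $AB\in\Psi^{m_1+m_2}_\hsc$.

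For (ii), the reduction step is easy: using (i) and the fact that $\Lambda_s:=(1-\hsc^2\Delta)^{s/2}\in\Psi_\hsc^s$ is invertible with $\Lambda_s^{-1}=\Lambda_{-s}\in\Psi_\hsc^{-s}$, the map $A:H^s_\hsc\to H^{s-m_1}_\hsc$ is bounded uniformly in $\hsc$ iff $\Lambda_{s-m_1}A\Lambda_{-s}:L^2\to L^2$ is bounded uniformly in $\hsc$; and by (i) the latter operator lies in $\Psi^0_\hsc$. So the heart of the matter is to show that any $T\in\Psi^0_\hsc$ is bounded $L^2\to L^2$ uniformly in $\hsc$. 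This is the semiclassical Calder\'on--Vaillancourt theorem, and is the main technical obstacle.

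The strategy for $L^2$-boundedness of $T=t(x,\hsc D)$ with $t\in S^0$ is the classical beta trick: it suffices to bound $T^*T$, and more generally to bound $(T^*T)^N$ for $N$ large. One writes $T^*T=q(x,\hsc D)$ with $q\in S^0$ by (i), then takes $M>\|q\|_\infty$ and observes that $M^2-q$ has a smooth square root $r\in S^0$ (since $q$ is bounded in all derivatives). Applying (i) once more gives $r(x,\hsc D)^*r(x,\hsc D)=M^2-T^*T+\hsc R$ with $R\in\Psi^{-1}_\hsc$, so
\beqs
\N{Tu}^2_{L^2}=\langle T^*Tu,u\rangle\leq M^2\|u\|^2_{L^2}+\hsc\,\bigl|\langle Ru,u\rangle\bigr|.
\eeqs
Iterating this identity, or equivalently bootstrapping in $\hsc$ (the remainder gains a power of $\hsc$ each time one repeats the argument on $R$), gives uniform $L^2$-boundedness of $T$ for $\hsc\leq\hsc_0$, completing the proof. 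The only place where ``real'' analytic work is needed is this Calder\'on--Vaillancourt step; everything else is bookkeeping with the symbol calculus from (i).
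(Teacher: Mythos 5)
The paper does not actually prove Theorem~\ref{thm:basicP} --- it is stated as a standard recap result and the ``proof'' consists entirely of citations to \cite[Theorem 8.10]{Zw:12} and \cite[Propositions E.17, E.19, E.24]{DyZw:19}. Your proposal therefore supplies a proof where the paper simply defers to the literature, and what you have written is essentially the textbook argument found in those references (composition via the double-symbol/stationary-phase calculation, reduction of Sobolev boundedness to $L^2$ boundedness of order-zero operators via $\Lambda_s=(1-\hsc^2\Delta)^{s/2}$, and a Calder\'on--Vaillancourt-type square-root trick). The overall architecture is correct, so this is a reasonable expansion of the citation.

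Two places are hand-waved in a way that would need repair in a genuine write-up. First, in the composition step: starting from the quadruple integral with phase $[(x-y)\cdot\xi+(y-z)\cdot\eta]/\hsc$, the phase is linear in $\xi$ and $\eta$, but the amplitude $a(x,\xi)b(y,\eta)$ depends on both frequency variables, so integrating either one does not directly produce a delta function. To get a delta you must first rewrite $B$ in its right (or Weyl) quantisation --- or, equivalently, carry out the $y$-integral after absorbing the $y$-dependence of $b$ into a Taylor expansion. The $O(\hsc)$ you record in $c_\hsc(x,y,\xi)=a(x,\xi)b(y,\xi)+O(\hsc)$ is exactly the cost of this left-to-right symbol conversion, but as written the intermediate step that ``produces a delta function'' is not quite right. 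Second, the bootstrap in the Calder\'on--Vaillancourt step does not close without an \emph{a priori} polynomially growing bound $\|T\|_{L^2\to L^2}\leq C\hsc^{-N}$ for some $N$ (obtained, e.g., from Schur's test or a crude integration-by-parts estimate on the Schwartz kernel). Without this seed bound, the inequality $\|T\|^2\leq M^2+\hsc\|R\|$ regresses indefinitely: you are trying to bound $\|T\|$, but the right-hand side contains the norm of another operator of the same general type, and you have no starting estimate. With the a priori bound, the iteration you describe converges in finitely many steps, since the exponent of $\hsc$ in the error follows the recursion $p_{k}=1+p_{k-1}/2$, which reaches a nonnegative value after $O(\log N)$ iterations. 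Both of these are standard points, so your sketch is morally fine, but as stated it skips exactly the places where the argument is not purely formal.
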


A key fact we use below is that if $\psi\in C_{c}^\infty(\mathbb{R})$ then, given $t\in\mathbb{R}$, $N>0$ and $\hsc_0>0$ there exists $C>0$ such that for all $\hsc\leq \hsc_0$,
\beq\label{e:frequencycutoff}
\N{\psi(|\hsc D|^2)}_{H_\hsc^{t}(\Rea^d)\to H_\hsc^{t+N}(\Rea^d)} \leq C;
\eeq
this can easily be proved using the semiclassical Fourier transform, since $\psi(|\hsc D|^2)$ is a Fourier multiplier (i.e., $\psi(|\hsc D|^2)$ is defined by \eqref{e:quant} with $a(x,\xi)= \psi(|\xi|^2)$, which is independent of $x$).

\subsection{The principal symbol map $\sigma_{\hsc}$}
Let the quotient space $ S^m/\hsc S^{m-1}$ be defined by identifying elements
of  $S^m$ that differ only by an element of $\hsc S^{m-1}$.
For any $m$, there is a linear, surjective map
$$
\sigma^m_{\hsc}:\Psi_\hsc ^m \to S^m/\hsc S^{m-1},
$$
called the \emph{principal symbol map},
such that, for $a\in S^m$,
\beq\label{e:symbolone}
\sigma_\hsc^m\big(\Op_\hsc(a)\big) = a \quad\text{ mod } \hsc S^{m-1};
\eeq
see \cite[Page 213]{Zw:12}, \cite[Proposition E.14]{DyZw:19} (observe that \eqref{e:symbolone} implies that
$\operatorname{ker}(\sigma^m_{\hsc}) = \hsc\Psi_\hsc ^{m-1}$).
When applying the map $\sigma^m_{\hsc}$ to
elements of $\Psi^m_\hsc$, we denote it by $\sigma_{\hsc}$ (i.e.~we omit the $m$ dependence) and we use $\sigma_{\hsc}(A)$ to denote one of the representatives
in $S^m$ (with the results we use then independent of the choice of representative).

Key properties of the principal symbol that we use below are that
\beq \label{e:multsymb}
\sigma_{\hsc}(AB)=\sigma_{\hsc}(A)\sigma_{\hsc}(B)
\quad\tand \quad
\sigma_\hsc(A^*) =\overline{\sigma_\hsc(A)},
\eeq
see \cite[Proposition E.17]{DyZw:19}, and for $A\in \Psi_{\hsc}^0$,
\begin{equation}
\label{e:pseudoL2}
\N{A}_{L^2\to L^2}\leq \sup|\sigma_{\hsc}(A)|+C\hsc,
\end{equation}
see~\cite[Theorem 13.13]{Zw:12}.
              
\subsection{Extension of the above results from $\Rea^d$ to $\Gamma$}\label{sec:ReatoGamma}

While the definitions above are written for operators on $\Rea^d$, semiclassical pseudodifferential operators and all of their properties above have analogues on compact manifolds  (see e.g.~\cite[\S14.2]{Zw:12},~\cite[\S E.1.7]{DyZw:19}). Roughly speaking, the class of semiclassical pseudodifferential operators of order $m$ on a compact manifold $\Gamma$, $\Psi^m_\hsc(\Gamma)$, are operators that, in any local coordinate chart, have kernels of the form~\eqref{e:quant} where the function $a\in S^m$ modulo a remainder operator $R$ that has the property that
\begin{equation}
\label{e:remainder}
\|R\|_{H_\hsc^{-N}(\Gamma)\to H_{\hsc}^N(\Gamma)}\leq C_{N} \hsc^N;
\end{equation}
we say that an operator $R$ satisfying~\eqref{e:remainder} is $O(\hsc^\infty)_{\Psi_\hsc^{-\infty}(\Gamma)}$.

Semiclassical pseudodifferential operators on manifolds continue to have a natural principal symbol map
\beq\label{e:sigmaGamma}
\sigma_{\hsc}:\Psi_\hsc^m\to S^m(T^*\Gamma)/\hsc S^{m-1}(T^*\Gamma)
\eeq
where now $S^m(T^*\Gamma)$ is the class of functions on $T^*\Gamma$, the cotangent bundle of $\Gamma$, that satisfy the estimate~\eqref{e:Sm}
with $x$ replaced by a local coordinate variable $x'$ and $\xi$ replaced by $\xi'$ -- the dual variables to $x'$.
The property \eqref{e:multsymb} holds as before.
Furthermore, there is a noncanonical quantisation map $\Op_{\hsc}:S^m(T^*\Gamma)\to \Psi^m(\Gamma)$ (involving choices of cut-off functions and coordinate charts) that satisfies
$$
\sigma_\hsc (\Op_{\hsc}(a))=a.
$$
and for all $A\in \Psi_\hsc^m(\Gamma)$, there is $a\in S^m(T^*\Gamma)$ such that
$$
A=\Op_{\hsc}(a)+O(\hsc^\infty)_{\Psi_\hsc^{-\infty}}.
$$

Let $g$ be the metric induced on $T^*\Gamma$ from the standard metric on $\mathbb{R}^d$. Then, in exact analogy with \eqref{e:frequencycutoff}, if $\psi\in C_{c}^\infty(\mathbb{R})$ then, given $t\in\mathbb{R}$, $N>0$ and $\hsc_0>0$ there exists $C>0$ such that for all $0<\hsc\leq \hsc_0$,
\beq\label{e:frequencycutoff2}
\N{\psi\hDarg}_{H_\hsc^{t}(\Gamma)\to H_\hsc^{t+N}(\Gamma)} \leq C,
\eeq
where $D':= -\ri \partial_{x'}$.

Finally, we record the following consequence of \eqref{e:pseudoL2} for bounds on $H_{\hsc}^s(\Gamma)$ norms.
\begin{lemma}
\label{l:HsEstimates}
Suppose that $A\in \Psi_{\hsc}^m(\Gamma)$. Then, for all $s\in \mathbb{R}$ there is $C_s>0$ such that
$$
\|A\|_{H_{\hsc}^{s}(\Gamma)\to H_{\hsc}^{s-m}(\Gamma)}\leq \sup_{(x',\xi')\in T^*\Gamma}|\sigma(A)(x',\xi')(1+|\xi'|^2_g)^{-m/2}|+C_s\hsc.
$$
\end{lemma}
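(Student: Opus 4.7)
The plan is to reduce the Sobolev mapping estimate to the $L^2\to L^2$ bound already stated in \eqref{e:pseudoL2}, by conjugating $A$ with appropriate powers of the elliptic operator $(1-\hsc^2\Delta_{\Gamma})$. Since, by \eqref{e:weightedNormGamma}, the operator $\Lambda_\hsc^r := (1-\hsc^2\Delta_{\Gamma})^{r/2}$ is an isometry $H^s_\hsc(\Gamma) \to H^{s-r}_\hsc(\Gamma)$ for all $s,r\in\mathbb{R}$, the operator norm $\|A\|_{H^s_\hsc \to H^{s-m}_\hsc}$ equals the $L^2\to L^2$ norm of $B := \Lambda_\hsc^{s-m} A \Lambda_\hsc^{-s}$.

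The next step is to identify $B$ as a semiclassical pseudodifferential operator of order $0$ on $\Gamma$ and compute its principal symbol. By the spectral theorem plus standard functional calculus on manifolds (see, e.g., \cite[Theorem 14.9]{Zw:12} or \cite[\S E.1.5]{DyZw:19}), $\Lambda_\hsc^r \in \Psi^r_\hsc(\Gamma)$ with principal symbol $(1+|\xi'|^2_g)^{r/2}$. Since $A\in\Psi^m_\hsc(\Gamma)$, the composition part of Theorem \ref{thm:basicP}(i), extended to $\Gamma$ as discussed in \S\ref{sec:ReatoGamma}, gives $B\in \Psi^0_\hsc(\Gamma)$. Moreover, by the multiplicativity of the principal symbol \eqref{e:multsymb} (also valid on $\Gamma$ via \eqref{e:sigmaGamma}),
\begin{equation*}
\sigma_\hsc(B) \;=\; (1+|\xi'|^2_g)^{(s-m)/2}\,\sigma_\hsc(A)\,(1+|\xi'|^2_g)^{-s/2} \;=\; \sigma_\hsc(A)(1+|\xi'|^2_g)^{-m/2}.
\end{equation*}

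Finally, applying the $L^2\to L^2$ symbol bound \eqref{e:pseudoL2} to $B$ yields
\begin{equation*}
\|A\|_{H^s_\hsc(\Gamma)\to H^{s-m}_\hsc(\Gamma)} \;=\; \|B\|_{L^2\to L^2} \;\leq\; \sup_{(x',\xi')\in T^*\Gamma}\bigl|\sigma_\hsc(A)(x',\xi')(1+|\xi'|^2_g)^{-m/2}\bigr| + C_s \hsc,
\end{equation*}
which is exactly the asserted bound.

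The only minor technical point to watch is that \eqref{e:pseudoL2} as stated is on $\mathbb{R}^d$, so we must invoke its counterpart on the compact manifold $\Gamma$ (this is standard; see the references in \S\ref{sec:ReatoGamma}). A subtlety is that the constant $C_s$ in the conclusion absorbs the $O(\hsc)$ error in passing from $\sigma_\hsc(B)$ to a representative symbol of $B$; this uses the fact that $\Lambda_\hsc^r$ and $A$ have symbols and remainders with bounds depending on $s$ and $m$, but not on $\hsc$. No further estimates are needed, so I do not anticipate any serious obstacle; the argument is essentially bookkeeping for the semiclassical calculus on $\Gamma$.
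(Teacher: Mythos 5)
Your proof is correct and takes essentially the same approach as the paper: conjugate $A$ by powers of $(1-\hsc^2\Delta_\Gamma)$ to reduce to an $L^2\to L^2$ bound for an order-$0$ operator with symbol $\sigma_\hsc(A)(1+|\xi'|^2_g)^{-m/2}$, then apply \eqref{e:pseudoL2}. The only cosmetic difference is that you introduce the notation $B$ and $\Lambda_\hsc^r$ and explicitly note that $\Lambda_\hsc^r$ is an isometry, whereas the paper carries out the same conjugation inline within the norm expression.
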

\begin{proof}
By \eqref{e:weightedNormGamma},
\begin{align*}
\|Au\|_{H_{\hsc}^{s-m}(\Gamma)}&=\|(1-\hsc^2\Delta_\Gamma)^{(s-m)/2}Au\|_{\LtG}\\
&=\|(1-\hsc^2\Delta_\Gamma)^{(s-m)/2}A(1-\hsc^2\Delta_\Gamma)^{-s/2}(1-\hsc^2\Delta_\Gamma)^{s/2}u\|_{\LtG}\\
&\leq \|(1-\hsc^2\Delta_\Gamma)^{(s-m)/2}A(1-\hsc^2\Delta_\Gamma)^{-s/2}\|_{\LtG\to \LtG}\|u\|_{H_{\hsc}^{s}(\Gamma)}.
\end{align*}
Now, $(1-\hsc^2\Delta_\Gamma)^{s/2}\in \Psi_{\hsc}^{s}(\Gamma)$ with symbol $(1+|\xi|_g^2)^{s/2}$
and hence, by~\eqref{e:pseudoL2},
$$
\|(1-\hsc^2\Delta_\Gamma)^{(s-m)/2}A(1-\hsc^2\Delta_\Gamma)^{-s/2}\|_{\LtG\to \LtG}\leq \sup_{(x,\xi)\in T^*\Gamma}(1+|\xi'|_g^2)^{-m/2}|\sigma(A)|(x',\xi')+C_s\hsc,
$$
as claimed.
\end{proof}

\subsection{Wavefront set and ellipticity}\label{sec:335}

\begin{definition}[Ellipticity]\label{def:elliptic}
$B\in \Psi^{\ell}_{\hsc}(\Gamma)$ is \emph{elliptic} on a set $U\subset T^*\Gamma$ if
\beq\label{e:elliptic}
\liminf_{\hsc\to 0}\inf_{(x',\xi')\in U}\big|\sigma_\hsc(B)(x',\xi')\langle \xi'\rangle^{-\ell}\big|>0,
\eeq
where $\langle \xi'\rangle:=(1+|\xi'|_g^2)^{\frac{1}{2}}$.
\end{definition}

\begin{definition}[Wavefront set of a pseudodifferential operator]\label{def:WF}
The wavefront set $\WF_\hsc(A)$ of $A\in \Psi^m_\hsc(\Gamma)$ is defined as follows:~$(x_0,\xi_0)\in (\WF_\hsc(A))^c$ if there exists $B\in \Psi^{-m}_\hsc(\Gamma)$, elliptic in a neighbourhood of $(x_0,\xi_0)$ such that
\beqs
BA = O(\hsc^\infty)_{\Psi^{-\infty}_\hsc(\Gamma)}.
\eeqs
\end{definition}

We make three remarks.

(i) Definition \ref{def:WF} implies that
\beq\label{e:WF_residual}
\WF_\hsc(A) = \emptyset \quad\iff \quad A=  O(\hsc^\infty)_{\Psi^{-\infty}_\hsc(\Gamma)}.
\eeq

(ii) Strictly speaking, $\WF_\hsc(A)$ is a subset of the fiber-radially compactified cotangent bundle $\overline{T}^*\Gamma$ (see \cite[\S E.1.3]{DyZw:19}), but this notion is not important in what follows.

(iii) One can show that Definition \ref{def:WF} is equivalent to \cite[Definition E.27]{DyZw:19} by using the composition formula \cite[E.1.21]{DyZw:19}. This composition formula also implies that
\beq\label{e:WF_prod}
\WF_\hsc(AB) \subset \WF_\hsc (A) \cap \WF_\hsc(B)
\eeq
and if $a$ is independent of $\hsc$ then
\beq\label{e:WFquant}
\WF_\hsc\big( \Op_\hsc(a)\big) \subset \supp \,a.
\eeq
The related adjoint formula \cite[E.1.22]{DyZw:19} implies that
\beq\label{e:WFadjoint}
\WF_\hsc(A^*) =\WF_\hsc (A)
\eeq
(compare \eqref{e:WF_prod} and \eqref{e:WFadjoint} to \eqref{e:multsymb}).

We use repeatedly below the corollary of \eqref{e:WF_residual} and \eqref{e:WF_prod} that
\beq\label{e:WFdisjoint}
\text{ if } \quad\WF_\hsc(A)\cap \WF_\hsc(B) = \emptyset\quad\text{ then } \quad AB = O(\hsc^\infty)_{\Psi^{-\infty}_\hsc(\Gamma)};
\eeq
i.e., pseudodifferential operators act microlocally (i.e., pseudo locally in phase space).

\begin{theorem}[Elliptic parametrix]\label{thm:elliptic_para}
Suppose that $A\in \Psi_\hsc^m(\Gamma)$ 
and that $B\in \Psi^\ell_\hsc(\Gamma)$ is elliptic on $\WF_\hsc (A)$. Then there exist $Q, Q' \in \Psi^{m-\ell}_\hsc(\Gamma)$ such that
\beqs
A= BQ + O(\hsc^\infty)_{\Psi^{-\infty}_\hsc(\Gamma)} = Q'B+O(\hsc^\infty)_{\Psi^{-\infty}_\hsc(\Gamma)}.
\eeqs
\end{theorem}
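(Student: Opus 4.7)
The plan is to build a microlocal right parametrix $E$ for $B$ on a neighbourhood of $\WF_\hsc(A)$ and set $Q:=EA$; for $Q'$ one constructs a left parametrix $E'$ analogously and sets $Q':=AE'$. First, by the ellipticity hypothesis \eqref{e:elliptic}, I would choose an open neighbourhood $U'$ of $\WF_\hsc(A)$ on which $|\sigma_\hsc(B)(x',\xi')|\langle\xi'\rangle^{-\ell}$ is bounded below by a positive constant (uniformly in small $\hsc$), together with a cutoff $\chi_0\in S^0(T^*\Gamma)$, independent of $\hsc$, satisfying $\chi_0\equiv 1$ on a smaller neighbourhood $U$ of $\WF_\hsc(A)$ and $\supp\chi_0\subset U'$. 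Since $\sigma_\hsc(B)$ is invertible on $\supp\chi_0$, the symbol $e_0:=\chi_0/\sigma_\hsc(B)$ (extended by zero) is a well-defined element of $S^{-\ell}(T^*\Gamma)$, and by the composition rule together with \eqref{e:multsymb}, $B\,\Op_\hsc(e_0)=\Op_\hsc(\chi_0)+\hsc R_0$ with $R_0\in\Psi_\hsc^{-1}(\Gamma)$.

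Next I would iteratively improve: given an approximation $E_j:=\sum_{i=0}^j\hsc^i\Op_\hsc(e_i)$ with $B E_j=\Op_\hsc(\chi_0)+\hsc^{j+1}R_j$, $R_j\in\Psi_\hsc^{-j-1}$, I would set $e_{j+1}:=-\chi_0\,\sigma_\hsc(R_j)/\sigma_\hsc(B)\in S^{-\ell-j-1}$, which, by the same application of \eqref{e:multsymb}, gives $B(E_j+\hsc^{j+1}\Op_\hsc(e_{j+1}))=\Op_\hsc(\chi_0)+\hsc^{j+2}R_{j+1}$ with $R_{j+1}\in\Psi_\hsc^{-j-2}$. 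Borel summation in the semiclassical pseudodifferential calculus then produces $E\in\Psi_\hsc^{-\ell}(\Gamma)$ with $E\sim\sum_{j\geq 0}\hsc^j\Op_\hsc(e_j)$ and
\beqs
BE=\Op_\hsc(\chi_0)+O(\hsc^\infty)_{\Psi_\hsc^{-\infty}(\Gamma)}.
\eeqs
Setting $Q:=EA$, Theorem~\ref{thm:basicP}(i) yields $Q\in\Psi_\hsc^{m-\ell}(\Gamma)$, and using $I=\Op_\hsc(1)$,
\beqs
BQ=\Op_\hsc(\chi_0)\,A+O(\hsc^\infty)_{\Psi_\hsc^{-\infty}}=A+\Op_\hsc(\chi_0-1)\,A+O(\hsc^\infty)_{\Psi_\hsc^{-\infty}}.
\eeqs
Since $\chi_0\equiv 1$ on $U\supset\WF_\hsc(A)$, $\supp(\chi_0-1)\cap\WF_\hsc(A)=\emptyset$, so \eqref{e:WFquant} and \eqref{e:WFdisjoint} give $\Op_\hsc(\chi_0-1)\,A=O(\hsc^\infty)_{\Psi_\hsc^{-\infty}}$, yielding the first identity.

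The construction of $Q'$ is entirely symmetric: I would build a left parametrix $E'\in\Psi_\hsc^{-\ell}$ by the same iteration applied to products $\Op_\hsc(e)\,B$ (whose principal symbol is again $e\,\sigma_\hsc(B)$), obtaining $E'B=\Op_\hsc(\chi_0)+O(\hsc^\infty)_{\Psi_\hsc^{-\infty}}$, and then set $Q':=AE'\in\Psi_\hsc^{m-\ell}$. Then
\beqs
Q'B=A\,\Op_\hsc(\chi_0)+O(\hsc^\infty)_{\Psi_\hsc^{-\infty}}=A+A\,\Op_\hsc(\chi_0-1)+O(\hsc^\infty)_{\Psi_\hsc^{-\infty}},
\eeqs
and the wavefront-set bound $\WF_\hsc(A\,\Op_\hsc(\chi_0-1))\subset\WF_\hsc(A)\cap\supp(\chi_0-1)=\emptyset$ (via \eqref{e:WF_prod} and \eqref{e:WFquant}) gives the second identity. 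The one step I have glossed over is the asymptotic (Borel) summation of the formal series $\sum_{j}\hsc^j\Op_\hsc(e_j)$ to a genuine element of $\Psi_\hsc^{-\ell}(\Gamma)$; this will be the main technical ingredient, but it is a completely standard feature of the semiclassical pseudodifferential calculus, independent of $A$ and of the Helmholtz setting, and so does not pose a real obstacle.
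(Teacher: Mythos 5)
Your proposal is correct and is the standard elliptic-parametrix construction: choose a microlocal cutoff $\chi_0$ with $\chi_0 \equiv 1$ near $\WF_\hsc(A)$ supported where $B$ is elliptic, invert $\sigma_\hsc(B)$ there to get a leading-order parametrix, iterate to kill successive $\hsc$-orders, Borel-sum, and finally absorb the discrepancy $\Op_\hsc(\chi_0-1)A$ using the disjointness of wavefront sets. The paper does not give its own proof --- it simply cites \cite[Proposition E.32]{DyZw:19}, which proves the result by exactly this argument --- so your write-up is a faithful reconstruction of the proof the paper invokes by reference.
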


\bpf[Reference for the proof]
This is proved in \cite[Proposition E.32]{DyZw:19}.
\epf

\begin{corollary}[Elliptic estimates]\label{cor:elliptic}

(i) If $B\in \Psi^\ell_\hsc(\Gamma)$ is \emph{elliptic} on $T^*\Gamma$
then there exists $\hsc_0>0$ such that, for all $0<\hsc\leq \hsc_0$, $B^{-1} : H_\hsc^{s-\ell}(\Gamma) \rightarrow H_\hsc^{s}(\Gamma)$ exists and is bounded (with norm independent of $\hsc$) for all $s$. Furthermore, if $B\in \Psi_{\hsc}^0$, then $B^{-1}\in \Psi_{\hsc}^0$ and
$$
\sigma_{\hsc}^0(B^{-1})=(\sigma_{\hsc}^0(B))^{-1}.
$$

(ii) Suppose that $A\in \Psi_\hsc^m(\Gamma)$ 
and that $B\in \Psi^\ell_\hsc(\Gamma)$ is elliptic on $\WF_\hsc (A)$. Then, given $s,M,N \in \Rea$, there exists $C, \hsc_0>0$ such that, for all $0<\hsc\leq \hsc_0$,
\beq\label{e:ellipticestimate}
\N{Au}_{H^{s-m}_\hsc(\Gamma)} \leq C\Big(
\N{Bu}_{H^{s-m-\ell}_\hsc(\Gamma)} + \hsc^M \N{u}_{H^{s-m-N}_\hsc(\Gamma)}.
\Big)
\eeq
\end{corollary}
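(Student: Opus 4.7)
The plan is to reduce both parts to Theorem~\ref{thm:elliptic_para}, combined with a Neumann-series argument that uses the $O(\hsc^\infty)_{\Psi^{-\infty}_\hsc(\Gamma)}$ control encoded by \eqref{e:remainder}.

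For Part~(i), since $I\in\Psi^0_\hsc(\Gamma)$ has wavefront set equal to all of $T^*\Gamma$, the hypothesis that $B$ is elliptic everywhere means in particular that $B$ is elliptic on $\WF_\hsc(I)$. First I would invoke Theorem~\ref{thm:elliptic_para} twice (once in each order) to obtain $Q, Q'\in\Psi^{-\ell}_\hsc(\Gamma)$ with
\begin{equation*}
BQ = I + R_1, \qquad Q'B = I + R_2, \qquad R_1,R_2 = O(\hsc^\infty)_{\Psi^{-\infty}_\hsc(\Gamma)}.
\end{equation*}
For fixed $s$, property \eqref{e:remainder} gives $\|R_i\|_{H^s_\hsc(\Gamma)\to H^s_\hsc(\Gamma)} = O(\hsc^\infty)$, so for small enough $\hsc$ both $I+R_i$ are invertible on $H^s_\hsc(\Gamma)$ by Neumann series. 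Then $B$ has left and right inverses, which must agree, and $B^{-1} = Q(I+R_1)^{-1} = (I+R_2)^{-1}Q'$. Theorem~\ref{thm:basicP}(ii) applied to $Q$ gives the uniform bound $B^{-1}:H^{s-\ell}_\hsc(\Gamma)\to H^s_\hsc(\Gamma)$. When $\ell=0$, I would next verify that the Neumann series $(I+R_1)^{-1}=I+\sum_{n\ge 1}(-R_1)^n$ converges in $\Psi^{-\infty}_\hsc(\Gamma)$ with coefficients that are $O(\hsc^\infty)$; this uses that $\Psi^{-\infty}_\hsc(\Gamma)$ is closed under composition and that composing an $O(\hsc^\infty)_{\Psi^{-\infty}_\hsc(\Gamma)}$ operator with any bounded-order pseudodifferential operator preserves the $O(\hsc^\infty)$ smoothing property. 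Hence $B^{-1}=Q+Q\cdot O(\hsc^\infty)_{\Psi^{-\infty}_\hsc(\Gamma)}\in\Psi^0_\hsc(\Gamma)$, and applying \eqref{e:multsymb} to $BB^{-1}=I$ yields $\sigma_\hsc(B^{-1})=\sigma_\hsc(B)^{-1}$.

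For Part~(ii), I would apply Theorem~\ref{thm:elliptic_para} to the pair $(A,B)$ to write $A = Q'B + R$ with $Q'\in\Psi^{m-\ell}_\hsc(\Gamma)$ and $R=O(\hsc^\infty)_{\Psi^{-\infty}_\hsc(\Gamma)}$. Then the triangle inequality gives
\begin{equation*}
\|Au\|_{H^{s-m}_\hsc(\Gamma)} \le \|Q'Bu\|_{H^{s-m}_\hsc(\Gamma)} + \|Ru\|_{H^{s-m}_\hsc(\Gamma)}.
\end{equation*}
The first term is estimated by Theorem~\ref{thm:basicP}(ii) applied to $Q'$, which is bounded $H^{s-\ell}_\hsc(\Gamma)\to H^{s-m}_\hsc(\Gamma)$, and the inclusion of weighted Sobolev spaces yields a bound in terms of $\|Bu\|_{H^{s-m-\ell}_\hsc(\Gamma)}$ as required. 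The second term is estimated directly from \eqref{e:remainder}: for any prescribed $M,N$, the defining property of $O(\hsc^\infty)_{\Psi^{-\infty}_\hsc(\Gamma)}$ gives $\|Ru\|_{H^{s-m}_\hsc(\Gamma)}\le C\hsc^M\|u\|_{H^{s-m-N}_\hsc(\Gamma)}$.

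The main obstacle is essentially bookkeeping rather than a conceptual difficulty: one must track that the Neumann series in Part~(i) produces a remainder that remains in $\Psi^{-\infty}_\hsc(\Gamma)$ with $O(\hsc^\infty)$ coefficients (not merely a bounded operator between two fixed Sobolev spaces), so that the conclusion $B^{-1}\in\Psi^0_\hsc(\Gamma)$ is actually legitimate within the calculus and the principal-symbol identity is meaningful. This follows from the standard composition properties of $\Psi^{-\infty}_\hsc(\Gamma)$ reviewed in \S\ref{sec:331}--\S\ref{sec:ReatoGamma}, so no genuine obstruction arises; the whole argument is an unpacking of Theorem~\ref{thm:elliptic_para} plus Neumann inversion.
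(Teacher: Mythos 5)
Your argument is correct and rests on exactly the same ingredients the paper uses (Theorem~\ref{thm:elliptic_para}, the $O(\hsc^\infty)_{\Psi^{-\infty}_\hsc(\Gamma)}$ remainder control from \eqref{e:remainder}, and Neumann-series/absorption for small $\hsc$), but the logical route to the invertibility in Part~(i) differs. You construct $B^{-1}$ directly from a two-sided parametrix: build $Q,Q'$ with $BQ=I+R_1$ and $Q'B=I+R_2$, invert $I+R_i$ by Neumann series, and read off boundedness of $B^{-1}$ from the mapping properties of $Q$. The paper instead derives invertibility as a corollary of Part~(ii): it applies \eqref{e:ellipticestimate} with $A=I$ once to $B$ and once to $B^*$ (using $\sigma_\hsc(B^*)=\overline{\sigma_\hsc(B)}$ to know $B^*$ is also elliptic), absorbs the $\hsc^M$ term, and interprets the resulting a priori estimates as injectivity of $B$ and of $B^*$, the latter giving surjectivity of $B$ by duality. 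Your direct parametrix construction is more self-contained and avoids the injectivity-plus-adjoint functional-analytic step; the paper's route is more economical because Part~(ii) has to be proved anyway and then does double duty. For the $\ell=0$ symbol statement, both you and the paper use the same trick of writing $(I+R)^{-1}=I+$ (a product of $R$ with a bounded operator), so the correction term lies in $O(\hsc^\infty)_{\Psi^{-\infty}_\hsc(\Gamma)}$; your appeal to convergence of the full Neumann series in the calculus is a slightly heavier-handed way of saying the same thing, and the paper's identity $(I-R)^{-1}=I+R(I-R)^{-1}$ sidesteps that bookkeeping. Part~(ii) is identical in both treatments.
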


\bpf
The first statement in Part (i) follows from Part (ii) with $A=I$, $B=B$, and $A=I$, $B=B^*$. In both cases, the $O(\hsc^M)$ error term on the right-hand side is absorbed on the left-hand side, making $\hsc_0$ smaller if necessary, so that the bound \eqref{e:ellipticestimate} is just the statement of injectivity of $B$ and its analogue with $B=B^*$ the statement of surjectivity of $B$. For the statement about $B\in \Psi^0_{\hsc}$, observe that by Theorem~\ref{thm:elliptic_para}, there is $Q\in \Psi^0_{\hsc}(\Gamma)$ such that
$$
I=QB+R,\qquad R=O(\hsc^\infty)_{\Psi_{\hsc}^{-\infty}(\Gamma)}.
$$
In particular, since $(I-R)^{-1}=I+R(I-R)^{-1}$,
 $B^{-1}=Q+R(I-R)^{-1}Q=Q+O(\hsc^\infty)_{\Psi^{-\infty}_{\hsc}(\Gamma)}\in \Psi_{\hsc}^0(\Gamma)$. 
Furthermore, the symbol formula~\eqref{e:multsymb} then implies that $\sigma_{\hsc}^{0}(B^{-1})=1/\sigma_{\hsc}^0(B)$ as claimed.

Part (ii) follows from Theorem \ref{thm:elliptic_para} and the definition \eqref{e:remainder} of $O(\hsc^\infty)_{\Psi^{-\infty}_\hsc(\Gamma)}$.
\epf

\subsection{Recap of the results of \cite{Ga:19, GaMaSp:21N}}\label{sec:SCAnew}

The key ingredient for the proof of the main results is the following result from \cite[Theorem 4.3]{GaMaSp:21N}, adapted from the results in \cite[Chapter 4]{Ga:19}.

\begin{theorem}\mythmname{The high-frequency components of $S_k, \DL_k, \DL'_k$, and $H_k$ are semiclassical pseudodifferential operators}\label{thm:HFSD}
Let $\chi \in C^\infty_{c}(\Rea)$ with
$\supp(1-\chi) \cap [-1,1]=\emptyset$. 
Then
\beqs
\begin{gathered}
\big(I-\chi\big(|\hsc D'|_g^2\big)\big) S_k ,\,\, \,S_k\big(I-\chi\big(|\hsc D'|_g^2\big)\big)\in \hsc \Psi_\hsc^{-1}(\Gamma),\\
\big(I-\chi\big(|\hsc D'|_g^2\big)\big) \DL_k' ,\,\, \, \DL_k'\big(I-\chi\big(|\hsc D'|_g^2\big)\big)\in \hsc \Psi_\hsc^{-1}(\Gamma),\\
\big(I-\chi\big(|\hsc D'|_g^2\big)\big) \DL_k ,\,\, \, \DL_k\big(I-\chi\big(|\hsc D'|_g^2\big)\big)\in \hsc \Psi_\hsc^{-1}(\Gamma),\\
\big(I-\chi\big(|\hsc D'|_g^2\big)\big) H_k ,\,\, \, H_k\big(I-\chi\big(|\hsc D'|_g^2\big)\big)\in \hsc^{-1} \Psi_\hsc^{1}(\Gamma).
\end{gathered}
\eeqs
Moreover,
\beq\label{e:prin_symb_S}
\sigma_\hsc\big(\big(I-\chi\big(|k^{-1} D'|_g^2\big)\big) S_k\big)=\sigma_\hsc\big(S_k\big(1-\chi\big(|k^{-1} D'|_g^2\big)\big)= \dfrac{\big(1-\chi(|\xi'|_g^2)\big)}{2k\sqrt{|\xi'|_g^2-1}}
\eeq
and
\beq\label{e:H_prin_symb}
\sigma_\hsc\big(\big(I-\chi\big(|k^{-1} D'|_g^2\big)\big) H_k\big)=\sigma_\hsc\big(H_k\big(1-\chi\big(|k^{-1} D'|_g^2\big)\big)=-\big(1-\chi(|\xi'|_g^2)\big) \dfrac{k\sqrt{|\xi'|_g^2-1}}{2}.
\eeq
\end{theorem}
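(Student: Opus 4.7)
The plan is to follow the strategy in \cite[Chapter 4]{Ga:19}: represent the free-space Helmholtz Green's function as a semiclassical oscillatory integral, reduce the layer potentials to operators on $\Gamma$ via Fermi coordinates, and then evaluate the normal-direction integral by contour deformation in the elliptic region $|\xi'|_g^2>1$ cut out by $1-\chi(|\hsc D'|_g^2)$. In this region the symbol $|\xi|^2-1$ of $-\hsc^2\Delta-1$ is elliptic in $\xi_n$, so the analysis reduces cleanly to symbolic calculus on $\Gamma$.

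First, I would fix a small collar neighbourhood of $\Gamma$ with Fermi coordinates $(x',x_n)$, $\Gamma=\{x_n=0\}$, $x_n$ the signed distance to $\Gamma$. The free Green's function admits the representation
\begin{equation*}
G_k(x-y)=(2\pi\hsc)^{-d}\int_{\mathbb{R}^d}\frac{\re^{\ri(x-y)\cdot\xi/\hsc}}{|\xi|^2-1-\ri 0}\,\rd\xi,\qquad \hsc=k^{-1},
\end{equation*}
and $S_k,\DL_k,\DL_k',H_k$ are obtained by restricting, and/or differentiating in $x_n$ or $y_n$, the corresponding single- and double-layer potentials and then setting $x_n=y_n=0$. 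Next, I would compose with $1-\chi(|\hsc D'|_g^2)$, which restricts the tangential frequency to $|\xi'|_g^2\geq 1$; in this set the $\xi_n$-integral has simple poles at $\xi_n=\pm \ri\sqrt{|\xi'|_g^2-1}$ and can be evaluated by residues. This shows that, up to an $O(\hsc^\infty)_{\Psi_\hsc^{-\infty}}$ error coming from the compactly-supported cutoff localising to a neighbourhood of $\Gamma$, each operator is a semiclassical quantisation on $\Gamma$ of an explicit symbol, which must then be identified with the asserted principal symbol.

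The symbol computation goes as follows. For $(1-\chi(|\hsc D'|_g^2))S_k$ the residue contributes $\frac{1}{2\ri\sqrt{|\xi'|_g^2-1}}$, the factor $(2\pi\hsc)^{-d}\to(2\pi\hsc)^{-(d-1)}$ gives an additional power of $\hsc$, and once the rescaling $k^{-1}=\hsc$ is accounted for one arrives at principal symbol $\frac{1-\chi(|\xi'|_g^2)}{2k\sqrt{|\xi'|_g^2-1}}\in S^{-1}/\hsc S^{-2}$, with the full operator in $\hsc\Psi_\hsc^{-1}(\Gamma)$ by Theorem~\ref{thm:basicP}. For $H_k$ the two normal derivatives pull down $(\ri \xi_n/\hsc)(\ri\xi_n/\hsc)$ inside the integral, and after the residue the surviving factor is $-(|\xi'|_g^2-1)/(2\hsc^2\sqrt{|\xi'|_g^2-1})$, yielding principal symbol $-\frac{(1-\chi(|\xi'|_g^2))k\sqrt{|\xi'|_g^2-1}}{2}$ and membership in $\hsc^{-1}\Psi_\hsc^{1}(\Gamma)$. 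For $\DL_k$ and $\DL'_k$ exactly one normal derivative acts, producing an odd integrand in $\xi_n$ at leading order, so the principal contribution from the residue vanishes and one finds that these operators lie in $\hsc\Psi_\hsc^{-1}(\Gamma)$ as well (with no explicit principal symbol asserted in the statement). The statements with the cutoff on the right rather than the left follow by taking adjoints and using \eqref{e:multsymb} together with the reality of $1-\chi(|\xi'|_g^2)$.

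The main obstacle is the bookkeeping of the lower-order terms that arise from (i) the curvature of $\Gamma$ (i.e.\ the nontrivial Jacobian of the Fermi coordinates, which produces corrections governed by the second fundamental form), and (ii) the fact that the oscillatory integral must be expanded to all orders in $\hsc$ to verify that the remainder genuinely lies in the stated pseudodifferential class rather than merely being $L^2$-bounded. Both points are standard in the semiclassical layer-potential literature; the careful verification is precisely what \cite[Chapter 4]{Ga:19} carries out, and the proof here would consist of invoking those results and packaging the symbol computation above.
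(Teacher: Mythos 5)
The paper does not actually prove Theorem~\ref{thm:HFSD}: it states the result and attributes it directly to \cite[Theorem~4.3]{GaMaSp:21N}, which in turn adapts the analysis of \cite[Chapter~4]{Ga:19}; the only text surrounding the statement is a remark explaining the cosmetic differences between the present formulation and the one in \cite{GaMaSp:21N}. Your sketch is a reasonable reconstruction of what those cited references do, and your residue computations in the elliptic region $|\xi'|_g^2>1$ correctly reproduce the asserted principal symbols \eqref{e:prin_symb_S} and \eqref{e:H_prin_symb}, including the observation that the odd-in-$\xi_n$ integrand makes the leading contribution to $\DL_k$ and $\DL_k'$ vanish. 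One small normalisation slip: the semiclassical Fourier representation of $\Phi_k$ carries an extra factor of $\hsc^2$ (since $(-\hsc^2\Delta-1)\Phi_k=\hsc^2\delta$), which your displayed formula for $G_k$ omits; it is needed to make the powers of $\hsc$ come out to the stated $\hsc\Psi_\hsc^{-1}(\Gamma)$ and $\hsc^{-1}\Psi_\hsc^{1}(\Gamma)$, though you implicitly recover it in the final bookkeeping. Since the paper simply invokes the external result — exactly as you conclude one should, deferring the curvature corrections and the all-orders remainder estimate to \cite{Ga:19} — there is no alternate in-paper proof to compare against, and your account is consistent with what the paper relies on.
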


\bre
The statement of Theorem \ref{thm:HFSD} in \cite[Theorem 4.4]{GaMaSp:21N} differs from Theorem \ref{thm:HFSD} in the following two ways. First, \cite[Theorem 4.4]{GaMaSp:21N} has $\chi \in C^\infty_{c}(\Rea)$ satisfying $\supp (1-\chi) \cap [-2,2]=\emptyset$; however, the result holds under the condition that $\supp (1-\chi) \cap [-1,1]=\emptyset$ (with this condition appearing in the key ingredient for the proof of \cite[Theorem 4.4]{GaMaSp:21N}, namely \cite[Lemma 4.1]{GaMaSp:21N}). Second, \cite[Theorem 4.4]{GaMaSp:21N} is stated with high-frequency cut-offs of the form $1-\chi\hDarg$, whereas above we have $1-\chi(|\hsc D'|_g^2)$ -- either choice is possible, since $|\xi'|_g =1$ iff $|\xi'|_g^2=1$.
\ere

\subsection{Pseudodifferential Properties of functions of $-\hsc^2\Delta_\Gamma$}

Theorem \ref{thm:HFSD} is stated with the high-frequency cutoffs given by $( I- \chi\hDarg)$
where $\chi \in C^\infty_{c}(\Rea)$ with
$\supp(1-\chi) \cap [-1,1]=\emptyset$. 
The arguments in the rest of the paper also use high-frequency cutoffs defined in terms of functions of $-\hsc^2\Delta_\Gamma$; i.e., $( I- \chi(-\hsc^2 \Delta_\Gamma))$ for $\chi$ as above. 
The following results show how one can replace instances of $( I- \chi(-\hsc^2 \Delta_\Gamma))$ by $( I- \widetilde\chi\hDarg)$, where $\widetilde{\chi}$ is smaller than $\chi$.
Note that this replacement was  used in \cite{GaSp:22} to present the results of Theorem \ref{thm:HFSD} without explicitly using pseudodifferential operators; see \cite[Remark 4.6]{GaMaSp:21N}.

\ble[Pseudodifferential properties of $( I- \chi(-\hsc^2 \Delta_\Gamma))$]\label{lem:WFcutoff}
If $\chi \in C^\infty_{c}(\Rea)$
 with $\supp(1-\chi)\cap[-\Xi,\Xi]=\emptyset$, then $( I- \chi(-\hsc^2 \Delta_\Gamma))\in \Psi^0_\hsc(\Gamma)$ with
\beqs
\WF_\hsc\big( I- \chi(-\hsc^2 \Delta_\Gamma)\big) \subset 
\overline{\big\{ (x',\xi') : 1-\chi(|\xi'|_g^2)>0\big\}}
\subset \big\{(x',\xi') : |\xi'|_g^2 >\Xi \big\}.
\eeqs
\ele

\begin{proof}
First recall from the Helffer-Sj\"ostrand functional calculus (see
e.g.~\cite[Theorem 14.9]{Zw:12}) that $\chi(-\hsc^2\Delta_\Gamma)\in \Psi_{\hsc}^{-\infty}(\Gamma)$ which implies $(I-\chi(-\hsc^2\Delta_\Gamma))\in \Psi^0_{\hsc}(\Gamma)$.
By the definition of the wavefront set (Definition \ref{def:WF}), to prove the lemma, we need to show that for any $(x'_0,\xi'_0)\in T^*\Gamma$ such that $\chi(|\xi_0'|_{g(x_0')}^2)=1$, there is $A\in \Psi_{\hsc}^0(T^*\Gamma)$ elliptic at $(x_0,\xi_0)$ such that
\begin{equation}
\label{e:waveFrontNeed}
\|A(I-\chi(-\hsc^2\Delta_\Gamma))v\|_{H_{\hsc}^N}\leq C_N\hsc^N\|v\|_{H_{\hsc}^{-N}}.
\end{equation}
To do this, fix such an $(x_0',\xi_0')$ and let $\widetilde{\chi}\in C_{c}^\infty(\mathbb{R})$ with $\supp \widetilde{\chi}\cap \supp(1-\chi)=\emptyset$ and 
$\widetilde{\chi}(|\xi_0'|_{g(x_0')}^2)=1$. 
Then, by \cite[Theorem 14.9]{Zw:12} again, $\widetilde{\chi}(-\hsc^2\Delta_\Gamma)\in \Psi_{\hsc}^{-\infty}(T^*\Gamma)$ with principal symbol
$$
\sigma_\hsc\big(\widetilde{\chi}(-\hsc^2\Delta_\Gamma)\big)=\widetilde{\chi}(|\xi'|_g^2).
$$
We now show that \eqref{e:waveFrontNeed} is satisfied with $A= \widetilde{\chi}(-\hsc^2\Delta_\Gamma)$. Indeed, 
By the definition of $\widetilde\chi$, 
$$
\widetilde{\chi}(-\hsc^2\Delta_\Gamma)\big(I-\chi(-\hsc^2\Delta_\Gamma)\big)=0,
$$
so~\eqref{e:waveFrontNeed} certainly holds. 
Furthermore, since $\widetilde{\chi}(|\xi_0'|_{g(x_0')})=1$, $A=\widetilde{\chi}(-\hsc^2\Delta_\Gamma)$ is elliptic at $(x_0',\xi_0')$. 
Since $(x_0',\xi_0')$ with $\chi(|\xi_0'|_{g(x_0')})=1$ was arbitrary, the proof is complete.
\end{proof}

\begin{corollary}[Replacing $I- \chi(-\hsc^2 \Delta_\Gamma)$ by $( I- \widetilde{\chi}\hDarg\big)$]\label{cor:WFcutoff}
Suppose $\chi, \widetilde{\chi} \in C^\infty_{c}(\Rea)$ with both $\supp(1-\chi)\cap[-\Xi,\Xi]=\emptyset$ and $\supp(1-\widetilde\chi)\cap[-\Xi,\Xi]=\emptyset$ and
$\supp\widetilde{\chi}\cap \supp(1-\chi)=\emptyset$.
Then
\begin{equation}
\begin{gathered}
I- \chi(-\hsc^2 \Delta_\Gamma)
=\big( I- \widetilde{\chi}\hDarg\big)\big(I- \chi(-\hsc^2 \Delta_\Gamma)\big)+ O(\hsc^\infty)_{\Psi^{-\infty}_\hsc(\Gamma)},\\
I- \chi(-\hsc^2 \Delta_\Gamma)= \big(I- \chi(-\hsc^2 \Delta_\Gamma)\big)\big( I- \widetilde{\chi}\hDarg\big)+ O(\hsc^\infty)_{\Psi^{-\infty}_\hsc(\Gamma)}.
\end{gathered}
\label{e:HSparametrix}
\end{equation}
\end{corollary}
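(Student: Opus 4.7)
The plan is to write both identities as statements about the product of two pseudodifferential operators whose wavefront sets are disjoint, and then invoke \eqref{e:WFdisjoint}. For the first identity, subtracting the right-hand side from the left-hand side gives
\beqs
\big(I - \chi(-\hsc^2\Delta_\Gamma)\big) - \big(I - \widetilde\chi\hDarg\big)\big(I - \chi(-\hsc^2\Delta_\Gamma)\big) = \widetilde\chi\hDarg\,\big(I - \chi(-\hsc^2\Delta_\Gamma)\big),
\eeqs
so it suffices to show that the composition on the right is $O(\hsc^\infty)_{\Psi^{-\infty}_\hsc(\Gamma)}$.

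First I would identify both factors as elements of $\Psi^0_\hsc(\Gamma)$ and bound their wavefront sets. The operator $I - \chi(-\hsc^2\Delta_\Gamma)$ is handled directly by Lemma~\ref{lem:WFcutoff}, which gives
\beqs
\WF_\hsc\big(I - \chi(-\hsc^2\Delta_\Gamma)\big) \subset \overline{\{(x',\xi') : 1-\chi(|\xi'|_g^2) > 0\}} \subset \{(x',\xi') : |\xi'|_g^2 \in \supp(1-\chi)\}.
\eeqs
For $\widetilde\chi\hDarg$, since its symbol is $\widetilde\chi(|\xi'|_g^2) \in S^{-\infty}$ (being compactly supported in $\xi'$ by the compact support of $\widetilde\chi$ in $\Rea$), it lies in $\Psi^{-\infty}_\hsc(\Gamma)$, and by the analogue of \eqref{e:WFquant} on $\Gamma$,
\beqs
\WF_\hsc\big(\widetilde\chi\hDarg\big) \subset \supp\big(\widetilde\chi(|\xi'|_g^2)\big) = \{(x',\xi') : |\xi'|_g^2 \in \supp\widetilde\chi\}.
\eeqs

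The key hypothesis $\supp\widetilde\chi \cap \supp(1-\chi) = \emptyset$ (as subsets of $\Rea$) then forces the two wavefront sets above to be disjoint as subsets of $T^*\Gamma$, and \eqref{e:WFdisjoint} yields $\widetilde\chi\hDarg\,(I - \chi(-\hsc^2\Delta_\Gamma)) = O(\hsc^\infty)_{\Psi^{-\infty}_\hsc(\Gamma)}$, which is the first identity in \eqref{e:HSparametrix}.

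For the second identity, the cleanest route is to apply the first identity to $\overline\chi$ and $\overline{\widetilde\chi}$ (which satisfy the same support hypotheses since $\chi, \widetilde\chi$ are real-valued --- this can be arranged at no cost by replacing them with their real parts, or one can work with real-valued $\chi,\widetilde\chi$ from the start since the hypothesis only involves supports) and take adjoints, using \eqref{e:WFadjoint}; alternatively one repeats the argument above with the order of composition reversed, which is identical since the wavefront-set disjointness argument is symmetric in the two factors. There is no serious obstacle in this proof: the only subtlety is confirming that $\widetilde\chi\hDarg$ has the expected wavefront bound, which follows from the fact that compact support of $\widetilde\chi$ in $\Rea$ makes $\widetilde\chi(|\xi'|_g^2)$ compactly supported in the fiber variable and hence residual in the classical sense away from $\{|\xi'|_g^2 \in \supp\widetilde\chi\}$.
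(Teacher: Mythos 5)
Your proof is correct and follows essentially the same route as the paper's: both identify $\WF_\hsc(I-\chi(-\hsc^2\Delta_\Gamma))$ via Lemma~\ref{lem:WFcutoff}, bound $\WF_\hsc(\widetilde\chi\hDarg)$ using the support of $\widetilde\chi$, conclude disjointness from $\supp\widetilde\chi\cap\supp(1-\chi)=\emptyset$, and apply~\eqref{e:WFdisjoint}. The paper simply applies the disjointness directly to both ordered compositions rather than appealing to symmetry or adjoints, but this is a presentational difference only.
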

\bpf[Proof of Corollary \ref{cor:WFcutoff}]By Lemma \ref{lem:WFcutoff} and the assumption that $\supp\widetilde{\chi}\cap \supp(1-\chi)=\emptyset$,
\beqs
\WF_\hsc\big( I- \chi(-\hsc^2 \Delta_\Gamma)\big)\cap \WF_\hsc(\widetilde{\chi}\hDarg)=\emptyset
\eeqs
Therefore,
\begin{align*}
\big(I-\widetilde{\chi}\hDarg\big)\big(I-\chi(-\hsc^2\Delta_\Gamma)\big)&=\big(I-\chi(-\hsc^2\Delta_\Gamma)\big) +\widetilde{\chi}\hDarg\big(I-\chi(-\hsc^2\Delta_\Gamma)\big)\\
&=\big(I-\chi(-\hsc^2\Delta_\Gamma)\big) +O(\hsc^\infty)_{\Psi_{\hsc}^{-\infty}(\Gamma)},
\end{align*}
and
\begin{align*}
\big(I-\chi(-\hsc^2\Delta_\Gamma)\big)\big(I-\widetilde{\chi}\hDarg\big)&=\big(I-\chi(-\hsc^2\Delta_\Gamma)\big) +\big(I-\chi(-\hsc^2\Delta_\Gamma)\big)\widetilde{\chi}\hDarg\\
&=\big(I-\chi(-\hsc^2\Delta_\Gamma)\big) +O(\hsc^\infty)_{\Psi_{\hsc}^{-\infty}(\Gamma)}.
\end{align*}
\epf

\section{
Convergence of projection methods in an abstract framework
}\label{sec:abstract}

\subsection{New abstract result on convergence of the projection method}

\begin{assumption}[Assumptions on $\operator$ and its high-frequency components]
\label{ass:abstract1}

\

(i) $\operator:= I+L$ is bounded and invertible on $H^s(\Gamma)$ for all $s\in \Rea$, with $\pert :H^s(\Gamma)\to H^{s+1}(\Gamma)$
and $k\mapsto \pert$ continuous.

(ii) For any $\chi \in C^\infty_{c}(\Rea;[0,1])$ with $\supp(1-\chi)\cap [-1,1]=\emptyset$, 
$(I-\chi\hDarg)\pert$ and $\pert(I-\chi\hDarg)$ are both in $\Psi^{-1}_\hsc(\Gamma)$.

(iii) There is $L_{\max}>0$ such that for any $\chi \in C^\infty_{c}(\Rea;[0,1])$ with $\supp(1-\chi)\cap [-1,1]=\emptyset$,
\beqs
\sup_{T^*\Gamma} \big|1+\sigma^0_\hsc \big( \big(I- \chi \hDarg) \pert \big)(x',\xi')\big|^{-1}\leq L_{\max} 
\eeqs
\end{assumption}

%

\begin{theorem}[New abstract result on convergence of the projection method]\label{thm:MAT}
Suppose that $\operator$ satisfies Assumptions \ref{ass:abstract1} and \ref{ass:polyboundintro}.
Let $k_0>0$, $C_1>0$, let $V_N$ be a finite-dimensional subspace of $H^s_\hsc(\Gamma)$ and let $P_N: H^s_\hsc(\Gamma) \to V_N$ be a projection satisfying
\beq\label{e:MATproj}
\N{(I-P_N)}_{H^t_\hsc\to H^q_\hsc} \leq C_1 \left(\frac{k}{N}\right)^{t-q}
\eeq
for all $N\geq k$, and $(t,q)$ equal to each of
\beq\label{e:approx_pairs}
(t_{\max},s), \quad (s+1,s), \quad (s,q_{\min}),\quad (s+1,q_{\min}),\quad (s,s),
\eeq
with $q_{\min}\leq s\leq t_{\max}-1$.

Given $\e>0$, there are $c,C>0$ such that the following holds. For all $k\geq k_0,\,k\notin\cJ$, $f\in H^s_{k}(\Gamma)$,  and $N$ satisfying 
\beq\label{e:MATthres}
\left(\frac{k}{N}\right)^{t_{\max} -q_{\min}} \rho \leq c
\eeq
the solution $v_N \in V_N$ to
\beq\label{e:MATprojection}
(I+P_N \pert) v_N = P_N f
\eeq
exists, is unique, and satisfies the quasi-optimal error estimate
\begin{multline}
\label{e:MATqo}
\N{v-v_N}_{H^s_\hsc(\Gamma)}
\\\leq \Bigg(  L_{\max} \|I-P_N\|^2_{\Hshts} +C\bigg(\left(\frac{k}{N}\right)^{s-q_{\min}}\rho+k^{-1}+k/N\bigg) \Bigg)
\N{(I-P_N)v}_{H^s_\hsc(\Gamma)},
\end{multline}
where $v\in H^s_\hsc(\Gamma)$ is the solution of
\beq\label{e:MATequation}
\operator v:=(I+L)v =f.
\eeq

Moreover, for $\chi\in C_{c}^\infty(\mathbb{R};[0,1])$ with $\supp (1-\chi)\cap [-1,1]=\emptyset$, 
\begin{multline}
\label{e:MATqoHF}
\N{
(I-1_{[-1-\e,1+\e]}(-\hsc^2\Delta_\Gamma))
(v-v_N)}_{H^s_\hsc(\Gamma)}
\\\leq \Bigg(  L_{\max} \|I-P_N\|^2_{\Hshts} +C\bigg(\left(\frac{k}{N}\right)^{t_{\max}-q_{\min}}\rho+k^{-1}+k/N\bigg) \Bigg)
\N{(I-P_N)v}_{H^s_\hsc(\Gamma)}.
\end{multline}
\end{theorem}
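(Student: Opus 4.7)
My plan is to combine the semiclassical ellipticity of $I+\pert$ at frequencies $\gtrsim k$ granted by Assumption~\ref{ass:abstract1}, the polynomial solvability bound $\rho$ from Assumption~\ref{ass:polyboundintro} at frequencies $\lesssim k$, and a Schatz/Aubin--Nitsche duality argument carried out in the weighted Sobolev scale $\Hsh$. Fix $\chi\in C_c^\infty(\mathbb{R};[0,1])$ with $\chi\equiv 1$ on $[-1,1]$ and $\supp(1-\chi)\cap[-1,1]=\emptyset$, and set $\chi_{\mathscr{H}}:=I-\chi(|\hsc D'|_g^2)$, $\chi_{\mathscr{L}}:=I-\chi_{\mathscr{H}}$. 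By Assumption~\ref{ass:abstract1}(ii), $\chi_{\mathscr{H}}\pert\in\Psi^{-1}_\hsc(\Gamma)$, so $I+\chi_{\mathscr{H}}\pert\in\Psi^0_\hsc(\Gamma)$; by Assumption~\ref{ass:abstract1}(iii) its principal symbol is bounded below in modulus by $L_{\max}^{-1}$ on all of $T^*\Gamma$. Corollary~\ref{cor:elliptic}(i) combined with Lemma~\ref{l:HsEstimates} then yield $(I+\chi_{\mathscr{H}}\pert)^{-1}\in\Psi^0_\hsc(\Gamma)$ with $\|(I+\chi_{\mathscr{H}}\pert)^{-1}\|_{\Hshts}\leq L_{\max}+C\hsc$, and Corollary~\ref{cor:WFcutoff} will let me interchange $\chi_{\mathscr{H}}$ freely with $I-\widetilde{\chi}(-\hsc^2\Delta_\Gamma)$ modulo $O(\hsc^\infty)_{\Psi^{-\infty}}$ remainders.

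Next, the projection equation will be shown solvable by writing
\[
I+P_N\pert=(I+\chi_{\mathscr{H}}\pert)+\chi_{\mathscr{L}}\pert-(I-P_N)\pert
\]
and factoring $(I+\chi_{\mathscr{H}}\pert)$ to the left: the perturbation $(I+\chi_{\mathscr{H}}\pert)^{-1}(I-P_N)\pert$ has $\Hshts$ norm $\lesssim k/N$ by pair $(s+1,s)$ of~\eqref{e:MATproj} together with $\pert:\Hsh\to H^{s+1}_\hsc(\Gamma)$, while $(I+\chi_{\mathscr{H}}\pert)^{-1}\chi_{\mathscr{L}}\pert$, being smoothing, is handled by writing $\chi_{\mathscr{L}}\pert=\chi_{\mathscr{L}}(\operator-I)$ and invoking $\operator^{-1}$ (with norm $\rho$ for $k\notin\cJ$) against $(k/N)^{s-q_{\min}}$ from pair $(s,q_{\min})$; the threshold~\eqref{e:MATthres} ensures the combined correction is a contraction, so $(I+P_N\pert)|_{V_N}$ is invertible. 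Then, using the identity $v-v_N=(I-P_N)v-(I+P_N\pert)^{-1}P_N\pert(I-P_N)v$ (with $\operator v=f$), I will obtain~\eqref{e:MATqo} by a Schatz-type duality in $\Hsh$: the high-frequency block $\chi_{\mathscr{H}}\pert(I-P_N)v$ is inverted by the parametrix of $I+\chi_{\mathscr{H}}\pert$, and an additional $(I-P_N)$ factor is introduced on the output side via the adjoint problem (which by Assumption~\ref{ass:abstract1}(iii) and \eqref{e:WFadjoint} is again globally elliptic); together with the incoming $(I-P_N)$ this produces the square $L_{\max}\|I-P_N\|^2_{\Hshts}$. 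The low-frequency block $\chi_{\mathscr{L}}\pert(I-P_N)v$ is estimated via $\operator^{-1}$ against pair $(s+1,q_{\min})$ to yield the $\rho(k/N)^{s-q_{\min}}$ term, while residual $O(\hsc^\infty)_{\Psi^{-\infty}}$ pseudodifferential errors produce the $k^{-1}$ contribution. For the high-frequency estimate~\eqref{e:MATqoHF}, applying the projector $I-\mathbf{1}_{[-1-\epsilon,1+\epsilon]}(-\hsc^2\Delta_\Gamma)$ to the error identity commutes (modulo $O(\hsc^\infty)$) with $\chi_{\mathscr{H}}\pert$, and the low-frequency remainder is now rebounded using pair $(t_{\max},s)$ of~\eqref{e:MATproj} together with the fact that the high-frequency component of $v$ is controlled in $H^{t_{\max}}_\hsc(\Gamma)$ through $\operator^{-1}$, upgrading the exponent $s-q_{\min}$ to $t_{\max}-q_{\min}$.

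The main obstacle will be the Schatz duality step: to obtain the squared factor $L_{\max}\|I-P_N\|^2_{\Hshts}$ rather than $\rho\|I-P_N\|_{\Hshts}$ on the high-frequency contribution, the argument must be routed through the adjoint of $I+\chi_{\mathscr{H}}\pert$, and it must be verified that the two $(I-P_N)$ factors combine multiplicatively in the $\Hshts$ norm. Correctly selecting which of the five approximation pairs in~\eqref{e:approx_pairs} to use at each stage -- ensuring the $L_{\max}$ and $\rho$ constants land in the right places and that residual $O(\hsc^\infty)$ remainders contribute only the $k^{-1}$ term -- will require careful bookkeeping.
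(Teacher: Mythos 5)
Your decomposition for invertibility,
\[
I+P_N\pert=(I+\chi_{\mathscr{H}}\pert)+\chi_{\mathscr{L}}\pert-(I-P_N)\pert,
\]
contains the term $\chi_{\mathscr{L}}\pert$, which is \emph{not} small: it has $O(1)$ norm on $\Hsh$ (it is $\chi_{\mathscr{L}}(\operator-I)$, and $\|\operator\|_{\Hsht}\sim 1$). There is no mechanism, and in particular the threshold \eqref{e:MATthres} provides none, to drive this term to zero, so the Neumann-series argument against $(I+\chi_{\mathscr{H}}\pert)^{-1}$ fails at the first step. The paper avoids exactly this problem by factoring differently: it writes
\[
I+P_N\pert=\Big(I+(P_N-I)\pert(I+\pert)^{-1}\Big)(I+\pert),
\]
so that the perturbation that needs to be small carries the $(P_N-I)$ factor from the outset. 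Inverting $I+L$ (rather than $I+\chi_{\mathscr{H}}L$) absorbs the problematic low-frequency part of $L$ into a known-invertible operator, at the price of a factor $\rho$ that must be controlled by the projection estimates.

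Even granting invertibility, the core of the paper's argument is the change-of-norm trick, which your plan omits. After the block decomposition of $(P_N-I)\pert(I+\pert)^{-1}$ along $\Pi_{\mathscr{L}}\oplus\Pi_{\mathscr{H}}$, the naive bound (see \eqref{e:blockDecomp1}) has a high-to-low block of size $(k/N)^{t_{\max}-s}\rho$, which is larger than the threshold quantity $(k/N)^{t_{\max}-q_{\min}}\rho$ and would force a more restrictive condition than \eqref{e:MATthres}. The paper conjugates by the diagonal operator $\mathcal{C}$ in \eqref{e:newNorm} to balance the two off-diagonal blocks, after which the worst entry is exactly the threshold quantity. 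Without this step you cannot reach the stated threshold. Finally, the squared factor $L_{\max}\|I-P_N\|^2_{\Hshts}$ in \eqref{e:MATqo} is obtained algebraically, not by a Schatz/Aubin--Nitsche duality: one factor of $(I-P_N)$ comes from Lemma \ref{lem:QOabs} ($v-v_N=(I+P_N\pert)^{-1}(I-P_N)(I-P_N)v$), and the second from the observation that if $(I+(P_N-I)\pert(I+\pert)^{-1})w=(I-P_N)f$ then $P_Nw=0$, which lets one insert $(I-P_N)$ on the left of the inverse in \eqref{e:thething}. The duality machinery you invoke is not the mechanism here and would not reproduce the precise constant structure.
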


\subsection{Results about microlocal properties of $\operator$ under Assumption \ref{ass:abstract1} needed for the proof of Theorem \ref{thm:MAT}}\label{sec:microlocalA}

\ble[Wavefront sets of $(I- \chi \hDarg) \pert$ and $\pert(I- \chi \hDarg)$]\label{lem:WF}
If $\operator$ satisfies Assumption \ref{ass:abstract1}, then for any $\chi \in C^\infty_{c}(\Rea;[0,1])$ with $\supp(1-\chi)\cap [-1,1]=\emptyset$ 
\beq\label{e:WFlemma}
\WF_\hsc \Big( \big(I- \chi \hDarg\big) \pert \Big)\cup
\WF_\hsc \Big(\pert \big(I- \chi \hDarg\big)\Big)
\subset
\overline{\big\{ (x',\xi') : 1-\chi(|\xi'|_g^2)>0\big\}}.
\eeq
\ele

\bpf
Let
\beqs
(x_0,\xi_0) \in
\big(\overline{\big\{ (x',\xi') : 1-\chi(|\xi'|_g^2)>0\big\}}\big)^c
\eeqs
Let $B$ be elliptic in a neighbourhood of $(x_0,\xi_0)$ and such that
\beq\label{e:WFB}
\WF_\hsc(B) \Subset 
\big(\overline{\big\{ (x',\xi') : 1-\chi(|\xi'|_g^2)>0\big\}}\big)^c
\eeq
(such a $B$ exists since the set on the right is open). 
By the definition of $\WF_\hsc$ (Definition \ref{def:WF}) it is sufficient to prove that
\beq\label{e:STPincl1}
B(I- \chi \hDarg) \pert=O(\hsc^\infty)_{\Psi^{-\infty}_\hsc(\Gamma)}\quad\tand\quad
B\pert (I- \chi \hDarg)=O(\hsc^\infty)_{\Psi^{-\infty}_\hsc(\Gamma)}.
\eeq
By \eqref{e:WFquant},
\beq\label{e:WFchi}
\WF_\hsc  \big(I- \chi \hDarg\big) \subset
\overline{\big\{ (x',\xi') : 1-\chi(|\xi'|_g^2)>0\big\}}.
\eeq
Therefore, by \eqref{e:WFB}, $\WF_\hsc (B) \cap \WF_\hsc (I- \chi \hDarg) = \emptyset$, and thus
the first equation in \eqref{e:STPincl1} holds by \eqref{e:WFdisjoint}.


To prove the second equation in \eqref{e:STPincl1},
choose $\widetilde{\chi}\in C^\infty_{c}(\Rea^d;[0,1])$ with $\widetilde{\chi} \equiv 1$ on $[-1,1]$, 
$\supp \widetilde \chi \cap \supp (1-\chi)=\emptyset$, and 
\beqs
\WF_\hsc(B) \subset
\big(\overline{\big\{ (x',\xi') : 1-\widetilde\chi(|\xi'|_g^2)>0\big\}}\big)^c;
\eeqs
such a choice is possible because of \eqref{e:WFB} (i.e., because there is space between $\WF_\hsc(B)$ and where $\chi \not\equiv 1$).
Observe that this choice of $\widetilde{\chi}$ and the property \eqref{e:WFchi} (with $\chi$ replaced by $\widetilde{\chi}$) imply that
\beq\label{e:2crows1}
\WF_\hsc(B) \cap \WF_\hsc \big(I- \widetilde{\chi} \hDarg\big) = \emptyset.
\eeq
Then, using that $\supp \widetilde \chi \cap \supp (1-\chi)=\emptyset$, \eqref{e:WFquant}, 
\eqref{e:WFdisjoint}, and Part (ii) of Theorem \ref{thm:basicP},
\beq\label{e:2crows2}
B\pert \big(I- \chi \hDarg\big)= B\pert \big(I- \chi \hDarg\big)(I- \widetilde{\chi} \hDarg\big) + O(\hsc^\infty)_{\Psi^{-\infty}_\hsc(\Gamma)}.
\eeq
By \eqref{e:WF_prod} and the assumption that $\pert \big(I- \chi \hDarg\big)\in\Psi_\hsc^{-1}(\Gamma)$ (in Part (ii) of Assumption \ref{ass:abstract1}),
\beqs
\WF_\hsc\big(B\pert \big(I- \chi \hDarg\big)\big) \subset \WF_\hsc(B).
\eeqs
This inclusion combined with \eqref{e:2crows1}, \eqref{e:WFdisjoint},
and \eqref{e:2crows2}, imply that $B\pert \big(I- \chi \hDarg\big)=O(\hsc^\infty)_{\Psi^{-\infty}_\hsc(\Gamma)}$, which proves the second equation in \eqref{e:STPincl1} and thus completes the proof.
\epf

\ble[Ellipticity of $ (I-\chi\hDarg)\operator$ on high frequencies]\label{lem:elliptic_HF}
If $\operator$ satisfies Assumption \ref{ass:abstract1}, then for any $\chi \in C^\infty_{c}(\Rea;[0,1])$ with $\supp(1-\chi)\cap [-1,1]=\emptyset$

(a) $(I-\chi\hDarg)\operator$ and $\operator(I-\chi\hDarg) \in \Psi^0_\hsc(\Gamma)$ are elliptic on $\{ (x',\xi') : 1-\chi(|\xi'|_g^2)>0\}$, and

(b) $I + (I-\chi\hDarg)\pert$ and $I + \pert(I-\chi\hDarg) \in \Psi^0_\hsc(\Gamma)$ are elliptic on $T^*\Gamma$.
\ele

\bpf
Part (b) follows immediately from Part (iii) of Assumption \ref{ass:abstract1} and the definition of ellipticity (Definition \ref{def:elliptic}).
For Part (a), given $\chi$ as in the statement, let $\widetilde{\chi} \in C^\infty_{c}(\Rea;[0,1])$ with $\supp(1-\widetilde\chi)\cap [-1,1]=\emptyset$
and $\supp(1-\chi)\cap \supp\widetilde\chi=\emptyset$. Then, by Part (a), $I + (1-\widetilde{\chi}\hDarg) \pert$ is elliptic on $T^*\Gamma$, and thus 
$(1-\chi\hDarg)(I + (1-\widetilde{\chi}\hDarg) \pert)$ is elliptic on $\{ (x',\xi') : 1-\chi(|\xi'|_g^2)>0\}$.
Since $(1-\chi)(1-\widetilde{\chi}) = (1-\chi)$, Part (a) follows.
\epf

\begin{corollary}[Ellipticity of $ (I-\chi\hDarg)\operator^*$ on high frequencies]
\label{cor:adjoint_elliptic}
If $\operator$ satisfies Assumption \ref{ass:abstract1}, then $(I-\chi\hDarg)\operator^*$ and $\operator^*(I-\chi\hDarg) \in \Psi^0_\hsc(\Gamma)$ are elliptic on $\{ (x,\xi') : 1-\chi(|\xi'|_g^2)>0\}$.
\end{corollary}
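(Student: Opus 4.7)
The plan is to derive Corollary \ref{cor:adjoint_elliptic} directly from Lemma \ref{lem:elliptic_HF}(a) by taking $L^2(\Gamma)$-adjoints. The two key facts I will use are that $I-\chi\hDarg$ is self-adjoint (since $\chi$ is real-valued and $-\hsc^2\Delta_\Gamma$ is self-adjoint on $L^2(\Gamma)$, so $1-\chi$ applied via functional calculus produces a self-adjoint operator) and that taking adjoints preserves membership of $\Psi^0_\hsc(\Gamma)$ and preserves ellipticity, the latter because $\sigma_\hsc(A^*) = \overline{\sigma_\hsc(A)}$ by \eqref{e:multsymb}.

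First I would record the identities
\[
\bigl((I-\chi\hDarg)\operator\bigr)^* \;=\; \operator^*(I-\chi\hDarg), \qquad \bigl(\operator(I-\chi\hDarg)\bigr)^* \;=\; (I-\chi\hDarg)\operator^*,
\]
using self-adjointness of $I-\chi\hDarg$. By Lemma \ref{lem:elliptic_HF}(a), the two operators on the left are in $\Psi^0_\hsc(\Gamma)$ and are elliptic on the set $\{(x',\xi') : 1-\chi(|\xi'|_g^2)>0\}$, so the two operators on the right inherit membership of $\Psi^0_\hsc(\Gamma)$.

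Next I would apply the adjoint formula for the principal symbol in \eqref{e:multsymb} to obtain
\[
\sigma_\hsc\bigl(\operator^*(I-\chi\hDarg)\bigr) \;=\; \overline{\sigma_\hsc\bigl((I-\chi\hDarg)\operator\bigr)},
\]
and analogously for $(I-\chi\hDarg)\operator^*$. Since $|\overline{z}|=|z|$, the ellipticity condition \eqref{e:elliptic} on $\{1-\chi(|\xi'|_g^2)>0\}$ for the adjoint operators follows immediately from the corresponding ellipticity of $(I-\chi\hDarg)\operator$ and $\operator(I-\chi\hDarg)$ in Lemma \ref{lem:elliptic_HF}(a). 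There is no real obstacle here: the corollary is a formal consequence of the adjoint formula for principal symbols together with Lemma \ref{lem:elliptic_HF}, and the only point to verify with care is the self-adjointness of $I-\chi\hDarg$.
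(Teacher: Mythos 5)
Your proof hinges on the claim that $I-\chi\hDarg$ is self-adjoint, and you justify it by saying it is obtained via functional calculus from $-\hsc^2\Delta_\Gamma$. This is the gap. In this paper $\chi\hDarg = \Op_\hsc\big(\chi(|\xi'|_g^2)\big)$ is a (noncanonical) quantisation of the symbol $\chi(|\xi'|_g^2)$, not the operator $\chi(-\hsc^2\Delta_\Gamma)$ obtained by spectral functional calculus -- the paper goes to some lengths (Lemma \ref{lem:WFcutoff} and Corollary \ref{cor:WFcutoff}) precisely to compare these two distinct objects. The quantisation $\Op_\hsc$ on a manifold involves partitions of unity and coordinate charts, so $\Op_\hsc(a)$ need not be self-adjoint even when $a$ is real; one only has that the principal symbol of $(\chi\hDarg)^*$ equals $\chi(|\xi'|_g^2)$, i.e.\ $(\chi\hDarg)^* = \chi\hDarg + O(\hsc)_{\Psi^{-\infty}_\hsc}$, not equality.

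Because of this, your starting identity $\big((I-\chi\hDarg)\operator\big)^* = \operator^*(I-\chi\hDarg)$ is false; the correct one is $\big(\operator(I-\chi\hDarg)^*\big)^* = (I-\chi\hDarg)\operator^*$, which is where the paper actually starts. One then has to deal with the extra adjoint on the cutoff. The paper does this by observing that $\WF_\hsc\big((I-\chi\hDarg)^*\big) = \WF_\hsc(I-\chi\hDarg)$ (equation \eqref{e:WFadjoint}), so that by \eqref{e:WFdisjoint} one may insert a slightly stronger cutoff $\widetilde\chi$ (with $\widetilde\chi \equiv 1$ on $[-1,1]$ and $\supp\widetilde\chi \cap \supp(1-\chi)=\emptyset$) and write
\[
\operator(I-\chi\hDarg)^* = \operator\big(I-\widetilde\chi\hDarg\big)(I-\chi\hDarg)^*,
\]
from which the $\Psi^0_\hsc$ membership and the ellipticity on $\{1-\chi(|\xi'|_g^2)>0\}$ follow via Lemma \ref{lem:elliptic_HF}(a) for $\operator(I-\widetilde\chi\hDarg)$, the multiplicativity of principal symbols in \eqref{e:multsymb}, and $\{1-\chi>0\}\subset\{1-\widetilde\chi>0\}$. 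Your attempt correctly identifies that the adjoint-symbol formula $\sigma_\hsc(A^*)=\overline{\sigma_\hsc(A)}$ is the engine, but it skips the microlocal step that is needed to manage the non-self-adjointness of the cutoff.
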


\bpf
We prove the result for $(I-\chi\hDarg)\operator^*$, the proof for $\operator^*(I-\chi\hDarg)$ is analogous.
Since $(I-\chi\hDarg)\operator^* = (\operator(I-\chi\hDarg)^*)^*$, by the second equation in \eqref{e:multsymb} and the
definition of ellipticity (Definition \ref{def:elliptic}), it is sufficient to prove that
\beq\label{e:chest2}
\operator\big(I-\chi\hDarg\big)^*\quad\text{ is elliptic on } \quad\big\{ (x',\xi') : 1-\chi(|\xi'|_g^2)>0\big\}.
\eeq
Choose $\widetilde{\chi}\in C^\infty_{c}(\Rea;[0,1])$ with $\widetilde{\chi}\equiv 1$ on $[-1,1]$ and $\supp \widetilde{\chi} \cap \supp(1-\chi)=\emptyset$. Then, by \eqref{e:WFadjoint} and \eqref{e:WFdisjoint},
\beq\label{e:chest1}
\operator\big(I-\chi\hDarg\big)^* = \operator \big(I-\widetilde{\chi}\hDarg\big)\big(I-\chi\hDarg\big)^*.
\eeq
By Lemma \ref{lem:elliptic_HF}, $\operator  (I-\widetilde{\chi}\hDarg)$ is elliptic on $\{ (x,\xi') : 1-\widetilde{\chi}(|\xi'|_g^2)>0\}$. Since $\{ 1- \chi >0\} \subset \{ 1- \widetilde{\chi}>0\}$, $\operator  (I-\widetilde{\chi}\hDarg)$ is elliptic on $\{ (x,\xi') : 1-\chi(|\xi'|_g^2)>0\}$. Then \eqref{e:chest2} follows by \eqref{e:chest1}, both equations in \eqref{e:multsymb}, and the definition of ellipticity (Definition \ref{def:elliptic}).
\epf

\

Recall that, since the operator $\operator$ is a compact perturbation of the identity, $\rho$ is bounded below by a $k$-independent constant (see 
Lemma \ref{lem:inversebound} below).

\ble[Norm of $\operator^{-1}$ on $\Hsh$ bounded by norm on $\LtG$
]\label{lem:2}
Suppose that $\operator$ satisfies Assumption \ref{ass:abstract1}.
Then, given $s\in \mathbb{R}$ and $k_0>0$, there exists $C>0$ such that, for all $k\geq k_0$,
$\operator$ is invertible on $H^s_\hsc(\Gamma)$ with
\beq\label{e:norminv2}
\N{\operator^{-1}}_{H^{s}_\hsc(\Gamma)\rightarrow H^{s}_\hsc(\Gamma)} \leq C\rho.
\eeq
\ele

\bpf
Because $\operator$ is invertible on $H^s(\Gamma)$ for all $k>0$ by Part (i) of Assumption \ref{ass:abstract1},
it is sufficient to prove the result for $k\geq k_0$ with $k_0$ sufficiently large.
We first prove this result for $s>0$.
Proving the bound \eqref{e:norminv2} is equivalent to proving
that if $\operator\phi = g$ with $\phi, g\in H^s_\hsc(\Gamma)$, then
\beq\label{e:norminv3}
\N{\phi}_{H^s_\hsc(\Gamma)}\leq C \rho \N{g}_{H^s_\hsc(\Gamma)}.
\eeq
Let $\chi,\widetilde{\chi}\in C^\infty_{c}(\Rea;[0,1])$ with both $\supp(1-\chi)\cap[-1,1]=\emptyset$ and $\supp(1-\widetilde\chi)\cap[-1,1]=\emptyset$, and $\supp \chi \cap \supp(1-\widetilde{\chi})=\emptyset$. 
Thus, by \eqref{e:WFchi} and Lemma \ref{lem:elliptic_HF},
\beq\label{e:2crows3}
(I-\chi\hDarg)\operator\quad \text{ is elliptic on } \quad\WF_\hsc\big( I- \widetilde{\chi}\hDarg\big).
\eeq

The idea of the proof is to write
\beqs
\phi = \big(1-\widetilde{\chi}\hDarg\big)\phi + \widetilde{\chi}\hDarg\phi,
\eeqs
estimate the high-frequency components of $\phi$ (i.e., $(1-\widetilde{\chi}\hDarg)\phi$) using ellipticity of $(I-\chi\hDarg)\operator$ on high frequencies 
and estimate the low-frequency components
(i.e., $\widetilde{\chi}\hDarg\phi$) by the property \eqref{e:frequencycutoff2} of frequency cut-offs.

Indeed, by \eqref{e:2crows3} and \eqref{e:ellipticestimate}, given $s\in \Rea$ and $M>0$, there exists $C,C',\hsc_0>0$ such that, for all $0<\hsc\leq \hsc_0$,
\begin{align}\nonumber
\N{(1-\widetilde{\chi}\hDarg)\phi}_{H^s_\hsc(\Gamma)} &\leq C\Big( \N{(I-\chi\hDarg)g}_{H^s_\hsc(\Gamma)} + \hsc^M \N{\phi}_{H^{s}_\hsc(\Gamma)}\Big) \\
&\leq C'\Big( \N{g}_{H^s_\hsc(\Gamma)} + \hsc^M \N{\phi}_{H^{s}_\hsc(\Gamma)}\Big).\label{e:lem22}
\end{align}
For the bound on the low-frequencies, by \eqref{e:frequencycutoff2}, given $s\geq 0$ and $\hsc_0>0$, there exists $C''>0$ such that, for all $0<\hsc\leq \hsc_0$,
\begin{align}\nonumber
\N{\widetilde{\chi}\hDarg \phi }_{H^s_\hsc(\Gamma)} \leq C'' \N{\phi}_{\LtG} &\leq C''\rho \N{g}_{\LtG}\\
& \leq C''
\rho \N{g}_{H^s_\hsc(\Gamma)}\label{e:lem23};
\end{align}
the result \eqref{e:norminv3} then follows by combining \eqref{e:lem22} and \eqref{e:lem23} and reducing $\hsc_0$ if necessary to absorb the
$\hsc^M \N{\phi}_{L^2(\Gamma)}$ term into the left-hand side.

Since $\|\operator^{-1}\|_{H^{s}_\hsc(\Gamma) \to H^{s}_\hsc(\Gamma)} = \|(\operator^*)^{-1} \|_{H^{-s}_\hsc(\Gamma) \to H^{-s}_\hsc(\Gamma)}$, the result for $s<0$ follows by applying the above argument to $\operator^*$ using Corollary \ref{cor:adjoint_elliptic}.
\epf

\

We record the following simple corollary of Lemma \ref{lem:2}.
\begin{corollary}\label{cor:invertMe}
Suppose that $\operator$ satisfies Assumption \ref{ass:abstract1}.
Then, given $s\in\Rea$ and $k_0>0$, there exists $C>0$ such that, for all $k\geq k_0$,
\begin{equation}
\|L(I+L)^{-1}\|_{\Hshts}\leq C\rho ,
\end{equation}
\end{corollary}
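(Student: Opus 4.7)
The plan is to reduce the bound for $L(I+L)^{-1}$ to the bound for $(I+L)^{-1}$ already established in Lemma \ref{lem:2}, via the elementary identity
\[
L(I+L)^{-1} = (I+L-I)(I+L)^{-1} = I - (I+L)^{-1}.
\]
First I would verify that the right-hand side is a well-defined bounded operator on $H^s_\hsc(\Gamma)$ for $k\geq k_0$: this is immediate since $\operator = I+L$ is invertible on $H^s_\hsc(\Gamma)$ by Lemma \ref{lem:2}.

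Next I would apply the triangle inequality in the operator norm on $H^s_\hsc(\Gamma)$, obtaining
\[
\|L(I+L)^{-1}\|_{\Hshts} \leq \|I\|_{\Hshts} + \|(I+L)^{-1}\|_{\Hshts} \leq 1 + C_1 \rho,
\]
where in the last step I use the bound \eqref{e:norminv2} from Lemma \ref{lem:2}.

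Finally, I would invoke the fact, recorded in the paragraph just before Lemma \ref{lem:2} (and proved in Lemma \ref{lem:inversebound} below), that $\rho(k)$ is bounded below by a positive $k$-independent constant. This allows the additive constant $1$ to be absorbed into the $C_1\rho$ term, giving the claimed bound $\|L(I+L)^{-1}\|_{\Hshts}\leq C\rho$ with $C$ depending only on $s$ and $k_0$. No real obstacle arises: the statement is purely an algebraic consequence of Lemma \ref{lem:2} and the lower bound on $\rho$.
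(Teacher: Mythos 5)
Your proof is correct and uses exactly the same identity $L(I+L)^{-1}=I-(I+L)^{-1}$ as the paper, combined with Lemma \ref{lem:2} and the lower bound on $\rho$; you have simply spelled out the triangle-inequality and absorption steps that the paper leaves implicit.
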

\bpf
Since 
$$
\pert (I+\pert)^{-1}= I-(I+L)^{-1}
$$
the result follows from~\eqref{e:norminv2}.
\epf

\

A result analogous to Lemma \ref{lem:2} about $\operator$ (as opposed to $\operator^{-1}$) also holds. Although we only use it in \S\ref{sec:approx} below, we state and prove it here because of its similarity with Lemma \ref{lem:2}.

\ble[Norm of $\operator$ on $\Hsh$ bounded by norm on $\LtG$]\label{lem:normA}
Suppose that $\operator$ satisfies Assumption \ref{ass:abstract1}. Then, given $s\in \Rea, k_0>0$, there exists $C>0$ such that
\beqs
\N{\operator}_{\Hsht} \leq C \Big( 1+ \N{\operator}_{\LtGt}\Big) \quad\tfa k\geq k_0.
\eeqs
\ele

\bpf
Let  $\chi \in C^\infty_{c}(\Rea;[0,1])$ with $\chi\equiv 1$ on a neighbourhood of $[-1,1]$.
If $v\in H^s(\Gamma)$ with $s>0$, then by \eqref{e:frequencycutoff2}, the fact that $\big(I-\chi\hDarg\big)\operator \in \Psi_h^0(\Gamma)$ (by, e.g., Part (i) of Lemma \ref{lem:elliptic_HF}), and Part (ii) of Theorem \ref{thm:basicP}, there exist $C_1, C_2>0$ such that, for all $k\geq k_0$,
\begin{align*}
\N{\operator v}_{\Hsh} &\leq \N{\chi\hDarg \operator v}_{\Hsh} + \N{\big(I-\chi\hDarg\big)\operator v}_{\Hsh},\\
&\leq C_1 \N{\operator}_{\LtGt} \N{v}_{\LtG} + C_2\N{v}_{\Hsh};
\end{align*}
the result (with $s>0$) then follows since $\|v\|_{\LtG}\leq \|v\|_{\Hsh}$.

Similarly, if $v\in H^s(\Gamma)$ with $s<0$, then 
there exist $C_1, C_2>0$ such that, for all $k\geq k_0$,
\begin{align*}
\N{\operator v}_{\Hsh} &\leq \N{\operator \chi\hDarg v}_{\Hsh} + \N{\operator \big(I-\chi\hDarg\big)v}_{\Hsh},\\
&\leq \N{\operator \chi\hDarg v}_{\LtG} + C_2\N{v}_{\Hsh}\leq C_1 \N{\operator}_{\LtGt} \N{v}_{\Hsh} + C_2\N{v}_{\Hsh},
\end{align*}
which is the result with $s<0$.
\epf

\

Finally, we use the following description of $\operator^{-1}$ acting on high frequencies.
\ble[The high-frequency components of $\operator^{-1}$]\label{lem:HFinverse}
Suppose that $\operator$ satisfies Assumptions \ref{ass:abstract1} and \ref{ass:polyboundintro}. Let $k_0>0$ and
$\chi_1,\chi_2 \in C^\infty_{c}$ with $\supp(1-\chi_j) \cap [-1,1]=\emptyset$ 
and $\supp \chi_1 \cap \supp(1-\chi_2)=\emptyset$.

Then for all $k>k_0$, $k\notin \cJ$,  $I+ \pert(I-\chi_1\hDarg)$ and $I+ (I-\chi_1\hDarg)\pert$ are both in $\Psi^0_\hsc(\Gamma)$, are elliptic on $T^*\Gamma$,
\beq\label{e:inverse_cutoff_right}
(I+\pert)^{-1}\big(I-\chi_2\hDarg\big) = \big( I+ \pert\big(I-\chi_1\hDarg\big)\big)^{-1}\big(I-\chi_2\hDarg\big) + O(\hsc^\infty)_{
\Psi^{-\infty}_\hsc}
\eeq
and
\beq\label{e:inverse_cutoff_left}
\big(I-\chi_2\hDarg\big) (I+\pert)^{-1}= \big(I-\chi_2\hDarg\big)\big( I+ \big(I-\chi_1\hDarg\big)\pert\big)^{-1} + O(\hsc^\infty)_{
\Psi^{-\infty}_\hsc}.
\eeq
Furthermore, given $s \in \Rea$ and $k_0>0$, there is $C>0$ such that for all $k\geq k_0$
\begin{equation}
\label{e:invEst}
\begin{gathered}
\big\|\big(I+L\big(I-\chi_1\hDarg\big)\big)^{-1}\big\|_{H_{\hsc}^s(\Gamma)\to H_{\hsc}^s(\Gamma)}\leq L_{\max}+Ck^{-1},\\\
\big\|\big(I+\big(I-\chi_2\hDarg\big)L\big)^{-1}\big\|_{H_{\hsc}^s(\Gamma)\to H_{\hsc}^s(\Gamma)}\leq L_{\max}+Ck^{-1}.
\end{gathered}
\end{equation}
\ele

\bpf
We prove the result \eqref{e:inverse_cutoff_right} with the cutoffs on the right; the proof of \eqref{e:inverse_cutoff_left} with the cutoffs on the left is analogous.
By Lemma \ref{lem:elliptic_HF}, $I+ L(I-\chi_1\hDarg)\in \Psi^0_\hsc(\Gamma)$ is elliptic on $T^* \Gamma$ and hence, by Part (i) of Corollary \ref{cor:elliptic}, is invertible with inverse satisfying
$$
|\sigma_{\hsc}^0\big((I+ L(I-\chi_1\hDarg))^{-1}\big)|=1/|\sigma_{\hsc}^0\big(I+ L(I-\chi_1\hDarg)\big)|.
$$
In particular, by Part (iii) of Assumption~\ref{ass:abstract1}, and Lemma~\ref{l:HsEstimates}, the first estimate in~\eqref{e:invEst} holds.
Then
\begin{align*}
(I+L)^{-1} - \big( I+ L \big(I-\chi_1\hDarg\big)\big)^{-1} = -(I+L)^{-1} L \chi_1\hDarg \big( I+ L \big(I-\chi_1\hDarg\big)\big)^{-1}.
\end{align*}
Multiplying by $(1- \chi_2\hDarg)$, we have that
\begin{align*}
&(I+L)^{-1}\big(1- \chi_2\hDarg\big)  - \big( I+ L \big(I-\chi_1\hDarg\big)\big)^{-1}\big(1- \chi_2\hDarg\big)  \\
&\hspace{2cm}= -(I+L)^{-1} L \chi_1\hDarg \big( I+ L \big(I-\chi_1\hDarg\big)\big)^{-1}\big(1- \chi_2\hDarg\big).
\end{align*}
By Lemma \ref{lem:2} and Assumption \ref{ass:polyboundintro}, $(I+L)^{-1}: H^s_\hsc(\Gamma)\to H^s_\hsc(\Gamma)$ is polynomially bounded for all $s\in \Rea$. Then, the fact that $\supp \chi_1 \cap \supp(1-\chi_2)=\emptyset$ and the properties  \eqref{e:WF_prod}, \eqref{e:WFdisjoint}, and \eqref{e:WFchi} imply that the right-hand side of the last displayed equality is
$ O(\hsc^\infty)_{\Psi^{-\infty}_\hsc}$, and the result follows.
\epf

Combining Lemma \ref{lem:HFinverse} with Corollary \ref{cor:WFcutoff}, we obtain the following result.
\begin{corollary}\label{cor:nicebound1}
Suppose that $\operator$ satisfies Assumptions \ref{ass:abstract1} and \ref{ass:polyboundintro} and 
$\chi \in C^\infty_{c}(\Rea;[0,1])$ with $\supp(1-\chi) \cap [-1,1]=\emptyset$. Given $s\in \Rea$ and $k_0>0$ there exists $C>0$ such that for all $k\geq k_0$
\beqs
\big\| L (I+L)^{-1} \big( I - \chi(\hsc^2 \Delta_\Gamma)\big) \big\|_{H^s_\hsc \to H^{s+1}_\hsc} \leq C.
\eeqs
\end{corollary}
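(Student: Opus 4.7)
The plan is to combine Lemma~\ref{lem:HFinverse} with Corollary~\ref{cor:WFcutoff} (as the sentence introducing the corollary advertises), using crucially that, by Assumption~\ref{ass:abstract1}(ii), $L(I-\chi_1(|\hsc D'|_g^2))\in\Psi_\hsc^{-1}(\Gamma)$ and therefore gains one derivative.

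First I would introduce three nested cutoffs $\chi_1\prec\widetilde\chi\prec\chi$, i.e.\ $\chi_1,\widetilde\chi\in C^\infty_c(\Rea;[0,1])$ with $\supp(1-\chi_1)\cap[-1,1]=\supp(1-\widetilde\chi)\cap[-1,1]=\emptyset$, $\supp\widetilde\chi\cap\supp(1-\chi)=\emptyset$, and $\supp\chi_1\cap\supp(1-\widetilde\chi)=\emptyset$; such a nesting exists because $\chi\equiv 1$ in a neighbourhood of $[-1,1]$. Corollary~\ref{cor:WFcutoff} then gives
\[
I-\chi(-\hsc^2\Delta_\Gamma)=\bigl(I-\widetilde\chi(|\hsc D'|_g^2)\bigr)\bigl(I-\chi(-\hsc^2\Delta_\Gamma)\bigr)+R_1,\qquad R_1=O(\hsc^\infty)_{\Psi^{-\infty}_\hsc(\Gamma)},
\]
while \eqref{e:inverse_cutoff_right} of Lemma~\ref{lem:HFinverse}, with $\chi_2=\widetilde\chi$, yields
\[
(I+L)^{-1}\bigl(I-\widetilde\chi(|\hsc D'|_g^2)\bigr)=\bigl(I+L(I-\chi_1(|\hsc D'|_g^2))\bigr)^{-1}\bigl(I-\widetilde\chi(|\hsc D'|_g^2)\bigr)+R_2,
\]
with $R_2$ of the same type.

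Next, using the algebraic identity $L(I+L)^{-1}=I-(I+L)^{-1}$ together with $I-(I+A)^{-1}=A(I+A)^{-1}$ applied to $A=L(I-\chi_1(|\hsc D'|_g^2))$, I would combine the two displays to produce
\[
L(I+L)^{-1}\bigl(I-\chi(-\hsc^2\Delta_\Gamma)\bigr)=T_{\mathrm{main}}+T_{\mathrm{rem}},
\]
where $T_{\mathrm{main}}=L(I-\chi_1(|\hsc D'|_g^2))\bigl(I+L(I-\chi_1(|\hsc D'|_g^2))\bigr)^{-1}\bigl(I-\widetilde\chi(|\hsc D'|_g^2)\bigr)\bigl(I-\chi(-\hsc^2\Delta_\Gamma)\bigr)$ and $T_{\mathrm{rem}}$ is a sum of terms of the form $R_2(I-\chi(-\hsc^2\Delta_\Gamma))$ and $L(I+L)^{-1}R_1$. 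The factor $L(I-\chi_1(|\hsc D'|_g^2))\in\Psi_\hsc^{-1}$ maps $H^s_\hsc\to H^{s+1}_\hsc$ uniformly by Theorem~\ref{thm:basicP}(ii); the inverse factor is uniformly bounded on $H^s_\hsc$ by the first estimate in \eqref{e:invEst}; and the two cutoffs are uniformly bounded operators on $H^s_\hsc$. Hence $T_{\mathrm{main}}$ is bounded $H^s_\hsc\to H^{s+1}_\hsc$ with constant independent of $k$.

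The remainder $T_{\mathrm{rem}}$ is where the polynomial bound on $\rho$ enters. The term $R_2(I-\chi(-\hsc^2\Delta_\Gamma))$ is directly $O(\hsc^\infty)$ as a map $H^s_\hsc\to H^{s+1}_\hsc$, since $R_2\in O(\hsc^\infty)_{\Psi^{-\infty}_\hsc}$ and the cutoff is uniformly bounded. For $L(I+L)^{-1}R_1$ I would use Corollary~\ref{cor:invertMe} and Assumption~\ref{ass:polyboundintro} (for $k\notin\cJ$) to bound $\|L(I+L)^{-1}\|_{H^{s+1}_\hsc\to H^{s+1}_\hsc}\leq C\rho\leq Ck^{P_{\rm inv}}$, which is absorbed by the $O(\hsc^\infty)$ decay of $R_1$ from $H^s_\hsc$ into $H^{s+1}_\hsc$. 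The only real obstacle is the support bookkeeping needed to make Corollary~\ref{cor:WFcutoff} and Lemma~\ref{lem:HFinverse} apply simultaneously; this is handled once and for all by the nesting $\chi_1\prec\widetilde\chi\prec\chi$ inside the region $\{\chi\equiv 1\}$.
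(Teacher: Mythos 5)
Your proposal is correct, and it uses the same ingredients as the paper (Corollary~\ref{cor:WFcutoff}, Lemma~\ref{lem:HFinverse}, the $\Psi^{-1}_\hsc$ mapping property from Assumption~\ref{ass:abstract1}(ii), and Corollary~\ref{cor:invertMe} plus Assumption~\ref{ass:polyboundintro} to absorb remainders), but the algebraic route through these ingredients is genuinely different. After inserting $(I-\widetilde\chi\hDarg)$ on the right via Corollary~\ref{cor:WFcutoff}, the paper splits on the \emph{left} by inserting $\widetilde\chi\hDarg + (I-\widetilde\chi\hDarg)$ and treats the two pieces by different mechanisms: for the low-frequency piece the derivative gain comes from the smoothing of $\widetilde\chi\hDarg$ itself (via \eqref{e:frequencycutoff2}, after rewriting $L(I+L)^{-1}=I-(I+L)^{-1}$), while for the high-frequency piece it comes from $(I-\widetilde\chi\hDarg)L\in\Psi^{-1}_\hsc$. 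You instead perform a single factorization: combining \eqref{e:inverse_cutoff_right} with $L(I+L)^{-1}=I-(I+L)^{-1}$ and $I-(I+A)^{-1}=A(I+A)^{-1}$ turns the entire main term into $A(I+A)^{-1}(I-\widetilde\chi\hDarg)(I-\chi(-\hsc^2\Delta_\Gamma))$ with $A=L(I-\chi_1\hDarg)\in\Psi^{-1}_\hsc$, so the derivative gain comes from the one factor $A$ and the inverse is bounded by \eqref{e:invEst}. Your route costs a third nested cutoff $\chi_1$ but avoids the left-side case split and is arguably slightly more streamlined. One small caveat you noticed correctly: the polynomial bound on $\|L(I+L)^{-1}\|$ via Corollary~\ref{cor:invertMe} requires $k\notin\cJ$, as does Lemma~\ref{lem:HFinverse}; the statement of Corollary~\ref{cor:nicebound1} omits the $k\notin\cJ$ restriction, but this is a pre-existing imprecision in the paper's phrasing (the bound is inherited from Lemma~\ref{lem:HFinverse}), not a gap you introduced.
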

\bpf
Let $\widetilde\chi \in C^\infty_{c}(\Rea;[0,1])$ be such that $\supp(1-\widetilde\chi) \cap [-1,1]=\emptyset$ and 
$\supp(1-\chi)\cap \supp \widetilde\chi=\emptyset$. 
Then, by Corollary \ref{cor:WFcutoff} and Assumption \ref{ass:polyboundintro}
\begin{align*}
L (I+L)^{-1} \big( I - \chi(\hsc^2 \Delta_\Gamma)\big) = L (I+L)^{-1} \big( I - \widetilde\chi\hDarg\big)\big( I - \chi(\hsc^2 \Delta_\Gamma)\big) + O(\hsc^\infty)_{
\Psi^{-\infty}_\hsc}.
\end{align*}
On the one hand,
\begin{align*}
&\widetilde\chi\hDarg L (I+L)^{-1} \big( I - \widetilde\chi\hDarg\big)\big( 1 - \chi(\hsc^2 \Delta_\Gamma)\big)\\
&\qquad\qquad=\widetilde\chi\hDarg\big( I - (I+L)^{-1}\big)\big( 1 - \widetilde\chi\hDarg\big)\big( 1 - \chi(\hsc^2 \Delta_\Gamma)\big),
\end{align*}
and the bound 
\beq\label{e:STY1}
\big\|\widetilde\chi\hDarg L (I+L)^{-1} \big( 1 - \chi(\hsc^2 \Delta_\Gamma)\big)\big\|_{H^s_\hsc\to H^{s+1}_\hsc}\leq C
\eeq
then follows from \eqref{e:inverse_cutoff_right}, \eqref{e:invEst}, and \eqref{e:frequencycutoff2}.

On the other hand, 
\beq\label{e:STY2}
\big\|\big(I-\widetilde\chi\hDarg \big)L (I+L)^{-1} \big( I - \widetilde\chi\hDarg\big)\big( I - \chi(\hsc^2 \Delta_\Gamma)\big)\big\|_{H^s_\hsc\to H^{s+1}_\hsc}\leq C
\eeq
by Part (ii) of Assumption \ref{ass:abstract1}, \eqref{e:inverse_cutoff_right}, and \eqref{e:invEst}. The result then follows by combining \eqref{e:STY1} and \eqref{e:STY2}.
\epf

\subsection{The idea of the proof of Theorem \ref{thm:MAT}}

The proof of Theorem \ref{thm:MAT} starts by using the following lemma.

\ble[Quasi-optimality in terms of the norm of the discrete inverse]\label{lem:QOabs}
If $P_N: \Hsh \to V_N$ is a projection and $I+ P_N\pert : H^s_\hsc(\Gamma)\to H^s_\hsc(\Gamma)$ is invertible, then
the solution $v_N\in V_N$ to \eqref{e:MATprojection} exists, is unique, and satisfies
\beq\label{e:QOabs}
v-v_N= (I+P_N \pert)^{-1}(I-P_N) (I-P_N)v.
\eeq
\ele

\bpf
We first consider the equation \eqref{e:MATprojection} as an equation in $H^s_\hsc(\Gamma)$.
Since $I+P_N \pert:H^s_\hsc(\Gamma)\to H^s_\hsc(\Gamma)$ is invertible, the solution $v_N$ to \eqref{e:MATprojection} in $H^s_\hsc(\Gamma)$ exists and is unique as an element of $H^s_\hsc(\Gamma)$. Applying $(I-P_N)$ to \eqref{e:MATprojection}, we see that $(I-P_N)v_N=0$ and thus the solution $v_N\in V_N$; i.e., the equation \eqref{e:MATprojection} has a unique solution in $V_N$.
Then, by \eqref{e:MATequation} and \eqref{e:MATprojection},
\begin{align*}
(I+P_N \pert) (v-v_N) = (I+P_N \pert) v - P_N f & = v + P_N \pert v - P_N\big( (I+\pert)v\big)  = (I-P_N)v.
\end{align*}
Therefore, since $ (I-P_N)=  (I-P_N)^2$,
\begin{align*}
(v-v_N) = (I+P_N \pert)^{-1} (I-P_N)v=(I+P_N \pert)^{-1} (I-P_N)(I-P_N)v.
\end{align*}
\epf

To obtain conditions under which $I+ P_N\pert : H^s_\hsc(\Gamma)\to H^s_\hsc(\Gamma)$ is invertible (and thus under which we can use Lemma \ref{lem:QOabs}),
we write
\beq\label{e:JeffFav1}
(I+P_N\pert) = I +\pert + (P_N -I)\pert = \Big( I+ (P_N-I)\pert (I+\pert)^{-1}\Big) (I+\pert),
\eeq
and study when $ I+ (P_N-I)\pert (I+\pert)^{-1}$ is invertible. 
We then use that 
\beqs
\Big( I+ (P_N-I) \pert(I+\pert)^{-1}\Big)^{-1}(I-P_N) = (I-P_N)\Big( I+ (P_N-I) \pert(I+\pert)^{-1}\Big)^{-1}(I-P_N)
\eeqs
i.e., we can add a $(I-P_N)$ on the left.
Indeed, if $( I+ (P_N-I) \pert(I+\pert)^{-1})v= (I-P_N)f$, then $P_N v =0$ so that $v=(I-P_N)v$.
Therefore the operator on the right-hand side of \eqref{e:QOabs} can be written as 
\beq\label{e:thething}
(I+P_N \pert )^{-1}(I-P_N) = (I+\pert)^{-1} (I-P_N)\Big( I+ (P_N-I) \pert(I+\pert)^{-1}\Big)^{-1}(I-P_N).
\eeq

The first natural attempt is to show that $ I+ (P_N-I)\pert (I+\pert)^{-1}$ is invertible is to show that
\begin{equation}
\label{e:Neumann0}
\|(P_N-I)\pert (I+\pert)^{-1}\|_{\Hshts}<\frac{1}{2}, 
\end{equation}
in which case $\|(I+(P_N-I)\pert (I+\pert)^{-1})^{-1}\|_{\Hshts}\leq 2$. 

To understand when the condition \eqref{e:Neumann0} holds, given $\e>0$, we define low- and high-frequency projectors, 
\beq\label{e:highandlow}
\Pi_{\mathscr{L}}:= 1_{[-1-\e,1+\e]}(-\hsc^2\Delta_\Gamma)\quad\tand\quad
\Pi_{\mathscr{H}}:=(I-\Pi_{\mathscr{L}}),
\eeq
 and decompose $H_{\hsc}^s$ into an orthogonal sum $\Pi_{\mathscr{L}}H_{\hsc}^s\oplus \Pi_{\mathscr{H}}H_{\hsc}^s$. 
That is, we write
\beq\label{e:split1}
\begin{aligned}
&(P_N-I)\pert (I+\pert)^{-1}= \begin{pmatrix}\Pi_{\mathscr{L}}(P_N-I)\pert (I+\pert)^{-1}\Pi_{\mathscr{L}}&\Pi_{\mathscr{L}}(P_N-I)\pert (I+\pert)^{-1}\Pi_{\mathscr{H}}\\

\Pi_{\mathscr{H}}(P_N-I)\pert (I+\pert)^{-1}\Pi_{\mathscr{L}}&\Pi_{\mathscr{H}}(P_N-I)\pert (I+\pert)^{-1}\Pi_{\mathscr{H}}\end{pmatrix}.
\end{aligned}
\eeq

Using the results in \S\ref{sec:microlocalA}, we show below (see Lemma \ref{lem:split2}) that 
\begin{equation}
\label{e:blockDecomp1}
\begin{aligned}
&\big\|(P_N-I)\pert (I+\pert)^{-1}\big\|_{\Hshts} \leq 
C\begin{pmatrix}(k/N)^{t_{\max}-q_{\min}}\rho &(k/N)^{s+1-q_{\min}}\\
(k/N)^{t_{\max}-s}\rho&(k/N)
\end{pmatrix},
\end{aligned}
\end{equation}
where here and below we use the notation that
$$
\|A\|_{\Hshts}\leq C_1\begin{pmatrix} a_{\mathscr{LL}}&a_{\mathscr{LH}}\\a_{\mathscr{HL}}&a_{\mathscr{HH}}\end{pmatrix}, \quad\text{if}\quad
A=
\begin{pmatrix}
 \Pi_{\mathscr{L}} A \Pi_{\mathscr{L}}
 &\Pi_{\mathscr{L}} A \Pi_{\mathscr{H}}
 \\\Pi_{\mathscr{H}} A \Pi_{\mathscr{L}}&\Pi_{\mathscr{H}} A \Pi_{\mathscr{H}}\end{pmatrix}, 
$$
with
$$
\begin{aligned}
\|\Pi_{\mathscr{L}} A \Pi_{\mathscr{L}}\|_{\Hshts}&\leq C_1a_{\mathscr{LL}}&\|\Pi_{\mathscr{H}} A \Pi_{\mathscr{L}}\|_{\Hshts}&\leq C_1a_{\mathscr{HL}}\\
\|\Pi_{\mathscr{L}} A \Pi_{\mathscr{H}}\|_{\Hshts}&\leq C_1a_{\mathscr{LH}}&\|\Pi_{\mathscr{H}} A \Pi_{\mathscr{H}}\|_{\Hshts}&\leq C_1a_{\mathscr{HH}}.
\end{aligned}
$$
(Once can then easily check that if $\|A\|\leq C_1 \mathscr{A}$ and $\|B\|\leq C_2 \mathscr{B}$, then $\|AB\|\leq C_1 C_2 \mathscr{A}\mathscr{B}$.)

The largest matrix entry on the right-hand side of \eqref{e:blockDecomp1} is $(k/N)^{t_{\max}-s} \rho$, and demanding that this be sufficiently small is a more restrictive condition than \eqref{e:MATthres}.

We instead change the norm on $H_{\hsc}^{s}$ using an invertible operator $\mathcal{C}:\Hshts$. More precisely, we observe that 
$$
 I+ (P_N-I)\pert (I+\pert)^{-1}= \mathcal{C}^{-1} \Big(I +\mathcal{C}(P_N-I)\pert (I+\pert)^{-1}\mathcal{C}^{-1}\Big)\mathcal{C}.
$$
We now choose $\mathcal{C}$ to be a diagonal matrix with entries chosen so that the bottom-left and top-right entries of 
$\mathcal{C}(P_N-I)\pert (I+\pert)^{-1}\mathcal{C}^{-1}$ are equal; i.e., compared to $(P_N-I)\pert (I+\pert)^{-1}$ we make the bottom left entry smaller, at the cost of making the top right entry bigger.
The choice of $\mathcal{C}$ that achieves this is 
\begin{equation}
\mathcal{C}:=\begin{pmatrix} I&0\\0&(k/N)^{s-t_{\max}/2+1/2-q_{\min}/2}\rho ^{-1/2}I\end{pmatrix},
\label{e:newNorm}
\end{equation}
with then
\begin{equation}
\label{e:blockDecomp}
\begin{aligned}
&\|\mathcal{C}(P_N-I)\pert (I+\pert)^{-1}\mathcal{C}^{-1}\|_{\Hshts}\\
&\hspace{2cm} \leq 
C\begin{pmatrix}(k/N)^{t_{\max}-q_{\min}}\rho &
\Big( (k/N)^{t_{\max}-q_{\min}}\rho \Big)^{1/2} (k/N)^{1/2}
\\
\Big( (k/N)^{t_{\max}-q_{\min}}\rho \Big)^{1/2} (k/N)^{1/2}
&(k/N)
\end{pmatrix};
\end{aligned}
\end{equation}
observe that the largest entry in the matrix on the right-hand side is the top left entry, which is $(k/N)^{t_{\max}-q_{\min}}\rho$ as desired. 
The condition \eqref{e:MATthres} then ensures that 
\begin{equation}
\label{e:Neumann1}
\big\|\mathcal{C}(P_N-I)\pert (I+\pert)^{-1}\mathcal{C}^{-1}\big\|_{\Hshts}<\frac{1}{2},
\end{equation}
and so $I+(P_N-I)\pert (I+\pert)^{-1}$ is invertible and 
\beq\label{e:magicC1}
\Big(I+(P_N-I)\pert (I+\pert)^{-1}\Big)^{-1}= \mathcal{C}^{-1}\Big(I+\mathcal{C}(P_N-I)\pert (I+\pert)^{-1}\mathcal{C}^{-1}\Big)^{-1}\mathcal{C};
\eeq
The result then follows by decomposing $(I+L)^{-1}(I-P_N)$ into high- and low-frequency components similar to in \eqref{e:split1} (see Lemma \ref{lem:split1} below).

\subsection{Proof of Theorem \ref{thm:MAT}}\label{sec:proofMAT}

First observe that with $\Pi_{\mathscr{L}}$ and $\Pi_{\mathscr{H}}$ defined by \eqref{e:highandlow}, 
by the definition of $\|\cdot\|_{H^s_\hsc}$ \eqref{e:weightedNormGamma},
\beq\label{e:projectionbounds}
\|\Pi_{\mathscr{L}}\|_{H_{\hsc}^{-N}\to H_{\hsc}^{N}}\leq C,\quad\text{ so that }\quad \|\Pi_{\mathscr{H}}\|_{H_{\hsc}^t\to H_{\hsc}^t}\leq C.
\eeq

\ble[Bounds on the decomposition of $(P_N-I)\pert (I+\pert)^{-1}$]\label{lem:split2}
Let $k_0>0$, $q_{\min}\leq s\leq t_{\max}-1$, $\operator$ satisfy Assumptions \ref{ass:abstract1} and \ref{ass:polyboundintro}, $P_N: H^{s}_\hsc(\Gamma) \to V_N$ be a projection satisfying \eqref{e:MATproj}
with $(t,q)$ the pairs
\beqs
(t_{\max},s),\quad (t_{\max},q_{\min}),\quad (s+1,q_{\min}) \quad\tand\quad \quad (s+1,s), 
\eeqs
Then there is $C>0$ such that for all $k>k_0$, $k\notin\cJ$, and $N\geq k$.
\beq\label{e:yoga3}
\begin{aligned}
\|\Pi_{\mathscr{L}} (I-P_N)L(I+L)^{-1}\Pi_{\mathscr{L}}\|_{\Hshts}&\leq C(k/N)^{t_{\max}-q_{\min}}\rho ,\\
\|\Pi_{\mathscr{H}} (I-P_N)L(I+L)^{-1}\Pi_{\mathscr{L}}\|_{\Hshts}&\leq C(k/N)^{t_{\max}-s}\rho ,\\
\|\Pi_{\mathscr{L}} (I-P_N)L(I+L)^{-1}\Pi_{\mathscr{H}}\|_{\Hshts}&\leq C(k/N)^{s+1-q_{\min}},\\
\|\Pi_{\mathscr{H}} (I-P_N)L(I+L)^{-1}\Pi_{\mathscr{H}}\|_{\Hshts}&\leq C(k/N).
\end{aligned}
\eeq
\ele

\bpf
For the term involving two low frequency projections, 
by \eqref{e:projectionbounds}, Corollary \ref{cor:invertMe}  and~\eqref{e:MATproj} with $(t,q)=(t_{\max},q_{\min})$,
\begin{align*}
&\|\Pi_{\mathscr{L}} (I-P_N)L(I+L)^{-1}\Pi_{\mathscr{L}}\|_{\Hshts}\\
&\leq \|\Pi_{\mathscr{L}}\|_{H_{\hsc}^{q_{\min}}\to H_{\hsc}^s}\|(I-P_N)\|_{H_{\hsc}^{t_{\max}}\to H_{\hsc}^{q_{\min}}}\|L(I+L)^{-1}\|_{H_{\hsc}^{t_{\max}}\to H_{\hsc}^{t_{\max}}}\|\Pi_{\mathscr{L}}\|_{H_{\hsc}^{s}\to H_{\hsc}^{t_{\max}}}\\
&\leq C(k/N)^{t_{\max}-q_{\min}}\rho.
\end{align*}
For the low-to-high frequency term, by \eqref{e:projectionbounds}, Corollary \ref{cor:invertMe}  and~\eqref{e:MATproj} with $(t,q)=(t_{\max},s)$,
\begin{align*}
&\|\Pi_{\mathscr{H}} (I-P_N)L(I+L)^{-1}\Pi_{\mathscr{L}}\|_{\Hshts}\\
&\leq \|\Pi_{\mathscr{H}}\|_{H_{\hsc}^{s}\to H_{\hsc}^s}\|(I-P_N)\|_{H_{\hsc}^{t_{\max}}\to H_{\hsc}^{s}}\|L(I+L)^{-1}\|_{H_{\hsc}^{t_{\max}}\to H_{\hsc}^{t_{\max}}}\|\Pi_{\mathscr{L}}\|_{H_{\hsc}^{s}\to H_{\hsc}^{t_{\max}}}\\
&\leq C(k/N)^{t_{\max}-s}\rho.
\end{align*}
For the high-to-low frequency term, let $\chi \in C_{c}^\infty((-1-\epsilon,1+\epsilon))$ 
with $\supp (1-\chi)\cap [-1,1]=\emptyset$ and observe that, by the definition of $\Pi_{\mathscr{H}}$ \eqref{e:highandlow}, $(1-\chi(-\hsc^2\Delta_\Gamma))\Pi_{\mathscr{H}}=\Pi_{\mathscr{H}}$. Using this, along with \eqref{e:projectionbounds}, Corollary \ref{cor:nicebound1},  and~\eqref{e:MATproj} with $(t,q)=(s+1,q_{\min})$, we obtain that
\begin{align*}
&\|\Pi_{\mathscr{L}} (I-P_N)L(I+L)^{-1}\Pi_{\mathscr{H}}\|_{\Hshts}\\
&\leq \|\Pi_{\mathscr{L}}\|_{H_{\hsc}^{q_{\min}}\to H_{\hsc}^s}\|(I-P_N)\|_{H_{\hsc}^{s+1}\to H_{\hsc}^{q_{\min}}}\|L(I+L)^{-1}(1-\chi(\hsc^2\Delta_\Gamma))\|_{H_{\hsc}^{s}\to H_{\hsc}^{s+1}}\|\Pi_{\mathscr{H}}\|_{H_{\hsc}^{s}\to H_{\hsc}^{s}}\\
&\leq C(k/N)^{s+1-q_{\min}}\|L(I+L)^{-1}(1-\chi(\hsc^2\Delta_\Gamma))\|_{H_{\hsc}^{s}\to H_{\hsc}^{s+1}}\\
&\leq C(k/N)^{s+1-q_{\min}}.
\end{align*}
Finally, for the high-to-high frequency term, by \eqref{e:projectionbounds}, Corollary \ref{cor:nicebound1},  and~\eqref{e:MATproj} with $(t,q)=(s+1,s)$,
\begin{align*}
&\|\Pi_{\mathscr{H}} (I-P_N)L(I+L)^{-1}\Pi_{\mathscr{H}}\|_{\Hshts}\\
&\leq \|\Pi_{\mathscr{H}}\|_{H_{\hsc}^{s}\to H_{\hsc}^s}\|(I-P_N)\|_{H_{\hsc}^{s+1}\to H_{\hsc}^{s}}\|L(I+L)^{-1}(1-\chi(\hsc^2\Delta_\Gamma))\|_{H_{\hsc}^{s}\to H_{\hsc}^{s+1}}\|\Pi_{\mathscr{H}}\|_{H_{\hsc}^{s}\to H_{\hsc}^{s}}\\
&\leq C(k/N).
\end{align*}
\epf

\ble[Bounds on the decomposition of $(I+\pert)^{-1}(I-P_N)$]\label{lem:split1}
Let $k_0>0$, $q_{\min}\leq s\leq t_{\max}$, $\operator$ satisfy Assumptions \ref{ass:abstract1} and \ref{ass:polyboundintro}, $P_N: H^{s}_\hsc(\Gamma) \to V_N$ be a projection satisfying \eqref{e:MATproj}
with $(t,q)$ the pairs
\beqs
(t_{\max},s),\quad (t_{\max},q_{\min}),\quad (s,q_{\min}) \quad\tand\quad (s,s), 
\eeqs
Then there is $C>0$ such that for all $k>k_0$, $k\notin\cJ$, and $N\geq k$
\beq\label{e:yoga4}
\begin{aligned}
\|\Pi_{\mathscr{L}} (I+L)^{-1}(I-P_N)\Pi_{\mathscr{L}}\|_{\Hshts}&\leq C(k/N)^{t_{\max}-q_{\min}}\rho ,\\
\|\Pi_{\mathscr{H}} (I+L)^{-1}(I-P_N)\Pi_{\mathscr{L}}\|_{\Hshts}&\leq C(k/N)^{t_{\max}-s},\\
\|\Pi_{\mathscr{L}} (I+L)^{-1}(I-P_N)\Pi_{\mathscr{H}}\|_{\Hshts}&\leq C(k/N)^{s-q_{\min}}\rho ,\\
\|\Pi_{\mathscr{H}} (I+L)^{-1}(I-P_N)\Pi_{\mathscr{H}}\|_{\Hshts}&\leq \big(L_{\max}+Ck^{-1}\big)\|I-P_N\|_{\Hshts}.
\end{aligned}
\eeq
\ele

\bpf
For the low-to-low frequency term, by \eqref{e:projectionbounds}, Lemma \ref{lem:2},  and~\eqref{e:MATproj} with $(t,q)=(t_{\max},q_{\min})$,
\begin{align*}
&\|\Pi_{\mathscr{L}} (I+L)^{-1}(I-P_N)\Pi_{\mathscr{L}}\|_{\Hshts}\\
&\leq \|\Pi_{\mathscr{L}}\|_{H_{\hsc}^{q_{\min}}\to H_{\hsc}^s}\|(I+L)^{-1}\|_{H_{\hsc}^{q_{\min}}\to H_{\hsc}^{q_{\min}}}\|(I-P_N)\|_{H_{\hsc}^{t_{\max}}\to H_{\hsc}^{q_{\min}}}\|\Pi_{\mathscr{L}}\|_{H_{\hsc}^{s}\to H_{\hsc}^{t_{\max}}}\\
&\leq C(k/N)^{t_{\max}-q_{\min}}\rho.
\end{align*}
For the low-to-high frequency term, let $\chi \in C_{c}^\infty((-2,2);[0,1])$ with $\supp (1-\chi)\cap [-1,1]=\emptyset$ and observe that 
\beq\label{e:Dean2}
\Pi_{\mathscr{H}}=\Pi_{\mathscr{H}}(1-\chi(-\hsc^2\Delta_\Gamma)).
\eeq
By Corollary \ref{cor:WFcutoff}, 
\eqref{e:inverse_cutoff_left},~\eqref{e:invEst},
\beq\label{e:Dean1}
\|(1-\chi(-\hsc^2\Delta_\Gamma))(I+L)^{-1}\|_{\Hshts}\leq L_{\max} + Ck^{-1};
\eeq
we record for later that analogous arguments also show that 
\beq\label{e:Dean1a}
\|(I+L)^{-1}(1-\chi(-\hsc^2\Delta_\Gamma))\|_{\Hshts}\leq L_{\max} + Ck^{-1}.
\eeq
Therefore, by \eqref{e:Dean2}, \eqref{e:Dean1}, \eqref{e:projectionbounds},  and~\eqref{e:MATproj} with $(t,q)=(t_{\max},s)$, we obtain that
\begin{align*}
&\|\Pi_{\mathscr{H}} (I+L)^{-1}(I-P_N)\Pi_{\mathscr{L}}\|_{\Hshts}\\
&\leq \|\Pi_{\mathscr{H}}\|_{H_{\hsc}^{s}\to H_{\hsc}^s}\|(1-\chi(-\hsc^2\Delta_\Gamma))(I+L)^{-1}\|_{\Hshts}\|(I-P_N)\|_{H_{\hsc}^{t_{\max}}\to H_{\hsc}^{s}}\|\Pi_{\mathscr{L}}\|_{H_{\hsc}^{s}\to H_{\hsc}^{t_{\max}}}\\
&\leq C(k/N)^{t_{\max}-s}.
\end{align*}
For the high-to-low frequency term, by \eqref{e:projectionbounds}, Lemma \ref{lem:2},  and~\eqref{e:MATproj} with $(t,q)=(s,q_{\min})$, 
\begin{align*}
&\|\Pi_{\mathscr{L}} (I+L)^{-1}(I-P_N)\Pi_{\mathscr{H}}\|_{\Hshts}\\
&\leq \|\Pi_{\mathscr{L}}\|_{H_{\hsc}^{q_{\min}}\to H_{\hsc}^s}\|(I-P_N)\|_{H_{\hsc}^{s}\to H_{\hsc}^{q_{\min}}}\|(I+L)^{-1}\|_{H_{\hsc}^{s}\to H_{\hsc}^{s}}\|\Pi_{\mathscr{H}}\|_{H_{\hsc}^{s}\to H_{\hsc}^{s}}\\
&\leq C(k/N)^{s-q_{\min}}\|(I+L)^{-1}\|_{H_{\hsc}^{s}\to H_{\hsc}^{s}}\\
&\leq C(k/N)^{s-q_{\min}}\rho.
\end{align*}
Finally, for the high-to-high frequency term, by \eqref{e:Dean1}, and the fact that $\|\Pi_{\mathscr{H}}\|_{\Hshts}=1$,
\begin{align*}
&\|\Pi_{\mathscr{H}} (I+L)^{-1}(I-P_N)\Pi_{\mathscr{H}}\|_{\Hshts}\\
&\leq \|\Pi_{\mathscr{H}}\|_{H_{\hsc}^{s}\to H_{\hsc}^s}\|(1-\chi(\hsc^2\Delta_\Gamma))(I+L)^{-1}\|_{H_{\hsc}^{s}\to H_{\hsc}^{s}}\|(I-P_N)\|_{H_{\hsc}^{s}\to H_{\hsc}^{s}}\|\Pi_{\mathscr{H}}\|_{H_{\hsc}^{s}\to H_{\hsc}^{s}}\\
&\leq \big(L_{\max}+C\hsc\big)\|(I-P_N)\|_{\Hshts}.
\end{align*}
\epf

\


\bpf[Proof of Theorem \ref{thm:MAT}]
By Lemma \ref{lem:QOabs}, it is enough to study $(I+P_N \pert)^{-1}(I-P_N)$ and hence we use~\eqref{e:thething}. Decomposing $H_{\hsc}^s=\Pi_{\mathscr{L}}H_{\hsc}^s\oplus \Pi_{\mathscr{H}}H_{\hsc}^s$, letting $\mathscr{C}$ be as in~\eqref{e:newNorm}, and using Lemma~\ref{lem:split1}, we obtain~\eqref{e:blockDecomp}. 
Therefore, under the condition~\eqref{e:MATthres}, \eqref{e:Neumann1} holds, and then $I+(P_N-I)\pert (I+\pert)^{-1}$ is invertible with \eqref{e:magicC1}.

We now claim that 
\begin{align}\nonumber
&\big\| \big(I+\mathcal{C}(P_N-I)\pert (I+\pert)^{-1}\mathcal{C}^{-1}\big)^{-1}\big\|_{\Hshts}\\
&\hspace{2cm}\leq \begin{pmatrix}1  +C(k/N)^{t_{\max}-q_{\min}}\rho&
C\Big( (k/N)^{t_{\max}-q_{\min}}\rho \Big)^{1/2} (k/N)^{1/2}
\\
C\Big( (k/N)^{t_{\max}-q_{\min}}\rho \Big)^{1/2} (k/N)^{1/2}
&1  + Ck/N
\end{pmatrix}.\label{e:sumSeries}
\end{align}
To see this, let 
\beqs
M:=
\begin{pmatrix}
\delta & \delta^{1/2} (k/N)^{1/2}\\
\delta^{1/2}(k/N)^{1/2} & (k/N)
\end{pmatrix}
\eeqs
and observe that $M^2= (\delta + k/N) M$. Thus 
\beqs
\sum_{n=0}^\infty M^n = I + \bigg( \sum_{n=0}^\infty (\delta + k/N)^n \bigg) M \leq I + C M
\eeqs 
if both $\delta$ and $k/N$ are sufficiently small. Applying this last inequality with $\delta = (k/N)^{t_{\max}-q_{\min}}\rho$, we obtain \eqref{e:sumSeries}. 

Then, by \eqref{e:magicC1} and the definition of $\mathcal{C}$ \eqref{e:newNorm}, 
\begin{equation}
\label{e:neumannOut}
\begin{aligned}
&\big\|\big(I+ (P_N-I)\pert (I+\pert)^{-1}\big)^{-1}\big\|_{\Hshts}\leq 
\begin{pmatrix}
1  +C(k/N)^{t_{\max}-q_{\min}}\rho&C(k/N)^{s+1-q_{\min}}\\
C(k/N)^{t_{\max}-s}\rho &1+Ck/N
\end{pmatrix}.
\end{aligned}
\end{equation}
By \eqref{e:MATproj},
\begin{gather*}
\|(I-P_N)\|_{ H_{\hsc}^{t_{\max}}\to H_{\hsc}^{q_{\min}}}\leq (k/N)^{t_{\max}-q_{\min}},\qquad
 \|(I-P_N)\|_{ H_{\hsc}^{s}\to H_{\hsc}^{q_{\min}}}\leq (k/N)^{s-q_{\min}},\\
 \|(I-P_N)\|_{H_{\hsc}^{t_{\max}}\to H_{\hsc}^s}\leq (k/N)^{t_{\max}-s},
\end{gather*}
so that, by \eqref{e:projectionbounds},
\beq\label{e:Dean3}
\|(I-P_N)\|_{\Hshts}\leq \begin{pmatrix} C(k/N)^{t_{\max}-q_{\min}}&C(k/N)^{s-q_{\min}}\\ C(k/N)^{t_{\max}-s}&\|I-P_N\|_{\Hshts}\end{pmatrix}.
\eeq
Thus, by \eqref{e:neumannOut}, \eqref{e:Dean3}, and the fact that $(k/N)^{t_{\max}-q_{\min}}\rho \leq c$ by \eqref{e:MATthres}, 
\begin{equation}
\label{e:neumannOut2}
\begin{aligned}
&\big\|\big(I+ (P_N-I)\pert (I+\pert)^{-1}\big)^{-1}(I-P_N)\big\|_{\Hshts}\\
&\qquad\leq \begin{pmatrix}C(k/N)^{t_{\max}-q_{\min}}&C(k/N)^{s-q_{\min}}\\C(k/N)^{t_{\max}-s}&\|I-P_N\|_{\Hshts}+ C(k/N)+C(k/N)^{t_{\max}-q_{\min}}\rho
\end{pmatrix},
\end{aligned}
\end{equation}
where we have bounded $\|I-P_N\|_{\Hshts}$ by a constant (via \eqref{e:MATproj}) when it is multiplied by a term that is small.
By Lemma \ref{lem:split1},
\begin{equation}
\label{e:oneMoreTime}
\begin{aligned}
&\|(I+L)^{-1}(I-P_N)\|_{\Hshts}\leq \begin{pmatrix} C(k/N)^{t_{\max}-q_{\min}}\rho & C(k/N)^{s-q_{\min}}\rho \\
C(k/N)^{t_{\max}-s}&(L_{\max}+C\hsc)\|I-P_N\|_{H_{\hsc}^s\to H_{\hsc}^s}.\end{pmatrix}.
\end{aligned}
\end{equation}
%
Combining \eqref{e:thething}, \eqref{e:neumannOut2}, and \eqref{e:oneMoreTime}, we obtain that
\begin{equation*}
\begin{aligned}&\|(I+P_N \pert)^{-1}(I-P_N)\|_{\Hshts}\\
&\leq \|(I+\pert)^{-1}(I-P_N)\|_{\Hshts}\|I+(P_N-I)\pert(I+\pert)^{-1}(I-P_N)\|_{\Hshts}\\
&\leq C\begin{pmatrix}C(k/N)^{t_{\max}-q_{\min}}\rho &C(k/N)^{s-q_{\min}}\rho \\
C(k/N)^{t_{\max}-s}&(L_{\max}+C\hsc)\|I-P_N\|^2_{\Hshts}+C\big((k/N)+(k/N)^{t_{\max}-q_{\min}}\rho \big).
\end{pmatrix},
\end{aligned}
\end{equation*}
where we have bounded $(L_{\max}+C\hsc)\|I-P_N\|_{\Hshts}$ by a constant when it is multiplied by a term that is small.
The bounds \eqref{e:MATqo} and \eqref{e:MATqoHF} now follow by combining the last displayed bound with Lemma~\ref{lem:QOabs}.

\section{Proofs of the Galerkin and collocation results for piecewise polynomials}\label{sec:PPproofs}

\subsection{Checking that the Dirichlet and Neumann BIEs in \S\ref{sec:BIEs} satisfy Assumption \ref{ass:abstract1}}\label{sec:check}


\subsubsection{The Dirichlet BIEs.}

We let
\beqs
\pert: = 2 \big( \DL_k - \ri \eta_D S_k\big) \quad\tor\quad 2 \big( \DL_k' - \ri \eta_D S_k\big)
\eeqs
so that $\operator = 2 A_k$ or $2A_k'$, respectively.
The properties of $S_k, \DL_k,$ and $\DL_k$ as standard (i.e., not semiclassical) pseudodifferential operators imply that $\pert:H^s(\Gamma)\to H^{s+1}(\Gamma)$ for all $s\in \Rea$ (see, e.g., \cite[Theorem 4.4.1]{Ne:01}); thus $\operator$ is bounded on $H^s(\Gamma)$.
Under Assumption \ref{ass:parameters}, $\eta_D \in \Rea$ with $|\eta|$ proportional to $k$, and then $\operator$ is injective on $H^s(\Gamma)$; see, e.g., \cite[Proof of Theorem 2.27]{ChGrLaSp:12}. Fredholm theory then implies that $\operator$ is invertible on $H^s(\Gamma)$, and Part (i) of Assumption \ref{ass:abstract1} holds.

%
%
Part (ii) of Assumption \ref{ass:abstract1} then follows from Theorem \ref{thm:HFSD}.

By Theorem \ref{thm:HFSD} and the results about the principal symbol in \S\ref{sec:ReatoGamma},
\beqs
\sigma^0_\hsc \Big((1-\chi\hDarg)\DL_k\Big) =\sigma^0_\hsc \Big((1-\chi\hDarg)\DL_k'\Big) =0,
\eeqs
so that
\beqs
\sigma^0_\hsc \Big( \big(1-\chi\hDarg\big) \pert\Big) =
-2\ri \eta\,\sigma^0_\hsc \Big( \big(1-\chi\hDarg\big)  S_k\big)\Big)
= -\ri \left(\frac{\eta_D}{k}\right)\dfrac{\big(1-\chi(|\xi'|_g^2)\big)}{\sqrt{|\xi'|_g^2-1}}.
\eeqs
Since $\eta_D$ is real with modulus proportional to $k$ (by Assumption \ref{ass:parameters}), Part (iii) of Assumption \ref{ass:abstract1} follows with $L_{\max}=1$.

\subsubsection{The Neumann BIEs.}

%

We give the proof for $\Breg$; the proof for $\Breg'$ is very similar.
Let
\beqs
\widetilde{L}:= -\ri \eta_N \DL_k +S_{\ri k}H_k + \frac{1}{4}I
\quad\text{ so that }\quad
\Breg= \left(\frac{\ri \eta_N}{2} - \frac{1}{4}\right)I + \widetilde{L}.
\eeqs
We then let
\beqs
\pert:= \left(\frac{\ri \eta_N}{2} - \frac{1}{4}\right)^{-1} \widetilde{L} 
\quad\text{ so that } \quad
\left(\frac{\ri \eta_N}{2} - \frac{1}{4}\right)^{-1}\Breg = I + \pert;
\eeqs
i.e., $\operator = (\ri \eta_N/2- 1/4)^{-1}\Breg$.
The first of the Calder\'on relations
\beq\label{e:Calderon}
S_k H_k = -\frac{1}{4}I + (\DL_k)^2
\quad\tand\quad
H_k S_k= -\frac{1}{4}I + (\DL_k')^2
\eeq
(see, e.g., \cite[Equation 2.56]{ChGrLaSp:12})
implies that
\beq\label{e:widetildeL}
\widetilde{L}= -\ri \eta_N \DL_k + \big(S_{\ri k}-S_k\big)H_k + (\DL_k)^2,
\eeq
and then $\widetilde{L}: H^s(\Gamma)\to H^{s+1}(\Gamma)$ for all $s\in \Rea$ by the properties of $\DL_k$, $H_k$, and $S_{\ri k }-S_k$ as standard pseudodifferential operators (see, e.g., \cite[Theorem 4.4.1]{Ne:01} and \cite[Theorems 2.1 and 2.2]{BoDoLeTu:15}); thus $\Breg$ is bounded on $H^s(\Gamma)$.
Under Assumption \ref{ass:parameters}, $\eta_N\in \mathbb{R}\setminus\{0\}$ is independent of $k$, and then $\Breg$ is injective on $H^s(\Gamma)$; see \cite[Theorem 2.2]{GaMaSp:21N}. Invertibility of $\Breg$ on $H^s(\Gamma)$ then follows from Fredholm theory, and Part (i) of Assumption \ref{ass:abstract1} holds.

We now show that Parts (ii) and (iii) of Assumption \ref{ass:abstract1} are satisfied. By Theorem \ref{thm:HFSD},
\beqs
\sigma_\hsc\big(\big(1-\chi\hDarg\big) \widetilde{L}\big) = \sigma_\hsc \big(\big(1-\chi\hDarg\big)S_{\ri k }H_k\big) + \frac{\big(1-\chi(|\xi'|_g^2)\big)}{4}.
\eeqs
Now, by, e.g., \cite[Lemma 4.8]{GaMaSp:21N}, $S_{\ri k} = \hsc \widetilde{S}$ where $\widetilde{S} \in\Psi_\hsc^{-1}(\Gamma)$ with
\beq\label{e:Sik}
\sigma_\hsc(\widetilde{S}) = \frac{1}{2 \sqrt{|\xi'|_g^2+1}};
\eeq
observe that $S_{\ri k}$ is then elliptic on $T^*\Gamma$, and hence invertible by Part (i) of Corollary \ref{cor:elliptic}.
Let $\widetilde{\chi}\in C^{\infty}_{c}(\Rea^d;[0,1])$ be such that $\widetilde{\chi}\equiv 1$ on $[-1,1]$ and $\supp \widetilde{\chi}\cap \supp(1-\chi)=\emptyset$. Then, by \eqref{e:Sik}, \eqref{e:H_prin_symb}, \eqref{e:multsymb}, and \eqref{e:WFdisjoint}
\beqs
\sigma_\hsc \big(\big(1-\chi\hDarg\big)S_{\ri k }H_k\big)
=\sigma_\hsc \big(\big(1-\chi\hDarg\big)S_{\ri k }\big(1-\widetilde{\chi}\hDarg\big)H_k\big)
=-\frac{1-\chi(|\xi'|_g^2)}{4} \sqrt{\frac{|\xi'|_g^2-1}{|\xi'|_g^2+1}},
\eeqs
so that
\begin{align}
\sigma_\hsc \big( (1- \chi\hDarg)\widetilde{L}\big) &=  \frac{1-\chi(|\xi'|_g^2)}{4}\left(1- \sqrt{\frac{|\xi'|_g^2-1}{|\xi'|_g^2+1}}\right)=\frac{1-\chi(|\xi'|_g^2)}{2(|\xi'|_g^2+1)}\left(1+ \sqrt{1-\frac{2}{|\xi'|_g^2+1}}\right)^{-1}.\label{e:Macron}
\end{align}
Since \eqref{e:Macron} is real on $\{(x',\xi') : 1-\chi(|\xi'|_g^2)>0\}$ and $\eta_N\neq 0$, Part (iii) of Assumption \ref{ass:abstract1} holds with $L_{\max}=1$.
The expression \eqref{e:Macron} implies that $(1- \chi\hDarg)\widetilde{L}$ is the sum of an operator in $\Psi^{-2}_\hsc(\Gamma)$ and an operator in $\hsc \Psi^{-1}_\hsc(\Gamma)$, and thus Part (ii) of Assumption \ref{ass:abstract1} holds.

\bre[More general regularising operators than $S_{\ri k}$]
The properties of $S_{\ri k}$ that are used in the above arguments to show that $\Breg$ and $\Bregp$ satisfy Assumption \ref{ass:abstract1} are that $S_{\ri k} \in \hsc\Psi_{\hsc}^{-1}(\Gamma)$, is elliptic, and its semiclassical principal symbol is real (with these assumptions equal to \cite[Assumption 1.1]{GaMaSp:21N}). The results in this paper therefore hold with $S_{\ri k}$ replaced by any other operator satisfying these assumptions (such as the quantisation of the principal symbol of $S_{\ri k}$, which is considered in \cite{BoTu:13}).
\ere

\bre[Regularisation understood via the Calder\'on relations]
The smoothing property in Part (ii) of Assumption \ref{ass:abstract1} can also be checked by using
the expression \eqref{e:widetildeL} and the fact that 
\beqs
\big(1-\chi\hDarg\big) (S_k-S_{\ri k}) \quad \tand\quad (S_k-S_{\ri k})\big(1-\chi\hDarg\big)\in \hsc \Psi_\hsc^{-3}(\Gamma).
\eeqs
with real principal symbols, which follows from the results in \cite[\S3,4]{GaMaSp:21N}.
%
\ere

\subsection{Proof of Theorem \ref{thm:PG}  (Galerkin method with piecewise polynomials)}\label{sec:Galerkin_poly_proof}



\ble\label{lem:GalerkinPP}
Let $p\geq 0$ and $V_h$ be a space of piecewise polynomials on a mesh of width $h$ satisfying  Assumption \ref{ass:ppG}. Then 
the approximation property \eqref{e:MATproj} holds with $N=h$ and $P_{V_h}^G$ defined by \eqref{e:Galerkin_def},
for all the pairs $(t,q)$ in \eqref{e:approx_pairs} with $s=0$, $t_{\max}=p+1$, and $q_{\min}=-p-1$.
\ele

\bpf
Multiplying \eqref{e:assppG} by $k^{-q}$,
then using that $k^{-t} \N{v}_{H^t(\Gamma)}\leq \N{v}_{H^t_k(\Gamma)}$ by \eqref{e:weightedineq}, we find that
\beqs
\N{v- \cI_h v}_{L^2(\Gamma)} \leq C(hk)^{t}\N{v}_{H^t_k(\Gamma)} \quad\tfor 0\leq t\leq p+1.
\eeqs
The minimisation property \eqref{e:Galerkin_def} (with $s=0$) then implies that 
\beq\label{e:MATproj2}
\N{I-P_{V_h}^G}_{H^t_\hsc(\Gamma)\to L^2(\Gamma)} \leq C(hk)^{t} \quad\tfor 0\leq t\leq p+1.
\eeq
i.e., \eqref{e:MATproj} holds for $(t,q)=(p+1,0)$, $(t,q)=(1,0)$, and $(t,q)=(0,0)$.
Since $I-P_{V_h}^G$ is the $L^2$-orthogonal projection, $(I-P_N^G)^*= I-P_N^G$, 
\eqref{e:MATproj2}
implies that 
\beq\label{e:MATproj2new}
\N{I-P_{V_h}^G}_{L^2(\Gamma)\to H^{-t}_\hsc(\Gamma) } \leq C(hk)^{t} \quad\tfor 0\leq t\leq p+1,
\eeq
so that \eqref{e:MATproj} holds for $(t,q)=(0,-p-1)$. It only remains to show that  \eqref{e:MATproj} holds for $(t,q)=(1,-p-1)$, but this follows from the fact that
\begin{align*}
\N{I-P_{V_h}^G}_{H^1_\hsc(\Gamma)\to H^{-p-1}_\hsc(\Gamma) } 
&=\N{(I-P_{V_h}^G)^2}_{H^1_\hsc(\Gamma)\to H^{-p-1}_\hsc(\Gamma) } \\
&\leq \N{I-P_{V_h}^G}_{H^1_\hsc(\Gamma)\to L^2(\Gamma) } \N{I-P_{V_h}^G}_{L^2(\Gamma)\to H^{-p-1}_\hsc(\Gamma) }\leq C(hk)^{p+2}
\end{align*}
by \eqref{e:MATproj2} and \eqref{e:MATproj2new}.
\epf

\

With Lemma \ref{lem:GalerkinPP} in hand, we
apply Theorem \ref{thm:MAT} with $s=0$, $t_{\max}=p+1$, and $q_{\min}=-p-1$. We see that if $c$ is sufficiently small, then the condition \eqref{e:Pthres} ensures that \eqref{e:MATthres} holds, and then the quasi-optimal error estimate \eqref{e:PPMR1} follows from \eqref{e:MATqo}, together with the facts that $\|I-P_{V_h}^G\|_{\LtGt}=1$, $L_{\max}=1$.

To prove the bound \eqref{e:PPMR2} on the relative error, observe that the combination of \eqref{e:MATproj} with $(t,q)=(p+1,0)$ and \eqref{e:oscil} imply that
\beq\label{e:delivery1}
\N{(I-P_{V_h}^G)v}_{\LtG} \leq C(hk)^{p+1} \N{v}_{H^{p+1}_k(\Gamma)}\leq C\Creg (hk)^{p+1} \N{v}_{\LtG}.
\eeq
Inputting \eqref{e:delivery1} into the quasi-optimal error estimate \eqref{e:PPMR1}, we find \eqref{e:PPMR2}.
%
%

\subsection{Proof of Theorem \ref{thm:PC} (collocation method with piecewise polynomials)}

\ble\label{lem:collocationPP}
Let $d=2$, $p\geq 1$ and $V_h$ be a space of piecewise polynomials on a mesh of width $h$ satisfying  Assumption \ref{ass:ppC}. Then 
the approximation property \eqref{e:MATproj} holds with $P_{V_h}^C$
given by the interpolation operator,
for the pairs $(t,q)$ equal to \eqref{e:approx_pairs} with
$\frac{1}{2}<s\leq 1$,  $t_{\max}=p+1$, $q_{\min}=0$.
\ele

\bpf
Applying \eqref{e:assppC} with first $q=0$ and then $q=1$, and using that $k^{-t} |v|_{H^t(\Gamma)}\leq \N{v}_{H^t_k(\Gamma)}$
by \eqref{e:weightedineq}, we obtain that
\beq\label{e:HoD1}
\N{v-\cI_h v}_{\LtG} \leq C (hk)^t \N{v}_{H^t_\hsc(\Gamma)}
\eeq
and
\beq\label{e:HoD2}
 k^{-1}|v-\cI_h v|_{\HoG} \leq C (hk)^{t-1} \N{v}_{H^t_\hsc(\Gamma)}
\eeq
for $\frac{1}{2}< t\leq p+1$.
Combining \eqref{e:HoD2} and \eqref{e:HoD1} to obtain a bound on $\|v-\cI_h v\|_{H^1_k(\Gamma)}$, and then interpolating the result with \eqref{e:HoD1}, we obtain that 
\beq\label{e:interpolate1}
\N{v-\cI_h v}_{H^q_k(\Gamma)} \leq C (hk)^{t-q} \N{v}_{H^t_\hsc(\Gamma)}
\eeq
for $0\leq q\leq1$ and $\frac{1}{2}< t\leq p+1$. Setting $t_{\max}=p+1$ and $q_{\min}=0$, we see that
the approximation property \eqref{e:MATproj} holds for the pairs $(t,q)$ equal to \eqref{e:approx_pairs} only when $\frac{1}{2}<s\leq 1$ (for the first pair in \eqref{e:approx_pairs} we need $s\leq 1$, for the second pair we need $0\leq s\leq1$, and for the third pair we need $s> \frac{1}{2}$).
\epf

\

With Lemma \ref{lem:collocationPP} in hand, the rest of the proof is the essentially the same as the proof of Part (i) of Theorem \ref{thm:PG} (i.e., using Theorem \ref{thm:MAT}), except that now $s=1$ instead of $s=0$.
\epf

\section{Definitions of the projections for trigonometric polynomials in 2-d}\label{sec:trig}

\subsection{Recap of Fourier series results and the definition of $H^s_\hsc(0,2\pi)$ in terms of Fourier series.}

Given $f\in L^1(0,2\pi)$, let
\beqs
\widehat{f}(n):= \frac{1}{\sqrt{2\pi}}\int_0^{2\pi} \exp(-\ri nt) f(t)\, \rd t.
\eeqs
Recall that if $f\in L^2(0,2\pi)$ then
\beq\label{e:Fourier}
f(t) =  \frac{1}{\sqrt{2\pi}}\sum_{n=-\infty}^\infty \exp(\ri nt ) \,\widehat{f}(n) \quad\tand\quad
\N{f}^2_{L^2(0,2\pi)} = \sum_{n=-\infty}^\infty |\widehat{f}(n)|^2.
\eeq
Let
\beqs
\langle f, g\rangle_{H^s_\hsc(0,2\pi)} := \sum_{n=-\infty}^\infty \widehat{f}(n) \, \overline{\widehat{g}(n)}\langle n\hsc\rangle^{2s}
\eeqs
(where recall that $\langle \xi\rangle:= (1+|\xi|^2)^{1/2}$)
and
\beq\label{e:norm_circle}
\vertiii{f}^2_{H^s_\hsc(0,2\pi)}:= \langle f, f\rangle_{H^s_\hsc} := \sum_{n=-\infty}^\infty \big|\widehat{f}(n)\big|^2 \langle n\hsc\rangle^{2s}.
\eeq
Given $s\in \Rea$ there exist $C_1, C_2>0$ such that, for all $v\in \Hsh$,
\beq\label{e:normequiv}
C_1 \|v\|_{\Hsh} \leq \vertiii{v\circ \gamma}_{H^s_\hsc(0,2\pi)} \leq C_2 \|v\|_{\Hsh},
\eeq
where $\|\cdot\|_{H^s_\hsc(\Gamma)}$ is defined by \eqref{e:weightedNormGamma}.
Let
\beq\label{e:nottrig1}
\phi_{m,\hsc}(t) := \frac{\exp(\ri m t)}{\sqrt{2\pi}\langle m\hsc\rangle^s}.
\eeq
Then
$\widehat{\phi}_m(n)= \delta_{mn}\langle m \hsc\rangle^{-s}$, $\vertiii{\phi_m}^2_{H^s_\hsc(0,2\pi)}=1$, and the Fourier expansion in \eqref{e:Fourier} implies that
\beqs
f(t) = \sum_{n=-\infty}^\infty \langle f ,\phi_{m,\hsc}\rangle_{H^s_\hsc(0,2\pi)}\phi_{m,\hsc}(t).
\eeqs

\subsection{The Galerkin projection.}
For $v\in L^2(0,2\pi)$, let
\beq\label{e:PNG}
P_{\trig_N}^G v(t):= \sum_{m=-N}^N \langle v ,\phi_{m,\hsc}\rangle_{L^2(0,2\pi)} \phi_{m,\hsc}(t)
=\sum_{m=-N}^N \widehat{v}(m) \frac{\exp (\ri m t)}{\sqrt{2\pi}}.
\eeq
Then $P_{V_N}^G: L^2(0,2\pi) \to V_N$ is a projection, where $V_N$ is defined by \eqref{e:V_N_trig}.
and \eqref{e:PNG} is equivalent to \eqref{e:Galerkin_def} with the the norm \eqref{e:newNormGamma}.


\ble[Approximation property of Galerkin projection]\label{lem:approx_FG}
For all $t\geq q$ there is $C>0$ such that 
\beq\label{e:approxFG}
\N{ I-P_{\trig_N}^G}_{H^t_\hsc(\Gamma) \to H^q_\hsc(\Gamma)} \leq \frac{C}{ \langle (N+1)\hsc\rangle^{t-q}}.
\eeq
\ele
\bpf
By \eqref{e:PNG} and \eqref{e:norm_circle}, 
for $v\in H^t_\hsc(0,2\pi)$,
\beqs
\vertiii{( I-P_{\trig_N}^G)v}_{H^q_\hsc(0,2\pi)}^2 = \frac{1}{2\pi} \sum_{|m|\geq N+1} |\widehat{v}(n)|^2 \frac{ \langle n\hsc\rangle^{2t}}{ \langle n \hsc \rangle^{2(t-q)}}
\leq \frac{1}{ \langle (N+1)\hsc\rangle^{2(t-q)}}\vertiii{v}_{H^t_\hsc(0,2\pi)}^2;
\eeqs
the result then follows from the norm equivalence \eqref{e:normequiv}. 
\epf

%
%
%

\subsection{The collocation projection.}
With $V_N$ defined by \eqref{e:V_N_trig} and $t_j$ defined by \eqref{e:trig_points}, let
\beq\label{e:collocation_def_recap}
(P_{\trig_N}^C v)(t_j) = v(t_j), \quad j=0, 1, \ldots, 2N, \quad\tand\quad P_{\trig_N}^C v \in V_N
\eeq
(i.e., \eqref{d:collocation} with $x_j$ replaced by $t_j$).
The points $t_j$ are unisolvent for $V_N$ -- and hence $P_{\trig_N}^C$ is a well-defined projection -- by, e.g., \cite[\S11.3]{Kr:14}, \cite[\S3.2.2]{At:97}.
We now recall the explicit expression for $P_{\trig_N}^C$ in terms of the Lagrange basis functions (see, e.g., \cite[\S3.2.2, Page 62]{At:97}).
For $j=0, 1, \dots, 2N$, let
\beqs
\ell_j(t) := \frac{2}{2N+1} D_N(t-t_j), 
\quad\text{ where }\quad
D_N(t):= \frac{1}{2} + \sum_{j=1}^N \cos(jt) = \frac{\sin (N+1/2)t}{2 \sin (t/2)};
\eeqs
the definition of $D_N(t)$ implies that $\ell_j(t_m) = \delta_{jm}$.
Then
\beq\label{e:PNC}
P_{\trig_N}^C v(t) = \sum_{j=0}^{2N} v(t_j) \ell_j(t).
\eeq
The definition \eqref{e:collocation_def_recap} implies that
\beq\label{e:alias0}
\tif\, m = \ell (2N+1) + \mu\, \text{ with }\, |\mu|\leq N\, \text{ then } \,
\big(P_{\trig_N}^C (\exp(\ri m \cdot)\big)(t) = \exp(\ri \mu t).
\eeq
Observe that the definition of $P_{V_N}^C$ \eqref{e:collocation_def_recap} implies that for $v\in\trig_N$,
$P_{\trig_N}^Cv= v$, so $(I-P_{\trig_N}^C)P_{\trig_N}^G=0$,  and thus
\beq\label{e:SVclevertrick}
I-P_{\trig_N}^C=(I-P_{\trig_N}^C)(I-P_{\trig_N}^G).
\eeq

\ble[Approximation property of collocation projection]
\label{lem:approx_FC}
Given $t\geq q\geq 0$ with $t>1/2$, then there exists $C>0$ such that if $(N+1) \geq k$ then
\beq\label{e:approx_FC}
\N{ I-P_{\trig_N}^C}_{H^t_\hsc(\Gamma) \to H^q_\hsc(\Gamma)} \leq C \left( \frac{ k}{N} \right)^{t-q}.
\eeq
\ele

\bpf
The analogous bound when $\hsc=1$ (i.e., the unweighted case) is proved in, e.g., \cite[Theorem 2.1]{KrSl:93}, \cite[Lemma 4.1]{SaVa:98}, \cite[Theorem 11.8]{Kr:14}, \cite[Theorem 8.3.1]{SaVa:02}. The bound \eqref{e:approx_FC} follows from repeating these proofs, now with $k\neq 1$; the requirement that $(N+1)k^{-1} \geq 1$ is used in the inequality $\langle a x\rangle \leq |a|\langle x\rangle$ when $|a|\geq 1$.
\epf

\section{Best approximation by trigonometric polynomials in plane-wave scattering}
\label{sec:approx}


\subsection{Statement of the two main results in this section}

The abstract results in \S\ref{sec:abstract} give conditions under which the projection-method error, $v-v_N$, is bounded in terms of the projection error $\|(I-P_{\trig_N}^G)v\|_{H^s_\hsc}$.
In this section we bound the projection error $\|(I-P_{\trig_N}^G)v\|_{H^s_\hsc}$ in terms of $\|v\|_{H^s_\hsc}$ when $v$ is the BIE  solution corresponding to the plane-wave scattering problem (i.e., when the right-hand side $f$ of $\operator v=f$ is given by Theorem \ref{thm:BIEs}).
These bounds then lead to the bounds on the relative error for plane wave data \eqref{e:PPMR2}, \eqref{e:PPMR5}, \eqref{e:MR2}, and~\eqref{e:MR5}).

The two main results of this section are the following two theorems.

\begin{theorem}[Galerkin and collocation projection error for trigonometric polynomials]\label{thm:superalgebraicFG}
Suppose $\trig_N$ is given by \eqref{e:V_N_trig} and that $\operator$ is one of $A_k, A_k'$, $\Breg$, or $\Bregp$ and the right-hand side $f$ is as described in Theorem \ref{thm:BIEs}.

%
%

Then $v\in H^s(\Gamma)$ for all $s\geq 0$ and, given $\eps>0$ and $k_0>0$, if $N\geq (c_{\max}+\eps)k$ and $k\geq k_0$, then
\beq\label{e:superalgebraicFG}
\frac{
\N{(I-P_{\trig_N}^G)v}_{H^s_k(\Gamma)}}
{\N{v}_{H^s_k(\Gamma)}}
=O(k^{-\infty}),
\eeq
where $P_{\trig_N}^G$ is defined by \eqref{e:Galerkin_def}/\eqref{e:PNG}.

Furthermore, if $s>1/2$, then \eqref{e:superalgebraicFG} holds with $P_{\trig_N}^G$ replaced by $P_{\trig_N}^C$ defined by
Definition~\ref{d:collocation}/Equation \eqref{e:collocation_def_recap}.
\end{theorem}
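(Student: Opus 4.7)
The core strategy is a microlocal concentration argument: both the data $f$ and the BIE solution $v=\operator^{-1}f$ are microlocally concentrated at semiclassical frequencies $|\xi'|_g\leq 1$ in $T^*\Gamma$, and this concentration translates into Fourier-mode decay in the parametrisation variable $t$ via the metric identity $|\xi'|_g=|\xi|/c(t)$, where $c(t):=|\dot\gamma(t)|\leq c_{\max}$. The threshold $N\geq(c_{\max}+\epsilon)k$ is calibrated precisely so that Fourier modes $|m|\geq N$ correspond to semiclassical frequencies $|\hsc m|/c(t)\geq(c_{\max}+\epsilon)/c_{\max}>1$ uniformly in $t$.

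\textbf{Microlocalisation of $f$ and $v$.} The right-hand side $f$ in Theorem~\ref{thm:BIEs} is a finite linear combination of terms $k^{\alpha}a(x)e^{\ri k x\cdot d}|_\Gamma$ with $a$ smooth and $k$-independent, built from traces and (scaled) normal derivatives of the incident plane wave. Since the tangential gradient of the phase $x\cdot d$ equals $d$ restricted to $T_x\Gamma$ and has length at most $|d|=1$, the semiclassical wavefront set satisfies $\WF_\hsc(f)\subset\{(x,\xi')\in T^*\Gamma:|\xi'|_g\leq 1\}$. Hence, for any $\chi\in C^\infty_c(\mathbb{R})$ with $\chi\equiv 1$ on $[0,(1+\eta)^2]$ (for some $\eta>0$), $(I-\chi(|\hsc D'|_g^2))f=O(\hsc^\infty)_{\Hrh}$ for every $r$ by Lemma~\ref{lem:WFcutoff} and microlocality. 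To transfer this to $v$, I apply Lemma~\ref{lem:HFinverse} with $\chi_2$ such a cutoff and $\chi_1$ slightly smaller:
\begin{equation*}
(I-\chi_2(|\hsc D'|_g^2))v = (I-\chi_2(|\hsc D'|_g^2))\bigl(I+(I-\chi_1(|\hsc D'|_g^2))\pert\bigr)^{-1}f + O(\hsc^\infty)_{\Hrh}.
\end{equation*}
By Lemma~\ref{lem:elliptic_HF}(b) and Corollary~\ref{cor:elliptic}(i), $\bigl(I+(I-\chi_1(|\hsc D'|_g^2))\pert\bigr)^{-1}\in\Psi^0_\hsc(\Gamma)$ and is therefore microlocal; composed with $(I-\chi_2(|\hsc D'|_g^2))$ its wavefront set is contained in the support of $1-\chi_2(|\xi'|_g^2)\subset\{|\xi'|_g\geq 1+\eta\}$ by \eqref{e:WF_prod} and Lemma~\ref{lem:WFcutoff}. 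Since this is disjoint from $\WF_\hsc(f)$, applying it to $f$ yields $O(\hsc^\infty)_{\Hrh}$ by the semiclassical microlocality~\eqref{e:WFdisjoint}.

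\textbf{Conversion to Fourier decay; collocation; main obstacle.} Splitting
$\hat v(m)=\langle v,\chi_2(|\hsc D'|_g^2)(e^{\ri m\cdot}/\sqrt{2\pi})\rangle_{L^2}+\langle(I-\chi_2(|\hsc D'|_g^2))v,e^{\ri m\cdot}/\sqrt{2\pi}\rangle_{L^2}$, the second summand is $O(\hsc^\infty)\langle m\hsc\rangle^r$ by the preceding step. For the first, standard semiclassical symbol calculus in a local chart gives $\chi_2(|\hsc D'|_g^2)e^{\ri mt}=a_\hsc(t,m\hsc)e^{\ri mt}+O(\hsc^\infty)_{H^{-r}_\hsc}$ with $a_\hsc$ having asymptotic expansion supported in $\{|\xi'|_g\leq 1+\eta\}$; for $|m|\geq(c_{\max}+\epsilon)k$ and $\eta<\epsilon/(2c_{\max})$ the point $(t,m\hsc)$ lies outside this support for every $t$, giving $\chi_2(|\hsc D'|_g^2)e^{\ri mt}=O(\hsc^\infty)_{H^{-r}_\hsc}$ uniformly for $m$ in the range $(c_{\max}+\epsilon)k\leq|m|\leq Rk$ (any fixed $R$). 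For $|m|\geq Rk$ with $R$ large, integration by parts using smoothness of $v$ (guaranteed by $f\in C^\infty(\Gamma)$ and Part~(i) of Assumption~\ref{ass:abstract1}) gives polynomial decay $|\hat v(m)|\leq C_N|m|^{-N}$. Combining the two ranges in $\sum_{|m|>N}|\hat v(m)|^2\langle m\hsc\rangle^{2s}$ yields $\|(I-P_{\trig_N}^G)v\|_{H^s_k(\Gamma)}=O(k^{-\infty})$ in absolute terms; since $\|v\|_{H^s_k(\Gamma)}\geq\|v\|_{\LtG}\geq\|f\|_{\LtG}/\|\operator\|_{\LtGt}$ is at worst polynomially small in $k^{-1}$ (the plane-wave data has uniformly positive $\LtG$ norm, and $\|\operator\|_{\LtGt}$ is polynomially bounded by Lemma~\ref{lem:normA} and the standard layer-potential bounds), the ratio~\eqref{e:superalgebraicFG} is $O(k^{-\infty})$. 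The collocation statement for $s>1/2$ follows by applying \eqref{e:SVclevertrick} together with Lemma~\ref{lem:approx_FC} with $t=q=s$, which yields $\|I-P_{\trig_N}^C\|_{\Hshts}$ uniformly bounded for $N\geq k$. The principal technical obstacle is reconciling the sharp Fourier-mode cutoff defining $P^G_{\trig_N}$ (a cutoff on the Euclidean dual variable $\xi$) with the intrinsic semiclassical cutoff $\chi_2(|\hsc D'|_g^2)$ (a cutoff on $|\xi|_g=|\xi|/c(t)$) uniformly across the full range $|m|\geq(c_{\max}+\epsilon)k$; it is this mismatch, bridged by the worst-case bound $c(t)\leq c_{\max}$ and the splitting at $|m|\sim Rk$, that forces the threshold $(c_{\max}+\epsilon)k$ rather than $(1+\epsilon)k$.
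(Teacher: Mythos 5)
Your proof is correct and shares its opening two steps with the paper (the microlocalisation $\WF_\hsc(f)\subset\{|\xi'|_g\leq 1\}$, which is Lemma~\ref{lem:HFf}, and the transfer to $v$ via Lemma~\ref{lem:HFinverse}, which reproduces Lemma~\ref{lem:HFv}), but you organise the final step quite differently. The paper recognises $P^G_{\trig_N}=\ind_{(-\infty,1]}(-N^{-2}\partial_t^2)$ as a spectral projector for the \emph{flat}-metric Laplacian and then, in Lemma~\ref{l:proj2Pseudo}, compares the wavefront set of $I-P^G_{\trig_N}$ (supported in $\{|\xi'|_{g_{\rm flat}}^2\geq(N\hsc)^2\}$) with that of $\chi\hDarg$ (supported in $\{|\xi'|_g^2\leq(1+\e/(2c_{\max}))^2\}$), using exactly your metric identity $|\xi'|_g=|\xi'|_{g_{\rm flat}}/|\dot\gamma(t)|\geq|\xi'|_{g_{\rm flat}}/c_{\max}$ to show their wavefront sets are disjoint under the threshold $N\geq(c_{\max}+\e)k$; this gives $(I-P^G_{\trig_N})=(I-P^G_{\trig_N})(I-\chi\hDarg)+O(\hsc^\infty)_{\Psi^{-\infty}_\hsc}$, and the whole theorem then falls out of Lemma~\ref{lem:HFv} in one line. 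You instead estimate Fourier coefficients mode by mode, splitting $\hat v(m)$ through $\chi_2$ and treating $(c_{\max}+\e)k\leq|m|\leq Rk$ (symbol evaluation of $\chi_2(|\hsc D'|_g^2)$ at frequency $m\hsc$, non-stationary) and $|m|\geq Rk$ (classical integration by parts) separately. Both work; the operator-level argument is cleaner because it handles all modes $|m|>N$ simultaneously and avoids the bookkeeping of $k$-dependence of $\|v\|_{H^M(\Gamma)}$ that your $|m|\geq Rk$ regime needs. That bookkeeping is not a gap — your microlocalisation step yields $\|v\|_{H^M_\hsc(\Gamma)}\leq C\|v\|_{\LtG}+O(\hsc^\infty)$, which together with the lower bound of Lemma~\ref{lem:vlowerbound} closes it — but it should be stated explicitly to avoid the appearance of circularity with Theorem~\ref{thm:oscil}. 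One small misattribution: your wavefront-set assertion for $f$ should cite Lemma~\ref{lem:HFf}, not Lemma~\ref{lem:WFcutoff}.
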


The result in Theorem \ref{thm:superalgebraicFG} for $I-P_N^C$ follows immediately from the result for $I-P_N^G$  by using
the identity \eqref{e:SVclevertrick} and the fact that $I-P_N^C$ is bounded on $H^s_k$ (by Lemma \ref{lem:approx_FC}).

\begin{theorem}[Oscillatory behaviour of BIE solution under plane-wave scattering]\label{thm:oscil}
Suppose that $\operator$ is one of $A_k, A_k'$, $\Breg$, or $\Bregp$ and the right-hand side $f$ is as described in Theorem \ref{thm:BIEs}.
Then $v\in H^t(\Gamma)$ for all $t\geq 0$ and
given $t\geq s\geq 0$ and $k_0>0$ there exists $\Creg(t,s,k_0)>0$ such that
\beq\label{e:oscil}
\N{v}_{H^t_k(\Gamma)} \leq \Creg \N{v}_{H^s_k(\Gamma)} \quad\tfa k\geq k_0.
\eeq
\end{theorem}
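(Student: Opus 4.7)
The plan is to show that $v = \operator^{-1}f$ inherits from the plane-wave data $f$ the property of being microlocally concentrated on tangential frequencies $|\xi'|_g \lesssim 1$ in $T^*\Gamma$, and then to deduce the inequality \eqref{e:oscil} from the smoothing action of low-frequency cutoffs recorded in \eqref{e:frequencycutoff2}. First I would establish, for any $\chi_0 \in C_c^\infty(\mathbb{R})$ with $\chi_0 \equiv 1$ on a neighbourhood of $[-1,1]$, that
\[
\big\|\big(I - \chi_0(-\hsc^2\Delta_\Gamma)\big)f\big\|_{H^t_\hsc(\Gamma)} = O(\hsc^\infty)\|f\|_{L^2(\Gamma)}, \qquad \|f\|_{H^t_\hsc(\Gamma)} \leq C_t \|f\|_{L^2(\Gamma)},
\]
uniformly in $k$, for every $t\geq 0$. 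Both bounds follow from the explicit form of $f$ supplied by Theorem~\ref{thm:BIEs}: $f$ is built from boundary and normal traces of the plane wave $u^{\rm inc}(x) = e^{\ri k x \cdot d}$, $|d|=1$ (with coefficients of order at most $k$), each of which has WKB form $a(y)e^{\ri k \phi(y)}$ in a local chart with $|\nabla_g\phi|^2 = 1 - (d\cdot \nu)^2 \leq 1$, so its semiclassical wavefront set lies in $\{|\xi'|_g \leq 1\}$.

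Next I would transfer this frequency localization to $v$ via Lemma \ref{lem:HFinverse}. Choose nested cutoffs $\chi_0 \prec \chi_1 \prec \tilde\chi$ in $C_c^\infty(\mathbb{R})$, all identically $1$ on $[-1,1]$, with $\supp \chi_0 \cap \supp(1-\tilde\chi) = \emptyset$ and $\supp \chi_1 \cap \supp(1-\tilde\chi) = \emptyset$. Lemma \ref{lem:HFinverse} then yields
\[
\big(I - \tilde\chi(-\hsc^2\Delta_\Gamma)\big)v = \big(I - \tilde\chi(-\hsc^2\Delta_\Gamma)\big) P f + O(\hsc^\infty)_{\Psi^{-\infty}_\hsc(\Gamma)} f,
\]
where $P := \big(I + (I-\chi_1\hDarg)\pert\big)^{-1} \in \Psi^0_\hsc(\Gamma)$ (well defined by Lemma \ref{lem:elliptic_HF}(b) and Corollary \ref{cor:elliptic}). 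Splitting $f = \chi_0(-\hsc^2\Delta_\Gamma) f + (I-\chi_0(-\hsc^2\Delta_\Gamma))f$, the second summand contributes $O(\hsc^\infty)\|f\|_{L^2}$ by the first step, while for the first summand I would argue that the composition $\big(I-\tilde\chi(-\hsc^2\Delta_\Gamma)\big) P \chi_0(-\hsc^2\Delta_\Gamma)$ is itself $O(\hsc^\infty)_{\Psi^{-\infty}_\hsc(\Gamma)}$: by Lemma \ref{lem:WFcutoff}, $\WF_\hsc\big(P\chi_0(-\hsc^2\Delta_\Gamma)\big) \subset \WF_\hsc\big(\chi_0(-\hsc^2\Delta_\Gamma)\big)$ is disjoint from $\WF_\hsc\big(I - \tilde\chi(-\hsc^2\Delta_\Gamma)\big)$, and \eqref{e:WFdisjoint} applies. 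Combining with $\|f\|_{L^2(\Gamma)} \leq \|\operator\|_{L^2 \to L^2}\|v\|_{L^2(\Gamma)} \leq C k^Q \|v\|_{L^2(\Gamma)}$ (the last inequality from standard mapping properties of the layer operators $S_k, \DL_k, \DL_k', H_k, S_{\ri k}$) produces
\[
\big\|\big(I - \tilde\chi(-\hsc^2\Delta_\Gamma)\big)v\big\|_{H^t_\hsc(\Gamma)} = O(\hsc^\infty)\|v\|_{L^2(\Gamma)}.
\]

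Finally, the complementary low-frequency bound $\|\tilde\chi(-\hsc^2\Delta_\Gamma)v\|_{H^t_\hsc(\Gamma)} \leq C_t\|v\|_{L^2(\Gamma)}$ from \eqref{e:frequencycutoff2}, absorbing the $O(\hsc^\infty)$ term for $k \geq k_0$, together with $\|v\|_{L^2(\Gamma)} \leq \|v\|_{H^s_\hsc(\Gamma)}$ for $s \geq 0$, yield \eqref{e:oscil} with $\Creg$ depending only on $t, s, k_0$. The main obstacle will be the microlocal composition argument in the second step: although the elliptic inverse $P$ is a full-range pseudodifferential operator, sandwiching it between frequency cutoffs with disjoint supports must produce a smoothing residual, and this is the mechanism that allows the high-frequency part of $v$ to be controlled by the data rather than by the polynomially-large factor $\rho$ coming from Assumption \ref{ass:polyboundintro}. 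Every other ingredient (Lemma \ref{lem:HFinverse}, the low-frequency bound \eqref{e:frequencycutoff2}, the polynomial norm bound on $\operator$) is already in place.
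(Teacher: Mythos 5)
Your overall strategy is right and closely parallels the paper's: show that the data $f$ is microlocally concentrated on $\{|\xi'|_g \lesssim 1\}$ (this is Lemma~\ref{lem:HFf}), transfer that localisation to $v$, and then use the smoothing property \eqref{e:frequencycutoff2} of the low-frequency projector. Your step~1 and your final trick of converting $\|f\|_{L^2}$ into $\|v\|_{L^2}$ via the polynomial bound on $\|\operator\|$ (rather than on $\|\operator^{-1}\|$) are also exactly the role that Lemma~\ref{lem:vlowerbound} plays in the paper.

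The gap is in your transfer step, where you write $(1-\tilde\chi)v = (1-\tilde\chi)Pf + O(\hsc^\infty)f$ with $P := \big(I + (I-\chi_1\hDarg)\pert\big)^{-1}$. This identity is the content of Lemma~\ref{lem:HFinverse}, and that lemma is proved only for $k\notin\cJ$: its error term is $-(I+\pert)^{-1}\pert\chi_1\hDarg P(1-\chi_2\hDarg)$, and although $\chi_1\hDarg P(1-\chi_2\hDarg)$ has the disjoint-wavefront smallness you describe, the remaining factor $(I+\pert)^{-1}\pert$ is controlled only via the polynomial bound in Assumption~\ref{ass:polyboundintro}, which fails on $\cJ$ (for trapping obstacles $\|(I+\pert)^{-1}\|$ grows exponentially along a sequence of $k$). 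Theorem~\ref{thm:oscil}, however, asserts \eqref{e:oscil} for \emph{all} $k\geq k_0$ with no exceptional set, and indeed it is invoked later (e.g.\ to derive the relative-error bound \eqref{e:PPMR2}) without any $k\notin\cJ$ restriction. The paper avoids this issue by never inverting $\operator$: it uses the ellipticity of $(I-\chi\hDarg)\operator$ on $\WF_\hsc(I-\tilde\chi\hDarg)$ (Lemma~\ref{lem:elliptic_HF} plus Corollary~\ref{cor:elliptic}), applied directly to the equation $\operator v = f$, to obtain
\[
\N{(1-\tilde\chi\hDarg) v }_{H^s_\hsc(\Gamma)}\leq C\Big(\N{(1- \chi\hDarg)f}_{H^{s}_\hsc(\Gamma)} + \hsc^M \N{v}_{H^{s}_\hsc(\Gamma)}\Big),
\]
whose constant depends only on the elliptic-parametrix construction and not on $\|\operator^{-1}\|$. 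If you replace your use of Lemma~\ref{lem:HFinverse} by this one-sided elliptic estimate (so that the error is carried by $\hsc^M\|v\|_{H^s_\hsc}$ rather than routed through $(I+\pert)^{-1}$), and then absorb that error using your lower bound on $\|v\|$, your argument becomes $\cJ$-free and agrees with the paper's.
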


In \S\ref{sec:Galerkin_poly_proof}, the bound in Theorem \ref{thm:oscil} is combined with the polynomial-approximation assumptions Assumptions \ref{ass:ppG} and \ref{ass:ppC} to prove bounds on $\|(I-P_N)v\|_{H^s_k(\Gamma)}$ (where $P_N$ is either the Galerkin or collocation projection).


\subsection{Bounds on the high-frequency components of $f$ and $v$}

In the next lemma we use the notation that $\gamma^+$ is the Dirichlet trace on $\Gamma$ from $\Omega^+$ (as used in \S\ref{app:A}).

\begin{lemma}[The high-frequency components of $f$ are superalgebraically small]\label{lem:HFf}
Suppose that $B\in \Psi_{\hsc}^m(\Gamma)$ and $\chi\in C_{c}^\infty(\Rea;[0,1])$ with
$\supp(1-\chi)\cap[-R,R]$ for some $R>0$. Then for all $N,\hsc_0>0$ there exists $C_N>0$ such that for all $a\in \mathbb{R}^d$ with $|a|\leq R$ and for all $0<\hsc\leq\hsc_0$,
$$
\|(I-\chi\hDarg)B\gamma^+\re^{\ri kx\cdot a}\|_{H_{\hsc}^{N}(\Gamma)}\leq C_N\hsc^N.
$$
\end{lemma}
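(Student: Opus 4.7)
The heuristic is that $e^{\ri k x\cdot a}$ is semiclassically concentrated at momentum $\xi = a$, so its trace on $\Gamma$ has tangential frequencies bounded (in the induced metric $g$) by $|a|\leq R$; meanwhile $I-\chi(|\hsc D'|_g^2)$ is microlocally supported where $|\xi'|_g^2 > R$ by the hypothesis on $\chi$. Provided the two frequency bands are disjoint -- which is the case in the range of $R$ relevant to the paper's applications, where $R^2 \leq R$ -- the composition is smoothing to all orders in $\hsc$, and the intervening $B\in\Psi_\hsc^m(\Gamma)$ only preserves wavefront sets, so the whole composition is $O(\hsc^\infty)_{H^{-L}_\hsc\to H^N_\hsc}$.

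\textbf{Step 1: microlocalizing the trace.} Set $u_k := \gamma^+ e^{\ri k x\cdot a}$ and choose $\psi\in C_c^\infty(\mathbb{R};[0,1])$ with $\psi\equiv 1$ on a neighbourhood of $[-R^2, R^2]$ and $\supp\psi\cap\supp(1-\chi)=\emptyset$. Working in a coordinate patch with parametrization $x = \gamma(t)$, we have $u_k(\gamma(t)) = e^{\ri k \varphi(t)}$ where $\varphi(t) := \gamma(t)\cdot a$ satisfies
$$
|d\varphi(t)|_g^2 = \frac{(\dot\gamma(t)\cdot a)^2}{|\dot\gamma(t)|^2}\leq |a|^2\leq R^2
$$
by Cauchy--Schwarz. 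The claim is that
$$
\bigl\|\bigl(I-\psi(|\hsc D'|_g^2)\bigr)u_k\bigr\|_{H^M_\hsc(\Gamma)} \leq C_{M,N} \hsc^N \quad \forall\, M,N\geq 0.
$$
This is a standard semiclassical non-stationary phase argument: writing this operator via the quantization formula \eqref{e:quant} produces an oscillatory integral with phase $(t-s)\cdot\xi/\hsc - \varphi(s)$, whose $s$-gradient $\xi - \hsc\nabla\varphi(s)$ is bounded below in norm by a positive constant on $\supp(1-\psi)$, because $|\nabla\varphi|_g\leq R$ while $|\xi|_g^2 \geq R^2 + \delta$ there. Iterated integration by parts in $s$ then gives arbitrary decay in $\hsc$. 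A partition of unity on $\Gamma$ reduces the general case to this local computation.

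\textbf{Step 2: combining with $B$.} By Lemma~\ref{lem:WFcutoff} and \eqref{e:WFquant}, together with the fact that $\supp\psi\cap\supp(1-\chi)=\emptyset$,
$$
\WF_\hsc\bigl(I-\chi(|\hsc D'|_g^2)\bigr)\cap \WF_\hsc\bigl(\psi(|\hsc D'|_g^2)\bigr) = \emptyset.
$$
Combining this with $B\in\Psi_\hsc^m(\Gamma)$ via \eqref{e:WF_prod}--\eqref{e:WFdisjoint} gives
$$
\bigl(I-\chi(|\hsc D'|_g^2)\bigr)\,B\,\psi(|\hsc D'|_g^2) = O(\hsc^\infty)_{\Psi^{-\infty}_\hsc(\Gamma)}.
$$
Decompose
$$
\bigl(I-\chi(|\hsc D'|_g^2)\bigr)B\,u_k = \bigl(I-\chi(|\hsc D'|_g^2)\bigr)B\,\psi(|\hsc D'|_g^2)\,u_k + \bigl(I-\chi(|\hsc D'|_g^2)\bigr)B\,\bigl(I-\psi(|\hsc D'|_g^2)\bigr)u_k.
$$
The first summand is $O(\hsc^N)_{H^M_\hsc}$ for every $M,N$, since $\|u_k\|_{L^2(\Gamma)}$ is uniformly bounded in $k$. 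The second summand is controlled by combining Step~1 with the $H^t_\hsc\to H^{t-m}_\hsc$ boundedness of $B$ from Theorem~\ref{thm:basicP}(ii). Either way the bound is $O(\hsc^N)$ in $H^N_\hsc(\Gamma)$, which is the claim.

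\textbf{Main obstacle.} The only nontrivial ingredient is Step~1 -- the WKB / non-stationary phase estimate in local coordinates that pins the tangential wavefront set of $\gamma^+ e^{\ri k x\cdot a}$ inside $\{|\xi'|_g \leq |a|\}$. All subsequent manipulations are routine wavefront-set bookkeeping from \S\ref{sec:335}. One should also verify the minor compatibility point that $\psi$ can indeed be chosen with $\psi\equiv 1$ near $[-R^2,R^2]$ and $\supp\psi\cap\supp(1-\chi)=\emptyset$; this holds in the range $R\leq 1$ that arises in the applications, where the natural cutoff threshold is $R=1$ and the plane-wave directions satisfy $|a|=1$.
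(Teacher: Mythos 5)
Your proof is correct and follows essentially the same route as the paper's: the key step in both is the nonstationary-phase computation showing that the trace $\gamma^+ e^{\ri k x\cdot a}$ is semiclassically localized in tangential frequencies $|\xi'|_g\leq |a|$, which is then played off against the support of the high-frequency cutoff. Your packaging differs slightly -- the paper applies the composition formula to write $(I-\chi\hDarg)B=\Op_\hsc(b)+O(\hsc^\infty)_{\Psi^{-\infty}}$ with $\supp b\cap\{|\xi'|_g\leq R\}=\emptyset$ and then estimates the oscillatory integral $\Op_\hsc(b)\psi(x')\gamma^+e^{\ri kx\cdot a}$ directly, while you insert an auxiliary cutoff $\psi(|\hsc D'|_g^2)$ between the two frequency bands and argue via $\WF_\hsc$ calculus -- but this is a modular rephrasing of the same idea, and both ultimately hinge on the identical stationary-phase bound $|\xi'_*|_g^2\leq |a|^2\leq R^2$. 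Two small points. First, there is an $\hsc$-scaling slip in Step 1: since $k=\hsc^{-1}$, the trace is $e^{\ri\varphi(t)/\hsc}$ and the oscillatory-integral phase should be $[(t-s)\cdot\xi+\varphi(s)]/\hsc$, so the $s$-gradient (up to a factor $\hsc^{-1}$) is $-\xi+\nabla\varphi(s)$, not $\xi-\hsc\nabla\varphi(s)$; the conclusion is unaffected, since disjointness of the regions $|\xi'|_g>\sqrt{R^2+\delta}$ and $|\nabla\varphi|_g\leq R$ still gives the required lower bound. Second, you are right to flag the $R$-versus-$R^2$ compatibility point: the statement's hypothesis (with the missing ``$=\emptyset$'' restored) gives $\supp(1-\chi(|\xi'|_g^2))\subset\{|\xi'|_g>\sqrt R\}$ whereas the stationary set is $\{|\xi'_*|_g\leq R\}$, so disjointness needs $R\leq 1$ (equivalently, the intended cutoff condition should read $\supp(1-\chi)\cap[-R^2,R^2]=\emptyset$); the paper's own proof asserts ``$|\xi'|_g>R$ on the support,'' consistent with the latter reading, and in every application $R=1$, where the distinction vanishes.
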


The idea behind the proof is that if $|a|\leq R$ then $B\gamma^+ \re^{\ri k x\cdot a}$ contains frequencies $\leq kR$, and $(I-\chi\hDarg$ is a frequency cut-off to frequencies $>kR$; we now proof this rigorously using the quantisation definition \eqref{e:quant} and integration by parts.

\

\begin{proof}
By a partition of unity, we can work in local coordinates where
$$
\Gamma=\big\{ (x',F(x'))\,:\, x'\in U\subset \mathbb{R}^{d-1}\big\}.
$$
By the composition formula for symbols \cite[Theorem 4.14]{Zw:12}, \cite[Proposition E.8]{DyZw:19}, $(I-\chi\hDarg)B=\Op_{\hsc}(b) +O(\hsc^\infty)_{\Psi^{-\infty}(\Gamma)}$ for some $b\in S^m$ satisfying $\supp b\cap \{|\xi'|_g\leq R\}=\emptyset$.
For such a $b$ 
and $\psi\in C_{c}^\infty(\mathbb{R}^{d-1})$ supported in $U$, by \eqref{e:quant},
\beq\label{e:trustingJeff}
\Op_{\hsc}(b)\psi(x')\gamma^+\re^{\ri kx\cdot a}= (2\pi \hsc)^{-d}\int_{\Rea^d}\int_{\Rea^d} \re^{\frac{\ri}{\hsc}(\langle x'-y',\xi'\rangle+\langle a',y'\rangle+F(y')a_1)}b(x',\xi')\psi(y')\rd y'\rd\xi'.
\eeq
In these coordinates, the metric $g$ on $\Gamma$ is given by
$$
g(\partial_{x^i},\partial_{x^j})= \delta_{ij} + \partial_{x^i}F\,\partial_{x^j}F
$$
and hence, in these coordinates,
$$
|\xi'|_g^2(x')= \big\langle G(x')\xi',\xi'\big\rangle,\qquad G(x')= I-\frac{1}{1+|\partial_{x'}F(x')|^2}(\partial_{x'}F(x'))^t(\partial_{x'}F(x')).
$$
The phase in the integrand in \eqref{e:trustingJeff} is stationary in $y'$ when
$
\xi'_*=a'+a_1\partial_{y'}F(y'),
$
and 
$$
|\xi'_*|^2_g= \Big\langle G(x')\big(a'+a_1\partial_{y'}F(y')\big),\big(a'+a_1\partial_{y'}F(y')\big)\Big\rangle.
$$
By direct computation, we find that $|\xi'_*|^2_g\leq |a|^2\leq R^2.$
Therefore, since $|\xi'|_g>R$ on the support of the integrand in \eqref{e:trustingJeff}, the phase is non-stationary in $y'$ and repeated integration by parts in $y'$
(see, e.g., \cite[Lemmas 3.10 and 3.14]{Zw:12})
shows that
$$
\N{\Op_{\hsc}(b)\psi(x')\gamma^+\re^{\ri kx\cdot a}}_{H_{\hsc}^N(\Gamma)}\leq C_N\hsc^N.
$$
\end{proof}

\ble[The high-frequency components of $v$ are superalgebraically small]\label{lem:HFv}
Suppose that $\operator$ is one of $A_k, A_k'$, $\Breg$, or $\Bregp$ and the right-hand side $f$ is as described in Theorem \ref{thm:BIEs}.
For any $\widetilde{\chi}\in C^\infty_{c}(\Rea^d;[0,1])$ with $\supp(1-\widetilde{\chi})\cap[-1,1]=\emptyset$ and any $s>0$,
\beqs
\frac{\N{(I- \widetilde{\chi}\hDarg) v}_{\Hsh}
}{
\N{v}_{\Hsh}
} = O(\hsc^\infty) .
\eeqs
\ele

To prove Lemma \ref{lem:HFv} we need the following result.

\ble[Lower bound on $\|v\|_{\Hsh}$]\label{lem:vlowerbound}
Suppose that $\operator$ is one of $A_k, A_k'$, $\Breg$, or $\Bregp$ and the right-hand side $f$ is as described in Theorem \ref{thm:BIEs}.
Given $s>0$ there exists $M\in \Rea$ such that, given $\hsc_0>0$, there exists $C>0$ such that
\beqs
\|v\|_{\Hsh}\geq C \hsc^{M}\quad\tfa 0<\hsc\leq \hsc_0.
\eeqs
\ele

\bpf
Since
\beqs
\|v\|_{\Hsh} \geq \|f\|_{\Hsh} \big( \| \operator \|_{\Hsht}\big)^{-1},
\eeqs
we see that the required bound from below on $\|v\|_{\LtG}$ follows if we can show that (i) $\|f\|_{\Hsh}$ is bounded below algebraically in $\hsc$, and (ii) $\| \operator \|_{\Hsht}$ is bounded above algebraically in $\hsc^{-1}$.

Condition (ii) follows from Lemma \ref{lem:normA}, the bounds on $\|A_k'\|_\LtGt = \|A_k\|_\LtG$ in \cite{ChGrLaLi:09}, \cite[Theorem 2]{GaSm:15}, \cite[Theorem A.1]{HaTa:15}, \cite[Theorem 4.5]{Ga:19},  and the bounds on $\|\Breg\|_{\LtGt}=\|\Breg'\|_{\LtGt}$ in \cite[Theorems 2.1]{GaMaSp:21N}.

Condition (i) follows from explicitly calculating the $\hsc$ dependence of $\|f\|_{\Hsh}$ for the four different $f$s given in Theorem \ref{thm:BIEs}; for the $f$ for $\Breg$, we additionally have to use the bound $\|(S_{\ri k})^{-1}\|_{H^{s-1}_\hsc(\Gamma)\to H^s_\hsc(\Gamma)} \lesssim \hsc^{-1}$ from \cite[Corollary 3.7]{GaMaSp:21N}.
\epf

\

\bpf[Proof of Lemma \ref{lem:HFv}]
By \S\ref{sec:check}, $\operator$ satisfies Assumption \ref{ass:abstract1} and thus the conclusions of Lemma \ref{lem:elliptic_HF} hold.
Given $\widetilde{\chi}$, let $\chi\in C^\infty_{c}(\Rea;[0,1])$ be such that $\supp(1-\chi)\cap[-1,1]=\emptyset$, and $\supp \chi \cap \supp(1-\widetilde{\chi})=\emptyset$; this implies that  $\{\widetilde{\chi}\not\equiv 1\} \Subset \{ \chi \not\equiv 1\}$, and thus, by \eqref{e:WFchi} and Lemma \ref{lem:elliptic_HF}, \eqref{e:2crows3} holds.
The elliptic estimate \eqref{e:ellipticestimate} then implies that, given $s,M, N \in \Rea$, there exists $C, \hsc_0>0$ such that, for all $0<\hsc\leq \hsc_0$,
\begin{align}\nonumber
\N{\big( 1- \widetilde{\chi}\hDarg\big) v }_{H^s_\hsc(\Gamma)}
&\leq C \Big(
\N{
\big( 1- \chi\hDarg\big) \operator v}_{H^{s}_\hsc(\Gamma)} + \hsc^M \N{v}_{H^{s-N}_\hsc(\Gamma)}
\Big)\\
&= C \Big(
\N{
\big( I-\chi\hDarg\big)f}_{H^{s}_\hsc(\Gamma)} + \hsc^M \N{v}_{H^{s-N}_\hsc(\Gamma)}
\Big).\label{e:Munich1}
\end{align}
The result then follows by choosing $N=0$ and using Lemmas \ref{lem:HFf} and \ref{lem:vlowerbound}.
%
%
\epf

\subsection{Proof of Theorem \ref{thm:oscil}}

First observe that it is sufficient to prove the bound \eqref{e:oscil} for $k_0$ sufficiently large.
Let $\widetilde{\chi}$ be as in Lemma \ref{lem:HFv}, and write
\beqs
v = \big( 1- \widetilde{\chi}\hDarg\big) v +\widetilde{\chi}\hDarg v.
\eeqs
By \eqref{e:frequencycutoff2}, given $t\geq s$ and $\hsc_0>0$, there exists $C>0$ such that, for all $0<\hsc\leq \hsc_0$,
\beqs
\N{\widetilde{\chi}\hDarg v}_{H^t_\hsc(\Gamma)} \leq C\N{v}_{H^s_\hsc(\Gamma)}.
\eeqs
Combining this with the result of Lemma \ref{lem:HFv}, we have
\beqs
\N{v}_{H^t_\hsc(\Gamma)} \leq O(\hsc^\infty) \N{v}_{H^t_\hsc(\Gamma)} + C \N{v}_{H^s_\hsc(\Gamma)},
\eeqs
and the result follows.

\subsection{Proof of Theorem \ref{thm:superalgebraicFG}}

As noted after the statement of the theorem, the result in Theorem \ref{thm:superalgebraicFG} for $I-P_{\trig_N}^C$ follows immediately from the result for $I-P_{\trig_N}^G$  by using
the identity \eqref{e:SVclevertrick} and the fact that $I-P_{\trig_N}^C$ is bounded on $H^s_k$ (by Lemma \ref{lem:approx_FC}).

To prove the result for $I-P_{\trig_N}^G$,
we first show that the operator $I-P_{\trig_N}^G$ can be naturally expressed in terms of functions of $\partial_t^2$.
Indeed, we claim that
\beq\label{e:PNGSL}
P_{\trig_N}^G = \ind_{(-\infty,1]} \big( - N^{-2} \partial_t^2\big),
\eeq
where $\partial_t$ is understood as an operator on functions on $\Gamma$ via $\partial_t u = (\partial_t (u\circ \gamma^{-1}))\circ \gamma$.

To see this, recall that for $f\in L^\infty(\Rea)$ and $v\in \LtG$,
\beq\label{e:func_calc}
f(-\partial_t^2)v:=\sum_{m=1}^\infty f(\lambda_m)( v,u_{\lambda_m})_{L^2(0,2\pi)}u_{\lambda_m},
\eeq
where 
%
%
$\{u_{\lambda_m}\}_{m=1}^\infty$ is an orthonormal basis for $L^2([0,2\pi])$ of eigenfunctions of $-\partial_t^2$; i.e.,
$$
(-\partial_t^2-\lambda_m)u_{\lambda_m}=0\quad\tand\quad \N{u_{\lambda_m}}_{L^2(\Gamma)}=1.
$$
Thus
\beqs
u_{\lambda_m}\big(\gamma(t)\big) = \frac{ \exp(\ri m t)}{\sqrt{2\pi}}, \quad \text{i.e.,} \quad u_{\lambda_m}(x) =  \frac{ \exp(\ri m \gamma^{-1}(x))}{\sqrt{2\pi}},
\eeqs
where $\lambda_m = m^2$.
Therefore, \eqref{e:func_calc} implies that
\begin{align*}
\ind_{(-\infty,1]} \big( - N^{-2} \partial_t^2\big) &= \sum_{m=1}^\infty 1_{(-\infty,1]} \big( N^{-2} \lambda_m\big) \big( v, u_{\lambda_m}\big)_{L^2(0,2\pi)} u_{\lambda_m}\\
&= \sum_{\lambda_m\leq N^2} \widehat{v}(n) \frac{\exp(\ri n t)}{\sqrt{2\pi}} =
\sum_{m=-N}^N \widehat{v}(n) \frac{\exp(\ri n t)}{\sqrt{2\pi}},
\end{align*}
which is $P_{\trig_N}^G$ given by \eqref{e:PNG}, so that \eqref{e:PNGSL} holds, and also
\beq\label{e:rain1}
(I-P_{\trig_N}^G) =\ind_{(1,\infty)}\big( -N^{-2} \partial_t^2\big)= \ind_{\big(( N\hsc)^2,\infty\big)} \big(-\hsc^2 \partial_t^2\big).
\eeq

Theorem \ref{thm:superalgebraicFG} is a consequence of the following result. 

\begin{lemma}
\label{l:proj2Pseudo}
Given $\Theta,\epsilon>0$, 
let $\chi \in C_{c}^\infty(\mathbb{R};[0,1])$ be such that 
$\supp(1-\chi)\cap[-\Theta^2,\Theta^2]=\emptyset$ and $\supp \chi \subset [-(\Theta + \epsilon/(2c_{\max}))^2, (\Theta + \epsilon/(2c_{\max})^2)]$.
If $N\geq (\Theta c_{\max} +\epsilon)k$, then
$$
(I-P_{\trig_N}^G)=(I-P_{\trig_N}^G)\big(I - \chi(|\hsc D'|_{g}^2)\big)  +O(\hsc^\infty)_{\Psi^{-\infty}_\hsc(\Gamma)}=\big(I - \chi(|\hsc D'|_{g}^2)\big)(I-P_{\trig_N}^G) + O(\hsc^\infty)_{\Psi^{-\infty}_\hsc(\Gamma)}.
$$
\end{lemma}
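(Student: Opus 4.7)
The plan is to reduce the two displayed identities to the equivalent pair of microlocal statements
\[
(I-P^G_{\trig_N})\,\chi(|\hsc D'|_g^2) = O(\hsc^\infty)_{\Psi^{-\infty}_\hsc(\Gamma)}, \qquad \chi(|\hsc D'|_g^2)\,(I-P^G_{\trig_N}) = O(\hsc^\infty)_{\Psi^{-\infty}_\hsc(\Gamma)}.
\]
My starting point is the identity \eqref{e:rain1}, which represents $I-P^G_{\trig_N}$ as the sharp spectral projector $\ind_{((N\hsc)^2,\infty)}(-\hsc^2\partial_t^2)$. Since the parametrisation $\gamma$ is smooth with $\dot\gamma$ nowhere vanishing, $-\partial_t^2$ is an elliptic self-adjoint second-order differential operator on $\Gamma\cong\mathbb{T}^1$, so by Helffer--Sj\"ostrand functional calculus \cite[Theorem 14.9]{Zw:12} every $\psi\in C_c^\infty(\mathbb{R})$ produces $\psi(-\hsc^2\partial_t^2)\in\Psi^{-\infty}_\hsc(\Gamma)$ with principal symbol $\psi(\tau^2)$ in the $t$-coordinate, and $I-\psi(-\hsc^2\partial_t^2)\in\Psi^0_\hsc(\Gamma)$ has wavefront set contained in $\overline{\{\psi(\tau^2)<1\}}$, exactly as in Lemma \ref{lem:WFcutoff}.

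The symbol-level comparison uses the identity $|\xi'|_g^2=\tau^2/|\dot\gamma(t)|^2$ in the $t$-coordinate, so the bound $|\dot\gamma|\leq c_{\max}$ gives $\tau^2\leq c_{\max}^2|\xi'|_g^2$ pointwise on $T^*\Gamma$. Consequently
\[
\WF_\hsc\bigl(\chi(|\hsc D'|_g^2)\bigr)\subset\bigl\{|\xi'|_g^2\leq (\Theta+\tfrac{\epsilon}{2c_{\max}})^2\bigr\}\subset\bigl\{\tau^2\leq (c_{\max}\Theta+\tfrac{\epsilon}{2})^2\bigr\},
\]
while the hypothesis $N\geq (\Theta c_{\max}+\epsilon)k$ yields $N\hsc\geq c_{\max}\Theta+\epsilon$. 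I fix $\psi\in C_c^\infty(\mathbb{R};[0,1])$ with $\psi\equiv 1$ on $[-(c_{\max}\Theta+\tfrac{2\epsilon}{3})^2,(c_{\max}\Theta+\tfrac{2\epsilon}{3})^2]$ and $\supp\psi\subset(-(c_{\max}\Theta+\tfrac{5\epsilon}{6})^2,(c_{\max}\Theta+\tfrac{5\epsilon}{6})^2)$, so that $\psi\equiv 1$ strictly outside $\WF_\hsc(\chi(|\hsc D'|_g^2))$ and $\supp\psi$ lies strictly inside $(-(N\hsc)^2,(N\hsc)^2)$.

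With this $\psi$ the argument has two steps. First, the wavefront set of $I-\psi(-\hsc^2\partial_t^2)$ is contained in $\{\tau^2\geq(c_{\max}\Theta+\tfrac{2\epsilon}{3})^2\}$ and is therefore disjoint from $\WF_\hsc(\chi(|\hsc D'|_g^2))$; applying \eqref{e:WFdisjoint} exactly as in the proof of Corollary \ref{cor:WFcutoff} yields
\[
\chi(|\hsc D'|_g^2)=\psi(-\hsc^2\partial_t^2)\chi(|\hsc D'|_g^2)+O(\hsc^\infty)_{\Psi^{-\infty}_\hsc(\Gamma)}=\chi(|\hsc D'|_g^2)\psi(-\hsc^2\partial_t^2)+O(\hsc^\infty)_{\Psi^{-\infty}_\hsc(\Gamma)}.
\]
Second, since $\supp\psi$ lies strictly inside $(-(N\hsc)^2,(N\hsc)^2)$, for every integer $m$ with $|m|>N$ one has $\psi(\hsc^2 m^2)=0$; the spectral calculus of the common self-adjoint operator $-\hsc^2\partial_t^2$ then gives $(I-P^G_{\trig_N})\psi(-\hsc^2\partial_t^2)=\psi(-\hsc^2\partial_t^2)(I-P^G_{\trig_N})=0$. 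Multiplying the displayed identity on the left (respectively right) by $I-P^G_{\trig_N}$, and using that this projection is bounded uniformly in $\hsc$ on every $H^s_\hsc(\Gamma)$ so that composition with an $O(\hsc^\infty)_{\Psi^{-\infty}_\hsc}$ remainder preserves the norm bound \eqref{e:remainder}, closes the argument.

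The main obstacle is reconciling the intrinsic Laplace--Beltrami $-\Delta_\Gamma$ (which defines $\chi(|\hsc D'|_g^2)$) with the coordinate operator $-\partial_t^2$ (whose sharp spectral projector is $I-P^G_{\trig_N}$); these differ by the conformal factor $|\dot\gamma|^2$, and the constant $c_{\max}$ in the hypothesis $N\geq(\Theta c_{\max}+\epsilon)k$ is exactly what compensates this mismatch at the symbol level. The delicate points are treating $-\partial_t^2$ as a genuine elliptic semiclassical pseudodifferential operator on the manifold $\Gamma$ (so that Lemma \ref{lem:WFcutoff} transplants verbatim), and fitting the cutoff $\psi$ into the narrow $\epsilon/2$-wide gap between the two relevant symbol supports.
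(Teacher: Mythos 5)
Your proof is correct and takes essentially the same route as the paper's. The paper also starts from the sharp spectral identity \eqref{e:rain1}, introduces an auxiliary compactly-supported cutoff $\chi_0$ in the variable $-\hsc^2\partial_t^2$ whose spectral support lies below $(N\hsc)^2$ so that $(I-P^G_{\trig_N})\chi_0(-\hsc^2\partial_t^2)=0$ exactly, and then establishes wavefront-set disjointness between $\chi\hDarg$ and $I-\chi_0(-\hsc^2\partial_t^2)$ by comparing $|\xi'|_g$ to the flat-metric frequency via the factor $|\dot\gamma|\leq c_{\max}$; your $\psi$ plays precisely the role of the paper's $\chi_0$, and your $\tau^2\leq c_{\max}^2|\xi'|_g^2$ is the same symbol-level comparison the paper phrases as $|\xi'|_{g_{\rm flat}}/|\xi'|_g=|\dot\gamma|\leq c_{\max}$.
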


The result \eqref{e:superalgebraicFG} follows by applying Lemma~\ref{l:proj2Pseudo} to $v$ with $\Theta=1$ and using Lemma \ref{lem:HFv} (noting that $\widetilde\chi$ in Lemma \ref{lem:HFv} can be taken to be $\chi$ via the choice $\Theta=1$). It therefore remains to prove Lemma \ref{l:proj2Pseudo}.

\

\begin{proof}[Proof of Lemma \ref{l:proj2Pseudo}]
Given $\eps>0$, let $\chi_0 \in C^{\infty}_{c}(\Rea;[0,1])$ be such that $\supp \chi_0 \subset (-(\Theta c_{\max}+\eps)^2, (\Theta c_{\max}+\eps)^2)$ and $\supp(1-\chi_0)\cap [-(\Theta c_{\max}+\eps/2)^2,(\Theta c_{\max}+\eps/2)^2]=\emptyset$.

If $N\geq (\Theta c_{\max}+\eps)k$ then
\beqs
(I-P^G_{\trig_N})\chi_0 (-\hsc^{2}\partial_t^2)=\chi_0 (-\hsc^{2}\partial_t^2)(I-P^G_{\trig_N})=0,
\eeqs
since the image of $\chi_0 (-\hsc^{2}\partial_t^2)$ contains frequencies $< (\Theta c_{\max}+\eps)k$, and $I-P_{\trig_N}^G$ restricts to frequencies $\geq (\Theta c_{\max}+\eps)k$ by \eqref{e:rain1}.
In particular,
\beqs
(I-P_{\trig_N}^G)=(I-P_{\trig_N}^G)\big(I - \chi_0(-\hsc^2 \partial_t^2)\big)=\big(I - \chi_0(-\hsc^2 \partial_t^2)\big)(I-P_{\trig_N}^G).
\eeqs
Now, $-\partial_t^2=-\Delta_{\Gamma,g_{\rm flat}}$ where $g_{\rm flat}$ is the metric on $\Gamma$ given by $|\dot{\gamma}(t)|_{g_{\rm flat}(\gamma(t))}=1$. Therefore, by Lemma \ref{lem:WFcutoff}, 
\beqs
\WF_\hsc(I-P_{\trig_N}^G) \subset \Big\{ |\xi'|_{g_{\rm flat}}^2 \geq (\Theta c_{\max} + \epsilon)^2 \Big\}.
\eeqs
Since the arc length metric equals $|\dot{\gamma}(t)|^2 dt^2$,
$$
\frac{|\xi'|_{g(\gamma(t))}}{|\xi'|_{g_{\rm flat}(\gamma(t))}}= |\dot{\gamma}(t)|^{-1}\geq \inf_{t\in[0,2\pi]}|\dot\gamma(t)|^{-1}= c_{\max}^{-1}.
$$
By the support properties of $\chi$ and \eqref{e:WFquant},
\begin{align*}
& \WF_\hsc\, \chi \hDarg\subset \Big\{ |\xi'|_g^2 \leq (\Theta + \epsilon/(2c_{\max}))^2\Big\} \subset \Big\{ |\xi'|_{g_{\rm flat}}^2 \leq (\Theta c_{\max} + \epsilon/2)^2\Big\}
\end{align*}
so that 
\beqs
\WF_\hsc\, \chi \hDarg \cap \WF_\hsc(I-P_{\trig_N}^G) = \emptyset;
\eeqs
the result then follows from \eqref{e:WF_prod}.
%
\end{proof}

\section{Proofs of the Galerkin and collocation results for trigonometric polynomials}
\label{sec:proofs}

Theorems \ref{thm:FG} and \ref{thm:FC} (on, respectively, the Galerkin and collocation method with trigonometric polynomials) are proved using the arguments in the proof of Theorem \ref{thm:MAT} with the additional structure of the Galerkin projection (in particular, \eqref{e:rain1}), and we prove these last.

\subsection{Proof of Theorem \ref{thm:FG} (Galerkin method with trigonometric polynomials)}

First observe that the bound on the relative error \eqref{e:MR2} follows from the combination of \eqref{e:MR1} and \eqref{e:superalgebraicFG}. 
Therefore, we only need to prove the quasi-optimality bound \eqref{e:MR1} 

Theorem \ref{thm:MAT} was based on the fact that the identity \eqref{e:thething} holds under the condition \eqref{e:Neumann1}. In the case of trigonometric polynomials, we instead use the simpler setup that \eqref{e:thething} holds under the condition \eqref{e:Neumann0}.

Let $\Xi\geq 1$, $\epsilon>0$, and let $\chi$ be as in Lemma \ref{l:proj2Pseudo} with $\Theta:= \Xi$; i.e. $\supp(1-\chi)\cap[-\Xi^2,\Xi^2]=\emptyset$ and $\supp \chi \subset [-(\Xi + \epsilon/(2c_{\max}))^2, (\Xi+ \epsilon/(2c_{\max})^2)]$.

Then, if $N\geq (\Xi c_{\max} +\epsilon)k$, the combination of \eqref{e:thething}, Lemma \ref{l:proj2Pseudo}, and Assumption \ref{ass:polyboundintro} implies that 
\begin{align}\nonumber
&(I+P_{\trig_N}^G \pert )^{-1}(I-P_{\trig_N}^G) \\ \nonumber
&= (I+\pert)^{-1}(I-\chi \hDarg)(I-P_{\trig_N}^G)\Big( I+ (P_{\trig_N}^G-I)(I-\chi \hDarg)) \pert(I+\pert)^{-1}\Big)^{-1}(I-P_{\trig_N}^G) \\
&\qquad\qquad+O(\hsc^\infty)_{\Psi_{\hsc}^{-\infty}(\Gamma)}.\label{e:theotherthing}
\end{align}
Now, by  \eqref{e:inverse_cutoff_right} and~\eqref{e:invEst} from Lemma \ref{lem:HFinverse} and the fact that $L_{\max}=1$ (by \S\ref{sec:check}),
\beq\label{e:JeffGenius2}
\N{(I+\pert)^{-1}(I-\chi \hDarg)}_{\LtGt}\leq (1+C\hsc )
\eeq
(compare to \eqref{e:yoga4}). 

By the combination of Lemma \ref{lem:QOabs}, \eqref{e:theotherthing} and the bounds \eqref{e:JeffGenius2} and $\|(I-P_{\trig_N}^G)\|_{L^2\to L^2}\leq 1$, 
to prove \eqref{e:MR1}, we only need to show that 
if 
$N\geq (\Xi R_{\min} c_{\max} +\epsilon)k$ then 
\beqs
\N{(P_{\trig_N}^G-I)(I-\chi \hDarg) \pert(I+\pert)^{-1}}_{\LtGt}\leq 
C \big( \Xi^{-1} +\hsc\big)
\eeqs
(compare to \eqref{e:yoga3}).
Similarly, since $\|(I-P_{\trig_N}^G)\|_{L^2\to L^2}\leq 1$, it is sufficient to show that
\beq\label{e:JeffGenius1}
\N{(I-\chi \hDarg) \pert(I+\pert)^{-1}}_{\LtG\to \LtG}\leq C \big( \Xi^{-1} +\hsc\big).
\eeq

To prove \eqref{e:JeffGenius1}, we choose $\psi_i\in C^{\infty}_{c}(\Rea;[0,1])$, $i=0,1$ with $\supp(1-\psi_i)\cap [-1,1]=\emptyset$,  $\supp \psi_i \cap \supp(1-\chi)=\emptyset$, $\supp \psi_0\cap \supp(1-\psi_1)=\emptyset$ (i.e., $\psi_0$ is ``smaller than" $\psi_1$ which is ``smaller than" $\chi$) -- such a choice is possible since $\Xi \geq 1$, and thus $[-\Xi^2,\Xi^2]\supset [-1,1]$. 
Then, by Lemma \ref{lem:WF}, \eqref{e:inverse_cutoff_left}, \eqref{e:WFdisjoint}, and Assumption \ref{ass:polyboundintro},
\begin{align}\nonumber
&\big(I-\chi \hDarg\big) \pert (I+L)^{-1}  \\ \nonumber
&\quad=\big(I-\chi \hDarg \big)\pert \big(I-\psi_1\hDarg\big)(I+L)^{-1}   + O(\hsc^\infty)_{\Psi_h^{-\infty}(\Gamma)}\\ \nonumber
&\quad=\big(I-\chi \hDarg \big)\pert \big(I-\psi_1\hDarg\big)\big(I+(1-\psi_0\hDarg)L\big)^{-1}   + O(\hsc^\infty)_{\Psi_h^{-\infty}(\Gamma)}\\ \nonumber
&\quad=\big(I-\chi \hDarg \big)\pert \big(I+(1-\psi_0\hDarg)L\big)^{-1}   + O(\hsc^\infty)_{\Psi_h^{-\infty}(\Gamma)}\\
&\quad=\big(I-\chi \hDarg \big)\big(1-\psi_0\hDarg\big)\pert \big(I+\big(1-\psi_0\hDarg\big)L\big)^{-1}   + O(\hsc^\infty)_{\Psi_h^{-\infty}(\Gamma)}.
\label{e:WW1}
\end{align}
By \eqref{e:invEst} from Lemma \ref{lem:HFinverse} and the fact that $L_{\max}=1$ (by \S\ref{sec:check}), 
\beqs
\big\|\big(I+\big(1-\psi_0\hDarg\big)L\big)^{-1}\big\|_{\LtGt}\leq 1 + C\hsc,
\eeqs
and thus to prove \eqref{e:JeffGenius1} it is sufficient to prove that
\beq\label{e:cleaners1}
\big\|\big(I-\chi \hDarg \big)\big(1-\psi_0\hDarg\big)\pert \big\|_{\LtGt} \leq C \big( \Xi^{-1} +\hsc\big).
\eeq

Now, by Part (ii) of Assumption \ref{ass:abstract1}, $\big(1-\psi_0\hDarg\big)\pert \in \Psi_\hsc^{-1}(\Gamma)$ so that, by the definition of $S^m$ \eqref{e:Sm}
and \eqref{e:sigmaGamma},
\beqs
\big| \sigma_\hsc\big(\big(1-\psi_0\hDarg\big)\pert \big) \big| \leq C \langle \xi'\rangle^{-1} \quad\tfa (x',\xi')\in T^*\Gamma.
\eeqs
Then, since $1-\chi(|\xi'|_g^2)=0$ for $|\xi'|_g\leq \Xi$,
\beqs
\big| \sigma_\hsc\big(\big(I-\chi \hDarg \big)\big(1-\psi_0\hDarg\big)\pert \big) \big| \leq C \langle \xi'\rangle^{-1}\leq C' \Xi^{-1} \quad\tfa (x',\xi')\in T^*\Gamma.
\eeqs
The bound \eqref{e:cleaners1} then holds by Lemma~\ref{l:HsEstimates}, and the the proof is complete.

\subsection{Proof of Theorem \ref{thm:FC}  (collocation method with trigonometric polynomials)}

First observe that the bound \eqref{e:MR5} on the relative error then follows
from combining \eqref{e:MR4}
with Theorem \ref{thm:superalgebraicFG}, \eqref{e:SVclevertrick}, and Assumption \ref{ass:polyboundintro}, since the superalgebraic decay in \eqref{e:superalgebraicFG} (with $P_{\trig_N}^G$ replaced by $P_{\trig_N}^C$) absorbs the $\rho$ on the right-hand side of \eqref{e:MR4}.
Therefore, we only need to prove the quasi-optimality bound \eqref{e:MR4}.

As in the proof of Theorem \ref{thm:FG}, we use the identity \eqref{e:thething} under the condition \eqref{e:Neumann0}.
Let $\chi\in C_{c}^\infty(\Rea;[0,1])$ be such that Lemma \ref{l:proj2Pseudo} holds with $\Theta=1$. Then, by \eqref{e:SVclevertrick} and Lemma \ref{l:proj2Pseudo}, if $N\geq (c_{\max} + \epsilon)k$, 
\begin{align}\nonumber
(P_{\trig_N}^C-I)\pert (I+\pert)^{-1} 
&= (P_{\trig_N}^C-I)(I-P_{\trig_N}^G)\pert(I+\pert)^{-1}\\ \nonumber
&= (P_{\trig_N}^C-I)(I-P_{\trig_N}^G)(1-\chi\hDarg)\pert(I+\pert)^{-1}+O(\hsc^\infty)_{\Psi_{\hsc}^{-\infty}(\Gamma)}\\
&= (P_{\trig_N}^C-I)(1-\chi\hDarg)\pert(I+\pert)^{-1}+O(\hsc^\infty)_{\Psi_{\hsc}^{-\infty}(\Gamma)}.\label{e:theThingItIs}
\end{align}
We now claim that if $N\geq (c_{\max} + \epsilon)k$ then 
\beq\label{e:collocateOK}
\N{(P^C_N-I)(I-\chi \hDarg) \pert(I+\pert)^{-1}}_{\Hsht}\leq C\frac{k}{N}.
\eeq
Indeed, since 
 $$
 \|I-P_{\trig_N}^C\|_{H_{\hsc}^{s+1}(\Gamma)\to \Hsh}\leq C\frac{k}{N}, 
 $$
for $s>1/2$ (by \eqref{e:approx_FC}),
to prove \eqref{e:collocateOK} it is sufficient to prove that 
\beq\label{e:collocateOK2}
\big\|(I-\chi \hDarg) \pert(I+\pert)^{-1}\big\|_{H^s_\hsc(\Gamma)\to H^{s+1}_\hsc(\Gamma)}\leq C.
\eeq
Let $\widetilde{\chi} \in C_{c}^\infty(\mathbb{R};[0,1])$ be such that $\supp(1-\widetilde{\chi})=\emptyset$ and $\supp(1-\chi)\cap \supp\widetilde \chi = \emptyset$. Then, by Lemma \ref{lem:WF}, \eqref{e:WF_prod}, \eqref{e:WF_residual}, and Assumption \ref{ass:polyboundintro}, 
\begin{align*}
(I-\chi \hDarg) \pert(I+\pert)^{-1}= (I-\chi \hDarg) \pert(I-\widetilde\chi \hDarg)(I+\pert)^{-1} + O(\hsc^\infty)_{\Psi^{-\infty}}.
\end{align*}
Part (ii) of Assumption \ref{ass:abstract1} and  Part (ii) of Theorem \ref{thm:basicP} imply that $\|(1-\chi\hDarg)\pert\|_{\Hsh\to H_{\hsc}^{s+1}(\Gamma)}\leq C$, and then the bound \eqref{e:collocateOK2} follows by combining this with \eqref{e:inverse_cutoff_left}, and \eqref{e:invEst}.

The combination of~\eqref{e:collocateOK} and~\eqref{e:theThingItIs} implies that if $N\geq (c_{\max} + \epsilon)k$ then 
 \begin{equation}
 \label{e:collocateOK2a}
 \|(I+(P_{\trig_N}^C-I)\pert(I+\pert)^{-1})^{-1}\|_{\Hsht}\leq 1+C\frac{k}{N}.
 \end{equation}
Therefore,  by~\eqref{e:thething}, to prove Theorem~\ref{thm:FC} we only need to bound $(I+L)^{-1}(I-P_{\trig_N}^C)$. 
Now, by \eqref{e:Dean1a} and the fact that $L_{\max}=1$,
\begin{align*}
&\N{(I+\pert)^{-1}(I-P_{\trig_N}^C) }_{\Hsht} \\
&\leq\N{(I+\pert)^{-1}(1-\chi(-\hsc^2\Delta_g))(I-P_{\trig_N}^C) }_{\Hsht} +\N{(I+\pert)^{-1}\chi(-\hsc^2\Delta_g)(I-P_{\trig_N}^C) }_{\Hsht}\\
&\leq  \|I-P_{\trig_N}^C\|_{\Hsht}+C\hsc +\N{(I+\pert)^{-1}\chi(-\hsc^2\Delta_g)(I-P_{\trig_N}^C) }_{\Hsht}.
\end{align*}
Finally, by the approximation property in Lemma \ref{lem:approx_FC} with $t=s$ and $q=0$, the smoothing property of $\chi(-\hsc^2\Delta_g)$, and Lemma \ref{lem:2},
\beq\label{e:laundry1}
\N{(I+\pert)^{-1}(I-P_{\trig_N}^C) }_{\Hsht} \leq \|I-P_{\trig_N}^C\|_{\Hsht}+C\hsc +C\left(\frac{k}{N}\right)^{s} \rho.
\eeq
The combination of \eqref{e:thething}, \eqref{e:collocateOK2a}, and  \eqref{e:laundry1} completes the proof of \eqref{e:MR4}, and the proof of Theorem \ref{thm:FC} is complete.

\

Before ending this section, we record the following corollary of our proof that we use in our study of the Nystr\"om method.
\begin{lemma}
\label{lem:inverseForm}
Suppose that $\operator$ satisfies Assumptions~\ref{ass:abstract1} and~\ref{ass:polyboundintro}. Then for all $s\geq 0$, $k_0>0$ there there is $C>0$ such that for all $k\geq k_0$, $k\notin \mathcal{J}$, and $N\geq Ck$, 
\begin{equation}
\label{e:invFormula}
(I+P_{\trig_N}^C\pert)^{-1}= (I+\pert)^{-1}\Big(I+(P_{\trig_N}^C-I)\pert(I+\pert)^{-1}\Big)^{-1}.
\end{equation}
\end{lemma}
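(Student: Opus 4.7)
The plan is to establish \eqref{e:invFormula} as a purely algebraic consequence of the factorisation identity \eqref{e:JeffFav1}, once we verify that both factors on the right-hand side of that factorisation are invertible on $H^s_\hsc(\Gamma)$. Specifically, \eqref{e:JeffFav1} reads
\begin{equation*}
I + P_{\trig_N}^C \pert = \Big(I + (P_{\trig_N}^C - I)\pert(I + \pert)^{-1}\Big)(I + \pert),
\end{equation*}
and once both factors on the right are invertible, composing the inverses in reverse order yields \eqref{e:invFormula}. So the entire task reduces to checking these two invertibilities under the stated hypotheses.

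First I would invoke Lemma \ref{lem:2}: since $\operator = I + \pert$ satisfies Assumptions \ref{ass:abstract1} and \ref{ass:polyboundintro}, and $k \notin \cJ$, it is invertible on $H^s_\hsc(\Gamma)$ for every $s \in \Rea$, with $\|\operator^{-1}\|_{\Hsht} \leq C\rho$. This handles the outer factor on the right of \eqref{e:JeffFav1}.

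For the inner factor $I + (P_{\trig_N}^C - I)\pert(I + \pert)^{-1}$, I would simply quote the bound already established in the proof of Theorem \ref{thm:FC}: the estimate \eqref{e:collocateOK} shows that, for $N \geq (c_{\max} + \e)k$,
\begin{equation*}
\big\|(P_{\trig_N}^C - I)\pert(I + \pert)^{-1}\big\|_{\Hsht} \leq C\frac{k}{N}.
\end{equation*}
Taking $N \geq Ck$ with $C$ chosen so large that $Ck/N$ is at most $1/2$ makes this operator norm strictly less than $1$, so a Neumann series shows that $I + (P_{\trig_N}^C - I)\pert(I + \pert)^{-1}$ is invertible on $H^s_\hsc(\Gamma)$ with bounded inverse (this is exactly the content of \eqref{e:collocateOK2a}).

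With both invertibilities in hand, the formula \eqref{e:invFormula} follows immediately by applying inverses to the factorisation identity. I do not expect any genuine obstacle: the lemma is essentially an explicit bookkeeping of the Neumann-series step already used to prove Theorem \ref{thm:FC}, and the only subtlety is ensuring that the constant $C$ in $N \geq Ck$ absorbs both the threshold $c_{\max} + \e$ needed for \eqref{e:collocateOK} and the factor needed to make the norm in that estimate smaller than a fixed constant less than one (the required $s$-dependence enters only through the constant in \eqref{e:collocateOK}, and this is uniform for $s$ in any bounded range of the relevant Sobolev scale).
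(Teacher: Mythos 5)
Your proposal is correct and follows the same route as the paper: factorise via \eqref{e:JeffFav1}, invert $I+\pert$ by Lemma \ref{lem:2}, and invert the inner factor by Neumann series using the smallness of $(P_{\trig_N}^C-I)\pert(I+\pert)^{-1}$, exactly as encapsulated in \eqref{e:collocateOK2a}. The one small imprecision is attributing the operator-norm bound $\|(P_{\trig_N}^C - I)\pert(I + \pert)^{-1}\|_{\Hsht}\leq Ck/N$ to \eqref{e:collocateOK} alone: that estimate only controls the high-frequency-truncated version $(P_{\trig_N}^C-I)(I-\chi\hDarg)\pert(I+\pert)^{-1}$, and one must invoke \eqref{e:theThingItIs} to discard the cutoff up to an $O(\hsc^\infty)$ error, which is precisely what the paper's proof cites alongside \eqref{e:collocateOK}.
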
 
\bpf
The combination of \eqref{e:theThingItIs} and \eqref{e:collocateOK}
imply that
\begin{equation*}
\|(P_{\trig_N}^C-I)\pert(I+\pert)^{-1}\|_{\Hshts}\leq C'k/N.
\end{equation*}
The result  \eqref{e:invFormula} then follows from \eqref{e:JeffFav1}.
\epf

\section{{Description of the Nystr\"om method}}
\label{sec:Nystrom}

\begin{assumption}[Class of weakly-singular integrals]\label{ass:Nystrom}
There exist $L_j(t,\tau)$, $j=1,2$, such that
\beq\label{e:NystromL}
(Lv)(t):=\int^{2\pi}_0 \log\left( 4\sin^2 \left(\frac{t-\tau}{2}\right)\right) L_1(t,\tau) \, v(\tau) \, \rd \tau + \int_0^{2\pi} L_2(t,\tau)\, v(\tau)\, \rd \tau.
\eeq
\end{assumption}

\begin{definition}[Approximation by quadrature]\label{def:quad}
With $L$ defined by \eqref{e:NystromL} and the collocation projection $P_{\trig_N}^C$ defined 
by \eqref{e:collocation_def_recap}
with an odd number of evenly-spaced points \eqref{e:trig_points},
\beq\label{e:LN}
(\LN^N v)(t):=\int^{2\pi}_0 \log\left( 4\sin^2 \left(\frac{t-\tau}{2}\right)\right) P_{\trig_N,\tau}^C \big(L_1(t,\tau) \, v(\tau)\big) \, \rd \tau + \int_0^{2\pi}
P_{\trig_N,\tau}^C \big(L_2(t,\tau)\, v(\tau)\big)\, \rd \tau.
\eeq
\end{definition}

Using the expression \eqref{e:PNC} for $P_{\trig_N,\tau}^C$ and the explicit expressions for integrals of trigonometric polynomials against the $\log$ factor (see, e.g., \cite[Lemma 8.23]{Kr:14}),
one can write $(\LN^N v)(t)$ defined by \eqref{e:LN} in terms of $L_j(t,t_j)v(t_j), j=0,\ldots,2N$, multiplying trigonometric polynomials in $t$; see, e.g., \cite[Equations 12.18-12.20]{Kr:14}.
The Nystr\"om method we consider for computing approximations to $(I+L)v = c_0^{-1}f$ is then \eqref{e:nystromForm} with $L_V= \LN^N$ and $P_V=  P_{N}^C$.

Our estimates for the Nystr\"om method are given in terms the following quantities (see Theorem~\ref{thm:MAT2} below): for $j=1,2$, let
\beq\label{e:widehatL}
\widehat{L}_{j,m}(t):=\frac{1}{\sqrt{2\pi}}\int_0^{2\pi} e^{-i\tau m}L_j(t,\tau)d\tau,
\eeq
and for $N\geq 0$, $0\leq \oldK\leq N$, $s\in \mathbb{R}$, $0<\e<1$, let
\begin{equation}
\label{e:defFs}
\begin{gathered}
\FL^{s,\e}(N,\oldK ,L):=k^{-1}\sum_{N-\oldK <|m|\leq (2-\e)N-\oldK }\|\widehat{L}_{1,m}\|_{\Hsk}\langle m/k\rangle^s,\\
\FH^{s,\e}(N,\oldK ,L):=\sum_{|m|> (2-\e)N-\oldK }\Big(\|\widehat{L}_{1,m}\|_{\Hsh}+\|\widehat{L}_{2,m}\|_{\Hsk}\Big)\langle m/k\rangle^s.
\end{gathered}
\end{equation}
We also write $\FL^{s,\e}(N,L):=\FL^{s,\e}(N,N,L)$ and $\FH^{s,\e}(N,L):=\FH^{s,\e}(N,N,L)$.

We now recall the standard method (a.k.a.~``Kress quadrature") for writing the Dirichlet \eqref{e:DBIEs} and Neumann  \eqref{e:NBIEs} BIEs with the perturbation $\pert$ satisfying Assumption~\ref{ass:Nystrom}; this method is based on the splitting
\beq\label{e:Kress1}
\frac{\ri }{4} H_0^{(1)}(\mu) = -\frac{1}{4\pi} J_0\big(\mu\big) \log \mu^2 +T\big(\mu,k\big),
\eeq 
where both $J_0(\mu)$ and $T(\mu,k)$ are analytic in $\mu$.

\begin{lemma}
\label{l:standardSplitA}
 The operators $\eta_D S_k$, $\DL_k$, and $\DL_k'$ satisfy  Assumption \ref{ass:Nystrom}.
In all three cases, there are functions $\widetilde{L}_j:\mathbb{R}^2\times \mathbb{T}_t\times \mathbb{T}_\tau$ and $N_0\in \mathbb{R}$ such that 
\begin{equation}
\begin{gathered}
\label{e:fourierSplit}
L_j(t,\tau)=\widetilde{L}_j\big(k(\gamma(t)-\gamma(\tau)),t,\tau\big),\qquad j=1,2,\\
\widetilde{L}_1(\mu,t,\tau)=k\int_{\mathbb{S}^1} e^{i\langle \mu,\omega\rangle}f(\omega,t,\tau)dS(\omega),\qquad |\partial^\alpha f|\leq C_\alpha,\\
\supp \mathcal{F}_{\mu\to \xi}\big(\partial_t^\alpha\partial_\tau^\beta\widetilde{L}_2(\cdot,t,\tau)\big)(\xi)\subset \big\{|\xi|\leq 1\big\}, \qquad |\partial^\alpha \widetilde{L}_2(\mu ,t,\tau)|\leq C_\alpha k^{N_0}.
\end{gathered}
\end{equation}
\end{lemma}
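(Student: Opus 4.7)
The plan is to apply the Kress splitting \eqref{e:Kress1} to $\Phi_k(x,y) = \tfrac{i}{4} H_0^{(1)}(k|x-y|)$ (and to its normal derivatives for $\DL_k, \DL_k'$), parametrise $\Gamma$ by $\gamma$, and read off $L_1, L_2$ explicitly. For $\eta_D S_k$ I would write
$\log(k^2|\gamma(t)-\gamma(\tau)|^2) = \log\bigl(4\sin^2((t-\tau)/2)\bigr) + g(t,\tau)$,
where $g(t,\tau) := \log\bigl(k^2|\gamma(t)-\gamma(\tau)|^2/(4\sin^2((t-\tau)/2))\bigr)$ extends smoothly across $t=\tau$ (the ratio of the two vanishing quantities is smooth and positive). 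The coefficient of the universal log is then
$L_1(t,\tau) = -\tfrac{\eta_D}{4\pi} J_0\bigl(k|\gamma(t)-\gamma(\tau)|\bigr) |\dot\gamma(\tau)|$,
and the remainder gives $L_2$. For $\DL_k, \DL_k'$ I would differentiate $\Phi_k$ in $\nu(y)$ or $\nu(x)$ using $\partial_{\nu(y)}|x-y| = -\langle x-y,\nu(y)\rangle/|x-y|$ and $J_0' = -J_1$; this produces a log-coefficient proportional to $k\langle x-y, \nu\rangle |x-y|^{-1} J_1(k|x-y|)$ together with a smooth non-log remainder, to which the same extraction of $\log(4\sin^2((t-\tau)/2))$ is applied.

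To verify the structure of $\widetilde{L}_1$, I would use the Jacobi--Anger consequences
\[
J_0(|\mu|) = \tfrac{1}{2\pi}\int_{\mathbb{S}^1} e^{i\langle \mu,\omega\rangle} dS(\omega),
\qquad
\tfrac{\langle \mu,\nu\rangle}{|\mu|} J_1(|\mu|) = -\tfrac{i}{2\pi}\int_{\mathbb{S}^1} \langle \omega,\nu\rangle e^{i\langle \mu,\omega\rangle} dS(\omega).
\]
Substituting $\mu = k(\gamma(t)-\gamma(\tau))$ and pulling out the factor of $k$ (for $S_k$, writing $\eta_D = k\cdot(\eta_D/k)$ and invoking Assumption \ref{ass:parameters}; for $\DL_k, \DL_k'$, the $k$ is already present from the normal-derivative computation) yields the representation $\widetilde{L}_1(\mu,t,\tau) = k\int_{\mathbb{S}^1} e^{i\langle \mu,\omega\rangle} f(\omega,t,\tau) dS(\omega)$ with $f$ a polynomial in $\omega$ times smooth bounded $(t,\tau)$-factors (products of $\eta_D/k$, $|\dot\gamma|$, and $\nu\circ\gamma$), so that $|\partial^\alpha f| \le C_\alpha$.

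To define $\widetilde{L}_2$, I would replace every occurrence of $k|\gamma(t)-\gamma(\tau)|$ in the $J_0, J_1, T$ factors appearing in $L_2$ by $|\mu|$, keeping the pure $(t,\tau)$-factors ($g$, $|\dot\gamma|$, $\nu\circ\gamma$, etc.) intact; then $\widetilde{L}_2(k(\gamma(t)-\gamma(\tau)),t,\tau) = L_2(t,\tau)$ by construction. The key observation for the Fourier-support condition is that each of $J_0, J_1, T(\cdot,k)$ extends to an entire function of a complex scalar argument of exponential type $1$: for $J_0, J_1$ this is classical, and for $T$ it follows from the entirety built into the Kress splitting together with the large-argument asymptotic $H_0^{(1)}(\mu)\sim\sqrt{2/(\pi\mu)}e^{i\mu-i\pi/4}$. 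Since $J_0, T$ are even and $J_1(\mu)/\mu$ is even, the radial extensions $J_0(|\mu|), T(|\mu|,k), J_1(|\mu|)/|\mu|$ to $\mathbb{R}^2$ lift to entire functions on $\mathbb{C}^2$ of exponential type $1$ (using $|\mu_1^2+\mu_2^2|^{1/2}\le|\mu|$ for $\mu\in\mathbb{C}^2$); by the multivariable Paley--Wiener theorem their Fourier transforms are supported in $\{|\xi|\le 1\}$. Multiplication by smooth $(t,\tau)$-factors (including polynomial-in-$\mu$ prefactors from $\tfrac{\langle\mu,\nu\rangle}{|\mu|}J_1(|\mu|) = \langle\mu,\nu\rangle\cdot\tfrac{J_1(|\mu|)}{|\mu|}$) and differentiation in $t,\tau$ preserve this Fourier support in $\mu$.

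The hardest part will be the bookkeeping in the $\DL_k, \DL_k'$ cases, where the normal derivative produces several terms -- one weakly log-singular and several smooth, each with a different $(t,\tau)$-prefactor -- that must be sorted correctly into $\widetilde{L}_1$ and $\widetilde{L}_2$, while exploiting $\langle x-y,\nu(y)\rangle = O(|x-y|^2)$ on the smooth curve to remove apparent singularities at $x=y$ in the smooth remainder. The polynomial-in-$k$ bound $|\partial^\alpha \widetilde{L}_2(\mu,t,\tau)| \le C_\alpha k^{N_0}$ then follows from $|\eta_D|\lesssim k$, the $\log k$-growth of $g$, uniform boundedness of $J_0, J_1, T$ and their derivatives on $\mathbb{R}$, and the smoothness of the geometric factors on the compact curve $\Gamma$.
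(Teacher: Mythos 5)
Your approach is the paper's: apply the Kress splitting \eqref{e:Kress1} to $\Phi_k$ (and its normal derivatives for $\DL_k,\DL_k'$), peel off the universal logarithm $\log(4\sin^2((t-\tau)/2))$ via the smoothness of $|\gamma(t)-\gamma(\tau)|^2/(4\sin^2((t-\tau)/2))$, identify $\widetilde L_1$ with (a smooth multiple of) $J_0(|\mu|)$ whose Fourier transform is the surface measure on $\mathbb S^1$, and establish the support property of $\widehat{\widetilde L_2}$ via Paley--Wiener using that $J_0$, $J_1/\mu$, and the smooth Kress remainder are entire of exponential type $1$ — the paper does all of this, differing only in that it does not carry the $|\dot\gamma(\tau)|$ Jacobian inside $L_1$ and states the growth bound on $J_0,H_0^{(1)}$ on the cut plane explicitly rather than through "large-argument asymptotics." Both differences are cosmetic; the proposal is correct.
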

\begin{proof}
We first write 
\begin{align*}
&\frac{\ri }{4} H_0^{(1)}(k|x-y|)\\
&=-\frac{1}{2\pi} J_0\big(k|x-y|\big) \log (k|x-y|)+ \frac{\ri }{4} H_0^{(1)}(k|x-y|)+\frac{1}{2\pi} J_0\big(k|x-y|\big) \log (k|x-y|) \\
&=-\frac{1}{4\pi} J_0\big(k|x-y|\big) \log |x-y|^2+ \frac{\ri }{4} H_0^{(1)}(k|x-y|)+\frac{1}{2\pi} J_0\big(k|x-y|\big) \log (k|x-y|) \\
&\hspace{1.5cm}-\frac{\log k}{2\pi} J_0\big(k|x-y|\big) \\
&=:-\frac{1}{4\pi} J_0\big(k|x-y|\big) \log |x-y|^2 + S_1\big(k|x-y|,k\big).
\end{align*}
By the asymptotics of $H_0^{(1)}(z)$ as $z\to 0$ (see, e.g., \cite[\S10.8]{Di:25}), 
$\frac{\ri }{4} H_0^{(1)}(z)+\frac{1}{2\pi} J_0(z) \log (z)$ is analytic at $z=0$, and thus $S_1(\mu,k)$ is analytic in $\mu$.

Next, observe that, since $\gamma:\mathbb{R}/2\pi \mathbb{Z}\to \Gamma$ is smooth, $|\gamma'|\geq c>0$, and $\gamma$ is bijectitve,
\begin{equation*}
e(t,\tau):=\frac{|\gamma(t)-\gamma(\tau)|^2}{4\sin^2\big(\frac{t-\tau}{2}\big)}\in C^\infty\big((\mathbb{R}/2\pi \mathbb{Z})^2\big),\qquad e(t,\tau)>c>0.
\end{equation*}
In particular, with $f(t,\tau):=\log e(t,\tau)\in C^\infty$, 
\begin{equation}
\label{e:distance}
\log\big(|\gamma(t)-\gamma(\tau)|^2\big)=:\log\Big(4\sin^2\big(\tfrac{t-\tau}{2}\big)\Big)+f(t,\tau),
\end{equation}
so that 
\begin{align*}
&\frac{\ri }{4} H_0^{(1)}(k|\gamma(t)-\gamma(\tau)|)\\
&=-\frac{1}{4\pi} J_0\big(k|\gamma(t)-\gamma(\tau)|\big) \log \Big(4\sin^2\big(\tfrac{t-\tau}{2}\big)\Big) -\frac{1}{4\pi} J_0\big(k|\gamma(t)-\gamma(\tau)|\big)f(t,\tau)+ S_1\big(k|\gamma(t)-\gamma(\tau)|,k\big).
\end{align*}
Thus, for $\eta_D S_k$ we set
\begin{gather*}
L_1(t,\tau):=-\frac{\eta_D}{4\pi} J_0\big(k|\gamma(t)-\gamma(\tau)|\big) ,\\ L_{2}(t,\tau):=-\frac{\eta_D}{4\pi} J_0\big(k|\gamma(t)-\gamma(\tau)|\big)f(t,\tau)+ \eta_D S_1\big(k|\gamma(t)-\gamma(\tau)|,k\big).
\end{gather*}
The required properties for $L_1(t,\tau)$ now follow from a standard integral representation of $J_0(|x-y|)$ as the Fourier transform of the surface measure on $\mathbb{S}^1$ (see, e.g., \cite[Page 154]{StWe:71}). The required properties for $L_2(t,\tau)$ follow from the Payley--Wiener theorem (see, e.g., \cite[Theorem 4.1]{StWe:71}), the fact that $J_0(\mu)$ and $S_1(\mu,k)$ are analytic and the asymptotic growth 
$$
|J_0(z)|+|H_0^{(1)}(z)|\leq C\log|z| e^{|\Im z|} \quad\tfa z \in \mathbb{C}\setminus (-\infty,0]
$$
see, e.g., \cite[\S10.8 and \S10.17]{Di:25}. 
The proofs for $\DL_k$ and $\DL_k'$ are nearly identical, using instead the  same analyticity and growth properties for $\partial_\mu S_1(\mu,k)$. 
\end{proof}

\begin{lemma}
\label{l:standardSplitB}
The operators $k\Reg$, $\DL_{\ri k}$, $\DL_{\ri k}'$ satisfy Assumption~\ref{ass:Nystrom} and in all three cases, there are functions $\widetilde{L}_j:\mathbb{R}^2\times \mathbb{T}_t\times \mathbb{T}_\tau$ such that 
\beq\label{e:cutSplit0}
L_j(t,\tau)=\widetilde{L}_j\big(k(\gamma(t)-\gamma(\tau)),t,\tau,k\big),\qquad j=1,2,
\eeq
and for all $j=1,2$, $\alpha\in \mathbb{N}$, $N>0$, there is $C_{\alpha N}>0$ such that
\begin{equation}
\label{e:cutSplit}
 |\partial^\alpha \widetilde{L}_1(\mu,t,\tau)|\leq C_{\alpha N}k\log k \langle \mu\rangle^{-N},
 \qquad
|\partial^\alpha \widetilde{L}_2(\mu,t,\tau)|\leq C_{\alpha N}k \langle \mu\rangle^{-N}.
\end{equation}
\end{lemma}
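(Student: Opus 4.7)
My plan is to mimic the Kress decomposition from the proof of Lemma \ref{l:standardSplitA}, replacing the Hankel kernel $(\ri/4)H_0^{(1)}(k|x-y|)$ by the modified Bessel kernel $(2\pi)^{-1}K_0(k|x-y|)$, and using three standard facts about the modified Bessel functions: (i) the series identities
\[
K_0(z)=-I_0(z)\log(z/2)+Q_0(z),\qquad K_1(z)=z^{-1}+I_1(z)\log(z/2)+Q_1(z),
\]
with $Q_0,Q_1$ entire; (ii) the exponential decay $|\partial_z^\alpha K_\nu(z)|\lesssim e^{-z}$ as $z\to\infty$ for $\nu=0,1$; and (iii) the exponential growth $I_\nu(z)\sim e^z/\sqrt z$, which forces us to use the Kress expansion only after localising to a compact set.

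The first step is to write each kernel in the form $\widetilde K(k(\gamma(t)-\gamma(\tau)),t,\tau,k)$ using $\Phi_{\ri k}(x,y)=(2\pi)^{-1}K_0(k|x-y|)$ and the identity $\partial_{n(y)}K_0(k|x-y|)=-kK_1(k|x-y|)\,(y-x)\cdot n(y)/|x-y|$. On a smooth curve $(y-x)\cdot n(y)=O(|x-y|^2)$, which absorbs the $1/z$ singularity of $K_1$ into a bounded, smooth geometric factor \emph{without} an extra $k$ prefactor; this is what allows the bounds in \eqref{e:cutSplit} to hold uniformly across all three operators with only one power of $k$ in front.

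The second step is to introduce a cutoff $\chi\in C_c^\infty(\mathbb{R};[0,1])$ with $\chi\equiv 1$ on $[-1,1]$ and decompose
\[
K_0(|\mu|) = -\chi(|\mu|)I_0(|\mu|)\log|\mu|+\bigl[\chi(|\mu|)\bigl(Q_0(|\mu|)+I_0(|\mu|)\log 2\bigr) + (1-\chi(|\mu|))K_0(|\mu|)\bigr]
\]
(together with the analogous decomposition of the $K_1$ contribution for $\DL_{\ri k}, \DL_{\ri k}'$). The bracketed remainder is smooth on all of $\mathbb{R}^2_\mu$, is compactly supported plus exponentially decaying, and therefore satisfies $|\partial_\mu^\alpha(\cdot)|\leq C_{\alpha N}\langle\mu\rangle^{-N}$ for every $N$. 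Substituting $\log|\mu|=\log k+\tfrac12\log(4\sin^2((t-\tau)/2))+\tfrac12 f(t,\tau)$ from \eqref{e:distance} and collecting the coefficient of $\log(4\sin^2((t-\tau)/2))$ into $\widetilde L_1$, with the remaining pieces assembled into $\widetilde L_2$, produces the required decomposition. The factors $\chi(|\mu|)I_\nu(|\mu|)$ are compactly supported in $\mu$, so their contributions enjoy arbitrary polynomial decay in $\langle\mu\rangle^{-1}$ automatically; the $k$ prefactor of $k\Reg$ and the $k$ from the chain rule for the double-layer operators supply the $k$ in the bounds, while the explicit $\log k$ produced by the substitution is attributed to $\widetilde L_1$, giving the $k\log k$ bound in \eqref{e:cutSplit} (which is a deliberately generous uniform bound covering all three operators simultaneously).

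The main obstacle is precisely this bookkeeping: after the substitution for $\log|\mu|$, one must keep the $\log k$ piece inside $\widetilde L_1$ (via the coefficient of $\log(4\sin^2)$) so that the $k$ bound on $\widetilde L_2$ in \eqref{e:cutSplit} survives, and one must verify that every ancillary term produced by the cutoff is smooth in $\mu\in\mathbb{R}^2$ (not merely radial in $|\mu|$), with constants uniform in $k$. The geometric cancellations needed to control the $1/z$ singularity of $K_1$ in the two double-layer cases are the other delicate point, though they reduce to expanding $(\gamma(t)-\gamma(\tau))\cdot n(\gamma(\tau))$ as a smooth function of $(t,\tau)$ vanishing quadratically on the diagonal and combining this with the $K_1$ expansion above.
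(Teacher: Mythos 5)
Your route — replacing $(\ri/4)H_0^{(1)}(\ri k|x-y|)$ by $(2\pi)^{-1}K_0(k|x-y|)$, using the modified Bessel series $K_0 = -I_0\log(z/2) + Q_0$ and $K_1 = z^{-1} + I_1\log(z/2) + Q_1$, and localising with $\chi\in C_c^\infty$ before appealing to the exponential decay of $K_0,K_1$ at infinity — is a genuine equivalent of the paper's argument, which works with $J_0(\ri z)$, $H_0^{(1)}(\ri z)$ and a Schwartz cutoff satisfying $|\chi(x)|\leq C e^{-r|x|}$ with $r>1+\e$. Your choice of a compactly supported cutoff is in fact slightly cleaner: it makes the competition between the exponential growth of $I_0,I_1$ and the decay of the cutoff trivial, rather than a matter of choosing the decay rate $r$ carefully. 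Your treatment of the two double-layer operators via the $z^{-1}$ term of $K_1$ and the geometric cancellation $(y-x)\cdot\nu(y)=O(|x-y|^2)$ is also standard and correct; the paper instead differentiates the remainder $S_1$, but the two routes are interchangeable.

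The one genuine gap is the $\log k$ bookkeeping. After writing $\log|\mu| = \log k + \tfrac12\log\bigl(4\sin^2\bigl(\tfrac{t-\tau}{2}\bigr)\bigr) + \tfrac12 f(t,\tau)$, the term $-\chi(|\mu|)I_0(|\mu|)\log k$ does \emph{not} carry the factor $\log\bigl(4\sin^2\bigl(\tfrac{t-\tau}{2}\bigr)\bigr)$, so by the structure of Assumption~\ref{ass:Nystrom} it cannot be absorbed into $\widetilde L_1$ (the coefficient of the singular $\log$ factor): it must land in $\widetilde L_2$. There is no bookkeeping trick that pushes it into $\widetilde L_1$ without changing the singular factor itself, which would break Definition~\ref{def:quad}. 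Carrying out the decomposition honestly gives $|\partial^\alpha\widetilde L_1|\leq C_{\alpha N}\,k\langle\mu\rangle^{-N}$ (no $\log k$) and $|\partial^\alpha\widetilde L_2|\leq C_{\alpha N}\,k\log k\,\langle\mu\rangle^{-N}$, which is what the paper's own proof produces (the $\log k$ sits inside $S_1(\cdot,\ri k)$, which goes into $L_2$) and what the proof of Lemma~\ref{l:cutSplitEst} actually uses, namely $\|\widehat L_{1,m}\|\leq C\langle m/k\rangle^{-M}$ and $\|\widehat L_{2,m}\|\leq C\log k\,\langle m/k\rangle^{-M}$. So the roles of $\widetilde L_1$ and $\widetilde L_2$ in the displayed bounds~\eqref{e:cutSplit} appear to be swapped relative to the paper's proof; your proposal reproduces that swap as if it were achievable, when in fact the $\log k$ can only sit on $\widetilde L_2$.

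One smaller point worth making precise: you need $\chi(|\mu|)I_\nu(|\mu|)$ to be smooth as a function of $\mu\in\mathbb{R}^2$, not merely radial. Since $I_\nu(z)$ is entire and even/odd for $\nu=0,1$, the functions $I_0(|\mu|)$ and $|\mu|^{-1}I_1(|\mu|)$ are smooth functions of $|\mu|^2$ and hence of $\mu$, and choosing $\chi$ even gives the same for $\chi(|\mu|)$; but the raw expression $I_1(|\mu|)$ alone is not smooth at $\mu=0$, so you should keep the $|\mu|$ in the denominator from the unit normal factor together with $I_1$ when verifying smoothness of $\widetilde L_j$ for the double-layer operators.
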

\begin{proof}
The main difference here compared to Lemma \ref{l:standardSplitA} is that $J_0(ix)$ grows exponentially and $H_0^{(1)}(ix)$ decays exponentially as $x\to \infty$ (see, e.g., \cite[\S10.17]{Di:25}). We therefore use the upper bounds that for any $\alpha \in \mathbb{N}$ and  $\e>0$ there is $C_{\alpha \e}>0$ such that 
\beq\label{e:Bessel}
\begin{aligned}
|\partial^\alpha J_0(\ri x)|&\leq C_{\alpha \e} e^{(1+\e)|x|},&x\in \mathbb{R},\\ 
|\partial^\alpha H_0^{(1)}(\ri x)|&\leq C_{\alpha \e} e^{-(1-\e)|x|}\log |x|,& x>0.
\end{aligned}
\eeq
As in Lemma~\ref{l:standardSplitA}, with $f$ as in~\eqref{e:distance},
\begin{align*}
\frac{\ri }{4} H_0^{(1)}(\ri k|\gamma(t)-\gamma(\tau)|)&=-\frac{1}{4\pi} J_0\big(\ri k|\gamma(t)-\gamma(\tau)|\big)  \log \Big(4\sin^2\big(\tfrac{t-\tau}{2}\big)\Big) \\
&\hspace{1cm}-\frac{1}{4\pi} J_0\big(\ri k|\gamma(t)-\gamma(\tau)|\big)f(t,\tau)+ S_1(\ri k|\gamma(t)-\gamma(\tau)|,\ri k),
\end{align*}
with 
$$
|\partial^\alpha S_1(\mu,\ri k)|\leq C_{\alpha,\e} e^{(1+\e)|\mu|} \log k
$$
by \eqref{e:Bessel}. 
We let $\chi \in \mathcal{S}(\mathbb{R})$ with $\chi \equiv 1$ near 0, and 
\beq\label{e:chiBound}
|\chi(x)|\leq Ce^{-r |x|}\quad\text{ for some }\quad r>1+\e,
\eeq
and write
\begin{align*}
&\frac{\ri }{4} H_0^{(1)}(\ri k|\gamma(t)-\gamma(\tau)|)=-\frac{1}{4\pi} J_0\big(\ri k|\gamma(t)-\gamma(\tau)|\big)\chi\big(k|\gamma(t)-\gamma(\tau)|\big)  \log \Big(4\sin^2\big(\tfrac{t-\tau}{2}\big)\Big) \\
&\hspace{3.5cm}+\chi(k|\gamma(t)-\gamma(\tau)|)\Big(S_1(\ri k|\gamma(t)-\gamma(\tau)|,\ri k)-\frac{1}{4\pi} J_0\big(\ri k|\gamma(t)-\gamma(\tau)|\big)f(t,\tau)\Big)\\
&\hspace{3.5cm}+\Big(1-\chi\big(k|\gamma(t)-\gamma(\tau)|\big)\Big)\frac{\ri }{4} H_0^{(1)}\big(\ri k|\gamma(t)-\gamma(\tau)|\big).
\end{align*}
Therefore, if
\begin{align*}
L_1(t,\tau)&:=-\frac{k}{4\pi} J_0\big(\ri k|\gamma(t)-\gamma(\tau)|\big)\chi\big(k|\gamma(t)-\gamma(\tau)|\big),\\
L_2(t,\tau)&:=k\chi(k|\gamma(t)-\gamma(\tau)|)\Big(S_1(\ri k|\gamma(t)-\gamma(\tau)|,\ri k)-\frac{1}{4\pi} J_0\big(\ri k|\gamma(t)-\gamma(\tau)|\big)f(t,\tau)\Big)\\
&\qquad+k\Big(1-\chi\big(k|\gamma(t)-\gamma(\tau)|\big)\Big)\frac{\ri }{4} H_0^{(1)}\big(\ri k|\gamma(t)-\gamma(\tau)|\big),
\end{align*}
then the result for $kS_{\rm i k}$ follows from the bounds \eqref{e:Bessel} and \eqref{e:chiBound}.
For $\DL_{\rm ik}$ and $\DL_{\rm ik}'$, the estimates follow similarly after taking appropriate derivatives of $H_0^{(1)}$, $J_0$, and $S_1$.
\end{proof}

\begin{lemma}
\label{l:standardSplitC}
The operator $k^{-1}(H_k-H_{\ri k})$ satisfies Assumption~\ref{ass:Nystrom} and there are functions $\widetilde{L}_{j,i}:\mathbb{R}^2\times \mathbb{T}_t\times \mathbb{T}_\tau$, $j=1,2$, $i=1,2$, and $N_0\in \Rea$ such that 
$$
\begin{gathered}
L_j(t,\tau)=\widetilde{L}_{j,1}\big(k(\gamma(t)-\gamma(\tau)\big),t,\tau,k)+\widetilde{L}_{j,2}\big(k(\gamma(t)-\gamma(\tau)),t,\tau,k\big),\qquad j=1,2,
\end{gathered}
$$
and for $j=1,2$,  all $\alpha\in \mathbb{N}$, $N>0$, there are $C_\alpha$ and $C_{\alpha N}>0$ such that
$$
\begin{gathered}
\widetilde{L}_{1,1}(\mu,t,\tau)=k\int_{\mathbb{S}^1} e^{i\langle \mu,\omega\rangle}f(\omega,t,\tau)dS(\omega),\qquad |\partial^\alpha f|\leq C_\alpha,\\
\supp \mathcal{F}_{\mu\to \xi}\big(\partial_t^\alpha\partial_\tau^\beta\widetilde{L}_{2,1}(\cdot,t,\tau)\big)(\xi)\subset \{|\xi|\leq 1\}, \qquad |\partial^\alpha \widetilde{L}_{2,1}(\mu ,t,\tau)|\leq C_\alpha k^{N_0}
\end{gathered}
$$
(compare to \eqref{e:fourierSplit})
and
$$
|\partial^\alpha \widetilde{L}_{1,2}(\mu,t,\tau)|\leq C_{\alpha N}k \log k\langle \mu\rangle^{-N},\qquad |\partial^\alpha \widetilde{L}_{2,2}(\mu,t,\tau)|\leq C_{\alpha N}k \langle \mu\rangle^{-N}
$$
(compare to \eqref{e:cutSplit}).
\end{lemma}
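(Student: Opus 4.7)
The central observation is that the hypersingular parts of $H_k$ and $H_{\ri k}$ cancel exactly in the difference. Indeed, applying \eqref{e:Kress1} gives
\[
\Phi_\kappa(x,y) = -\tfrac{1}{4\pi}J_0(\kappa|x-y|)\log|x-y|^2 + S_1(\kappa|x-y|,\kappa)
\]
for $\kappa \in \{k,\ri k\}$, and the leading hypersingular contribution $\partial_{n(x)}\partial_{n(y)}\log|x-y|^2$ carries the coefficient $J_0(0)=1$, which is independent of $\kappa$. Writing $J_0(\kappa r) = 1+[J_0(\kappa r)-1]$ and subtracting, the ``$1\cdot\log|x-y|^2$'' pieces cancel exactly in $\Phi_k-\Phi_{\ri k}$, while the residual factor obeys $J_0(\kappa r)-1 = -\tfrac{\kappa^2 r^2}{4}+O(\kappa^4 r^4)$, vanishing to second order at $r=0$. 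Consequently $\partial_{n(x)}\partial_{n(y)}\bigl[(J_0(\kappa r)-1)\log|x-y|^2\bigr]$ is at most logarithmically singular, so $k^{-1}(H_k-H_{\ri k})$ fits the framework of Assumption~\ref{ass:Nystrom}.

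Building on this cancellation, the plan is to combine the arguments of Lemmas \ref{l:standardSplitA} and \ref{l:standardSplitB}. I would write
\[
k^{-1}(H_k-H_{\ri k})u(x) = k^{-1}\int_\Gamma \partial_{n(x)}\partial_{n(y)}\bigl[\Phi_k(x,y)-\Phi_{\ri k}(x,y)\bigr]\,u(y)\,ds(y),
\]
substitute the splittings above, and use \eqref{e:distance} to convert $\log|\gamma(t)-\gamma(\tau)|^2$ into $\log(4\sin^2((t-\tau)/2))$ plus a smooth remainder. The resulting expression then decomposes naturally into a ``$\kappa=k$ piece'' and a ``$\kappa=\ri k$ piece'' (each now manifestly weakly singular thanks to the cancellation), which become $(\widetilde L_{1,1},\widetilde L_{2,1})$ and $(\widetilde L_{1,2},\widetilde L_{2,2})$ respectively.

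For the $\kappa=k$ piece, the required structure of $\widetilde L_{j,1}$ follows by repeating the argument of Lemma \ref{l:standardSplitA}: $J_0(k|x-y|)-1$ admits the same $\mathbb{S}^1$-integral representation as $J_0(k|x-y|)$ (up to a smooth additive term), and the analytic remainder $S_1(k|x-y|,k)$ is handled by Paley--Wiener exactly as before. The extra factor $k^2$ from $J_0(kr)-1 = -\tfrac{k^2 r^2}{4}+O(k^4r^4)$ combines with the prefactor $k^{-1}$ to yield the required $O(k)$ size of $\widetilde L_{1,1}$ and the $O(k^{N_0})$ size of $\widetilde L_{2,1}$. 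For the $\kappa=\ri k$ piece, I would follow the proof of Lemma \ref{l:standardSplitB}: insert a Schwartz cutoff $\chi(k|x-y|)$ to suppress the exponential growth of $J_0(\ri kr)=I_0(kr)$ at large $kr$, and apply the bounds \eqref{e:Bessel} on $H_0^{(1)}(\ri z)$ together with the decay \eqref{e:chiBound} of $\chi$. The $\log k$ factor in the bound on $\widetilde L_{1,2}$ tracks the analogous factor arising in Lemma \ref{l:standardSplitB} from the separation of $\log(k|x-y|) = \log|x-y|+\log k$.

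The main technical obstacle will be organising the Leibniz rule for $\partial_{n(x)}\partial_{n(y)}[(J_0(\kappa r)-1)\log|x-y|^2]$ so as to cleanly separate the logarithmic and smooth contributions. Since $J_0(\kappa r)-1 = O(r^2)$ and its $r$-derivative is $O(r)$, each term produced by Leibniz either carries a polynomial prefactor of degree at least two at $r=0$ that absorbs the hypersingular part of $\partial^2\log|x-y|^2$, or does not involve $\log$ at all; a careful bookkeeping then yields the log-plus-smooth form required by Assumption~\ref{ass:Nystrom}. Once this is in place, verifying the claimed growth bounds on the four functions $\widetilde L_{j,i}$ reduces essentially to the computations already performed in the proofs of Lemmas \ref{l:standardSplitA} and \ref{l:standardSplitB}.
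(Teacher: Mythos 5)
Your proposal is correct, and it takes a genuinely different route from the paper's proof. The paper works directly with the second normal derivative of the fundamental solution: it uses Bessel's equation together with $H_0^{(1)\prime}=-H_1^{(1)}$ to write $H_0^{(1)\prime\prime}(\mu)=\mu^{-1}H_1^{(1)}(\mu)-H_0^{(1)}(\mu)$, and then invokes the analytic regularisation $S_2(z):=H_1^{(1)}(z)+\tfrac{2i}{\pi z}-\tfrac{2i}{\pi}J_1(z)\log\tfrac{z}{2}$ to compute $H_0^{(1)\prime\prime}(\mu)+H_0^{(1)\prime\prime}(i\mu)$ explicitly and watch the $\mu^{-2}$ singularity cancel algebraically. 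You instead observe the cancellation already at the level of $\Phi_k-\Phi_{ik}$, by noting that the hypersingular contribution $J_0(\kappa r)\,\partial_{n_x}\partial_{n_y}\log r^2$ has $\kappa$-independent coefficient $J_0(0)=1$, so that writing $J_0(\kappa r)=1+(J_0(\kappa r)-1)$ and subtracting leaves residual factors vanishing to second order at $r=0$. After that, both proofs converge to the same plan: split into a $k$-piece (handled by the $\mathbb{S}^1$-integral and Paley--Wiener arguments of Lemma~\ref{l:standardSplitA}) and an $\ri k$-piece (handled by the cutoff argument of Lemma~\ref{l:standardSplitB}), with the $\log k$ in the bound on $\widetilde L_{1,2}$ tracked exactly as you say.

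Your approach is somewhat more conceptual and has a small practical advantage: the $\mathbb{S}^1$-representation of $\partial_{n_x}\partial_{n_y}J_0(k|x-y|)$ drops out immediately by differentiating under the integral, whereas the paper's route passes through $H_1^{(1)}$, $J_1$, $J_1/\mu$, etc., which require a bit more care to put into the required form. The paper's approach is more explicit, producing the $S_2$-decomposition in closed form, which makes the $\mu$-decay bounds for the $\ri k$-pieces straightforward. The one place where your write-up would need to be expanded before it could replace the paper's argument is the Leibniz bookkeeping that you flag as the main technical obstacle: to see cleanly that the $L_1$ coefficient is $\partial_{n_x}\partial_{n_y}J_0(\kappa r)$ (bounded, no contribution from the $-1$), that $\partial_{n_y}\log r^2$ and $\partial_{n_x}\log r^2$ are $O(1)$ on a smooth curve, and that the $G\,\partial_{n_x}\partial_{n_y}\log r^2$ term with $G=J_0(\kappa r)-1=O(r^2)$ is bounded. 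None of this is an obstruction, but it is the part of the proof that is only sketched.
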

\begin{proof}
Recall (from, e.g, \cite[\S10.8]{Di:25}) that 
$$
H^{(1)}_1(z)+i\frac{2}{\pi z} -i\frac{2}{\pi}J_1(z)\log \frac{z}{2}=:S_2(z)
$$
is analytic. 
Bessel's equation and the fact that $H^{(1)'}_0(\mu)= - H^{(1)}_1(\mu)$ imply that 
$$
H^{(1)''}_0(\mu)=-\mu^{-1} H^{(1)'}_0(\mu)-H^{(1)}_0(\mu)=\mu^{-1} H^{(1)}_1(\mu)-H^{(1)}_0(\mu).
$$
Therefore, 
\begin{align*}
&H^{(1)''}_0(\mu)+H^{(1)''}_0(i\mu)\\
&=\mu^{-1}\Big(-iH_1^{(1)}(i\mu)+H_1^{(1)}(\mu)\Big)-H_0^{(1)}(\mu)-H_0^{(1)}(i\mu)\\
&=\mu^{-1}\bigg(-iS_2(i\mu)+S_2(\mu)+\frac{2}{\pi}J_1(i\mu)\log \frac{i\mu}{2}+i\frac{2}{\pi}J_1(\mu)\log\frac{\mu}{2}\bigg)-H^{(1)}_0(\mu)-H^{(1)}_0(i\mu)\\
&=\mu^{-1}\bigg(iJ_1(i\mu)-iS_2(i\mu)+S_2(\mu)+\frac{2}{\pi}\big(J_1(i\mu)+iJ_1(\mu)\big)\log \frac{\mu}{2}\bigg)-H^{(1)}_0(\mu)-H^{(1)}_0(i\mu).
\end{align*}
Recall that $J_1$ is analytic, vanishes at $0$, and for all $\alpha\in \mathbb{N}$, there are $C>0$ and $N>0$ such that 
$$
|J_1(z)|\leq C \langle z\rangle^Ne^{|\Im z|}\quad\tand\quad
|S_2(z)|\leq C \langle z\rangle^Ne^{|\Im z|}\quad \tfa z\in \Com.
$$
The result then follows by introducing cutoffs on all terms where Bessel functions, $J_0, J_1$ are evaluated at $ik|\gamma(t)-\gamma(\tau)|$ as in Lemma~\ref{l:standardSplitB} to obtain $\widetilde{L}_{j,2}$, and analyzing the remaining terms as in Lemma~\ref{l:standardSplitA} to obtain $\widetilde{L}_{j,1}$.
\end{proof}

\section{Abstract result about the convergence of the Nystr\"om method}
\label{sec:NystromProof}

For our abstract theorem on the Nystr\"om method, we assume that  $\operator=I+\pert$ satisfies Assumption~\ref{ass:abstract1} with
\beq\label{e:nystromSetup}
\pert= \sum_{i=1}^I \widetilde{\pert}_i+\sum_{j=1}^J  \widetilde{\pert}_{j,a}\widetilde{\pert}_{j,b}, \qquad \pert_N:=\sum_{i=1}^I\widetilde{\LN}_{i}^N+\sum_{j=1}^J  \widetilde{\LN}_{j,a}^NP_{\trig_N}^C\widetilde{\LN}_{j,b}^N,
\eeq
and $\widetilde{\pert}_i$, $\widetilde{\pert}_{j,\cdot}$ satisfying Assumption~\ref{ass:Nystrom} and $\widetilde{\LN}_{i}^N$ and $\widetilde{\LN}_{j,\cdot}^N$ defined as in Definition~\ref{def:quad}.
We further assume that for any $\chi \in C_{c}^\infty$ with $\supp(1-\chi)\cap[-1,1]=\emptyset$, $(1-\chi\hDarg)\widetilde{\pert}_{j,b},\,\widetilde{\pert}_{j,b}(1-\chi\hDarg)\in \Psi^{-1}_{\hsc}(\Gamma)$. 

We define
\begin{gather*}
\FLm^{s,\e}(N,\oldK ):=\max\Big(\sup_{i}\FL^{s,\e}(N,\oldK ,\widetilde{\pert}_{i}),\sup_{j}\FL^{s,\e}(N,\oldK ,\widetilde{\pert}_{j,a}),\sup_j \FL^{s,\e}(N,\oldK ,\widetilde{\pert}_{j,b})\Big),\\ 
\FHm^{s,\e}(N,\oldK ):=\max\Big(\sup_{i}\FH^{s,\e}(N,\oldK ,,\widetilde{\pert}_{i}),\sup_{j}\FH^{s,\e}(N,\oldK ,\widetilde{\pert}_{j,a}),\sup_j \FH^{s,\e}(N,\oldK ,\widetilde{\pert}_{j,b})\Big),\\
\|\widetilde{\pert}\|_{s,r}:= \sum_{j}\|\widetilde{L}_{j,a}\|_{\Hsh\to \Hsh}+\|\widetilde{L}_{j,b}\|_{H_{\hsc}^{r}(\Gamma)\to H_{\hsc}^{r}(\Gamma)},\\ \mathcal{B}^{r,s}(\widetilde{L}):=\sum_{j} \|\widetilde{L}_{j,a}\|_{\Hsh\to \Hsh}\|\widetilde{L}_{j,b}\|_{H_{\hsc}^{r}(\Gamma)\to H_{\hsc}^{r}(\Gamma)}.
\end{gather*}
We also write $\FLm^{s,\e}(N):=\FLm^{s,\e}(N,N)$ and $\FHm^{s,\e}(N):=\FHm^{s,\e}(N,N)$. 

\begin{theorem}[New abstract result about the convergence of the Nystr\"om method]\label{thm:MAT2}
Suppose that $\operator$ satisfies Assumptions \ref{ass:abstract1} and \ref{ass:polyboundintro}, and $L$ and $L_N$ are as in \eqref{e:nystromSetup}.
Given $k_0>0$, $M>0$, $t^*\geq s\geq t>1/2$, there exist $c,C>0$ such that the following holds.

Given $f\in H^s_\hsc(\Gamma)$, let $v\in H^s_\hsc(\Gamma)$ be the solution of
\beq\label{e:MATequation2}
\operator v:=c_0(I+L)v =f.
\eeq
If $k\geq k_0$,
\beq\label{e:MATthresa0}
\left( \frac{k}{N}\right)^{t^*-s} \rho \leq c,
\eeq
\begin{align}\label{e:MATthresb0a}
\bigg[1+\rho\Big(\frac{k}{N}\Big)^{s-t}\bigg]\Big(\frac{k}{N}\Big)\FLm^{s,\e}(N)\Big(1+\|\widetilde{L}\|_{s,s}+\|\widetilde{L}\|_{t,t}\Big)&\leq c\\
\label{e:MATthresb0b}
\bigg[1+\rho\Big(1+\Big(\frac{k}{N}\Big)^{s+1}\Big)\bigg]\FHm^{s,\e}(N)\Big(1+\|\widetilde{L}\|_{s,s}+\|\widetilde{L}\|_{t,t}\Big)&\leq c, \quad\tand\\
\label{e:MATthresb0c}
\bigg[1+\rho\Big(\frac{k}{N}\Big)^{s+1}\bigg]\Big(\frac{k}{N}\Big)\mathcal{B}^{s,s}(\widetilde{L})+
\rho \Big(\frac{k}{N}\Big)^{s-t+1}
\mathcal{B}^{t,s}(\widetilde{L})&\leq c,
\end{align}
then the solution $v_N \in V_N$ to
\beq\label{e:MATprojection2}
(I+P_{\trig_N}^C \pert_N) v_N = P_{\trig_N}^C f
\eeq
exists, is unique, and satisfies the error estimate
\begin{align}\nonumber
&\N{v-v_N}_{H^s_\hsc(\Gamma)}\\
\nonumber
&\hspace{1cm}\leq C\bigg[\Big(1+\rho \Big(\frac{k}{N}\Big)^{s}\Big)\|(I-P_{\trig_N}^C)v\big\|_{\Hsh}+ \Big(1+\rho \Big(\frac{k}{N}\Big)^{s+1}\Big)\big\|P_{\trig_N}^C(\pert_N-\pert)P_{\trig_N}^Cv\big\|_{\Hsh}\\
&\hspace{3cm} +\rho\big\|P_{\trig_N}^C(\pert_N-\pert)P_{\trig_N}^Cv\big\|_{L^2(\Gamma)}\bigg],
\label{e:MATqo2}
\\ \nonumber
&\leq C\bigg[\Big(1+\rho \Big(\frac{k}{N}\Big)^{s}\Big)\Big(\big\|(I-P_{\trig_N}^C)v\big\|_{\Hsh}+ \big\|P_{\trig_N}^C(\pert_N-\pert)P_{\trig_N}^Cv\big\|_{\Hsh}\Big)\\
&\hspace{1cm} +\rho\bigg( \Big[\Big(\frac{k}{N}\Big)^{s+1} \FLm^{s,\e}(N)+\FHm^{0,\e}(N)\Big]
\big(1+\|\widetilde{L}\|_{0,0}+ \|\widetilde{L}\|_{s,s}\big)
+ \Big(\frac{k}{N}\Big)^{s+1}\mathcal{B}^{0,s}(\widetilde{L})\bigg)\|v\|_{\Hsh}\bigg].
\label{e:MATqo2new}
\end{align}
\end{theorem}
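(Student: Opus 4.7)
}

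The overall strategy is to reduce the Nystr\"om analysis to the collocation analysis of Theorem \ref{thm:FC} by treating the quadrature as a perturbation of the collocation method. The starting point is the decomposition
\begin{equation*}
v-v_N = (I+P_{\trig_N}^C\pert_N)^{-1}\Big[(I-P_{\trig_N}^C)v + P_{\trig_N}^C(\pert-\pert_N)v\Big],
\end{equation*}
which follows by subtracting \eqref{e:MATprojection2} from $c_0(I+\pert)v=f$ applied under $P_{\trig_N}^C$. Assuming invertibility of $I+P_{\trig_N}^C\pert_N$ (to be established below), the two terms in the bracket are, respectively, the collocation-approximation error and the consistency (quadrature) error.

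To establish invertibility, I would factor
$$
I+P_{\trig_N}^C\pert_N = (I+P_{\trig_N}^C\pert)\Big[I+(I+P_{\trig_N}^C\pert)^{-1}P_{\trig_N}^C(\pert_N-\pert)\Big].
$$
Lemma \ref{lem:inverseForm} combined with \eqref{e:neumannOut2}/\eqref{e:laundry1} from the proof of Theorem \ref{thm:FC} gives, under \eqref{e:MATthresa0}, an explicit block-triangular bound on $(I+P_{\trig_N}^C\pert)^{-1}$ in which the low-frequency component costs a factor $\rho$ and the high-frequency component costs only $O(1)$. Thus to make the perturbation term have norm $<1/2$, one needs the consistency operator $P_{\trig_N}^C(\pert_N-\pert)$ to be small in the appropriate ``mixed'' sense: small in $\Hsh\to\Hsh$ when composed with $\rho$-weighted low frequencies, and small in $\Hsh$ when combined with high frequencies. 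The three threshold conditions \eqref{e:MATthresb0a}--\eqref{e:MATthresb0c} are precisely what is needed to ensure this after Step 3 below.

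The analytic heart of the proof is the estimation of $(\widetilde\pert_i-\widetilde\LN_i^N)w$, which I would carry out by Fourier-expanding the kernel in the $\tau$-variable,
$L_j(t,\tau)=\tfrac{1}{\sqrt{2\pi}}\sum_m \widehat L_{j,m}(t)e^{im\tau}$, and using that the trigonometric interpolation $P_{\trig_N,\tau}^C$ is exact on $\trig_N$, aliases frequencies in $(N,(2-\e)N]$ into $\trig_N$ by shifting $m\mapsto m-(2N+1)$ (cf.~\eqref{e:alias0}), and simply deletes frequencies beyond $(2-\e)N$. The Bessel-/analyticity-type decay estimates recorded in Lemmas \ref{l:standardSplitA}--\ref{l:standardSplitC} (encoded abstractly in the mapping properties of $\widetilde\pert_{j,b}\in\Psi^{-1}_\hsc$) control how fast $\widehat L_{j,m}(t)$ decays in $m$; this is what produces the bounds $\FL$ for the aliased band and $\FH$ for the high-frequency tail. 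The factor $k/N$ multiplying $\FLm^{s,\e}$ in \eqref{e:MATthresb0a} reflects the fact that the aliased band has width $\sim N$ and is centred at frequencies $\sim N\sim k$, which is essentially a $k/N$ smaller scale than the high-frequency tail captured by $\FHm$.

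For the composition terms $\widetilde\pert_{j,a}\widetilde\pert_{j,b}$ I would use the telescope
$$
\widetilde\pert_{j,a}\widetilde\pert_{j,b}-\widetilde\LN_{j,a}^N P_{\trig_N}^C\widetilde\LN_{j,b}^N = (\widetilde\pert_{j,a}-\widetilde\LN_{j,a}^N)\widetilde\pert_{j,b}+\widetilde\LN_{j,a}^N(I-P_{\trig_N}^C)\widetilde\pert_{j,b}+\widetilde\LN_{j,a}^NP_{\trig_N}^C(\widetilde\pert_{j,b}-\widetilde\LN_{j,b}^N),
$$
estimating the first and third pieces by the Fourier bounds of Step 3 and the middle piece using $\widetilde\pert_{j,b}\in\Psi^{-1}_\hsc$ on high frequencies together with Lemma~\ref{lem:approx_FC} to gain a factor $k/N$. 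This is the source of the $\mathcal B$-terms in \eqref{e:MATthresb0c} and \eqref{e:MATqo2new}. Finally, assembling Steps 1--3 gives \eqref{e:MATqo2}; to obtain the sharper form \eqref{e:MATqo2new} one splits $v=P_{\trig_N}^Cv+(I-P_{\trig_N}^C)v$ inside the residual $P_{\trig_N}^C(\pert_N-\pert)v$ and reuses the Step-3 estimates to replace $\rho\|P_{\trig_N}^C(\pert_N-\pert)P_{\trig_N}^Cv\|_{\LtG}$ by an explicit combination of $\FLm,\FHm,\mathcal B$ multiplying $\|v\|_{\Hsh}$.

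The principal obstacle will be the accounting in Steps 3--4: one must separate the three frequency regimes (exact, aliased, discarded) in the Fourier expansion of each kernel and track how the resulting terms interact with $P_{\trig_N}^C$ and with the $\rho$-weighting inherited from $(I+P_{\trig_N}^C\pert)^{-1}$. The sharper estimate \eqref{e:MATqo2new} is particularly delicate because to avoid picking up an unwanted factor of $\rho$ on the consistency error one must exploit that each $\widetilde\pert_{j,b}$ is semiclassically smoothing at high frequencies, so that $(I-P_{\trig_N}^C)\widetilde\pert_{j,b}$ gains an additional $k/N$ compared with the raw Fourier bound. Bookkeeping the interplay of these three effects (aliasing, smoothing, and the $\rho$-weighting of low frequencies) is what produces the somewhat elaborate combination of norms appearing on the right-hand sides of the threshold hypotheses and of \eqref{e:MATqo2new}.
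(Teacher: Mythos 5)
Your decomposition $v-v_N=(I+P_{\trig_N}^C\pert_N)^{-1}[(I-P_{\trig_N}^C)v+P_{\trig_N}^C(\pert-\pert_N)v]$ and the factorization of $I+P_{\trig_N}^C\pert_N$ are correct, and they are algebraically equivalent to the paper's decomposition (Lemma \ref{lem:QOabs2} and \eqref{e:JeffFav2}) once one uses the structural identity $\pert_N=\pert_NP_{\trig_N}^C$ (the Nystr\"om operator depends on its input only through grid values); the paper's proof just works with the equivalent form $(I-P_N\pert)(I-P_N)v+P_N(\pert_N-\pert)P_Nv$, which is what appears in the theorem statement. So the plan follows essentially the same route: reduce to the collocation analysis via a Neumann-series perturbation and use Fourier aliasing plus frequency splitting with the $\rho$-weighted low-frequency block. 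The invertibility argument via Lemma \ref{lem:inverseForm} / Lemma \ref{l:splitCollocate} is correct in spirit.

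However, the composition-term telescope in your Step 4 has a concrete error. Because $\widetilde\LN_{j,a}^N$ depends on its argument only through its values at the grid points, $\widetilde\LN_{j,a}^N(I-P_{\trig_N}^C)=0$ identically, so the ``middle piece'' to which you assign the pseudodifferential-smoothing argument is just the zero operator. The smoothing argument is not wasted work, but it needs to be applied elsewhere: the first piece $(\widetilde\pert_{j,a}-\widetilde\LN_{j,a}^N)\widetilde\pert_{j,b}$ cannot be directly controlled by the single-operator Fourier bound \eqref{e:singleOp}, because that bound only holds with a right-projection $P^C_{\trig_{\oldK}}$ (the aliasing lemma \eqref{e:alias}/\eqref{e:alias2} assumes trigonometric input). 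You must insert $\widetilde\pert_{j,b}=P^C_{\trig_{\oldK'}}\widetilde\pert_{j,b}+(I-P^C_{\trig_{\oldK'}})\widetilde\pert_{j,b}$ and treat the two pieces separately; the high-frequency remainder is where $\widetilde\pert_{j,b}\in\Psi^{-1}_\hsc$ and Lemma \ref{lem:approx_FC} enter. This is precisely what produces the five-term telescope and the extra pieces with $P^C_{\trig_{\oldK'}}$ in the paper's lemma after Lemma \ref{lem:mainevent}. Your plan correctly identifies all the tools but misassigns where the key one is needed, which is the main gap to repair.
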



Before proving Theorem~\ref{thm:MAT2}, we need a few technical lemmas; the first is an analogue of Lemma~\ref{lem:HFinverse} for $(I+P_{\trig_N}^C\pert)^{-1}$.
\begin{lemma}
\label{l:splitCollocate}
Suppose that $\operator$ satisfies Assumptions~\ref{ass:abstract1} and~\ref{ass:polyboundintro}. 
Given $t^*> 1/2$ 
and
$\chi \in C_{c}^\infty$ with $\supp(1-\chi)\cap[-1,1]=\emptyset$, there exist $C,c>0$ such that if $(k/N)^{t^*-s} \rho \leq c$ then 
\begin{equation}
\label{e:recordHighLow1}
\big\|(I+P_{\trig_N}^C\pert)^{-1}\big(I-\chi(-\hsc^2\Delta_\Gamma)\big)\big\|_{\Hsht}\leq C\Big(1+\rho \Big(\frac{k}{N}\Big)^{s+1}\Big)
\eeq
and
\beq
\label{e:recordHighLow2}
\big\|(I+P_{\trig_N}^C\pert)^{-1}\chi(-\hsc^2\Delta_\Gamma)\big\|_{L^2(\Gamma)\to \Hsh}\leq C\rho .
\end{equation}

\end{lemma}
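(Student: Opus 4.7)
My plan is to factor the inverse using Lemma~\ref{lem:inverseForm} as $(I+P^C_{\trig_N}\pert)^{-1} = (I+\pert)^{-1}(I+M)^{-1}$, where $M := (P^C_{\trig_N}-I)\pert(I+\pert)^{-1}$, and then treat the two estimates separately. The proof of Theorem~\ref{thm:FC}, specifically \eqref{e:collocateOK} and \eqref{e:collocateOK2a}, already gives $\|M\|_{\Hshts} \le C(k/N)$ and $\|(I+M)^{-1}\|_{\Hshts} \le 1 + C(k/N)$ whenever $k/N$ is sufficiently small; the hypothesis $(k/N)^{t^*-s}\rho \le c$ together with the universal lower bound on $\rho$ forces exactly this smallness.

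The estimate \eqref{e:recordHighLow2} then follows directly: the cutoff $\chi(-\hsc^2\Delta_\Gamma)$ is infinitely smoothing, mapping $L^2(\Gamma) \to \Hsh$ with $\hsc$-independent norm by \eqref{e:frequencycutoff2}; composing with the $\Hshts$-bounded operator $(I+M)^{-1}$ and then with $(I+\pert)^{-1}$, whose $\Hshts$-norm is controlled by $C\rho$ via Lemma~\ref{lem:2}, yields the required bound of $C\rho$.

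For \eqref{e:recordHighLow1}, I would use the first-resolvent-type identity
\[
(I+P^C_{\trig_N}\pert)^{-1}(I-\chi(-\hsc^2\Delta_\Gamma)) = (I+\pert)^{-1}(I-\chi(-\hsc^2\Delta_\Gamma)) + \mathcal{E},
\]
with $\mathcal{E} := -(I+P^C_{\trig_N}\pert)^{-1}(P^C_{\trig_N}-I)\pert(I+\pert)^{-1}(I-\chi(-\hsc^2\Delta_\Gamma))$. The first term has $\Hshts$-norm $\le L_{\max}+Ck^{-1}$ by \eqref{e:Dean1a}, supplying the constant $1$ in the claimed bound. For $\mathcal{E}$, the strategy is to exploit that $\pert(I+\pert)^{-1}(I-\chi(-\hsc^2\Delta_\Gamma))$ is, up to $O(\hsc^\infty)$ residuals, a semiclassical pseudodifferential operator of order $-1$ whose wavefront set lies at high frequencies -- a consequence of Lemma~\ref{lem:HFinverse} combined with Corollary~\ref{cor:WFcutoff} and Theorem~\ref{thm:HFSD}. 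This composition therefore maps $\Hsh$ into $H^{s+1}_\hsc(\Gamma)$ with norm independent of $\rho$. Combined with the approximation estimate $\|I-P^C_{\trig_N}\|_{H^{s+1}_\hsc \to \Hsh} \le C(k/N)$ from Lemma~\ref{lem:approx_FC}, and the bound $\|(I+P^C_{\trig_N}\pert)^{-1}\|_{\Hshts} \le C\rho$ (immediate from Step~1 and Lemma~\ref{lem:2}), this produces $\|\mathcal{E}\|_{\Hshts} \le C\rho(k/N)$.

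The main obstacle is recovering the extra factor of $(k/N)^s$ required to reach $\rho(k/N)^{s+1}$. The plan for this is to iterate the microlocal regularity-gain argument by inserting additional high-frequency cut-offs via Corollary~\ref{cor:WFcutoff}: repeated compositions of the form $[\pert(I+\pert)^{-1}(I-\chi(-\hsc^2\Delta_\Gamma))]^r$ are semiclassically pseudodifferential of order $-r$, so they map $\Hsh \to H^{s+r}_\hsc(\Gamma)$ with $\hsc$-independent norm. Expanding $(I+M)^{-1}$ as a Neumann series in $M$ and pairing each successive factor of $M = (P^C_{\trig_N}-I)\pert(I+\pert)^{-1}$ with the approximation estimate $\|I-P^C_{\trig_N}\|_{H^{s+r}_\hsc \to \Hsh} \le C(k/N)^r$ trades regularity gains for powers of $k/N$ and yields the missing factor $(k/N)^s$, with a single instance of $\rho$ absorbed at the outermost step. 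The delicate part is bookkeeping the microlocal supports through each Neumann-series term so that the high-frequency parametrix absorbs every $\pert(I+\pert)^{-1}$ except the outermost one without ever picking up a second factor of $\rho$.
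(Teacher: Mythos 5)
Your factorization via Lemma~\ref{lem:inverseForm}, and your argument for \eqref{e:recordHighLow2}, match the paper's; the latter is essentially correct. The gap is in your treatment of \eqref{e:recordHighLow1}, and it is conceptual rather than a matter of bookkeeping.

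You bound $\|(I+P^C_{\trig_N}\pert)^{-1}\|_{\Hshts}\le C\rho$ globally, combine it with $\|(I-P^C_{\trig_N})\|_{H^{s+1}_\hsc\to\Hsh}\le C(k/N)$, and then hope to recover the missing factor $(k/N)^{s}$ by iterating the pseudodifferential regularity gain through a Neumann series. That iteration cannot run as stated, because each factor of $M=(P^C_{\trig_N}-I)\pert(I+\pert)^{-1}$ contains an $I-P^C_{\trig_N}$, and this destroys high-frequency localization: if $w$ has Fourier content only at $|m|\gtrsim N$, then $(I-P^C_{\trig_N})w = w - P^C_{\trig_N}w$ acquires the low-frequency aliasing component $-P^C_{\trig_N}w\in\trig_N$, which is not small in $L^2$. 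Thus the next $\pert(I+\pert)^{-1}$ is not a high-frequency parametrix step, and you are forced to either re-multiply by $\rho$ or re-project onto high frequencies -- in either case the ``only one $\rho$'' bookkeeping fails at the first iteration, and $[\pert(I+\pert)^{-1}(I-\chi)]^r$ is not what actually appears.

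The correct mechanism is simpler and involves no iteration. The collocation projection satisfies the \emph{stronger} approximation bound $\|I-P^C_{\trig_N}\|_{H^{s+1}_\hsc(\Gamma)\to L^2(\Gamma)}\le C(k/N)^{s+1}$ (Lemma~\ref{lem:approx_FC} with $t=s+1$, $q=0$), not just the rate-$(k/N)$ bound from $H^{s+1}_\hsc$ to $\Hsh$ that you invoke. The full power $(k/N)^{s+1}$ therefore arrives in a single step: one order of smoothing from $\pert(I+\pert)^{-1}(I-\chi):\Hsh\to H^{s+1}_\hsc$ (bounded uniformly, by Corollary~\ref{cor:nicebound1}), followed by the approximation estimate dropping all the way to $L^2$. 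Of course the image is then only in $L^2$, not $\Hsh$; the paper handles this by splitting $(I+\pert)^{-1}$ into its action on low and high frequencies. The low-frequency part $(I+\pert)^{-1}\Pi_{\mathscr{L}}$ costs a factor $\rho$ but smooths $L^2$ back up to $\Hsh$, and is paired with the $(k/N)^{s+1}$ just obtained, giving $\rho(k/N)^{s+1}$; the high-frequency part $(I+\pert)^{-1}\Pi_{\mathscr{H}}$ costs only $O(1)$ by Lemma~\ref{lem:HFinverse}, and is paired with the $(k/N)$ rate, giving $O(k/N)$. Concretely, the paper writes $(I+L)^{-1}(I+M)^{-1}(I-\chi(-\hsc^2\Delta_\Gamma))$, inserts $\chi^{<}(-\hsc^2\Delta_\Gamma)+(I-\chi^{<}(-\hsc^2\Delta_\Gamma))$ after $(I+L)^{-1}$, and then applies the already-established block bound \eqref{e:neumannOut} with $q_{\min}=0$, $t_{\max}=t^*$: the two contributions are $C\rho\,\|\Pi_{\mathscr{L}}(I+M)^{-1}\Pi_{\mathscr{H}}\|\le C\rho(k/N)^{s+1}$ and $C\,\|(I+M)^{-1}\Pi_{\mathscr{H}}\|\le C(1+k/N)$, summing to \eqref{e:recordHighLow1}. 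Your plan replaces these two ingredients (the $H^{s+1}_\hsc\to L^2$ approximation bound and the low/high split of $(I+\pert)^{-1}$) by an iteration that does not close.
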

\bpf
By Lemma~\ref{lem:inverseForm}, if $N\geq C k$ then
\beq\label{e:Friday1}
(I+P_{\trig_N}^CL)^{-1}=(I+L)^{-1}\Big(I+(P_{\trig_N}^C-I)\pert(I+\pert)^{-1}\Big)^{-1}.
\eeq
We prove the bounds \eqref{e:recordHighLow1} and \eqref{e:recordHighLow2} using \eqref{e:neumannOut}.
Observe that $P_{\trig_N}^C$ satisfies~\eqref{e:MATproj} for $0\leq q\leq t\leq \infty$ and $t>1/2$ by \eqref{e:approx_FC}; i.e., $t_{\max}$ in the arguments in \S\ref{sec:proofMAT} can be taken to be arbitrarily large. We therefore apply \eqref{e:neumannOut} 
with $q_{\min}=0$ and $t_{\max}=t^*$, so that the condition \eqref{e:MATthres} under which \eqref{e:neumannOut} holds becomes $(k/N)^{t^*} \rho \leq c$.

Given $\chi$ as in the statement of the lemma, choose $\e>0$ in the definitions of $\Pi_{\mathscr{L}}$ and $\Pi_{\mathscr{H}}$ \eqref{e:highandlow} so that 
\beqs
1-\chi(-\hsc^2\Delta_\Gamma)=\Pi_{\mathscr{H}}(1-\chi(-\hsc^2\Delta_\Gamma)).
\eeqs
Then choose $\chi^< \in C_{c}^\infty $ with $\supp (1-\chi^<)\cap [-1,1]=\emptyset$  such that 
\beqs
\chi^<(-\hsc^2\Delta_\Gamma)=\chi^<(-\hsc^2\Delta_\Gamma)\Pi_{\mathscr{L}}
\eeqs
(this requires, in particular, that $\supp \chi^{<} \cap\supp(1-\chi)=\emptyset$).
Finally, choose $\chi^{\ll} \in C_{c}^\infty $ with $\supp (1-\chi^<)\cap [-1,1]=\emptyset$ such that $\supp \chi^{\ll}\cap \supp(1-\chi^<)=\emptyset$.
Now, by \eqref{e:Friday1}, Lemma \ref{lem:2}, Lemma \ref{lem:HFinverse}, and Corollary \ref{cor:WFcutoff},
\begin{align*}
&\big\|(I+P_{\trig_N}^C\pert)^{-1}\big(I-\chi(-\hsc^2\Delta_\Gamma)\big)\big\|_{\Hshts}\\
&=
\big\|(I+L)^{-1}\big(I+(P_{\trig_N}^C-I)\pert(I+\pert)^{-1}\big)^{-1}\big(I-\chi(-\hsc^2\Delta_\Gamma)\big)\big\|_{\Hsht}
\\
&\leq 
\big\|(I+L)^{-1}\chi^<(-\hsc^2\Delta_\Gamma)\big(I+(P_{\trig_N}^C-I)\pert(I+\pert)^{-1}\big)^{-1}\big(I-\chi(-\hsc^2\Delta_\Gamma)\big)\big\|_{\Hshts}
\\
&\quad
+
\big\|(I+L)^{-1}(I-\chi^{\ll}(-\hsc^2\Delta_\Gamma))\big(I-\chi^<(-\hsc^2\Delta_\Gamma)\big)\big(I+(P_{\trig_N}^C-I)\pert(I+\pert)^{-1}\big)^{-1}\big(I-\chi(-\hsc^2\Delta_\Gamma)\big)\big\|_{\Hshts}\\
&\leq 
C\rho \big\|\chi^<(-\hsc^2\Delta_\Gamma)\big(I+(P_{\trig_N}^C-I)\pert(I+\pert)^{-1}\big)^{-1}\big(I-\chi(-\hsc^2\Delta_\Gamma)\big)\big\|_{\Hshts}
\\
&\qquad\qquad
+
C\big\|\big(I-\chi^<(-\hsc^2\Delta_\Gamma)\big)\big(I+(P_{\trig_N}^C-I)\pert(I+\pert)^{-1}\big)^{-1}\big(I-\chi(-\hsc^2\Delta_\Gamma)\big)\big\|_{\Hshts}
\\
&\leq 
C\rho \big\|\Pi_{\mathscr{L}}\big(I+(P_{\trig_N}^C-I)\pert(I+\pert)^{-1}\big)^{-1}\Pi_{\mathscr{H}}\big\|_{\Hshts}
\\
&\qquad\qquad
+
C\big\|\big(I+(P_{\trig_N}^C-I)\pert(I+\pert)^{-1}\big)^{-1}\Pi_{\mathscr{H}}\big\|_{\Hshts}.
\end{align*}
Therefore, by \eqref{e:neumannOut}, if $(k/N)^{t^*} \rho \leq c$ then
\begin{align*}
\big\|(I+P_{\trig_N}^C\pert)^{-1}\big(I-\chi(-\hsc^2\Delta_\Gamma)\big)\big\|_{\Hshts}\leq C\rho (k/N)^{s+1} + C (k/N)^{s+1} + 1 + C k/N,
\end{align*}
%
which 
implies~\eqref{e:recordHighLow1}.

For \eqref{e:recordHighLow2}, 
we apply again
\eqref{e:neumannOut} (with $q_{\min}=0$ and $t_{\max}=t^*$), except this time, given $\chi$ as in the statement of the lemma, we choose $\e>0$  
 in the definitions of $\Pi_{\mathscr{L}}$ and $\Pi_{\mathscr{H}}$ \eqref{e:highandlow} so that 
\beqs
\chi(-\hsc^2\Delta_\Gamma)=\Pi_{\mathscr{L}}\chi(-\hsc^2\Delta_\Gamma).
\eeqs
Then, by \eqref{e:Friday1}, \eqref{e:neumannOut}, and Lemma \ref{lem:2}, 
 if  $(k/N)^{t^*-s}\rho \leq c$ then 
\begin{align}\nonumber
\big\|(I+P_{\trig_N}^C\pert)^{-1}\chi(-\hsc^2\Delta_\Gamma)\big\|_{\Hshts}
&\leq C \rho \big\|\big(I+(P_{\trig_N}^C-I)\pert(I+\pert)^{-1}\big)^{-1}\chi(-\hsc^2\Delta_\Gamma)\big\|_{\Hshts}\\
&\leq C \rho \big\|\big(I+(P_{\trig_N}^C-I)\pert(I+\pert)^{-1}\big)^{-1}\Pi_{\mathscr{L}}\big\|_{\Hshts} \nonumber
\\
& \leq C \rho
\label{e:tempBound1}
\end{align}
Finally, 
let $\chi^>\in C_{c}^\infty$ with $\supp(1-\chi^>)\cap[-1,1]=\emptyset$, and $\supp \chi \cap \supp(1-\chi^>)=\emptyset$. By the definition of $\|\cdot\|_{\Hsh}$ \eqref{e:weightedNormGamma} and the fact that $\widetilde\chi$ has compact support, $\chi^>(-\hsc^2\Delta_\Gamma): L^2(\Gamma)\to \Hsh$ with bounded norm. 
The result \eqref{e:recordHighLow2} then follows 
from the bound \eqref{e:tempBound1}, the fact that $\chi = \chi \chi^>$, and the smoothing property of $\chi^>(-\hsc^2\Delta_\Gamma)$.
\epf

\ble[Error estimate in terms of the discrete inverse]\label{lem:QOabs2}
If $P_N: \Hsh \to V_N$ is a projection,
$\pert_N : \Hsh\to \Hsh$, and $I+ P_N\pert_NP_N : H^s_\hsc(\Gamma)\to H^s_\hsc(\Gamma)$ is invertible, then
the solution $v_N\in V_N$ to \eqref{e:MATprojection2}
exists, is unique, and satisfies
\begin{equation}\label{e:QOabs2}
v-v_N = \big(I+P_NL_NP_N\big)^{-1}\Big((I-P_NL)(I-P_N)v +P_N(L_N-L)P_Nv\Big).
\end{equation}
\ele

\bpf
First observe that if $v_N\in H_{\hsc}^s(\Gamma)$ and $(I+P_NL_N)v_N=P_Nf$, then $v_N\in V_N$, and hence $(I+P_NL_NP_N)v_N=P_Nf$. Furthermore, if $(I+P_NL_NP_N):H^s_\hsc(\Gamma)\to H^s_\hsc(\Gamma)$ is invertible, then there is $v_N\in H_{\hsc}^s(\Gamma)$ such that $(I+P_NL_NP_N)v_N=P_Nf$. This $v_N$ satisfies $(I-P_N)v_N=0$ so that $v_N\in V_N$ and thus $(I+P_NL_N)v_N=P_Nf$.
Therefore, if $(I+P_NL_NP_N):H_{\hsc}^s(\Gamma)\to H^s_{\hsc}(\Gamma)$ is invertible, then
the equation \eqref{e:MATprojection2} has a unique solution in $V_N$ and it, in addition, satisfies $(I+P_NL_NP_N)v_N=P_Nf$.
By this and the fact that $(I+\pert)v=f$, 
\begin{align*}
(I+P_N \pert_NP_N) (v-v_N) = (I+P_N \pert_NP_N) v - P_N f & 
=(I-P_N)v + P_N L_N P_N v - P_N Lv
\\
&= (I-P_NL)(I-P_N)v +P_N(L_N-L)P_Nv
\end{align*}
and the result follows.
\epf

\ble\label{lem:mainevent}
Suppose that $\pert$ satisfies Assumption \ref{ass:Nystrom}, $L_N$ is defined by Definition \ref{def:quad}. 
Then, given $r\geq s\geq 0$, $0<\e<1$, there exist $C>0$ such that for all $0\leq \oldK \leq \oldK '\leq N$,
\begin{equation}\label{e:mainevent2}
\begin{aligned}
&\N{(\pert - \pert_N)P_{\trig_\oldK}^C }_{H^r_\hsc(\Gamma)\to H^s_\hsc(\Gamma)} \\
&\hspace{.2cm}\leq C\Bigg(\Big( \Big(\frac{k}{N}\Big)^{r-s+1} \FLm^{r,\e}(N,\oldK ')+\FHm^{s,\e}(N,\oldK ')\Big)(1+\|\widetilde{L}\|_{s,s}+ \|\widetilde{L}\|_{r,r})\\
&\hspace{.4cm}+\Big(\Big(\frac{k}{N}\Big) \FLm^{s,\e}(N)+\FHm^{s,\e}(N)\Big)\Big( \Big(\frac{k}{N}\Big)^{r-s+1} \FLm^{r,\e}(N,\oldK ')+\FHm^{s,\e}(N,\oldK ')\Big)+ \Big(\frac{k}{N}\Big)^{r-s+1}\mathcal{B}^{s,r}(\widetilde{L})\\
&\hspace{.4cm}+\max_j\Big( \Big(\frac{k}{N}\Big)^{r-s+1} \FLm^{r,\e}(N)+\FHm^{s,\e}(N)\Big)\cdot\|P_{\trig_{N}}^C(I-P_{\trig_{\oldK'}}^C)\|_{H^{r}_{\hsc}\to H_{\hsc}^{r}}\|(I-P_{\trig_{\oldK '}}^G)\widetilde{L}_{j,b}P_{\trig_{\oldK} }^G\|_{H_{\hsc}^r\to H_{\hsc}^{r}}\Bigg).
\end{aligned}
\end{equation}
In particular, if $\oldK =\oldK '=N$,
\begin{align}\nonumber
&\N{(\pert - \pert_N)P_{\trig_{N}}^C}_{H^r_\hsc(\Gamma)\to H^s_\hsc(\Gamma)} \\ \nonumber
&\hspace{.2cm}\leq C\Bigg[\Big( \Big(\frac{k}{N}\Big)^{r-s+1} \FLm^{r,\e}(N)+\FHm^{s,\e}(N)\Big)(1+\|\widetilde{L}\|_{s,s}+ \|\widetilde{L}\|_{r,r})\\
&\hspace{.4cm}+\Big(\Big(\frac{k}{N}\Big) \FLm^{s,\e}(N)+\FHm^{s,\e}(N)\Big)\Big( \Big(\frac{k}{N}\Big)^{r-s+1} \FLm^{r,\e}(N)+\FHm^{s,\e}(N)\Big)+ \Big(\frac{k}{N}\Big)^{r-s+1}\mathcal{B}^{s,r}(\widetilde{L})\Bigg].\label{e:mainevent}
\end{align}
Furthermore, if $\oldK '\geq \oldK +\e k$,  and $\oldK =\Xi k$ with $\Xi\geq c_{\max}+\e$, then for all $M>0$ there is $C_M>0$ such that 
\begin{align}\nonumber
&\N{(\pert - \pert_N)P_{\trig_\oldK}^C }_{H^r_\hsc(\Gamma)\to H^s_\hsc(\Gamma)} \\ \nonumber
&\hspace{0.5cm}\leq C\Bigg[\Big( \Big(\frac{k}{N}\Big)^{r-s+1} \FLm^{r,\e}(N,\oldK ')+\FHm^{s,\e}(N,\oldK ')\Big)(1+\|\widetilde{L}\|_{s,s}+ \|\widetilde{L}\|_{r,r})\\ \nonumber
&\hspace{1cm}+\Big(\Big(\frac{k}{N}\Big) \FLm^{s,\e}(N)+\FHm^{s,\e}(N)\Big)\Big( \Big(\frac{k}{N}\Big)^{r-s+1} \FLm^{r,\e}(N,\oldK ')+\FHm^{s,\e}(N,\oldK ')\Big)+ \Big(\frac{k}{N}\Big)^{r-s+1}\mathcal{B}^{s,r}(\widetilde{L})\Bigg]\\
&\hspace{1cm}+C_M\Big( \Big(\frac{k}{N}\Big)^{r-s+1} \FLm^{r,\e}(N)+\FHm^{s,\e}(N)\Big)\|\widetilde{L}\|_{0,0}k^{-M}.
\label{e:mainevent3}
\end{align}
\ele

We postpone the proof of Lemma \ref{lem:mainevent} and first prove Theorem \ref{thm:MAT2}.

\

\bpf[Proof of Theorem \ref{thm:MAT2}]
In a similar way to \eqref{e:JeffFav1}, we write
\begin{align}\nonumber
&I+P^C_{\trig_N} \pert_N P_{\trig_N}^C = I+ P_{\trig_N}^C\pert + P_{\trig_N}^C(\pert_N-\pert)P_{\trig_N}^C+P_{\trig_N}^C\pert(P_{\trig_N}^C-I) \\
&\qquad= (I+P_{\trig_N}^C\pert)\Big( I + (I+P_{\trig_N}^C\pert)^{-1} P_{\trig_N}^C \pert(P_{\trig_N}^C-I) + (I+P_{\trig_N}^C\pert )^{-1}P_{\trig_N}^C (\pert_N-\pert)P_{\trig_N}^C\Big) .\label{e:JeffFav2}
\end{align}
By decreasing $c$ if necessary, we see that the condition \eqref{e:MATthresa0} implies that $N\geq Ck$ for $C$ as in Lemma \ref{lem:inverseForm}; thus $I+P_{\trig_N}^C\pert$ is invertible.
Then,  
by \eqref{e:JeffFav2} and Neumann series, 
if
\beq\label{e:STPN1}
\N{(I+P_{\trig_N}^C\pert)^{-1} P_{\trig_N}^C \pert(P_{\trig_N}^C-I)}_{\Hsht}\leq 1/4 
\eeq
and
\beq\label{e:STPN2}
\N{(I+P_{\trig_N}^C\pert )^{-1}P_{\trig_N}^C (\pert_N-\pert)P_{\trig_N}^C}_{\Hsht} \leq 1/4,
\eeq
then $I+P_{\trig_N}^CL_NP_{\trig_N}^C$ is invertible with 
\beq\label{e:goodOldNeumann}
\big\|(I+P_{\trig_N}^C \pert_N P_{\trig_N}^C)^{-1}v\big\|_{\Hsh}\leq 2\big\|\big(I+P_{\trig_N}^CL\big)^{-1}v\big\|_{\Hsh}\quad\tfa v\in H^s_\hsc(\Gamma).
\eeq
We first prove that \eqref{e:STPN1} holds, by establishing that
\begin{align}
\big\|(I+P_{\trig_N}^C\pert)^{-1}P_{\trig_N}^{C}\pert(P_{\trig_N}^C-I)\big\|_{\Hshts} \leq
C \bigg( 1 + \rho \Big( \frac k N\Big)^{s+1}\bigg)\frac k N + C\rho  \Big( \frac k N\Big)^{s},
\label{e:moreCareful}
\end{align}
We then prove the result \eqref{e:MATqo2} under the assumption that \eqref{e:STPN2} holds, and come back and establish \eqref{e:STPN2} at the end; we proceed in  this order because we use \eqref{e:moreCareful} in the proof of \eqref{e:MATqo2}.

Let $\chi\in C_{c}^\infty(-1-\e,1+\e)$ with $\supp(1-\chi)\cap [-1,1]=\emptyset$. 
Then,
\begin{align*}
&(I+P_{\trig_N}^C\pert)^{-1} P_{\trig_N}^C \pert\big(1-\chi(-\hsc^2\Delta_\Gamma)\big)(P_{\trig_N}^C-I)\\
&\hspace{1cm}=(I+P_{\trig_N}^C\pert)^{-1} \Big(\chi(-\hsc^2\Delta_\Gamma)+\big(1-\chi(-\hsc^2\Delta_\Gamma)\big)\Big)P_{\trig_N}^C \pert\big(1-\chi(-\hsc^2\Delta_\Gamma)\big)(P_{\trig_N}^C-I).
\end{align*}
Now, by \eqref{e:recordHighLow1}, \eqref{e:approx_FC} with $t=q=s>1/2$, Part (ii) of Assumption \ref{ass:abstract1}, Corollary \ref{cor:WFcutoff}, and \eqref{e:approx_FC} with $t=s$ and $q=s-1$,
\begin{align}\nonumber
&\big\|(I+P_{\trig_N}^C\pert)^{-1}\big(1-\chi(-\hsc^2\Delta_\Gamma)\big)P_{\trig_N}^C \pert\big(1-\chi(-\hsc^2\Delta_\Gamma)\big)(P_{\trig_N}^C-I)\big\|_{\Hshts}\\ 
&\hspace{0.5cm}\leq C\Big(1+\rho \Big(\frac{k}{N}\Big)^{s+1}\Big)\big\| \pert\big(1-\chi(-\hsc^2\Delta_\Gamma)\big)(P_{\trig_N}^C-I)\big\|_{\Hshts}
 \label{e:splitMe1}
\leq  C\Big(1+\rho  \Big(\frac{k}{N}\Big)^{s+1}\Big)\frac{k}{N}.
\end{align}
Next, by \eqref{e:recordHighLow2}, \eqref{e:approx_FC} with $t=q=1$, Part (ii) of Assumption \ref{ass:abstract1} and Corollary \ref{cor:WFcutoff}, and \eqref{e:approx_FC} with $t=s, q=0$,
\begin{align}\nonumber
&\big\|(I+P_{\trig_N}^C\pert)^{-1}\chi(-\hsc^2\Delta_\Gamma)P_{\trig_N}^C \pert\big(1-\chi(-\hsc^2\Delta_\Gamma)\big)(P_{\trig_N}^C-I)\big\|_{\Hshts}\\ \nonumber
&\leq \big\|(I+P_{\trig_N}^C\pert)^{-1}\chi(-\hsc^2\Delta_\Gamma)\|_{H^1_\hsc \to \Hshs} \big\|P_{\trig_N}^C\big\|_{H^1_\hsc\to H^1_\hsc} \big\| \pert\big(1-\chi(-\hsc^2\Delta_\Gamma)\big)\big\|_{L^2\to H^1_\hsc} \big\| (P_{\trig_N}^C-I)\big\|_{\Hshs \to L^2}\\ 
&\hspace{0cm}\leq C\rho \Big(\frac{k}{N}\Big)^{s}.\label{e:splitMe2}
\end{align}
Finally, by \eqref{e:recordHighLow2} again, Lemma \ref{lem:2}, the fact that $\chi(-\hsc^2\Delta_\Gamma): L^2 \to H^s_\hsc$ with bounded norm, and \eqref{e:approx_FC} with $t=s$ and $q=0$,
\begin{align}\nonumber
&\big\|(I+P_{\trig_N}^C\pert)^{-1}P_{\trig_N}^C\pert  \chi(-\hsc^2\Delta_\Gamma)(P_{\trig_N}^C-I)\big\|_{\Hshts}\\ 
&\hspace{0cm}=\big\|(I-(I+P_{\trig_N}^C\pert)^{-1})\chi(-\hsc^2\Delta_\Gamma)(P_{\trig_N}^C-I)\big\|_{\Hshts}\leq C\rho \big\|(P_{\trig_N}^C-I)\big\|_{H_{\hsc}^s\to L^2}
\leq C\rho\Big(\frac{k}{N}\Big)^{s}.\label{e:splitMe3}
\end{align}
Combining~\eqref{e:splitMe1},~\eqref{e:splitMe2}, and~\eqref{e:splitMe3}, we obtain \eqref{e:moreCareful},
and thus \eqref{e:STPN1} is ensured if \eqref{e:MATthresa0} holds.

We now prove \eqref{e:MATqo2} under the assumption that \eqref{e:STPN2} holds. 
By \eqref{e:goodOldNeumann},
the right-hand side of \eqref{e:QOabs2} can be bounded as follows:
\begin{align}\nonumber
&\Big\|\big(I+P_{\trig_N}^C\pert_NP_{\trig_N}^C\big)^{-1}\Big[(I-P_{\trig_N}^C\pert)(I-P_{\trig_N}^C)v +P_{\trig_N}^C(\pert_N-\pert)P_{\trig_N}^Cv\Big]\Big\|_{\Hsh}\\ \nonumber
&\leq2 \Big\|\big(I+P_{\trig_N}^C\pert\big)^{-1}\Big[(I-P_{\trig_N}^C\pert)(I-P_{\trig_N}^C)v +P_{\trig_N}^C(\pert_N-\pert)P_{\trig_N}^Cv\Big]\Big\|_{\Hsh}\\ \nonumber
&=2 \Big\| \Big[I- 2(I+P_{\trig_N}^C\pert)^{-1}P_{\trig_N}^C\pert\Big](I-P_{\trig_N}^C)v +(I+P_{\trig_N}^C\pert)^{-1}P_{\trig_N}^C(\pert_N-\pert)P_{\trig_N}^Cv\Big\|_{\Hsh}\\ \nonumber
&\leq 2 \big\| (I-P_{\trig_N}^C)v\big\|_{\Hsh}+4\big\|(I+P_{\trig_N}^C\pert)^{-1}P_{\trig_N}^{C}\pert(I-P_{\trig_N}^C)\big\|_{\Hsh\to \Hsh}\big\|(I-P_{\trig_N}^C)v\big\|_{\Hsh}\\
&\qquad +2 \big\|(I+P_{\trig_N}^C\pert)^{-1}P_{\trig_N}^C(\pert_N-\pert)P_{\trig_N}^Cv\big\|_{\Hsh}.
\label{e:Friday2a}
\end{align}
The second term on the right-hand side of \eqref{e:Friday2a} can be estimated via \eqref{e:moreCareful}. 
To bound the third term on the right-hand side of \eqref{e:Friday2a}, we observe that, by Lemma \ref{l:splitCollocate}, 
\begin{align}\nonumber
&\big\|(I+P_{\trig_N}^C\pert)^{-1}P_{\trig_N}^C(\pert_N-\pert)P_{\trig_N}^Cv\big\|_{\Hsh}\\ \nonumber
&\hspace{1cm}\leq
\big\|(I+P_{\trig_N}^C\pert)^{-1}\big(I-\chi(-\hsc^2\Delta_g)\big)\big\|_{\Hsh\to \Hsh} \big\|P_{\trig_N}^C(\pert_N-\pert)P_{\trig_N}^Cv\big\|_{\Hsh}\\ \nonumber
&\hspace{2cm}+\|(I+P_{\trig_N}^C\pert)^{-1}\chi(-\hsc^2\Delta_g)\|_{L^2(\Gamma)\to \Hsh}\big\|P_{\trig_N}^C(\pert_N-\pert)P_{\trig_N}^Cv\big\|_{L^2(\Gamma)}\\ 
&\hspace{1cm}
\leq
C\Big(1+\rho \Big(\frac{k}{N}\Big)^{s+1}\Big)\big\|P_{\trig_N}^C(\pert_N-\pert)P_{\trig_N}^Cv\big\|_{\Hsh}
+ C \rho \big\|P_{\trig_N}^C(\pert_N-\pert)P_{\trig_N}^Cv\big\|_{L^2(\Gamma)}.
\label{e:Friday3}
\end{align}
Therefore, inserting the bounds \eqref{e:moreCareful} and \eqref{e:Friday3} into \eqref{e:Friday2a}, we obtain that
\begin{align*}
&\Big\|\big(I+P_{\trig_N}^C\pert_NP_{\trig_N}^C\big)^{-1}\Big[(I-P_{\trig_N}^C\pert)(I-P_{\trig_N}^C)v +P_{\trig_N}^C(\pert_N-\pert)P_{\trig_N}^Cv\Big]\Big\|_{\Hsh}\\
& \leq C \Big( 1 + \rho \Big( \frac k N \Big)^s \Big) \big\| (I- P_{\trig_N}^C)v\big\|_{\Hsh} + 
C\Big(1+\rho \Big(\frac{k}{N}\Big)^{s+1}\Big)\big\|P_{\trig_N}^C(\pert_N-\pert)P_{\trig_N}^Cv\big\|_{\Hsh}\\
&\quad+ C \rho \big\|P_{\trig_N}^C(\pert_N-\pert)P_{\trig_N}^Cv\big\|_{L^2(\Gamma)},
\end{align*}
which combined with \eqref{e:QOabs2} implies the bound \eqref{e:MATqo2}.

To complete the proof of \eqref{e:MATqo2}, we now need to check that~\eqref{e:STPN2} holds.
By \eqref{e:recordHighLow2}, \eqref{e:recordHighLow1},
and the fact that $P_{\trig_N}^C:\Hsh\to \Hsh$ is uniformly bounded (for $s>1/2$) by \eqref{e:approx_FC}, 
for $s\geq t>1/2$, 
\begin{align*}
&\N{(I+P_{\trig_N}^C\pert)^{-1}P_{\trig_N}^C(\pert_N-\pert)P_{\trig_N}^C}_{\Hshts} \\
&\hspace{.25cm}\leq \N{(I+P_{\trig_N}^C\pert)^{-1}\chi(-\hsc^2\Delta_\Gamma)P_{\trig_N}^C(\pert_N-\pert)P_{\trig_N}^C}_{\Hshts}\\
&\hspace{5cm} +\N{(I+P_{\trig_N}^C\pert)^{-1}(1-\chi(-\hsc^2\Delta_\Gamma))P_{\trig_N}^C(\pert_N-\pert)P_{\trig_N}^C}_{\Hshts} \\ 
&\hspace{.25cm}\leq C\rho \N{(\pert_N-\pert)P_{\trig_N}^C}_{H_\hsc^{s} \to H_\hsc^{t}} +C\Big(1+\rho \Big(\frac{k}{N}\Big)^{s+1}\Big)\N{(\pert_N-\pert)P_{\trig_N}^C}_{\Hshts}.
\end{align*}
Now, by the bound \eqref{e:mainevent2} from Lemma \ref{lem:mainevent}, first applied with $r=t$ and then with $r=s$, 
\begin{align*}\nonumber
&\N{(I+P_{\trig_N}^C\pert)^{-1}P_{\trig_N}^C(\pert_N-\pert)P_{\trig_N}^C}_{\Hshts} \\ \nonumber
&\hspace{.25cm}\leq
C\rho  \bigg(\Big(\frac{k}{N}\Big)^{s-t+1}\mathcal{B}^{t,s}(\widetilde{L})+\Big[\Big(\frac{k}{N}\Big)^{s-t+1} \FLm^{s,\e}(N)+ \FHm^{t,\e}(N)\Big]
\\
&\hspace{5cm}\times
\Big[1+\|\widetilde{L}\|_{s,s}+\|\widetilde{L}\|_{t,t}+
\left(\frac{k}{N}\right)\FLm^{s,\e}(N) + \FHm^{s,\e}(N)\Big]\bigg) \\
&\hspace{.5cm}+C\Big(1+\rho \Big(\frac{k}{N}\Big)^{s+1}\Big)\bigg(\Big(\frac{k}{N}\Big)\mathcal{B}^{s,s}(\widetilde{L})
+ \Big[\Big(\frac{k}{N}\Big) \FLm^{s,\e}(N)+\FHm^{s,\e}(N)\Big]\\
&\hspace{5cm}\times\Big[1+\|\widetilde{L}\|_{s,s}+
\left(\frac{k}{N}\right)\FLm^{s,\e}(N) + \FHm^{s,\e}(N)\Big]\bigg)\\
&\hspace{.25cm}\leq C\Big( \frac k N + \rho\Big(\frac{k}{N}\Big)^{s-t+1}\Big)\FLm^{s,\e}(N)
\Big[1+\|\widetilde{L}\|_{s,s}+\|\widetilde{L}\|_{t,t}\Big]
\\
&\hspace{.5cm}+C\bigg(1+\rho\Big(1+\Big(\frac{k}{N}\Big)^{s+1}\Big)\bigg)\FHm^{s,\e}(N)\Big[1+\|\widetilde{L}\|_{s,s}+\|\widetilde{L}\|_{t,t}\Big]\\
&\hspace{.5cm}
+C\bigg[1+\rho\Big(\frac{k}{N}\Big)^{s+1}\bigg]\Big(\frac{k}{N}\Big)\mathcal{B}^{s,s}(\widetilde{L})+
\rho \Big(\frac{k}{N}\Big)^{s-t+1}
\mathcal{B}^{t,s}(\widetilde{L})
\end{align*}
where in the last line we have used~\eqref{e:MATthresb0b} and \eqref{e:MATthresb0c} to see that 
$$
\Big|\left(\frac{k}{N}\right)\FLm^{s,\e}(N) +\FHm^{s,\e}(N)\Big|\leq C.
$$
Therefore, the conditions \eqref{e:MATthresb0a}, \eqref{e:MATthresb0b}, and \eqref{e:MATthresb0c} ensure that \eqref{e:STPN2} holds, and the proof of \eqref{e:MATqo2} is complete.

Finally, to obtain \eqref{e:MATqo2new}, we bound
$\big\|P_{\trig_N}^C(\pert_N-\pert)P_{\trig_N}^Cv\big\|_{L^2(\Gamma)}$ in \eqref{e:MATqo2} by \eqref{e:mainevent}.
Indeed, by \eqref{e:mainevent},
\begin{align*}
&\big\|P_{\trig_N}^C(\pert_N-\pert)P_{\trig_N}^Cv\big\|_{L^2(\Gamma)}\\
&\leq C\Bigg[\Big( \Big(\frac{k}{N}\Big)^{s+1} \FLm^{s,\e}(N)+\FHm^{0,\e}(N)\Big)(1+\|\widetilde{L}\|_{0,0}+ \|\widetilde{L}\|_{s,s})\\
&\hspace{1cm}+\Big(\Big(\frac{k}{N}\Big) \FLm^{0,\e}(N)+\FHm^{0,\e}(N)\Big)\Big( \Big(\frac{k}{N}\Big)^{s+1} \FLm^{s,\e}(N)+\FHm^{0,\e}(N)\Big)+ \Big(\frac{k}{N}\Big)^{s+1}\mathcal{B}^{0,s}(\widetilde{L})\Bigg]\|v\|_{\Hsh},
\\
&\leq C\bigg[ \Big(\Big(\frac{k}{N}\Big)^{s+1} \FLm^{s,\e}(N)+\FHm^{0,\e}(N)\Big]
\big(1+\|\widetilde{L}\|_{0,0}+ \|\widetilde{L}\|_{s,s}\big)
+ \Big(\frac{k}{N}\Big)^{s+1}\mathcal{B}^{0,s}(\widetilde{L})\bigg]\|v\|_{\Hsh}.
\end{align*}
\epf

Before proving Lemma \ref{lem:mainevent}, we record the following two lemmas. In these lemmas and the proof of  Lemma \ref{lem:mainevent}, we use the notation that
\beq\label{e:nottrig2}
(\varphi_{m}\circ \gamma)(t) := \exp(\ri m t)
\eeq
(compare to \eqref{e:nottrig1}).

\ble[Multiplication by trigonometric polynomials] Given $s\in \Rea$ there exists $C>0$ such that, for all $v \in \Hsh$ and all $m\in \Rea$,
\beq\label{e:SVineq}
\N{\varphi_m v}_{\Hsh} \leq C \langle m \hsc\rangle^s \N{v}_{\Hsh}.
\eeq
\ele

\bpf
This result is well-known (see, e.g., \cite[Prop.~3]{SaVa:96}), but since the proof is very short, we give it here. By \eqref{e:norm_circle} and \eqref{e:normequiv},
\begin{align*}
\N{\varphi_m v}_{\Hsh}& \leq C_2 \sum_{n=-\infty}^\infty \big| \widehat{(v\circ \gamma)}(n-m) \big|^2 \langle n\hsc\rangle^{2s} \\
&=C_2 \sum_{n=-\infty}^\infty \big| \widehat{(v\circ \gamma)}(n) \big|^2 \langle (n+m)\hsc\rangle^{2s}
\leq C' \sum_{n=-\infty}^\infty \big| \widehat{(v\circ \gamma)}(n) \big|^2 \langle n\hsc\rangle^{2s} \langle m\hsc\rangle^{2s},
\end{align*}
by Peetre's inequality. The result \eqref{e:SVineq} then follows from using \eqref{e:norm_circle} and \eqref{e:normequiv} again.
\epf

\begin{lemma}
\label{l:alias}
Let $\oldK \leq N$. 
If $\oldK+|m|\leq N $ then
\begin{equation}
\label{e:alias2}
(I-P_{\trig_N}^C)\varphi_m P_{\trig_{\oldK}}^C =0.
\end{equation}
Furthermore, if $\oldK+|m|\leq 2N$ then
\begin{equation}
\label{e:alias}
P^G_{\trig_{2N-\oldK-|m| }}(I-P_{\trig_N}^C)\varphi_m P_{\trig_{\oldK}}^C =0.
\end{equation}
\end{lemma}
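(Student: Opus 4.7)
The plan is to reduce both statements to the single aliasing identity~\eqref{e:alias0} applied to Fourier basis elements. By linearity and continuity it suffices to verify both identities when the argument is a basis function $\varphi_j$ with $|j|\leq \oldK$, since the image of $P^C_{\trig_\oldK}$ lies in $\trig_\oldK=\operatorname{span}\{\varphi_j:|j|\leq \oldK\}$. Then $\varphi_m\varphi_j=\varphi_{m+j}$, and writing $m+j=\ell(2N+1)+\mu$ with $|\mu|\leq N$, identity~\eqref{e:alias0} gives
\begin{equation*}
(I-P^C_{\trig_N})\varphi_{m+j}=\begin{cases}0,&\ell=0,\\ \varphi_{m+j}-\varphi_\mu,&\ell\neq 0.\end{cases}
\end{equation*}

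For part~\eqref{e:alias2}, under the hypothesis $\oldK+|m|\leq N$ we have $|m+j|\leq \oldK+|m|\leq N$, which forces $\ell=0$ and therefore $(I-P^C_{\trig_N})\varphi_m\varphi_j=0$. This immediately yields~\eqref{e:alias2}.

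For part~\eqref{e:alias}, the assumption $\oldK+|m|\leq 2N$ gives $|m+j|\leq 2N$, so $|\ell|\leq 1$. If $\ell=0$, the previous paragraph already produces zero. If $\ell=\pm 1$, I must verify that both $\varphi_{m+j}$ and $\varphi_\mu$ are annihilated by $P^G_{\trig_{2N-\oldK-|m|}}$; that is, both $|m+j|$ and $|\mu|$ exceed $2N-\oldK-|m|$. The first is clear because $|m+j|>N$ (since $\ell\neq 0$) and $N\geq 2N-\oldK-|m|$ whenever $\oldK+|m|\geq N$ (the only regime in which $\ell\neq 0$ can occur). For the second, assuming without loss of generality $m+j>0$ (so $\ell=1$ and $\mu=m+j-(2N+1)$), one computes $|\mu|=2N+1-(m+j)\geq 2N+1-(\oldK+|m|)>2N-\oldK-|m|$, using $m+j\leq |m|+\oldK$. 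The case $m+j<0$ is symmetric. Combining the two paragraphs gives~\eqref{e:alias}.

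There is essentially no obstacle here: the whole argument is an accounting exercise in the aliasing formula~\eqref{e:alias0}, and the only care needed is to track the worst-case frequencies produced by multiplication by $\varphi_m$ against the support constraint $|j|\leq \oldK$.
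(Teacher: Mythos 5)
Your proof is correct and takes essentially the same approach as the paper's: reduce to the action on basis functions $\varphi_j$ with $|j|\leq\oldK$, apply the aliasing identity \eqref{e:alias0} to $\varphi_{m+j}$, and run the arithmetic on the resulting frequencies to show that under the stated hypotheses either $\ell=0$ (so the term vanishes) or both $\varphi_{m+j}$ and the aliased $\varphi_\mu$ lie strictly above frequency $2N-\oldK-|m|$. Your write-up is a bit more explicit about the three cases $\ell\in\{0,\pm1\}$, while the paper works with a generic output frequency $\xi$ and derives a contradiction for $\ell\geq 1$, but the content is identical.
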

\begin{proof}
For $(I-P_{\trig_N}^C)\varphi_m P_{\trig_{\oldK}}^C$ not to be zero, there must exist $\xi\in \mathbb{Z}$ such that
\beq\label{e:lastDay1}
N+1\leq |\xi| \leq n+|m|,
\eeq
so that if $n+|m|\leq N$ then \eqref{e:alias2} holds. 
Then for 
$P^G_{\trig_{2N-\oldK-|m| }}(I-P_{\trig_N}^C)\varphi_m P_{\trig_{\oldK}}^C$ not to be zero, we need, in addition to \eqref{e:lastDay1},
that, by \eqref{e:alias0}, there exists $\ell\in0,1,2,\ldots$ such that 
\beq\label{e:lastDay2}
\big| |\xi| - \ell (2N+1)\big| \leq 2N-\oldK -|m|.
\eeq
If $n+|m|\leq N$, then \eqref{e:lastDay1} 
cannot hold, so we can restrict attention to the case  $n+|m|>N$.
In this case, \eqref{e:lastDay2} 
implies that $| |\xi| - \ell (2N+1)| \leq N$, which
cannot hold with $\ell=0$ by the lower bound in \eqref{e:lastDay1}.
Finally, \eqref{e:lastDay2} implies that
\beqs
|\xi| -\ell(2N+1) \geq |m| + n -2N \quad\text{ so that }\quad |\xi| \geq |m| + n +\ell + 2N(\ell-1),
\eeqs
which contradicts \eqref{e:lastDay1} if $\ell\geq 1$; i.e., we have proved \eqref{e:alias}.
%
%
\end{proof}

\

We adopt the notation of \cite[\S12.4]{Kr:14}, \cite{KrSl:93} and define
\beq\label{e:A0}
(A_0 w)(t):= \int_0^{2\pi} \log \left(4\sin^2\left(\frac{t-\tau}{2}\right)\right) w(\tau) \rd \tau.
\eeq
Up to a factor of $\pi^{-1}$, $A_0$ is the Laplace single-layer operator on the unit circle with arc-length parametrisation
(since there $|x(t)-x(\tau)|^2 = 4 \sin^2 ((t-\tau)/2)$). Therefore, $A_0: H^s(0,2\pi)\to H^{s+1}(0,2\pi)$ for all $s\in \Rea$ (by, e.g., \cite[Theorem 4.4.1]{Ne:01}).

\begin{lemma}
If $L$ satisfies Assumption \ref{ass:Nystrom} 
then for all $c,\e>0$ there exists $C>0$ such that for $N\geq ck$,  $0\leq \oldK \leq N$,  $r\geq s>1/2$, and $\LN^N$ as in~\eqref{e:LN},
\begin{equation}
\label{e:singleOp}
\|(L-\LN^N)P_{\trig_{\oldK}}^C\|_{H_{\hsc}^r(\Gamma)\to \Hsh}\leq  C \bigg[\Big(\frac{k}{N}\Big)^{r-s+1} \FL^{r,\e}(N,\oldK ,L)+\Big(\frac{k}{N}\Big)^{\min\{s,1\}}\FH^{s,\e}(N,\oldK ,L)\bigg].
\end{equation}
\end{lemma}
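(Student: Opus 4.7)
The plan is to split $L-\LN^N$ into the log-kernel and smooth-kernel contributions and expand each $L_j$ in a Fourier series in the $\tau$-variable using \eqref{e:widehatL}. Writing
\begin{equation*}
(L-\LN^N)P^C_{\trig_\oldK}v = E_1 + E_2,
\end{equation*}
where $E_1(t) = A_0[(I-P^C_{\trig_N,\tau})(L_1(t,\cdot)P^C_\oldK v)]$ and $E_2(t) = \int_0^{2\pi}(I-P^C_{\trig_N,\tau})[L_2(t,\tau)P^C_\oldK v(\tau)]\,d\tau$, and inserting $L_j(t,\tau) = (2\pi)^{-1/2}\sum_m \widehat{L}_{j,m}(t)\varphi_m(\tau)$, each of $E_1, E_2$ becomes a sum over $m\in\mathbb{Z}$ of $\widehat L_{j,m}(t)$ multiplied by a universal expression in $v$. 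By \eqref{e:alias2} of Lemma~\ref{l:alias}, only $|m|>N-\oldK$ contributes, and I will split this range into the \emph{low} band $N-\oldK<|m|\leq (2-\e)N-\oldK$ and the \emph{high} band $|m|>(2-\e)N-\oldK$, corresponding respectively to $\FL$ and $\FH$.

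For the low band, I first observe that $E_2$ contributes \emph{nothing}: by an explicit Fourier computation, $\int_0^{2\pi}(I-P^C_{\trig_N})[\varphi_m P^C_\oldK v]\,d\tau$ picks up only modes whose aliased image under $P^C_{\trig_N}$ is zero, which forces $|m|\geq 2N+1-\oldK$, outside the low range. This explains why $L_2$ appears only in $\FH$. For $E_1$, I will invoke \eqref{e:alias} of Lemma~\ref{l:alias} to conclude that $(I-P^C_{\trig_N})(\varphi_m P^C_\oldK v)$ is supported at Fourier modes $|n|>2N-\oldK-|m|\geq \e N$. Since $A_0$ acts as the Fourier multiplier $-2\pi/|n|$ (by \cite[Lemma 8.23]{Kr:14}), applying $A_0$ to such a high-frequency function yields $\|A_0 u\|_{H^s_\hsc}\leq (C/\e N)\|u\|_{H^s_\hsc}$, and using again the high-frequency support together with $\|u\|_{H^s_\hsc}\leq C(k/M)^{r-s}\|u\|_{H^r_\hsc}$ for $u$ supported at $|n|\geq M\geq k$, this combines to give $\|A_0(I-P^C_{\trig_N})(\varphi_m P^C_\oldK v)\|_{H^s_\hsc} \leq Ck^{-1}(k/N)^{r-s+1}\langle m/k\rangle^r \|v\|_{H^r_\hsc}$, where I also use \eqref{e:SVineq} and boundedness of $P^C_\oldK$ on $H^r_\hsc$.

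For the high band, the aliasing estimate gives no additional smoothness, so I bound directly using the approximation property \eqref{e:approx_FC}: for $s\geq 1$ I apply it with $(t,q)=(s,s-1)$ to get $\|(I-P^C_{\trig_N})g\|_{H^{s-1}_\hsc}\leq C(k/N)\|g\|_{H^s_\hsc}$, then use $\|A_0\|_{H^{s-1}_\hsc\to H^s_\hsc}\leq C$; for $1/2<s<1$ I will interpolate to recover the exponent $s$. Combined with $\|\varphi_m P^C_\oldK v\|_{H^s_\hsc}\leq C\langle m/k\rangle^s\|v\|_{H^s_\hsc}\leq C\langle m/k\rangle^s\|v\|_{H^r_\hsc}$, this yields $\|h_m\|_{H^s_\hsc}\leq C(k/N)^{\min\{s,1\}}\langle m/k\rangle^s\|v\|_{H^r_\hsc}$. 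The $E_2$ contribution in the high band is estimated analogously using $\|\int(I-P^C_{\trig_N})(\varphi_m w)\,d\tau\|\leq C\|(I-P^C_{\trig_N})(\varphi_m w)\|_{L^2}$.

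Finally I multiply each factor by $\widehat L_{j,m}(t)$ and sum in $m$. Since $s>1/2$ the one-dimensional torus Sobolev space $H^s_\hsc$ admits a Moser-type product estimate $\|fg\|_{H^s_\hsc}\leq C(\|f\|_{H^s_\hsc}\|g\|_{L^\infty}+\|f\|_{L^\infty}\|g\|_{H^s_\hsc})$, which, together with $\|f\|_{L^\infty}\leq C\|f\|_{H^s_\hsc}$ (here the embedding constant is absorbed by the extra $k^{-1}$ and $\langle m/k\rangle^{s}$ factors already produced by the kernel structure), lets me control $\|\widehat L_{j,m} h_m\|_{H^s_\hsc}$ by $\|\widehat L_{j,m}\|_{H^s_\hsc\text{ or }H^r_\hsc}\|h_m\|_{H^s_\hsc}$. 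Summing the bounds from the two bands against the definitions \eqref{e:defFs} gives exactly the claim \eqref{e:singleOp}. The main technical obstacle will be the product estimate and the $1/2<s<1$ interpolation for the high band; the aliasing bookkeeping from Lemma~\ref{l:alias} is the crucial structural input that drives the gain $(k/N)^{r-s+1}$ on the low band.
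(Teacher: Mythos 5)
Your proof follows essentially the same route as the paper's: you decompose $(L-\LN^N)P^C_{\trig_\oldK}$ into the log-kernel and smooth-kernel pieces, expand the kernels in Fourier series in $\tau$ via $\widehat{L}_{j,m}$, invoke Lemma~\ref{l:alias} to restrict the $m$-sum, split into the low band $N-\oldK<|m|\leq(2-\e)N-\oldK$ and high band $|m|>(2-\e)N-\oldK$, exploit the smoothing of $A_0$ on the low band, and observe that the $L_2$-piece only contributes for $|m|>2N-\oldK$ (hence only to $\FH$). The one structural difference is that you treat $A_0$ as an exact Fourier multiplier ($A_0\,e^{\ri nt}=-\frac{2\pi}{|n|}e^{\ri nt}$ for $n\neq 0$, by \cite[Lemma 8.23]{Kr:14}), whereas the paper works with $A_0$ as a non-semiclassical pseudodifferential operator of order $-1$ and inserts an intermediate projector $P^G_{\trig_{2N-|m|-\oldK}}$; your route is slightly more direct and equally valid on the circle. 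The aliasing bookkeeping, the gain of $(k/N)^{r-s+1}$ on the low band, the exponent $\min\{s,1\}$ on the high band, and the vanishing of the $E_2$ contribution below $|m|\leq 2N-\oldK$ are all correctly identified.

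There is, however, a genuine gap in the product step. You propose a Moser estimate $\|fg\|_{\Hsh}\lesssim\|f\|_{\Hsh}\|g\|_{L^\infty}+\|f\|_{L^\infty}\|g\|_{\Hsh}$ together with the embedding $\|f\|_{L^\infty}\leq C\|f\|_{\Hsh}$, asserting that the embedding constant is "absorbed by the extra $k^{-1}$ and $\langle m/k\rangle^s$ factors already produced by the kernel structure." This does not hold up: the semiclassical Sobolev embedding on the circle picks up a factor of $\hsc^{-1/2}$ (indeed $\|f\|_{L^\infty}\leq\sum_n|\hat f(n)|\leq\big(\sum_n\langle n\hsc\rangle^{-2s}\big)^{1/2}\|f\|_{\Hsh}$, and $\sum_n\langle n\hsc\rangle^{-2s}\asymp\hsc^{-1}$ for $s>1/2$), and the $k^{-1}$ and $\langle m/k\rangle^s$ factors you cite are exactly the ones already used to match the definition \eqref{e:defFs} of $\FL$ — the $k^{-1}$ comes from the $A_0$ smoothing, the $\langle m/k\rangle^r$ from the $\varphi_m$ multiplier bound \eqref{e:SVineq}, and both are fully spent. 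There is no spare $k^{-1/2}$ to absorb the loss, so your Moser-plus-embedding derivation would overshoot the target bound by $k^{1/2}$. The paper avoids this by invoking the algebra property \eqref{e:SVmult}, $\|\widehat{L}_{j,m}\,v\|_{\Hsh}\leq C\|\widehat{L}_{j,m}\|_{\Hsh}\|v\|_{\Hsh}$, directly as a cited fact from \cite{SaVa:96,SaVa:02} rather than attempting to rederive it from an $L^\infty$ embedding; you should do the same, since the derivation you sketch does not close.
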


\bpf
By the definitions of $L$  in~\eqref{e:NystromL} and $\LN^N$ in \eqref{e:LN}, 
\beqs
(L-\LN^N)P_{\trig_{\oldK}}^C  = I_1 + I_2,
\eeqs
where
\beqs
(I_1v)(t) := \int^{2\pi}_0 \log\left( 4\sin^2 \left(\frac{t-\tau}{2}\right)\right) \big(I- P_{\trig_N,\tau}^C\big) \big(L_1(t,\tau) \, P_{\trig_{\oldK}}^C v(\tau)\big) \, \rd \tau
\eeqs
and
\beqs
(I_2 v)(t) := \int^{2\pi}_0 \big(I- P_{\trig_N,\tau}^C\big) \big(L_2(t,\tau) \, P_{\trig_{\oldK}}^Cv(\tau)\big) \, \rd \tau.
\eeqs
We prove \eqref{e:singleOp} by proving that 
\begin{align}\nonumber
&\N{I_1}_{H^r_\hsc(\Gamma)\to \Hsh}\leq C \Big(\frac{k}{N}\Big)^{r-s+1} k^{-1}\sum_{N-\oldK \leq |m|\leq (2-\e)N-\oldK } \big\| \widehat{L}_{1,m}(t)\big\|_{\Hsh}\langle m/k\rangle^{r} \\
&\hspace{3cm}+C\Big( \frac k N \Big)^{\min\{s,1\}}\sum_{|m| >(2-\e)N-\oldK } \big\| \widehat{L}_{1,m}(t)\big\|_{\Hsh}\langle m/k\rangle^{s}
\label{e:VV1}
\end{align}
and
\beq\label{e:VV2}
\N{I_2}_{H^r_\hsc(\Gamma)\to \Hsh}\leq C\Big(\frac k N\Big)^s\sum_{|m|>2N -\oldK }\big\|\widehat{L}_{2,m}(t)\|_{\Hsh}\langle m/k\rangle^{s}.
\eeq
By the definition of 
$\widehat{L}_{j,m}(t) $ from \eqref{e:widehatL}
and  \eqref{e:Fourier},
\beqs
L_j(t,\tau) = \frac{1}{\sqrt{2\pi}} \sum_{m=-\infty}^\infty \widehat{L}_{j,m}(t)\re^{\ri m \tau}.
\eeqs
Then
\beq\label{e:I1}
(I_1v)(t)= \frac{1}{\sqrt{2\pi}}\sum_{m=-\infty}^\infty \widehat{L}_{1,m}(t) \, A_0\Big( (I- P_{\trig_{N}}^C)(\varphi_m P_{\trig_{\oldK}}^C v)\Big)(t),
\eeq
where $A_0$ is defined by \eqref{e:A0} and
\beq\label{e:I2}
\begin{aligned}
(I_2v)(t)&= \frac{1}{\sqrt{2\pi}}\sum_{m=-\infty}^\infty \widehat{L}_{2,m}(t) \int_0^{2\pi} (I- P_{\trig_{N},\tau}^C)\big(\varphi_m(\tau) P_{\trig_{\oldK}}^Cv(\tau)\big)\rd \tau\\
&=\sum_{m=-\infty}^\infty \widehat{L}_{2,m}(t) \ P_{\trig_0}^G(I- P_{\trig_{N}}^C)\big(\varphi_m P_{\trig_{\oldK}}^Cv\big).
\end{aligned}
\eeq
We first prove the bound on $I_2$ in \eqref{e:VV1} (since this is slightly easier than proving the bound on $I_1$).
By, e.g., \cite[Prop.~2]{SaVa:96}, \cite[\S5.13]{SaVa:02},  for all $v\in \Hsh$ and $s>1/2$,
\beq\label{e:SVmult}
\big\|\widehat{L}_{2,m}v\big\|_{\Hsht} \leq C \big\|\widehat{L}_{2,m}\big\|_{\Hsh} \N{v}_{\Hsh}.
\eeq
By \eqref{e:SVmult}, \eqref{e:alias}, \eqref{e:SVineq}, \eqref{e:PNG}, 
\eqref{e:approx_FC}, 
\begin{align*}
\N{I_2 v}_{\Hsh} &\leq C \sum_{|m|\geq 2N-\oldK }\big\|\widehat{L}_{2,m}\big\|_{\Hsh} \N{ P_{\trig_0}^G(I- P_{\trig_{N}}^C)(\varphi_m P_{\trig_{\oldK}}^Cv)}_{H_{\hsc}^s(\Gamma)}\\
&\leq C\|v\|_{\Hsh} \sum_{|m|\geq 2N-\oldK }\big\|\widehat{L}_{2,m}\|_{\Hsh}\langle mk^{-1}\rangle^{s} (k/N)^{s},\\
&\leq C\|v\|_{\Hrh} \sum_{|m|\geq 2N-\oldK }\big\|\widehat{L}_{2,m}\|_{\Hsh}\langle mk^{-1}\rangle^{s} (k/N)^{s},
\end{align*}
which is \eqref{e:VV2}.
We now bound $I_1$. Using in the expression \eqref{e:I1} the properties \eqref{e:SVmult}, \eqref{e:alias2}, \eqref{e:alias}, and \eqref{e:MATproj2}, we find that
\begin{align*}
\N{I_1 v}_{\Hsh} &\leq  C \sum_{m=-\infty}^\infty \big\| \widehat{L}_{1,m}(t)\big\|_{\Hsh} \|A_0 (I-P_{\trig_N}^C)\varphi_mP_{\trig_{\oldK}}^Cv\|_{\Hsh} \\
&=  C \sum_{|m|> N-n} \big\| \widehat{L}_{1,m}(t)\big\|_{\Hsh} \|A_0 (I-P_{\trig_N}^C)\varphi_mP_{\trig_{\oldK}}^Cv\|_{\Hsh} \\
&\leq C \sum_{N-\oldK <|m|\leq (2-\e)N-\oldK } \big\| \widehat{L}_{1,m}(t)\big\|_{\Hsh} \|A_0 (I-P_{\trig_{2N-|m|-\oldK} }^{G})(I-P_{\trig_N}^C)\varphi_mP_{\trig_{\oldK}}^Cv\|_{\Hsh}\\
&\qquad+C\sum_{|m| >(2-\e)N-\oldK } \big\| \widehat{L}_{1,m}(t)\big\|_{\Hsh} \|A_0 (I-P_{\trig_N}^C)\varphi_mP_{\trig_{\oldK}}^Cv\|_{\Hsh}.
\end{align*}
We now claim that $\|A_0\|_{H^{s-1}_\hsc(\Gamma) \to \Hsh}\leq C$. Indeed, for $s\geq 1$, since $A_0: H^{s-1}(\Gamma) \to H^s(\Gamma)$,
\beqs
\big\| A_0 u \big\|_{L^2(\Gamma)} \leq C \| u\|_{L^2(\Gamma)} \quad\tand\quad \big\| A_0 u\big\|_{H^s(\Gamma)} \leq C \| u \|_{H^{s-1}(\Gamma)}
\eeqs
so that 
\beqs
\big\| A_0u\big\|_{\Hsh}\leq C \Big( \big\| A_0 u \big\|_{\LtG} + \hsc^s \big\| A_0 u\big\|_{\Hsh}\Big) \leq C \big( \| u \|_{\LtG} + \hsc^s \| u \|_{H^{s-1}(\Gamma)} \big) \leq C \| u\|_{H^{s-1}_\hsc(\Gamma)}.
\eeqs
The result for $s<1$ follows by duality and interpolation. Therefore,
\begin{align}\nonumber
\N{I_1 v}_{\Hsh}
&\leq C \sum_{N-\oldK <|m|\leq (2-\e)N-\oldK } \big\| \widehat{L}_{1,m}(t)\big\|_{\Hsh}\|A_0 (I-P_{\trig_{2N-|m|-\oldK} }^{G})(I-P_{\trig_N}^C)\varphi_mP_{\trig_{\oldK}}^Cv\|_{\Hsh}\\\nonumber
&\qquad+\sum_{|m| >(2-\e)N-\oldK } \big\| \widehat{L}_{1,m}(t)\big\|_{\Hsh} \| (I-P_{\trig_N}^C)\varphi_mP_{\trig_{\oldK}}^Cv\|_{H_{\hsc}^{s-1}(\Gamma)}\\
\nonumber
&\leq C \sum_{N-\oldK <|m|\leq (2-\e)N-\oldK } \big\| \widehat{L}_{1,m}(t)\big\|_{\Hsh}\|A_0 (I-P_{\trig_{2N-|m|-\oldK} }^{G})(I-P_{\trig_N}^C)\varphi_mP_{\trig_{\oldK}}^Cv\|_{\Hsh}\\
&\qquad+\sum_{|m| >(2-\e)N-\oldK } \big\| \widehat{L}_{1,m}(t)\big\|_{\Hsh} \Big( \frac k N \Big)^{s-\max\{s-1,0\}}\langle m/k\rangle^s\| v\|_{H_{\hsc}^{s}(\Gamma)} .
\label{e:quinoa1}
\end{align}
where we have used \eqref{e:approx_FC} and \eqref{e:SVineq} in the last step. 
Now, since $A_0$ is a (non-semiclassical) pseuoddifferential operator of order $-1$, for any $M$
$$
\|P_{\trig_{N-\frac{|m|-n}{2}}}^GA_0(I-P_{\trig_{2N-|m|-n}}^G)\|_{H^{-M}\to H^M}\leq C_M (2N-|m|-n)^{-M}.
$$
Next, observe that for any $s\in \mathbb{R}$, since $2N-|m|-n\geq \e  N\geq c\e k$, then
\begin{gather*}
\|(I-P_{\trig_{N-\frac{|m|-n}{2}}}^G)u\|_{H_{\hsc}^s(\Gamma)}\leq C\hsc^s\|(I-P_{\trig_{N-\frac{|m|-n}{2}}}^G)u\|_{H^s(\Gamma)},\\
 \|(I-P_{\trig_{2N-|m|-n}}^G)u\|_{H^s(\Gamma)}\leq C\hsc^{-s}\|(I-P_{\trig_{2N-|m|-n}}^G)u\|_{H_{\hsc}^s(\Gamma)},\\
 \|(I-P_{\trig_{2N-|m|-n}}^G)u\|_{H^{s-1}(\Gamma)}\leq C (2N-|m|-n+1)^{-1}\|(I-P_{\trig_{2N-|m|-n}}^G)u\|_{H^{s}(\Gamma)}.
\end{gather*}
By \eqref{e:approxFG} and the fact that $2N-|m|-n\geq \e  N\geq c\e k$,
\beqs
\| I -P_{\trig_{N-\frac{|m|-n}{2}}}^G\|_{\Hsh\to \Hsh}\leq C.
\eeqs
Combining these last five displayed bounds, we obtain that 
\begin{align*}
&\| A_0(I-P_{\trig_{2N-|m|-n}}^G)w \|_{\Hsh}\\
 &\quad\leq \big\|  \big( I -P_{\trig_{N-\frac{|m|-n}{2}}}^G\big) A_0(I-P_{\trig_{2N-|m|-n}}^G)w \big\|_{\Hsh} + C_M (2N-|m|-n)^{-M}\N{w}_{\Hsh},\\
&\quad\leq  C \hsc^s \big\|  \big( I -P_{\trig_{N-\frac{|m|-n}{2}}}^G\big) A_0(I-P_{\trig_{2N-|m|-n}}^G)w \big\|_{H^s(\Gamma)}  + C_M (2N-|m|-n)^{-M}\N{w}_{\Hsh},\\
&\quad\leq  C \hsc^s \big\|(I-P_{\trig_{2N-|m|-n}}^G)w \big\|_{H^{s-1}(\Gamma)}  + C_M (2N-|m|-n)^{-M}\N{w}_{\Hsh},\\
&\quad\leq  C (2N-|m|-n+1)^{-1} \hsc^s \big\|(I-P_{\trig_{2N-|m|-n}}^G)w \big\|_{H^{s}(\Gamma)}  + C_M (2N-|m|-n)^{-M}\N{w}_{\Hsh},\\
&\quad\leq  C (2N-|m|-n+1)^{-1}  \big\|(I-P_{\trig_{2N-|m|-n}}^G)w \big\|_{H^{s}_\hsc(\Gamma)}  + C_M (2N-|m|-n)^{-M}\N{w}_{\Hsh}.
\end{align*}
Using this along with \eqref{e:approx_FC} and \eqref{e:SVineq} in the estimate \eqref{e:quinoa1}, we obtain 
\begin{align*}
\N{I_1 v}_{\Hsh} &\leq C \sum_{N-\oldK <|m|\leq (2-\e)N-\oldK } \big\| \widehat{L}_{1,m}(t)\big\|_{\Hsh} (2N-\oldK +1-|m|)^{-1}\langle m/k\rangle^{r}\Big(\frac{k}{N}\Big)^{r-s}\|v\|_{H_{\hsc}^{r}(\Gamma)}\\
&\qquad +\sum_{|m| >(2-\e)N-\oldK } \big\| \widehat{L}_{1,m}(t)\big\|_{\Hsh}\Big( \frac k N \Big)^{s-\max\{s-1,0\}}\langle m/k\rangle^{s} \| v\|_{H_{\hsc}^{s}(\Gamma)} \\
&\leq C\|v\|_{H_{\hsc}^{r}(\Gamma)}\Big(\frac{k}{N}\Big)^{r-s+1} k^{-1}\sum_{N-\oldK <|m|\leq (2-\e)N-\oldK } \big\| \widehat{L}_{1,m}(t)\big\|_{\Hsh}\langle m/k\rangle^{r} \\
&\qquad+C\| v\|_{H_{\hsc}^{r}(\Gamma)}\Big( \frac k N \Big)^{\min\{s,1\}}\sum_{|m| >(2-\e)N-\oldK } \big\| \widehat{L}_{1,m}(t)\big\|_{\Hsh}\langle m/k\rangle^{s},
\end{align*}
where we have used \eqref{e:approx_FC} and the fact that $N\geq ck $ in the last step.
\epf

The only thing the remains to do in the proof of Lemma~\ref{lem:mainevent} is to estimate the difference
$$
(\widetilde{L}_{j,a}\widetilde{L}_{j,b}-\widetilde{\LN}_{j,a}^NP_{\trig_N}^C\widetilde{\LN}_{j,b}^N)P_{\trig_N}^C.
$$

\begin{lemma}
Let $T_a$ and $T_b$ Assumptions~\eqref{e:NystromL}. Then, for all $r\geq s\geq 0$ and $0<\e<1$, there is $C>0$ such that for $0\leq \oldK \leq \oldK '\leq N$, and $\TN_a^N,\,\TN_b^N$ as in~\eqref{e:LN},
\begin{equation*}
\begin{aligned}
&\|(T_aT_b-\TN_a^NP_{\trig_N}^C\TN_b^N)P_{\trig_{\oldK}}^C\|_{H_{\hsc}^r(\Gamma)\to \Hsh}\\
&\hspace{.5cm}\leq C\Bigg(\Big( \Big(\frac{k}{N}\Big)^{r-s+1} \FL^{r,\e}(N,\oldK ,T_b)+\FH^{s,\e}(N,\oldK ,T_b)\Big)\|T_a\|_{H_{\hsc}^{s}\to H_{\hsc}^s}\\\
&\hspace{0.75cm}+ \Big( \Big(\frac{k}{N}\Big)^{r-s+1} \FL^{r,\e}(N,\oldK ',T_a)+\FH^{s,\e}(N,\oldK ',T_a)\Big)\|T_b\|_{H_{\hsc}^{r}\to H_{\hsc}^{r}}\\
&\hspace{0.75cm}+\Big(\Big(\frac{k}{N}\Big) \FL^{s,\e}(N,T_a)+\FH^{s,\e}(N,T_a)\Big)\Big( \Big(\frac{k}{N}\Big)^{r-s+1} \FL^{r,\e}(N,\oldK ,T_b)+\FH^{s,\e}(N,\oldK ,T_b)\Big)
&\\
&\hspace{0.75cm}+ \Big(\frac{k}{N}\Big)^{r-s+1}\|T_a\|_{H_{\hsc}^{s}\to H_{\hsc}^{s}}\|T_b\|_{H_{\hsc}^{r}\to H_{\hsc}^{r}}\\
&\hspace{0.75cm}+\Big( \Big(\frac{k}{N}\Big)^{r-s+1} \FL^{r,\e}(N,T_a)+\FH^{s,\e}(N,T_a)\Big)\cdot\|P_{\trig_N}^C(I-P_{\trig_{\oldK '}}^C)\|_{H^{r}_{\hsc}\to H_{\hsc}^{r}}\|(I-P_{\trig_{\oldK '}}^G)T_bP_{\trig_{\oldK}}^G\|_{H_{\hsc}^r\to H_{\hsc}^{r}}\Bigg).
\end{aligned}
\end{equation*}
In particular, if $\oldK =\oldK '=N$,
\begin{equation*}
\begin{aligned}
&\|(T_aT_b-\TN_a^NP_{\trig_N}^C\TN_b^N)P_{\trig_N}^C\|_{H_{\hsc}^r(\Gamma)\to \Hsh}\\
&\hspace{.5cm}\leq C\Bigg(\Big( \Big(\frac{k}{N}\Big)^{r-s+1} \FL^{r,\e}(N,T_b)+\FH^{s,\e}(N,T_b)\Big)\|T_a\|_{H_{\hsc}^{s}\to H_{\hsc}^s}\\\
&\hspace{.75cm}+ \Big( \Big(\frac{k}{N}\Big)^{r-s+1} \FL^{r,\e}(N,T_a)+\FH^{s,\e}(N,T_a)\Big)\|T_b\|_{H_{\hsc}^{r}\to H_{\hsc}^{r}}\\
&\hspace{.75cm}+\Big(\Big(\frac{k}{N}\Big) \FL^{s,\e}(N,T_a)+\FH^{s,\e}(N,T_a)\Big)\Big( \Big(\frac{k}{N}\Big)^{r-s+1} \FL^{r,\e}(N,T_b)+\FH^{s,\e}(N,T_b)\Big)
&\\
&\hspace{.75cm}+ \Big(\frac{k}{N}\Big)^{r-s+1}\|T_a\|_{H_{\hsc}^{s}\to H_{\hsc}^{s}}\|T_b\|_{H_{\hsc}^{r}\to H_{\hsc}^{r}}\Bigg).
\end{aligned}
\end{equation*}
Suppose, in addition, that for all $\chi \in C_{c}^\infty(\mathbb{R})$ with $\chi \equiv 1$  near $[-1,1]$, $(1-\chi\hDarg)T_b\in \Psi^{-1}_\hsc(\Gamma)$. Then, for $\oldK '\geq \oldK +\e k$ and $\oldK =\Xi k$ with $\Xi\geq c_{\max}+\e$, then for all $M>0$ there is $C_M>0$ such that
\begin{equation*}
\begin{aligned}
&\|(T_aT_b-\TN_a^NP_{\trig_N}^C\TN_b^N)P_{\trig_{\oldK}}^C\|_{H_{\hsc}^r(\Gamma)\to \Hsh}\\
&\hspace{.2cm}\leq C\Bigg(\Big( \Big(\frac{k}{N}\Big)^{r-s+1} \FL^{r,\e}(N,\oldK ,T_b)+\FH^{s,\e}(N,\oldK ,T_b)\Big)\|T_a\|_{H_{\hsc}^{s}\to H_{\hsc}^s}\\\
&\hspace{1cm}+ \Big( \Big(\frac{k}{N}\Big)^{r-s+2} \FL^{r,\e}(N,\oldK ',T_a)+\FH^{s,\e}(N,\oldK ',T_a)\Big)\|T_b\|_{H_{\hsc}^{r}\to H_{\hsc}^{r+1}}\\
&\hspace{1cm}+\Big(\Big(\frac{k}{N}\Big) \FL^{s,\e}(N,T_a)+\FH^{s,\e}(N,T_a)\Big)\Big( \Big(\frac{k}{N}\Big)^{r-s+1} \FL^{r,\e}(N,\oldK ,T_b)+\FH^{s,\e}(N,\oldK ,T_b)\Big)
&\\
&\hspace{1cm}+ \Big(\frac{k}{N}\Big)^{r-s+1}\|T_a\|_{H_{\hsc}^{s}\to H_{\hsc}^{s}}\|T_b\|_{H_{\hsc}^{r}\to H_{\hsc}^{r}}\\
&\hspace{1cm}+C_M\Big( \Big(\frac{k}{N}\Big)^{r-s+1} \FL^{r,\e}(N,T_a)+\FH^{s,\e}(N,T_a)\Big)\|T_b\|_{\LtGt}k^{-M}\Bigg).
\end{aligned}
\end{equation*}
\end{lemma}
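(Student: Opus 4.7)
The plan is to start from the algebraic identity
\begin{equation*}
T_aT_b - \TN_a^N P_{\trig_N}^C \TN_b^N
= T_a(I-P_{\trig_N}^C)T_b + T_aP_{\trig_N}^C(T_b-\TN_b^N) + (T_a-\TN_a^N)P_{\trig_N}^C\TN_b^N,
\end{equation*}
compose on the right by $P_{\trig_{\oldK}}^C$, and bound the three summands separately. The key tools are the single-operator estimate \eqref{e:singleOp} applied with various projection indices, the collocation approximation property of Lemma~\ref{lem:approx_FC}, and the elementary aliasing identities $(I-P_{\trig_N}^C)P_{\trig_{\oldK'}}^G = 0$ and $P_{\trig_{\oldK'}}^C P_{\trig_{\oldK'}}^G = P_{\trig_{\oldK'}}^G$ (valid for $\oldK'\le N$ since $\trig_{\oldK'}\subset\trig_N$), which convert Nystr\"om projections to Galerkin projections at convenient places. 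The auxiliary index $\oldK'$ is introduced via the splitting $P_{\trig_N}^C = P_{\trig_{\oldK'}}^C + P_{\trig_N}^C(I-P_{\trig_{\oldK'}}^C)$, producing singleOp contributions with projection indices $\oldK'$ and $N$ respectively.

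The first summand becomes $T_a(I-P_{\trig_N}^C)(I-P_{\trig_{\oldK'}}^G)T_b P_{\trig_\oldK}^G P_{\trig_\oldK}^C$ after using $P_{\trig_\oldK}^C = P_{\trig_\oldK}^G P_{\trig_\oldK}^C$ and the first aliasing identity; it is then bounded by $\|T_a\|_{H^s_\hsc\to H^s_\hsc}$ times the approximation factor for $I-P_{\trig_N}^C$ times the explicit norm $\|(I-P_{\trig_{\oldK'}}^G)T_b P_{\trig_\oldK}^G\|$ appearing as the last factor in the claimed bound. The second summand is handled directly by singleOp applied to $(T_b-\TN_b^N)P_{\trig_\oldK}^C$, producing the $T_b$-indexed $\FL,\FH$ terms with projection index $\oldK$, multiplied by $\|T_a\|_{H^s_\hsc\to H^s_\hsc}$ together with boundedness of $P_{\trig_N}^C$ on $\Hsh$ for $s>1/2$. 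For the third summand I write $\TN_b^N = T_b + (\TN_b^N - T_b)$: the $(\TN_b^N - T_b)$ piece combines two singleOp bounds to produce the product-type terms; the $T_b$ piece is split by the $\oldK'$ decomposition above, where the $P_{\trig_{\oldK'}}^C$ sub-piece gives $\|(T_a-\TN_a^N)P_{\trig_{\oldK'}}^C\|_{H^r_\hsc\to H^s_\hsc}\cdot\|T_b\|_{H^r_\hsc\to H^r_\hsc}$ via singleOp with index $\oldK'$, while the $P_{\trig_N}^C(I-P_{\trig_{\oldK'}}^C)$ sub-piece uses singleOp with index $N$ on $(T_a-\TN_a^N)P_{\trig_N}^C$ and then factors the remaining term as $\|P_{\trig_N}^C(I-P_{\trig_{\oldK'}}^C)\|\cdot\|(I-P_{\trig_{\oldK'}}^G)T_b P_{\trig_\oldK}^G\|$ via the identity $(I-P_{\trig_{\oldK'}}^C)T_b P_{\trig_\oldK}^G = (I-P_{\trig_{\oldK'}}^C)(I-P_{\trig_{\oldK'}}^G)T_b P_{\trig_\oldK}^G$. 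When $\oldK=\oldK'=N$ the $(I-P_{\trig_{\oldK'}}^C)$-sub-piece vanishes, giving the second conclusion.

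For the final statement, the smoothing hypothesis $(1-\chi\hDarg)T_b \in \Psi_\hsc^{-1}(\Gamma)$ is used under the restrictions $\oldK = \Xi k$ with $\Xi \ge c_{\max}+\e$ and $\oldK' \ge \oldK + \e k$. Under these restrictions Lemma~\ref{l:proj2Pseudo} identifies $P_{\trig_\oldK}^G$ and $I-P_{\trig_{\oldK'}}^G$ with pseudodifferential cutoffs, modulo $O(\hsc^\infty)_{\Psi^{-\infty}_\hsc}$ remainders, to regions of phase space that are separated in frequency. Combined with the $-1$ order of $(1-\chi\hDarg)T_b$ and Corollary~\ref{cor:WFcutoff}, this (i) upgrades $\|T_b\|_{H^r_\hsc\to H^r_\hsc}$ in the $\oldK'$-piece of the third summand to $\|T_b\|_{H^r_\hsc\to H^{r+1}_\hsc}$, yielding one extra power of $(k/N)$ when paired with the enhanced input regularity in singleOp, and (ii) reduces the $\|(I-P_{\trig_{\oldK'}}^G)T_b P_{\trig_\oldK}^G\|$ norm in the $(v)$-piece of the third summand to an $O(\hsc^M)$ remainder controlled by $\|T_b\|_{\LtGt}k^{-M}$, which accounts for the last line of the bound.

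The main obstacle is the combinatorial bookkeeping: each of the five (respectively four) terms in the first and second conclusions arises from a specific pairing of singleOp with a chosen projection index $\oldK$, $\oldK'$, or $N$ against either an operator norm on Sobolev spaces or a collocation approximation factor, and one must carefully track at each step which Sobolev index ($r$ or $s$) and which projection index is used, verifying in particular that $s>1/2$ wherever $P_{\trig_N}^C$ acts on $\Hsh$. The microlocal argument for the final statement additionally requires care in checking that the wavefront separation of Lemma~\ref{l:proj2Pseudo} is preserved through composition with $P_{\trig_N}^C$ and $(I-P_{\trig_{\oldK'}}^C)$.
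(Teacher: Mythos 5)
Your proposal is correct and takes essentially the same approach as the paper. The opening algebraic identity (after composing with $P_{\trig_\oldK}^C$), the use of $P_{\trig_\oldK}^C = P_{\trig_\oldK}^G P_{\trig_\oldK}^C$, $(I-P_{\trig_N}^C)P_{\trig_{\oldK'}}^G=0$, $(I-P_{\trig_{\oldK'}}^C)P_{\trig_{\oldK'}}^G=0$ and the splitting $P_{\trig_N}^C = P_{\trig_{\oldK'}}^C + P_{\trig_N}^C(I-P_{\trig_{\oldK'}}^C)$ introducing the auxiliary index $\oldK'$, the single-operator estimate \eqref{e:singleOp} as the workhorse, and the microlocal argument combining Lemma~\ref{l:proj2Pseudo} with $(1-\chi\hDarg)T_b\in\Psi^{-1}_\hsc(\Gamma)$ to get the $O(\hsc^M)\|T_b\|_{\LtGt}$ remainder in the final case — all of these match the paper. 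The one small departure is in the treatment of $T_a(I-P_{\trig_N}^C)T_bP_{\trig_\oldK}^C$: the paper bounds it directly as $\|T_a\|_{H^s_\hsc\to H^s_\hsc}\,\|I-P_{\trig_N}^C\|_{H^r_\hsc\to H^s_\hsc}\,\|T_b\|_{H^r_\hsc\to H^r_\hsc}$, producing the fourth term of the stated bound, whereas you insert the aliasing identity and obtain the slightly sharper $\|T_a\|_{H^s_\hsc\to H^s_\hsc}\,(k/N)^{r-s+1}\,\|(I-P_{\trig_{\oldK'}}^G)T_bP_{\trig_\oldK}^G\|$. Since that quantity is dominated by the fourth term, your version still proves the lemma; it is worth noting it explicitly so the reader sees where that term arises.
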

\bpf
We write
\begin{align*}
(T_aT_b-\TN_a^NP_{\trig_N}^C\TN_b^N)P_{\trig_{\oldK}}^C&=T_aP_{\trig_N}^CT_bP_{\trig_{\oldK}}^C-\TN_a^NP_{\trig_N}^C\TN_b^NP_{\trig_{\oldK}}^C+T_a(I-P_{\trig_N}^C)T_bP_{\trig_{\oldK}}^C\\
&=T_aP_{\trig_N}^C(T_b-\TN_b^N)P_{\trig_{\oldK}}^C +(T_a-\TN_a^N)P_{\trig_N}^C\TN_b^NP_{\trig_{\oldK}}^C+T_a(I-P_{\trig_N}^C)T_bP_{\trig_{\oldK}}^C\\
&=T_a(T_b-\TN_b^N)P_{\trig_{\oldK}}^C +(T_a-\TN_a^N)P_{\trig_{\oldK '}}^CT_bP_{\trig_{\oldK}}^C+(T_a-\TN_a^N)P_{\trig_N}^C(\TN_b^N-T_b)P_{\trig_{\oldK}}^C\\
&\qquad +T_a(I-P_{\trig_N}^C)T_bP_{\trig_{\oldK}}^C +(T_a-\TN_a^N)P_{\trig_N}^C(I-P_{\trig_{\oldK '}}^C)T_bP_{\trig_{\oldK}}^C
\end{align*}
So, 
\begin{align*}
&\|(T_aT_b-\TN_a^NP_{\trig_N}^C\TN_b^N)P_{\trig_{\oldK}}^C\|_{H_{\hsc}^r\to H_{\hsc}^s}\\
&\hspace{.2cm}\leq C\Big(\|T_a\|_{H_{\hsc}^{s}\to \Hsh}\|(T_b-\TN_b^N)P_{\trig_{\oldK}}^C\|_{H_{\hsc}^r\to H_{\hsc}^{s}}+\|(T_a-\TN_a^N)P_{\trig_{\oldK '}}^C\|_{H_{\hsc}^{r}(\Gamma)\to H_{\hsc}^s}\|T_b\|_{H_{\hsc}^r(\Gamma)\to H_{\hsc}^{r}} \\
&\hspace{.4cm}+\|(T_a-\TN_a^N)P_{\trig_N}^C\|_{H_{\hsc}^{s}\to H_{\hsc}^s}\|(\TN_b^N-T_b)P_{\trig_{\oldK}}^C\|_{H_{\hsc}^r\to H_{\hsc}^{s}}\\
&\hspace{.4cm}+\|T_a\|_{H_{\hsc}^{s}\to H_{\hsc}^{s}}\|I-P_{\trig_N}^C\|_{H_{\hsc}^{r}\to H_{\hsc}^{s}}\|T_b\|_{H_{\hsc}^{r}\to H_{\hsc}^{r}}\\
&\hspace{.4cm}+\|(T_a-\TN_a^N)P_{\trig_N}^C\|_{H_{\hsc}^s\to H_{\hsc}^s}\|P_{\trig_N}^C(I-P_{\trig_{\oldK '}}^C)\|_{H_{\hsc}^s\to H_{\hsc}^s}\|(I-P_{\trig_{\oldK '}}^G)T_bP_{\trig_{\oldK}}^G\|_{H_{\hsc}^r(\Gamma)\to H_{\hsc}^s}\Big)\\
&\hspace{.2cm}\leq C\Bigg(\Big( \Big(\frac{k}{N}\Big)^{r-s+1} \FL^{r,\e}(N,\oldK ,T_b)+\FH^{s,\e}(N,\oldK ,T_b)\Big)\|T_a\|_{H_{\hsc}^{s}\to H_{\hsc}^s}\\\
&\hspace{.4cm}+ \Big( \Big(\frac{k}{N}\Big)^{r-s+1} \FL^{r,\e}(N,\oldK ',T_a)+\FH^{s,\e}(N,\oldK ',T_a)\Big)\|T_b\|_{H_{\hsc}^{r}\to H_{\hsc}^{r}}\\
&\hspace{.4cm}+\Big(\Big(\frac{k}{N}\Big) \FL^{s,\e}(N,T_a)+\FH^{s,\e}(N,T_a)\Big)\Big( \Big(\frac{k}{N}\Big)^{r-s+1} \FL^{r,\e}(N,\oldK ,T_b)+\FH^{s,\e}(N,\oldK ,T_b)\Big)
&\\
&\hspace{.4cm}+ \Big(\frac{k}{N}\Big)^{r-s+1}\|T_a\|_{H_{\hsc}^{s}\to H_{\hsc}^{s}}\|T_b\|_{H_{\hsc}^{r}\to H_{\hsc}^{r}}\\
&\hspace{.4cm}+\Big( \Big(\frac{k}{N}\Big)^{r-s+2} \FL^{r,\e}(N,T_a)+\FH^{s,\e}(N,T_a)\Big)\cdot\|P_{\trig_N}^C(I-P_{\trig_{\oldK '}}^C)\|_{H^{r}_{\hsc}\to H_{\hsc}^{r}}\|(I-P_{\trig_{\oldK '}}^G)T_bP_{\trig_{\oldK}}^G\|_{H_{\hsc}^r\to H_{\hsc}^{r}}\Bigg).
\end{align*}
The implication when $\oldK =\oldK '=N$ follows immediately since $P_{\trig_N}^C(I-P_{\trig_N}^C)=0$. 

Now, suppose that $(1-\chi\hDarg)T_b\in \Psi_{\hsc}^{-1}(\Gamma)$ for all $\chi \in C_{c}^\infty$ with $\chi \equiv 1$ near $[-1,1]$, $\oldK '\geq \oldK +\e k$, and $\oldK =\Xi k$ with $\Xi\geq (1+\e)c_{\max}$. Then let $\chi_0,\chi_1\in C_{c}^\infty(\mathbb{R})$ with $\chi_0\equiv 1$ near $[-\Xi^2,\Xi^2]$, $\supp \chi_0\subset \{\chi_1\equiv 1\}$, and $\supp \chi_1\subset  (-(\Xi+\e)^2,(\Xi+\e)^2)$ we observe 
$$
P_{\trig_{\oldK}}^G=\chi_0(k^{-2}\partial_t^2)P_{\trig_{\oldK}}^G,\qquad (I-P_{\trig_{\oldK '}}^G)=(I-P_{\trig_{\oldK '}}^G)(1-\chi_1(k^{-2}\partial_t^2))
$$
By Lemma~\ref{l:proj2Pseudo} (with $\Xi=1$) together with the fact that $\oldK '\geq (c_{\max}+\e)k$, there is $\chi \in C_{c}^\infty$ with $\chi \equiv 1$ near $[-1,1]$ such that
$$
(I-P_{\trig_{\oldK '}}^G)=(I-P_{\trig_{\oldK '}}^G)(1-\chi\hDarg)+O(\hsc^\infty)_{\Psi^{-\infty}_\hsc(\Gamma)}.
$$
Therefore, 
\begin{align*}
&(I-P_{\trig_{\oldK '}}^G)T_bP_{\trig_{\oldK}}^G\\
&=(I-P_{\trig_{\oldK '}}^G)(1-\chi\hDarg)T_bP_{\trig_{\oldK}}^G+O(\hsc^\infty\|T_b\|_{\LtGt})_{\Psi^{-\infty}_\hsc(\Gamma)}\\
&=(I-P_{\trig_{\oldK '}}^G)(1-\chi_1(|k^{-2}\partial_t|^2)(1-\chi\hDarg)T_b\chi_0(k^{-2}\partial_t^2))P_{\trig_{\oldK}}^G+O(\hsc^\infty\|T_b\|_{\LtGt})_{\Psi^{-\infty}_\hsc(\Gamma)}\\
&=O(\hsc^\infty\|T_b\|_{\LtGt})_{\Psi^{-\infty}_\hsc(\Gamma)},
\end{align*}
where the last line follows from the fact that  $(1-\chi\hDarg)T_b\in \Psi_{\hsc}^{-1}$,  that both $(1-\chi_1(|k^{-1}\partial_t|^2))\in \Psi^0_{\hsc}$ and $\chi_0(|k^{-1}\partial_t|^2))\in \Psi^0_{\hsc}(\Gamma)$, 
and 
$$
\WF(\chi_0(k^{-2}\partial_t^2))\cap \WF(1-\chi_1(k^{-2}\partial_t^2))=\emptyset.
$$ 
\epf

The next lemma is used to prove the results for plane-wave data.
\begin{lemma}
\label{l:NystromCompactMicrolocalizedD}
Suppose that there is $\Xi>0$ such that for all $\psi \in C_{c}^\infty(\mathbb{R})$ with $\supp(1-\psi)\cap [-\Xi,\Xi]=\emptyset$  and all $M>0$, there is $C_M>0$ such that 
\begin{equation}
\label{e:compactMicro}
\|(1-\psi\hDarg)v\|_{H_{\hsc}^M(\Gamma)}\leq C_M\hsc^M,
\end{equation}
and for all $\chi\in C_{c}^\infty(\mathbb{R})$ with $\chi \equiv 1$ near $[-\Xi,\Xi]$, $(1-\chi\hDarg)\widetilde{\pert}_b\in \Psi^{-1}_{\hsc}(\Gamma)$. Then for all $r\geq s\geq 0$, $M>0$, and $s\in \mathbb{R}$, there are $C_1,C_2>0$ such that for all $N\geq C_1k$ 
$$
\begin{aligned}
&\|P_{\trig_N}^C(\pert_N-\pert)P^C_{\trig_N}v\|_{\Hsh} \\
&\hspace{.2cm}\leq C\Bigg(\Big( \Big(\frac{k}{N}\Big)^{r-s+1} \FLm^{r,\e}(N,C_2k)+\FHm^{s,\e}(N,C_2k)\Big)\big(1+\|\widetilde{L}\|_{s,s}+ \|\widetilde{L}\|_{r,r}\big)\\
&\hspace{.4cm}+\Big(\Big(\frac{k}{N}\Big) \FLm^{s,\e}(N)+\FHm^{s,\e}(N)\Big)\Big( \Big(\frac{k}{N}\Big)^{r-s+1} \FLm^{r,\e}(N,C_2k)+\FHm^{s,\e}(N,C_2k)\Big)+ \Big(\frac{k}{N}\Big)^{r-s+1}\mathcal{B}^{s,r}(\widetilde{L})\\
&\hspace{.4cm}+C_M\Big( \Big(\frac{k}{N}\Big)^{r-s+1} \FLm^{r,\e}(N)+\FHm^{s,\e}(N)\Big)\|\widetilde{L}\|_{0,0}k^{-M}\Bigg)\|v\|_{H_{\hsc}^{r}(\Gamma)}.
\end{aligned}
$$
\end{lemma}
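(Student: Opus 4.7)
The plan is to reduce to Lemma~\ref{lem:mainevent}, equation~\eqref{e:mainevent3}, by exploiting the microlocal concentration of $v$ to replace $P^C_{\trig_N} v$ on the right by $P^C_{\trig_{C_2 k}} v$ up to a superalgebraically small remainder, and then to feed the resulting approximation directly into the estimate~\eqref{e:mainevent3}. Without loss of generality I assume $\Xi\ge 1$ (since any microlocalisation on $\{|\xi'|_g^2\le \Xi\}$ is a fortiori one on $\{|\xi'|_g^2\le 1\}$), and pick $C_2:=\max(1,\sqrt{\Xi})c_{\max}+2\e$.

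First I would show that $(I-P^C_{\trig_{C_2 k}})v$ is $O(\hsc^\infty)$ in every weighted Sobolev norm. By Lemma~\ref{l:proj2Pseudo} applied with $\Theta=\sqrt{\Xi}$, there exists $\chi\in C_c^\infty(\mathbb{R};[0,1])$ with $\supp(1-\chi)\cap[-\Xi,\Xi]=\emptyset$ such that
$$
I-P^G_{\trig_{C_2 k}}=(I-P^G_{\trig_{C_2 k}})\bigl(I-\chi(|\hsc D'|_g^2)\bigr)+O(\hsc^\infty)_{\Psi^{-\infty}_\hsc(\Gamma)}.
$$
Combining this with the collocation identity~\eqref{e:SVclevertrick} (i.e., $I-P^C_{\trig_{C_2 k}}=(I-P^C_{\trig_{C_2 k}})(I-P^G_{\trig_{C_2 k}})$), with the boundedness of $P^C_{\trig_{C_2 k}}$ on all $H^t_\hsc(\Gamma)$ with $t>1/2$ via Lemma~\ref{lem:approx_FC}, and with the hypothesis~\eqref{e:compactMicro}, yields $\|(I-P^C_{\trig_{C_2 k}})v\|_{H^M_\hsc(\Gamma)}\le C_M\hsc^M$ for every $M>0$.

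Next I would split
$$
P^C_{\trig_N}(\pert_N-\pert)P^C_{\trig_N}v
=P^C_{\trig_N}(\pert_N-\pert)P^C_{\trig_{C_2 k}}v+P^C_{\trig_N}(\pert_N-\pert)P^C_{\trig_N}(I-P^C_{\trig_{C_2 k}})v,
$$
using $P^C_{\trig_N}P^C_{\trig_{C_2 k}}=P^C_{\trig_{C_2 k}}$ (since $\trig_{C_2 k}\subset \trig_N$ and $P^C_{\trig_N}$ fixes $\trig_N$). For the first summand I would apply Lemma~\ref{lem:mainevent}, equation~\eqref{e:mainevent3}, with $\oldK=C_2 k$, $\oldK'=(C_2+\e)k$ (so that $\oldK'\ge\oldK+\e k$ and $\oldK'\le N$ provided $C_1\ge C_2+\e$), and with the ``$\Xi$'' of that lemma equal to $C_2\ge c_{\max}+\e$; the extra hypothesis $(1-\chi\hDarg)\widetilde\pert_b\in \Psi^{-1}_\hsc$ required by~\eqref{e:mainevent3} is exactly the standing assumption of the present lemma. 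Since $s>1/2$ is already in force (needed for~\eqref{e:approx_FC} to give boundedness of $P^C_{\trig_N}$ on $H^s_\hsc$), multiplying the resulting bound by $\|P^C_{\trig_N}\|_{H^s_\hsc\to H^s_\hsc}\le C$ on the left produces precisely the first three lines of the claimed estimate (after absorbing $(C_2+\e)k$ into $C_2 k$ via a harmless enlargement of $C_2$).

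For the second summand I would use the Sobolev-smoothness gain: $\|P^C_{\trig_N}(\pert_N-\pert)P^C_{\trig_N}\|_{H^{M'}_\hsc\to H^s_\hsc}$ is bounded by a fixed polynomial in $k$ (via the coarse case $\oldK=N$ of Lemma~\ref{lem:mainevent} together with the $k$-polynomial boundedness of the individual $\widetilde\pert_i,\widetilde\pert_{j,a},\widetilde\pert_{j,b}$ on $H^s_\hsc$), and then paired against the $O(\hsc^M)$ estimate from the first step produces, for any $M$, a contribution of size $C_M k^{-M}\|v\|_{H^r_\hsc(\Gamma)}$, which is absorbed into the last line of the target bound. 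The main obstacle will be calibrating $C_2$ consistently: it must be large enough that Lemma~\ref{l:proj2Pseudo} produces a cutoff $\chi$ with $\supp(1-\chi)\cap[-\Xi,\Xi]=\emptyset$ (to pair with~\eqref{e:compactMicro}), yet small enough that $\oldK=C_2 k$ and $\oldK'=(C_2+\e)k\le N$ both satisfy the hypotheses of~\eqref{e:mainevent3}; the rest of the argument is bookkeeping.
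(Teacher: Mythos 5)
Your proposal is correct and follows essentially the same strategy as the paper: split $v$ as $P^C_{\trig_{C_2k}}v + (I-P^C_{\trig_{C_2k}})v$, show the tail $(I-P^C_{\trig_{C_2k}})v$ is $O(\hsc^\infty)$ in every weighted Sobolev norm by combining $I-P^C_{\trig_{C_2k}}=(I-P^C_{\trig_{C_2k}})(I-P^G_{\trig_{C_2k}})$ with Lemma~\ref{l:proj2Pseudo} and the microlocal hypothesis~\eqref{e:compactMicro}, and feed the head into~\eqref{e:mainevent3}. Your algebraic identity $P^C_{\trig_N}(\pert_N-\pert)P^C_{\trig_N}v = P^C_{\trig_N}(\pert_N-\pert)P^C_{\trig_{C_2k}}v + P^C_{\trig_N}(\pert_N-\pert)P^C_{\trig_N}(I-P^C_{\trig_{C_2k}})v$, resting on $P^C_{\trig_N}P^C_{\trig_{C_2k}}=P^C_{\trig_{C_2k}}$, is in fact the clean form of the decomposition — the paper's~\eqref{e:alarm1} writes the second factor as $P^C_{\trig_n}(I-P^C_{\trig_N})$, which is a small slip (it should be $P^C_{\trig_N}(I-P^C_{\trig_n})$); you avoided it. Your streamlining of the cutoff argument (one cutoff $\chi$ with $\supp(1-\chi)\cap[-\Xi,\Xi]=\emptyset$, directly matching~\eqref{e:compactMicro}, rather than the paper's $\chi_1$ and $\psi$ pair) is a harmless simplification. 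One minor correction: the hypothesis $(1-\chi\hDarg)\widetilde\pert_b\in\Psi^{-1}_\hsc$ for $\chi\equiv 1$ near $[-1,1]$ needed in~\eqref{e:mainevent3} comes from the standing assumption stated at the start of \S\ref{sec:NystromProof} rather than from the present lemma's hypothesis (which, for $\Xi\ge 1$, is a weaker statement covering fewer cutoffs); the end result is unaffected.
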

\bpf
First observe that, if $N\geq n$,  
\begin{align}\nonumber
(\pert-\pert_N)P_{\trig_N}^C v &= (\pert-\pert_N)P_{\trig_n}^C v - (\pert-\pert_N)(P_{\trig_n}^C - P_{\trig_N}^C) v \\
&= (\pert-\pert_N)P_{\trig_n}^C v - (\pert-\pert_N)P_{\trig_n}^C(I-P_{\trig_N}^C) v.\label{e:alarm1}
\end{align}
We now claim that there exists $C_3>0$ such that 
\begin{equation}
\label{e:claimJubilee}
\|(I-P_{\trig_{C_3c_{\max}k}}^C)v\|_{H_{\hsc}^{r}(\Gamma)}=O(\hsc^\infty).
\end{equation}
Assuming this claim, the result follows from the combination of \eqref{e:alarm1}, \eqref{e:claimJubilee}, and \eqref{e:mainevent3}, by setting $n$ and $n'$ as in \eqref{e:claimJubilee} and then setting $C_1=\max\{C_3,1\} c_{\max}+ 2\e$.

To see~\eqref{e:claimJubilee}, recall that from \eqref{e:SVclevertrick} and Lemma~\ref{l:proj2Pseudo} that there is $\chi_1 \in C_{c}^\infty(\mathbb{R})$ with $\chi_1 \equiv 1 $ on $[-C_3,C_3]$ such that
\begin{align}\nonumber
(I-P_{\trig_{C_3c_{\max}k}}^C)&=(I-P_{\trig_{C_3c_{\max}k}}^C)(I-P_{C_3c_{\max}k}^G)\\
&=(I-P_{\trig_{C_3c_{\max}k}}^C)(I-P_{C_3c_{\max}k}^G)(1-\chi_1\hDarg)+O(\hsc^\infty)_{\Psi^{-\infty}_\hsc(\Gamma)}.\label{e:alarm2}
\end{align}
Let $C_3>\Xi$. Then, for $\psi\in C_{c}^\infty((-C_3,C_3))$ with $\psi\equiv 1$ near $[-\Xi,\Xi]$,
\beq\label{e:alarm3}
(1-\chi_1\hDarg) \psi\hDarg=O(\hsc^\infty)_{\Psi^{-\infty}_\hsc(\Gamma)}.
\eeq
Combining \eqref{e:alarm2} and \eqref{e:alarm3}, we see that
$$
I-P_{\trig_{C_3c_{\max}k}}^C= (I-P_{\trig_{C_3c_{\max}k}}^C)\big(1-\psi\hDarg\big)+O(\hsc^\infty)_{\Psi^{-\infty}_\hsc(\Gamma)},
$$
and then \eqref{e:claimJubilee} holds by~\eqref{e:compactMicro}.
\epf

\section{Application of the abstract Nystr\"om results to the second-kind Helmholtz BIEs}\label{sec:NystromApplied}
\subsection{Technical bounds on the Kress splittings}

The next few lemmas bound $\FL$ and $\FH$ (defined by \eqref{e:defFs}) for the splittings discussed in Lemmas~\ref{l:standardSplitA} to~\ref{l:standardSplitC}  for any curve $\Gamma$.

\begin{lemma}
\label{l:generalEstimates}
For $L_1$ and $L_2$ satisfying~\eqref{e:fourierSplit}, $0<\e<1$, $k_0>0$, $s\in \mathbb{R}$, $M>0$, there is $C>0$ such that for all $k>k_0$,  $N>(1+\e)(1-\e)^{-1}c_{\max}k$
\begin{equation}
\label{e:basicEstimates}
\FL^{s,\e}(N,L)\leq C\sqrt{\log k},\qquad \FH^{s,\e}(N,L)\leq Ck^{-M}.
\end{equation}
Furthermore, if $N-n\geq (1+\e) c_{\max} k$, then 
\beq\label{e:theEnd1}
{\max}\big\{ \FL^{s,\e}(N,n,L),\FH^{s,\e}(N,n,L) \big\} \leq  C k^{-M}.
\eeq
\end{lemma}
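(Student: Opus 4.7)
The plan is to study $\widehat{L}_{j,m}(t)$ as oscillatory integrals in $\tau$, and then to exploit the separation between $|m|/k$ and the geometric bound $c_{\max}$ to apply either non-stationary phase (giving super-algebraic decay) or stationary phase (controlling the residual sum).  Using \eqref{e:fourierSplit}, one writes
\begin{equation*}
\widehat{L}_{1,m}(t)=\frac{k}{\sqrt{2\pi}}\int_{\mathbb{S}^1}e^{ik\langle\gamma(t),\omega\rangle}\bigg(\int_{0}^{2\pi}e^{-i(m\tau+k\langle\gamma(\tau),\omega\rangle)}f(\omega,t,\tau)\,d\tau\bigg)dS(\omega),
\end{equation*}
so that the phase in $\tau$ has derivative $-m-k\langle\dot\gamma(\tau),\omega\rangle$, which is bounded below in absolute value by $|m|-kc_{\max}$.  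The analogous representation for $\widehat{L}_{2,m}$ comes from writing $\widetilde{L}_2$ via its Fourier inverse on $\{|\xi|\le 1\}$, and its phase derivative satisfies $|\partial_\tau\Phi|\geq|m|-kc_{\max}$ as well.

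\emph{Step 1 (bound on $\FH$).}  When $|m|>(2-\e)N-N=(1-\e)N$, the hypothesis $N>(1+\e)(1-\e)^{-1}c_{\max}k$ gives $|m|-kc_{\max}\ge\e k c_{\max}$.  Hence the $\tau$-phase is non-stationary and repeated integration by parts (cf.\ \cite[Lemma~3.14]{Zw:12}) yields $|\widehat{L}_{j,m}(t)|\le C_N k^{-N}\langle m\rangle^{-N}$ for every $N$, uniformly in $t$.  Derivatives in $t$ only bring down extra factors of $k$ (from either $e^{ik\langle\gamma(t),\omega\rangle}$ or the amplitude), and these are absorbed by the $\hsc^s=k^{-s}$ weighting in $\|\cdot\|_{\Hsk}$, so the same estimate holds for $\|\widehat{L}_{j,m}\|_{\Hsk}$.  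Summing the geometric-type tail against $\langle m/k\rangle^s$ gives the second bound in \eqref{e:basicEstimates}.  The same argument proves \eqref{e:theEnd1}: when $N-n\ge(1+\e)c_{\max}k$ the entire summation in both $\FL^{s,\e}(N,n,L)$ and $\FH^{s,\e}(N,n,L)$ starts beyond $(1+\e)c_{\max}k$, so non-stationary phase applies throughout.

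\emph{Step 2 (bound on $\FL$).}  For $\FL^{s,\e}(N,L)$ the range is $0<|m|\le (1-\e)N$.  In the sub-range $|m|>(1+\e)c_{\max}k$ the argument of Step~1 again produces $O(k^{-\infty})$ contributions.  The core task is to estimate the remaining sub-range $|m|\le (1+\e)c_{\max}k$, where the $\tau$-phase has a stationary point at $\omega$ aligned so that $k\langle\dot\gamma(\tau),\omega\rangle=-m$.  Applying stationary phase in $(\tau,\omega)$, the Hessian is non-degenerate away from the tangent-ray locus $|m|=kc_{\max}$ and degenerates (with an Airy-type behaviour) at it; carrying out this analysis, together with the $t$-derivative bookkeeping used in Step~1, yields
\begin{equation*}
\|\widehat{L}_{1,m}\|_{\Hsk}\le C\,\frac{\sqrt{k}}{\sqrt{\langle|m|-kc_{\max}|\rangle\vee 1}},\qquad 0<|m|\le(1+\e)c_{\max}k.
\end{equation*}
Combining this with $\langle m/k\rangle^s = O(1)$ throughout the relevant range, the sum
\begin{equation*}
k^{-1}\!\!\sum_{0<|m|\le (1+\e)c_{\max}k}\!\!\|\widehat{L}_{1,m}\|_{\Hsk}\langle m/k\rangle^{s}
\end{equation*}
is estimated by splitting into dyadic shells around $|m|=kc_{\max}$; the contribution from the innermost shell of width $O(1)$ is $O(1)$, and the remaining shells contribute $O(\sqrt{\log k})$ in total.

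\emph{Main obstacle.}  The hard part is the stationary-phase analysis at the tangent-ray locus, which is precisely where the usual quadratic stationary-phase formula fails and must be replaced by uniform Airy-type asymptotics.  This is where the residual logarithmic loss in the $\sqrt{\log k}$ bound originates.  Once the pointwise estimate above is established (uniformly in $t$ and with $t$-derivatives absorbed by the semiclassical weighting), everything else reduces to an elementary summation.
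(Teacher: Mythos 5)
Your proposal for the $\FL$ bound takes a genuinely different route from the paper, and that route does not go through for general $\Gamma$.

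The paper does \emph{not} attempt a pointwise estimate of each $\|\widehat{L}_{1,m}\|_{\Hsk}$. Instead it applies Cauchy--Schwarz in $m$ (absorbing the factor $\langle m/k\rangle^s$ into a $k^{1/2}$), bounds the resulting $\ell^2$-sum $\sum_m\|\widehat{L}_{1,m}\|^2_{L^2}$ by the Hilbert--Schmidt norm of $L_1$, and then estimates $\|L_1\|^2_{\rm HS}$ by stationary phase in the \emph{two angular variables} $(\omega,\zeta)\in\mathbb{S}^1\times\mathbb{S}^1$ of the kernel $\int L_1(t,\tau)\overline{L_1(t,\tau)}\,dt\,d\tau$. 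The $\sqrt{\log k}$ arises from the diagonal singularity $\int_{|t-\tau|\geq ck^{-1}}|\gamma(t)-\gamma(\tau)|^{-1}\,dt\,d\tau\sim\log k$ in that Hilbert--Schmidt integral; it has nothing to do with an Airy-type tangent-ray degeneracy. This is a deliberate choice: the Cauchy--Schwarz/HS argument only needs $L^2$-in-$m$ control, and it works for arbitrary smooth $\Gamma$.

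The pointwise route you propose is exactly the strategy the paper uses in Lemma~\ref{l:genuineSplit} --- but there it is restricted to $\Gamma$ convex with non-vanishing curvature, precisely because the joint $(\tau,\omega)$-stationary phase requires controlling the Hessian $\det\partial^2_{\tau\omega}\Phi$, and formula \eqref{e:Hessian2} shows this determinant can vanish for non-convex boundaries. Your write-up does not engage with this: "uniform Airy-type asymptotics at the tangent-ray locus" is one issue, but the more serious one for general $\Gamma$ is that the stationary point in $(\tau,\omega)$ may be degenerate away from $\tau=t$, and your proposal offers no mechanism to deal with that. (Indeed, removing this obstruction is the paper's Conjecture~\ref{conj:fl}.) Two further problems: the claimed pointwise bound $\sqrt{k}/\sqrt{\langle|m|-kc_{\max}|\rangle}$ has the wrong shape --- for general $\gamma$ (not arc-length parametrised) the critical locus is not concentrated near $|m|=kc_{\max}$ but occurs for all $|m|\lesssim k|\dot\gamma(\tau)|$, so the estimate should not single out $c_{\max}$; and even accepting your bound, the sum
$k^{-1}\sum_{|m|\lesssim kc_{\max}}\sqrt{k}\,\langle|m|-kc_{\max}\rangle^{-1/2}$ is $O(1)$, not $O(\sqrt{\log k})$, which is a sign that your argument, if it could be completed, would prove the conjecture rather than the lemma --- and hence that something is being swept under the rug.

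The non-stationary-phase part (your Step~1, and the proof of \eqref{e:theEnd1}) is essentially what the paper does and is fine.
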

%
%
%

\begin{proof}
By \eqref{e:widehatL} and \eqref{e:fourierSplit},
$$
D_t^\ell \widehat{L}_{1,m}=\frac{k}{\sqrt{2\pi}}\int_0^{2\pi}\int_{\mathbb{S}^1} D_t^\ell\Big(e^{ik\big(\langle \gamma(t)-\gamma(\tau),\omega\rangle-m\tau/k\big)}f(\omega,t,\tau)\Big)dS(\omega)d\tau.
$$
When $m>(1+\e)c_{\max} k$, 
$$
\big|-\langle \gamma'(\tau),\omega\rangle-m/k\big|>c_\e |m/k|,
$$
and thus we can integrate by parts in $\tau$ to obtain
\begin{equation}
\label{e:highFreqL1}
|D_t^\ell\widehat{L}_{1,m}|\leq C_M k^{-M}\langle m/k\rangle^{-M},\qquad |m|>(1+\e)c_{\max}k. 
\end{equation}
Next, by the Cauchy--Schwarz inequality,
\begin{equation}
\label{e:bumping}
k^{-1}\sum_{|m|\leq (1+\e)c_{\max}k}\|\widehat{L}_{1,m}\|_{\Hsh}\langle m/k\rangle^s\leq C_sk^{-1/2}\sqrt{\sum_{|m|\leq (1+\e)c_{\max}k}\|\widehat{L}_{1,m}\|_{\Hsh}^2},
\end{equation}
where we have used that 
\beqs
\sqrt{\sum_{|m|\leq (1+\e)c_{\max}k}\langle m/k\rangle^{2s}}\leq C_s k^{1/2}.
\eeqs
Let $L_1$ be the operator with kernel $L_1(t,\tau)$. By \eqref{e:widehatL},
it is enough to estimate for any $\ell\geq 0$
\begin{align}\nonumber
\sum_{|m|\leq (1+\e)c_{\max}k}\big\|(k^{-1}D_t)^\ell\widehat{L}_{1,m}\big\|_{L^2(\Gamma)}^2
&=\sum_{|m|\leq (1+\e)c_{\max}k}\big\|(k^{-1}D_t)^\ell L_{1}(e^{-im\tau})\big\|_{L^2(\Gamma)}^2\\ \nonumber
&\leq C\sum_{|m|\leq (1+\e)c_{\max}k}\big\|(k^{-1}D_t)^\ell L_{1}(e^{-im\tau})\big\|_{L^2(0,2\pi)}^2\\
&\leq C\big\|(k^{-1}D_t)^\ell L_1\big\|^2_{\operatorname{HS}},\label{e:eating}
\end{align}
where $\|\cdot\|_{\operatorname{HS}}$ denotes the Hilbert--Schmidt norm (see, e.g., \cite[Page 32]{Kr:14}).

Now, the kernel of $(k^{-1}D_t)^\ell L_1$ is given by
$$
K(\tau,s):=k\int_{\mathbb{S}^1} e^{ik\big(\langle \gamma(t)-\gamma(\tau),\omega\rangle\big)}f_\ell(\omega,t,\tau)dS(\omega),
$$
for some smooth $f_\ell$ with all derivatives bounded uniformly in $k$. Therefore, since the Hilbert--Schmidt norm is equal to the $L^2$ norm of the kernel (see, e.g., \cite[Equation 1.2.33]{At:97}),
\begin{align*}
&\|(k^{-1}D_t)^\ell L_1\|^2_{\operatorname{HS}}\\
&=k^2\int_0^{2\pi}\int_0^{2\pi}\int_{\mathbb{S}^1}\int_{\mathbb{S}^1} e^{ik(\langle \gamma(t)-\gamma(\tau),\omega\rangle-\langle \gamma(t)-\gamma(\tau),\zeta\rangle)}\widetilde{f}_\ell(\omega,t,\tau,\zeta)dS(\omega)dS(\zeta)dtd\tau\\
&=k^2\int_0^{2\pi}\int_0^{2\pi}\int_{\mathbb{S}^1}\int_{\mathbb{S}^1} e^{ik|\gamma(t)-\gamma(\tau)|\big(\big\langle \frac{\gamma(t)-\gamma(\tau)}{|\gamma(t)-\gamma(\tau)|},\omega\big\rangle-\big\langle \frac{\gamma(t)-\gamma(\tau)}{|\gamma(t)-\gamma(\tau)|},\zeta\big\rangle\big)}\widetilde{f}_\ell(\omega,t,\tau,\zeta)dS(\omega)dS(\zeta)dtd\tau\\
&\leq Ck+ k^2\int_{0}^{2\pi}\int_{|t-\tau|\geq Ck^{-1}}\int_{\mathbb{S}^1}\int_{\mathbb{S}^1} e^{ik|\gamma(t)-\gamma(\tau)|\big(\big\langle \frac{\gamma(t)-\gamma(\tau)}{|\gamma(t)-\gamma(\tau)|},\omega\big\rangle-\big\langle \frac{\gamma(t)-\gamma(\tau)}{|\gamma(t)-\gamma(\tau)|},\zeta\big\rangle\big)}\widetilde{f}_\ell(\omega,t,\tau,\zeta)dS(\omega)dS(\zeta)dtd\tau.
\end{align*}
We now apply stationary phase in $\zeta$ and $\omega$. Let 
\beqs
\Phi:=
\langle \gamma(t)-\gamma(\tau),\omega\rangle-\langle \gamma(t)-\gamma(\tau),\zeta\rangle.
\eeqs
When $\omega \in \mathbb{S}^1$, $\omega'=\omega^\perp$ and $(\omega^\perp)' = -\omega$ so that
\beqs
\partial_\omega \Phi= \big\langle \gamma(t)-\gamma(\tau),\omega^\perp\big\rangle
\quad\tand\quad
\partial_\zeta \Phi = -\big\langle \gamma(t)-\gamma(\tau),\zeta^\perp\big\rangle
\eeqs
so that there are stationary points when both $\zeta$ and $\omega$ equal $\pm (\gamma(t)-\gamma(\tau))$. 
Since 
\beqs
\partial^2_{\tau\omega}\Phi=\begin{pmatrix}
 -\big\langle \gamma(t)-\gamma(\tau),\omega\big\rangle& 0\\
0& \big\langle \gamma(t)-\gamma(\tau),\zeta\big\rangle\\
\end{pmatrix}
\eeqs
and $|t-\tau|\geq k^{-1}$, these stationary phase are nondegenerate, and the principle of stationary phase (see, e.g., 
\cite[Theorem 3.16]{Zw:12}, \cite[Theorem 7.7.5]{Ho:83}) implies that 
$$
\begin{aligned}
\|(k^{-1}D_t)^\ell L_1\|_{\operatorname{HS}}&\leq Ck+Ck\int_{0}^{2\pi}\int_{|t-\tau|\geq Ck^{-1}} |\gamma(t)-\gamma(\tau)|^{-1}dtd\tau\leq Ck\log k.
\end{aligned}
$$
Combining this with~\eqref{e:bumping} and~\eqref{e:eating}, we obtain
$$
k^{-1}\sum_{|m|\leq (1+\e)c_{\max}k}\|\widehat{L}_{1,m}\|_{\Hsh}\langle m/k\rangle^s\leq C_s\sqrt{\log k}.
$$
The first estimate in~\eqref{e:basicEstimates} then follows from this last inequality combined 
the definition of $F_L^{s,\epsilon}(N,L)$ \eqref{e:defFs}, the choice $N>(1+\e)(1-\e)^{-1}c_{\max}k$, and~\eqref{e:highFreqL1}.

For the second estimate in~\eqref{e:basicEstimates}, by \eqref{e:widehatL} and \eqref{e:fourierSplit} (including, in particular, the support property of the Fourier transform of $\widetilde{L}$),
$$
D_t^\ell\widehat{L}_{2,m}=\frac{1}{\sqrt{2\pi}}\int_{0}^{2\pi}\int_{|\xi|\leq 1} e^{ik(\langle \gamma(t)-\gamma(\tau),\xi\rangle-m\tau/k)}\widehat{\widetilde{L}}_{2}(\xi,t,\tau)d\xi d\tau.
$$
As before, we integrate by parts in $\tau$ when $|m|>(1+\e)c_{\max}k$ to obtain that
\begin{equation}
\label{e:highFreqL2}
|D_t^\ell\widehat{L}_{2,m}|\leq C_M k^{-M}\langle m/k\rangle^{-M},\qquad |m|>(1+\e)c_{\max}k,
\end{equation}
which, together with~\eqref{e:highFreqL1} immediately implies the second estimate in~\eqref{e:basicEstimates}.

Finally, the result \eqref{e:theEnd1} follows from \eqref{e:highFreqL1} and  \eqref{e:highFreqL2}.
\end{proof}

\begin{lemma}
\label{l:cutSplitEst}
Let $L_1$ and $L_2$ satisfy~\eqref{e:cutSplit}. Then, for all $M>0$, $s\in \mathbb{R}$, and $0<\e<1$, there is $C>0$ such that 
\begin{equation}
\label{e:basicCutEstimates}
\FL^{s,\e}(N,L)\leq C,\qquad \FH^{s,\e}(N,L)\leq C\langle N/k\rangle^{-M}k\log k.
\end{equation}
\end{lemma}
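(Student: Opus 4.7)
The strategy closely parallels Lemma~\ref{l:generalEstimates}, but exploits the rapid decay $\langle\mu\rangle^{-N}$ (for every $N$) in the first argument of $\widetilde L_j$ guaranteed by~\eqref{e:cutSplit}. This rapid decay allows pointwise, rather than Hilbert--Schmidt, bounds on the Fourier coefficients $\widehat L_{j,m}$. The first step is to translate~\eqref{e:cutSplit} into the chain-rule pointwise bound
$$
|\partial_t^{\alpha_1}\partial_\tau^{\alpha_2}L_j(t,\tau)|\leq C_{\alpha N}k^{1+\alpha_1+\alpha_2}f_j(k)\langle k(\gamma(t)-\gamma(\tau))\rangle^{-N}\qquad\tfa\alpha_1,\alpha_2,N,
$$
where $f_1(k)=\log k$, $f_2(k)=1$; each $t$ or $\tau$ derivative acting on $\widetilde L_j(k(\gamma(t)-\gamma(\tau)),t,\tau,k)$ produces either a $\partial_\mu$-derivative (times the $k$ from the chain rule) or a direct derivative, and in both cases the $\langle\mu\rangle^{-N}$ decay is preserved by~\eqref{e:cutSplit}.

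The second and main step is to estimate $\widehat L_{j,m}$ via integration by parts in $\tau$. Writing
$$
\hsc^{\ell_1}D_t^{\ell_1}\widehat L_{j,m}(t)=\frac{\hsc^{\ell_1}}{(\ri m)^{\ell_2}\sqrt{2\pi}}\int_0^{2\pi}e^{-\ri m\tau}\partial_t^{\ell_1}\partial_\tau^{\ell_2}L_j(t,\tau)\,d\tau,
$$
inserting the Step~1 bound, and using that $|\gamma'|\geq c_1>0$ implies $\int_0^{2\pi}\langle k|\gamma(t)-\gamma(\tau)|\rangle^{-N}\,d\tau\leq C/k$ for $N\geq 2$, the factors of $k$ rearrange as follows: the $\hsc^{\ell_1}=k^{-\ell_1}$ cancels the $k^{\ell_1}$ from differentiating $t$, the integration contributes $1/k$ which cancels the surplus $k$ from $\widetilde L_j$, and $|m|^{-\ell_2}\cdot k^{\ell_2}=(k/|m|)^{\ell_2}$. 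Combining the $\ell_2=0$ bound (used for $|m|\leq k$) with the $\ell_2=M$ bound (used for $|m|>k$), we obtain for every $s\in\R$ and every $M\geq 0$,
$$
\|\widehat L_{j,m}\|_{H^s_\hsc(\Gamma)}\leq C_{s,M}\,f_j(k)\,\langle m/k\rangle^{-M}\qquad\tfa m\in\mathbb Z.
$$

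The third step is to insert this rapid-decay bound into the definitions~\eqref{e:defFs}. For $\FL$, choose $M>s+1$ and recognise $k^{-1}\sum_{m\neq 0}\langle m/k\rangle^{s-M}$ as a Riemann sum (step $1/k$) for $\int_0^\infty\langle x\rangle^{s-M}\,dx<\infty$, giving $\FL^{s,\e}(N,L)\leq C\log k$ uniformly in $N$. For $\FH$, choose $M'$ arbitrarily large and estimate the tail $\sum_{|m|>(1-\e)N}\langle m/k\rangle^{s-M'}\leq Ck\langle N/k\rangle^{s-M'+1}$, so that $\FH^{s,\e}(N,L)\leq C_M\,k\log k\,\langle N/k\rangle^{-M}$ as claimed. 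The only real obstacle is the bookkeeping of $k$ powers in Step~2; there is no singular integral to analyse because the cutoff built into~\eqref{e:cutSplit} already kills the logarithmic singularity that appears in Lemma~\ref{l:generalEstimates}. (We note that our analysis yields $\FL^{s,\e}(N,L)\leq C\log k$ rather than the stated $\leq C$; this unavoidable $\log k$ comes from the $\log k$ in the bound on $\widetilde L_1$ in~\eqref{e:cutSplit} and is absorbed harmlessly in the applications of the lemma.)
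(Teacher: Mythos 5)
Your approach is essentially the same as the paper's: integration by parts in $\tau$, using the rapid $\langle\mu\rangle^{-N}$ decay of $\widetilde L_j$ to kill the resulting weights. (The paper applies the single operator $(1+k^{-2}mD_\tau)^N/\langle m/k\rangle^{2N}$, which handles $|m|\lesssim k$ and $|m|\gtrsim k$ simultaneously, instead of combining separate $\ell_2=0$ and $\ell_2=M$ bounds, but this is a cosmetic difference.)

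Regarding the $\log k$ you flagged: this is a genuine typo in~\eqref{e:cutSplit}, not a gap in your argument. The $\log k$ factor belongs on $\widetilde L_2$, not $\widetilde L_1$. Indeed, inspecting the proof of Lemma~\ref{l:standardSplitB}: $L_1$ is $k$ times $J_0(\ri|\mu|)\chi(|\mu|)$, which carries no $\log k$, whereas $L_2$ contains the term $S_1(\ri|\mu|,\ri k)$, whose estimate carries a factor $\log k$, and the $(1-\chi)H_0^{(1)}$ term likewise produces a $\log$. The paper's own proof of this lemma is consistent with the swap: it opens by reducing to $\|\widehat L_{1,m}\|_{\Hsk}\leq C_{M,s}\langle m/k\rangle^{-M}$ and $\|\widehat L_{2,m}\|_{\Hsk}\leq C_{M,s}\log k\,\langle m/k\rangle^{-M}$ (the $\log k$ on the $L_2$ side), and the bound it derives for $D_t^\ell\widehat L_{1,m}$ has no $\log k$. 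Since $\FL^{s,\e}(N,L)$ involves only $\widehat L_{1,m}$, the corrected hypothesis yields $\FL\leq C$ as stated, and your derivation of $\FH\leq C\,k\log k\,\langle N/k\rangle^{-M}$ is already in the right form because $\FH$ is the place where $\widehat L_{2,m}$ enters. With the typo in~\eqref{e:cutSplit} corrected, your proof gives exactly~\eqref{e:basicCutEstimates}.
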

\begin{proof}
By the definitions of $\FL$ and $\FH$ \eqref{e:defFs}, it is sufficient to prove that
$$
\|\widehat{L}_{1,m}\|_{\Hsk}\leq C_{M,s}\langle m/k\rangle^{-M},\qquad \|\widehat{L}_{2,m}\|_{\Hsk}\leq C_{M,s}\log k\langle m/k\rangle^{-M}.
$$
By \eqref{e:cutSplit0} and \eqref{e:cutSplit},
$$
D_t^\ell \widehat{L}_{1,m}(t)= \frac{1}{\sqrt{2\pi}}\int_0^{2\pi} e^{-i\tau m} D_t^\ell\big(\widetilde{L}_1\big(k(\gamma(t)-\gamma(\tau)), t,\tau\big)\big)d\tau.
$$
By integration by parts in $\tau$ and the property \eqref{e:cutSplit} of $\widetilde{L}_1$,
\begin{align*}
|D_t^\ell \widehat{L}_{1,m}(t)|&= \frac{1}{\sqrt{2\pi}} \int_0^{2\pi} e^{-i\tau m} D_t^\ell \frac{(1+ k^{-2}mD_\tau)^N}{\langle m/k\rangle^{2N}}\big(\widetilde{L}_1\big(k(\gamma(t)-\gamma(\tau)), t,\tau\big)\big)d\tau\\
&\leq C\int_0^{2\pi} k^{\ell+1}\langle m/k\rangle^{-N} \langle k(\gamma(t)-\gamma(\tau))\rangle^{-2}d\tau\\
&\leq k^{\ell}\langle m/k\rangle^{-N}.
\end{align*}
The proof of the estimate on $\widehat{L}_{2,m}$ is identical.
\end{proof}

\begin{lemma}
\label{l:genuineSplit}
Let $L_1$ and $L_2$ satisfy~\eqref{e:fourierSplit}. If $\Gamma$ is convex with non-vanishing curvature and unit parametrized, then for all $k_0>0$ and $s\in \mathbb{R}$ there is $C>0$ such that for all $k>k_0$, $0<\e<1$ and $N\in \mathbb{R}$, 
\begin{equation}
\label{e:advancedEstimates}
\FL^{s,\e}(N,L)\leq C.
\end{equation}
\end{lemma}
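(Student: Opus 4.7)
My plan is to strengthen Lemma~\ref{l:generalEstimates} from the $\sqrt{\log k}$ bound to a uniform $O(1)$ bound by extracting more from the non-vanishing curvature. The key observation is that Lemma~\ref{l:generalEstimates} used only stationary phase in $\omega$ (implicit in the Hilbert--Schmidt/Cauchy--Schwarz argument); here I would combine this with a second stationary phase in $\tau$, whose Hessian is bounded below precisely by the curvature. Rather than estimating $\|L_1\|_{\operatorname{HS}}$ and Cauchy--Schwarzing, I would bound each Fourier coefficient $\widehat L_{1,m}(t)$ \emph{individually} and then sum.

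Concretely, I would write
$$\widehat L_{1,m}(t)=\frac{k}{\sqrt{2\pi}}\int_0^{2\pi}\!\!\int_{\mathbb{S}^1} e^{ik\Phi(\tau,\omega)}f(\omega,t,\tau)\,dS(\omega)\,d\tau,\qquad \Phi=\langle\gamma(t)-\gamma(\tau),\omega\rangle-\tfrac{m}{k}\tau,$$
and perform two-dimensional stationary phase in $(\tau,\omega)$. The critical points satisfy $\omega\parallel\gamma(t)-\gamma(\tau)$ together with $\langle\gamma'(\tau),\omega\rangle=-m/k$. Because $\gamma$ is unit speed and $\kappa>0$, for $|m/k|\leq 1-ck^{-2/3}$ there are exactly two critical points $(\tau_\pm,\omega_\pm)$, and a direct computation gives the Hessian determinant
$$|\det\partial^2\Phi|_{(\tau_\pm,\omega_\pm)}=\sqrt{1-(m/k)^2}\,\big|\sqrt{1-(m/k)^2}\pm\kappa(\tau_\pm)|\gamma(t)-\gamma(\tau_\pm)|\big|.$$
For the $+$ branch this is bounded below by $c(1-(m/k)^2)$, so two-dimensional stationary phase yields $|\widehat L_{1,m}(t)|\leq C(1-(m/k)^2)^{-1/2}$ uniformly in $t$. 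The $-$ branch can degenerate on a caustic $t$-set of measure zero, but uniform stationary phase (Airy asymptotics) near this set gives the same $O((1-(m/k)^2)^{-1/2})$ bound in $L^2$. In the Airy transition $||m|-k|\leq k^{1/3}$ one has the single bound $|\widehat L_{1,m}|\leq Ck^{1/3}$, and for $|m|>k+k^{1/3}$ iterated integration by parts in $\tau$ yields the rapid decay $\|\widehat L_{1,m}\|_{\Hsk}\leq C_N\langle m/k\rangle^{-N}k^{-N}$. Differentiation in $t$ with the $k^{-1}$ weight only pulls down bounded factors $\langle\gamma'(t),\omega\rangle$ plus $O(\hsc)$ corrections from the smoothness of $f$, so these bounds transfer verbatim to $\|\widehat L_{1,m}\|_{\Hsk}$.

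Summing the three regimes,
$$\FL^{s,\e}(N,L)\leq C\bigg(k^{-1}\!\!\!\!\sum_{|m|\leq k-k^{1/3}}\!\frac{\langle m/k\rangle^s}{\sqrt{1-(m/k)^2}}+k^{-1/3}+O(k^{-\infty})\bigg),$$
and the first sum is a Riemann sum for $\int_{-1}^{1}\langle x\rangle^s(1-x^2)^{-1/2}\,dx<\infty$, yielding the claimed uniform bound. I expect the main obstacle to lie not in the algebraic Hessian computation but in controlling the caustic degeneration of the $-$ branch and matching it cleanly with the Airy transition window near $|m|=k$; the former can be handled by uniform stationary phase (Airy function asymptotics), and the latter by a careful partition of unity in $m/k$. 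As a sanity check, when $\Gamma$ is the unit circle the Hessian computation gives $|\widehat L_{1,m}(t)|\sim(1-(m/k)^2)^{-1/2}$ exactly, matching the classical identity $\widehat L_{1,m}(t)\propto k J_{|m|}(k)^2 e^{-imt}$ whose sum over $m$ is uniformly bounded by the normalization $\sum_m J_m(k)^2=1$.
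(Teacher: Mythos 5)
Your plan captures the essence of the near-diagonal analysis — two-dimensional stationary phase in $(\tau,\omega)$ with the Hessian determinant $\sqrt{1-(m/k)^2}\bigl(|\kappa(\tau_c)||\gamma(t)-\gamma(\tau_c)|-\sqrt{1-(m/k)^2}\bigr)$ bounded below by $c\,(1-(m/k)^2)$ via the curvature. But there is a genuine gap: you do not split $L_1$ into a near-diagonal piece $I_1 = L_1\chi_\e(t-\tau)$ and an off-diagonal piece $I_2=L_1(1-\chi_\e(t-\tau))$, and that decomposition is essential. The curvature lower bound on the Hessian is obtained precisely by Taylor-expanding $\gamma(t)-\gamma(\tau_c)$ around $\tau_c$ and showing $|\kappa||\gamma(t)-\gamma(\tau_c)| \approx 2\sqrt{1-(m/k)^2}$; that expansion is only valid when $|t-\tau_c|$ is small. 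Stationary points with $\tau_c$ far from $t$ exist (the chord $\gamma(t)-\gamma(\tau)$ sweeps all directions as $\tau$ winds around a convex curve), and at those points $|\gamma(t)-\gamma(\tau_c)|$ is order one, so the term $|\kappa||\gamma(t)-\gamma(\tau_c)|$ can cross $\sqrt{1-(m/k)^2}$ and the determinant can vanish. This degeneration is exactly the ``$-$ branch'' you worry about — except there is no genuine $\pm$ choice in the determinant formula; the sign ambiguity in the off-diagonal entries $\pm\sqrt{1-(m/k)^2}$ cancels after squaring, and the formula has a single fixed sign. What distinguishes the two regimes is whether $\tau_c$ is near or far from $t$, not a branch choice.

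Your proposed remedy — uniform stationary phase with Airy asymptotics on the caustic — is not substantiated, and is likely harder than the actual proof. Proving a uniform $L^2$ bound in $t$ via Airy analysis requires knowing the caustic is a nondegenerate fold uniformly in $m/k$, which is a nontrivial geometric assertion for a general convex curve and in any case gives $L^\infty_t$ bounds of order $(1-(m/k)^2)^{-1/4}k^{1/6}$ near the caustic, not directly the clean $(1-(m/k)^2)^{-1/2}$ you assume. The paper avoids all of this for the off-diagonal piece by a different device: it partitions into well-separated boxes $\psi_j(t)I_2\psi_k(\tau)$, notes that convexity with non-vanishing curvature makes the chord from $\gamma(\tau)$ to $\gamma(t)$ non-tangent at $\gamma(t)$ when $t$ and $\tau$ are separated, forms the operator product $(\psi_jI_2\psi_k)^*\psi_jI_2\psi_k$, and applies stationary phase in $(t,\omega)$ with the non-tangency guaranteeing a uniformly non-degenerate Hessian — reducing the far-diagonal kernel to one ``of the same form as $I_1$'' and thereby bounding $\|\widehat{I}_{2,m}\|_{L^2(\Gamma)}$ rather than $|\widehat{I}_{2,m}(t)|$ pointwise. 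Without this squaring/non-tangency argument (or a rigorous substitute) your plan does not close.
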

\begin{proof}
We decompose $L_1$ into two pieces. Let $\chi \in C_{c}^\infty(\mathbb{R})$ with $\supp(1-\chi)\cap [-1,1]=\emptyset$ and set $\chi_\e(x):=\chi(\e^{-1}x)$. Then
\beq\label{e:splitConvex}
(k^{-1}D_t)^\ell L_1(t,\tau)= (k^{-1}D_t)^\ell L_1(t,\tau)\chi_\e(t-\tau) +(k^{-1}D_t)^\ell L_1(t,\tau)\big(1-\chi_\e(t-\tau)\big)=:I_1(t,\tau)+I_2(t,\tau).
\eeq
Let $\widehat{I}_{j,m}(t)$ be the Fourier transform in $\tau$ of $I_j(t,\tau)$ (compare to \eqref{e:widehatL}),
so that, by the definition of $\FL$ \eqref{e:defFs}, it is sufficient to prove that
\beq\label{e:BH1}
k^{-1}\sum_{m}\|\widehat{I}_{1,m}\|_{\LtG}\langle m/k\rangle^s+ k^{-1}\sum_{m}\|\widehat{I}_{2,m}\|_{\LtG}\langle m/k\rangle^s\leq C.
\eeq
Now, by \eqref{e:fourierSplit},
$$
 \widehat{I}_{1,m}(t)=k\int\int_{\mathbb{S}^1} e^{ik\big(\langle \gamma(\tau)-\gamma(t),\omega\rangle-\tau m/k\big) }f_\ell(\omega,t,\tau)\chi_\e(t-\tau)dS(\omega)d\tau.
$$
 For general $m$, we may perform stationary phase in $\omega$ alone to find
 \begin{align*}
  \widehat{I}_{1,m}(t)=k\int\sum_{\pm}e^{\pm ik|\gamma(t)-\gamma(\tau)|-i\tau m} \langle k|\gamma(t)-\gamma(\tau)|\rangle^{-1/2}f_{\pm,\ell}(t,\tau)\chi_\e(t-\tau)d\tau,
  \end{align*}
  where 
  $$
  | f_{\pm,\ell}(t,\tau)|\leq C.
  $$
  Integrating in $\tau$ and using that $|\dot\gamma |>c>0$ and $\gamma:\mathbb{R}/2\pi \mathbb{Z}\to \mathbb{R}^2$ is a bijection, we then obtain for any $m$ that
\begin{equation}
\label{e:i0}
|
\widehat{I}_{1,m}(t)|\leq  C_{\ell} k^{1/2}.
\end{equation}
Recalling the goal \eqref{e:BH1}, we see
it is therefore enough to obtain estimates when $|m-k|\geq M' k^{1/2}$ and $|m+k|\geq M' k^{1/2}$, i.e., when
$Mk^{-1/2}<|1-(m/k)^2|$. We consider the three cases
\beq\label{e:threeCases}
(m/k)^2 -1 \leq -\delta, \quad -\delta < (m/k)^2- 1 < -Mk^{-1/2}, \quad \tand \quad (m/k)^2 -1 > Mk^{-1/2}
\eeq
separately. 
Let 
$$
\Phi:=\langle \gamma(\tau)-\gamma(t),\omega\rangle-\tau m/k
$$
be the phase function.  Then,
\beq\label{e:statPoint}
\partial_\tau \Phi=\langle \gamma'(\tau),\omega\rangle -m/k,\qquad \partial_\omega \Phi=\langle \gamma(\tau)-\gamma(t),\omega^\perp\rangle.
\eeq
Since $\omega$ is a variable on the unit circle and $|\gamma'(t)|=1$ for all $t$,
at a critical point, $(\tau_c,\omega_c)$, where $\partial_{\tau}\Phi=\partial_\omega\Phi=0$,
\beq\label{e:Sat1}
\gamma(t)-\gamma(\tau_c)=\pm |\gamma(t)-\gamma(\tau_c)|\omega_c,\qquad \gamma'(\tau_c)=(m/k)\omega_c\pm \sqrt{1-(m/k)^2}\,\omega_c^\perp,
\eeq
and 
\beq\label{e:Hessian}
\partial^2_{\tau\omega}\Phi=\begin{pmatrix}\langle \gamma''(\tau_c),\omega_c\rangle& \langle \gamma'(\tau_c),\omega_c^\perp\rangle\\
\langle \gamma'(\tau_c),\omega_c^\perp\rangle&-\langle \gamma(\tau_c)-\gamma(t),\omega_c\rangle \end{pmatrix}
=\begin{pmatrix}\langle \gamma''(\tau_c),\omega_c\rangle& \pm \sqrt{1-(m/k)^2}\\
\pm \sqrt{1-(m/k)^2}&-\langle \gamma(\tau_c)-\gamma(t),\omega_c\rangle \end{pmatrix}.
\eeq
When $1-(m/k)^2\geq \delta$, the Hessian is non-degenerate, and there exists $(\tau_c,\omega_c)$ such that $\Phi(\tau_c,\omega_c)=0$; we then apply the principle of stationary phase in $(\tau,\omega)$. In this case, for $\e>0$ chosen small enough depending on $\delta>0$, the only critical point occurs when $\tau_c=t$ and so
$$
\partial^2_{\tau\omega}\Phi
=\begin{pmatrix}\langle \gamma''(\tau_c),\omega_c\rangle& \mp \sqrt{1-(m/k)^2}\\
\mp \sqrt{1-(m/k)^2}&0 \end{pmatrix}.
$$
Thus $|\det \partial^2_{\tau\omega}\Phi|>c>0$ and the principle of stationary phase (see, e.g., \cite[Theorem 3.16]{Zw:12}) implies that 
\begin{equation}
\label{e:i1}
| \widehat{I}_{1,m}(t)|\leq  C_{\ell \delta}
\qquad\tfa 1-(m/k)^2\geq \delta.
\end{equation}

We next consider $1-(m/k)^2<-Mk^{-1/2}$. In this case, we integrate by parts with $\frac{k^{-1}\langle D\Phi, D_{\tau,\omega}\rangle}{|D\Phi|^2}$, using that 
$$
|\partial_\tau \Phi|+|\partial_\omega \Phi| \geq |m/k|-1 + 1-|\langle\gamma'(\tau),\omega\rangle|+c|\tau-t|^2,\qquad 
$$
to obtain 
 \begin{align}
|\widehat{I}_{1,m}(t)|&\leq Ck^{1-N'}\int \Big(|m/k|-1 + 1-|\langle\gamma'(\tau),\omega\rangle|+c|\tau-t|^2\Big)^{-2N'}dS(\omega)d\tau\\
&\leq Ck^{1-N'}\int_0^{2\pi}\int_{0}^{\pi} \Big(|m/k|-1 + 1-\sin\theta+c|t-\tau|^2\Big)^{-2N'}d\theta d\tau.
\end{align}
Now 
\begin{align*}
\int_{-\pi/2-\e}^{\pi/2+\e} \big( A + 1 -\sin \theta\big)^{-2N'} d \theta
\leq \int_{-\e}^\e \big( A + C \phi^2\big)^{-2N'} d \phi 
&= A^{1/2-2N'} \int_{-A^{1/2}\e}^{A^{1/2}\e} \big(1 + C s^2\big)^{-2N'} d \phi \\
&\leq C A^{1/2-2N'}
\end{align*}
and 
\beqs
\int_{|\theta-\pi/2|>\e} \big( A + 1 -\sin \theta\big)^{-2N'} d \theta \leq C A^{-2N'},
\eeqs
so that 
\begin{align}
|\widehat{I}_{1,m}(t)|&\leq Ck^{1-N'} \int_0^{2\pi} \Big(|m/k|-1+c|t-\tau|^2\Big)^{1/2-2N'}d\tau\leq Ck^{1+\ell-N'}  \big(|m/k|-1\big)^{1-2N'}.
\label{e:i2}
 \end{align}

Finally, we consider $M/k^{1/2}<1-(m/k)^2<\delta.$ For this, we again perform stationary phase in $(\omega,\tau)$. 
Recall that $\gamma''(\tau_c) = - n(\tau_c) \kappa(\tau_c)$, where $n$ is the outward-pointing unit normal vector to $\Omega^-$ and $\kappa$ is the (signed) curvature. Since $n$ is perpendicular to $\gamma'$, by both equations in \eqref{e:Sat1}, 
$$
-\langle \gamma''(\tau_c),\omega_c\rangle\langle \gamma(\tau_c)-\gamma(t),\omega_c\rangle = |\kappa(\tau_c)|\sqrt{1-(m/k)^2}|\gamma(t)-\gamma(\tau_c)|.
$$
Therefore, by \eqref{e:Hessian},
\beq\label{e:Hessian2}
\det \partial^2_{\tau\omega}\Phi= \sqrt{1-(m/k)^2}\Big(|\kappa(\tau_c)||\gamma(t)-\gamma(\tau_c)|-\sqrt{1-(m/k)^2}\Big).
\eeq
We now seek to show that $|\det \partial^2_{\tau\omega}\Phi|>0$ when $|\kappa(\tau)|\geq c$ for all $\tau$; i.e., when $\Gamma$ is convex with non-vanishing curvature.
By the first equation in \eqref{e:Sat1}, with coordinates chosen so that $\omega_c=(1,0)$, for $r=\pm 1$,
\begin{align*}
&\pm \big(|\gamma(t)-\gamma(\tau_c)|,0\big)\\
&=\gamma(t)-\gamma(\tau_c)\\
&=\gamma'(\tau_c)(t-\tau_c)+\frac{1}{2}\gamma''(\tau_c)(t-\tau_c^2)+O((t-\tau_c)^3)\\
&=\big(m/k, r\sqrt{1-(m/k)^2}\big) (t-\tau_c)\pm \frac{1}{2}\kappa(\tau_c)\big(-r\sqrt{1-(m/k)^2},m/k\big)(t-\tau_c)^2+O\big((t-\tau_c)^3\big).
\end{align*}
The two components of this last equation imply that
\begin{gather*}
(t-\tau_c)\Big(r\sqrt{1-(m/k)^2}\pm \frac{1}{2}\kappa(\tau_c)(m/k)(t-\tau_c)+O\big((t-\tau_c)^2\big)\Big)=0,
\quad\tand
\\ |\gamma(t)-\gamma(\tau_c)|=\Big|m/k \pm \frac{1}{2}\kappa(\tau_c)(-r\sqrt{1-(m/k)^2})(t-\tau_c)+O((t-\tau_c)^2\Big|\big|t-\tau_c\big|
\end{gather*}
Therefore, either $t=\tau_c$, in which case $|\det \partial^2_{\tau\omega}\Phi|=-\sqrt{1-(m/k)^2}$ by \eqref{e:Hessian}, or 
$$
\frac{1}{2}\kappa(\tau_c)(t-\tau_c)=\mp r(k/m) \sqrt{1-(m/k)^2} +O((t-\tau_c)^2).
$$
In this latter case, since $\kappa(\tau_c)>0$, 
$$
|t-\tau_c|=O(\sqrt{1-(m/k)^2})
$$
and
$$
\frac{|\gamma(t)-\gamma(\tau_c)|}{|t-\tau_c|}=|m/k| +O\big(1-(m/k)^2\big).
$$
Thus, 
\begin{align*}
|\kappa(\tau_c)||\gamma(t)-\gamma(\tau_c)|&=|\kappa(\tau_c)||t-\tau_c| (|m/k|+O\big(1-(m/k)^2))\big)\\
&=\big|( 2k/m \sqrt{1-(m/k)^2} +O\big((1-(m/k)^2)\big)\big|\big(|m/k|+O\big((1-(m/k)^2)\big)\big)\\
&=2\sqrt{1-(m/k)^2}+O\big( 1-(m/k)^2\big),
\end{align*}
Thus, by \eqref{e:Hessian2}, at either critical point
$$
|\det \partial^2_{\tau\omega}\Phi| \geq c\big(1-(m/k)^2\big).
$$

Performing stationary phase, (using~\cite[Theorem 7.7.5]{Ho:83} with $k=1$) we then obtain 
\begin{equation}
\label{e:i3}
| \widehat{I}_{1,m}(t)|\leq C_{\ell} 
(1-(m/k)^2)^{-1/2}\quad\tfor M/k^{1/2}<1-(m/k)^2<\delta.
\end{equation}
We now combine~\eqref{e:i0},~\eqref{e:i1},~\eqref{e:i2}, and~\eqref{e:i3}, to see that the bound on $\widehat{I}_{1,m}$ in \eqref{e:BH1} holds (i.e., the contribution to $\FL$ from $I_1$ is uniformly bounded). Indeed, splitting the sum into the four regions (three as in \eqref{e:threeCases} and the fourth equal to $Mk^{-1/2}<|1-(m/k)^2|$) and inputting 
\eqref{e:i0},~\eqref{e:i1},~\eqref{e:i2}, and~\eqref{e:i3}, we obtain that 
\begin{align*}
\sum_{m}\|\widehat{I}_{1,m}\|_{\LtG}\langle m/k\rangle^s
&\leq 
C\sum_{Mk^{-1/2}<|1-(m/k)^2|} \langle m/k\rangle^s k^{1/2}
+C\sum_{(m/k)^2-1 \leq -\delta}\langle m/k\rangle^s \\
&\qquad+C\sum_{(m/k)^2-1> Mk^{-1/2}} \langle m/k\rangle^s k^{1-N'} \big( |m/k|-1\big)^{1-2N'}\\
&\qquad +C \sum_{Mk^{-1/2} < 1-(m/k)^2 < \delta }\langle m/k\rangle^s \big( 1- (m/k)^2\big)^{-1/2}
\\
&\quad\leq 
Ck+  C \int_{Mk^{-1/2} < 1-(m/k)^2 < \delta }\big( 1- (m/k)^2\big)^{-1/2}dm\\
&\leq Ck + C k\int_{Mk^{-1/2} < 1-x^2 < \delta } \big(1- x^2\big)^{-1/2}dx,
\end{align*}
so that the bound on $\widehat{I}_{1,m}$ in \eqref{e:BH1} holds.

Next, we consider $I_2$. For this, we use a partition of unity, $\{\psi_j\}_{j=1}^J$, with $\sup_j\diam(\supp\psi_j)<\frac{\e}{2}$. To write
$$
I_2(t,\tau)=\sum_{j,k}\psi_j(t)I_2(t,\tau)\psi_k(\tau).
$$ 
Since $\chi_\e \equiv 1$ on $[-\e,\e]$, we can assume that $\supp\psi_j\cap \supp \psi_k=\emptyset$. 

Since $\Gamma$ is convex with non-vanishing curvature, 
\beq\label{e:diagramsareforsissies1}
\frac{\gamma(t)-\gamma(\tau)}{|\gamma(t)-\gamma(\tau)|}\neq \pm \gamma'(t) \quad\tfor t\in \supp \psi_j, \tau\in \supp \psi_k;
\eeq
i.e., the ray from $\gamma(\tau)$ to $\gamma(t)$ is not tangent to $\Gamma$ at $\gamma(t)$.  We now write
\begin{align*}
&\int \psi_j(t) I_2(t,\tau)\psi_k(\tau)\overline{\psi_j(t) I_2(t,s)\psi_k(s)}dt\\
&= k^{2+2\ell}\int\int_{\mathbb{S}^1}\int_{\mathbb{S}^1} e^{ik(\langle \gamma(\tau)-\gamma(t),\omega\rangle-\langle \gamma(s)-\gamma(t),\zeta\rangle) }\\
&\hspace{1cm}\psi_j^2(t)\psi_k(\tau)\psi_k(s)(1-\chi_\e(t-s))(1-\chi_\e(t-\tau))f_\ell(\omega,t,\tau) \overline{f_\ell(\zeta,t,s) }dS(\omega)dS(\zeta)dt.
\end{align*}
Denote the phase function by
$$
\Psi:=\big\langle \gamma(\tau)-\gamma(t),\omega\big\rangle-\big\langle \gamma(s)-\gamma(t),\zeta\big\rangle.
$$
First, by integrating by parts in $\zeta$, we may assume that 
$$
\big|\big \langle \gamma(t)-\gamma(s),\zeta^\perp\big\rangle\big|\ll1,
$$
at the cost of an $O(k^{-\infty})$ error term. 
Next, we perform stationary phase in $(t,\omega)$; 
$$
\partial_t \Psi= \langle \gamma'(t),\zeta-\omega\rangle,\qquad \partial_\omega \Psi = \langle \gamma(t)-\gamma(\tau),\omega^\perp\rangle,
$$
and 
$$
\partial^2_{t\omega}\Psi=\begin{pmatrix} \langle \gamma''(t),\zeta-\omega\rangle &\langle \gamma'(t),\omega^\perp\rangle\\\langle \gamma'(t),\omega^\perp\rangle& \langle \gamma(t)-\gamma(\tau),\omega\rangle\end{pmatrix}.
$$
By our assumption on $\supp \psi_j$ and $\supp \psi_k$, we have $\omega_c=\zeta$  and $\gamma(t_c)-\gamma(\tau)=\pm |\gamma(t_c)-\gamma(\tau)|\omega$.
Thus, by \eqref{e:diagramsareforsissies1}, $|\langle \gamma'(t_c),\zeta^\perp\rangle|>c>0$.  In particular, $|\det \partial^2\Psi|>c>0$ and we can perform stationary phase to obtain
\begin{align*}
\int \psi_j(t)I_2(t,\tau)\psi_k(\tau)\overline{\psi_j(t)I_2(t,s)\psi_k(s)}dt
&= k^{1+2\ell}\int_{\mathbb{S}^1} e^{ik(\langle \gamma(\tau)-\gamma(s),\zeta\rangle) }\widetilde{f}_\ell(\zeta,s,\tau) \widetilde{\chi}_\e(s-\tau)dS(\zeta),
\end{align*}
for some $\widetilde{\chi}\in C_{c}^\infty(\mathbb{R})$ with $\widetilde{\chi}\equiv 1$ in a small neighborhood of 0. In particular, this has exactly the same form as $I_1$ and hence
$$
\| \widehat{I}_{2,m}\|_{L^2}=\sqrt{\big|\sum_{j,k}\langle  \psi_j I_2\psi_k e^{-im\tau}, \psi_j I_2\psi_ke^{-im\tau}\rangle\big|}\leq \sum_{j,k}\sqrt{\|( \psi_jI_2\psi_k)^* \psi_jI_2\psi_ke^{-im\tau}\|_{L^2}} .
$$
In particular, by~\eqref{e:i0},~\eqref{e:i1},~\eqref{e:i2}, and~\eqref{e:i3},
$$
k^{-1}\sum_m\| \widehat{I}_{2,m}\|_{L^2}\leq Ck^\ell.
$$
\end{proof}


Although, at the moment, we are only able to prove Lemma~\ref{l:genuineSplit} in the case of $\Gamma$ convex with non-vanishing curvature, and in other cases lose a $\sqrt{\log k}$ as in Lemma~\ref{l:generalEstimates}; we conjecture that this loss is technical.
\begin{conjecture}
\label{conj:fl}
Let $\Gamma$ be smooth and parametrized by $\gamma$ where $0<|\gamma'|<c_{\max}$. Then for all $k_0>0$ and $s\in \mathbb{R}$ there is $C>0$ such that for all $N>0$, $k>k_0$, $0<\e <1$, 
$$
\FL^{s,\e}(N,L)\leq C.
$$
\end{conjecture}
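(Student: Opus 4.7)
The plan is to retain the structure of the proof of Lemma~\ref{l:genuineSplit} but to sharpen the pointwise bound on $\widehat{I}_{1,m}(t)$ in the glancing regime near points where the curvature vanishes, so that the $\sqrt{\log k}$ loss from the Hilbert--Schmidt estimate in Lemma~\ref{l:generalEstimates} is avoided. More precisely, I would again split $L_1 = I_1 + I_2$ as in \eqref{e:splitConvex}, reduce the control of $\FL^{s,\e}(N,L)$ to a pointwise control of $\widehat{I}_{1,m}(t)$, and partition the range of $m$ into the same three regions as in \eqref{e:threeCases} (plus the tiny window $|1-(m/k)^2| \leq Mk^{-1/2}$). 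The bounds \eqref{e:i0}, \eqref{e:i1}, and \eqref{e:i2} in the elliptic ($(m/k)^2-1>Mk^{-1/2}$) and deep-hyperbolic ($(m/k)^2-1\leq -\delta$) ranges only use non-degenerate stationary phase or non-stationary phase after integration by parts, and go through verbatim for any smooth $\gamma$ with $0<|\dot\gamma|\leq c_{\max}$.

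The work is entirely in the near-glancing range $Mk^{-1/2} < 1-(m/k)^2 < \delta$, where convexity with non-vanishing curvature in Lemma~\ref{l:genuineSplit} was used to deduce $|\det \partial^2_{\tau\omega}\Phi|\geq c(1-(m/k)^2)$ from the identity
\[
\det \partial^2_{\tau\omega}\Phi = \sqrt{1-(m/k)^2}\Big(|\kappa(\tau_c)||\gamma(t)-\gamma(\tau_c)|-\sqrt{1-(m/k)^2}\Big).
\]
I would introduce a $k$-independent partition of unity $\{\psi_j\}$ on $\Gamma$ subordinate to two types of arcs: type (I), where $|\kappa|\geq c>0$ (possibly with varying sign), and type (II), small neighbourhoods of the finite set of inflection points of $\Gamma$, of some fixed size chosen independent of $k$. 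On type (I) arcs the bound $|\det \partial^2_{\tau\omega}\Phi|\geq c(1-(m/k)^2)$ is preserved (the argument from Lemma~\ref{l:genuineSplit} only uses $|\kappa|$, not its sign), so the Airy-free stationary phase bound $|\widehat{I}_{1,m}(t)|\leq C(1-(m/k)^2)^{-1/2}$ holds and summing as in \eqref{e:i3} gives a uniform constant.

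On type (II) arcs the critical points of $\Phi$ can coalesce with the vanishing of $\kappa$, producing a genuinely degenerate stationary phase of cusp type. Here I would replace the application of the principle of stationary phase by a normal-form reduction of $\Phi$ near each such degenerate critical set to an Airy/Pearcey-type model phase (using the fact that at an inflection point of $\Gamma$ the Taylor expansion of $\gamma$ contains a cubic term transverse to $\gamma'$), and then invoke the van der Corput/Airy bounds for degenerate oscillatory integrals to obtain $|\widehat{I}_{1,m}(t)| \leq C(1-(m/k)^2)^{-2/3}$. Since $\int_0^1 (1-x^2)^{-2/3} \,dx$ is still finite, the resulting sum $k^{-1}\sum_m |\widehat{I}_{1,m}(t)|\langle m/k\rangle^s$ is uniformly bounded. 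The $I_2$ term is handled exactly as in the proof of Lemma~\ref{l:genuineSplit}, via a $T^*T$ reduction to an $I_1$-type integral (the non-tangency used there, \eqref{e:diagramsareforsissies1}, is no longer automatic on a non-convex curve, but can be recovered on each patch of the partition of unity by shrinking the supports of the $\psi_j$ so that the chord $\gamma(t)-\gamma(\tau)$ stays away from being tangent to $\Gamma$ at $\gamma(t)$; the residual diagonal strip where this fails is absorbed into $I_1$).

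The main obstacle is the degenerate stationary phase analysis on type (II) arcs. A generic inflection point gives a standard Airy-type fold with a clean $(1-(m/k)^2)^{-2/3}$ bound, but higher-order contact between $\Gamma$ and its tangent line (or simultaneous degeneracy in both the $\tau$ and $\omega$ directions) requires a more involved normal form and van der Corput-type estimate of order higher than three. One needs to verify that, uniformly in the two parameters $(t, m/k)$ and in the position of $\tau_c$ relative to the inflection point, the total contribution of the cusp region sums to $O(1)$. A secondary subtlety is that in the narrow window $|1-(m/k)^2|\leq Mk^{-1/2}$ the degenerate stationary phase is competing with the pre-asymptotic regime, and one must either use the trivial bound $|\widehat{I}_{1,m}(t)|\leq C k^{1/2}$ from \eqref{e:i0} (which contributes only $O(1)$ terms because the window has width $O(k^{1/2})$), or combine the Airy asymptotics with an explicit Airy function bound. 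Once these estimates are uniform in the location within a type (II) arc, summing over the finite partition of unity closes the argument.
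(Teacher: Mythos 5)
This statement is labeled as a \emph{Conjecture} in the paper, and the preceding remark makes explicit that the authors can only prove the $\FL$ bound (Lemma~\ref{l:genuineSplit}) when $\Gamma$ is convex with non-vanishing curvature, and otherwise lose $\sqrt{\log k}$ (Lemma~\ref{l:generalEstimates}); they only \emph{conjecture} that the loss is removable. So there is no paper proof to compare your proposal against: it has to stand entirely on its own, which also means the bar for scrutiny is high, since the authors evidently tried and did not succeed.

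There are two genuine gaps, both of which you flag as ``obstacles'' but neither of which you close, and closing them is exactly where the whole difficulty lies. First, for $I_1$ near curvature-vanishing points you assert an Airy/Pearcey-type bound $|\widehat{I}_{1,m}(t)|\leq C(1-(m/k)^2)^{-2/3}$, but a general smooth $\Gamma$ can have $\kappa$ vanishing to arbitrary finite --- or infinite --- order (e.g.\ a flat arc), so there is no single normal form and no van der Corput estimate of bounded order that applies uniformly; the exponent in such estimates degrades with the order of contact, and you give no argument controlling this uniformly, which is precisely the content of the conjecture. (Incidentally, the target exponent $-2/3$ is already weaker than the $-1/2$ that the convex proof achieves via the Hessian identity \eqref{e:Hessian2}; what is needed is a uniform integrable exponent, but that uniformity is what is missing.) Second, the $I_2$ treatment is not correct: on a non-convex curve the set of pairs $(t,\tau)$ for which the chord $\gamma(t)-\gamma(\tau)$ is tangent to $\Gamma$ at $\gamma(t)$ --- the failure set of \eqref{e:diagramsareforsissies1} --- contains pairs with $|t-\tau|$ bounded away from zero, so it is not a ``residual diagonal strip.'' Shrinking the supports of the partition $\{\psi_j\}$ cannot eliminate such pairs, and they cannot be absorbed into $I_1$ because $I_1$ is supported where $|t-\tau|<\e$. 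Consequently the non-degenerate stationary phase in $(t,\omega)$ used in the $T^*T$ reduction for $I_2$ fails at far-apart tangent chords, and your proposal contains no replacement argument for that new degeneracy, which is specific to the non-convex setting and absent from the convex case.
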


If this corollary holds, then the conclusion of Point (ii) of Theorem \ref{t:DNystrom} holds with $N\geq C_1 k$; i.e., we need not consider the case of convex obstacles separately in Theorem \ref{t:DNystrom}.


\subsection{Application of Theorem~\ref{thm:MAT2} to Dirichlet and Neumann BIEs}

\paragraph{Proof of Theorem~\ref{t:DNystrom}}

Parts (i)-(iii) of Theorem~\ref{t:DNystrom} follow from Theorem~\ref{thm:MAT2} combined with 
\bit
\item the fact $\mathcal{B}^{r,s}(\widetilde{L})=0= \|\widetilde{L}\|_{s,r}$ for all $s,r$ (since the sums over $j$ in \eqref{e:nystromSetup} are empty),
\item for Part (i), Lemmas \ref{l:standardSplitA} and \ref{l:genuineSplit} (applied to $\eta_D S_{k}$ $\DL_{k}$, and $\DL'_{k}$),
\item for Parts (ii) and (iii), Lemmas \ref{l:standardSplitA} and \ref{l:generalEstimates}.
\eit
For the result about plane wave data \eqref{e:MR7}, we use, in addition, Lemma~\ref{l:NystromCompactMicrolocalizedD} -- together with Lemmas~\ref{lem:HFv} and Lemma~\ref{lem:vlowerbound} -- and the bounds \eqref{e:theEnd1}.

\paragraph{Proof of Theorem~\ref{t:NeumannNystrom}}

Theorem~\ref{t:NeumannNystrom} follow from Theorem~\ref{thm:MAT2} after using Lemmas~\ref{l:standardSplitA} and \ref{l:generalEstimates} on $\DL_k$ and $\DL_k'$, Lemmas \ref{l:standardSplitB} and \ref{l:cutSplitEst} on $S_{ik}$, and the combination of Lemmas \ref{l:standardSplitC}, \ref{l:generalEstimates}, and \ref{l:cutSplitEst} on $k^{-1}(H_k-H_{ik})$.

As in the Dirichlet case, the result about plane wave data \eqref{e:MR7} uses, in addition, Lemma~\ref{l:NystromCompactMicrolocalizedD} together with Lemmas~\ref{lem:HFv} and Lemma~\ref{lem:vlowerbound}.


%

\section{From quasimodes to pollution for projection methods}
\label{s:pollution}

In this section, we first prove a version of Theorem~\ref{t:pollutionIntro} for an abstract projection method under an appropriate assumption on the approximation space (see Lemma~\ref{l:pollution}). We then specialize to the case of piecewise polynomial spaces, proving Theorem~\ref{t:pollutionIntro}.

\subsection{Abstract assumptions on the projection}
We start by stating some abstract assumptions on the approximation space and projection. Throughout, we let $\{V_h\}_{h>0}\subset L^2(\Gamma)$ be a family of finite dimensional approximation spaces with $P^G_{V_h}:L^2(\Gamma)\to V_h$ orthogonal projection. 
\begin{assumption}
\label{ass:Goodness}
 Let $t_{\max}\geq 0$. For any $k_0>0$, there is $C>0$ such that for all $0<h<1$, and $k_0<k$,
$$
\|I-P^G_{V_h}\|_{H^{t_{\max}}(\Gamma)\to \LtG}\leq Ch^{t_{\max}}. 
$$
\end{assumption}
It will also be convenient to assume that $V_h$ contains the constants.
\begin{assumption}
\label{ass:Goodness2}
The subspace $V_h$ contains the constants.
\end{assumption}

To state our next assumption, let $-\Delta_g$ denote the Laplace--Beltrami operator on $\Gamma$ and $\{\phi_{\lambda_j}\}_{j=1}^\infty$ be an orthonormal basis satisfying
$$
(-\Delta_g-\lambda_j^2)\phi_{\lambda_j}=0,
$$
and define the set of functions \emph{oscillating with frequency between $ka$ and $kb$} as
\beq\label{e:oscilSpace}
\mathcal{E}_k(a,b):=\operatorname{span} \big\{\phi_{\lambda_j}\,:\, ak<  \lambda_j\leq bk\big\}. 
\eeq
We say that a function $u=u(k)$ is \emph{$k$-oscillating} if there are $0<a<b$ and $k_0>0$ such that $u(k)$ is oscillating with frequency between $ka$ and $kb$ for all $k>k_0$. 

We denote by
$$
\Pi_j(a):\LtG\to \mathcal{E}_k(2^{-j-1}a,2^{-j}a),
$$
the orthogonal projection onto $\mathcal{E}_k(2^{-j-1}a,2^{-j}a)$.

We then make an assumption on $P^G_{V_h}$ that quantifies the angle between $(V_h)^\perp$ and oscillating functions.
\begin{assumption}
\label{ass:angleControl}
Let $\{V_h\}_{h>0}\subset L^2(\Gamma)$ and $P_{V_h}^G:L^2(\Gamma)\to V_h$ a corresponding family of projections.
Let $\Xi_0>1$, $k_0>0$ and $J=J(k):[k_0,\infty)\to (0,\infty)$.  Then, for $k\geq k_0$ there exists $\Ang(V_h,P^G_{V_h},k)<\infty$ such that
$$
\sum_{j=0}^{J-1}2^{jt_{\max}}\Pi_j(\Xi_0)(I-P^G_{V_h})\sum_{\ell=0}^{J-1}2^{\ell t_{\max}}\Pi_\ell(\Xi_0):H^{t_{\max}}_{k}(\Gamma)\to \mathcal{E}_k(2^{-J}\Xi_0,\Xi_0)
$$
is surjective and has a right inverse $\mc{R}(k,h):\mathcal{E}_k(2^{-J}\Xi_0,\Xi_0)\to \mathcal{E}(2^{-J}\Xi_0,\Xi_0)$ with 
$$
\|\mc{R}(k,h)\|_{\LtG\to H^{t_{\max}}_{k}(\Gamma)}\leq C\Ang(V_h,P^G_{V_h},k).
$$
\end{assumption}
\begin{remark}
The notation $\Ang$ comes from the fact that if $J=1$, then this number estimates the tangent of the angle between $(V_h)^{\perp}$ and the range of $1_{[2^{-2}\Xi_0^2,\Xi_0^2]}(-k^{-2}\Delta_g)$. 
\end{remark}

\subsection{Pollution under Assumptions~\ref{ass:Goodness} and~\ref{ass:angleControl}}
We now prove the analogue of Theorem~\ref{t:pollutionIntro} under Assumptions~\ref{ass:Goodness} and~\ref{ass:angleControl}. Before proceeding, we need a technical lemma.
For $a>0$, $J\geq 1$, define
$$
\mc{W}_J(a):=\sum_{j=0}^{J-1}2^{jt_{\max}}\Pi_j(a).
$$
\begin{lemma}
\label{l:sobolev}
Let $m\geq 0$, $a>0$. Then there is $C>0$ such that for all $k_0>0$, $k>k_0$, defining $J=J(k):=\log_2k -\log_2 k_0+1$  we have
$$
\|\mc{W}_Ju\|_{H^m}\leq C\max(k^{t_{\max}} k_0 ^{-t_{\max}}\langle k_0\rangle^{m},\langle k\rangle ^{m})\Big\|\sum_{j=0}^{J-1}\Pi_j(a) u\Big\|_{L^2}.
$$
\end{lemma}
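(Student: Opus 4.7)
The plan is to exploit the orthogonality of the spectral projections $\Pi_j(a)$ with respect to both the $L^2(\Gamma)$ and $H^m(\Gamma)$ inner products. Since $H^m(\Gamma)$ is described spectrally via $\|u\|_{H^m}^2 = \sum_\lambda (1+\lambda^2)^m|\langle u,\phi_\lambda\rangle|^2$, and the ranges of $\Pi_j(a)$ for distinct $j$ lie in disjoint eigenspace blocks of $-\Delta_g$, one immediately obtains
$$
\|\mc{W}_J u\|_{H^m}^2 = \sum_{j=0}^{J-1} 2^{2jt_{\max}}\,\|\Pi_j(a)u\|_{H^m}^2.
$$
Moreover, for $v\in\mathcal{E}_k(2^{-j-1}a,2^{-j}a)$, the spectral representation yields $\|v\|_{H^m}\leq C\langle 2^{-j}ak\rangle^m \|v\|_{L^2}$. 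It therefore suffices to bound
$$
M := \max_{0\leq j\leq J-1} 2^{jt_{\max}}\langle 2^{-j}ak\rangle^m
$$
by the right-hand side of the claimed inequality, since pulling $M$ outside the sum leaves $\sum_j \|\Pi_j(a)u\|_{L^2}^2 = \|\sum_j \Pi_j(a)u\|_{L^2}^2$ by $L^2$-orthogonality.

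The main step is therefore the maximisation of $f(j) := 2^{jt_{\max}}\langle 2^{-j}ak\rangle^m$ over $j\in\{0,\dots,J-1\}$. A direct computation gives
$$
\frac{d}{dj}\log f(j) = (\log 2)\Big(t_{\max} - m\cdot\frac{(2^{-j}ak)^2}{1+(2^{-j}ak)^2}\Big),
$$
in which the fraction decreases monotonically from near $1$ (at small $j$, where $2^{-j}ak$ is large) to near $0$ (at $j=J-1$, where $2^{-j}ak = ak_0$). Consequently this derivative is either positive throughout (when $t_{\max}\geq m$) or changes sign from negative to positive at most once (when $t_{\max}<m$), so $f$ is either monotone or first decreases then increases on the discrete interval, and in every case its maximum is attained at one of the endpoints. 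At $j=0$ one has $f(0)=\langle ak\rangle^m\leq C\langle k\rangle^m$, while at $j=J-1$, using $2^{J-1}=k/k_0$, one obtains $f(J-1)=(k/k_0)^{t_{\max}}\langle ak_0\rangle^m \leq C k^{t_{\max}}k_0^{-t_{\max}}\langle k_0\rangle^m$, which together yield the required bound on $M$.

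There is no genuine obstacle here: the statement is essentially a Bernstein-type inequality for functions spectrally localised between frequencies $\sim k_0$ and $\sim k$, combined with the $L^2$-orthogonality of the Littlewood--Paley-style pieces $\Pi_j(a)$. The only minor subtlety is maintaining uniform control as $2^{-j}ak$ crosses the threshold $1$, but the bracket $\langle\cdot\rangle$ interpolates automatically between the two regimes and produces the single unified bound stated in the lemma. Note that Assumption~\ref{ass:Goodness2} plays no role in this argument, as expected, since the lemma concerns only the spectral structure of $\Gamma$ and not the approximation space $V_h$.
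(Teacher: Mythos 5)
Your proof is correct and takes essentially the same route as the paper's: split into the $\Pi_j$-pieces by $L^2$-orthogonality of the spectral projections, use the spectral band to convert $H^m$ to $L^2$ on each piece, pull out the worst coefficient, and recombine by orthogonality. The paper's write-up simply asserts the bound $2^{2jt_{\max}}\langle 2^{-j}k\rangle^{2m}\leq\max\big(k^{2t_{\max}}k_0^{-2t_{\max}}\langle k_0\rangle^{2m},\langle k\rangle^{2m}\big)$ without comment, whereas you supply the (correct) justification that the coefficient $f(j)=2^{jt_{\max}}\langle 2^{-j}ak\rangle^m$ has $\tfrac{d}{dj}\log f$ monotone increasing in $j$, so $f$ attains its maximum at an endpoint of $\{0,\dots,J-1\}$, and the endpoint values match the two entries of the $\max$ up to constants depending only on $a$. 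This is a welcome filling-in of the paper's compressed step, not a different argument.
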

\begin{proof}
Notice that
\begin{align*}
\|\mc{W}_J(a)u\|_{H^m}^2&\leq C\| (-\Delta_g+1)^{m/2}\mc{W}_Ju\|_{L^2}^2=C\sum_{j=0}^{J-1}\|(-\Delta_g+1)^{m/2}2^{jt_{\max}}\Pi_j(a)u\|_{L^2}^2\\
&\leq C\sum_{j=0}^{J-1}(k^22^{-2j-2}a^2+1)^{m}2^{2jt_{\max})}\|\Pi_j(a) u\|_{L^2}^2\\
&\leq C\sum_{j=0}^{J-1}2^{2jt_{\max}}\langle 2^{-j}k\rangle^{2m}\|\Pi_j(a) u\|_{L^2}^2\\
&\leq C\max(k^{2t_{\max}}k_0^{-2t_{\max}}\langle k_0\rangle^{2m},\langle k\rangle^{2m})\sum_{j=0}^{J-1}\|\Pi_j(a) u\|_{L^2}^2\\
&= C\max(k^{2t_{\max}}k_0^{-2t_{\max}}\langle k_0\rangle^{2m},\langle k\rangle^{2m})\Big\|\sum_{j=0}^{J-1}\Pi_j(a) u\Big\|_{L^2}^2.
\end{align*}
\end{proof}

\begin{lemma}
\label{l:pollution}
Let $C>0$, $t_{\max}\geq 0$ $\Xi_0>1$, $0<\e<\frac{1}{10}$ $k_0>0$, $0<\e_0<\Xi_0$, $\chi \in C_c^\infty((-(1+2\e)^2, (1+2\e)^2))$ with $\chi \equiv 1$ on $[-1-\e,1+\e]$, $\operator=I+L$, and $\cJ\subset (0,\infty)$ such that Assumptions~\ref{ass:polyboundintro} and~\ref{ass:abstract1} hold. Let $V_h,P_h$ satisfies Assumptions~\ref{ass:Goodness} and \ref{ass:angleControl} with 
$$
\Ang(\mc{V}_h,P_h,k)\leq C (hk)^{-\gamma},\qquad \gamma\geq t_{\max},\qquad 1\leq J(k)\leq \log_2 k+C.
$$
Then there is $c>0$ such that the following holds. For any $k_n\to \infty$, $k_0<k_n\notin \cJ$, $u_n,f_n\in L^2(\Gamma)$, $\beta_n,\alpha_n\in (0,1]$ satisfying
\beq\label{e:betaNorm}
\beta_n\|(I+\pert(k_n))^{-1}\|_{\LtGt}\leq c,
\eeq
and
\begin{gather}(I+\pert(k_n))u_n=f_n,\qquad \|f_n\|_{\LtG}\leq \alpha_n, \qquad \|u_n\|_{\LtG}=1,\label{e:BIEQuasi}\\
\| (1-\chi(\Xi_0^{-2}k_n^{-2}\Delta_g))f_n\|_{\LtG}\leq \beta_n\qquad \|\chi(\e^{-2}_0k_n^{-2}\Delta_g)f_n\|_{\LtG}\leq\beta_n,\label{e:kOscillate}\\
2^{-J(k_n)t_{\max}}(hk_n)^{2t_{\max}-\gamma}\alpha_n \|1_{(0,2^{-2J+4}\Xi_0^2]}(-k^{-2}\Delta_g)(I+L^*)^{-1}u_n\|_{L^2}\leq c, \label{e:normalOK}
 \end{gather}
then for any $0<h<1$ and $n$ such that $(I+P^G_{V_h}\pert(k_n))$ has an inverse,
$$
\big\|(I+P^G_{V_h}\pert(k_n))^{-1}(I-P^G_{V_h})\big\|_{\LtG\to \LtG}\geq c \begin{cases} (hk_n)^{-t_{\max}}, &  \alpha_n\leq (hk_n)^{\gamma}\,\\
(hk_n)^{\gamma-t_{\max}}\alpha_n^{-1},&(hk_n)^{\gamma}\leq \alpha_n\leq c (hk_n)^{\gamma-t_{\max}}.
\end{cases}
$$
\end{lemma}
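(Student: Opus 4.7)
The plan is to execute the quasimode-to-pollution strategy sketched in Section~\ref{s:discussion}, with Assumption~\ref{ass:angleControl} supplying the right-inverse that in~\cite{Ga:25} is special to piecewise-polynomial spaces. For any $\widetilde f\in\LtG$, set $v:=(I+L)^{-1}(I-P^G_{V_h})\widetilde f$; the Galerkin solution of $(I+L)w=(I-P^G_{V_h})\widetilde f$ is then $v_N=0$ (since $P^G_{V_h}(I-P^G_{V_h})\widetilde f=0$ and $I+P^G_{V_h}L$ is invertible), so Lemma~\ref{lem:QOabs} gives
\[
\big\|(I+P^G_{V_h}L)^{-1}(I-P^G_{V_h})\big\|_{\LtGt}\;\geq\;\frac{\|v\|_{\LtG}}{\|(I-P^G_{V_h})v\|_{\LtG}}.
\]
To make the ratio large, put $P_{\rm osc}:=\sum_{j=0}^{J-1}\Pi_j(\Xi_0)$, $g:=\mc{W}_J(\Xi_0)P_{\rm osc}f_n$, $\widetilde g:=\mathcal{R}(k,h)g\in\mathcal{E}_k(2^{-J}\Xi_0,\Xi_0)$ (so $\mc{W}_J(\Xi_0)(I-P^G_{V_h})\mc{W}_J(\Xi_0)\widetilde g=g$ with $\|\widetilde g\|_{H^{t_{\max}}_k}\leq C(hk)^{-\gamma}\|g\|_{\LtG}$ by Assumption~\ref{ass:angleControl}) and $\widetilde f:=\mc{W}_J(\Xi_0)\widetilde g$. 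Matching dyadic bands forces $P_{\rm osc}(I-P^G_{V_h})\widetilde f=P_{\rm osc}f_n$, so
\[
(I-P^G_{V_h})\widetilde f-f_n\;=\;e_{\mathscr H}+e_{\mathscr L},
\]
where $e_{\mathscr H}$ is supported at frequencies $>\Xi_0 k$ and $e_{\mathscr L}$ lives in the finite-dimensional space $1_{(0,2^{-2J+4}\Xi_0^2]}(-k^{-2}\Delta_g)\LtG$.

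Using Assumption~\ref{ass:Goodness}, Lemma~\ref{l:sobolev} (which gives $\|\widetilde f\|_{H^{t_{\max}}}\leq Ck^{t_{\max}}\|\widetilde g\|_{\LtG}$), and \eqref{e:kOscillate}, one obtains $\|e_{\mathscr H}\|_{\LtG}\leq C((hk)^{t_{\max}-\gamma}\alpha_n+\beta_n)$. The crucial refinement is a double-projection (Aubin--Nitsche) identity $\langle(I-P^G_{V_h})\widetilde f,\phi\rangle=\langle(I-P^G_{V_h})\widetilde f,(I-P^G_{V_h})\phi\rangle$ valid for any $\phi$ (using $(I-P^G_{V_h})\widetilde f\perp V_h$): for $\phi$ low-frequency, $\|(I-P^G_{V_h})\phi\|\leq Ch^{t_{\max}}\|\phi\|$ contributes an extra factor of $h^{t_{\max}}$, yielding the sharp bound
\[
\|e_{\mathscr L}\|_{\LtG}\;\leq\;C h^{2t_{\max}-\gamma}k^{t_{\max}-\gamma}\alpha_n+C\beta_n,
\]
which is what allows the next step to close.

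The main obstacle is showing $\|v\|_{\LtG}\geq 1/2$. Write $v-u_n=(I+L)^{-1}e_{\mathscr H}+(I+L)^{-1}e_{\mathscr L}$. The high-frequency term is bounded in $L^2$ independently of $\rho$ via Corollary~\ref{cor:nicebound1}, so $\|(I+L)^{-1}e_{\mathscr H}\|\leq C\|e_{\mathscr H}\|\leq 1/4$ under the hypothesis $\alpha_n\leq c(hk)^{\gamma-t_{\max}}$ and \eqref{e:betaNorm}. The low-frequency term $(I+L)^{-1}e_{\mathscr L}$ can be amplified by $\rho$ and its $L^2$ norm is not small; one instead bounds only its inner product with $u_n$ via the adjoint identity
\[
\langle(I+L)^{-1}e_{\mathscr L},u_n\rangle_{\LtG}\;=\;\langle e_{\mathscr L},(I+L^*)^{-1}u_n\rangle_{\LtG}.
\]
Cauchy--Schwarz plus low-frequency support of $e_{\mathscr L}$ bound the right-hand side by $\|e_{\mathscr L}\|_{\LtG}\cdot\|1_{(0,2^{-2J+4}\Xi_0^2]}(-k^{-2}\Delta_g)(I+L^*)^{-1}u_n\|_{\LtG}$, and the product of the sharp $\|e_{\mathscr L}\|$ bound with the quantity constrained by \eqref{e:normalOK} has the powers of $h$ and $k$ cancel exactly, producing a product $\leq c$. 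Hence $|\langle v-u_n,u_n\rangle|\leq 1/2$, so $\|v\|_{\LtG}\geq|\langle v,u_n\rangle|\geq 1/2$; the same estimates together with $N'\leq\rho$ give $\|v\|_{\LtG}\leq C$.

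For the denominator, an elliptic-parametrix argument in the spirit of Lemma~\ref{lem:HFv} applied to $(I+L)u_n=f_n$---combined with the $k$-oscillation of $f_n$ from~\eqref{e:kOscillate}---gives $\|u_n\|_{H^{t_{\max}}_k}\leq C$, so Assumption~\ref{ass:Goodness} yields $\|(I-P^G_{V_h})u_n\|_{\LtG}\leq C(hk)^{t_{\max}}$. Splitting $v=\chi_{\mathscr H}v+\chi_{\mathscr L}v$ with a $k$-weighted cutoff and using Lemma~\ref{lem:HFinverse} gives $\|\chi_{\mathscr H}v\|\leq C\|(I-P^G_{V_h})\widetilde f\|\leq C(hk)^{t_{\max}-\gamma}\alpha_n$, while Assumption~\ref{ass:Goodness} applied to the low-frequency $\chi_{\mathscr L}v$ (and $\|v\|\leq C$) gives $\|(I-P^G_{V_h})\chi_{\mathscr L}v\|\leq C(hk)^{t_{\max}}$. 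Combining,
\[
\|(I-P^G_{V_h})v\|_{\LtG}\;\leq\;C\bigl((hk)^{t_{\max}}+(hk)^{t_{\max}-\gamma}\alpha_n\bigr).
\]
Dividing $\|v\|\geq 1/2$ by this upper bound delivers the two branches of the stated lower bound: $(hk)^{-t_{\max}}$ dominates when $\alpha_n\leq(hk)^\gamma$, and $(hk)^{\gamma-t_{\max}}\alpha_n^{-1}$ dominates when $(hk)^\gamma\leq\alpha_n\leq c(hk)^{\gamma-t_{\max}}$.
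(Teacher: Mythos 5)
Your proposal follows the same overall route as the paper's proof: use the right-inverse $\mathcal{R}(k,h)$ from Assumption~\ref{ass:angleControl} to build $\widetilde f$ with $(I-P^G_{V_h})\widetilde f \approx f_n$ modulo low- and high-frequency errors; define $v := (I+L)^{-1}(I-P^G_{V_h})\widetilde f$; show $\|v\|\geq c$ by transferring the problematic low-frequency error onto $(I+L^*)^{-1}u_n$ via self-adjointness of $P^G_{V_h}$ and $(I-P^G_{V_h})^2 = I-P^G_{V_h}$, so it is controlled by~\eqref{e:normalOK}; and bound $\|(I-P^G_{V_h})v\|$ by a high/low frequency split. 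Two points in the execution, however, need repair.

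First, you work with $\Pi_j(\Xi_0)$, so $\sum_{j=0}^{J-1}\Pi_j(\Xi_0)$ projects onto frequencies at most $\Xi_0 k$. But $f_n$ is only guaranteed to be $O(\beta_n)$ outside the range where $\chi(\Xi_0^{-2}k^{-2}\cdot)\equiv 1$, which extends up to frequency $\Xi_0(1+2\e)k > \Xi_0 k$. Consequently the band $(\Xi_0 k,\ \Xi_0(1+2\e)k]$ contributes an $O(\alpha_n)$ piece to your $e_{\mathscr H}$ that is not controlled by $\beta_n$ or by $(hk)^{t_{\max}-\gamma}\alpha_n$; since $\alpha_n\in(0,1]$ can be $O(1)$, the bound $\|(I+L)^{-1}e_{\mathscr H}\|\leq 1/4$ fails. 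The paper avoids this by taking dyadic pieces $\Pi_j(2\Xi_0)$, whose top band $(\Xi_0 k, 2\Xi_0 k]$ strictly contains the region $\supp \chi(\Xi_0^{-2}k^{-2}\cdot)\cap\{1-\chi\neq 0\}$; with your $\Pi_j(\Xi_0)$ you should instead widen the outermost band similarly.

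Second, you invoke $\|v\|\leq C$ to drop the $\|v\|$ factor from the low-frequency part of $\|(I-P^G_{V_h})v\|$; neither your sketch nor the hypotheses of the lemma give such an upper bound. The term $(I+L)^{-1}e_{\mathscr L}$ can be amplified by $\rho$, and assumption~\eqref{e:normalOK} constrains only the pairing with $1_{\rm low}(I+L^*)^{-1}u_n$, not $\rho\|e_{\mathscr L}\|$ itself. (Indeed, the justification you offer --- ``the same estimates together with $N'\leq\rho$'' --- introduces an undefined quantity.) The fix is simply to keep $\|v\|$ in the estimate, i.e. $\|(I-P^G_{V_h})v\|\leq C(hk)^{t_{\max}-\gamma}\alpha_n+C(hk)^{t_{\max}}\|v\|$, and observe that $x\mapsto x/(A+Bx)$ is increasing in $x$, so $\|v\|\geq c$ suffices to produce the stated two-branch lower bound. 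With these two corrections, your argument becomes essentially identical to the paper's.
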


\begin{proof}
To ease the notation, we omit the subscript $n$ in the proof.
By \eqref{e:kOscillate}, there is $j_0>0$ such that for all $k>k_0$,
\begin{equation}
\label{e:fHasOscillation}
f-\sum_{j=0}^{j_0}\Pi_{j}(2\Xi_0)f=O(\beta(\hsc))_{\LtG}.
\end{equation}
By Assumption~\ref{ass:angleControl} 
$$
g:= \mc{R}(k,h)\sum_{j=0}^{j_0}2^{jt_{\max}}\Pi_j(2\Xi_0)f
$$
satisfies
\beq
\label{e:preliminaryRightInverse}
\sum_{j=0}^{J-1}2^{jt_{\max}}\Pi_j(2\Xi_0)(I-P^G_{V_h}) \sum_{\ell=0}^{J-1}2^{\ell t_{\max}}\Pi_\ell(2\Xi_0)g = \sum_{j=0}^{j_0}2^{jt_{\max}}\Pi_j(2\Xi_0)f,
\eeq
and, using Lemma~\ref{l:sobolev},
\beq\label{e:tildefbound}
\|\tilde{f}\|_{H^{t_{\max}}(\Gamma)}\leq Ck^{t_{\max}}\Ang(\mc{V}_h,P_h,\hsc)\alpha,\quad\text{ where } \quad  \tilde{f}:=\mathcal{W}_J(2\Xi_0)g= \sum_{\ell=0}^{J-1}2^{\ell t_{\max}}\Pi_j(2\Xi_0)g.
\eeq
Applying $\Pi_\ell(2\Xi_0)$ to \eqref{e:preliminaryRightInverse}, we see that
\beqs
\Pi_\ell(2\Xi_0) (I- P^G_{V_h}))\tilde{f} = 
\begin{cases}
\Pi_\ell(2\Xi_0) f, & \ell=0,\ldots, j_0,\\
0, & \ell=j_0+1,\ldots,J-1,
\end{cases}
\eeqs
and combining this with 
\eqref{e:fHasOscillation} we find that 
\beq
\label{e:rightInverse}
\sum_{j=0}^{J-1}\Pi_j(2\Xi_0)(I-P^G_{V_h}) \tilde f = \sum_{j=0}^{j_0}\Pi_j(2\Xi_0)f=f+O(\beta(\hsc))_{\LtG}.
\eeq
In particular,  since $-\Delta_g\phi_0=0$ implies that $\phi_0$ is a constant, 
Assumption~\ref{ass:Goodness2} implies that $1_{\{0\}}(-k^{-2}\Delta_g) (I- P^G_{V_h})=0$ and then \eqref{e:rightInverse} implies that
\begin{equation}
\label{e:weSolvedIt}
\begin{gathered}
(I-P^G_{V_h})\tilde{f}=f +O(\beta )_{\LtG}+1_{(0,2^{-2J+4}\Xi_0^2]}(-k^{-2}\Delta_g)(I-P^G_{V_h})\tilde{f}+1_{(4\Xi_0^2,\infty)}(-k^{-2}\Delta_g)(I-P^G_{V_h})\tilde{f}.
\end{gathered}
\end{equation}
Define
\begin{equation}
\begin{aligned}
&v:=(I+\pert)^{-1}(I-P^G_{V_h})\tilde{f}\label{e:finalv}\\
&=u+O(\beta\|(I+\pert)^{-1}\|_{\LtGt})_{\LtG}\\
&\qquad+(I+\pert)^{-1}\Big(1_{(0,2^{-2J+4}\Xi_0^2]}(-k^{-2}\Delta_g)(I-P^G_{V_h})\tilde{f}+1_{(4\Xi_0^2,\infty)}(-k^{-2}\Delta_g)(I-P^G_{V_h})\tilde{f}\Big). 
\end{aligned}
\end{equation}
Then, $(I+\pert)v=(I-P^G_{V_h})\tilde{f}$, and hence
\begin{align*}
(I+P^G_{V_h}\pert)v&=(I+\pert)v-(I-P^G_{V_h})\pert v=(I-P^G_{V_h})\tilde{f}-(I-P^G_{V_h})\big[(I-P^G_{V_h})\tilde{f}-v\big]\\
&=(I-P^G_{V_h})v.
\end{align*}
Moreover, by Lemma \ref{lem:HFinverse}, the fact that $P^G_{V_h}$ is self-adjoint, Assumption \ref{ass:Goodness}, 
and \eqref{e:tildefbound},
\begin{align*}
\big\|v\big\|_{\LtG}^2&=1-O(\beta\|(I+L)^{-1}\|_{\LtGt})-O(\|(I-P^G_{V_h})\tilde f\|_{\LtG})\\
&\qquad -2\big\langle u, (I+L)^{-1}1_{(0,2^{-2J+4}\Xi_0^2]}(-k^{-2}\Delta_g)(I-P^G_{V_h})\tilde{f}\big\rangle \\
&\qquad +\|(I+L)^{-1}1_{(0,2^{-2J+4}\Xi_0^2]}(-k^{-2}\Delta)(I-P^G_{V_h})\tilde{f}\|_{\LtG}^2\\
&\geq 1-O(\beta\|(I+L)^{-1}\|_{\LtGt})-C(hk)^{t_{\max}-\gamma}\alpha\\
&\qquad -2\big\langle(I-P^G_{V_h})1_{(0,2^{-2J+4}\Xi_0^2]}(-k^{-2}\Delta_g) (I+L^*)^{-1}u, (I-P^G_{V_h})\tilde{f}\big\rangle \\
&\geq 1-O(\beta\|(I+L)^{-1}\|_{\LtGt})-C(hk)^{t_{\max}-\gamma}\alpha\\
&\qquad -\|(I-P^G_{V_h})1_{(0,2^{-2J+4}\Xi_0^2]}(-k^{-2}\Delta_g) (I+L^*)^{-1}u\|_{\LtG}\|(I-P^G_{V_h})\tilde{f}\|_{\LtG}\\
&\geq 1-O(\beta\|(I+L)^{-1}\|_{\LtGt})-C(hk)^{t_{\max}-\gamma}\alpha\\
&\qquad -C2^{-Jt_{\max}}(hk)^{2t_{\max}-\gamma} \alpha\|1_{(0,2^{-2J+4}\Xi_0^2]}(-k^{-2}\Delta_g) (I+L^*)^{-1}u\|_{\LtG}.
\end{align*}
Therefore, if $\alpha\ll (hk)^{\gamma-t_{\max}}$, then, by 
\eqref{e:betaNorm} and~\eqref{e:normalOK}, $\|v\|_{\LtG}\geq c>0$.
Now, by \eqref{e:finalv}, Lemma \ref{lem:HFinverse}, and \eqref{e:tildefbound},
\begin{align*}
\|(I-P^G_{V_h})v\|_{\LtG}&\leq \|(I-P^G_{V_h})(1-\chi(-\hsc^2\Delta_g))(I+\pert)^{-1}(I-P^G_{V_h})\tilde{f}\|_{\LtG}+ \|(I-P^G_{V_h})\chi(-\hsc^2\Delta_g)v\|_{\LtG}\\
&\leq C\|(I-P^G_{V_h})\tilde{f}\|_{\LtG}+C(hk)^{t_{\max}}\|v\|_{\LtG}\\
&\leq C(hk)^{t_{\max}} \Ang(V_h,P_h,k)\alpha+C(hk)^{t_{\max}}\|v\|_{\LtG}.
\end{align*}
Provided the inverse $(I+P^G_{V_h}\pert)^{-1}$ exists, we have $(I+P^G_{V_h}\pert)^{-1}(I-P^G_{V_h})v=v$. Thus, for $\alpha\ll (hk)^{\gamma-t_{\max}}$, we have $\alpha\geq \|(I+L)^{-1}\|^{-1}\geq \beta$, $(hk)^{t_{\max}} \Ang(V_h,P_h,k)\geq 1$, and 
\begin{align*}
\frac{\|(I+P^G_{V_h}\pert)^{-1}(I-P^G_{V_h})v\|_{\LtG}}{\|(I-P^G_{V_h})v\|_{\LtG}}
&\geq \frac{\|v\|_{\LtG}}{C (hk)^{t_{\max}} \Ang(V_h,P_h,k)\alpha +C(hk)^{t_{\max}}\|v\|_{\LtG}}\\
&\geq \frac{1}{C(hk)^{t_{\max}} \Ang(V_h,P_h,k)\alpha +C(hk)^{t_{\max}}}\\
&\geq \frac{1}{C(hk)^{t_{\max}-\gamma} \alpha +C(hk)^{t_{\max}}}\\
&\geq c \begin{cases} (hk)^{-t_{\max}}, & \alpha\leq (hk)^{\gamma} \,\\
(hk)^{\gamma-t_{\max}}\alpha^{-1}, &(h\hbar)^{\gamma}\leq \alpha\ll (hk)^{\gamma-t_{\max}}.
\end{cases}
\end{align*}
\end{proof}

\subsection{Galerkin projection onto piecewise polynomials satisfies Assumption~\ref{ass:angleControl}: proof of Theorem~\ref{t:pollutionIntro}}
First observe that Assumption~\ref{ass:ppG} implies Assumption~\ref{ass:Goodness}. Therefore, we only need to check Assumption~\ref{ass:angleControl}. 
\begin{lemma}
\label{l:lowerBound}
Let $p\geq 0$, $a>0$ $C>0$. Then there are $k_1\geq 0$, $c>0$ such that for any $k_0> k_1$, defining $J:=\log_2 k-\log_2k_0+1$, for all $hk<C$, 
$$
c(hk)^{p+1}\Big\|\sum_{j=0}^{J-1}\Pi_j(a) u\Big\|_{L^2}\leq \Big\|(I-P_{V_h}^G)\mc{W}_J(a)u\Big\|_{L^2}.
$$
Moreover, if $p=0$, then $k_1=0$. 
\end{lemma}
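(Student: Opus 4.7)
The plan is to reduce the lemma to the single-scale lower bound of \cite{Ga:25} on Galerkin approximation of $k$-oscillating functions. That result provides constants such that, for $v$ concentrated at frequency $\sim \mu$ with $h\mu$ bounded, $\|(I-P_{V_h}^G)v\|_{L^2}\geq c_0(h\mu)^{p+1}\|v\|_{L^2}$. Writing $u_j:=\Pi_j(a)u\in\mathcal{E}_k(2^{-j-1}a,2^{-j}a)$, each $u_j$ oscillates at effective scale $\mu_j:=2^{-j}ka$, and since $hk<C$ one has $h\mu_j\leq Ca$ for every $0\leq j<J$, so the single-scale bound applies uniformly to give $\|(I-P_{V_h}^G)u_j\|_{L^2}\geq c_0(h\mu_j)^{p+1}\|u_j\|_{L^2}$. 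The weight $2^{j(p+1)}$ built into $\mathcal{W}_J(a)$ is chosen precisely to cancel the $2^{-j(p+1)}$ in $(h\mu_j)^{p+1}$, producing the scale-independent lower bound
\[
\|(I-P_{V_h}^G)(2^{j(p+1)}u_j)\|_{L^2}\geq c_0(hka)^{p+1}\|u_j\|_{L^2}.
\]

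The remaining task is to aggregate these individual scale estimates into the claimed lower bound on $\|(I-P_{V_h}^G)\mathcal{W}_J(a)u\|_{L^2}$. My plan is to use duality: since $(I-P_{V_h}^G)$ is the orthogonal projection onto $V_h^\perp$,
\[
\|(I-P_{V_h}^G)\mathcal{W}_J u\|_{L^2}=\sup_{\phi\in V_h^\perp,\,\|\phi\|\leq 1}|\langle\mathcal{W}_J u,\phi\rangle|.
\]
For each $j$ the single-scale argument yields a dual witness $\phi_j\in V_h^\perp$ with $\|\phi_j\|\leq 1$ such that $\langle u_j,\phi_j\rangle\geq c_0(h\mu_j)^{p+1}\|u_j\|_{L^2}$; one may take $\phi_j=(I-P_{V_h}^G)u_j/\|(I-P_{V_h}^G)u_j\|$, so $\phi_j$ inherits the spectral concentration of $u_j$ near the frequency band $[\mu_j/2,\mu_j]$ up to an error of order $h\mu_j$. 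Taking the normalised combination $\phi=(\sum_j\alpha_j\phi_j)/\|\sum_j\alpha_j\phi_j\|$ with $\alpha_j\propto\|u_j\|_{L^2}$, the diagonal contribution evaluates to
\[
\sum_j\alpha_j\,2^{j(p+1)}\langle u_j,\phi_j\rangle\gtrsim (hka)^{p+1}\Big(\sum_j\|u_j\|_{L^2}^2\Big)^{1/2}=(hka)^{p+1}\Big\|\sum_j\Pi_j(a)u\Big\|_{L^2},
\]
which is the desired lower bound.

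The main obstacle is controlling the off-diagonal cross-terms $\langle u_\ell,\phi_j\rangle$ for $\ell\neq j$ and the norm $\|\sum_j\alpha_j\phi_j\|_{L^2}$: both require the dual witnesses $\{\phi_j\}$ to be quasi-orthogonal across dyadic frequency bands. This is where the threshold $k>k_1$ enters the statement. Since $u_j$ is supported in the spectral interval $[\mu_j/2,\mu_j]$ and $(I-P_{V_h}^G)$ has norm close to $1$ when acting on oscillations of scale $\mu_j$ (with the leakage outside this band quantitatively controlled in terms of $h\mu_j$), the dyadic bands are genuinely separated once $k_0$ (hence $\mu_0=ak_0$) is large enough to make the logarithmic width of each band exceed the spectral spread introduced by the mesh projection. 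Taking $k_1$ sufficiently large in terms of $a,p,C$ ensures this separation and produces a uniformly bounded Gram matrix for $\{\phi_j\}$, so that the diagonal dominates. When $p=0$, the orthogonal complement $V_h^\perp$ admits a local description (functions of zero mean over each mesh cell), and the dual witnesses can be chosen to inherit exact spectral localisation in each $\Pi_j(a)$-range, removing the need for the large-$k$ separation; this is why the statement allows $k_1=0$ in the piecewise-constant case.
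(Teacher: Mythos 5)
Your strategy is genuinely different from the paper's and has a real gap at its centre.

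The paper does not argue scale by scale. It works with the aggregate function $v=\mc{W}_J(a)u$ directly: the spectral identity \eqref{e:low} shows that the $2^{j(p+1)}$ weights make $\|v\|_{\dot H^{p+1}}\gtrsim k^{p+1}\|\sum_j\Pi_j(a)u\|_{L^2}$, and then the proof adapts the two broken-Sobolev estimates of \cite[Lemmas 3.1, 3.2]{Ga:25} (with the Sobolev-norm control supplied by Lemma~\ref{l:sobolev}) to bound the broken $(p+1)$-derivative of $v$ from below by $k^{2(p+1)}\|\sum_j\Pi_j u\|^2$ and from above by $\epsilon k^{2(p+1)}\|\sum_j\Pi_j u\|^2+\epsilon^{-1}h^{-2(p+1)}\|(I-P_{V_h}^G)v\|^2$. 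Choosing $\epsilon$ small and absorbing gives the result in two lines, with no need to consider cross-interactions between dyadic bands. This is the whole point of the specific weighting in $\mc{W}_J$: it equalises the $(p+1)$-th derivatives of all scales so that the multi-scale object behaves like a single $k$-oscillating function.

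Your route instead applies the single-scale lower bound to each $\Pi_j(a)u$ and then tries to recombine via duality with witnesses $\phi_j\propto(I-P_{V_h}^G)\Pi_j(a)u$. The missing step is precisely the one you flag: control of $\langle u_\ell,\phi_j\rangle$ for $\ell\neq j$, equivalently of $\langle(I-P_{V_h}^G)u_\ell,(I-P_{V_h}^G)u_j\rangle$. Cauchy--Schwarz only bounds each such term by $\|(I-P_{V_h}^G)u_\ell\|\,\|(I-P_{V_h}^G)u_j\|$, which, after the $2^{(j+\ell)(p+1)}$ weights, is exactly as large as the diagonal; and since the number of bands is $J\sim\log_2 k$, a uniform bound strictly less than $1$ would not suffice --- you need off-diagonal decay in $|j-\ell|$, i.e.\ genuine quasi-orthogonality of the residuals. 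That quasi-orthogonality is not automatic: $(I-P_{V_h}^G)u_j$ is \emph{not} spectrally localised near $\mu_j=2^{-j}ka$; for piecewise polynomials it has substantial content at mesh-scale frequencies $\sim h^{-1}$ independent of $j$, and also low-frequency content through $P_{V_h}^G u_j$. Moreover, the proposed mechanism --- making $k_0$ large --- does not obviously produce the needed separation, because the spectral spread of the residuals is governed by $h$, not $k$. Your claimed explanation for the $p=0$ case is also off: $V_h^\perp$ consisting of functions with zero cell averages does not yield spectral localisation (such functions concentrate at frequencies $\gtrsim h^{-1}$); the true reason $k_1=0$ when $p=0$ is the lower threshold in the adaptation of \cite[Lemma 3.1]{Ga:25} to the weighted function $v$, which only needs largeness of $k$ for $p\geq 1$.

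So: the scale-by-scale duality idea might be salvageable, but only after supplying a quantitative, $|j-\ell|$-decaying bound on $\langle(I-P_{V_h}^G)u_\ell,(I-P_{V_h}^G)u_j\rangle$ valid for the general unstructured meshes of Assumption~\ref{ass:ppG}. The paper's aggregate broken-Sobolev argument is designed precisely to avoid having to prove any such thing.
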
 
\begin{proof}
First, observe that 
\begin{equation}
\label{e:low}
\begin{aligned}
\Big\langle \Delta_{g}^{p+1}\sum_{j=0}^{J-1}2^{j(p+1)}\Pi_j(a) u,\sum_{j=0}^{J-1}2^{j(p+1)}\Pi_j(a) u\Big\rangle &= \sum_{j=0}^{J-1}\Big\langle (2^{2j}\Delta_{g})^{p+1}\Pi_j(a) u,\Pi_j(a) u\Big\rangle \\
&\geq 2^{-2(p+1)}a^{2(p+1)}k^{2(p+1)}\sum_{j=0}^{J-1}\|\Pi_j(a) u\|_{L^2}^2\\
&\geq   2^{-2(p+1)}a^{2(p+1)}k^{2(p+1)}\Big\|\sum_{j=0}^{J-1}\Pi_j(a) u\Big\|_{L^2}^2.
\end{aligned}
\end{equation}
Set 
\beq\label{e:newv}
v:=\sum_{j=0}^{J-1}2^{j(p+1)}\Pi_j(a)u.
\eeq
Then, using Lemma~\ref{l:sobolev} in the arguments in~\cite[Lemma 3.2]{Ga:25}, 
we obtain (for $k_0>0$ large enough when $p\geq 1$  or $k_0>0$ when $p=0$)
\begin{equation}
\label{e:brokenEst}
\sum_{T\in \mc{T}_h}\sum_{|\alpha|=p+1}\|\partial_x^\alpha (v\circ \gamma_T)\|_{L^2(\Omega_h,dx)}^2\leq C\e k^{2(p+1)}\Big\|\sum_{j=0}^{J-1}\Pi_j(a) u\Big\|_{L^2}^2+\e^{-1}Ch^{-2(p+1)}\|(I-P_{V_h}^G)v\|_{L^2}^2.
\end{equation}
Arguing as in~\cite[Lemma 3.1]{Ga:25}, we also obtain that for $k>k_1$
\begin{equation}
\label{e:kToDer}
ck^{2(p+1)}\Big\|\sum_{j=0}^{J-1}\Pi_j(a) u\Big\|_{L^2}^2\leq \sum_{T\in \mc{T}_h}\sum_{|\alpha|=p+1}\|\partial_x^\alpha (v\circ \gamma_T)\|_{L^2(\Omega_h,dx)}^2
\end{equation}
and $k_1=0$ if $p=0$. (Note that the arguments of \cite[Lemmas 3.1 and 3.2]{Ga:25} use only estimates on the Sobolev norms of $v$, and the analogue of \eqref{e:low}, and hence can be easily adapted to the case when $v$ is given by \eqref{e:newv}.)

Combining~\eqref{e:brokenEst},~\eqref{e:kToDer}, and taking $\e>0$ small enough proves the lemma.
\end{proof}

\begin{corollary}
\label{c:polysAreGreat}
Let $p\geq 0$, $a>0$, $C>0$. Then there is $k_1\geq 0$ such that given $k_0>k_1$, there is $C_1>0$ such that for all $k>k_0$, $hk\leq C$, setting $J:=\log_2k-\log_2k_0+1$, the operator
$\mc{W}_J(a)(I-P_{V_h}^G)\mc{W}_J(a):\mc{E}_k(2^{-J}a,a)\to \mc{E}_k(2^{-J}a,a)$ is invertible and satisfies
$$
\Big\|\big[\mc{W}_J(a)(I-P_{V_h}^G)\mc{W}_J(a)\big]^{-1}\Big\|_{H_k^{-p-1}\to H_k^{p+1}}\leq C_1(hk)^{2(p+1)}.
$$
Moreover, if $p=0$, then $k_1=0$.
\end{corollary}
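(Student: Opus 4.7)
\textbf{Plan for the proof of Corollary \ref{c:polysAreGreat}.}
The approach is to exploit the self-adjoint structure of the operator $T := \mc{W}_J(a)(I-P_{V_h}^G)\mc{W}_J(a)$ to convert the lower bound from Lemma~\ref{l:lowerBound} into a coercivity estimate, and then invoke norm equivalence on the finite-dimensional range. First I would observe that $T$ is self-adjoint on $L^2(\Gamma)$: the Galerkin projection $I-P_{V_h}^G$ is an $L^2$-orthogonal projection, and $\mc{W}_J(a) = \sum_{j=0}^{J-1}2^{j(p+1)}\Pi_j(a)$ is a real-valued bounded Borel function of $-\Delta_g$. Combined with the idempotent identity $(I-P_{V_h}^G)^2 = I-P_{V_h}^G$, this gives the quadratic-form representation
\[
\langle Tu, u\rangle_{L^2} \;=\; \bigl\|(I-P_{V_h}^G)\mc{W}_J(a)\,u\bigr\|_{L^2}^2 \qquad \text{for all } u\in L^2(\Gamma).
\]
For $u\in \mc{E}_k(2^{-J}a, a)$ we have $\sum_{j=0}^{J-1}\Pi_j(a) u = u$, so Lemma~\ref{l:lowerBound} applied to the right-hand side yields the coercivity estimate $\langle Tu, u\rangle_{L^2} \geq c^2 (hk)^{2(p+1)} \|u\|_{L^2}^2$ on $\mc{E}_k(2^{-J}a,a)$, provided $k > k_1$ (with $k_1=0$ when $p=0$). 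Since $T$ preserves the finite-dimensional subspace $\mc{E}_k(2^{-J}a, a)$ and is self-adjoint there, this forces invertibility of its restriction, with
\[
\bigl\|T^{-1}\bigr\|_{L^2\to L^2} \;\leq\; c^{-2}(hk)^{-2(p+1)}.
\]

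Next I would upgrade this $L^2$-bound to the required estimate between the weighted Sobolev spaces $H_k^{-(p+1)}$ and $H_k^{p+1}$. The point is that on the subspace $\mc{E}_k(2^{-J}a, a)$, every $H_k^s$ norm with $|s|\leq p+1$ is equivalent to the $L^2$ norm with constants depending only on $a$ and $p$: for $u = \sum_j c_j\phi_{\lambda_j}$ with $\lambda_j/k \in (2^{-J}a, a]$, the spectral weights $(1+\lambda_j^2/k^2)^{\pm(p+1)}$ lie in the fixed interval $[(1+a^2)^{-(p+1)}, (1+a^2)^{p+1}]$, uniformly in $J$, $k$, and $h$. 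The bound on $\|T^{-1}\|_{L^2\to L^2}$ therefore converts directly into the claimed bound on $\|T^{-1}\|_{H_k^{-(p+1)}\to H_k^{p+1}}$, absorbing only constants that depend on $a$ and $p$ into $C_1$.

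The only substantive input is Lemma~\ref{l:lowerBound}, whose proof is already completed just above; there is no further obstacle of real difficulty. The mild technical points are (i) checking that self-adjointness of $\mc{W}_J(a)$ really does allow the symmetric splitting used in the quadratic-form identity, which follows from the functional calculus for $-\Delta_g$, and (ii) verifying that the norm-equivalence constants on $\mc{E}_k(2^{-J}a, a)$ are $J$-independent, which is immediate because the frequency window $(2^{-J}ak, ak]$ is contained in the fixed interval $(0, ak]$ and the Sobolev symbols are bounded on $[0,a]$.
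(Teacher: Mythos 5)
Your proof is correct and follows essentially the same route as the paper's own: self-adjointness of $T=\mc{W}_J(a)(I-P_{V_h}^G)\mc{W}_J(a)$ together with the idempotence of $I-P_{V_h}^G$ gives the quadratic-form identity, Lemma~\ref{l:lowerBound} gives coercivity on $\mc{E}_k(2^{-J}a,a)$, and the frequency-localised norm equivalence upgrades the $L^2$ bound on the inverse to the weighted Sobolev bound. One point of hygiene you should not gloss over: the $L^2$ bound you obtain is $\|T^{-1}\|_{L^2\to L^2}\leq c^{-2}(hk)^{-2(p+1)}$, and the norm-equivalence step preserves the exponent, so the conclusion is $\|T^{-1}\|_{H_k^{-p-1}\to H_k^{p+1}}\leq C_1(hk)^{-2(p+1)}$, with a \emph{negative} exponent; the $(hk)^{2(p+1)}$ printed in the corollary statement is a typo (the paper's own proof produces the negative power, and the subsequent application in the proof of Theorem~\ref{t:pollutionIntro} reads $\operatorname{Tan}(V_h,P_{V_h}^G,k)\leq C(hk)^{-2(p+1)}$), so when you write that your $L^2$ bound ``converts directly into the claimed bound'' you should say explicitly that the claimed bound as written has the wrong sign.
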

\begin{proof}
Observe that since $P_{V_h}^G$ and $\mathcal{W}_J$ are self-adjoint and $(I-P_{V_h}^G)^2=(I-P_{V_h}^G)$, by Lemma~\ref{l:lowerBound}
\begin{align*}
c(hk)^{2(p+1)}\Big\|\sum_{j=0}^{J-1}\Pi_j(a)u\Big\|_{L^2}^2&\leq\|(I-P_{V_h}^G)\mc{W}_Ju\|_{L^2}^2=\Big\langle \mc{W}_J(I-P_{V_h}^G)\mc{W}_Ju,\sum_{j=0}^{J-1}\Pi_j(a)u\Big\rangle.
\end{align*}
Hence, 
$$c(hk)^{2(p+1)}\Big\|\sum_{j=0}^{J-1}\Pi_j(a)u\Big\|_{L^2}\leq \| \mc{W}_J(I-P_{V_h}^G)\mc{W}_Ju\|_{L^2},$$
and the lemma follows since $\mc{E}_k(2^{-J}a,a)$ is finite dimensional and there is $C>0$ such that 
$$
C^{-1}\|u\|_{H_k^{p+1}}\leq \|u\|_{L^2}\leq C\|u\|_{H_k^{-p-1}},\qquad \text{for all }u\in \mc{E}_k(2^{-J}a,a).
$$
\end{proof}

We can now prove Theorem~\ref{t:pollutionIntro}.

\

\begin{proof}[Proof of Theorem~\ref{t:pollutionIntro}]
By Corollary~\ref{c:polysAreGreat}, there is $k_1\geq 0$ such that for all $k_0>k_1$, $(V_h,P_{V_h}^G)$ satisfies Assumptions~\ref{ass:angleControl} with $\Ang(V_h,P_{V_h}^G,k)\leq C(hk)^{-2(p+1)}$, and $J=\log_2k-\log_2 k_0+1$. Morevover, if $p=0$, then $k_1=0$. By Assumption~\ref{ass:ppG} part (iii), $(P_{V_h}^G,V_h)$ satisfies Assumption~\ref{ass:Goodness} with $t_{\max}=p+1$ and, since a constant is a piecewise polynomial, $V_h$ contains the constants. Therefore, the assumptions of Lemma~\ref{l:pollution} hold with $t_{\max}=p+1$ and $\gamma=2(p+1)$. 
Finally, we need to show that the condition \eqref{e:normalOK} (i.e., the second condition in \eqref{e:normalOk1}) is not required when $p=0$. To do this, observe that there is $k_*>0$ such that $1_{(0,k_*^2k^{-2})}(k^{-2}\Delta_g)=0$ and hence, choosing $k_0>0$ small enough when $p=0$ completes the proof.
\end{proof}

\section{Construction of quasimodes implying pollution}\label{s:quasimodes}

In this section, we present the construction of quasimodes that lead to Theorems~\ref{t:diamond} and \ref{t:qualitative1}.  In fact, we prove results that allow for the passage from quasimodes for the scattering problem i.e. outgoing $u$ such that $(-\hsc^2\Delta-1)u\in L^2_{\comp}(\Omega^+)$ and
$$
\|(-\hsc^2\Delta-1)u\|_{L^2(\Omega^+)}\ll \hsc\|\chi u\|_{L^2(\Omega^+)},\qquad u|_{\Gamma}=0,\qquad u\text{ outgoing},
$$
where $\chi\in C_c^\infty$ with $\chi \equiv 1$ in a neighborhood of the convex hull of $\Omega^-$, with a few additional properties to quasimodes usable in Theorem~\ref{t:pollutionIntro}. 

Before proceeding, we recall the following formulas for the inverses of $A_{k,\eta}$, $A_{k,\eta}'$, $B_{k,\rm{reg}}$, and $B_{k,\rm{reg}}'$~\cite[Theorem 2.33]{ChGrLaSp:12} \cite[Lemma 7.4]{GaMaSp:21N}
\begin{equation}
\label{e:BIEinverse}
\begin{aligned}
A_{k}^{-1}&=I-P_{\rm ItD}^{-,\eta_D}(P_{\rm DtN}^+-ik^{-1}\eta_D)\\
(A_{k}')^{-1}&=I-(P_{\rm DtN}^+-ik^{-1}\eta_D)P_{\rm ItD}^{-,\eta_D}\\
B_{k,\rm{reg}}^{-1}&=P_{\rm NtD}^+k^{-1}S_{ik}^{-1}-(I-i\eta_N P_{\rm NtD}^+k^{-1}S_{ik}^{-1})P_{\rm ItD}^{-,\eta_N,S_{ik}}\\
(B_{k,\rm{reg}}')^{-1}&=S_{ik}^{-1}k^{-1}P_{\rm NtD}^+-k^{-1}S_{ik}^{-1}P_{\rm ItD}^{-,\eta_N,S_{ik}}(kS_{ik}-i\eta_N P_{\rm NtD}^+)
\end{aligned}
\end{equation}
Here, we define $P_{\rm DtN}^{\pm}f:=k^{-1}\partial_\nu u_1$, where
$$
(-\Delta-k^2)u_1=0\text{ in }\Omega^+,\quad u_1|_{\Gamma}=f,\qquad u_1\text{ is outgoing ($+$)/ incoming ($-$)},
$$
 $P_{\rm NtD}^{\pm}f:= u_2|_{\Gamma}$, where
$$
(-\Delta-k^2)u_2=0\text{ in }\Omega^+,\quad k^{-1}\partial_\nu u_2|_{\Gamma}=f,\qquad u_2\text{ is outgoing($+$)/ incoming ($-$)},
$$
$P_{\rm ItD}^{\pm,\eta}f:=u_3|_{\Gamma}$, where
$$
(-\Delta-k^2)u_3=0\text{ in }\Omega^-,\quad (k^{-1}\partial_\nu u_3\pm ik^{-1}\eta  u_3)|_{\Gamma}=f,
$$
and $P_{\rm ItD}^{\pm,\eta,S_{ik}}f:=u_4|_{\Gamma}$, where
$$
(-\Delta-k^2)u_4=0\text{ in }\Omega^-,\quad (S_{ik}\partial_\nu u_4\pm i\eta u_4 )|_{\Gamma}=f.
$$
\subsection{Dynamical Preliminaries}
\label{s:dynamics}

 Let $\Omega\subset \mathbb{R}^d$ be open with smooth, compact, boundary, $\Gamma$. In order to define the relevant dynamics, we recall the notion of the $b$-contangent bundle of $\Omega$. We have $T^*\Omega\rightharpoonup {}^bT^*\Omega$ via the canonical projection map $\pi_b:T^*\Omega\to {}^bT^*\Omega$ given, in local coordinates $(x_1,x')$ where $\partial\Omega=\{x_1=0\}$ and $\Omega=\{x_1>0\}$ by
$$
\pi_b(x_1,x',\xi_1,\xi')=(x_1,x',x_1\xi_1,\xi').
$$
Let $g$ be a metric on $\Omega$ and assume throughout this section that for every point $\mc{Z}:=\rho\in \pi_b(S^*M)$, there is exactly one GBB through $\rho$. This, for instance, is the case of $\partial\Omega$ is nowhere tangent to the geodesics in $\Omega$ to infinite order or $\partial\Omega$ has negative semidefinite second fundamental form with respect to the inward normal. 
We denote the generalized broken bicharacteristic flow on $\mc{Z}$, by
$$
\varphi_t^{\Omega}:\mc{Z}\to \mc{Z}.
$$
It will sometimes be convenient below to denote $x(\varphi_t^\Omega(\rho)):=\pi_{\mathbb{R}^d}(\varphi_t^\Omega(\rho))$, where $\pi_{\mathbb{R}^d}:\mc{Z}\to \mathbb{R}^d$ denote projection to the base.

Let $f:\mc{Z}\to \mathbb{R}$ be a boundary defining function for $\Omega$ and $g_{\Gamma}$ denote the metric induced on $T^*\Gamma$ by $g$.. We write
\begin{gather*}
\mc{H}:=\{ (x',\xi')\in T^*\Gamma\,:\, |\xi'|_{g_\Gamma}<1\} , \quad  \mc{E}:=\{ (x',\xi')\in T^*\Gamma\,:\, |\xi'|_{g_\Gamma}>1\}\\
\mc{G}:=\{ (x',\xi')\in T^*\Gamma\,:\, |\xi'|_{g_\Gamma}=1\},\quad  \mc{G}_d:=\{ (x',\xi')\in \mc{G}\,:\, H_{|\xi|_g^2}^2f>0\},\qquad \mc{G}_g:=\mc{G}\setminus \mc{G}_d.
\end{gather*}

We then define the incoming ($\Gamma_-$) and outgoing ($\Gamma_+$) sets,
\begin{gather*}
\Gamma_\pm:=\{ \rho \in \mc{Z}\,:\, \limsup_{t\to \infty} |x(\varphi_{\pm t}(\rho))|<\infty \},
\end{gather*}
and the trapped set
$$
K:=\Gamma_+\cap \Gamma_-.
$$
We say $(\Omega,g)$ is trapping if $K\neq \emptyset$. We denote by $\operatorname{ch}(U)$, the convex hull of a set $U$.  

\paragraph{Basic properties of the trapped set}

\begin{lemma}
Suppose that $\Omega^-\Subset \mathbb{R}^d$ has connected complement and $\Omega=\Omega^+:=\mathbb{R}^d\setminus \overline{\Omega}$ and $\supp (g-I)$ is compact. The trapped set is closed and satisfies
$$
K\subset \operatorname{ch}(\overline{\Omega^-}\cup \supp (g-I)).
$$
\end{lemma}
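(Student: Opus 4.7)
The plan is to treat the two assertions separately, starting with the inclusion (which is the real geometric content) and then deducing closedness as a soft consequence of flow invariance and boundedness.

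First I would prove the inclusion. Fix $\rho = (x,\xi) \in \mc{Z}$ whose base point $x$ lies outside $U := \operatorname{ch}(\overline{\Omega^-}\cup \supp(g-I))$. The key observation is that on $\mathbb{R}^d\setminus U$ we have $g = I$ and no obstacle, so as long as a generalized broken bicharacteristic stays in the complement of $U$ it is a straight line. Since $U$ is closed and convex, the separating hyperplane theorem furnishes a unit vector $v\in\mathbb{R}^d$ and a constant $c$ such that $v\cdot x > c \geq v\cdot y$ for every $y\in U$. I would then split into the three cases $v\cdot\xi > 0$, $v\cdot\xi < 0$, and $v\cdot\xi = 0$: in the first, the forward straight-line trajectory $t\mapsto x+t\xi$ satisfies $v\cdot(x+t\xi)>c$ for all $t\geq 0$, so it never re-enters $U$ and $|x(\varphi_t^\Omega(\rho))|\to\infty$, giving $\rho\notin\Gamma_+$; in the second, the symmetric argument gives $\rho\notin\Gamma_-$; in the third, the trajectory stays on the hyperplane $\{v\cdot y=v\cdot x\}$, which is disjoint from $U$, and escapes to infinity in both directions. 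In every case $\rho\notin K$.

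Next I would establish closedness. Step 1 already shows $K\subset \pi_{\mathbb{R}^d}^{-1}(U)\cap \mc{Z}$, which has compact base. So given $\rho_n \to \rho$ with $\rho_n\in K$, I would invoke the standing hypothesis of the subsection that there is exactly one GBB through each point in $\mc{Z}$; in the setting of this section (either boundaries transverse to the interior bicharacteristic flow to infinite order or non-positive second fundamental form with respect to the inward normal), this uniqueness hypothesis is well known to imply that $\varphi_t^\Omega$ depends continuously on its initial condition for each fixed $t$. Consequently $\varphi_t^\Omega(\rho_n)\to \varphi_t^\Omega(\rho)$ for every $t$, and since each $\varphi_t^\Omega(\rho_n)$ lies in the compact set $\pi_{\mathbb{R}^d}^{-1}(U)\cap \mc{Z}$, so does the limit. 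This bounds the full trajectory of $\rho$ and hence $\rho\in K$.

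The main obstacle is making the continuous-dependence argument in Step 2 airtight, because generalized broken bicharacteristics can in principle be discontinuous in initial data at glancing points. Under the uniqueness hypothesis assumed for this entire section, however, this continuity is a standard consequence of Melrose--Sj\"ostrand-type results on propagation, and the remaining work is just book-keeping of the separating hyperplane argument in Step 1.
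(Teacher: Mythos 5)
Your proof is correct and takes essentially the same approach as the paper: the paper also shows the base point of any trajectory that starts outside the convex hull has at least one escaping ray (phrased as a direct consequence of convexity rather than via a separating hyperplane, but these are equivalent elementary facts), and closedness is deduced in the same way from flow invariance, the bound just established, and the continuity of $\varphi_t^{\Omega^+}$ that is implicit in the section's standing uniqueness hypothesis on GBBs. Your explicit attribution of that continuity to Melrose--Sj\"ostrand under the uniqueness hypothesis is a helpful remark that the paper leaves implicit.
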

\begin{proof}
We start by proving the inclusion of $K$. Suppose that $\rho\in\mc{Z}$ with $x(\rho)\notin \operatorname{ch}(\overline{\Omega^-}\cup \supp (g-I))$. Then, for 
$$
|t|\leq d\big(x(\rho), \operatorname{ch}(\overline{\Omega^-}\cup \supp (g-I)\big)
$$
we have
$$
x(\varphi^{\Omega^+}_{t})= x(\rho)\pm t\xi(\rho).
$$
Now, by convexity, for one choice of $\pm$, 
$$
\{ x(\rho)\pm [0,\infty)\xi(\rho)\}\cap \operatorname{ch}(\overline{\Omega^-}\cup \supp (g-I))=\emptyset
$$
In particular,  $\rho$ not trapped either forward or backward in time.

Since $K$ is invariant under $\varphi_t^{\Omega^+}$ and  $K\subset \operatorname{ch}(\overline{\Omega^-}\cup \supp (g-I))$, there is $C>0$ such that for any $\{\rho_n\}_{n=1}^\infty\in K$ with $\rho_n\to \rho$, and any $t\in\mathbb{R}$,  $|x(\varphi_t^{\Omega^+}(\rho_n))|\leq C$.
In particular, since $\varphi_t^{\Omega^+}$ is continuous, $|x(\varphi_t^{\Omega^+}(\rho))|\leq C$. Since $t$ is arbitrary this implies $\rho\in K$ and hence $K$ is closed.
\end{proof}

\begin{lemma}
\label{l:incoming}
Suppose that $\Omega^-\Subset \mathbb{R}^d$ has connected complement and $\Omega=\Omega^+:=\mathbb{R}^d\setminus \overline{\Omega}$ and $\supp (g-I)$ is compact and $\Omega^+$ is trapping. Then $\Gamma_-\setminus K\neq \emptyset$.
\end{lemma}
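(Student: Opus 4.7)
\emph{Plan.} The plan is to argue by contradiction, using the time-reversal symmetry $\iota(x,\xi):=(x,-\xi)$ of the geodesic flow. Suppose for contradiction that $\Gamma_-=K$. Since $\iota$ intertwines $\varphi_t^{\Omega^+}$ with $\varphi_{-t}^{\Omega^+}$ (and descends to $\mc{Z}$ under the standing GBB-uniqueness hypothesis), $\iota$ exchanges $\Gamma_-$ with $\Gamma_+$ while preserving $K$; hence we also have $\Gamma_+=\iota(\Gamma_-)=\iota(K)=K$. I will derive a contradiction by producing a point in $\Gamma_+\setminus K$.

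First I would fix $R_0>0$ large enough that $K\subset B_{R_0-1}$ (possible by the previous lemma, which bounds $K$ inside $\operatorname{ch}(\overline{\Omega^-}\cup\supp(g-I))$) and that the metric $g$ is Euclidean on $\{|x|\geq R_0-1\}$. Since $\mc{Z}$ is connected (as the $b$-compressed cosphere bundle over the connected manifold $\Omega^+$ in dimension $d\geq 2$), and $K$ is a non-empty closed bounded proper subset, $K$ cannot be clopen, so $\partial K\neq\emptyset$. Pick $\rho_0\in\partial K$ and a sequence $\rho_n\in\mc{Z}\setminus K$ with $\rho_n\to\rho_0$. The contradiction hypothesis forces $\rho_n\notin\Gamma_-$, so the forward exit time
$$T_n:=\inf\{t>0:|x(\varphi_t^{\Omega^+}(\rho_n))|=R_0\}$$
is finite for each $n$.

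Next I would show $T_n\to\infty$ and extract a subsequential limit. Invariance and compactness of $K\subset B_{R_0-1}$ keep $\varphi_t^{\Omega^+}(\rho_0)$ inside $B_{R_0-1}$ for all $t\geq 0$; continuous dependence of the GBB flow on initial data (provided by the standing uniqueness hypothesis) then gives $\varphi_t^{\Omega^+}(\rho_n)\in B_{R_0-1/2}$ uniformly on any fixed interval $[0,T]$ for all sufficiently large $n$, so $T_n>T$. Setting $\sigma_n:=\varphi_{T_n}^{\Omega^+}(\rho_n)$, the points $\sigma_n$ lie in $\{\rho\in\mc{Z}:|x(\rho)|=R_0\}$, which is compact: in the Euclidean region it coincides with the unit cosphere bundle over $\{|x|=R_0\}$. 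Passing to a subsequence, $\sigma_n\to\sigma\in\mc{Z}$ with $|x(\sigma)|=R_0$.

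To conclude, observe that for every fixed $t\geq 0$ and every $n$ with $T_n>t$, the very definition of $T_n$ gives $|x(\varphi_{-t}^{\Omega^+}(\sigma_n))|=|x(\varphi_{T_n-t}^{\Omega^+}(\rho_n))|\leq R_0$; continuity of $\varphi_{-t}^{\Omega^+}$ then yields $|x(\varphi_{-t}^{\Omega^+}(\sigma))|\leq R_0$ for all $t\geq 0$, so $\sigma\in\Gamma_+$. Since $|x(\sigma)|=R_0>R_0-1\geq\sup_{\rho\in K}|x(\rho)|$, we have $\sigma\notin K$, contradicting the derived identity $\Gamma_+=K$. The main technical point will be the careful use of continuity of the GBB flow on $\mc{Z}$ near glancing directions — a subtle issue in general — but this is precisely what the standing uniqueness hypothesis imposed at the start of \S\ref{s:dynamics} buys us, so the argument should go through without further work.
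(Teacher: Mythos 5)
Your proof follows essentially the same route as the paper's: pick points outside $K$ approaching it, follow their escaping trajectories to a fixed exit sphere, and extract a subsequential limit of the exit points to obtain a one-sided trapped point that cannot lie in $K$. You recast this as a contradiction argument (assume $\Gamma_-=K$, deduce $\Gamma_+=K$ by time-reversal, so the escape direction can always be taken forward), whereas the paper argues directly and allows the escape direction $\mu\in\{\pm1\}$ to depend on $n$; this is a cosmetic rather than substantive difference, though your explicit connectedness argument for $\partial K\neq\emptyset$ is a nice touch that the paper leaves implicit.

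There are two sign-convention slips relative to the paper's definitions $\Gamma_\pm=\{\rho:\limsup_{t\to\infty}|x(\varphi_{\pm t}^{\Omega^+}(\rho))|<\infty\}$. First, $\rho_n\notin\Gamma_-$ says the \emph{backward} trajectory escapes; the premise that makes the forward exit time $T_n$ finite is $\rho_n\notin\Gamma_+$ (which also holds under your contradiction hypothesis, so this is harmless). Second, the limit point $\sigma$, for which you show $|x(\varphi_{-t}^{\Omega^+}(\sigma))|\leq R_0$ for all $t\geq 0$, therefore has a bounded backward trajectory and lies in $\Gamma_-$, not $\Gamma_+$. The two slips cancel: $\sigma\in\Gamma_-\setminus K$ contradicts your standing assumption $\Gamma_-=K$ directly, and the proof is correct once the labels are fixed to match the paper's conventions.
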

\begin{proof}
Since $|\xi|_g^2$ is homogeneous degree $2$, $\Gamma_-$ is nonempty if and only if $\Gamma_+$ is nonempty. Therefore, it is enough to show that $\Gamma_+\cup \Gamma_-\neq \emptyset$. 

Let $\{q_n\}_{n=1}^\infty\subset \mc{Z}\setminus K$ with $d(q_n,K)\to0$. Such a sequence exists since $K$ is closed and $K$ is not equal to $\mc{Z}$. Let 
$$
C_1>\operatorname{diam}(\operatorname{ch}(\overline{\Omega^-}\cup \supp (g-I))).
$$ 

Without loss of generality, we may assume $|x(q_n)|\leq C_1$. Then, since the $\varphi_t^{\Omega^+}$ is continuous and for all $q'\in K$, $t\in \mathbb{R}$, $|x(\varphi_t^{\Omega^+}(q') )|\leq C_1$, for any $T>0$, there is $n$ large enough such that $|x(\varphi^+_{t}(q_n))|\leq 2C_1$ for $|t|\leq T$. Extracting subsequences, we may assume that 
$$
|x(\varphi_t^{\Omega^+}(q_n))|\leq 2C_1,\quad 0\leq t\leq n.
$$ 
Now, since $q_n\notin K$, there is $ s_n>n$ and $\mu\in\{1,-1\}$ such that $2C_1<|x(\varphi^+_{\mu s_n}(q_n))|<3C_1$.  Then, since $|x(q_n)|\leq C_1$ we have $x(\varphi^+_{\mu t}(q_n))\leq 3C_1$ for $0\leq t\leq s_n$. Taking a subsequences, we may assume that $\varphi^+_{s_n}(q_n)\to q$ with 
$$
2C_1\leq |x(q)|\leq3C_1
$$
Now, fix $T>0$ then, 
$$
d(\varphi^{\Omega^+}_{-\mu T}(q),\varphi^{\Omega^+}_{\mu (s_n-T)}(q_n))\to 0,
$$
and for $n>T$
$$
|x(\varphi^{\Omega^+}_{\mu(s_n-T)}(q_n)))|\leq 3C_1
$$
In particular, 
$$
|x(\varphi^{\Omega^+}_{-\mu T}(q))|\leq 3C_1
$$
for all $T\geq 0$. In particular, $q\in (\Gamma_-\cup \Gamma_+)\setminus K$.

\end{proof}

\begin{lemma}
\label{l:incomingIsNotNormal}
Suppose that $\Omega^-\Subset \mathbb{R}^d$ has connected complement and smooth boundary, $\Omega=\Omega^+:=\mathbb{R}^d\setminus \overline{\Omega}$, $\supp (g-I)$ is compact, and $q\in( \Gamma_-\setminus K)\cap T^*\Gamma$. Then $\xi'(q)\neq 0$. 
\end{lemma}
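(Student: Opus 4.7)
The plan is to argue by contradiction using time-reversibility of the generalized broken bicharacteristic flow $\varphi_t^{\Omega^+}$. Suppose $\xi'(q)=0$. Since $q\in \mc{Z}\cap T^*\Gamma = \pi_b(S^*M)\cap T^*\Gamma$ lifts to a point $(x_0,\xi_0)\in S^*M$ with $|\xi_0|_g=1$, and $\pi_b$ zeros out only the normal component $\xi_1$, the assumption $\xi'(q)=0$ forces $|\xi_{0,1}|=1$. Thus $q$ is strictly hyperbolic -- a point of purely normal incidence, well away from the glancing set $\mc{G}$ -- and the GBB through $q$ simply hits $\Gamma$ transversally and reflects.

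Next, introduce the time-reversal involution $\tau:T^*\mathbb{R}^d \to T^*\mathbb{R}^d$, $\tau(x,\xi):=(x,-\xi)$. The interior geodesic flow satisfies $\varphi_{-t}=\tau\circ\varphi_t\circ \tau$, and this identity is preserved under the elastic reflection law at hyperbolic boundary points, since reflection of momentum commutes with negation (explicitly, if $R$ flips only the normal component then $R^2=I$ implies $R(-\xi_{\mathrm{out}})=-\xi_{\mathrm{in}}$ whenever $\xi_{\mathrm{out}}=R\xi_{\mathrm{in}}$). Hence, under the standing uniqueness hypothesis on GBBs through each $\rho\in \mc{Z}$ stated at the beginning of \S\ref{s:dynamics}, the identity
\begin{equation*}
\varphi^{\Omega^+}_{-t}=\tau\circ\varphi^{\Omega^+}_t\circ \tau
\end{equation*}
extends to the full GBB flow along the orbit of $q$. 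Pushing down by $\pi_b$, the map $\tau$ descends to $\mc{Z}$, and its restriction to $\mc{Z}\cap T^*\Gamma$ acts by $(x,\xi')\mapsto (x,-\xi')$; in particular $\tau(q)=q$ since $\xi'(q)=0$. Because $\tau$ preserves the base point,
\begin{equation*}
x(\varphi^{\Omega^+}_{-t}(q)) \;=\; x(\tau\circ\varphi^{\Omega^+}_t\circ\tau(q)) \;=\; x(\varphi^{\Omega^+}_t(q)) \qquad \text{for all } t\in \mathbb{R}.
\end{equation*}

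Consequently the forward and backward orbits of $q$ have identical base projections, so $q\in \Gamma_-$ if and only if $q\in\Gamma_+$; that is, $q\in\Gamma_-$ forces $q\in K$, contradicting $q\in\Gamma_-\setminus K$. The only substantive point in the argument is checking that $\tau$ really does intertwine forward and backward GBBs across reflections, but this is handled by the $R^2=I$ observation above. The rest is simply unpacking the definition of $\pi_b$ to see that vanishing of $\xi'$ at a boundary point is exactly the condition for $q$ to be a fixed point of the descended involution $\tau$.
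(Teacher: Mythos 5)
Your proof is correct and follows the same strategy as the paper: both arguments deduce from $\xi'(q)=0$ that the base projections of the forward and backward GBB orbits through $q$ coincide, and then use $q\notin K=\Gamma_+\cap\Gamma_-$ to force $q\notin\Gamma_-$, a contradiction. You spell out the base-projection identity $x(\varphi^{\Omega^+}_{t}(q))=x(\varphi^{\Omega^+}_{-t}(q))$ via the time-reversal involution $\tau$, the reflection law, and GBB uniqueness, whereas the paper simply asserts it; otherwise the two proofs are essentially the same.
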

\begin{proof}
Suppose that $q\in (\Gamma_-\setminus K)\cap T^*\Gamma$ and  $\xi'(q)=0$.  Then $x(\varphi^{\Omega^+}_{t}(q))=x(\varphi^+_{-t}(q))$, and hence, since $q\in \Gamma_-\setminus K$, $|x(\varphi^+_{-t}(q))|\to \infty$. Therefore, $|x(\varphi_t^{\Omega^+}(q))|\to \infty$. In particular $q\notin \Gamma_-$. 
\end{proof}

\paragraph{Distinguished neighborhoods of a GBB segment and perturbations near the boundary}
Our construction of approximate solutions of Helmholtz problems for given data with precise knowledge of the approximate solution's wavefront set uses perturbations of the Hamiltonian $p=|\xi|^2-1$ near the boundary and neighborhoods $U$ of GBB segments such that i) the operator is unchanged in $U$ and ii) every trajectory either escapes backward in time, enters the interior of the domain outside of $U$, or hits a distinguished subset of $T^*\Gamma$.  

For our next lemma, we need some additional notation. First, for $U,V\subset \mc{H}$, $\mu\in \{1,-1\}$, and $q\in\mc{Z}$ define
$$
T^\mu_{U}(q):=\inf \{ t> 0:\varphi_{\mu t}^{\Omega}(q)\in U\},\qquad T^\mu_{U<V}(q):=\inf\{0<t<T^\mu_{V}(q)\,:\, \varphi_{\mu t}^\Omega(q)\in U\}.
$$
Next, we write $a\in S_{\mathsf{T}}^{\comp}(\Omega)$ if in a Fermi coordinate $(x_1,x',\xi_1,\xi')$ neighborhood of the boundary, 
$$
a=\tilde{a}(x_1,x',\xi'),\qquad \tilde{a}\in C_c^\infty(\mathbb{R}\times T^*\Gamma).
$$

\begin{lemma}
\label{l:modifiedDynamics}
Let $\mu\in \{ 1,-1\}$, $\mc{O}\subset \mc{H}$ open and $\mc{B}\subset \mc{H}$ closed. Suppose that $q\in \mc{Z}$ and $0<T_q<T_{\mc{B}\cup \{q\}}$ such that $\varphi_{T_q}(q)\in\mc{O}$. Then there is $\delta_0>0$, a closed neighborhood, $V\subset  T^*\Gamma\setminus (S^*\Gamma \cup \mc{B}\cup\{q\})$ of $\varphi^{\Omega}_{T_q}(q)$ and for all $0<\delta<\delta_0$ a closed neighborhood $U\subset  \mc{Z}$ of $q$, and a continuous function $T:U\to [T_q-\delta,T_q+\delta]$ such that 
$$
\varphi_{T(q')}^{\Omega}(q')\in V,\quad \inf\{ d(\varphi_t^{\Omega^+}(q'),U\cup \mc{B}): \delta<t<T(q'), \varphi_t(q')\in T^*\Gamma,q'\in U\}>0.
$$

Moreover, for any $R>0$ and any open $V_-\subset V^o$ with $q\in V_-$, there are $T>0$ and $a\in S_{\mathsf{T}}^{\comp}(\Omega)$, $0\leq a\leq \tfrac{1}{2}|\xi'|_{g_{\Gamma}}^2 $, $a|_{\Gamma}=0$ with 
$$
\supp a\cap \Sigma_q=\emptyset,\qquad \Sigma_q:=\bigcup_{0\leq t\leq T_q}\varphi_{\mu t}^{\Omega^+}(q)
$$
 such that, putting $\mc{Z}_a:={}^b\pi(\{ |\xi|_g^2-a-1=0\})$,  for any GBB, $\gamma$, for $|\xi|_g^2-a-1$, there is $0\leq t\leq T$ such that
\begin{equation}
\label{e:dynamics}
\gamma(-\mu t)\in V_-\cup \Big( \{|x|>R\}\cup\{d(x,\Gamma)>0\}\Big)\setminus \Sigma_q.
\end{equation}
\end{lemma}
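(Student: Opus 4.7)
The lemma splits into two essentially independent statements: a continuous-tracking statement for the unperturbed GBB flow, and a construction of a boundary perturbation $a$ with controlled dynamical consequences for $|\xi|_g^2-a-1$. I would handle them in that order.

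For the first part, the plan is to exploit continuous dependence of the generalized broken bicharacteristic flow on its initial data, which is available here because we assume a unique GBB through every point of $\mc{Z}$. The hypothesis $0<T_q<T^\mu_{\mc{B}\cup\{q\}}(q)$ means the forward trajectory $\Sigma_q=\bigcup_{0\leq t\leq T_q}\varphi_{\mu t}^\Omega(q)$ is a compact curve segment that stays away from $\mc{B}\cup\{q\}$; likewise, since $\varphi^\Omega_{T_q}(q)\in\mc{O}\subset\mc{H}$, this endpoint is strictly hyperbolic, hence transverse to $\Gamma$. Combining these two facts, I would take $V$ to be a small closed neighborhood of $\varphi^\Omega_{T_q}(q)$ that sits inside $\mc{O}\cap(\mc{H}\setminus\mc{B})$ and at positive distance from $\{q\}$; then pick $U\subset\mc{Z}$ a closed neighborhood of $q$ so small that every orbit starting in $U$ passes transversely through $V$ at some time $T(q')\in[T_q-\delta,T_q+\delta]$, with $T$ continuous by the implicit function theorem applied at the hyperbolic crossing time. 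The separation $\inf d(\varphi_t(q'),U\cup\mc{B})>0$ for $\delta<t<T(q')$ then follows from the compactness of the pieces $\{\varphi_{\mu t}(q):\delta\leq t\leq T_q-\delta\}$, their disjointness from $U\cup\mc{B}$ (once $U$ is chosen small), and continuity of the flow over the compact parameter set.

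For the second part, the idea is to build $a$ as a bump that vanishes on $\Sigma_q$ but is strictly positive in a collar of the boundary away from $\Sigma_q$, so that the characteristic variety $\{|\xi|_g^2=1+a\}$ enlarges the elliptic region of $T^*\Gamma$ outside $\Sigma_q$ but is unchanged on it. Concretely, in Fermi coordinates $(x_1,x',\xi_1,\xi')$ near $\Gamma$, I would set
\[
a(x_1,x',\xi')=\chi(x_1)\,\psi(x',\xi')\,\tfrac{1}{2}|\xi'|_{g_\Gamma}^2,
\]
where $\chi\in C_c^\infty([0,\epsilon))$ vanishes to high order at $x_1=0$ and equals $1$ on a slightly smaller interval, and $\psi\in C_c^\infty(T^*\Gamma)$ is a cutoff supported away from the projection of $\Sigma_q$ to $T^*\Gamma$; extending by zero one gets $a\in S_{\mathsf T}^{\comp}(\Omega)$ with $a|_\Gamma=0$ and $0\leq a\leq\tfrac12|\xi'|_{g_\Gamma}^2$. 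On $\Sigma_q$ the symbol $a$ vanishes, so $\Sigma_q$ remains a GBB of $|\xi|_g^2-a-1$ and by the first part the orbit of $q$ still lands in $V_-$. Meanwhile, in the region where $\psi\equiv 1$ the perturbed flow sees the boundary glancing set moved outward, forcing any GBB there to either pass into the elliptic zone (and hence stay at $d(x,\Gamma)>0$ away from $\Sigma_q$), reflect and then exit along a direction that clears the perturbed collar, or intersect $V_-$. A standard finite-propagation-speed argument then yields a uniform $T$ by covering a large ball in $\mc Z$ by flow-boxes and taking the maximum hitting time, using that outside a fixed neighborhood of $\operatorname{ch}(\overline{\Omega^-}\cup\supp(g-I))$ the flow is straight-line and escapes $\{|x|\leq R\}$ in bounded time.

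The main obstacle, and the step that most needs care, is verifying \eqref{e:dynamics} uniformly over all GBBs: I have to rule out that the perturbation creates new trapped orbits or new boundary-grazing behavior near $\partial\supp\psi$, where the construction of $a$ is most delicate. This requires choosing $\psi$ so that its support avoids not only $\Sigma_q\cap T^*\Gamma$ but a fixed $\mc{G}_g$-neighborhood of it, and arranging the radial profile $\chi$ so that no new glancing trajectories of $p-a$ cling to $\{x_1=\text{const}\}$; compactness and continuity of the GBB flow for $p-a$ (which itself depends on the same uniqueness hypothesis, now applied to the modified Hamiltonian) then gives a finite uniform time $T$ as required.
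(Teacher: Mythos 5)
Your plan for the first part (continuous dependence of $\varphi_t^{\Omega}$, transversality at the hyperbolic hit, compactness away from $\mc{B}\cup\{q\}$) matches the paper's one-line justification and is fine.

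The second part, however, has a genuine gap in the construction of $a$. You propose $a(x_1,x',\xi')=\chi(x_1)\psi(x',\xi')\tfrac12|\xi'|_{g_\Gamma}^2$ with $\chi$ vanishing \emph{to high order} at $x_1=0$. But the whole point of the paper's construction is that $a$ vanishes to \emph{exactly first order} at $x_1=0$ with a \emph{large} normal derivative: the paper takes $a=\e^{-1}x_1\,|\xi'|_{g_\Gamma}^2\,\chi(\e^{-1}x_1)\,\chi(\e^{-1}r(0,x',\xi'))\,(1-\psi_\e(x',\xi'))$, so that $\partial_{x_1}a|_{x_1=0}\sim\e^{-1}$ near the glancing set outside the $\e$-neighborhood of $\Sigma_q$. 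This first-order term is what alters the diffractive/gliding dichotomy: for the perturbed symbol $\xi_1^2-(r+a)$ a glancing point is gliding iff $\partial_{x_1}(r+a)|_{x_1=0}\le 0$, and since $\e^{-1}$ dominates $\|\partial_{x_1}r\|_{L^\infty}$ for small $\e$, this is only possible on $\supp\psi_\e\subset\{d((x',\xi'),\Sigma_q)<\e\}$. That containment is exactly the statement the paper uses in its contradiction argument to rule out orbits gliding along $\Gamma$ away from $\Sigma_q$. If $a$ vanishes to high order at $x_1=0$ then $\partial_{x_1}a|_{x_1=0}=0$ and the boundary dynamics of the GBB flow are unchanged, so the perturbation does nothing to the gliding set. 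Your proposal also omits the cutoff $\chi(\e^{-1}r(0,x',\xi'))$ localizing $a$ to the glancing region, and uses $\psi$ ``supported away from $\Sigma_q$'' rather than $(1-\psi_\e)$ with $\psi_\e\equiv 1$ near $\Sigma_q$; the latter is needed so that $a$ is \emph{active} on the entire glancing set outside a fixed neighborhood of $\Sigma_q$, not just on some bump. Finally, the ``finite-propagation-speed / flow-box'' conclusion you invoke is not a proof that no new trapped orbits appear — you flag this yourself as the obstacle, and it is precisely what the paper's three-case contradiction argument (with the $t_0,c_0$ separation set up at the start of the proof) is there to establish.
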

\begin{proof}
The existence of $V$ and $U$ as described follows from the facts that 1) $\varphi_t^{\Omega}$ is continuous, 2) since $\mc{O}\subset \mc{H}$, $\partial_t\varphi_t^{\Omega}|_{T_q-0}$ is transverse to $\Gamma$, and 3) $T_q<T_{\mc{B}\cup \{q\}}$ and $\mc{B}\cup\{q\}$ is closed.

For use below, let $t_0,c_0>0$  small enough that 
$$
d(\varphi_{-\mu t_0}^\Omega(q),q)>c_0.
$$
To construct the desired $a$, we work in Fermi normal coordinates $[0,\e)_{x_1}\times \Gamma_{x'}$ near $\Gamma$. In these coordinates, 
$$
|\xi|_g^2-1= \xi_1^2-r(x,\xi'),\qquad r(0,x',\xi')=1-|\xi'|_{g_\Gamma}^2.
$$

Let $\chi\in C_c^\infty((-\frac{1}{2},\frac{1}{2});[0,1])$ with $0\notin \supp (1-\chi)$. Let also $\psi_\e\in C_c^\infty(T^*\Gamma;[0,1])$ with $\supp (1-\psi_\e) \cap \Sigma_q=\emptyset$, $\supp \psi_\e\subset \{d((x',\xi'),\Sigma_q)<\e\}.$  Then, for some $0<\e<c_0/2$ to be determined, define
$$
a:= \e^{-1}x_1|\xi'|_{g_{\Gamma}}^2\chi(\e^{-1} x_1)\chi(\e^{-1}(r(0,x',\xi')))(1-\psi_\e(x',\xi')). 
$$

Without loss of generality, we may assume that $\Gamma\subset \{|x|\leq R\}$. Therefore, to prove~\eqref{e:dynamics}, suppose that there is no such $T$. Then for all $n$ there is $\gamma_n$ such that 
\begin{equation}
\label{e:contradictionTime}
\gamma_n(-\mu[0,n])\subset  [V_-]^c\cap \Big( T^*\Gamma\cup \Sigma_q\Big).
\end{equation}
Suppose first that $\gamma_n(0)\in \Sigma_q\cap [V^o]^c$. Then, since $\supp a\cap \Sigma_q=\emptyset$, there is $T_q+\delta>S>\inf\{0\leq t\,:\, \varphi_{-\mu t}^\Omega(\gamma_n(0))\notin \Sigma_q\}=:S_0$ such that $\gamma_n(-\mu t)=\varphi_{-\mu t}^{\Omega}(\gamma_n(0))$ for $0\leq t\leq S$. In particular, this implies that 
$$
\varphi_{-\mu t}^{\Omega}(\gamma_n(0))\notin \Sigma_q,\qquad S_0<t<S. 
$$
Since $\varphi_{T_q}^{\Omega}(q)\in \mc{H}$, 
$$
\inf\{ t>S_0\,:\, \gamma_n(-\mu t)\in \Sigma_q\}<\inf\{ t>S_0\,:\, \gamma_n(-\mu t)\notin T^*\Gamma\}.
$$
In particular, for $\e>0$ small enough (depending only on $\|\partial_{x_1}r|_{x_1=0=r=0}\|_{L^\infty}$),  $a|_{x_1=0}=0$, this implies
$$
\cup_{S_0<t<n}\gamma_n(-\mu t)\subset \{ r(0,x',\xi')=\partial_{x_n}r+\partial_{x_n}a=0\}\subset \{ d((x',\xi'),\Sigma_q)<\e\},
$$
and $\gamma_n(-\mu t)=\varphi_{-\mu t}^{\Omega}(\gamma_n(0))$ for $0\leq t\leq n$.  Now, let $t=S_0+t_0$. Then,
$$
c_0<d(\varphi_{-\mu t_0}^{\Omega}(q),\Sigma_q)=d(\varphi_{-\mu t}^{\Omega}(\gamma_n(0)),\Sigma_q)<\e<c_0/2,
$$
which is a contradiction for $n$ large enough.

Next, suppose that $\gamma_n(0)\in (T^*\Gamma\cap [V_-]^c)\setminus \Sigma_q$ and there is $0\leq t\leq n$ such that $\gamma_n(-\mu t)\in \Sigma_q$. Then, since $V\subset \mc{H}$, $q\notin V$, and $\Sigma_q\cap \supp a=\emptyset$, there is $0<s<t$ such that $\gamma_n(-\mu s)\notin \Sigma_q\cup T^*\Gamma $, which contradicts~\eqref{e:contradictionTime}.

Finally, we consider the case $\gamma_n(-\mu t)\subset (T^*\Gamma\cap [V_-]^c\cap \Sigma_q^c)$, $0\leq t\leq n$. For $\e>0$ small enough (depending only on $\|\partial_{x_1}r|_{x_1=0=r=0}\|_{L^\infty}$), this implies that $\gamma_n(-\mu t)\in \{ r(0,x',\xi')=\partial_{x_n}r+\partial_{x_n}a=0\}\subset \{ d((x',\xi'),\Sigma_q)<\e\}$ and, since $a|_{x_1=0}=0$, $\gamma_n(-\mu t)=\varphi_{-\mu t}^{\Omega}(\gamma_n(0))$. The continuity of $\varphi_t^{\Omega}$ and the fact that $\Sigma_q$ is a flow-line of length $T_q$  implies that for any $\beta>0$, there are $\e>0$ small enough (depending only on the continuity properties of $\varphi_t^\Omega$) and $0<t<T_q+\beta$ such that $d(\varphi_{-\mu t}^{\Omega}(\gamma_n(0)), q)<\beta$ and hence, choosing $\beta>0$ small enough (again depending only on the continuity properties of $\varphi_t^\Omega$), 
$$
d(\varphi_{-\mu (t+t_0)}^\Omega(\gamma_n(0)),\varphi_{-\mu t_0}^\Omega(q))<c_0/2,
$$
which implies
$$
d(\varphi_{-\mu (t+t_0)}^\Omega(\gamma_n(0)),\Sigma_q)>c_0/2>\e,
$$
a contradiction. Thus, we have proved~\eqref{e:dynamics}.
\end{proof}

\begin{lemma}
\label{l:modifiedDynamics2}
Let $\Omega^-\Subset \mathbb{R}^d$ open with smooth boundary and connected complement $\Omega:=\mathbb{R}^d\setminus \overline{\Omega^-}$ and suppose $q\in\mathcal{Z}\setminus \Gamma_-$. Then for any $R>0$, there are $T>0$  and $a\in S_{\mathsf{T}}^{\comp}(\Omega)$ with $a\geq 0$, $a|_{\Gamma}=0$ such that 
$$
\gamma_q\cap \supp a =\emptyset,\qquad \gamma_q:=\bigcup_{t\geq 0}\varphi_{t}^{\Omega}(q),
$$
and, putting $\mc{Z}_a:={}^b\pi(\{ |\xi|_g^2-a-1=0\})$,  for any GBB, $\gamma$, for $|\xi|_g^2-a-1$, there is $0\leq t\leq T$ such that
\begin{equation}
\label{e:dynamics2}
\gamma(- t)\in  \Big( \{|x|>R\}\cup\{d(x,\Gamma)>0\}\Big)\setminus \gamma_q.
\end{equation}
\end{lemma}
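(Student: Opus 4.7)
The plan is to bootstrap Lemma~\ref{l:modifiedDynamics} (applied with $\mu=-1$ at a carefully chosen sequence of points) into a global construction, using the fact that $q\notin \Gamma_-$ to keep all the relevant perturbations supported away from $\gamma_q$.

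My first observation is that since $q \notin \Gamma_-$, the backward trajectory $\{\varphi_{-t}^{\Omega}(q) : t \geq 0\}$ is unbounded. Combined with the fact that outside $\operatorname{ch}(\overline{\Omega^-})$ the flow is straight-line motion, this gives a time $T_0 > 0$ with $\varphi_{-t}^{\Omega}(q) \in \{|x| > R+1\}$ for all $t \geq T_0$. In particular $\gamma_q$ is a closed subset of $\mathcal{Z}$, and $\gamma_q \cap \{|x|\leq R\}$ projects to a compact subset of $T^*\Gamma$ (together with the interior hops), so I can choose an open neighborhood $\mathcal{N}$ of $\gamma_q \cap \pi^{-1}(\{|x|\leq R+1\})$ in $T^*\Gamma$ whose closure is disjoint from the compact "target'' set I will build below. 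This $\mathcal{N}$ will play the role of the blocking set $\mathcal{B}$ in Lemma~\ref{l:modifiedDynamics}.

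Next I proceed by an iterative covering. Set $a_0 \equiv 0$ and consider the compact set $K_R := \{\rho \in \mathcal{Z} : |x(\rho)| \leq R\} \setminus \gamma_q$. For each $q' \in K_R$ whose backward orbit under the (partially perturbed) flow eventually reaches a hyperbolic point $\varphi_{-T_{q'}}^\Omega(q') \in \mathcal{H} \setminus \overline{\mathcal{N}}$ without first hitting $\mathcal{N} \cup \{q'\}$, Lemma~\ref{l:modifiedDynamics} (applied with $\mu=-1$, $\mathcal{O}$ a small neighborhood of that target point disjoint from $\gamma_q$, and $\mathcal{B} = \mathcal{N}$) produces a neighborhood $U_{q'}$ of $q'$, a target $V_{q'} \subset T^*\Gamma \setminus \gamma_q$, a time $T_{q'}$, and a nonnegative perturbation $a_{q'} \in S^{\mathrm{comp}}_{\mathsf{T}}(\Omega)$ with $a_{q'}|_\Gamma = 0$ and $\supp a_{q'} \cap \gamma_q = \emptyset$, such that every GBB for $|\xi|_g^2 - a_{q'} - 1$ starting in $U_{q'}$ reaches $V_{q'} \cup (\{|x| > R\} \cup \{d(x,\Gamma)>0\}) \setminus \gamma_q$ within time $T_{q'}$ going backward. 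By compactness of $K_R$, finitely many such neighborhoods $U_1,\dots,U_N$ cover $K_R$; I add the corresponding $a_1,\dots,a_N$ one at a time, at each step verifying that the target $V_j$ of the next perturbation is already covered by the previous $U_\ell$s (or lies in the exit region), so that chaining the local exit statements yields a global one. Since the perturbations are nonnegative and their supports are disjoint from $\gamma_q$, the sum $a := \sum_j a_j$ satisfies $a \geq 0$, $a|_\Gamma = 0$, $\supp a \cap \gamma_q = \emptyset$, and every GBB for $|\xi|_g^2 - a - 1$ exits into $(\{|x|>R\} \cup \{d(x,\Gamma)>0\}) \setminus \gamma_q$ within total time $T := \sum_j T_j$.

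The main obstacle I anticipate is handling points $q' \in K_R$ whose unperturbed backward orbit lies entirely in the trapped set $K$ or is asymptotic to glancing trajectories on $\mathcal{G}_g$ — for these Lemma~\ref{l:modifiedDynamics} cannot be applied directly to the \emph{unperturbed} flow since the required $T_{q'} < T_{\mathcal{B} \cup \{q'\}}$ may fail. My plan to overcome this is to first add a ``seed'' perturbation $a_0$, supported in an annular neighborhood of $\Gamma$ disjoint from $\gamma_q$, chosen so that the \emph{perturbed} flow $|\xi|_g^2 - a_0 - 1$ has empty trapped set inside $\{|x| \leq R\}$ modulo $\gamma_q$ (such a perturbation exists because adding a positive quantity to $|\xi'|_{g_\Gamma}^2$ at the boundary turns hyperbolic reflections more "escaping'' and destabilises $K$; the support can be kept away from $\gamma_q$ because $\gamma_q \cap T^*\Gamma$ is a closed, lower-dimensional set). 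After this seed, every remaining orbit in $K_R$ has a finite backward exit time through $\mathcal{H} \setminus \overline{\mathcal{N}}$, so the compactness argument above applies to the flow of $|\xi|_g^2 - a_0 - 1$, and the iterative cover terminates. The final $a$ is the sum of $a_0$ and the finitely many $a_j$'s.
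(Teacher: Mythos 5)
Your proposal takes a substantially different and unnecessarily complicated route, and in its present form it contains two genuine gaps.

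\textbf{The escape region is already almost everything.} The crucial observation you seem to be missing is that $\{d(x,\Gamma)>0\}$ — the entire interior of $\Omega$ away from the boundary — is \emph{part of the escape set} in~\eqref{e:dynamics2}. So a GBB that spends any time in the interior away from $\gamma_q$ already satisfies the conclusion. The only thing that can go wrong is a GBB that stays in $T^*\Gamma\cup\gamma_q$ for arbitrarily long (backward) time. The perturbation $a$ is designed to deal precisely with the glancing/boundary part: it destroys the ability of a near-glancing trajectory to stick to the boundary forever, by pushing it into the interior in uniformly bounded time. The condition $q\notin\Gamma_-$ takes care of the $\gamma_q$ part. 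There is no need whatsoever to touch the trapped set $K$, to cover $K_R$, or to produce escape through the hyperbolic region — any trajectory in the interior has already escaped. This is why the paper can say the proof is the same as that of Lemma~\ref{l:modifiedDynamics} but simpler: one constructs $a := \e^{-1}x_1|\xi'|^2_{g_\Gamma}\chi(\e^{-1}x_1)\chi(\e^{-1}r)(1-\psi_\e)$ exactly as there (with $\psi_\e$ a cutoff near $\gamma_q\cap T^*\Gamma$ instead of $\Sigma_q$), and the contradiction argument — suppose $\gamma_n(-[0,n])\subset T^*\Gamma\cup\gamma_q$ — splits into the same cases, minus the bookkeeping involving $V$, $V_-$, $\mc{B}$, and $\mc{O}$.

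\textbf{Two concrete gaps in your argument.} First, the ``seed'' claim that adding a positive quantity to $|\xi'|^2_{g_\Gamma}$ near the boundary ``destabilises $K$'' and renders the perturbed trapped set empty is false. For two convex obstacles, the trapped orbit bounces between the two closest boundary points; reflections are not glancing, the perturbation is supported near glancing, and the trapped orbit persists unchanged. Your strategy depends on this false step. Second, your summing/chaining step is not justified: Lemma~\ref{l:modifiedDynamics} applied at $q'$ produces a perturbation $a_{q'}$ and a conclusion about GBBs \emph{for the Hamiltonian $|\xi|_g^2-a_{q'}-1$}. After you add several $a_{q'}$'s together, the GBBs for $|\xi|_g^2-\sum_j a_j-1$ are not GBBs for any single $|\xi|_g^2-a_j-1$; the neighborhoods $U_j$, targets $V_j$ and exit times $T_j$ produced by Lemma~\ref{l:modifiedDynamics} for the individual perturbations have no a priori meaning for the aggregate flow, so ``chaining the local exit statements'' is a nontrivial claim you would still need to prove. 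As it happens, neither step is needed — you should instead mimic the direct construction and contradiction argument of Lemma~\ref{l:modifiedDynamics}, dropping the special sets.
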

\begin{proof}
The proof follows the same lines as that of Lemma~\ref{l:modifiedDynamics} but is simpler because we need not consider any special subsets of $T^*\Gamma$. \end{proof}

%
%

\subsection{Semiclassical $b$ and tangential-pseudodifferential operators}

\paragraph{$b$-pseudodifferential operators}

The class of $b$-pseudodifferential operators that we work with is the natural class of operators quantizing differential operators that are tangential to the boundary of $\Omega_+$. Away from $\partial\Omega_+$ they are pseudodifferential operators in the sense of \S\ref{sec:SCA}, but near $\partial\Omega_+$ they have a different form. In particular, in coordinates $(x_1,x')$ with $\partial\Omega_+=\{x_1=0\}$, their symbols are functions on the $b$-cotangent bundle, ${}^bT^*\Omega_+$, whose sections are of the form
$$
\sigma \frac{d x_1}{x_1}+\xi' dx'.
$$
Notice that ${}^bT^*\Omega_+$ is the dual to sections of $T^*\Omega_+$ that are tangent to $\partial\Omega_+$. We also write ${}^b\overline{T^*\Omega_+}$ for the fiber radially compactified $b$-contangent bundle; i.e., ${}^bT^*\Omega_+$ with the sphere at infinity in $(\sigma, \xi')$ attached.

In coordinates, $b$-pseudodifferential operators are of the form
$$
\Op_{b}(a)(u)(x)=\frac{1}{(2\pi \hsc)^d}\int e^{\frac{i}{\hsc}((x_1-y_1)\xi_1+(x'-y'),\xi')}\phi(x_1/y_1)a(x_1,x',x_1\xi_1,\xi')u(y)dyd\xi,
$$
where $\phi\in C_c^\infty(1/2,2)$ with $\phi\equiv 1$ near $1$  and for some $m$
$$
|D_{x}^\alpha D_\sigma^j D_{\xi'}^\beta a(x_1,x',\sigma,\xi')|\leq C_{j\alpha\beta}\langle (\sigma,\xi')\rangle^{m-j-|\beta|}.
$$
In this case, we write $\Op_{b}(a)\in \Psi_b^m(\Omega_+)$ and $a\in S^m({}^bT^*\Omega_+)$. When $m=0$ we write $S({}^bT^*\Omega_+)$ and $\Psi_b(\Omega_+)$ respectively. We also write $\Psi_b^{-\infty}=\cap_m \Psi_b^m$. 

 The class comes equipped with principal symbol map ${}^b\sigma: {}^b\Psi^m(\Omega_+)\to S^m({}^bT^*\Omega_+)/ h {}^bS^{m-1}(T^*\Omega_+)$ such that if $A\in\Psi_b(\Omega_+)$ and $\sigma(A)=0$ then $A\in h\Psi_b^{m-1}(\Omega_+)$. We now introduce two important sets for $b$-pseudodifferential operators. For $A\in \Psi_b^m(\Omega_+)$ and $q\in {}^b\overline{T^*\Omega_+}$, we say $q\in {}^b\Ell(A)$ if there is a neighbourhood, $U$ of $q$ such that
$$
|\sigma(A)(q')|\langle (\sigma,\xi')\rangle^{-m}>c>0,\qquad q'\in U\cap {}^bT^*\Omega_+.
$$ 
Next, we say $q\notin {}^b\WF(A)$ if there is $E\in {}^b\Psi(\Omega_+)$ with $q\in {}^b\Ell(E)$ such that 
$$
EA\in \hsc^{\infty} \Psi_b^{-\infty}.
$$ 
For a more complete treatment of these operators, we refer the reader to~\cite[Appendix A]{HiVa:18} and the references therein.

\paragraph{Tangential pseudodifferential operators}

In addition to the class of $b$-pseudodifferential operators, it is useful to have a subclass consisting of tangential pseudodifferential operators. For this, let $g$ be a Riemannian metric on $\Omega$ and $(x_1,x')\in \mathbb{R}\times \Gamma$ be Fermi normal coordinates near $\Gamma$. That is,
$$
\Omega=\{x_1>0\},\, x_1=d(x,\partial\Omega).
$$
We then use the canonical coordinates $(x_1,x',\xi_1,\xi')$ on $T^*\Omega$ and define
$$
S_{\mathsf{T}}^{m}:=\{ a(x,\xi')\,:\, |\partial_{x}^\alpha\partial_{\xi'}^\beta a(x,\xi')|\leq C_{\alpha\beta}\langle \xi'\rangle^{m-|\alpha|},
$$
with $S^{\comp}_{\mathsf{T}}$, $S^{\infty}_{\mathsf{T}}$, and $S^{-\infty}_{\mathsf{T}}$ defined as for the usual symbol classes.
We then set
$$
 \Psit^{m}(\Omega):=\{ \Op(a)\,:\, a\in S_{\mathsf{T}}^{m}(\Omega)\},
$$ 
again, with  $\Psi^{\comp}_{\mathsf{T}}$, $\Psi^{\infty}_{\mathsf{T}}$, and $\Psi^{-\infty}_{\mathsf{T}}$. 

\paragraph{Wavefront sets}

We have already defined wavefront sets of pseudodifferential operators, but it is useful, in addition, to have the notion of a wavefront set of certain classes of distributions.
\begin{definition}
Let $s\in \mathbb{R}$ and $M$ be a smooth manifold (possibly with boundary). We say that $u\in H_{\hsc ,\loc}^s(M)$ is $\hsc$-tempered if for all $\chi\in C_c^\infty(\overline{M})$, there are $N>0$ and $C>0$ such that 
$$
\|\chi u\|_{H_{\hsc}^s}\leq Ch^{-N}.
$$
\end{definition}

We can now define the wavefront set and $b$-wavefront set of a distribution.
\begin{definition}
Let $M$ be a smooth manifold without boundary and $s,t\in\mathbb{R}$ and suppose that $u\in H_{\hsc ,\loc}^s(M)$ is $\hsc$-tempered. Then, we say that $(x_0,\xi_0)\in \overline{T^*M}$ is not in $\WF^{t}_{H_{\hsc}^s}(u)$ if there are $C>0$ and $A\in \Psi^0(M)$ with $A$ elliptic at $(x_0,\xi_0)$ such that 
$$
\|Au\|_{H_{\hsc}^s}\leq C\hsc^t.
$$
We write $\WF_{H_\hsc^s}(u)=\cup_t\WF_{H_{\hsc}^s}^t(u)$, $\WF^t(u)=\cup_s\WF_{H_{\hsc}^s}^t(u)$, and $\WF(u)=\cup_s\WF_{H_{\hsc}^s}(u)$. 
\end{definition}

\begin{definition}
Let $\Omega$ be a smooth manifold with boundary and $s,t\in\mathbb{R}$ and suppose that $u\in H_{\hsc ,\loc}^s(M)$ is $\hsc$-tempered. Then, we say that $(x_0,\xi_0)\in \overline{{}^bT^*M}$ is not in ${}^b\WF^{t}_{H_{\hsc}^s}(u)$ if there are $C>0$ and $A\in {}^b\Psi^0(M)$ with $A$ elliptic at $(x_0,\xi_0)$ such that 
$$
\|Au\|_{H_{\hsc}^s}\leq C\hsc^t.
$$
We write ${}^b\WF_{H_\hsc^s}(u)=\cup_t{}^b\WF_{H_{\hsc}^s}^t(u)$. Since $H_{\hsc}^1$ will be our default space on a manifold with boundary, we sometimes abuse notation and write ${}^b\WF(u)$ for ${}^b\WF_{H_{\hsc}^1}(u)$. 
\end{definition}

We record the following lemma which is a consequence of~\cite[Theorem 18.3.32]{Ho:07} and allows us to use tangential pseudodifferential operators as test operators rather than the full $b$ calculus.
\begin{lemma}
Let $t,s\in\mathbb{R}$ and suppose that $u\in H_{\hsc ,\loc}^s$ is $h$ tempered and $b\in S_{\mathsf{T}}^0(M)$. Then,
$$
{}^b\WF_{H_{\hsc}^s}^t(u)|_{\partial M}\subset {}^b\WF^t_{H_{\hsc}^s}(\Op(b)u)\cup \{(x',\xi')\in \overline{T^*\Gamma}\,:\, \sigma(b)(0,x',\xi')=0\}.
$$
\end{lemma}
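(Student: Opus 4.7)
The plan is to derive this from the elliptic parametrix construction in the semiclassical $b$-calculus, after first embedding tangential pseudodifferential operators into $\Psi_b$. The key observation is that if $b=b(x_1,x',\xi')\in S_{\mathsf{T}}^0(\Omega)$, then $b$ has no dependence on $\xi_1$, hence no dependence on the $b$-dual variable $\sigma=x_1\xi_1$, and so it is automatically a valid $b$-symbol on ${}^bT^*\Omega$ in the Fermi collar. Consequently $\Op(b)\in\Psi_b^0(\Omega)$ and its $b$-principal symbol at a boundary point is
\[
{}^b\sigma(\Op(b))(0,x',\sigma,\xi')=\sigma(b)(0,x',\xi'),
\]
independent of the $b$-normal covariable $\sigma$.

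Now fix $(x_0',\xi_0')\in \overline{T^*\Gamma}$ that lies in the complement of the right-hand side, i.e.\ $\sigma(b)(0,x_0',\xi_0')\neq 0$ and $(x_0',\xi_0')\notin {}^b\WF^t_{H_\hsc^s}(\Op(b)u)$. By the symbolic identity above, $\Op(b)$ is $b$-elliptic at every point of ${}^bT^*_{\partial\Omega}\Omega$ that projects to $(x_0',\xi_0')$ under the base map, including the fiber-infinity points needed to capture $\overline{T^*\Gamma}$. A standard $b$-elliptic parametrix construction (the semiclassical adaptation of \cite[Theorem~18.3.32]{Ho:07}, proved inductively in $\hsc$-powers exactly as in Theorem~\ref{thm:elliptic_para}) then produces $Q\in\Psi_b^0(\Omega)$ and $E\in\Psi_b^0(\Omega)$ with ${}^b\Ell(E)$ containing a neighborhood of the lifts of $(x_0',\xi_0')$, such that
\[
Q\,\Op(b)=E+R, \qquad R\in\hsc^\infty\Psi_b^{-\infty}.
\]
Applying both sides to $u$ gives $Eu=Q\,\Op(b)u-Ru$.

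It remains to read off microlocal regularity. Since $u$ is $\hsc$-tempered, $Ru=O_{H_\hsc^N}(\hsc^\infty)$ for any $N$, in particular in $H_\hsc^s$ with $\hsc^t$ decay. Since $Q\in\Psi_b^0(\Omega)$ is $b$-pseudolocal and uniformly bounded on $H_\hsc^s$, the assumption $(x_0',\xi_0')\notin{}^b\WF^t_{H_\hsc^s}(\Op(b)u)$ gives $(x_0',\xi_0')\notin{}^b\WF^t_{H_\hsc^s}(Q\,\Op(b)u)$. Hence $(x_0',\xi_0')\notin{}^b\WF^t_{H_\hsc^s}(Eu)$. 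Using $b$-ellipticity of $E$ at this point and the corresponding $b$-elliptic estimate
\[
\|A u\|_{H_\hsc^s}\leq C\Big(\|Eu\|_{H_\hsc^s}+\hsc^M\|u\|_{H_\hsc^{-N}}\Big)
\]
valid for any $A\in\Psi_b^0$ with ${}^b\WF(A)\subset{}^b\Ell(E)$, we conclude $(x_0',\xi_0')\notin{}^b\WF^t_{H_\hsc^s}(u)|_{\partial\Omega}$, which is the claimed inclusion.

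The main obstacle is purely bookkeeping: reconciling the identification of points of $\overline{T^*\Gamma}$ with the fiber ${}^bT^*_{\partial\Omega}\Omega$ used in the definition of ${}^b\WF|_{\partial\Omega}$ (in particular the behavior as $|\sigma|\to\infty$), and verifying that the semiclassical $b$-elliptic parametrix construction goes through uniformly in $\hsc$ with the $\hsc^t$ threshold-type wavefront sets. Both are standard once the inclusion $\Psit^0\subset\Psi_b^0$ is made explicit via the coordinate calculation at the beginning.
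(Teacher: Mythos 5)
Your overall strategy — reduce to $b$-ellipticity and invoke an elliptic parametrix — is the right mechanism, and it matches the spirit of the source the paper cites (Hörmander, Thm.~18.3.32; the paper gives no independent proof). However, the argument has a genuine gap at its first step.

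You assert that $S_{\mathsf{T}}^0\subset S^0({}^bT^*\Omega)$ "because $b$ has no $\sigma$-dependence," and conclude $\Op(b)\in\Psi_b^0$. This is false with the symbol class the paper uses. The $\Psi_b$ estimates require
\[
|D_x^\alpha D_\sigma^j D_{\xi'}^\beta a|\leq C\,\langle(\sigma,\xi')\rangle^{m-j-|\beta|},
\]
so for $m=0$, $j=0$, $|\beta|=1$ you need $|D_{\xi'}b|\lesssim\langle(\sigma,\xi')\rangle^{-1}$. A tangential symbol $b(x,\xi')\in S^0_{\mathsf{T}}$ gives only $|D_{\xi'}b|\lesssim\langle\xi'\rangle^{-1}$, which is \emph{weaker} than required once $|\sigma|\gg\langle\xi'\rangle$. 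Unless $b$ is literally independent of $\xi'$, the estimate fails at fiber infinity in $\sigma$, so $\Op(b)\notin\Psi_b^0$. Consequently it has no ${}^b\sigma$ in the claimed sense, you cannot compose it symbolically with a $b$-parametrix $Q\in\Psi_b^0$, and the identity $Q\,\Op(b)=E+O(\hsc^\infty)_{\Psi_b^{-\infty}}$ is not available from the $b$-elliptic parametrix theorem.

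Concretely, what's missing is the compatibility of the tangential calculus $\Psit$ with the $b$-calculus near $\sigma$-fiber-infinity over $\partial M$, which is precisely the technical content of Hörmander's theorem. One way to repair the proof is to restrict attention first to the region $\{|\sigma|\leq C\langle\xi'\rangle\}$, where $\langle(\sigma,\xi')\rangle\sim\langle\xi'\rangle$ and your symbol embedding is legitimate — there the parametrix argument goes through verbatim — and then separately show that the piece of ${}^b\WF_{H_\hsc^s}^t(u)|_{\partial M}$ living at $\sigma$-fiber-infinity (equivalently, $\xi_1/\langle\xi'\rangle\to\infty$) is controlled by $\Op(b)u$ (or is vacuous under the hypotheses on $u$). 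Your sketch silently skips this second piece; as written it establishes the conclusion only for the compressed characteristic set, not for all of ${}^b\WF(u)|_{\partial M}$.
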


\paragraph{Defect measures}

In the most general settings below, we prove qualitative pollution bounds; i.e. we show that there is pollution, but do not determine the rate. For this, it is convenient to use the notion of defect measures. 
\begin{definition}
Suppose that $\hsc_n\to 0^+$ and $\{u_{\hsc_n}\}_{n=1}^\infty$ satisfies $\sup_n\|\chi u_{\hsc_n}\|_{L^2(\Omega^+)}<\infty$ for all $\chi\in C_c^\infty(\overline{\Omega}_+)$. Then there is a subsequence $h_{n_k}\to 0$ and a positive radon measure $\mu$ on $T^*\mathbb{R}^d$ such that for any $a\in \Psi^{\comp}(\mathbb{R}^d)$ we have
$$
\langle \Op(a)u_{h_{n_k}}1_{\Omega^+},u_{h_{n_k}}1_{\Omega^+}\rangle_{L^2(\mathbb{R}^d)}\to \int \sigma(a)d\mu. 
$$
\end{definition}
We record the following fact about defect measures for Helmholtz solutions. For its proof, we refer the reader to e.g.~\cite[Lemma 2.12]{GMS},~\cite[Lemma 4.2,4.8]{GaSpWu:20}, and~\cite{Bu:02}.
\begin{lemma}
\label{l:invariance}
Suppose that $\chi\in C_c^\infty(\overline{\Omega}_+)$,  $f\in L^2(\Omega^+)$, with $\|f\|_{L^2}\leq C$,
$$
(-\hsc^2\Delta-1)u=\hsc \chi f\text{ in }\Omega^+,\qquad u|_{\Gamma}=0,\qquad u\text{ is outgoing},
$$
and $u$ has defect measure $\mu$. Then, for any $\psi\in C_c^\infty(\overline{\Omega})$, $\mu(\psi^2)=\lim_{h\to 0^+}\|\psi u\|_{L^2}$, $\supp {}^b\pi_*\mu\subset \mc{Z}$, and   ${}^b\pi_*\mu$ is invariant under $\varphi_t^{\Omega^+}$ away from ${}^b\WF(f)$ and $\mc{G}^\infty$ and, for $R$ large enough,
$$
\mu(\mc{I}_-(R)\setminus \bigcup_{t>0}\varphi_t^{\Omega^+}({}^b\WF(f))=0,
$$
where 
$$
\mc{I}_-(R):=\{ (x,\xi)\in T^*\Omega^+\,:\, |x|>R, \langle \tfrac{x}{|x|},\xi\rangle \leq 0\}.
$$
\end{lemma}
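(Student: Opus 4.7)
The plan is to invoke the standard semiclassical defect measure machinery for outgoing Helmholtz solutions with Dirichlet boundary, treating each of the four assertions in turn.

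First, the identification $\mu(\psi^2) = \lim_{\hsc\to 0^+} \|\psi u\|_{L^2}^2$ is immediate from the defining limit of the defect measure applied to $a = \psi^2 \in C_c^\infty(\overline{T^*\Omega^+})$: indeed $\langle \Op(\psi^2) u, u\rangle - \|\psi u\|^2 \to 0$ because $\Op(\psi^2) - M_{\psi^2}$ is order $-1$ (where $M_\phi$ denotes multiplication by $\phi$), and hence contributes $o(1)$ after pairing against the bounded family $u$. Next, the support statement $\supp {}^b\pi_* \mu \subset \mc{Z}$ is elliptic regularity in the $b$-calculus. Away from $\Gamma$, if $a \in C_c^\infty(T^*\Omega^+)$ has $\supp a \cap \{|\xi|^2_g = 1\} = \emptyset$, the standard elliptic parametrix for $-\hsc^2\Delta - 1$ together with $(-\hsc^2\Delta-1)u = \hsc \chi f$ yields $\|\Op(a) u\|_{L^2} = O(\hsc)$ and hence $\int a\, d\mu = 0$. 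At the boundary, the tangential/$b$-elliptic parametrix — applied to operators elliptic on the complement of $\mc{Z}$ in ${}^b\overline{T^*\Omega^+}$ — gives the same conclusion, using that $u$ is $\hsc$-tempered in $L^2_{\loc}$.

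The third assertion — invariance of ${}^b\pi_* \mu$ along the generalized broken bicharacteristic flow $\varphi_t^{\Omega^+}$ away from ${}^b\WF(f)$ and the infinitely glancing set $\mc{G}^\infty$ — is the heart of the matter and the step I expect to be the main obstacle. The starting point is the commutator identity: for $A \in \Psi^{\comp}$ (or $A \in \Psit^{\comp}$ near the boundary) with $P = -\hsc^2\Delta - 1$,
\[
2 \Im \langle A u, Pu\rangle_{L^2(\Omega^+)} = \langle \tfrac{1}{i}[P, A] u, u\rangle + \text{(boundary term)},
\]
where the boundary term vanishes thanks to the Dirichlet condition $u|_\Gamma = 0$ and the tangentiality of $A$. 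Since $Pu = \hsc \chi f$, the left-hand side equals $2\hsc \, \Im\langle Au, \chi f\rangle$, which is $o(\hsc)$ whenever $\WF(A)\cap {}^b\WF(\chi f) = \emptyset$. The principal symbol of $i\hsc^{-1}[P, A]$ is $H_p \sigma(A)$, giving $\int H_p \sigma(A)\, d\mu = 0$ in the interior and thus invariance under the geodesic (in fact, free) Hamiltonian flow there. To extend this to the boundary including the hyperbolic, diffractive, and non-infinitely-glancing gliding regimes, one follows Melrose--Sjöstrand and Burq~\cite{Bu:02}: one constructs a test-operator family with symbol transported by the GBB flow, bounded commutator with $P$ modulo lower order, and then carries through the same energy estimate. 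Glancing rays where the flow branches to infinite order must be excluded, which is why $\mc{G}^\infty$ appears in the statement.

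Finally, the no-incoming-mass statement for $|x| > R$ is a consequence of the Sommerfeld radiation condition combined with the invariance just established. For $R$ large enough, $\Omega^+ \cap \{|x| > R\}$ is free space where the GBB flow is just the straight-line Hamiltonian flow of $p = |\xi|^2_g - 1$. Any point $(x_0, \xi_0) \in \mc{I}_-(R)$ lies on a trajectory which, flowing backward in time, tends to spatial infinity without reentering the support of $\chi$ or hitting $\Gamma$. The Sommerfeld radiation condition (or limiting absorption) forces the defect measure to vanish on purely incoming directions at infinity — this is proved by a standard multiplier argument using $x \cdot \nabla$ type commutants, or equivalently by the observation that outgoing solutions have $\WF_\hsc(u) \subset \{\xi = |\xi| x/|x|\}$ at infinity. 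Invariance then propagates this vanishing backward along the flow from spatial infinity, unless the trajectory meets ${}^b\WF(f)$ — which is exactly the exception stated. The main technical obstacle, as above, is the rigorous handling of the flow and invariance at glancing/gliding boundary points, which we address by quoting~\cite{Bu:02} and the more recent treatments in~\cite{GMS, GaSpWu:20} that develop precisely this semiclassical defect-measure framework for exterior Helmholtz problems.
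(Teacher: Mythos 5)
Your proposal takes essentially the same route as the paper: the paper's ``proof'' is a single sentence referring the reader to \cite[Lemma 2.12]{GMS}, \cite[Lemmas 4.2, 4.8]{GaSpWu:20}, and \cite{Bu:02}, and your proposal sketches the standard defect-measure argument and then quotes exactly these references for the technically delicate glancing/propagation step. (One small point of caution: in the first part, $\psi^2$ depends only on $x$ and so is not compactly supported in $\xi$, hence not an admissible test symbol in the definition used in the paper; one needs to truncate in $\xi$ and use that $\mu$ is supported near the characteristic set $\{|\xi|_g^2=1\}$ and $u$ is $L^2$-bounded, but this is standard and matches the convention in the cited works. Also note the paper's lemma statement omits a square on $\|\psi u\|_{L^2}$; your squared version is the correct one.)
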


\subsection{Propagation of singularities and solvability with boundary damping}
Let $\Omega\subset \mathbb{R}^d$ be an open  with smooth, compact boundary $\Gamma$,  $g$ be a Riemannian metric on $\Omega$ with $g_{ij}(x)\equiv \delta_{ij}$ for $|x|\gg 1$ and $\nu$ be the outward unit normal to $\Omega$ with respect to $g$. Let $Q\in \Psi^{-1}(\Gamma)$, and $W\in \Psi^{\comp}(\Omega)$ and $A\in \Psit^{\comp}(\Omega)$, 
We study the problem
\begin{equation}
\label{e:damped}
\begin{cases}
\mathsf{P}u:=(-\hsc^2\Delta_g-iW-1-A)u=\hsc f_1&\text{in }\Omega\\
\mathsf{B}u:=Q\hsc D_{\nu}u-u=f_2&\text{on }\Gamma\\
u\text{ is outgoing}.
\end{cases}
\end{equation}
We prove the following propagation of singularities theorem in Appendix~\ref{a:propagate}.
\begin{theorem}[Propagation of singularities]
\label{t:basicPropagate}
Let $Q\in \Psi^{-1}(\Gamma)$ with $\sigma(Q)\geq 0$, $A\in \Psit^{\comp}$ with $A|_{\Gamma}=0$ and $\sigma(A)\in\mathbb{R}$, $W\in \Psi^{\comp}(\Omega)$ with $\sigma(W)\geq 0$, $N>0$, $s\in\mathbb{R}$,  and $u\in H_{\hsc}^1$ be $\hsc$-tempered. 
Then, with $S^*_a\Omega:=\{(x,\xi)\,:\, |\xi|_g^2-a=1\}$, $\mc{Z}:=\pi_b(S_a^*\Omega)$,
$$
{}^b\WF_{H_{\hsc}^2}^s(u)\subset \big(\mc{Z}\cup {}^b\WF_{L^2}^s(\mathsf{P}u)\cup \WF^{s-\frac{1}{2}}_{H_{\hsc}^{3/2}}(\mathsf{B}u)\big)\setminus\{\sigma(W)>0\}
$$
and for all $\beta>0$, 
$$
{}^b\WF_{H_{\hsc}^1}^s(u)\cap (\mc{Z}\setminus \{ \sigma(W)>0\})
$$
is a union of maximally extended backward GBBs for the Hamiltonian $|\xi|_g^2-a-1$ in 
$$
\mc{Z}\setminus\Big( {}^b\WF_{L^2}^{s+1}(\mathsf{P}u)\cup \big(\WF_{L^2}^s(\mathsf{B}u)\cap (\mc{G}_d\cup \mc{H})\big)\cup \big(\WF_{L^2}^{s+\beta}(\mathsf{B}u)\cap \mc{G}_g\big)\Big).
$$
\end{theorem}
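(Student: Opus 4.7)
The plan is to establish the theorem by combining a semiclassical elliptic estimate off the perturbed characteristic set with a Melrose--Sj\"ostrand type propagation-of-singularities argument at the boundary, adapted here to the damped Helmholtz operator $\mathsf{P}$ (which carries both an interior absorber $iW$ and a tangential compactly-supported perturbation $A$ vanishing on $\Gamma$) and the pseudodifferential impedance-type boundary condition $\mathsf{B} = Q\hsc D_\nu - I$ with $Q \in \Psi^{-1}(\Gamma)$. The principal symbol of $\mathsf{P}$ is $p_a(x,\xi) = |\xi|_g^2 - \sigma(A)(x,\xi') - 1 - i\sigma(W)(x,\xi)$; its vanishing set in ${}^bT^*\Omega$ is precisely $\mc{Z}\setminus\{\sigma(W)>0\}$, and the relevant bulk Hamiltonian is $\operatorname{Re} p_a$, which coincides with that of $-\hsc^2\Delta_g - 1$ on $\Gamma$ because $A|_\Gamma = 0$ and $A$ is tangential.

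For the first containment, the idea is a standard elliptic parametrix argument. Away from $\Gamma$, Theorem~\ref{thm:elliptic_para} handles any point of $T^*(\Omega\setminus\Gamma)$ outside $\mc{Z}\cup\{\sigma(W)>0\}$, since there either $\operatorname{Re} p_a$ or $\operatorname{Im} p_a$ is elliptic. Near $\Gamma$ in the $b$-elliptic region $|\xi'|_g^2 > 1+\sigma(A)$, the boundary condition $Q\hsc D_\nu u = u + f_2$ is determined at principal order by $u$ alone (since $Q$ has order $-1$, so $Q\hsc D_\nu$ vanishes to principal order on $\Gamma$), allowing both traces of $u$ to be estimated via $\mathsf{P}u$ and $\mathsf{B}u$; the $\tfrac12$-derivative trace loss produces the $\WF^{s-1/2}_{H_\hsc^{3/2}}(\mathsf{B}u)$ term in the statement.

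For the second containment, I would run a positive-commutator argument along the bicharacteristics of $\operatorname{Re} p_a$. In the interior, commutants microlocalized along a bicharacteristic strip transfer $H_\hsc^1$-regularity along the flow, with the damping $-iW$ contributing a nonnegative lower-order term in the commutator identity (which justifies excluding $\{\sigma(W)>0\}$ from the wavefront set). At $\Gamma$, the three boundary regimes demand separate treatment: in the hyperbolic set $\mc{H}$, a reflection-type commutant propagates regularity from incoming to outgoing rays, with the boundary contribution controlled by $\mathsf{B}u = f_2$ at the $\tfrac12$-trace loss; in the diffractive set $\mc{G}_d$, a radial-point type estimate carries the wavefront straight through the tangent-to-$\Gamma$ direction with the same loss; in the gliding set $\mc{G}_g$, the standard Melrose--Sj\"ostrand construction incurs an arbitrarily small additional Sobolev loss $\beta > 0$, which is precisely the $\beta$ in the statement. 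The sign condition $\sigma(Q)\geq 0$ is used here to ensure that the boundary term produced by $\mathsf{B}$ in the commutator identity is on the dissipative side, so it can be absorbed rather than obstructing the estimate.

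The main obstacle is verifying that the classical Melrose--Sj\"ostrand machinery, developed for Dirichlet or Neumann boundary conditions on $-\hsc^2\Delta-1$, extends here in the presence of (a) the pseudodifferential boundary condition $\mathsf{B}$, (b) the absorbing term $iW$, and (c) the tangential perturbation $A$. The observations that make this tractable are: $Q\in\Psi^{-1}(\Gamma)$ means $\mathsf{B}$ coincides with $-I$ at principal order, so the reflection geometry at $\Gamma$ is unchanged and all corrections coming from $Q$ enter as subprincipal terms regularized by the sign condition $\sigma(Q)\geq 0$; $W$ and $A$ are both compactly supported with $A|_\Gamma = 0$, so $A$ only modifies the interior Hamiltonian to $\operatorname{Re} p_a$ and does not perturb the boundary calculus, while $W$ only contributes dissipative lower-order terms; and because the problem imposes the outgoing condition, the propagation conclusion extends uniformly out to infinity via the outgoing radial estimates in the incoming region $\mc{I}_-$ in the spirit of Lemma~\ref{l:invariance}. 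The explicit positive-commutator computations in each of the three boundary regimes, together with the outgoing radial estimates at infinity, will be carried out in Appendix~\ref{a:propagate}.
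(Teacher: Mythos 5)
Your overall plan --- elliptic parametrix off the characteristic set, positive-commutator propagation at the boundary, and a trichotomy into hyperbolic, diffractive, and gliding regions in the style of H\"ormander Chapter 24 --- is the structure the paper's proof follows in Appendix~\ref{a:propagate}. But two of the explanations you give for why the boundary condition $\mathsf{B}=Q\hsc D_\nu - I$ is harmless are wrong, and one of them matters.

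Your claim that in the $b$-elliptic region \textquotedblleft $Q\hsc D_\nu$ vanishes to principal order on $\Gamma$ (since $Q$ has order $-1$)\textquotedblright\ is false. In the elliptic region the factorization (Lemma~\ref{l:factor}) gives $\hsc D_\nu u \approx \Lambda_- u$ at $\Gamma$ with $\sigma(\Lambda_-)=i\sqrt{|r|}$, an operator of order $+1$; hence $Q\Lambda_-\in\Psi^0$ with $\sigma(Q\Lambda_-)=i\,\sigma(Q)\sqrt{|r|}$, which is $O(1)$ and does not vanish. The boundary operator one must invert is $Q\Lambda_-+I$, and its ellipticity does not follow from the order of $Q$ at all. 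It follows from the sign condition $\sigma(Q)\geq 0$: since $\sigma(Q)$ is real and nonnegative, $1+i\sigma(Q)\sqrt{|r|}$ is bounded away from zero in modulus (indeed $\geq 1$). This is exactly the hypothesis $|\sigma(Q)i\sqrt{|r|}+1|>c>0$ invoked in Lemma~\ref{l:elliptic}. Without noticing this, a boundary condition such as $-\,i\,|\hsc D'|^{-1}\hsc D_\nu u - u$ (order $-1$, wrong sign) would appear to work by your argument yet is not elliptic, so the reasoning as written is not just imprecise but fails.

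Two smaller misattributions are also worth flagging. First, the $\beta$-loss in the gliding set is not \textquotedblleft the standard Melrose--Sj\"ostrand\textquotedblright\ loss --- the classical Dirichlet theorem has no such loss. Here it arises specifically from the pseudodifferential boundary condition: in the non-diffractive estimate (Lemma~\ref{l:gbb}) the boundary trace enters with a $\mu^{-1}$ prefactor, and the iteration trades $\mu^{-1}$ against $(\mu+\hsc^{1/2})^M$ by taking $\mu=\hsc^\beta$ with $M\gtrsim N/\beta$. Second, the outgoing condition plays no role in Theorem~\ref{t:basicPropagate}, which is a purely local propagation statement; radial estimates à la Lemma~\ref{l:invariance} enter only in the separate solvability result (Lemma~\ref{l:dampedResolve}). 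Finally, as the paper's remark after the theorem points out, the genuinely new content is the quantitative dependence of the microlocalized $H^1_\hsc$ norm of $u$ on the microlocalized $L^2$ norm of $\mathsf{B}u$ along the GBBs (new even for Dirichlet); your sketch treats that as routine bookkeeping, but it is where most of the work lives, particularly Lemmas~\ref{l:microNormalDerivative}, \ref{l:ivrii}, and \ref{l:gbb} and the careful construction of the escape functions with controlled $\xi_1$-linear reduction by the Malgrange preparation theorem.
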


\bre\label{r:propagation}
The differences between Theorem \ref{t:basicPropagate} and existing results in the literature are that Theorem \ref{t:basicPropagate}  (i) covers semiclassical boundary damping (via $Q$) and (ii) quantifies the dependence of the microlocalised $H^1_\hsc$ norm of $u$ on the microlocalised $L^2$ norm of $\mathsf{B}u$ along the GBBs. Surprisingly, even for the Dirichlet problem, (ii) appears not to be in the literature.
The most subtle part of the analysis is in the glancing region, where we follow the proof in~\cite[Chapter 24]{Ho:07}. 
\ere

We also need the following lemma, which is also proved in Appendix~\ref{a:propagate}.
\begin{lemma}
\label{l:DtN}
Let $V\Subset \mc{H}$. Then there is $\Lambda^{-1}\in \Psi^{-1}(\Gamma)$ with $\sigma(\Lambda^{-1})\geq c\langle \xi
\rangle^{-1}>0$ such that for all $q\in V$, $s\in \mathbb{R}$, and all $\hsc$-tempered $u\in H_{\hsc}^1$ with
$$
q\notin {}^b\WF^{s+1}_{L^2}(\mathsf{P}u)\cap \WF^{s}(\Lambda^{-1}\hsc D_{\nu}u-u),
$$ 
 there is $\e>0$ such that 
$$
\bigcup_{0< s\leq \e}\varphi_{s}(q)\notin \WF_{H_{\hsc}^1}^s(u).
$$
 \end{lemma}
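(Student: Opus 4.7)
The plan is to choose $\Lambda^{-1}$ to be the quantization of a strictly positive symbol of order $-1$; any such choice satisfies $\sigma(\Lambda^{-1})\ge c\langle\xi\rangle^{-1}>0$, and for concreteness I would take
$$
\Lambda^{-1}:=c_0(1-\hsc^2\Delta_\Gamma)^{-1/2}
$$
with $c_0>0$ fixed below. Positivity makes $\Lambda^{-1}$ admissible as the damping operator $Q$ in Theorem~\ref{t:basicPropagate}, so the propagation machinery there is available.

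The main step would be a microlocal factorization of $\mathsf P$ in the hyperbolic region. In Fermi normal coordinates $(x_1,x')$ near $\Gamma$, the principal symbol of $\mathsf P$ at $x_1=0$ is $\xi_1^2-(1-|\xi'|_g^2-\sigma(A))$; since $V\Subset\mathcal H$ is compact and $1-|\xi'|_g^2-\sigma(A)|_{x_1=0}>0$ on $V$, one factors microlocally in a neighborhood of $\{0\}\times V$,
$$
\mathsf P=(\hsc D_{x_1}-\Xi_+)(\hsc D_{x_1}-\Xi_-)\ \text{mod}\ \Psi_\hsc^{-\infty},
$$
with $\Xi_\pm\in\Psit^1$ having real principal symbols $\sigma(\Xi_\pm)=\mp\sqrt{1-|\xi'|_g^2-\sigma(A)}$, and splits $u=u^++u^-$ so that $(\hsc D_{x_1}-\Xi_\pm)u^\pm$ is controlled by $\mathsf Pu$ modulo $\Psi_\hsc^{-\infty}$. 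Since $\hsc D_\nu=-\hsc D_{x_1}$ in these coordinates, the boundary operator $\mathsf B=\Lambda^{-1}\hsc D_\nu-I$ acts microlocally as
$$
\mathsf Bu\equiv -(\Lambda^{-1}\Xi_++I)u^+\;+\;(\Lambda^{-1}\Xi_--I)u^-.
$$
Choosing $c_0$ small enough that $\sigma(\Lambda^{-1})\sqrt{1-|\xi'|_g^2-\sigma(A)}\le 1/2$ on $V$ makes both principal symbols $\pm\sigma(\Lambda^{-1})\sqrt{1-|\xi'|_g^2-\sigma(A)}-1$ bounded away from zero there, while the trace-to-amplitude map $(u^+,u^-)\mapsto(u^++u^-,\Xi_+u^++\Xi_-u^-)$ has principal determinant $\sigma(\Xi_-)-\sigma(\Xi_+)=-2\sqrt{1-|\xi'|_g^2-\sigma(A)}\neq 0$ on $V$. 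Consequently, the pair of microlocal amplitudes $u^\pm|_\Gamma$ is uniquely and boundedly recovered from $(\mathsf Pu,\mathsf Bu)$ at $q$, and the hypothesis yields $u^+|_\Gamma\in H_\hsc^s$ microlocally at $q$.

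Finally, $\hsc D_{x_1}-\Xi_+\in\Psit^1$ is of real principal type, and for small times its forward Hamilton flow from $q$ coincides with the generalized billiard flow $\varphi_s(q)$, which stays in the neighborhood where the factorization holds provided $\varepsilon$ is small enough. Hörmander's scalar propagation of singularities applied to $(\hsc D_{x_1}-\Xi_+)u^+\in\Psi_\hsc^{-\infty}$ then transports the $H_\hsc^s$ regularity of $u^+$ from $q$ into $\varphi_{(0,\varepsilon]}(q)$; since the wavefront of $u$ on the outgoing sheet over the forward flowout coincides with that of $u^+$, this gives $\varphi_s(q)\notin\WF_{H_\hsc^1}^s(u)$ for $s\in(0,\varepsilon]$, as claimed. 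The hardest part will be the Lopatinskii-type reduction to a microlocal $2\times 2$ boundary system with uniform control over $V$, including a careful treatment of the $\Psi_\hsc^{-\infty}$ remainders in the factorization via a positive-commutator estimate of the type appearing in Appendix~\ref{a:propagate}.
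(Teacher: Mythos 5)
Your choice of $\Lambda^{-1}$ is the fatal flaw, and it makes the rest of the argument unsalvageable as written. You take $\Lambda^{-1}=c_0(1-\hsc^2\Delta_\Gamma)^{-1/2}$ and shrink $c_0$ so that \emph{both} boundary symbols $\pm\sigma(\Lambda^{-1})\sqrt{1-|\xi'|_g^2-\sigma(A)}-1$ stay bounded away from zero. That is precisely the opposite of what the lemma requires. The operator $\Lambda^{-1}$ must be chosen so that $\Lambda^{-1}\hsc D_\nu u-u$ at $x_1=0$ \emph{is} (up to an elliptic, order-$(-1)$ prefactor) the outgoing factor $(\hsc D_{x_1}-\Lambda_-)u|_{x_1=0}$ from the factorization of Lemma~\ref{l:factor}. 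Concretely, on $V$ one needs $\sigma(\Lambda^{-1})=(1-|\xi'|_g^2)^{-1/2}$ (equivalently, $-\Lambda_-(0)^{-1}$ microlocally), so that exactly one of the two boundary symbols \emph{vanishes}. This is what makes $\Lambda^{-1}\hsc D_\nu-I$ a DtN-type ``transparent'' boundary operator — hence the name of the lemma — and is the entire mechanism. Your $\Lambda^{-1}$ is a small perturbation of $-I$ in the boundary operator, i.e.\ essentially a Dirichlet condition. For Dirichlet data, the one-sided conclusion (regularity on only the forward flowout $\varphi_{(0,\e]}(q)$) is simply false: a Gaussian beam arriving at $q$ along the incoming ray and reflecting off a Dirichlet boundary has $u|_\Gamma=0$ and $\mathsf Pu$ small, yet carries a strong singularity on the reflected ray $\varphi_s(q)$, $s>0$.

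Relatedly, the step where you claim ``the pair of microlocal amplitudes $u^\pm|_\Gamma$ is uniquely and boundedly recovered from $(\mathsf Pu,\mathsf Bu)$ at $q$'' is false. $\mathsf Bu$ is a single scalar boundary equation, giving \emph{one} linear combination $\alpha\, u^+|_\Gamma+\beta\, u^-|_\Gamma$ with both $\alpha,\beta$ elliptic under your choice. The invertibility of the trace-to-amplitude map $(u^+,u^-)\mapsto(u|_\Gamma,\hsc D_{x_1}u|_\Gamma)$ does not help, because the hypotheses do not give you both boundary traces — they give $\mathsf Bu$ and nothing about $u^-|_\Gamma$ (no radiation or decay condition is available here). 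You cannot invert a rank-one map. With the correct $\Lambda^{-1}$ no such inversion is attempted: the boundary equation is itself, modulo an elliptic factor, the control of $(\hsc D_{x_1}-\Lambda_-)u(0)$, and that single quantity is propagated forward directly using the commuting tangential cutoffs $E_\pm$ of Lemma~\ref{l:commuter} and the energy estimate \eqref{e:factoredEnergy}; no ``Lopatinskii-type $2\times2$ reduction'' is needed, and none is available from a single boundary scalar.
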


Finally, we require a lemma that gives conditions under which the problem~\eqref{e:damped} is uniquely solvable.
\begin{lemma}
\label{l:dampedResolve}
Let $\Omega\subset \mathbb{R}^d$ open with smooth boundary, $\Gamma\Subset\mathbb{R}^d$ and $g$ a Riemannian metric on $\Omega$ with $\supp (g-I)\Subset \mathbb{R}^d$.
Let  $\hsc_0>0$, $Q\in \Psi^{-1}(\Gamma)$ with $\sigma(Q)\geq 0$, $A\in \Psit^{\comp}$ with $A|_{\Gamma}=0$, $\sigma(A)\in \mathbb{R}$, $W\in\Psi^{\comp}(\Omega)$ with $\sigma(W)\geq 0$. Suppose that for all $R>0$ there are $\delta>0$, $T>0$ such that for all $\rho\in \mc{Z}\cap( B(0,R))$, either there is $0\leq t\leq T$ such that 
\begin{equation}
\label{e:backGood1}
\varphi_{-t}^{\Omega}(\rho)\in \{\sigma(W)>0\}\cup\big( \mc{H}\cap \{\sigma(Q)>0\}\big)\cup T^*\mathbb{R}^d\setminus B(0,R)
\end{equation}
or
\begin{equation}
\label{e:backGood2}
\bigcup_{t-\delta}^{t}\varphi_{s}^{\Omega}(\rho)\subset \mc{G} \cap\{\sigma(Q)>0\}.
\end{equation}
Then for every $R>0$ and $\chi\in C_c^\infty(\overline{\Omega})$, there is $C>0$ such that for all $f_1\in L^2(\Omega)$, $\supp f_1\subset B(0,R)$,  $f_2\in H_{\hsc}^{3/2}(\Gamma)$, the solution, $u$ to~\eqref{e:damped} exists, is unique and for all $0<\hsc<\hsc_0$, 
$$
\|\chi u\|_{H_{\hsc}^2(\Omega)}+\|u\|_{H_{\hsc}^{3/2}(\Gamma)}+\|\hsc \partial_\nu u\|_{H_{\hsc}^{1/2}(\Gamma)}\leq C(\|f_1\|_{L^2(\Omega)}+\|f_2\|_{H_{\hsc}^{3/2}(\Gamma)}).
$$
\end{lemma}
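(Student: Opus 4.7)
The plan is to prove the a priori estimate by a semiclassical contradiction/defect measure argument and then deduce both uniqueness (directly) and existence (via a Fredholm reformulation, using injectivity from uniqueness). Throughout, the key tools are Theorem~\ref{t:basicPropagate} (propagation of singularities with dissipation) and Lemma~\ref{l:DtN} (ellipticity of the impedance/DtN map in $\mc{H}$).

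Step 1 (contradiction setup). Suppose the estimate fails. Then one finds sequences $\hsc_n\to 0^+$, $(u_n,f_{1,n},f_{2,n})$ satisfying~\eqref{e:damped} with $\supp f_{1,n}\subset B(0,R)$ such that
\[
\|\chi u_n\|_{H_\hsc^2(\Omega)}+\|u_n\|_{H_\hsc^{3/2}(\Gamma)}+\|\hsc\partial_\nu u_n\|_{H_\hsc^{1/2}(\Gamma)}=1,\qquad \|f_{1,n}\|_{L^2}+\|f_{2,n}\|_{H_\hsc^{3/2}}\to 0.
\]
Because $u_n$ is outgoing with uniformly compactly supported inhomogeneity and $g$ is flat outside a compact set, the standard free-resolvent radiation estimate extends $u_n$ to $L^2_{\loc}$ uniformly, so after extracting a subsequence $u_n 1_\Omega$ admits a semiclassical defect measure $\mu$ on $T^*\mathbb{R}^d$.

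Step 2 (vanishing of $\mu$). Since $\mathsf{P}u_n=\hsc f_{1,n}\to 0$ in $L^2$, the support of the pushed-forward measure ${}^b\pi_*\mu$ lies in $\mc{Z}$, and Theorem~\ref{t:basicPropagate} together with the standard energy identity for~\eqref{e:damped} gives two things: (a) $\mu$ vanishes on the dissipative set $\{\sigma(W)>0\}$ since $\operatorname{Im}\langle\mathsf{P}u_n,u_n\rangle$ controls $\int\sigma(W)d\mu$; (b) ${}^b\pi_*\mu$ is invariant under the backward GBB flow of $|\xi|_g^2-\sigma(A)-1$ away from the damping set and away from the boundary contribution where $\sigma(Q)>0$. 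The boundary condition $u_n=Q\hsc D_\nu u_n-f_{2,n}$ combined with Green's identity produces on $\Gamma$ a positive dissipation term $\int \sigma(Q)|\sigma(\hsc D_\nu u_n)|^2$ in the hyperbolic region (plus a glancing analogue), forcing $\mu$ to have no mass at points of $\mc{H}\cap\{\sigma(Q)>0\}$; the outgoing condition and Lemma~\ref{l:invariance} force $\mu$ to vanish on the incoming set at infinity. The hypothesis~\eqref{e:backGood1}-\eqref{e:backGood2} then guarantees that every point of $\mc{Z}\cap B(0,R)$ is connected by a backward GBB of length $\le T$ to one of these three zero-mass regions, so invariance of $\mu$ along GBBs yields $\mu\equiv 0$ on $\mc{Z}\cap B(0,R)$.

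Step 3 (upgrade and contradiction). From $\mu=0$ we obtain $\|\chi u_n\|_{L^2(\Omega)}\to 0$. Theorem~\ref{t:basicPropagate} upgrades this to $\|\chi u_n\|_{H_\hsc^2(\Omega)}\to 0$ (elliptic off $\mc{Z}$; controlled via the GBB propagation on $\mc{Z}$). Lemma~\ref{l:DtN} applied microlocally then controls the full traces $\|u_n\|_{H_\hsc^{3/2}(\Gamma)}+\|\hsc\partial_\nu u_n\|_{H_\hsc^{1/2}(\Gamma)}\to 0$, since $\Lambda^{-1}\hsc D_\nu u_n-u_n$ can be recovered from $\mathsf{B}u_n=f_{2,n}$ up to an elliptic error on $\mc{H}$ and using the damping in $\mc{G}$. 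This contradicts the normalisation. Uniqueness of~\eqref{e:damped} follows by applying the same estimate to the homogeneous problem. For existence, rewrite~\eqref{e:damped} as an equation $(I+\mathcal K_\hsc)u=g_\hsc$ on $H_\hsc^{3/2}(\Omega)\times H_\hsc^{3/2}(\Gamma)$ by inverting the model operator $-\hsc^2\Delta_g-1$ (with outgoing boundary condition at infinity) against a smoothing cutoff, making $\mathcal K_\hsc$ compact; injectivity from uniqueness and Fredholm theory produce the solution, and the estimate then transfers by closed graph.

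The main technical obstacle is the glancing case~\eqref{e:backGood2}: in $\mc{G}\cap\{\sigma(Q)>0\}$ the boundary dissipation is degenerate because $|\sigma(\hsc D_\nu u)|$ is small at glancing, and one needs to quantify that a positive-length glancing segment still forces the defect measure to vanish there. This is handled by combining the glancing clause of Theorem~\ref{t:basicPropagate} (which permits an $H_\hsc^{s+\beta}$ loss on $\mathsf{B}u$ along $\mc{G}_g$) with a microlocal commutator/positive-commutator argument testing against a symbol supported along the glancing trajectory; the $\delta$-length hypothesis in~\eqref{e:backGood2} is exactly what is needed to make this commutator estimate yield a strictly positive dissipation and thus to close the argument.
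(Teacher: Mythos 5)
The paper does not give a self-contained proof of this lemma: it simply asserts that ``This lemma follows directly from the analysis in~\cite[Lemma 3.2]{GMS}.'' Your defect-measure contradiction sketch is the right philosophy and, as far as one can tell, consistent in spirit with what the cited reference does. However, as written there are several substantive gaps that you should be aware of before treating this as a proof.

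First, your Step~3 claims that $\mu\equiv 0$ upgrades to $\|\chi u_n\|_{H_\hsc^2}+\|u_n\|_{H_\hsc^{3/2}(\Gamma)}+\|\hsc\partial_\nu u_n\|_{H_\hsc^{1/2}(\Gamma)}\to 0$ ``via Theorem~\ref{t:basicPropagate}'' and Lemma~\ref{l:DtN}. But both of those statements are formulated at the level of $b$-wavefront sets and give no uniform norm bound; they record microlocal regularity along GBBs, not quantitative estimates with explicit constants. To close the contradiction you need norm-level versions such as the a priori estimates in Lemmas~\ref{l:elliptic}, \ref{l:hyperbolic}, \ref{l:microNormalDerivative}, and \ref{l:ivrii}, applied over a finite cover of $\mc{Z}\cap B(0,R)$, not the wavefront-set statements. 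The cleaner version of your argument normalises only $\|\chi u_n\|_{L^2}=1$ and uses those quantitative lemmas to deduce that the other three norms are $\lesssim 1+o(1)$, then derives the contradiction from $\mu=0$; as written, the logical structure does not quite close.

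Second, you invoke Lemma~\ref{l:invariance} to conclude that $\mu$ vanishes on the incoming set at infinity, but that lemma is stated for the \emph{Dirichlet} boundary condition $u|_\Gamma=0$; here the boundary condition is $Q\hsc D_\nu u - u = f_2$. The at-infinity conclusion is indeed driven by the outgoing radiation condition and is insensitive to the boundary condition at $\Gamma$, but you should either prove the relevant impedance analogue or restrict the appeal to the incoming-set-at-infinity statement and argue separately on $\Gamma$. More importantly, the invariance of $\mu$ along GBBs \emph{through} boundary points requires combining the interior equation with the boundary energy identity under the impedance condition; this is exactly the delicate step, and citing the Dirichlet version does not cover it.

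Third, the glancing case~\eqref{e:backGood2} — which you yourself flag as the main obstacle — is left as a gesture toward ``a microlocal commutator/positive-commutator argument.'' Since the hypothesis~\eqref{e:backGood2} is precisely the nontrivial component of the assumptions (a $\delta$-interval spent in $\mc{G}\cap\{\sigma(Q)>0\}$), treating it as a remark rather than a lemma leaves the argument incomplete; the quantitative glancing estimate from Lemma~\ref{l:ivrii} (or the $\mc{G}_g$ clause of Theorem~\ref{t:basicPropagate}) is where this would have to be fleshed out. Finally, the Fredholm reduction for existence is loose: $-\hsc^2\Delta_g-1$ with outgoing condition is not invertible on $L^2$, so ``inverting the model operator against a smoothing cutoff'' needs to be replaced by a standard boundary-layer or grafting argument (black-box or limiting-absorption plus layer potentials), and then the a priori estimate transfers to the constructed solution. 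None of these issues is conceptually fatal, but each needs to be filled in before this qualifies as a complete proof.
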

\begin{proof}
This lemma follows directly from the analysis in~\cite[Lemma 3.2]{GMS}. 
\end{proof}

With these lemmas in hand, we are able to construct approximate solutions to~\eqref{e:damped} that have prescribed wavefront set properties.
\begin{lemma}[Construction of approximate solutions to \eqref{e:damped}]
\label{l:approxSolve}
Let $\Omega\subset \mathbb{R}^d$ open with smooth boundary, $\Gamma\Subset\mathbb{R}^d$ and $g$ a Riemannian metric on $\Omega$ with $\supp (g-I)\Subset \mathbb{R}^d$.
Let $V\Subset \mc{H}$, $\hsc_0>0$, $Q\in \Psi^{-1}(\Gamma)$ with $\sigma(Q)\geq 0$, $U\subset \mc{Z}$ and suppose that for any $R>0$ and $\rho\in U$ there is $0\leq t<\infty$ such that 
$$
\varphi_{t}^{\Omega}(\rho)\in (\WF(Q-\Lambda^{-1}))^c\cup \{|x|>R\}
$$
Then for all  $R>0$, there is $C>0$ such that for all $\hsc$-tempered,$f_1\in L^2(\Omega)$, $\supp f_1\subset B(0,R)$,  $f_2\in H_{\hsc}^{3/2}(\Gamma)$, with 
$$
({}^b\WF(f_1)\cup \WF(f_2))\cap \mc{Z}\subset U,
$$ 
there is $u\in H_{\hsc ,\loc}^2(\Omega_+)$ such that, $u$ satisfies
\begin{equation}
\label{e:damped2}
\begin{cases}
(-\hsc^2\Delta_g-1)u=\hsc f_1+O(\hsc^{\infty})_{L^2_{\comp}}&\text{in }\Omega\\
Q\hsc D_{\nu}u-u=f_2&\text{on }\Gamma\\
u&\text{ is outgoing},
\end{cases}
\end{equation}
\begin{equation}
\label{e:estU}
\|u\|_{H_{\hsc}^2(\Omega)}+\|u\|_{H_{\hsc}^{3/2}(\Gamma)}+\|\hsc \partial_\nu u\|_{H_{\hsc}^{1/2}(\Gamma)}\leq C(\|f_1\|_{L^2(\Omega)}+\|f_2\|_{H_{\hsc}^{3/2}(\Gamma)}),
\end{equation}
and
$$
{}^b\WF(u)\subset {}^b\WF(f_1)\cup \WF(f_2)\cup \bigcup_{q\in {}^b\WF(f_1)\cup \WF(f_2)}\overline{\bigcup_{0\leq t<T_{Q}(q) }\varphi_{t}^{\Omega}(q)},
$$
where 
$$
T_Q(q):=\inf\{t\geq 0\,:\, \varphi_t^{\Omega}(q)\in (\WF(Q-\Lambda_+^{-1}))^c\}.
$$
\end{lemma}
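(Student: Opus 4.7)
The plan is to reduce the construction to the exact solvability statement of Lemma~\ref{l:dampedResolve} by introducing two auxiliary operators: a tangential Hamiltonian perturbation $A\in\Psit^{\comp}(\Omega)$ with $A|_{\Gamma}=0$, $\sigma(A)\geq 0$, and an interior damping $W\in\Psi^{\comp}(\Omega)$ with $\sigma(W)\geq 0$, both chosen to have principal symbols vanishing on the ``target'' set
\[
\Sigma \;:=\; \Big({}^b\WF(f_1)\cup \WF(f_2)\Big)\;\cup\; \bigcup_{q\in\,{}^b\WF(f_1)\cup \WF(f_2)}\overline{\bigcup_{0\leq t< T_Q(q)}\varphi_t^{\Omega}(q)}.
\]
If such $A$ and $W$ are chosen so that the modified problem also satisfies the dynamical hypothesis \eqref{e:backGood1}--\eqref{e:backGood2} of Lemma~\ref{l:dampedResolve}, then that lemma produces an exact outgoing solution $u$ of $(-\hsc^2\Delta_g-iW-1-A)u=\hsc f_1$, $(Q\hsc D_\nu-I)u=f_2$ satisfying \eqref{e:estU}. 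Microlocal disjointness of $\supp\sigma(A),\supp\sigma(W)$ from ${}^b\WF(u)\subset\Sigma$ will then force $Au,Wu=O(\hsc^\infty)_{L^2_{\comp}}$, so the same $u$ also solves \eqref{e:damped2} modulo the required $O(\hsc^\infty)$ remainder.

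The geometric construction of $A$ and $W$ is the heart of the argument. Using the hypothesis $T_Q(\rho)<\infty$ for every $\rho\in U$, a compactness argument on $\mc{Z}\cap B(0,R')$ (with $R'$ larger than $R$ and the supports of $g-I$ and $\Omega^-$) provides a finite cover of this set by tubes about GBB segments not contained in $\Sigma$. On each tube I would invoke either Lemma~\ref{l:modifiedDynamics}, with target set $V\subset \mc{H}\cap(\WF(Q-\Lambda^{-1}))^c$ on which Lemma~\ref{l:DtN} later supplies the boundary elliptic estimate, or Lemma~\ref{l:modifiedDynamics2} for tubes avoiding the boundary; the ``$\Sigma_q$'' sets in those lemmas are chosen to lie inside $\Sigma$, so the resulting perturbations have supports disjoint from $\Sigma$ by construction. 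Summing via a partition of unity gives $A$ with $\supp\sigma(A)\cap\Sigma=\emptyset$ such that every backward GBB of the perturbed Hamiltonian $|\xi|_g^2-\sigma(A)-1$ in the perturbed characteristic set in $B(0,R')$ reaches either $\mc{H}\cap(\WF(Q-\Lambda^{-1}))^c$ or $\{|x|>R'\}$ in bounded time. A compactly supported $W$ with $\supp\sigma(W)\cap\Sigma=\emptyset$ is then added to absorb any residual backward trajectories and ensure \eqref{e:backGood1}--\eqref{e:backGood2} hold on all of $\mc{Z}_A\cap B(0,R')$.

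With the damped, perturbed problem solved by Lemma~\ref{l:dampedResolve}, the last step is the wavefront-set inclusion, which follows from Theorem~\ref{t:basicPropagate} combined with Lemma~\ref{l:DtN}. Theorem~\ref{t:basicPropagate} expresses ${}^b\WF(u)$ as a union of maximally extended backward GBBs for the perturbed Hamiltonian in the complement of ${}^b\WF(f_1)\cup\WF(f_2)\cup\{\sigma(W)>0\}$, while Lemma~\ref{l:DtN} truncates such a GBB as soon as it reaches a boundary point in $\mc{H}\cap(\WF(Q-\Lambda^{-1}))^c$, where the boundary condition microlocally reads $(\Lambda^{-1}\hsc D_\nu-I)u=f_2+O(\hsc^\infty)$ and is effectively elliptic. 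Combining these two mechanisms with $\supp\sigma(A)\cup\supp\sigma(W)\cap\Sigma=\emptyset$ forces ${}^b\WF(u)\subset\Sigma$, whence $Au,Wu=O(\hsc^\infty)_{L^2_{\comp}}$. The main obstacle is the second paragraph: one must position $A$ and $W$ outside a data-dependent, possibly large, set $\Sigma$ while still providing enough damping for all backward GBBs in $\mc{Z}_A\cap B(0,R')$; the different regimes at hyperbolic, glancing, and interior points are precisely what force the combined use of Lemmas~\ref{l:modifiedDynamics} and~\ref{l:modifiedDynamics2} in a partition-of-unity construction. Once the geometric step is complete, solvability, propagation, and the $O(\hsc^\infty)$ residual follow routinely from the cited lemmas.
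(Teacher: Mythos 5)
Your proposal correctly identifies all the ingredients --- Lemma~\ref{l:dampedResolve} for solvability, Lemmas~\ref{l:modifiedDynamics}/\ref{l:modifiedDynamics2} for the tangential perturbations, Theorem~\ref{t:basicPropagate} and Lemma~\ref{l:DtN} for the wavefront bound --- but the step ``summing via a partition of unity gives $A$ [\ldots] such that every backward GBB of the perturbed Hamiltonian $|\xi|_g^2-\sigma(A)-1$ [\ldots] reaches [\ldots]'' has a genuine gap. Lemmas~\ref{l:modifiedDynamics} and~\ref{l:modifiedDynamics2} produce, for a \emph{single} $q$, a perturbation $a_q$ such that all backward GBBs of the \emph{single} perturbed Hamiltonian $|\xi|_g^2-a_q-1$ escape in bounded time; this is not an additive property. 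If you set $A:=\sum_i\Op(a_{q_i})$, the GBB dynamics of $|\xi|_g^2-\sum a_{q_i}-1$ has no a priori relation to those of the individual summands, so the global escape hypotheses \eqref{e:backGood1}--\eqref{e:backGood2} of Lemma~\ref{l:dampedResolve} are not verified. A second issue: each $a_{q_i}$ is built to avoid only its own tube $\Sigma_{q_i}$, so for a large target set $\Sigma$ one can have $\supp a_{q_i}\cap\Sigma_{q_j}\neq\emptyset$ for $j\neq i$, destroying the disjointness $\supp\sigma(A)\cap\Sigma=\emptyset$ that your last step relies on.

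The paper's proof dodges both problems by decomposing the \emph{data} rather than trying to build one global perturbation. It takes a finite cover $\{U_{q_i}\}$ of $({}^b\WF(f_1)\cup\WF(f_2))\cap\mc{Z}$ with subordinate partitions of unity $\{\psi_i\}$, $\{\chi_i\}$, and for each $i$ solves a \emph{separate} damped problem $(-\hsc^2\Delta_g-iW_i^*W_i-1-A_i)u_i=\hsc\,{}^b\Op(\psi_i)f_1$ with boundary condition $Q\hsc D_\nu u_i-u_i=\Op(\chi_i)f_2$, where $(A_i,W_i)$ is built from Lemma~\ref{l:modifiedDynamics} or~\ref{l:modifiedDynamics2} for the single tube through $q_i$. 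Each such perturbed problem individually has the escape property, so Lemma~\ref{l:dampedResolve} applies; Theorem~\ref{t:basicPropagate} and Lemma~\ref{l:DtN} localize ${}^b\WF(u_i)$ into the tube about $q_i$, which is disjoint from $\supp\sigma(A_i)\cup\supp\sigma(W_i)$ by construction; hence $A_iu_i,\ W_i^*W_iu_i=O(\hsc^\infty)_{L^2_{\comp}}$, and $u:=\sum_i u_i$ satisfies the \emph{unperturbed} problem modulo $O(\hsc^\infty)$. This ``solve-piecewise-then-superimpose'' mechanism is what replaces your ``sum the Hamiltonians'' step, and is the missing idea.
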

\begin{proof}
Let $q\in {}^b\WF(f_1)\cup \WF(f_2)\cap \mc{Z}$. Then, there is $0\leq t\leq T$ such that 
$$
\varphi_{t}^{\Omega}(\rho)\in (\WF(Q-\Lambda^{-1}))^c\cup \{|x|>R\}.
$$
In particular, either Lemma~\ref{l:modifiedDynamics} or~\ref{l:modifiedDynamics2} applies. Let $a_q$ as constructed there so that either~\eqref{e:dynamics} or~\eqref{e:dynamics2} holds. Then, since $\mc{Z}$ is compact, there is a neighborhood $U_q$ of $q$ and $V_q\Subset V_q^+\Subset T^*\Omega$ such that 
$$
\bigcup_{q'\in U\cap \mc{Z}}\overline{\bigcup_{0\leq t<T_{Q}(q') }\varphi_{t}^{\Omega}(q')}\cap (\supp a\cup V^+_q)=\emptyset.
$$
and such that~\eqref{e:dynamics} or~\eqref{e:dynamics2} holds with $\{d(x,\Gamma)>0\}$ replaced by $V_q$. 

Now, since ${}^b\WF(f_1)\cup \WF(f_2)\cap \mc{Z}$ is compact, there are $q_1,\dots, q_N$ such that
$$
{}^b\WF(f_1)\cup \WF(f_2)\cap \mc{Z}\subset \bigcup_{i=1}^N U_{q_i}. 
$$
Let $\psi_i\in C_c^\infty(U_{q_i})$ be a partition of unity near ${}^b\WF(f_1)$ and $\chi_i\in C_c^\infty(U_i\cap T^*\Gamma)$ a partition of unity near $\WF(f_2)\cap \mc{Z}$ and set $\psi_{0}:=1-\sum_i\psi_i$, $\chi_{0}:=1-\sum_i\chi_i$. 
Let also $W_i\in \Psi^{\comp}(\Omega)$ with $\sigma(W_i)=1$ on $V_{q_i}$ and $\WF(W_i)\Subset V^+_{q_i}$, and $A_i=\Op(a_{q_i})$, $i=1,\dots, N$, and set $W_0=W_1$, $A_0=A_1$.  

Then, by Lemma~\ref{l:dampedResolve}, for $i=0,1,\dots,N$, there is $u_i$ satisfying
\begin{equation*}
\begin{cases}
(-\hsc^2\Delta_g-A_i-iW_i^*W_i-1)u=\hsc{}^b\Op(\psi_i)f_1&\text{in }\Omega\\
Q\hsc D_{\nu}u_i-u_i=\Op(\chi_i)f_2&\text{on }\Gamma\\
u_i&\text{ is outgoing},
\end{cases}
\end{equation*}
with $u_i$ satisfying~\eqref{e:estU}. Moreover, by Theorem~\ref{t:basicPropagate}, Lemma~\ref{l:DtN}, and the outgoing property,
$$
{}^b\WF(u_i)\subset {}^b\WF(f_1)\cup \WF(f_2)\cup \bigcup_{q\in( {}^b\WF(f_1)\cup \WF(f_2))\cap U_{q_i}}\overline{\bigcup_{0\leq t<T_{Q}(q) }\varphi_{t}^{\Omega}(q)}
$$
In particular, ${}^b\WF(u_i)\cap ({}^b\WF(A_i)\cap {}^b\WF(W_i))=\emptyset$, and hence
\begin{equation*}
\begin{cases}
(-\hsc^2\Delta_g-1)u=\hsc{}^b\Op(\psi_i)f_1+O(\hsc^{\infty})_{L^2_{\comp}}&\text{in }\Omega\\
Q\hsc D_{\nu}u_i-u_i=\Op(\chi_i)f_2&\text{on }\Gamma\\
u_i&\text{ is outgoing}.
\end{cases}
\end{equation*}
Setting $u:=\sum_{i=0}^N u_i$ completes the proof.
\end{proof}

\subsection{Review of resolvents and layer potentials}
In this section, we recall some facts about the free, outgoing resolvent and boundary layer potentials; we use the latter to control the boundary traces of solutions to $(-\hsc^2\Delta-1)u=0$. Recall that the outgoing resolvent $R_0(\hsc):L^2_{\comp}\to L^2_{\loc}$ is the unique solution to 
$$
(-\hsc^2\Delta-1)R_0(\hsc)f=f\text{ in }\mathbb{R}^d,\qquad R_0(\hsc)f\text{ is outgoing}.
$$
It is well known (see e.g.~\cite{GaSpWu:20}) that for all  $\chi\in C_c^\infty(\mathbb{R}^d)$, $\hsc_0>0$, and $s\in \mathbb{R}$, there is $C>0$ such that for $0<\hsc<\hsc_0$,
\begin{equation}
\label{e:freeResolveEstimates}
\|\chi R_0(\hsc)\chi\|_{H_{\hsc}^s\to H_{\hsc}^{s+2}}\leq C\hsc^{-1}.
\end{equation}

The single and double layer potentials associated to $\Gamma$ are then given, respectively, for $f\in L^2(\Gamma)$ by
$$
\mc{S}\ell f(x):= \hsc^2\int_{\Gamma}R_0(\hsc)(x,y)f(y)dS(y),\qquad \mc{D}\ell f(x):=\hsc^2\int_{\Gamma}\partial_{\nu_y}R_0(\hsc)(x,y)f(y)dS(y),
$$
where $R_0(\hsc)(x,y)$ is the kernel of $R_0(\hsc)$.

\begin{lemma}
Let $\chi \in C_c^\infty(\mathbb{R}^d)$, and $B\in \Psi^{\comp}(\mathbb{R}^d)$ with $\WF(B)\cap \bigcup_{t\geq 0}\varphi_t^{\mathbb{R}^d}(S^*\Gamma)=\emptyset$. Then, for all $\hsc_0>0$ there is $C>0$ such that for $0<\hsc<\hsc_0$,
\begin{equation}
\label{e:layer}
\|B\mc{S}\ell\|_{L^2(\Gamma)\to L^2(\mathbb{R}^d)}\leq C\hsc, \qquad
\|\chi \mc{D}\ell\|_{L^2(\Gamma)\to L^2(\mathbb{R}^d)}\leq C.
\end{equation}
\end{lemma}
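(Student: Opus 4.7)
\emph{Proof plan.} The key reformulation is
\[
\mathcal{S}\ell f = \hsc^2 R_0(\hsc)(f\otimes\delta_\Gamma),\qquad \mathcal{D}\ell f = \hsc^2 R_0(\hsc)(\partial_{\nu_y}(f\otimes\delta_\Gamma)),
\]
which expresses the layer potentials via the outgoing free resolvent applied to conormal distributions on $\Gamma$. Both bounds then combine~\eqref{e:freeResolveEstimates} with semiclassical Sobolev estimates on $f\otimes\delta_\Gamma$; the microlocal hypothesis on $B$ is used in the first estimate to remove the $\hsc^{-1/2}$-loss coming from the semiclassical trace theorem, while no such gain is needed for the second.

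For $\|\chi\mathcal{D}\ell\|_{L^2(\Gamma)\to L^2}\le C$, one has $\|f\otimes\delta_\Gamma\|_{H^{-1/2-\varepsilon}_\hsc(\mathbb{R}^d)}\le C\hsc^{-1/2}\|f\|_{L^2(\Gamma)}$ by a direct semiclassical Fourier computation (saturated by stationary-phase asymptotics such as $J_0(r)\sim\sqrt{2/(\pi r)}\cos(r-\pi/4)$ in $d=2$). A classical derivative costs $\hsc^{-1}$ in the semiclassical norm, so $\|\partial_{\nu_y}(f\otimes\delta_\Gamma)\|_{H^{-3/2-\varepsilon}_\hsc}\le C\hsc^{-3/2}\|f\|_{L^2(\Gamma)}$; combining with $\|\chi R_0(\hsc)\chi\|_{H^{-3/2-\varepsilon}_\hsc\to H^{1/2-\varepsilon}_\hsc}=O(\hsc^{-1})$ from~\eqref{e:freeResolveEstimates} and the $\hsc^2$ prefactor yields the claim. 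Alternatively, jump relations represent $\mathcal{D}\ell f$ as a difference of outgoing/incoming Dirichlet-problem solutions in $\Omega^\pm$ with $L^2(\Gamma)$ boundary data, and $\hsc$-uniform well-posedness of those problems delivers the same bound.

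For the single-layer bound, I would argue by duality:
\[
\|B\mathcal{S}\ell\|_{L^2(\Gamma)\to L^2(\mathbb{R}^d)} = \|\mathcal{S}\ell^*B^*\|_{L^2(\mathbb{R}^d)\to L^2(\Gamma)},\qquad \mathcal{S}\ell^*B^*g = \hsc^2\gamma_\Gamma v,\quad v:=R_0(\hsc)^*B^*g,
\]
so it suffices to prove $\|\gamma_\Gamma v\|_{L^2(\Gamma)}\le C\hsc^{-1}\|g\|_{L^2}$. Since $B\in\Psi^{\comp}$ is essentially smoothing, \eqref{e:freeResolveEstimates} gives $\|\chi v\|_{H^s_\hsc}\le C_s\hsc^{-1}\|g\|_{L^2}$ for every $s$ and every $\chi\in C_c^\infty$. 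The naive semiclassical trace inequality $\|\gamma_\Gamma u\|_{L^2(\Gamma)}\le C\hsc^{-1/2}\|u\|_{H^1_\hsc}$---whose $\hsc^{-1/2}$-loss is saturated by coherent states concentrated along $S^*\Gamma$---yields only $\hsc^{-3/2}$. The missing factor $\hsc^{1/2}$ comes from propagation of singularities for the incoming resolvent $R_0(\hsc)^*$, under which singularities propagate backward, so
\[
\WF(v)\subset \WF(B)\cup\bigcup_{t\le 0}\varphi_t^{\mathbb{R}^d}\!\bigl(\WF(B)\cap\{|\xi|=1\}\bigr),
\]
and the hypothesis $\WF(B)\cap\bigcup_{t\ge 0}\varphi_t^{\mathbb{R}^d}(S^*\Gamma)=\emptyset$ forces $\WF(v)$ to be disjoint from an open neighbourhood of the compact set $S^*\Gamma$. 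Picking $\chi_0\in\Psi^{\comp}(\mathbb{R}^d)$ microlocally $\equiv 1$ on a neighbourhood of $S^*\Gamma$ with $\WF(\chi_0)\cap\WF(v)=\emptyset$, one has $\chi_0 v = O(\hsc^\infty)_{H^s_\hsc}$, reducing the problem to bounding $\gamma_\Gamma(1-\chi_0)v$, which is microlocally supported away from $S^*\Gamma$.

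The main obstacle is a sharp, no-loss semiclassical trace inequality $\|\gamma_\Gamma u\|_{L^2(\Gamma)}\le C\|u\|_{H^1_\hsc(\mathbb{R}^d)}$ for $u$ with wavefront set disjoint from a neighbourhood of $S^*\Gamma$. I would establish it by Fermi-coordinate ODE analysis of $(-\hsc^2\Delta-1)u = (-\hsc^2\Delta-1)u$ in the normal variable: at tangential frequencies $|\xi'|_g$ bounded away from one the equation is semiclassically elliptic and Corollary~\ref{cor:elliptic} upgrades $u$ in arbitrary $H^s_\hsc$ near $\Gamma$ without the resolvent's $\hsc^{-1}$-loss, while at $|\xi'|_g\approx 1$ the microlocal disjointness from $S^*\Gamma$ rules out the concentrated coherent states responsible for the loss. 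Assembling these microlocal pieces by a partition of unity in $T^*\mathbb{R}^d|_\Gamma$ and combining with the $H^1_\hsc$ bound on $v$ yields $\|\gamma_\Gamma(1-\chi_0)v\|_{L^2(\Gamma)}\le C\hsc^{-1}\|g\|_{L^2}$, completing the proof.
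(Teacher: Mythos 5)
Your single-layer argument is essentially the paper's: write $B\mathcal{S}\ell$ by duality as $\hsc^2\gamma_\Gamma R_0(\hsc)^*B^*$, note that $\WF\bigl(R_0(\hsc)^*B^*g\bigr)$ is disjoint from a neighbourhood of $S^*\Gamma$ (propagation for the incoming resolvent plus the standing hypothesis), and then invoke a restriction estimate with no $\hsc^{-1/2}$ glancing loss. The paper cites such an estimate from [GaLe:20, Prop.~3.1]; you sketch re-proving it via Fermi-coordinate ellipticity. That part is fine in spirit, though your sketch treats only the elliptic region ($|\xi'|_g$ away from $1$); in the hyperbolic region the operator is not elliptic, and one needs a factorization/energy argument rather than Corollary~\ref{cor:elliptic}.

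The genuine gap is the double-layer bound. Run the arithmetic you set up: $\|\partial_{\nu_y}(f\otimes\delta_\Gamma)\|_{H^{-3/2-\varepsilon}_\hsc}\lesssim\hsc^{-3/2}\|f\|_{L^2(\Gamma)}$, $\|\chi R_0(\hsc)\chi\|_{H^{-3/2-\varepsilon}_\hsc\to H^{1/2-\varepsilon}_\hsc}\lesssim\hsc^{-1}$, and the $\hsc^2$ prefactor give $\hsc^2\cdot\hsc^{-1}\cdot\hsc^{-3/2}=\hsc^{-1/2}$, not $O(1)$. So the naive Sobolev route loses a factor $\hsc^{1/2}$; it cannot "yield the claim" as written. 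The reason the naive route cannot be patched is that both estimates you chain are individually sharp, but they fail to see the Helmholtz structure: the $\hsc^{-1/2}$ in the conormal-Sobolev estimate and the $\hsc^{-1}$ in the free-resolvent estimate are saturated on disjoint microlocal regions that the product never occupies. Your alternative via jump relations is also broken: $\gamma^\pm\mathcal{D}\ell f=(\pm\tfrac12 I+K_k)f$, so $\mathcal{D}\ell f|_{\Omega^-}$ is an interior Helmholtz solution and the interior Dirichlet Poisson operator is \emph{not} $\hsc$-uniformly bounded (it has poles at Dirichlet eigenvalues); for the exterior piece you would also need a resolvent estimate, which is what one wants to avoid assuming.

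The correct route, which the paper takes, is to dualize the double layer too. Writing $\mathcal{D}\ell=R_0(\hsc)L^*\hsc^2\delta_\Gamma$ for a vector field $L$ with $L|_\Gamma=\partial_\nu$, one gets $\mathcal{D}\ell^*(\chi v)=\gamma\bigl(\hsc L\,R_0(\hsc)^*\hsc\chi v\bigr)$, i.e., the semiclassical \emph{Neumann} trace of a quasimode $w$ with $(-\hsc^2\Delta-1)w=\hsc\chi v$ and $\|w\|_{L^2_{\mathrm{loc}}}\lesssim\|v\|_{L^2}$. Unlike the Dirichlet trace, the Neumann-trace restriction estimate for quasimodes, $\|\hsc\partial_\nu w|_\Gamma\|_{L^2(\Gamma)}\le C\bigl(\|w\|_{L^2}+\hsc^{-1}\|(-\hsc^2\Delta-1)w\|_{L^2}\bigr)$, has \emph{no} glancing loss at all (coherent states that saturate the Dirichlet-trace loss are tangent to $\Gamma$ and hence have small normal derivative); this is Tacy's estimate cited as [Ta:17, Cor.~0.6]. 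That is why no microlocalizer $B$ and no wavefront-set hypothesis are needed for the double-layer bound, only a spatial cutoff $\chi$. Your proposal misses precisely this point.
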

\begin{proof}
Let $L$ be a smooth vectorfield with $L|_{\Gamma}=\partial_\nu$. We start by recalling that 
$$
\mc{S}\ell= R_0(\hsc)\hsc^2\delta_{\Gamma},\qquad \mc{D}\ell=R_0(\hsc)L^*\hsc^2\delta_{\Gamma}.
$$
Let $u\in L^2(\Gamma)$ and $v\in L^2(\mathbb{R}^d)$. Then,
\begin{align*}
\langle B\mc{S}\ell u,v\rangle_{L^2(\mathbb{R}^d)}=\langle BR_0(\hsc)\hsc^2\delta_{\Gamma} u,v\rangle_{L^2(\mathbb{R}^d)}
=\langle u, \gamma R_0^*(\hsc)\hsc^2B^*v\rangle_{L^2(\Gamma)}
\end{align*}
Since $R_0^*$ is the \emph{incoming} resolvent,
$$
(-\hsc^2\Delta-1)R_0^*(\hsc)\hsc^2B^*v=\hsc^2B^*v,\qquad \WF(R_0^*(\hsc)\hsc^2B^*v)\subset \WF(B^*v)\cup\bigcup_{t\leq 0}\varphi_t^{\mathbb{R}^d}\big(\WF(B^*)\cap S^*\mathbb{R}^d\big),
$$
In particular, $\WF(R_0^*(\hsc)\hsc^2B^*v)\cap S^*\Gamma=\emptyset$. Thus, for $\chi \in C_c^\infty(\mathbb{R}^d)$ with $\chi \equiv 1$ near $\Gamma$, standard trace estimates (see e.g.~\cite[Proposition 3.1]{GaLe:20})
\begin{align*}
\|\gamma R_0^*(\hsc)\hsc^2B^*v\|_{L^2(\Gamma)}&\leq C(\|\chi R_0^*(\hsc)\hsc^2B^*v\|_{L^2(\mathbb{R}^d)}+\hsc\|B^*v\|_{L^2(\mathbb{R}^d)})\leq C\hsc\|B^*v\|_{L^2(\mathbb{R}^d)}\leq C\hsc\|v\|_{L^2(\mathbb{R}^d)},
\end{align*}
and hence
$$
|\langle B\mc{S}\ell u,v\rangle_{L^2(\mathbb{R}^d)}|\leq C\hsc\|u\|_{\LtG}\|v\|_{L^2(\mathbb{R}^d)},
$$
which implies the first estimate in~\eqref{e:layer} by duality.

For the second, we proceed similarly.
\begin{align*}
\langle \mc{D}\ell u,v\rangle_{L^2(\mathbb{R}^d)}=\langle R_0(\hsc)\hsc^2L^*\delta_{\Gamma} u,v\rangle_{L^2(\mathbb{R}^d)}.
=\langle u, \gamma \hsc LR_0^*(\hsc)\hsc v\rangle_{L^2(\Gamma)}
\end{align*}
Then, we have
$$
(-\hsc^2\Delta-1)R_0^*(\hsc)\hsc v=\hsc v,
$$
and hence, using e.g.~\cite[Corollary 0.6]{Ta:17}, 
$$
\|\hsc\gamma LR_0^*(\hsc)hv\|_{\LtG}\leq C(\|R_0^*(\hsc)\hsc v\|_{L^2(\mathbb{R}^d)}+\|v\|_{L^2(\mathbb{R}^d)})\leq C\|v\|_{L^2(\mathbb{R}^d)},
$$
which, arguing as above, implies the second estimate in~\eqref{e:layer} by duality.
\end{proof}

Next, we consider the Dirichlet resolvent $R_D:L^2_{\comp}(\Omega^+)\to L^2_{\loc}(\Omega^+)$ defined by
$$
(-\hsc^2\Delta-1)R_Df=f,\qquad R_Df|_{\Gamma}=0,\qquad R_D f\text{ is outgoing},
$$
and the Poisson operator $G_D:H_{\hsc}^1(\Gamma)\to L^2_{\loc}(\Omega^+)$, defined by
$$
(-\hsc^2\Delta-1)G_Dg=0,\qquad G_Dg|_{\Gamma}=g,\qquad G_Dg\text{ is outgoing}.
$$

We need the following assumption on $R_D$.
\begin{assumption}
\label{ass:poly}
There is $P_{\rm inv}\geq 0$ and $\tilde{\cJ}\subset (0,\infty)$ such that for all $\chi \in C_c^\infty(\Omega^+)$, there is $C>0$ such that 
$$
\|\chi R_D\chi\|_{L^2(\Omega^+)\to L^2(\Omega^+)}\leq C\hsc ^{-1-P_{\rm inv}},\qquad \hsc \notin \tilde{\cJ}
$$
\end{assumption}
Note that by~\cite{LSW1}, for any $\Omega^-\Subset \mathbb{R}^d$ with smooth boundary and connected complement, and any $\delta>0$ there is $\cJ$ with $|\cJ|<\delta$ such that Assumption~\ref{ass:poly} holds. Moreover, for any $N>0$, one can find $\cJ$ and $C>0$ such that $|\cJ\cap (k,\infty)|\leq C k^{-N}$ for all $k>1$.

We now show that, provided the data is located away from $\Gamma_-$, $G_D$ and $R_D$ behave as in the nontrapping case.
\begin{lemma}
\label{l:awayGamma-}
Suppose that Assumption~\eqref{ass:poly} holds. Then for all $B\in \Psi^0(\Gamma)$ with $\WF(B)\cap \Gamma_-=\emptyset$ and all $\chi \in C_c^\infty(\overline{\Omega^+})$, there is $C>0$ such that
$$
\|\chi G_D B\|_{H_{\hsc}^{3/2}(\Gamma)\to H_{\hsc}^2(\Omega^+)}\leq C,\qquad \hsc \notin \cJ,
$$
and, if $u\in H^{3/2}_h$ is $\hsc$-tempered
$$
{}^b\WF(G_DBu)\subset \WF(B)\cup \bigcup_{t\geq 0}\varphi_t^{\Omega}(\WF(B)\cap \mc{Z}).
$$
\end{lemma}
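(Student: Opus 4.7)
The plan is to build an approximate outgoing solution $\tilde v$ to the exterior Dirichlet problem with boundary data $Bu$ using the parametrix provided by Lemma~\ref{l:approxSolve}, and then to correct the $O(\hsc^\infty)$ error via the Dirichlet resolvent $R_D$ and Assumption~\ref{ass:poly}. This is the direct analogue of how one proves uniform-in-$\hsc$ bounds on the exterior Poisson map in the nontrapping setting, except that the polynomial bound on $R_D$ (valid off $\tilde\cJ$) replaces the nontrapping estimate in the correction step.

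Concretely, I would apply Lemma~\ref{l:approxSolve} with $U = \WF(B)$, $f_1 = 0$, and $f_2 = -Bu$, choosing $Q \in \Psi^{-1}(\Gamma)$ with $\sigma(Q) \geq 0$ globally and $Q \equiv 0$ microlocally on a neighborhood of $\WF(B)$. The latter makes the impedance boundary condition $Q \hsc D_\nu \tilde v - \tilde v = -Bu$ collapse microlocally to the Dirichlet condition $\tilde v|_\Gamma = Bu$ on $\WF(B)$, while away from $\WF(B)$ the choice $Q \equiv \Lambda^{-1}$ keeps the problem well-posed. The hypothesis $\WF(B) \cap \Gamma_- = \emptyset$ is what I would use to verify the dynamical condition of Lemma~\ref{l:approxSolve}: for each $\rho \in \WF(B) \subset T^*\Gamma$, the forward GBB enters the interior and, since $\rho \notin \Gamma_-$, its reflected branches cannot be backward-trapped near the scatterer, so the flow from $\rho$ either leaves the microsupport of $Q-\Lambda^{-1}$ at the next boundary encounter or escapes to spatial infinity in finite time. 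Lemma~\ref{l:approxSolve} then yields $\tilde v \in H^2_\hsc(\Omega^+)$ with $(-\hsc^2\Delta_g - 1)\tilde v = O(\hsc^\infty)_{L^2_{\rm comp}}$, $\tilde v|_\Gamma = Bu$, outgoing, together with $\|\chi \tilde v\|_{H^2_\hsc} \leq C\|u\|_{H^{3/2}_\hsc(\Gamma)}$ and the wavefront bound ${}^b\WF(\tilde v) \subset \WF(B) \cup \bigcup_{t \geq 0}\varphi_t^\Omega(\WF(B) \cap \mc{Z})$.

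Setting $w := G_D Bu - \tilde v$, the function $w$ is outgoing, satisfies $(-\hsc^2\Delta_g - 1)w = O(\hsc^\infty)_{L^2_{\rm comp}}$, and vanishes on $\Gamma$. Writing $w = R_D\phi$ with $\phi = O(\hsc^\infty)_{L^2_{\rm comp}}$, Assumption~\ref{ass:poly} gives $\|\chi w\|_{L^2} \leq C\hsc^{-1-P_{\rm inv}}\|\phi\|_{L^2} = O(\hsc^\infty)$ for $\hsc \notin \tilde\cJ$, and local elliptic regularity upgrades this to $\|\chi w\|_{H^2_\hsc} = O(\hsc^\infty)$. Combining, $\|\chi G_D Bu\|_{H^2_\hsc} \leq \|\chi \tilde v\|_{H^2_\hsc} + \|\chi w\|_{H^2_\hsc} \leq C\|u\|_{H^{3/2}_\hsc(\Gamma)}$, which is the norm bound; since $\WF(w) = \emptyset$, we get ${}^b\WF(G_D Bu) \subset {}^b\WF(\tilde v)$, giving the wavefront inclusion.

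The hard part will be verifying the dynamical hypothesis of Lemma~\ref{l:approxSolve} with the above choice of $Q$: one must carefully track how the reflected GBBs from $\WF(B)$ interact with the microsupport of $Q - \Lambda^{-1}$ and show that $\WF(B) \cap \Gamma_- = \emptyset$ is strong enough to rule out pathological recurrent orbits returning repeatedly to the Dirichlet patch. The rest is a routine parametrix-plus-remainder argument, where the polynomial resolvent bound of Assumption~\ref{ass:poly} is precisely the input needed to absorb an $O(\hsc^\infty)$ error into an $O(\hsc^\infty)$ correction.
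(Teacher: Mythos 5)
Your approach is the same as the paper's: apply Lemma~\ref{l:approxSolve} (with $\WF(B)\cap\Gamma_-=\emptyset$ supplying the dynamical hypothesis) to produce an approximate outgoing solution with boundary data $Bg$ and $O(\hsc^\infty)$ interior error, then absorb the remainder via $R_D$ and the polynomial bound of Assumption~\ref{ass:poly}. The paper's proof is terser — it does not spell out the choice of $Q$ — but the parametrix-plus-remainder structure, the role of $\Gamma_-$, and the use of the polynomially bounded resolvent all match exactly.
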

\begin{proof}
Since $\WF(B)\cap \Gamma_-=\emptyset$, Lemma~\ref{l:approxSolve} implies that for any $g\in H_{\hsc}^{3/2}$ there is $u\in H_{\hsc ,\loc}^2$ such that for all $\chi\in C_c^\infty(\Omega^+)$, 
$$
\|\chi u\|_{H_{\hsc}^2}\leq C\|f\|_{H_{\hsc}^{3/2}}
$$
and 
$$
{}^b\WF(u)\subset \WF(B)\cup \bigcup_{t\geq 0}\varphi_t^{\Omega+}({}^b\WF(B)\cap \mc{Z}),
$$
and 
$$
\begin{cases}(-\hsc^2\Delta-1)u=f=O(\hsc^{\infty}\|g\|_{H_{\hsc}^{3/2}})_{L^2_{\comp}}&\text{ in }\Omega^+\\
u=Bg&\text{ on }\Gamma,\\
u\text{ is outgoing}.
\end{cases}
$$
In particular, by Assumption~\ref{ass:poly}, 
$$
G_DBg=u-R_Df= u+O(\hsc^{\infty})_{H_{\hsc ,\loc}^{2}},
$$
and the lemma follows.
  \end{proof}

 The next lemma is proved using an almost identical argument
  \begin{lemma}
  \label{l:awayGamma-2}
  Suppose that Assumption~\ref{ass:poly} holds. Then for all $B\in \Psi^{\comp}(\Omega^+)$ with $\WF(B)\cap \Gamma_-=\emptyset$ and all $\chi \in C_c^\infty(\overline{\Omega^+})$, there is $C>0$ such that
$$
\|\chi R_D B\|_{L^2\to L^2(\Omega^+)}\leq C\hsc^{-1},\qquad \hsc \notin \tilde{\cJ},
$$
and, if $u\in L^2$ is $\hsc$-tempered,
$$
{}^b\WF(R_DBu)\subset {}^b\WF(B)\cup \bigcup_{t\geq 0}\varphi_t^{\Omega}({}^b\WF(B)\cap \mc{Z}).
$$
  \end{lemma}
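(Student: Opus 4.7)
\noindent\emph{Proof plan.} The strategy mirrors the proof of Lemma~\ref{l:awayGamma-} verbatim, but now with interior rather than boundary data. The plan is to first build an approximate outgoing solution to the interior problem using Lemma~\ref{l:approxSolve}, and then use Assumption~\ref{ass:poly} to absorb the $O(\hsc^\infty)$ error produced by this construction.

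First, I would apply Lemma~\ref{l:approxSolve} with $Q\equiv 0$ (so that the boundary operator $\mathsf{B}u=-u$ encodes the Dirichlet condition and $\WF(Q-\Lambda^{-1})=T^*\Gamma$), with $f_2=0$, and with $f_1:=\hsc^{-1}Bu$. The hypothesis $\WF(B)\cap\Gamma_-=\emptyset$ ensures that for every $q\in{}^b\WF(f_1)\subset {}^b\WF(B)\cap\mc{Z}$, the forward GBB through $q$ escapes every ball $B(0,R)$ in finite time, so the dynamical assumption of Lemma~\ref{l:approxSolve} holds. This produces $w\in H^{2}_{\hsc,\loc}(\Omega^+)$ satisfying
\begin{equation*}
(-\hsc^2\Delta_g-1)w=Bu+r,\qquad w|_\Gamma=0,\qquad w\text{ outgoing},
\end{equation*}
with $r=O(\hsc^\infty\|Bu\|_{L^2})_{L^2_{\comp}}$, the bound $\|\chi w\|_{H^2_\hsc}\leq C\|f_1\|_{L^2}\leq C\hsc^{-1}\|Bu\|_{L^2}$, and the wavefront inclusion
\begin{equation*}
{}^b\WF(w)\subset{}^b\WF(B)\cup\bigcup_{q\in{}^b\WF(B)\cap\mc{Z}}\overline{\bigcup_{t\geq 0}\varphi_t^{\Omega^+}(q)}.
\end{equation*}

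Second, by uniqueness of the outgoing Dirichlet problem I would write $R_DBu=w-R_Dr$. By Assumption~\ref{ass:poly}, $\|\chi R_D\chi'r\|_{L^2}\leq C\hsc^{-1-P_{\mathrm{inv}}}\|r\|_{L^2}=O(\hsc^\infty)$ for $\hsc\notin\tilde{\cJ}$, and standard elliptic regularity upgrades this bound to $O(\hsc^\infty)$ in $H^2_{\hsc,\loc}$ (since $R_Dr$ solves a Dirichlet problem with $L^2$ right-hand side). Combining these gives $\|\chi R_DBu\|_{L^2}\leq C\hsc^{-1}\|Bu\|_{L^2}$, which is the quantitative estimate, and ${}^b\WF(R_DBu)={}^b\WF(w)$, which yields the claimed wavefront-set inclusion.

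The main obstacle is a bookkeeping one rather than a conceptual one: making sure that $f_1=\hsc^{-1}Bu$ genuinely satisfies the hypotheses of Lemma~\ref{l:approxSolve}, namely compactness of support (which follows from $B\in\Psi^{\comp}(\Omega^+)$, possibly after pre- and post-composing $B$ with a fixed spatial cutoff supported near the relevant compact set), $\hsc$-temperedness (inherited from $u$), and the dynamical escape condition (which is exactly the content of $\WF(B)\cap\Gamma_-=\emptyset$). Once these are checked the proof is a transcription of the argument for Lemma~\ref{l:awayGamma-}, with Poisson operator $G_D$ replaced by the resolvent $R_D$ and the gain of $\hsc^{-1}$ coming from the relation $\hsc f_1=Bu$ in Lemma~\ref{l:approxSolve}.
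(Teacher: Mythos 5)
Your proposal is correct and follows essentially the same route as the paper, which states that Lemma~\ref{l:awayGamma-2} is proved by an argument ``almost identical'' to that of Lemma~\ref{l:awayGamma-}: apply Lemma~\ref{l:approxSolve} to build an approximate outgoing solution $w$ with Dirichlet boundary data (taking $Q\equiv 0$, $f_2=0$, $f_1=\hsc^{-1}Bu$), so that $(-\hsc^2\Delta-1)w=Bu+O(\hsc^\infty)_{L^2_{\comp}}$, $w|_\Gamma=0$, with the gain of $\hsc^{-1}$ coming from the relation $\hsc f_1=Bu$; then use Assumption~\ref{ass:poly} to absorb the $O(\hsc^\infty)$ residual, giving $R_DBu=w+O(\hsc^\infty)_{H^2_{\hsc,\loc}}$, from which both the $L^2$ estimate and the wavefront-set inclusion follow. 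Your checks of the hypotheses of Lemma~\ref{l:approxSolve} -- that $\WF(Q-\Lambda^{-1})^c=\emptyset$ when $Q\equiv 0$ so the dynamical hypothesis reduces precisely to $\WF(B)\cap\Gamma_-=\emptyset$, that $T_Q\equiv\infty$ so the propagated wavefront set runs over all $t\geq 0$, and that $B\in\Psi^{\comp}(\Omega^+)$ guarantees compact support of $f_1$ -- are exactly the bookkeeping the paper leaves to the reader.
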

  
  We also require the following estimate on the wavefront set properties of $R_D$ which follows directly from Theorem~\ref{t:basicPropagate}, the outgoing condition, and Assumption~\ref{ass:poly}
  \begin{lemma}
  \label{l:outgoing}
  Suppose that Assumption~\ref{ass:poly} holds. Then, for all $B\in {}^b\Psi^{\comp}(\Omega^+)$, $\chi\in C_c^\infty(\overline{\Omega^+})$, and $u\in L^2$ $\hsc$-tempered,
  $$
  {}^b\WF(R_D Bu)\subset {}^b\WF(B)\cup \bigcup_{t\geq 0}\varphi_t^{\Omega^+}({}^b\WF(B)\cap \mc{Z})\cup \Gamma_+.
  $$ 
  \end{lemma}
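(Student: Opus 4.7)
The plan is to set $v := R_D B u$ and apply Theorem~\ref{t:basicPropagate} directly. Then $v$ satisfies $(-\hsc^2\Delta_g - 1)v = Bu$ in $\Omega^+$, $v|_\Gamma = 0$, and is outgoing. Assumption~\ref{ass:poly} combined with standard elliptic boundary regularity gives that $v \in H^2_{\hsc,\loc}(\overline{\Omega^+})$ is $\hsc$-tempered for $\hsc$ outside the exceptional set. By elliptic regularity off the characteristic set, ${}^b\WF(v) \subset \mc{Z} \cup {}^b\WF(Bu) \subset \mc{Z} \cup {}^b\WF(B)$, so it suffices to analyze ${}^b\WF(v) \cap \mc{Z}$.

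Next, I would invoke Theorem~\ref{t:basicPropagate} with $A=0$, $W=0$, and $Q=0$, so that $\mathsf{P}v = Bu$ and $\mathsf{B}v = -v|_\Gamma = 0$. Since $\WF(\mathsf{B}v)$ is empty, the theorem tells us that ${}^b\WF(v)\cap\mc{Z}$ is a union of maximally extended backward GBBs for the Hamiltonian $|\xi|_g^2 - 1$ within $\mc{Z}\setminus {}^b\WF(B)$. Therefore, for any $q \in {}^b\WF(v)\cap\mc{Z}\setminus {}^b\WF(B)$, tracing the backward GBB through $q$ gives two possibilities: either the GBB meets ${}^b\WF(B)\cap\mc{Z}$ at some nonpositive time $-t_0$, placing $q = \varphi_{t_0}^{\Omega^+}(\rho) \in \bigcup_{t\geq 0}\varphi_t^{\Omega^+}({}^b\WF(B)\cap\mc{Z})$, or it extends indefinitely backward in $\mc{Z}\setminus {}^b\WF(B)$ without ever hitting the source.

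In the latter case, I would use the outgoing condition. If the indefinitely extended backward GBB were unbounded, then at sufficiently negative times it would enter the incoming region $\mc{I}_-(R)$ at infinity, producing nontrivial wavefront of $v$ there; but the microlocal Sommerfeld outgoing condition forbids ${}^b\WF(v) \cap \mc{I}_-(R)$ outside the forward flowout of the compactly supported source ${}^b\WF(Bu)$, yielding a contradiction. Hence the backward GBB stays in a compact region of $\mc{Z}$, so the full orbit through $q$ accumulates on the trapped set $K \subset \Gamma_+$, placing $q \in \Gamma_+$.

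The main obstacle is the rigorous microlocal formulation of the outgoing condition for the individual solution $v$ (only the defect-measure analogue is proved in Lemma~\ref{l:invariance}); once cast as a radial estimate at the radial sets at infinity, or equivalently extracted from the limiting absorption characterization of $R_D$, the remainder of the argument is a direct combination of elliptic estimates and the backward propagation statement of Theorem~\ref{t:basicPropagate}.
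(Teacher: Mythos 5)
Your proposal follows the same route the paper intends: the paper does not write out a proof of this lemma but simply asserts that it ``follows directly from Theorem~\ref{t:basicPropagate}, the outgoing condition, and Assumption~\ref{ass:poly}'', and your decomposition into elliptic regularity off $\mc{Z}$, backward-GBB propagation from Theorem~\ref{t:basicPropagate} (with $A=W=Q=0$), and the outgoing radial estimate at infinity is exactly what that one-liner is gesturing at. Two small remarks. First, the phrasing of your final step is a bit off: once you know the backward GBB through $q$ stays in a compact region, the direct conclusion is that $q$ belongs to the backward-trapped set, which is what the paper calls $\Gamma_+$ in all of its actual uses (see Lemmas~\ref{l:incomingIsNotNormal}, \ref{l:itGrowsGamma-}, \ref{l:qualitative1}, where $\Gamma_-$ is treated as the forward-trapped ``incoming'' set and $\Gamma_+$ as the backward-trapped ``outgoing'' set, even though the displayed formula defining $\Gamma_\pm$ in \S\ref{s:dynamics} has the signs $\pm t$ reversed relative to this usage). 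Saying ``the full orbit accumulates on $K\subset\Gamma_+$, placing $q\in\Gamma_+$'' is not quite right — only the backward orbit is bounded (the forward orbit may escape), and accumulation on a subset of $\Gamma_+$ doesn't by itself put $q$ in $\Gamma_+$; the clean statement is that boundedness of the past orbit is precisely the definition of $\Gamma_+$ in the intended convention. Second, you are right to flag that the paper only provides a defect-measure form of the outgoing condition (Lemma~\ref{l:invariance}), whereas here one needs the microlocal ($\hsc$-tempered, single-family) version; this is a standard semiclassical radial-point/outgoing-wavefront estimate for the Dirichlet resolvent, but it is indeed being used implicitly, so your identification of where an external ingredient is invoked is accurate.
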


\subsection{Non-interference of the Impedance-to-Dirichlet maps}

In order to pass from quasimodes in $\Omega^+$ to quasimodes for $A_{k}$ and $A_{k}'$, we need two lemmas that give conditions guaranteeing that
$$
\|P_{\rm ItD}^{-,\eta}P_{\rm DtN}^+f\|_{L^2}\sim \|P_{\rm DtN}^+f\|_{L^2},\qquad \|P_{\rm ItD}^{+,\eta}P_{\rm DtN}^-f\|_{L^2}\sim \|P_{\rm DtN}^-f\|_{L^2}.
$$

\begin{lemma}
\label{l:layerNoCancel}
Let $C>0$ and suppose that $f\in L^2(\Gamma)$, with 
$$
\|f\|_{\LtG}=o(1),\qquad \|P_{\rm DtN}^+f\|_{L^2}=1,
$$
and there is $B\in \Psi^{\comp}(\mathbb{R}^d)$ with $\WF(B)\cap \bigcup_{t>0}\varphi_{t}^{\mathbb{R}^d}(S^*\Gamma)=\emptyset$ such that 
\begin{equation}
\label{e:notGlanceOnly}
\|BG_D f\|_{L^2(\Omega^+)}\geq c>0.
\end{equation}
Then, for all $C^{-1}\leq \hsc \eta\leq C$,
$$
\|P_{\rm ItD}^{-,\eta}P_{\rm DtN}^+f\|\geq c>0.
$$
\end{lemma}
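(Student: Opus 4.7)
The plan is to derive a representation of $u_1 := G_D f$ in $\Omega^+$ as a combination of layer potentials acting on the boundary traces $f$ and $u_3|_\Gamma := P_{\rm ItD}^{-,\eta}P_{\rm DtN}^+ f$, and then use the layer-potential estimates in~\eqref{e:layer} to bound $\|B G_D f\|_{L^2}$ purely in terms of $\|f\|_{L^2(\Gamma)}$ and $\|u_3|_\Gamma\|_{L^2(\Gamma)}$. The lower bound~\eqref{e:notGlanceOnly} together with $\|f\|_{L^2(\Gamma)}=o(1)$ will then force $\|P_{\rm ItD}^{-,\eta}P_{\rm DtN}^+ f\|_{L^2(\Gamma)} \geq c>0$.

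To carry this out, I would first let $u_1 := G_D f$, so $u_1|_\Gamma = f$ and $\hsc\partial_\nu u_1|_\Gamma = P_{\rm DtN}^+ f$, and let $u_3$ be the interior impedance solution satisfying $\hsc\partial_\nu u_3|_\Gamma - \ri\hsc\eta\, u_3|_\Gamma = P_{\rm DtN}^+ f$, so that $u_3|_\Gamma = P_{\rm ItD}^{-,\eta}P_{\rm DtN}^+ f$.  Green's identity applied to $u_1$ in $\Omega^+$ (using the outgoing condition to kill the contribution at infinity, and noting that the outward unit normal of $\Omega^+$ along $\Gamma$ is $-\nu$) gives
\beqs
u_1 = \mc{D}\ell(f) - \mc{S}\ell(\partial_\nu u_1) \qquad \text{in } \Omega^+.
\eeqs
Green's identity applied to $u_3$ and evaluated at a point $x\in\Omega^+\subset \mathbb{R}^d\setminus\overline{\Omega^-}$ yields $\mc{S}\ell(\partial_\nu u_3)(x) = \mc{D}\ell(u_3|_\Gamma)(x)$. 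The impedance relation reads $\partial_\nu u_1 = \partial_\nu u_3 - \ri\eta\, u_3$ on $\Gamma$; substituting into the first identity then gives
\beqs
G_D f = u_1 = \mc{D}\ell(f - u_3|_\Gamma) + \ri\eta\,\mc{S}\ell(u_3|_\Gamma) \qquad \text{in } \Omega^+.
\eeqs

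Applying $B$ and taking the $L^2(\mathbb{R}^d)$ norm, the estimates in~\eqref{e:layer} -- using $B\in \Psi^{\comp}$ to insert a cutoff adapted to $\WF(B)$ in the $\mc{D}\ell$ bound, and using the wavefront hypothesis on $B$ for the improved single-layer bound -- yield $\|B\mc{D}\ell\|_{L^2(\Gamma)\to L^2(\mathbb{R}^d)} \leq C$ and $\|B\mc{S}\ell\|_{L^2(\Gamma)\to L^2(\mathbb{R}^d)} \leq C\hsc$, and hence
\beqs
\|B G_D f\|_{L^2} \leq C\|f\|_{L^2(\Gamma)} + C\big(1 + |\eta|\hsc\big)\|u_3|_\Gamma\|_{L^2(\Gamma)}.
\eeqs
Since $|\eta|\hsc \leq C$ by hypothesis and $\|f\|_{L^2(\Gamma)} = o(1)$, while $\|B G_D f\|_{L^2} \geq c>0$, rearranging gives $\|u_3|_\Gamma\|_{L^2(\Gamma)} \geq c'>0$ for all $\hsc$ sufficiently small, which is the claim. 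The main points of care will be getting the signs in the Green's-identity manipulations exactly right (since the normal to $\Omega^+$ is opposite to the normal to $\Omega^-$) and verifying that the precise wavefront condition on $B$ in the lemma's hypothesis allows invoking the $O(\hsc)$ single-layer bound from~\eqref{e:layer}; the balance of $|\eta|\hsc = O(1)$ with this $O(\hsc)$ improvement is what makes the Dirichlet coupling parameter $\eta \sim k$ exactly the threshold at which the argument works.
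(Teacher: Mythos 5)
Your argument is correct and is essentially the same as the paper's: the representation $G_D f = \mc{D}\ell(f - u_3|_\Gamma) + \ri\eta\,\mc{S}\ell(u_3|_\Gamma)$ on $\Omega^+$ that you derive from the two Green's identities is exactly the piecewise layer-potential identity for $u_3\mathbf{1}_{\Omega^-}+G_Df\,\mathbf{1}_{\Omega^+}$ that the paper writes down directly, and both proofs then conclude in the same way from the estimates $\|B\mc{D}\ell\|_{L^2\to L^2}\leq C$, $\|B\mc{S}\ell\|_{L^2\to L^2}\leq C\hsc$, and $\hsc\eta = O(1)$. The only cosmetic difference is that you argue directly with an explicit inequality while the paper phrases it as a contradiction.
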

\begin{proof}
Suppose there is a sequence $\hsc_n\to 0$ with $\|P_{\rm ItD}^{-,\eta}P_{\rm DtN}^+f\|\to 0$. Then, there are solutions $u_1,u_2$ to
$$
\begin{cases}(-\hsc^2\Delta-1)u_1=0&\text{ in }\Omega\\
(\hsc \partial_\nu -i\hsc \eta )u_1=\hsc \partial_\nu u_2,\end{cases}\qquad \begin{cases}(-\hsc^2\Delta-1)u_2=0&\text{ in }\mathbb{R}^d\setminus \Omega\\
u_2|_{\partial\Omega}=f\\
u_2\text{ outgoing}
\end{cases}
$$
with $\|u_1|_{\partial\Omega}\|_{L^2}=o(1)$ and $\|f\|_{L^2}=o(1)$. Thus,
\begin{align*}
(u_11_{\Omega}+u_21_{\mathbb{R}^d\setminus \Omega})&= R_0(\hsc)(\hsc^2(u_1-u_2)|_{\Gamma}\otimes\partial_\nu \delta_{\Gamma}+hR_0(\hsc)(\hsc \partial_\nu u_1-\hsc \partial_{\nu}u_2)\otimes \delta_{\Gamma}\\
&=\mathcal{D}\ell(u_1-u_2)|_{\Gamma})+\hsc^{-1}\mathcal{S}\ell(\hsc \partial_\nu u_1-\hsc\partial_{\nu}u_2)\\
&=\mathcal{D}\ell (u_1-f)+\hsc^{-1}\mathcal{S}\ell i\hsc \eta u_1\\
&= \mathcal{D}\ell o(1)_{L^2}+ \hsc^{-1}\mathcal{S}\ell o(1)_{L^2}.
\end{align*}
In particular, for any $B\in \Psi^{c}(\mathbb{R}^d)$ with 
$$
\WF(B)\cap \bigcup_{t\geq 0}\varphi_t^{\mathbb{R}^d}(S^*\Gamma)=\emptyset,
$$ 
we have
$$
B((u_11_{\Omega}+u_21_{\mathbb{R}^d\setminus \Omega}))=o(1)_{L^2}.
$$
This contradicts~\eqref{e:notGlanceOnly}.
\end{proof}

\begin{lemma}
\label{l:layerNoCancel2}
Let $C,c>0$ and suppose that $f\in L^2(\Gamma)$, with 
$$
\|f\|_{\LtG}=o(1),\qquad \|P_{\rm DtN}^-f\|_{L^2}=1,
$$
and there is $B\in \Psi^{\comp}(\mathbb{R}^d)$ with $\WF(B)\cap \bigcup_{t\leq 0}\varphi_{t}^{\mathbb{R}^d}(S^*\Gamma\cup N^*\Gamma)=\emptyset$ such that 
\begin{equation}
\label{e:notGlanceOnly2}
\|BG_D^+(h)f\|_{L^2(\Omega^+)}\geq c>0.
\end{equation}
Then, there is $A\in \Psi(\Gamma)$ with $\WF(A)\cap \{\xi'=0\}=\emptyset$ such that for all $C^{-1}\leq \hsc \eta\leq C$, 
$$
\|AP_{\rm ItD}^{+,\eta}P_{\rm DtN}^-f\|\geq c>0.
$$
\end{lemma}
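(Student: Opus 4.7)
I would mimic the proof of Lemma~\ref{l:layerNoCancel}, with two changes to accommodate the ``incoming'' sign and to introduce the extra microlocaliser $A$ in the conclusion. Since $P_{\rm DtN}^-f=\hsc\partial_\nu u_2$ for the \emph{incoming} solution $u_2$ with $u_2|_\Gamma=f$, the natural Green's representation for $\tilde u:=u_1\mathbf{1}_{\Omega^-}+u_2\mathbf{1}_{\Omega^+}$ (where $u_1$ is the interior impedance solution whose boundary data forces $\hsc\partial_\nu u_1+i\hsc\eta u_1=\hsc\partial_\nu u_2$ on $\Gamma$) uses the incoming free resolvent $R_0^*(\hsc)=\overline{R_0(\hsc)}$ and the corresponding incoming layer potentials $\mathcal{S}\ell^*=\overline{\mathcal{S}\ell}$, $\mathcal{D}\ell^*=\overline{\mathcal{D}\ell}$. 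Complex conjugation and time reversal upgrade~\eqref{e:layer} to $\|B\mathcal{S}\ell^*\|_{L^2\to L^2}\leq C\hsc$ whenever $\WF(B)\cap\bigcup_{t\leq 0}\varphi_t^{\mathbb{R}^d}(S^*\Gamma)=\emptyset$, while $\|\chi\mathcal{D}\ell^*\|_{L^2\to L^2}\leq C$ always.

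I would argue by contradiction. Given $B$ as in the hypothesis, choose $\epsilon>0$ small enough that $\WF(B)$ stays disjoint from $\bigcup_{t\leq 0}\varphi_t^{\mathbb{R}^d}$ of the conic neighbourhood $\{|\xi'|_g\leq 2\epsilon\}$ of $N^*\Gamma$; this is possible precisely because of the $N^*\Gamma$ piece of the hypothesis together with openness of $\WF(B)^c$ and continuity of the flow. Pick $\psi\in C_c^\infty((-\epsilon^2,\epsilon^2);[0,1])$ with $\psi\equiv 1$ near $0$, and set $A:=I-\psi(|\hsc D'|_g^2)$, so $\WF(A)\cap\{\xi'=0\}=\emptyset$. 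Suppose, for contradiction, that along a subsequence $\hsc_n\to 0$ with $C^{-1}\leq \hsc_n\eta_n\leq C$ we have $\|Au_1|_\Gamma\|_{L^2}=o(1)$. The impedance relation and Green's third identity (in incoming form) yield, up to sign conventions,
\[
\tilde u=\mathcal{D}\ell^*\big((u_1-f)|_\Gamma\big)-\hsc^{-1}\mathcal{S}\ell^*\big(i\hsc\eta u_1|_\Gamma\big).
\]
Applying $B$: the single-layer piece is $O(\hsc)$ using $\hsc|\eta|=O(1)$ and $\|B\mathcal{S}\ell^*\|=O(\hsc)$. Splitting $u_1|_\Gamma=Au_1|_\Gamma+(I-A)u_1|_\Gamma$, the $Au_1|_\Gamma$ and $f$ contributions to $B\mathcal{D}\ell^*(u_1-f)|_\Gamma$ are $o(1)$ by the contradiction assumption and by $\|f\|_{L^2}=o(1)$ respectively.

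The decisive step is the microlocal estimate $\|B\mathcal{D}\ell^*(I-A)\|_{L^2\to L^2}=O(\hsc^\infty)$. Since $(I-A)=\psi(|\hsc D'|_g^2)$ microlocalises boundary data into $\{|\xi'|_g^2\leq\epsilon^2\}\subset T^*\Gamma$, the conormal distribution on which $R_0^*(\hsc)$ acts inside $\mathcal{D}\ell^*$ (namely $((I-A)u_1|_\Gamma)\otimes L^*\delta_\Gamma$) has wavefront set contained in a small conic neighbourhood of $N^*\Gamma$ in $T^*\mathbb{R}^d$; backward propagation under the bicharacteristic flow of $|\xi|^2-1$ keeps the wavefront inside $\bigcup_{t\leq 0}\varphi_t^{\mathbb{R}^d}(\{|\xi'|_g\leq 2\epsilon\})$, which is disjoint from $\WF(B)$ by construction, so~\eqref{e:WFdisjoint} delivers the smoothing bound. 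Combining all contributions, $\|B\tilde u\|_{L^2(\mathbb{R}^d)}=o(1)$; since $\WF(B)\cap S^*\Gamma=\emptyset$ allows us to localise $B$ spatially into $\Omega^+$ modulo $O(\hsc^\infty)$ errors, we obtain $\|BG_D^+(h)f\|_{L^2(\Omega^+)}=o(1)$, contradicting~\eqref{e:notGlanceOnly2}. The main obstacle is precisely this microlocal estimate for $B\mathcal{D}\ell^*(I-A)$, which simultaneously forces the strengthened $N^*\Gamma$-part of the hypothesis on $\WF(B)$ and the presence of the operator $A$ in the conclusion --- neither of which was required in Lemma~\ref{l:layerNoCancel}.
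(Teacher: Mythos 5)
Your proposal is correct and follows the same strategy as the paper: argue by contradiction using the Green's-identity representation of $u_1\mathbf{1}_{\Omega^-}+u_2\mathbf{1}_{\Omega^+}$ via the \emph{incoming} free resolvent and layer potentials, split $u_1|_\Gamma=Au_1|_\Gamma+(I-A)u_1|_\Gamma$, and use the $N^*\Gamma$ part of the hypothesis on $\WF(B)$ to absorb the $(I-A)$ contribution. Your version is in fact slightly more complete than the paper's terse proof, since you spell out the choice $A=I-\psi(|\hsc D'|_g^2)$, the compactness argument selecting $\epsilon$ small enough relative to $\WF(B)$, and the microlocal estimate on $B\mathcal{D}\ell^*(I-A)$ which the paper treats as implicit; you also correctly use $t\leq 0$ in the incoming flow condition, matching the statement of the lemma (the paper's proof has what appears to be a sign typo $t\geq 0$ at this point).
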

\begin{proof}
Suppose there is a sequence $\hsc_n\to 0$ with $\|AP_{\rm ItD}^{+,\eta}P_{\rm DtN}^-f\|\to 0$. Then, there are solutions $u_1,u_2$ to
$$
\begin{cases}(-\hsc^2\Delta-1)u_1=0&\text{ in }\Omega\\
(\hsc \partial_\nu +i\hsc \eta )u_1=\hsc \partial_\nu u_2,\end{cases}\qquad \begin{cases}(-\hsc^2\Delta-1)u_2=0&\text{ in }\mathbb{R}^d\setminus \Omega\\
u_2|_{\partial\Omega}=f\\
u_2\text{ incoming}
\end{cases}
$$
with $\|Au_1|_{\partial\Omega}\|_{L^2}=o(1)$ and $\|f\|_{L^2}=o(1)$. Thus,
\begin{align*}
(u_11_{\Omega}+u_21_{\mathbb{R}^d\setminus \Omega})&= R_0^*(\hsc)(\hsc^2(u_1-u_2)|_{\Gamma}\otimes\partial_\nu \delta_{\Gamma}+hR_0^*(\hsc)(\hsc \partial_\nu u_1-\hsc \partial_{\nu}u_2)\otimes \delta_{\Gamma}\\
&=\widetilde{\mathcal{D}\ell}(u_1-u_2)|_{\Gamma})+\hsc^{-1}\widetilde{\mathcal{S}\ell}(\hsc \partial_\nu u_1-\hsc \partial_{\nu}u_2)\\
&=\widetilde{\mathcal{D}\ell }(u_1-f)+\hsc^{-1}\widetilde{\mathcal{S}\ell} i\hsc\eta u_1\\
&=\widetilde{\mathcal{D}\ell }(u_1)-\widetilde{ \mathcal{D}\ell} o(1)_{L^2}+ \hsc^{-1}\widetilde{\mathcal{S}\ell } i\hsc\eta u_1.
\end{align*}
In particular, for any $B\in \Psi^{c}(\mathbb{R}^d)$ with 
$$
\WF(B)\cap \bigcup_{t\geq 0}\varphi_t^{\mathbb{R}^d}(S^*\Gamma\cup \WF(I-A))=\emptyset,
$$ 
we have
$$
B((u_11_{\Omega}+u_21_{\mathbb{R}^d\setminus \Omega}))=B\widetilde{\mathcal{D}\ell }(u_1)+ \hsc^{-1}\widetilde{\mathcal{S}\ell } i\hsc\eta u_1+o(1)_{L^2}=B\widetilde{\mathcal{D}\ell }(Au_1) \hsc^{-1}\widetilde{\mathcal{S}\ell } i\hsc\eta Au_1+o(1)_{L^2}=o(1)_{L^2}.
$$
This contradicts~\eqref{e:notGlanceOnly2}.
\end{proof}

\subsection{From interior quasimodes to quasimodes for BIEs}
We are now in a position to show that appropriate quasimodes in the bulk produce quasimodes for  the BIE operators that are effective for demonstrating pollution. Before constructing these quasimodes, we need two technical lemmas that reduce~\eqref{e:kOscillate} to a wavefront set statement.
\begin{lemma}
Suppose that Assumption~\ref{ass:parameters} holds  Assumption~\ref{ass:poly} holds for some $\tilde{\cJ}$. Then Assumption~\ref{ass:polyboundintro} holds with $\operator=A_{k}$ or $A_{k}'$ with $\cJ:=\{ k\,:\, k^{-1}\in \tilde{\cJ}\}$.\end{lemma}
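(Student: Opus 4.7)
The plan is to bound $\|A_k^{-1}\|_{L^2\to L^2}$ and $\|(A_k')^{-1}\|_{L^2\to L^2}$ polynomially in $k$ by factoring through the resolvent formulas in \eqref{e:BIEinverse} and bounding each constituent operator separately. Recall that
$$
A_k^{-1} = I - P_{\rm ItD}^{-,\eta_D}\big(P_{\rm DtN}^+ - ik^{-1}\eta_D\big),
$$
and similarly for $(A_k')^{-1}$. Since $k^{-1}|\eta_D|\leq C$ by Assumption~\ref{ass:parameters}, it suffices to establish (i) a $k$-uniform bound on $P_{\rm ItD}^{-,\eta_D}:L^2(\Gamma)\to L^2(\Gamma)$, and (ii) a polynomial-in-$k$ bound on $P_{\rm DtN}^+:L^2(\Gamma)\to L^2(\Gamma)$ valid for $k\notin\cJ$.

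For (i), the interior impedance problem with real impedance parameter $\eta_D/k \sim 1$ is uniformly well-posed in $k$: this follows from a standard Rellich/Morawetz-multiplier argument applied to the equation $(-\Delta-k^2)u = 0$ in $\Omega^-$ with boundary condition $(\partial_\nu - i\eta_D)u = g$, yielding $\|P_{\rm ItD}^{-,\eta_D}\|_{L^2(\Gamma)\to L^2(\Gamma)} \leq C$ independent of $k$ (see, e.g., the arguments behind \cite[Lemma 2.22]{ChGrLaSp:12}). This is independent of whether the exterior problem is trapping, because the interior impedance problem sees only the compact domain $\Omega^-$.

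For (ii), the key is to relate the outgoing DtN map to $R_D$. Given $f\in L^2(\Gamma)$, I would pick a standard semiclassical extension $\tilde f\in H^{1/2}_{\rm comp}(\mathbb R^d)$ supported in a fixed neighbourhood of $\Gamma$, satisfying $\|\tilde f\|_{H^s_\hsc(\mathbb R^d)} \lesssim \hsc^{s-1/2}\|f\|_{L^2(\Gamma)}$ for $s\leq 1$ with $\tilde f|_{\Gamma}=f$. Then
$$
G_D f \,=\, \tilde f\big|_{\Omega^+} \,-\, R_D(\hsc)\Big[(-\hsc^2\Delta - 1)\tilde f\big|_{\Omega^+}\Big],
$$
and, on $\Gamma$, $P_{\rm DtN}^+ f = \hsc \partial_\nu G_D f$. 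Taking a normal derivative, using an interior trace inequality (as in, e.g., \cite[Corollary 0.6]{Ta:17}) on $R_D(\hsc)\big[(-\hsc^2\Delta - 1)\tilde f\big]$ (which is an interior Helmholtz solution in a neighbourhood of $\Gamma$), and invoking Assumption~\ref{ass:poly} to bound $\|\chi R_D(\hsc)\chi\|_{L^2\to L^2}\leq C\hsc^{-1-P_{\rm inv}}$ for $\hsc\notin\tilde\cJ$, produces an estimate of the form $\|P_{\rm DtN}^+\|_{L^2(\Gamma)\to L^2(\Gamma)}\leq C k^{P_{\rm inv}+c}$ for $k\notin\cJ$ (with $c$ depending only on the fixed losses in trace and extension bounds).

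Combining (i) and (ii) with the formula for $A_k^{-1}$ gives $\|A_k^{-1}\|_{L^2\to L^2}\leq C k^{P_{\rm inv}+c}$ for $k\notin\cJ$, verifying Assumption~\ref{ass:polyboundintro}. The bound for $(A_k')^{-1}$ follows by the dual formula in \eqref{e:BIEinverse} together with the same pair of estimates, or simply by $L^2$-duality $\|(A_k')^{-1}\|_{L^2\to L^2}=\|A_k^{-1}\|_{L^2\to L^2}$ (since $A_k'$ and $A_k$ are $L^2$-adjoints). The only mildly technical point is tracking the semiclassical weights in the extension and trace inequalities so that the power of $k$ produced matches $P_{\rm inv}$ up to a fixed shift; the argument is otherwise routine.
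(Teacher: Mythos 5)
Your overall scaffolding matches the paper's --- both of you factor through the formula $A_k^{-1}=I-P_{\rm ItD}^{-,\eta_D}(P_{\rm DtN}^+-\ri k^{-1}\eta_D)$ from \eqref{e:BIEinverse}, bound $P_{\rm ItD}^{-,\eta_D}$ uniformly, and bound $P_{\rm DtN}^+$ by the cut-off Dirichlet resolvent via extension-plus-trace. But the proposal has a concrete gap in the choice of function spaces, and it is not a technicality: the operator $P_{\rm DtN}^+$ is a semiclassical pseudodifferential operator of order $+1$ on $\Gamma$ (its principal symbol behaves like $-\sqrt{|\xi'|_g^2-1}$ for $|\xi'|_g>1$), so it is \emph{not} bounded on $L^2(\Gamma)\to L^2(\Gamma)$ --- not with any power of $k$. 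Your claimed bound in item (ii) is therefore vacuous as stated. Relatedly, your extension estimate $\|\tilde f\|_{H_\hsc^s(\Rea^d)}\lesssim \hsc^{s-1/2}\|f\|_{L^2(\Gamma)}$ for $s\leq 1$ fails once $s>1/2$, because an $L^2(\Gamma)$ function need not be the trace of anything better than $H^{1/2}_\hsc$-regular near $\Gamma$; so $(-\hsc^2\Delta-1)\tilde f$ lands only in $H^{-3/2}_\hsc$ or worse, and $\|\chi R_D\chi\|_{L^2\to L^2}$ cannot be applied to it.

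What the paper does instead is keep everything in the weighted scale: Lemma~\ref{l:dampedResolve} gives $\|P_{\rm ItD}^{-,\eta}\|_{H^{1/2}_\hsc\to H^{3/2}_\hsc}\leq C$, and extension-plus-trace (with $E:H_\hsc^{3/2}(\Gamma)\to H_{\hsc,\comp}^2(\Omega^+)$) gives $\|P_{\rm DtN}^+\|_{H^{3/2}_\hsc\to H^{1/2}_\hsc}\leq C\hsc^{-1}\|\chi R_D\chi\|_{L^2\to L^2}$. Composing yields an estimate on $A_k^{-1}\Psi$ for $\Psi\in\Psi^{\comp}(\Gamma)$ --- i.e.\ only on the frequency-localized part of the input, where the $H^{3/2}_\hsc$ and $L^2$ norms are comparable. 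This is precisely why the proof must then invoke Lemma~\ref{lem:HFinverse}: at frequencies $\gtrsim k$ the operator $A_k$ is elliptic (Assumption~\ref{ass:abstract1}(iii), verified in \S\ref{sec:check}), so $A_k^{-1}$ is bounded there independently of any resolvent bound, and the low- and high-frequency pieces can be glued. Your proposal never performs this high/low-frequency split, and the argument cannot close on $L^2$ without it. To repair it you would either adopt the paper's weighted-space estimates and then run the argument of Lemma~\ref{lem:HFinverse}, or, equivalently, insert a cutoff $\chi(|\hsc D'|^2_g)$ in front of $P_{\rm DtN}^+$ before invoking the trace inequality and handle $(I-\chi(|\hsc D'|^2_g))A_k^{-1}$ separately by ellipticity.

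(Your dual observation $\|(A_k')^{-1}\|_{L^2\to L^2}=\|A_k^{-1}\|_{L^2\to L^2}$ is correct --- $A_k'$ is the transpose of $A_k$, and transposes preserve $L^2$-operator norms --- and the uniform $L^2\to L^2$ bound on $P_{\rm ItD}^{-,\eta_D}$ is plausible, since that operator has semiclassical order $-1$; these parts are not the issue.)
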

\begin{proof}
Notice that Lemma~\ref{l:dampedResolve} implies
$$
\|P_{\rm ItD}^{-,\eta}\|_{H_{\hsc}^{1/2}\to H_{\hsc}^{3/2}}\leq C.
$$
Next, we show that that there are $\chi \in C_c^\infty$ and  $N_0>0$ such that 
$$
\|P_{\rm DtN}^+\|_{H_{\hsc}^{3/2}\to H_{\hsc}^{1/2}}\leq C\|\chi R_D\chi\|_{L^2\to L^2}.
$$

For this, observe that there is $E:H_{\hsc}^{3/2}(\Gamma)\to H_{\hsc ,\comp}^{2}(\Omega^+)$, such that $\|E\|_{H_{\hsc}^{3/2}\to H_{\hsc}^2}\leq C\hsc^{-1/2}$. Therefore, 
$$
P_{\rm DtN}^+= \hsc \partial_\nu R_D(-\hsc^2\Delta-1)E,
$$
and we use the estimate: for any $\chi \in C_c^\infty(\overline{\Omega}^+)$ with $\chi\equiv 1$ near $\Gamma$,
$$
\|\hsc \partial_\nu u\|_{H_{\hsc}^{1/2}(\Gamma)}\leq C\hsc^{-1/2}\|\chi u\|_{H_{\hsc}^2},
$$
to conclude that 
$$
\|P_{\rm DtN}^+\|_{H_{\hsc}^{3/2}\to H_{\hsc}^{1/2}}\leq C\hsc^{-1}\|\chi R_D\chi \|_{L^2\to H_{\hsc}^2}\leq C\hsc^{-1}\|\chi R_D \chi\|_{L^2\to L^2}. 
$$

Together with~\eqref{e:BIEinverse}, this implies that for $\Psi\in \Psi^{\comp}(\Gamma)$, 
$$
\|A_{k}^{-1}\Psi\|_{\LtGt}+\|(A_{k}')^{-1}\Psi\|_{\LtGt}\leq C\hsc^{-1}\|\chi R_D\chi \|_{L^2\to L^2}.
$$
Lemma~\ref{lem:HFinverse}, or more precisely its proof, then completes the proof of the lemma.
\end{proof}

\begin{lemma}
\label{l:waveToKOscillate}
Suppose that $f\in L^2(\Gamma)$ is $\hsc$-tempered and  
\begin{equation}
\label{e:goodWave}
\WF(f)\subset \{(x',\xi')\in \cH\,:\, 0<|\xi'|_{g_\Gamma}\}.
\end{equation}
Then for any $k_0>0$, $\e>0$, $\chi \in C_c^\infty ((-(1+2\e)^2,(1+2\e)^2))$ with $\chi \equiv 1$ on $[-1-\e,1+\e]$, there is $\e_0>0$ such that for any $N>0$ there is $C>0$ such that 
$$
\|(1-\chi(k^{-2}\Delta_g))f\|_{L^2(\Gamma)}\leq Ck^{-N},\qquad \|\chi(\e_0^{-2}k^{-2}\Delta_g)f\|_{L^2(\Gamma)}\leq Ck^{-N}
$$
\end{lemma}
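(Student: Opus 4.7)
The plan is to prove both estimates by reducing them to statements of the form ``a semiclassical pseudodifferential operator whose wavefront set is disjoint from $\WF(f)$ acts on $f$ as $O(\hsc^\infty)$ in $L^2$.'' First I would convert the cutoffs $\chi(k^{-2}\Delta_g)$ into the form used in Lemma \ref{lem:WFcutoff} by setting $\psi(s):=\chi(-s)$, so that $\chi(k^{-2}\Delta_g)=\psi(-\hsc^2\Delta_g)$ with $\hsc=k^{-1}$, and $\psi$ has the same support properties as $\chi$: supported in $(-(1+2\e)^2,(1+2\e)^2)$ and identically $1$ on $[-1-\e,1+\e]$.

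For the first estimate, Lemma \ref{lem:WFcutoff} (applied to $\psi$) gives $I-\chi(k^{-2}\Delta_g)\in\Psi^0_\hsc(\Gamma)$ with
\[
\WF_\hsc\big(I-\chi(k^{-2}\Delta_g)\big)\subset\overline{\{(x',\xi'):1-\psi(|\xi'|_g^2)>0\}}\subset\{|\xi'|_g^2>1+\e\}.
\]
Since the hypothesis $\WF(f)\subset\cH$ gives $\WF(f)\subset\{|\xi'|_g<1\}$, the two wavefront sets are disjoint, and the conclusion $\|(I-\chi(k^{-2}\Delta_g))f\|_{L^2}=O(\hsc^\infty)$ will follow by the standard microlocalisation argument (elliptic parametrix, Theorem \ref{thm:elliptic_para}, combined with a finite cover of the wavefront set and the $\hsc$-temperedness of $f$).

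For the second estimate, I would first exploit that $\Gamma$ is compact and $\WF(f)$ is closed in the fiber-radially compactified bundle $\overline{T^*\Gamma}$ and contained in the bounded region $\cH$; together with the assumption that $\WF(f)$ avoids the zero section, compactness produces a constant $c>0$ with $\WF(f)\cap\{|\xi'|_g\le c\}=\emptyset$. Then I would choose $\e_0>0$ small enough that $\e_0(1+2\e)\le c$. The operator $\chi(\e_0^{-2}k^{-2}\Delta_g)=\psi(\e_0^{-2}(-\hsc^2\Delta_g))$ lies in $\Psi^{-\infty}_\hsc(\Gamma)$ by the Helffer--Sjöstrand functional calculus, with principal symbol $\psi(\e_0^{-2}|\xi'|_g^2)$, and by \eqref{e:WFquant} its wavefront set sits inside $\{|\xi'|_g<\e_0(1+2\e)\}\subset\{|\xi'|_g\le c\}$, again disjoint from $\WF(f)$.

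The main point requiring care is the passage from ``disjoint wavefront sets'' to the quantitative $O(\hsc^{-N})$ bound in $L^2$. I would handle this via the elliptic estimate \eqref{e:ellipticestimate}: for $A$ one of the two cutoff operators and $B\in\Psi^0_\hsc$ elliptic on $\WF(A)$ and microlocally supported in the complement of $\WF(f)$ (built from finitely many test operators coming from the pointwise definition of $\WF(f)$ and a pseudodifferential partition of unity on the compact set $\WF(A)\cap\{|\xi'|_g\le R\}$, glued to the elliptic-at-infinity cutoff $I-\chi(k^{-2}\Delta_g)$ in the first case), we obtain $\|Bf\|_{H^s_\hsc}=O(\hsc^\infty)$ and hence $\|Af\|_{L^2}\le C(\|Bf\|_{L^2}+\hsc^M\|f\|_{H^{-N}_\hsc})=O(\hsc^\infty)$ by temperedness.
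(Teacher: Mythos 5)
Your proof is correct and takes essentially the same route as the paper's (the paper's own proof is a one-line reference to the definition of wavefront set and Corollary~\ref{cor:WFcutoff}; you instead invoke Lemma~\ref{lem:WFcutoff} plus an elliptic parametrix argument, which is the same circle of ideas expanded). One cosmetic imprecision: for the second estimate you cite \eqref{e:WFquant}, which is stated for quantizations $\Op_\hsc(a)$ of $\hsc$-independent symbols, whereas $\chi(\e_0^{-2}k^{-2}\Delta_g)$ is a functional-calculus operator -- the correct reference is the wavefront-set argument in the proof of Lemma~\ref{lem:WFcutoff} (choosing $\widetilde{\chi}$ with $\supp\widetilde{\chi}\cap\supp(\chi(\e_0^{-2}\cdot))=\emptyset$ so that $\widetilde{\chi}(-\hsc^2\Delta_\Gamma)\chi(\e_0^{-2}(-\hsc^2\Delta_\Gamma))=0$ exactly by the spectral theorem); the conclusion $\WF_\hsc(\chi(\e_0^{-2}k^{-2}\Delta_g))\subset\{|\xi'|_g\le\e_0(1+2\e)\}$ is correct. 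The compactness argument producing $c>0$ from the hypothesis $\WF(f)\subset\{0<|\xi'|_{g_\Gamma}<1\}$ is the right observation and is the point that forces $\e_0$ to depend on $f$, as the statement allows.
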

\begin{proof}
The proof follows from the definition of wavefront set and Corollary~\ref{cor:WFcutoff}. 
\end{proof}

We now give a few abstract lemmas that pass from quasimodes in $\Omega^+$ to data, $f$ on $\Gamma$ satisfying~\eqref{e:goodWave} and~\eqref{e:BIEQuasi}.
\begin{lemma}[From Helmholtz quasimode data to growing $P_{\rm DtN}^+$]
\label{l:quasimodeExteriorToDirichlet}
Let $\Omega^-\Subset \mathbb{R}^d$ open with smooth boundary and connected complement $\Omega^+:=\mathbb{R}^d\setminus \overline{\Omega}_-$ such that Assumption~\ref{ass:poly} holds. 
Let $\mu\in \{1,-1\}$, $\mc{O}\subset \mc{H}$ open, $\mc{B}\subset \mc{H}$ closed and $B\in \Psi^{\comp}(\Omega^+)$ such that 
\begin{equation}
\label{e:uIsBig}
\bigcup_{\mu t\geq 0}\varphi_{\mu t}^{\mathbb{R}^d}(S^*\Gamma)\cap \WF(B)=\emptyset.
\end{equation}
 Suppose that there are $u\in H_{\hsc}^2(\Omega^+)$, $\tilde{g}\in L^2_{\comp}(\Omega^+)$ such that 

\begin{gather}
\sup_{q\in {}^b\WF(\tilde{g}\cap\mc{Z})}T^\mu_{\mc{O}<\mc{B}\cup\{q\}}(q)<\infty,\qquad \|Bu\|_{L^2(\Omega^+)}\geq c>0
\label{e:WFCond}\\
(-\hsc^2\Delta-1)u= \hsc \tilde{g},\qquad u|_{\Gamma}=0,\qquad \| \tilde{g}\|_{L^2}=o(1),\quad u \text{ is } \begin{cases} \text{outgoing}&\mu=1\\\text{incoming}&\mu=-1\end{cases}.\label{e:uCond}
\end{gather}
Then for any function $T:{}^b\WF(\tilde{g})\to [0,\infty)$ such that $T(q)<T_{\mc{B}\cup\{q\}}(q)$,  
$\varphi_{\mu T(q)}^{\Omega^+}(q)\in \mc{O}\setminus (\mc{B}\cup \{q\}),$ and any $\e>0$ there is $c_1>0$ such that for any $\hsc \notin \cJ$, $q_0\in {}^b\WF(\tilde{g})$ and $g_1\in L^2(\Gamma)$ such that 
\begin{equation}
\label{e:DTNBig}
\|P_{\rm DtN}^{\mu}g_1\|_{L^2}\geq c_1(\|Bu\|_{L^2}/\|\tilde{g}\|_{L^2})\|g_1\|_{L^2},
\end{equation}
\begin{equation}
\label{e:WFDirichlet}
\WF(g_1)\subset B(\varphi_{T(q_0)}^{\Omega_+}(q_0),\e)\subset \mc{O}\setminus (\mc{B}\cup\{q_0\}),
\end{equation}
and
$$
\|BG_Dg_1\|_{L^2}\geq c_1.
$$
\end{lemma}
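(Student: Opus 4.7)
The plan is to construct $g_1$ as a microlocal cutoff of the boundary trace of a localized free-space resolvent applied to $\tilde g$. The starting point is the decomposition
\begin{equation}
\label{e:decompPlan}
u = \hbar R_0 \tilde g + G_D f_0, \qquad f_0 := -\hbar R_0 \tilde g|_\Gamma,
\end{equation}
which holds because both sides are outgoing Dirichlet solutions with the same source and boundary data. Writing $u$ alternatively via Green's representation as $u = \hbar R_0 \tilde g + c\cdot \mathcal{S}\ell(\hbar \partial_\nu u)$ (for some absolute constant $c$), the hypothesis~\eqref{e:uIsBig} combined with the single-layer bound in~\eqref{e:layer} and the free-resolvent bound~\eqref{e:freeResolveEstimates} yields, after absorbing $\|B\hbar R_0 \tilde g\| = O(\|\tilde g\|)$, the lower bound
$$
\|\hbar \partial_\nu u\|_{L^2(\Gamma)} \,\gtrsim\, \hbar^{-1}(\|Bu\|_{L^2} - C\|\tilde g\|) \,\gtrsim\, \hbar^{-1}.
$$
Likewise, \eqref{e:freeResolveEstimates} gives $\|B \hbar R_0 \tilde g\|_{L^2} = o(1)$, so from~\eqref{e:decompPlan}, $\|B G_D f_0\|_{L^2} \geq c/2$ for small $\hbar$, while a semiclassical trace estimate gives $\|f_0\|_{L^2(\Gamma)} \leq C\|\tilde g\|$.

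Next I would choose $q_0$ and the cutoffs. Using Lemma~\ref{l:modifiedDynamics} at $q_0$ (applied to the point $p_1 := \varphi_{\mu T(q_0)}^{\Omega^+}(q_0) \in \mathcal{O}\setminus(\mathcal{B}\cup\{q_0\})$) produces closed neighborhoods $V \subset T^*\Gamma \setminus (S^*\Gamma \cup \mathcal{B} \cup \{q_0\})$ of $p_1$ and $U$ of $q_0$ together with a good damping $a$, avoiding the trajectory $\Sigma_{q_0}$. Select a $b$-pseudodifferential cutoff $\Psi$ on $\Omega^+$ microlocalized in $U$, and a pseudodifferential cutoff $\Psi_0$ on $\Gamma$ with $\operatorname{WF}(\Psi_0) \subset B(p_1,\varepsilon) \cap V$. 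A partition of unity on the compact set ${}^b\operatorname{WF}(\tilde g)$ and pigeonhole then allow one to pick $q_0$ so that the localized data $\Psi\tilde g$ produces via~\eqref{e:decompPlan} a boundary trace $f_0^{(q_0)} := -\hbar R_0(\Psi\tilde g)|_\Gamma$ whose contribution to $\|B G_D(\cdot)\|$ is still bounded below by a fixed constant. Finally set
$$
g_1 := \Psi_0\, f_0^{(q_0)}.
$$
By construction, $\operatorname{WF}(g_1) \subset B(p_1,\varepsilon)$ and $\|g_1\|_{L^2(\Gamma)} \leq C\|\tilde g\|$.

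The core of the proof is then to control $G_D g_1$ and $P_{\rm DtN}^\mu g_1$ microlocally. Using Lemma~\ref{l:approxSolve} together with propagation of singularities (Theorem~\ref{t:basicPropagate}) applied to the damped problem obtained from Lemma~\ref{l:modifiedDynamics}, one shows that $G_D g_1$ agrees, modulo $O(\hbar^\infty)_{L^2_{\mathrm{loc}}}$ and modulo a contribution supported away from $\operatorname{WF}(B)$, with the corresponding microlocal piece of $G_D f_0^{(q_0)} = \hbar R_D(\Psi\tilde g) - \hbar R_0(\Psi\tilde g)$. Combined with Lemma~\ref{l:awayGamma-2} (to control the non-localized remainder using Assumption~\ref{ass:poly}), this yields $\|B G_D g_1\|_{L^2} \geq c_1$. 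Similarly, since $\hbar \partial_\nu G_D g_1 = \hbar \partial_\nu\bigl(G_D f_0^{(q_0)}\bigr) + O(\hbar^\infty)$ microlocally near $p_1$, and $\hbar\partial_\nu G_D f_0^{(q_0)} = \hbar\partial_\nu (\hbar R_D(\Psi\tilde g)) - \hbar^2\partial_\nu R_0(\Psi\tilde g)|_\Gamma$, the first term is bounded below by $\|\Psi_0'(\hbar\partial_\nu u)\|$ for a related tangential cutoff $\Psi_0'$, while the second is $O(\|\tilde g\|)$. Combining this with the trapping lower bound $\|\hbar\partial_\nu u\|\gtrsim\hbar^{-1}$ and $\|g_1\|\leq C\|\tilde g\|$ gives
$$
\frac{\|P_{\rm DtN}^\mu g_1\|_{L^2}}{\|g_1\|_{L^2}} \,\gtrsim\, \frac{1}{\hbar\|\tilde g\|} \,\geq\, c_1\,\frac{\|Bu\|_{L^2}}{\|\tilde g\|_{L^2}},
$$
where the last step uses $\|Bu\|\leq C$.

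The main obstacle is in the two microlocal approximations just above: one must argue carefully that applying $G_D$ to the \emph{cut-off} datum $\Psi_0 f_0^{(q_0)}$ produces, modulo negligible error, the same solution one gets by freely propagating $\Psi\tilde g$ and reflecting only at the boundary crossing near $p_1$. This requires propagation of singularities on a manifold with boundary (handled by Theorem~\ref{t:basicPropagate}) to track the Dirichlet reflection at $p_1$, together with the fact that the subsequent segments of the trajectory either leave $\mathcal{Z}$ by the modified dynamics (Lemma~\ref{l:modifiedDynamics}) or are absorbed by the damping built into the approximate inverse of Lemma~\ref{l:approxSolve}. The outgoing property of $G_D$ and Lemma~\ref{l:outgoing} prevent spurious wavefront contributions from $\Gamma_+$ that do not intersect $\operatorname{WF}(B)$.
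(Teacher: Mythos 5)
Your starting decomposition $u = \hbar R_0\tilde g + G_D f_0$ with $f_0 := -\hbar R_0\tilde g|_\Gamma$ is correct, and from the free-resolvent and trace estimates you do get $\|B\hbar R_0\tilde g\| = O(\|\tilde g\|)$, hence $\|BG_D f_0\|\geq c/2$ and $\|f_0\|\leq C\|\tilde g\|$. But there is a genuine gap once you pass from $f_0$ to $g_1 := \Psi_0 f_0^{(q_0)}$. Cutting off the boundary datum by $\Psi_0$ does not commute with $G_D$ in any way controlled by the cited lemmas: you need $\|B\, G_D\big((I-\Psi_0)f_0^{(q_0)}\big)\|$ to be small, and nothing in Lemma~\ref{l:approxSolve}, Theorem~\ref{t:basicPropagate}, or Lemma~\ref{l:awayGamma-2} gives this without prior control of $\WF\big((I-\Psi_0)f_0^{(q_0)}\big)$. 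But $\WF(f_0^{(q_0)})$ is governed by free propagation in $\mathbb R^d$, not by the broken flow: the straight-line rays from $\WF(\Psi\tilde g)$ can cross $\Gamma$ multiple times (and need not first cross at $\varphi^{\Omega^+}_{T(q_0)}(q_0)$ when $T$ is not the first boundary-hitting time), so $\WF(f_0^{(q_0)})$ can populate $\Gamma_-$ and contribute to $\WF(B)$ after applying $G_D$. Your sentence ``one shows that $G_D g_1$ agrees, modulo $O(\hbar^\infty)$ and modulo a contribution supported away from $\WF(B)$, with the corresponding microlocal piece of $G_D f_0^{(q_0)}$'' is exactly the missing argument, and it does not follow from Lemma~\ref{l:approxSolve}: that lemma builds approximate solutions of a \emph{damped} problem from given data, it does not tell you how $G_D$ acts on a microlocal cutoff.

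The paper's proof reverses the order of operations precisely to avoid this. It pigeonholes over a partition of unity to pick one block $g={}^b\Op(\psi_i)\tilde g$, then uses Lemma~\ref{l:approxSolve} to build an auxiliary $u_1$ solving the \emph{damped} problem with source $\hbar g$, with $\|\chi u_1\|_{H^2_\hbar}\leq C\|g\|$ and $\WF(u_1|_\Gamma)\subset V^2_q$ automatically. It then sets $u_2 := R_D g - u_1 + O(\hbar^\infty)$, which is the Poisson extension of $g_1 := -u_1|_\Gamma$; the wavefront localization of $g_1$ is by construction, and $\|Bu_2\|\geq \|BR_Dg\|-\|Bu_1\|\geq c - C\|\tilde g\|$ since $u_1$ carries no mass on $\WF(B)$. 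The layer-potential representation $u_2 = \mathcal{S}\ell\,\partial_\nu u_2 - \mathcal{D}\ell\, u_2|_\Gamma$ and \eqref{e:layer} then yield $\|\hbar\partial_\nu u_2\|\geq c'$, and $\|g_1\| = \|u_1|_\Gamma\|\leq C\|\tilde g\|$ closes the estimate. As a secondary remark, your lower bound $\|\hbar\partial_\nu u\|\gtrsim \hbar^{-1}$ appears to have a spurious factor of $\hbar^{-1}$: in the paper's normalization the single layer acts on $\partial_\nu u$, and the same layer-potential argument gives $\|\hbar\partial_\nu u\|\geq c$, not $\gtrsim \hbar^{-1}$. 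This does not affect the statement of the final inequality (since $\|Bu\|\leq C$), but it indicates a slip in the normalization of $\mathcal{S}\ell$.
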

\begin{proof} 
We consider the case of $\mu=1$, the other being almost identical but with the role of $R_D$ played by $R_D^*$. 

Let $q\in {}^b\WF(\tilde{g})\cap \mc{Z}$ and $0<T_q<T_{\mc{B}\cup\{q\}}$ such that $\varphi_{ T^1_{q}}^{\Omega^+}(q)\in \mc{O}\setminus (\mc{B}\cup \{q\})$. Let $V_q^1\Subset V_q^2\subset \mc{O}\cap (B(\varphi_{T_q}^{\Omega_+}(q),\e)\setminus (\mc{B}\cup\{q\}))$ be a neighborhood of $\varphi_{T_q}^{\Omega^+}(q)$ and $U_q$ a neighborhood of $q$ such that 
$$
\inf_{q'\in U_q}T^1_{V^1_q<(\mc{B}\cup \{q'\}}(q')<\infty.
$$
Such a neighborhood exists because $\varphi_t^{\Omega^+}$ is continuous and $\varphi_{\mu T(q)}^{\Omega^+}(q)\in \mc{H}\setminus (\mc{B}\cup\{q\})$.

Since ${}^b\WF(\tilde{g})\cap \mc{Z}$ is compact, there are $\{q_1,\dots q_N\}$ such that 
$$
{}^b\WF(\tilde{g})\cap \mc{Z}\subset \bigcup_{i=1}^NU_{q_i}. 
$$
Let $\{\psi_j\}_{j=1}^N\in C_c^\infty({}^bT^*\Omega^+)$ with $\sum_{j}\psi_j\equiv 1 $ on ${}^b\WF(\tilde{g})\cap \mc{Z}$ and $\supp \psi_j \subset U_{q_j}$. Then, by Lemma~\ref{l:awayGamma-2}
and hence, using Assumption~\ref{ass:poly}, for any $\chi\in C_c^\infty$, 
$$
\|\chi u-\sum_j\chi R_D {}^b\Op(\psi_j)\hsc \tilde{g}\|_{H_{\hsc}^2}\leq C\|\tilde{g}\|_{L^2}.
$$
In particular, there is $i\in\{ 1,\dots, N\}$ such that 
\begin{equation}
\label{e:newAssumption}
\|B R_D {}^b\Op(\psi_{i})\tilde{g}\|_{L^2}\geq \frac{1}{N}\|B R_D \tilde{g}\|_{L^2}\geq \frac{c}{N},\qquad 
\| {}^b\Op(\psi_i)\tilde{g}\|_{L^2}\leq C\|\tilde{g}\|_{L^2}=o(1). 
\end{equation}

Let $g:={}^b\Op(\psi_i)\tilde{g}$. then, by Lemma~\ref{l:approxSolve} there is $u_1$ satisfying
$$
\begin{cases}(-\hsc^2\Delta-1)u_1= \hsc g+O(\hsc^{\infty})_{L^2_{\comp}}&\text{ in }\Omega^+
\\
u_1\text{ outgoing}\\
Q\Lambda^{-1}\hsc D_\nu u_1 -u_1=0&\text{ on }\Gamma,
\end{cases}
$$
where $Q\in \Psi^{\comp}(\Gamma)$ with $\sigma(Q)\geq0$, $\WF(Q)\subset V^2_{q_i}$ and $\WF(Q-\Lambda^{-1})\cap V_{q_i}^1=\emptyset,$ where $\Lambda^{-1}\in \Psi^{-1}$ is from Lemma~\ref{l:approxSolve}. Moreover, for any $\chi\in C_c^\infty(\overline{\Omega^+})$, 
$$
\begin{gathered}
{}^b\WF(u_1)\subset {}^b\WF(g)\cup \bigcup_{q\in {}^b\WF(g)}\overline{\bigcup_{0\leq t<T_{V_{q_i}^1}(q) }\varphi_{t}^{\Omega}(q)},\qquad \WF(u_1|_{\Gamma})\subset V_{q}^2,
\end{gathered}
$$
and
\begin{equation}
\label{e:estimateU1}
\|\chi u_1\|_{H_{\hsc}^2}\leq c\|g\|_{L^2}.
\end{equation}

%
%

Put $u_2:= R_Dg-u_1+R_D((-\hsc^2\Delta-1)u_1-g)$. Then 
$$
(-\hsc^2\Delta-1)u_2=0 \text{ in }\Omega^+,\qquad u_2|_{\Gamma}=-u_1|_{\Gamma},\qquad u_2(\hsc)\text{ is outgoing},
$$
and, using Assumption~\ref{ass:poly} and that $(-\hsc^2\Delta-1)u_1-g=O(\hsc^{\infty})_{L^2_{\comp}}$,
$$
u_2=R_Dg-u_1+O(\hsc^{\infty})_{H_{\hsc ,\loc}^2}.
$$
In particular, by~\eqref{e:newAssumption} and~\eqref{e:estimateU1}
\begin{equation}
\label{e:aardvark}
\|B u_2\|_{L^2}\geq \|BR_Dg\|-\|Bu_1\|_{L^2}-O(\hsc^{\infty})\geq c -C\| \tilde{g}\|_{L^2}+O(\hsc^{\infty}).
\end{equation}
Furthermore, since $\WF(u_1|_{\Gamma})\subset\mathcal{H}$, for $\chi\in C_c^\infty(\overline{\Omega^+})$ with $\supp (1-\chi)\cap \Gamma=\emptyset$, 
\begin{equation}
\label{e:squirrel}
\|u_2|_{\Gamma}\|_{L^2}=\|u_1|_{\Gamma}\|_{L^2}\leq \|\chi u_1\|_{L^2}+C\|g\|_{L^2}+O(\hsc^{\infty})\|g\|_{L^2}\leq C\|\tilde{g}\|_{L^2}+O(\hsc^{\infty})\|\tilde{g}\|_{L^2}.
\end{equation}
Now,
$$
u_2=\mc{S}\ell \partial_\nu u_2|_{\Gamma}-\mc{D}\ell u_2|_{\Gamma}.
$$ 
Therefore, using
\begin{equation}
\label{e:lemur}
c-C\|\tilde{g}\|_{L^2}-O(\hsc^{\infty})\leq \|Bu_2\|_{L^2}\leq \|B\mc{S}\ell \partial_\nu u_2|_{\Gamma}\|+\|B\mc{D}\ell u_2|_{\Gamma}\|\leq C\|\hsc \partial_\nu u_2\|_{L^2}+C\| \tilde{g}(h)\|_{L^2}.
\end{equation}
In particular, since $\|\tilde{g}\|_{L^2}=o(1)$, using~\eqref{e:aardvark},~\eqref{e:squirrel}, and~\eqref{e:lemur}, for $h$ small enough
$$
c\frac{\|u_2|_{\Gamma}\|_{L^2}}{\|\tilde{g}\|_{L^2}}\leq c\leq \|\hsc \partial_{\nu}u_2\|_{L^2}=\|P_{\rm DtN}^+u_2|_{\Gamma}\|_{L^2}
$$
\end{proof}

\begin{lemma}[From Helmholtz quasimode data to BIE quasimode data for $A_k$]
\label{l:bWave}
Let $\Omega^-\Subset \mathbb{R}^d$ open with smooth boundary and connected complement $\Omega^+:=\mathbb{R}^d\setminus \overline{\Omega}_-$ and suppose that Assumptions~\ref{ass:parameters} and~\ref{ass:poly} hold. Let $\mu=1$, $B\in \Psi^{\comp}(\Omega^+)$ satisfy~\eqref{e:uIsBig} and suppose there are $u\in H_{\hsc ,\loc}^2(\Omega^+)$, $g\in L^2_{\comp}(\Omega^+)$ satisfying~\eqref{e:WFCond} and~\eqref{e:uCond} with 
$$
\mc{O}=\mc{H}_0:=\{ (x',\xi')\in\mc{H},:\, |\xi'|\neq 0\},\qquad \mc{B}=\emptyset.
$$
Then for all $\hsc \notin \tilde{\cJ}$, there are $v,f\in L^2(\Gamma)$ such that $f$ satisfies~\eqref{e:goodWave} and
$$
A_{k}v=f,\qquad \|f\|_{\LtG}\leq C\|\tilde{g}\|_{L^2(\Omega^+)}\|v\|_{\LtG}.
$$
\end{lemma}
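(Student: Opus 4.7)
The plan is to produce $f\in L^2(\Gamma)$ whose BIE solution $v=A_k^{-1}f$ is much larger than $f$, the amplification factor being $1/\|\tilde g\|_{L^2}$. The construction has three conceptual stages: (i) convert the interior Helmholtz quasimode data $\tilde g$ into a Dirichlet boundary datum via Lemma~\ref{l:quasimodeExteriorToDirichlet}; (ii) certify, via Lemma~\ref{l:layerNoCancel}, that the interior impedance-to-Dirichlet map does not cancel the exterior Dirichlet-to-Neumann data; (iii) conclude using the explicit formula \eqref{e:BIEinverse} for $A_k^{-1}=I-P^{-,\eta_D}_{\mathrm{ItD}}(P^+_{\mathrm{DtN}}-ik^{-1}\eta_D)$.

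First, I would apply Lemma~\ref{l:quasimodeExteriorToDirichlet} with $\mu=1$, $\mathcal{O}=\mathcal{H}_0$, and $\mathcal{B}=\emptyset$; the hypotheses \eqref{e:WFCond}, \eqref{e:uCond}, and \eqref{e:uIsBig} are precisely those assumed in Lemma~\ref{l:bWave}. This yields $g_1\in L^2(\Gamma)$ such that (a) $\WF(g_1)\subset \mathcal{H}_0$, which is exactly \eqref{e:goodWave}; (b) $\|P^+_{\mathrm{DtN}}g_1\|_{L^2}\geq c_1(\|Bu\|/\|\tilde g\|)\|g_1\|_{L^2}$; (c) $\|B G_D g_1\|_{L^2(\Omega^+)}\geq c_1>0$; and, from the trace estimates implicit in the construction, $\|g_1\|_{L^2}\leq C\|\tilde g\|_{L^2}=o(1)$. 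Second, I would invoke Lemma~\ref{l:layerNoCancel} to deduce $\|P^{-,\eta_D}_{\mathrm{ItD}}P^+_{\mathrm{DtN}}g_1\|_{L^2}\geq c'>0$ uniformly in $\hbar$: the smallness hypothesis $\|g_1\|_{L^2}=o(1)$ is satisfied, the wavefront condition on $B$ in \eqref{e:uIsBig} matches that required by the lemma, and $\|BG_D g_1\|_{L^2}\geq c_1$ is part (c). The normalization $\|P^+_{\mathrm{DtN}}f\|_{L^2}=1$ stated in the lemma is only a scaling convention; the contradiction argument in its proof (applied to $g_1$) produces the lower bound above directly. Third, I set $f:=g_1$ and $v:=A_k^{-1}f$. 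Using \eqref{e:BIEinverse} and writing
\begin{equation*}
v=g_1-P^{-,\eta_D}_{\mathrm{ItD}}P^+_{\mathrm{DtN}}g_1+ik^{-1}\eta_D P^{-,\eta_D}_{\mathrm{ItD}}g_1,
\end{equation*}
the $L^2\to L^2$ boundedness of $P^{-,\eta_D}_{\mathrm{ItD}}$ from Lemma~\ref{l:dampedResolve} together with $|k^{-1}\eta_D|\leq C$ from Assumption~\ref{ass:parameters} shows that the last two terms have $L^2$ norm at most $C'\|g_1\|_{L^2}\leq C''\|\tilde g\|=o(1)$. Hence $\|v\|_{L^2}\geq c'/2$ for $\hbar$ small, and combined with $\|f\|_{L^2}=\|g_1\|_{L^2}\leq C\|\tilde g\|_{L^2}$ this yields $\|f\|_{L^2}\leq (2C/c')\|\tilde g\|_{L^2}\|v\|_{L^2}$, as claimed.

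The main obstacle I anticipate is the step extracting a uniform-in-$\hbar$ lower bound $\|P^{-,\eta_D}_{\mathrm{ItD}}P^+_{\mathrm{DtN}}g_1\|_{L^2}\geq c'>0$ from Lemma~\ref{l:layerNoCancel} without having an upper bound on $\|P^+_{\mathrm{DtN}}g_1\|_{L^2}$ (which would be needed if one renormalized to enforce $\|P^+_{\mathrm{DtN}}f\|_{L^2}=1$). The cleanest route is to re-run the contradiction argument directly for $g_1$: if $\|P^{-,\eta_D}_{\mathrm{ItD}}P^+_{\mathrm{DtN}}g_1\|_{L^2}\to 0$ along a subsequence, then the interior impedance trace $u_1|_\Gamma$ and the exterior Dirichlet trace $u_2|_\Gamma=g_1$ both tend to $0$ in $L^2$, so the layer representation $u_11_\Omega+u_21_{\mathbb{R}^d\setminus\Omega}=\mathcal{D}\ell(u_1-g_1)+\hbar^{-1}\mathcal{S}\ell(i\hbar\eta_D u_1)$ combined with the mapping estimates \eqref{e:layer} produces $\|BG_D g_1\|_{L^2(\Omega^+)}=o(1)$, contradicting $\|BG_D g_1\|_{L^2}\geq c_1$. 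This keeps everything uniform in $\hbar$ and sidesteps the need to control $\|P^+_{\mathrm{DtN}}g_1\|$ from above.
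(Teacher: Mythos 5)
Your proposal is correct and follows essentially the same route as the paper: Lemma~\ref{l:quasimodeExteriorToDirichlet} to produce $g_1$ with the stated lower bounds and wavefront localization, Lemma~\ref{l:layerNoCancel} to preclude cancellation in $P^{-,\eta_D}_{\mathrm{ItD}}P^+_{\mathrm{DtN}}g_1$, and the inverse formula \eqref{e:BIEinverse} to finish. The one place you diverge from the paper's write-up is the step you flag at the end: rather than rescale $g_1$ to meet the literal normalization $\|P^+_{\rm DtN}f\|=1$ in Lemma~\ref{l:layerNoCancel} (which would require an implicit upper bound on $\|P^+_{\rm DtN}g_1\|$ from the construction), you re-run the contradiction argument directly on $g_1$ using only $\|g_1\|=o(1)$ and $\|BG_Dg_1\|\gtrsim 1$; this is a legitimate and arguably cleaner reading, and it compensates by using the estimate $\|g_1\|\lesssim\|\tilde g\|$ (also implicit in the construction of $g_1=u_2|_\Gamma$, via the trace bound \eqref{e:squirrel}) where the paper instead leans on \eqref{e:DTNBig}. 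Either way the conclusion $\|f\|\lesssim\|\tilde g\|\,\|v\|$ comes out the same.
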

\begin{proof}
By Lemma~\ref{l:quasimodeExteriorToDirichlet}, there is $g_1\in L^2(\Gamma)$ with $\WF(g_1)\subset \mc{H}_0$ such that 
$$
\|P_{\rm DtN}^+g_1\|_{L^2}\geq \|Bu\|_{L^2}/\|\tilde{g}\|_{L^2}\|g_1\|_{L^2},
$$
and 
$$
\|BG_Dg_1\|_{L^2}\geq c>0.
$$
Thus, Lemma~\ref{l:layerNoCancel} implies that 
$$
\|P_{\rm ItD}^{-,\eta}P_{\rm DtN}^+g_1\|_{L^2}\geq c\|\tilde{g}_1\|_{L^2}^{-1}\|g_1\|_{L^2}.
$$

In particular, 
$$
\|A_{k}^{-1}u_2|_{\Gamma}\|_{L^2}=\|(I-P_{\rm ItD}^{-,\eta}(P_{\rm DtN}^+-i\hsc \eta))u_2|_{\Gamma}\|_{L^2}\geq (c\| \tilde{g}\|_{L^2}^{-1}-C)\|g_1\|_{L^2},
$$
which completes the proof.
\end{proof}

We give two versions of Lemma~\ref{l:bWave} for $A_{k,\eta}$. The first is simpler but has more restrictive assumptions.
\begin{lemma}[From Helmholtz quasimode data to BIE quasimode data for $A'_k$:I]
\label{l:bWave2New}
Let $\Omega^-\Subset \mathbb{R}^d$ open with smooth boundary and connected complement $\Omega^+:=\mathbb{R}^d\setminus \overline{\Omega}_-$ and suppose that Assumptios~\ref{ass:parameters} and ~\ref{ass:poly} hold. 
Let $\mu=-1$,  $B\in \Psi^{\comp}(\Omega^+)$ satisfy~\eqref{e:uIsBig} and
\begin{equation}
\label{e:notNormal}
\WF(B)\cap \bigcup_{t\geq 0}\varphi_{-t}^{\mathbb{R}^d}(N^*\Gamma),
\end{equation}
and suppose there are $u\in H_{\hsc ,\loc}^2(\Omega^+)$, $g\in L^2_{\comp}(\Omega^+)$ satisfying~\eqref{e:WFCond} and~\eqref{e:uCond} with $\mc{O}=\mc{H}$
Then for all $\hsc \notin \tilde{\cJ}$, there are $v,f\in L^2(\Gamma)$ such that $f$ satisfies~\eqref{e:goodWave} and
$$
A'_{k}v=f,\qquad \|f\|_{\LtG}\leq C\|\tilde{g}\|_{L^2(\Omega^+)}\|v\|_{\LtG}.
$$
%
%
\end{lemma}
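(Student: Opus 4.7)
I plan to follow the structure of the proof of Lemma~\ref{l:bWave}, adapted to the incoming case $\mu=-1$ and to $A'_k$, which arises via the mirror identity $(A'_k)^{-1}=I-(P_{\rm DtN}^{+}-ik^{-1}\eta)\,P_{\rm ItD}^{-,\eta}$ from~\eqref{e:BIEinverse}. As the first step, I apply the $\mu=-1$ version of Lemma~\ref{l:quasimodeExteriorToDirichlet} to the incoming Helmholtz quasimode $u$ (the same lemma, with the outgoing Dirichlet resolvent $R_D$ replaced throughout by the incoming resolvent $R_D^{*}$, as indicated in its proof). With $\mc{O}=\mc{H}$ and $\mc{B}=\emptyset$, this produces $g_{1}\in L^{2}(\Gamma)$ whose wavefront set is concentrated in a small neighbourhood of $\varphi^{\Omega^{+}}_{-T(q_{0})}(q_{0})\in\mc{H}_{0}$ and which satisfies
\[
\|P_{\rm DtN}^{-}g_{1}\|_{L^{2}}\;\gtrsim\;\frac{\|Bu\|_{L^{2}}}{\|\tilde{g}\|_{L^{2}(\Omega^{+})}}\,\|g_{1}\|_{L^{2}}\qquad\text{and}\qquad\|B\,G_{D}^{-}\,g_{1}\|_{L^{2}(\Omega^{+})}\;\geq\;c,
\]
where $G_{D}^{-}$ denotes the incoming Dirichlet Poisson operator on $\Omega^{+}$.

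The second step is to apply Lemma~\ref{l:layerNoCancel2}. Its wavefront-set hypothesis on the test operator is precisely the extra condition~\eqref{e:notNormal} imposed in Lemma~\ref{l:bWave2New} (and not needed in Lemma~\ref{l:bWave}), because in the incoming regime one must exclude backward trajectories that pass through the conormal bundle $N^{*}\Gamma$ in addition to $S^{*}\Gamma$. With this hypothesis verified and $\WF(g_1)\subset\mc{H}_0$ ensuring $\xi'\neq 0$ throughout, Lemma~\ref{l:layerNoCancel2} yields $A_{0}\in\Psi^{0}(\Gamma)$ with $\WF(A_{0})\cap\{\xi'=0\}=\emptyset$ and
\[
\big\|A_{0}\,P_{\rm ItD}^{+,\eta}\,P_{\rm DtN}^{-}\,g_{1}\big\|_{L^{2}}\;\gtrsim\;\|g_{1}\|_{L^{2}}/\|\tilde{g}\|_{L^{2}}.
\]
The third step converts this estimate into the desired lower bound on $(A'_{k})^{-1}$. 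For this, I use the real-$L^{2}(\Gamma)$ transpose identities $(P_{\rm DtN}^{\pm})^{T}=P_{\rm DtN}^{\pm}$ and $(P_{\rm ItD}^{\pm,\eta})^{T}=P_{\rm ItD}^{\pm,\eta}$, all of which follow from Green's second identity applied to two outgoing (resp.\ two incoming) Helmholtz solutions in $\Omega^{+}$ or $\Omega^{-}$, since the boundary terms at infinity cancel identically in same-type pairings. Transposing the composition in the displayed estimate reorders its factors to $P_{\rm DtN}^{-}\,P_{\rm ItD}^{+,\eta}\,A_{0}^{T}$, and then running the whole construction with $\eta$ replaced by $-\eta$ -- a move permitted by Assumption~\ref{ass:parameters}, which controls only $|\eta_{D}|$ -- turns this composition into exactly $(P_{\rm DtN}^{+}-ik^{-1}\eta)\,P_{\rm ItD}^{-,\eta}$, the non-identity part of $(A'_{k})^{-1}$. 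Reading off the inverse, I obtain $v,f\in L^{2}(\Gamma)$ with $A'_{k}v=f$, $\|f\|_{L^{2}}\leq C\|\tilde{g}\|_{L^{2}(\Omega^{+})}\|v\|_{L^{2}}$, and $\WF(f)\subset\mc{H}_{0}$ as required by~\eqref{e:goodWave}.

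The main obstacle is the last algebraic reconciliation, which is the only genuinely new input relative to Lemma~\ref{l:bWave}. The incoming construction naturally delivers $P_{\rm ItD}^{+,\eta}\,P_{\rm DtN}^{-}$, while $(A'_{k})^{-1}$ features $P_{\rm DtN}^{+}\,P_{\rm ItD}^{-,\eta}$; transposition correctly fixes the order of the factors but does not flip the $\pm$ signs, and the only clean resolution I see is to rerun the construction with the opposite-sign coupling parameter. The bookkeeping needed to verify that every preceding estimate -- from Lemma~\ref{l:dampedResolve} through Lemma~\ref{l:quasimodeExteriorToDirichlet} to Lemma~\ref{l:layerNoCancel2} -- is invariant under $\eta\mapsto-\eta$, and that the composition with the pseudodifferential factor $A_{0}^{T}$ preserves the microlocal concentration of $g_{1}$ in $\mc{H}_{0}$ (so that $\WF(f)\subset\mc{H}_{0}$ rather than merely $\mc{H}$), is the principal technical task.
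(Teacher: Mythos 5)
Your Steps 1 and 2 match the paper exactly: apply Lemma~\ref{l:quasimodeExteriorToDirichlet} with $\mu=-1$ to get $g_1$ with $\WF(g_1)\subset\mc{H}$ and $\|P_{\rm DtN}^-g_1\|\gtrsim \|g_1\|/\|\tilde g\|$, then use Lemma~\ref{l:layerNoCancel2} to get $\|\tilde B\,P_{\rm ItD}^{+,\eta}P_{\rm DtN}^-g_1\|\gtrsim \|g_1\|/\|\tilde g\|$ with $\WF(\tilde B)\cap\{\xi'=0\}=\emptyset$. You also correctly identify hypothesis~\eqref{e:notNormal} as what makes Lemma~\ref{l:layerNoCancel2} applicable in the incoming regime.

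The problem is in Step 3. You correctly compute the real-$L^2$ transposes $(P_{\rm DtN}^{\pm})^T=P_{\rm DtN}^{\pm}$ and $(P_{\rm ItD}^{\pm,\eta})^T=P_{\rm ItD}^{\pm,\eta}$ — but this is precisely what breaks the argument: the transpose reverses the order of the factors while keeping every $\pm$ sign, so it produces $P_{\rm DtN}^-P_{\rm ItD}^{+,\eta}\tilde B^T$, not $P_{\rm DtN}^+P_{\rm ItD}^{-,\eta}\tilde B^T$. Replacing $\eta$ by $-\eta$ only uses the identity $P_{\rm ItD}^{+,-\eta}=P_{\rm ItD}^{-,\eta}$, which fixes the impedance sign but does nothing to $P_{\rm DtN}^-$; that superscript encodes the radiation condition (incoming vs.\ outgoing), which is independent of the coupling parameter. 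After your manipulation you are left with $P_{\rm DtN}^-P_{\rm ItD}^{-,\eta}\tilde B^T$, which is not the operator appearing in $(A'_k)^{-1}=I-(P_{\rm DtN}^+-ik^{-1}\eta)P_{\rm ItD}^{-,\eta}$, and the displayed identity you wrote is therefore false.

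The clean fix, and what the paper actually does, is to use the Hermitian adjoint rather than the transpose. Under complex conjugation, incoming solutions become outgoing and the $+i\eta$ impedance becomes $-i\eta$, so one has $(P_{\rm DtN}^-)^*=P_{\rm DtN}^+$ and $(P_{\rm ItD}^{+,\eta})^*=P_{\rm ItD}^{-,\eta}$ with the \emph{same} real $\eta$. Consequently $(\tilde B\,P_{\rm ItD}^{+,\eta}P_{\rm DtN}^-)^*=P_{\rm DtN}^+P_{\rm ItD}^{-,\eta}\tilde B^*$, and since the $L^2$ operator norm is adjoint-invariant this gives $\|P_{\rm DtN}^+P_{\rm ItD}^{-,\eta}\tilde B^*\|\gtrsim 1/\|\tilde g\|$ directly, with no $\eta$-flip and no bookkeeping. (Equivalently: your transpose is the adjoint composed with entrywise conjugation, so you can rescue your route by conjugating after transposing — but at that point you have reconstructed the adjoint.) Note also that $\WF(\tilde B^*)=\WF(\tilde B)$ by~\eqref{e:WFadjoint}, so the microlocal concentration of $f$ in $\mc{H}_0$ survives; your worry about ``bookkeeping needed to verify invariance under $\eta\mapsto-\eta$'' disappears entirely, which is a useful diagnostic that the $\eta$-flip was the wrong move.
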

\begin{proof}
By Lemma~\ref{l:quasimodeExteriorToDirichlet} with $\mu=-1$, there is $g_1\in L^2(\Gamma)$ with $\WF(g_1)\subset \mc{H}$ such that 
$$
\|P_{\rm DtN}^{-}g_1\|_{L^2}\geq \|Bu\|_{L^2}/\|\tilde{g}\|_{L^2}\|g_1\|_{L^2},
$$
Thus, Lemma~\ref{l:layerNoCancel2} implies that there is $\tilde{B}\in\Psi(\Gamma)$ with $\WF(\tilde{B})\cap \{\xi'=0\}=\emptyset$ such that 
$$
\|\tilde{B}P_{\rm ItD}^{+,\eta}P_{\rm DtN}^-g_1\|_{L^2}\geq c\|\tilde{g}_1\|_{L^2}^{-1}\|g_1\|_{L^2}.
$$
In particular, since $(P_{\rm ItD}^{+,\eta}P_{\rm DtN}^-)^*=P_{\rm DtN}^+P_{\rm ItD}^{-,\eta}$, we have
$$
\|P_{\rm DtN}^+P_{\rm ItD}^-\tilde{B}\|_{L^2\to L^2}=\|\tilde{B}P_{\rm ItD}^+P_{\rm DtN}^-\|_{L^2\to L^2}\geq \frac{c}{\|\tilde{g}\|_{L^2}}.
$$
Since 
$$
\|(A_{k,\eta}')^{-1}\tilde{B}\|_{\LtGt}\geq  \|(P_{\rm DtN}^+P_{\rm ItD}^{-,\eta})\tilde{B}\|_{\LtGt}-C,
$$
this completes the proof.
\end{proof}

In some settings it is useful to have a more refined estimate than Lemma~\ref{l:bWave2New} provides.  In this case the dynamics in the interior of $\Omega^-$ also play a role.
\begin{lemma}[From Helmholtz quasimode data to BIE quasimode data for $A'_k$:II]
\label{l:bWave2}
Let $\Omega^-\Subset \mathbb{R}^d$ open with smooth boundary and connected complement $\Omega^+:=\mathbb{R}^d\setminus \overline{\Omega}_-$ and suppose that Assumptions~\ref{ass:parameters} and~\ref{ass:poly} hold.  Let $\Gamma_-$ be the incoming set for $\Omega^+$ and define
$$
\mc{O}:=\{(x',\xi')\in \mc{H}\setminus \Gamma_-\,:\, |\xi'|>0\},\qquad \mc{B}:= \Gamma_-\cap \{\xi'=0\}.
$$
Suppose that there is $\tilde{g}\in H_{\hsc}^{3/2}(\Gamma)$ satisfying, $\WF(\tilde{g})\subset \mc{H}_0$,~\eqref{e:WFCond} with $\mc{O}$ and $\mc{B}$ as above, and
$$
\|\tilde{g}\|_{H_{\hsc}^{3/2}}=o(1),\qquad \|\hsc P_{\rm DtN}^+\tilde{g}\|_{\LtG}=1.
$$
Then for all $\hsc \notin \cJ$, there are $v,f\in L^2(\Gamma)$ such that $f$ satisfies~\eqref{e:goodWave} and
$$
A_{k}'v=f,\qquad \|f\|_{\LtG}\leq C\frac{\|\tilde{g}\|_{\LtG}}{\|\hsc P_{\rm DtN}^+\tilde{g}\|_{\LtG}},\qquad \|v\|_{\LtG}=1.
$$

\end{lemma}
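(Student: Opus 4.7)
The plan is to construct $v$ and $f$ directly, by setting $v := \hsc P_{\rm DtN}^+ \tilde g$ (which has $\|v\|_{L^2} = 1$ by hypothesis) and then defining $f := A_k' v$. I would then verify the estimate $\|f\|_{L^2} \leq C\|\tilde g\|_{L^2}$ (recalling that $\|\hsc P_{\rm DtN}^+ \tilde g\|_{L^2} = 1$) together with the wavefront-set condition \eqref{e:goodWave}. This approach avoids the intermediate ``DtN-to-ItD'' transfer (via Lemmas~\ref{l:quasimodeExteriorToDirichlet} and \ref{l:layerNoCancel2}) used in Lemma~\ref{l:bWave2New}, because we already have boundary data with large DtN built in.

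The key computation is to express $A_k' v$ using Green's representation for the outgoing Helmholtz solution $u^+ = G_D \tilde g$ in $\Omega^+$. Standard Green's representation gives
$$u^+(x) = -\mathcal{D}_k(\tilde g)(x) + \mathcal{S}_k(\partial_\nu u^+)(x), \quad x \in \Omega^+,$$
and the same expression vanishes for $x \in \Omega^-$. Taking the interior Dirichlet and Neumann traces of this second identity and using the standard jump relations yields
$$S_k(\partial_\nu u^+) = (K_k - \tfrac{1}{2}I)\tilde g, \qquad (\tfrac{1}{2}I + K_k')\partial_\nu u^+ = H_k \tilde g$$
on $\Gamma$. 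Since $\partial_\nu u^+ = \hsc^{-2} v$, this gives $(\tfrac{1}{2}I + K_k')v = \hsc^2 H_k \tilde g$ and $S_k v = \hsc^2 (K_k - \tfrac{1}{2}I)\tilde g$, and hence
$$f \;=\; A_k' v \;=\; (\tfrac{1}{2}I + K_k')v - i\eta S_k v \;=\; \hsc^2 H_k \tilde g - i\eta \hsc^2 (K_k - \tfrac{1}{2}I)\tilde g.$$

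To control $\|f\|_{L^2}$, I would use that $\WF(\tilde g)$ is a compact subset of $\mathcal{H}_0$ bounded away from the glancing set $\{|\xi'|_{g_\Gamma} = 1\}$ (this compact-containment follows from \eqref{e:WFCond}, since the condition $\sup_{q \in \WF(\tilde g)\cap \mathcal{Z}} T^+_{\mathcal{O}<\mathcal{B}\cup\{q\}}(q) < \infty$ forces $\WF(\tilde g)$ to stay away from trajectories that remain near glancing for arbitrarily long times). Consequently $\tilde g = \chi(|\hsc D'|_g^2)\tilde g + O(\hsc^\infty)_{C^\infty}$ for an appropriate compactly supported cutoff $\chi$, so $\tilde g$ has frequencies of size $O(\hsc^{-1})$, and semiclassical mapping properties (together with Theorem~\ref{thm:HFSD} and its proof) give $\|H_k \tilde g\|_{L^2} \leq C \hsc^{-1} \|\tilde g\|_{L^2} + O(\hsc^\infty)$ and $\|K_k \tilde g\|_{L^2} \leq C \|\tilde g\|_{L^2} + O(\hsc^\infty)$. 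Combined with Assumption~\ref{ass:parameters} (which gives $|\hsc\eta|\leq C$), this yields $\|f\|_{L^2} \leq C \hsc \|\tilde g\|_{L^2} \leq C\|\tilde g\|_{L^2}$ for $\hsc$ small. The wavefront-set condition \eqref{e:goodWave} then follows from the pseudo-locality of $H_k$ and $K_k$: $\WF(f) \subset \WF(\tilde g) \subset \mathcal{H}_0 \subset \{(x',\xi')\in\mathcal{H} : 0 < |\xi'|_{g_\Gamma}\}$.

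The main technical obstacle is the careful handling of the semiclassical structure near glancing, needed to ensure that $\|H_k \tilde g\|_{L^2}$ obeys the expected bound uniformly in $\hsc$; this requires the compact-containment of $\WF(\tilde g)$ inside $\mathcal{H}_0$ extracted from \eqref{e:WFCond}. A secondary (but routine) point is to verify that $u^+ = G_D \tilde g$ has enough regularity for the Green's identity computation to be valid, which follows from standard well-posedness of the exterior outgoing Dirichlet problem for $\tilde g \in H^{3/2}_\hsc(\Gamma)$.
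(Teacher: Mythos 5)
Your proposed construction $v := \hsc P_{\rm DtN}^+\tilde g$, $f := A_k' v$ is an appealing shortcut, and after sorting out the jump-relation sign conventions the Calder\'on identities do give the clean formula $f = \hsc^2\big[H_k\tilde g - i\eta_D(K_k - \tfrac{1}{2}I)\tilde g\big]$ with no leftover $v$ term. However, there are two genuine gaps, one of which is fatal.

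\textbf{The wavefront-set claim is incorrect.} You assert that \eqref{e:goodWave} follows from ``pseudo-locality of $H_k$ and $K_k$.'' But $H_k$, $K_k$ are pseudodifferential (and hence pseudo-local) only at frequencies $|\xi'|_g > 1$; this is precisely what Theorem~\ref{thm:HFSD} says, via the high-frequency cutoff $1-\chi(|\hsc D'|^2_g)$. In the \emph{hyperbolic} region $|\xi'|_g < 1$ -- which is exactly where $\WF(\tilde g)\subset\mc{H}_0$ lives -- the free-space layer operators are Fourier integral operators whose canonical relation is the free chord map: a point $(x_0,\xi'_0)$ lifts to directions $\xi_0 = \xi'_0 \pm\sqrt{1-|\xi'_0|_g^2}\,\nu(x_0)$ and the wavefront is transported along the straight line $x_0+\mathbb{R}\xi_0$ to wherever it next meets $\Gamma$. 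This chord image of $\mc{H}_0$ is not contained in $\mc{H}_0$ in general: a chord that enters $\Omega^-$ transversally can exit perpendicular to $\Gamma$ (producing $\xi'=0$), and for non-convex $\Gamma$ a chord can graze $\Gamma$ (producing glancing contributions). Thus $\WF(f)\subset\mc{H}_0$ does not follow from your argument, and establishing \eqref{e:goodWave} is the entire point of the lemma.

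\textbf{The hypotheses involving $\Gamma_-$ and \eqref{e:WFCond} are not used.} Your proof never invokes the condition that the interior billiard flow from $\WF(\tilde g)$ escapes into $\mc{O}=\{(x',\xi')\in\mc{H}\setminus\Gamma_-:|\xi'|>0\}$ before hitting $\mc{B}=\Gamma_-\cap\{\xi'=0\}$ -- this is a statement about generalized bicharacteristics in $\Omega^-$. The paper's proof is built around exactly this: it constructs $f=(\hsc D_\nu - \eta)u|_\Gamma$ from an interior impedance problem in $\Omega^-$ via Lemma~\ref{l:approxSolve}, using a custom damping operator $Q$ whose wavefront is dictated by \eqref{e:WFCond}, and then controls $\|(A_k')^{-1}f\|$ from below via the formula $(A_k')^{-1}= I - (P_{\rm DtN}^+ - ik^{-1}\eta_D)P_{\rm ItD}^{-,\eta_D}$ and Lemma~\ref{l:awayGamma-}. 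The dynamical hypothesis is what delivers the wavefront-set localization of $f$. Any proof that ignores it is proving a stronger statement that almost certainly fails.

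A secondary issue: even for the norm bound, Theorem~\ref{thm:HFSD} gives $H_k\in\hsc^{-1}\Psi^1_\hsc$ only after the high-frequency cutoff, so your use of it to bound $\|H_k\tilde g\|_{L^2}$ for hyperbolic-region $\tilde g$ is not justified by the cited theorem; while the needed bound $\|H_k\tilde g\|_{L^2}\lesssim\hsc^{-1}\|\tilde g\|_{L^2}$ away from glancing is plausible (and true for the disk by direct computation via \eqref{e:diskOperators}), it requires a separate FIO/stationary-phase argument that your sketch does not supply.
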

\begin{proof}
Let $q\in \WF(\tilde{g})$ and $0<T_q<T_{\mc{B}\cup\{q\}}$ such that $\varphi_{T_{q}}^{\Omega^-}(q)\in \mc{O}\setminus (\mc{B}\cup \{q\})$.

Let $V_q^1\Subset V_q^2\subset \mc{O}\cap (B(\varphi_{T_q}^{\Omega_+}(q),\e)\setminus (\mc{B}\cup\{q\}))$ be a neighborhood of $\varphi_{T_q}^{\Omega^+}(q)$ and $U_q$ neighborhoods of $q$ such that 
$$
\inf_{q'\in U_q}T^\mu_{V^1_q<(\mc{B}\cup \{q'\})}(q')<\infty.
$$
Such a neighborhood exists because $\varphi_t^{\Omega^+}$ is continuous and $\varphi_{\mu T(q)}^{\Omega^+}(q)\in \mc{H}\setminus (\mc{B}\cup\{q\})$.

Since $\WF(\tilde{g})\cap \mc{Z}$ is compact, there are $\{q_1,\dots q_N\}$ such that 
$$
\WF(\tilde{g})\subset \bigcup_{i=1}^NU_{q_i}. 
$$
Let $\{\psi_j\}_{j=1}^N\in C_c^\infty(T^*\Gamma)$ with $\sum_{j}\psi_j\equiv 1 $ on $\WF(\tilde{g})\cap \mc{Z}$ and $\supp \psi_j \subset U^1_{q_j}$. Then,
by Lemma~\ref{l:awayGamma-}, 
$$
\|P_{\rm DtN}^+ u-\sum_j P_{\rm DtN}^+Op(\psi_j)\tilde{g}\|_{H_{\hsc}^{1/2}}\leq C\|\tilde{g}\|_{H_{\hsc}^{3/2}}.
$$
In particular, there is $i\in\{ 1,\dots, N\}$ such that 
\begin{equation*}
\|P_{\rm DtN}^+\Op(\psi_{i})\tilde{g}\|_{L^2}\geq \frac{1}{N}\|P_{\rm DtN}^+ \tilde{g}\|_{L^2}\geq \frac{c}{N},\qquad 
\| \Op(\psi_i)\tilde{g}\|_{L^2}\leq C\|\tilde{g}\|_{L^2}=o(1). 
\end{equation*}

Let $Q\in \Psi^{\comp}(\Gamma)$ with $\sigma(Q)\geq 0$ 
$$
\WF(Q)\subset V^1_{q_i},\qquad \WF(Q-I)\cap V^2_{q_i}=\emptyset ,
$$
and $Q_0\in \Psi(\Gamma)$ with $0\leq \sigma(Q_0)\leq 1$, 
\begin{equation}
\label{e:avoidFlowOut}
\WF(Q_0)\cap \bigcup_{q\in U_{q_i}}\bigcup_{0\leq t\leq T_{V_{q_i}^1}(q)}\varphi_{t}^{\Omega_-}(q)\cap \Gamma_-=\emptyset,
\end{equation}
 and $\WF(Q_0-I)\cap \{\xi'=0\}=\emptyset$. 
Then, by Lemma~\ref{l:approxSolve}, there is $u$ satisfying
$$
\begin{cases}
(-\hsc^2\Delta-1)u=O(\hsc^{\infty}\|g\|_{L^2})_{L^2}&\text{in }\Omega^-\\
((Q_0-I)Q\tilde{\Lambda}^{-1}+Q_0\eta^{-1})\hsc D_\nu u-u=g&\text{ on }\tilde{\Gamma},
\end{cases}
$$
where $\tilde{\Lambda}\in \Psi^{-1}$ with $\Lambda^{-1}$ as in Lemma~\ref{l:approxSolve}
$$
\WF(\tilde{\Lambda}^{-1}-\Lambda^{-1})\cap \WF(Q)=\emptyset, \qquad \sigma(\tilde{\Lambda}^{-1})\geq c\langle \xi'\rangle^{-1}>0.
$$ 
Moreover, 
\begin{gather*}
\| u\|_{L^2(\tilde{\Omega}_-)}+\|u|_{\Gamma}\|_{H_{\hsc}^{3/2}}+\|\hsc \partial_{\nu}u|_{\Gamma}\|_{H_{\hsc}^{1/2}}\leq C\| g\|_{H_{\hsc}^{3/2}},\\
{}^b\WF(u)\subset U_{q_i}\cup \bigcup_{q'\in U_{q_i}}\bigcup_{0\leq t\leq T^1_{V_{q_i}^1}(q')}\varphi_t^{\Omega_-}(U_{q_i}\cap \mc{Z}),
\end{gather*}
and using $\WF(Q_0)\cap \WF(g)=\emptyset$ and~\eqref{e:avoidFlowOut}
$$
\WF((\hsc D_\nu -\eta)u|_{\Gamma})\cap \{\xi'=0\}=\emptyset,\qquad \WF(u|_{\tilde{\Gamma}}-g)\cap \Gamma_-=\emptyset
$$
Using Lemma~\ref{l:awayGamma-} this implies
$$
\|P_{\rm DtN}^+P_{\rm ItD}^{-,\eta}(\hsc D_{\nu}-\eta)u|_{\Gamma}\|\geq \|\hsc P_{\rm DtN}^+g\|_{\LtG}-C\|g\|_{\LtG},
$$
and 
$$
\|(\hsc D_\nu -\eta)u\|_{\LtG}\leq C\|g\|_{\LtG}.
$$

Thus, putting $f=(\hsc D_{\nu}-\eta)u$, we have
\begin{align*}
\|(A_{k,\eta}')^{-1}f\|_{\LtG}&\geq \|(P_{\rm DtN}^+P_{\rm ItD}^{-,\eta})f\|_{\LtG}-C\|f\|_{\LtG}\\
&\geq c\|\hsc P_{\rm DtN}^+\tilde{g}\|_{\LtG}/\|\tilde{g}\|_{\LtG}\|f\|_{\LtG}-C\|f\|_{\LtG},
\end{align*}
which proves the claim.
\end{proof}

\subsection{Pollution in concrete situations}

\subsubsection{A good quasimode on the diamond domain: Proof of Theorem~\ref{t:diamond}}
In this section, we construct a quasimode that yields quantitative pollution. Let $0<\e<\pi/2$ and $\Omega_0\subset (-\pi+\e,\pi-\e)\times(-\pi+\e,\pi-\e)$ be convex with smooth boundary and 
$$
\partial\Omega_0\supset \cup_{\pm}\big(\{\pm(\pi-\e)\}\times (-\pi+2\e,\pi-2\e)\cup (-\pi+2\e,\pi-2\e)\times \{\pm(\pi-\e)\}.
$$
Then, define
\begin{equation}
\label{e:diamondDomain}
\Omega^-:= \Omega_0+\{(-2\pi+\e,0),(0,2\pi -\e), (0,-2\pi+\e), (0,2\pi-\e)).
\end{equation}
Observe that $\Omega^-$ has connected complement and smooth boundary, $\Gamma$, such that 
\begin{gather*}
\Omega^-\cap \{ (x,y)\in [-\pi,\pi]\times [-\pi,\pi]\}=\emptyset, \\
 [-\pi+2\e,\pi-2\e]\times\{-\pi,\pi\}\cup  \{-\pi,\pi\}\times [-\pi+2\e,\pi-2\e]\subset\Gamma.
\end{gather*}
Now, let $0<\delta<\e$, $\chi\in C_c^\infty((2\delta,2\pi-2\delta);[0,1])$, and define
$$
u:= \Big[\chi(y-x)e^{in(x+y)} -\chi(2\pi-y-x)e^{in(x-y)} +\chi(x-y)e^{in(-x-y)}  -\chi(2\pi+y+x)e^{in(-x+y)}\Big]1_{[-\pi,\pi]}(x)1_{[-\pi,\pi]}(y)
$$
Notice that 
\begin{gather*}
\supp \chi(y-x)\cap[-\pi,\pi]\times[-\pi,\pi]\subset  [-\pi,\pi-2\delta)\times ( -\pi+2\delta,\pi]\\
\supp \chi(2\pi-y-x)\cap[-\pi,\pi]\times[-\pi,\pi]\subset  (-\pi+2\delta,\pi]\times ( -\pi+2\delta,\pi]\\
\supp \chi(x-y)\cap[-\pi,\pi]\times[-\pi,\pi]\subset  (-\pi+2\delta,\pi]\times [ -\pi,\pi-2\delta)\\
\supp \chi(2\pi+y+x)\cap[-\pi,\pi]\times[-\pi,\pi]\subset  [-\pi,\pi-2\delta)\times [ -\pi,\pi-2\delta),
\end{gather*}
and $u|_{\Gamma}=0$. Hence, $u\in H_0^1(\Omega^+)\cap C_c^\infty(\overline{\Omega^+}).$

Set $\hsc_n=\frac{1}{\sqrt{2}n}$,
\begin{multline*}
\tilde{g}:=\frac{\hsc_n}{2}\Big[\chi''(y-x)e^{in(x+y)} -\chi''(2\pi-y-x)e^{in(x-y)} \\+\chi''(x-y)e^{in(-x-y)}  -\chi''(2\pi+y+x)e^{in(-x+y)}\Big]1_{[-\pi,\pi]}(x)1_{[-\pi,\pi]}(y),
\end{multline*}
so that for $|\e|\leq C$, 
$$
(-\hsc_n^2\Delta  -1+\e \hsc_n^2)u=  \hsc_n^2(\tilde{g}+\e)u.
$$
Observe that 
\begin{align*}
({}^b\WF(u)\cup{}^b\WF(g))\cap T^*\{x=-\pi\} &\subset \{ (y,\tfrac{1}{\sqrt{2}})\,:\, y\in \supp \chi(y+\pi)\}\\
({}^b\WF(u)\cup{}^b\WF(g))\cap T^*\{y=\pi\} &\subset \{ (x,\tfrac{1}{\sqrt{2}})\,:\, x\in \supp \chi(\pi-x)\}\\
({}^b\WF(u)\cup{}^b\WF(g))\cap T^*\{x=\pi\} &\subset \{ (y,-\tfrac{1}{\sqrt{2}})\,:\, y\in \supp \chi(\pi-y)\}\\
({}^b\WF(u)\cup{}^b\WF(g))\cap T^*\{y=-\pi\} &\subset \{ (x,-\tfrac{1}{\sqrt{2}})\,:\, x\in \supp \chi(\pi+x)\}\\
({}^b\WF(u)\cup{}^b\WF(g))\cap T^*(-\pi,\pi)\times (-\pi,\pi)&\subset \bigcup_{\pm}\{ (\xi,\eta)=\pm(\tfrac{1}{\sqrt{2}},\tfrac{1}{\sqrt{2}}), \pm(x-y)\in \supp \chi \}\\
&\qquad \cup \bigcup_{\pm}\{ (\xi,\eta)=\pm(\tfrac{1}{\sqrt{2}},-\tfrac{1}{\sqrt{2}}), 2\pi \pm(-x-y)\in \supp \chi \},
\end{align*}
and hence
\begin{equation}
\label{e:WFgSquare}
\sup_{q\in {}^b\WF(u)}T_{\mc{H}_0<\{q\}}<\infty.
\end{equation}
 Moreover, for $\delta>0$ small enough, since $\Omega_0$ is convex, 
\begin{equation}
\label{e:WFuSquare}
{}^b\WF(u)\cap \bigcup_{t\geq 0}(S^*\Gamma \cup N^*\Gamma)=\emptyset.
\end{equation}
Hence, there is $B$ satisfying~\eqref{e:uIsBig} and~\eqref{e:notNormal} such that~\eqref{e:WFCond} and~\eqref{e:uCond} hold. Thus, $u$ and $g$ satisfy the hypotheses of both Lemma~\ref{l:bWave} and~\ref{l:bWave2New}. This implies that for $\hsc = \hsc_n/\sqrt{1+\e\hsc_n^2}$, there are $f_1,f_2\in L^2(\Gamma)$ with $f_i$ satisfying~\eqref{e:goodWave} such that
$$
\|A_{k}^{-1}g_1\|_{\LtG}\geq c\hsc^{-1}\|g_1\|_{\LtG},\qquad \|(A_{k}')^{-1}g_2\|_{\LtG}\geq c\hsc^{-1}\|g_2\|_{\LtG}.
$$
Hence, Theorems~\ref{t:diamond} and~\ref{t:diamondNew} follow from Theorem~\ref{t:pollutionIntro}.

\subsubsection{Non-quantitative pollution}

To prove the non-quantitative pollution results, we first find appropriate data $g$ with wavefront set near a point in $\Gamma_-\setminus K$ such that $R_D \hsc g$ is ``large". 
Using the fact that $\Gamma_-\setminus K$ avoids the normal bundle (Lemma \ref{l:incomingIsNotNormal}), 
we then apply Lemmas \ref{l:bWave} and \ref{l:bWave2} to produce quasimodes for $A_k$ and $A_k'$, respectively.

\begin{lemma}[Growing Dirichlet resolvents with data near a point in $\Gamma_-$]
\label{l:itGrowsGamma-}
Let $\Omega^-\Subset \mathbb{R}^d$ with smooth boundary, $\Gamma$ and connected complement $\Omega^+:=\mathbb{R}^d\setminus \overline{\Omega^-}$ and suppose that $K\neq \emptyset$ and Assumption~\ref{ass:poly} holds.
Then, for any $q\in \Gamma_-\setminus T^*\Gamma$ and $\delta>0$, there are $g\in L^2_{\comp}(\Omega^+)$ and $\chi\in C_c^\infty(\overline{\Omega^+})$ such that 
\begin{equation}
\label{e:growing1}
\WF(g)\subset \big\{\varphi_t^{\Omega^+}(q)\,:\, -2\delta<t<-\delta\big\},\qquad \|g\|_{L^2}=1,
\end{equation}
and
\begin{equation}
\label{e:growing2}
\lim_{h\to 0^+, \hsc \notin \tilde{\cJ}}\|\chi R_D\hsc g\|_{L^2}=\infty.
\end{equation}
\end{lemma}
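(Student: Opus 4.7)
\emph{Plan.} The proof is by contradiction, combining quasimode construction in the trapping regime with microlocal propagation. Suppose the lemma fails, so that there exist $q\in\Gamma_-\setminus T^*\Gamma$, $\delta>0$, and $C>0$ such that for every $\hsc$-tempered family $g_\hsc$ with $\WF(g_\hsc)\subset\{\varphi_t^{\Omega^+}(q):-2\delta<t<-\delta\}$ and $\|g_\hsc\|_{L^2}=1$, and every $\chi\in C_c^\infty(\overline{\Omega^+})$, one has $\|\chi R_D\hsc g_\hsc\|_{L^2}\leq C$ along some sequence $\hsc_n\to 0$, $\hsc_n\notin\tilde{\cJ}$. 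The goal is to use $K\neq\emptyset$ to produce, via propagation of singularities, a particular family of $g_\hsc$'s violating this bound.

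First, I would use standard trapping quasimode constructions (e.g.\ \cite{BoBuRa:10}) together with Assumption~\ref{ass:poly} to obtain a sequence $\hsc_n\to 0$, $\hsc_n\notin\tilde{\cJ}$, and outgoing $u_n\in H^1_{\loc}(\Omega^+)$ with $u_n|_\Gamma=0$, $\|\chi_0 u_n\|_{L^2}=1$ for a fixed $\chi_0\in C_c^\infty(\overline{\Omega^+})$, and $(-\hsc_n^2\Delta-1)u_n=\hsc_n r_n$ with $\|r_n\|_{L^2}=o(1)$. Since $u_n=R_D(\hsc_n)\hsc_n r_n$, setting $\tilde r_n:=r_n/\|r_n\|_{L^2}$ gives $\|\chi_0 R_D(\hsc_n)\hsc_n\tilde r_n\|_{L^2}\to\infty$; thus $R_D$ blows up on the normalized data $\tilde r_n$. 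By Lemma~\ref{l:invariance}, any defect measure $\mu$ of $(u_n)$ is a nonzero positive Radon measure, supported in $\mathcal{Z}$, whose $b$-push-forward is flow-invariant away from ${}^b\WF(r_n)$ and carries no mass in the incoming region $\mathcal{I}_-(R)$. Consequently the ``injection'' sites of mass, namely ${}^b\WF(r_n)\cap\mathcal{Z}$, must contain backward orbit segments from which invariance can transport mass to $\supp\pi_{b*}\mu$. A microlocal partition of unity on ${}^b\WF(r_n)\cap\mathcal{Z}$, followed by pigeonholing, then isolates a single piece of $\tilde r_n$ microlocalized in a small patch around some $q_\ast\in\mathcal{Z}$ for which the resolvent blow-up persists.

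The critical step is to convert this growing family at $q_\ast$ into one with WF on the backward orbit segment of $q$. By duality, $\|\chi R_D\hsc g\|_{L^2}$ is large iff there is a unit $v_\hsc$ making $\langle g,\hsc R_D^*\chi v_\hsc\rangle$ large, where $w_\hsc:=\hsc R_D^*\chi v_\hsc$ solves the incoming Dirichlet problem with source $\hsc\chi v_\hsc$. By the adjoint of Theorem~\ref{t:basicPropagate} (propagation along backward GBBs for the incoming resolvent), $\WF(w_\hsc)$ is obtained from $\WF(v_\hsc)$ by backward flow; choosing $v_\hsc$ to be a wave packet at an appropriate point of $\supp\mu$ lying forward of $q_\ast$ along the flow places $\WF(w_\hsc)$ precisely on the backward orbit emanating from that point. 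Since $q\in\Gamma_-$ ensures that $\{\varphi_t^{\Omega^+}(q):t\leq 0\}$ is a well-defined, bounded backward orbit segment in $\mathcal{Z}$, one can arrange (by choosing the initial $v_\hsc$ microlocally so its backward flow hits the prescribed window) that the matrix element $\langle g_\hsc,w_\hsc\rangle$ is bounded below by a constant for a suitably normalized $g_\hsc$ with $\WF(g_\hsc)\subset\{\varphi_t^{\Omega^+}(q):-2\delta<t<-\delta\}$, contradicting the contradiction hypothesis.

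The main obstacle is this last microlocal alignment: $q_\ast$ is determined by the intrinsic dynamics of the quasimode (roughly, it lives on or near $K$), whereas the target segment near $q$ is externally prescribed. The reconciliation relies on two facts working together: (i) $q\in\Gamma_-$ forces backward trajectories through $q$ to stay in a compact region where microlocal analysis is valid, and (ii) Lemma~\ref{l:approxSolve} provides enough flexibility, via the construction of approximate solutions with prescribed WF, to implement the backward propagation of $q_\ast$'s patch until it meets the backward orbit of $q$, which can always be arranged by the generic density of backward orbits in the trapped/near-trapped region. Implementing this rigorously, tracking constants through Lemmas~\ref{l:awayGamma-2} and \ref{l:outgoing}, is where the bulk of the technical work lies.
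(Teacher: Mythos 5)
There is a genuine gap in your proposal, and you identify it yourself in the third paragraph without resolving it: the ``microlocal alignment'' step. You start from abstract trapping quasimodes, which produce a sequence $u_n$ whose defect measure lives near the trapped set $K$, and then obtain (by pigeonholing) a patch around some dynamically-determined point $q_\ast$ where the resolvent blows up. But the lemma prescribes an arbitrary $q\in\Gamma_-\setminus T^*\Gamma$, and in general $q\notin K$ (indeed, the applications use $q\in\Gamma_-\setminus K$, whose forward flow escapes to infinity). There is no a priori relation between $q_\ast$ and $q$, and the backward orbit of $q$ need not pass anywhere near the support of the quasimode's defect measure. Appealing to Lemma~\ref{l:approxSolve} or to ``generic density of backward orbits'' to ``implement the backward propagation of $q_\ast$'s patch until it meets the backward orbit of $q$'' is not a proof: propagation of singularities transports microsupport along a fixed orbit, it does not let you relocate the blow-up from one orbit to a disjoint one. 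This is precisely the step you flag as ``the main obstacle,'' and it cannot be closed as stated.

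The paper's proof avoids this entirely by building the test pair directly at $q$ rather than importing one from a quasimode. Working in the Egorov normal form (the symplectomorphism $\kappa$ of \cite[Thm.~12.3]{Zw:12} conjugating $|\xi|^2-1$ to $\xi_1$ near $q$), one writes down an explicit beam $u=\hsc^{-(d-1)/4}e^{-|x'|^2/2\hsc}\chi(\delta^{-1}x_1)$ and sets $v:=T_2u$, $g:=\hsc^{-1}(-\hsc^2\Delta-1)v$; then $\|g\|_{L^2}\leq C$, $\|\chi v\|_{L^2}\geq c$, $\WF(g)$ lies on the backward orbit segment $\{\varphi_t^{\Omega^+}(q):-2\delta<t<-\delta\}$, and $\WF(v)$ lies on $\{|t|<2\delta\}$. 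Now \emph{suppose} $\|\psi R_D\hsc\,\psi(x_1)g\|_{L^2}\leq C$ along a sequence. Form $w:=v-R_D\hsc\,\psi(x_1)g$, which solves $(-\hsc^2\Delta-1)w=(1-\psi)\hsc g$, and extract a defect measure $\mu$. By Lemma~\ref{l:outgoing} the term $R_D\hsc\psi g$ has no wavefront at $q$, so $\mu$ inherits a positive mass $\geq c$ on the patch near $q$ from $v$. Lemma~\ref{l:invariance} makes $\pi_{b*}\mu$ invariant under $\varphi_t^{\Omega^+}$ away from $\WF((1-\psi)g)$; since $q\in\Gamma_-$, the backward orbit stays in a fixed compact set forever, so invariance forces $\mu(|x|<R)=\infty$, a contradiction. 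So the correct starting point is a \emph{local} beam construction anchored at the prescribed $q$, not a global quasimode; $K\neq\emptyset$ enters only to guarantee $\Gamma_-\setminus T^*\Gamma$ is nonempty (via Lemma~\ref{l:incoming}), not as a source of test functions.
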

\begin{proof}
Let $(x_0,\xi_0)=q\in \Gamma_-\setminus (K\cup T^*\Gamma)$. Without loss of generality, we may assume $x_0=0$ and $\xi_0=(1,0,\dots,0)$.

We start by finding $\delta>0$ and $v\in H^2_{h,\comp}(\Omega^+)$, $g\in L^2_{\comp}$ such that $B(0,\delta)\subset \Omega^+$, and for $\chi\in C_c^\infty(B(0,\delta))$ with $0\notin \supp (1-\chi)$, 
\begin{equation}
\label{e:R0Solve}
\begin{gathered}
(-\hsc^2\Delta-1)v=\hsc g,\qquad \|g\|_{L^2}\leq C,\qquad \|\chi v\|_{L^2}\geq c\\
\WF(g)\subset \{(x_1,0,\dots,0),(1,0,\dots,0)\,:\, \delta<|x_1|<2\delta\},\\ \WF(v)\subset \{(x_1,0,\dots,0),(1,0,\dots,0)\,:\, <|x_1|<2\delta\}.
\end{gathered}
\end{equation}

To do this, recall that by~\cite[Theorem 12.3]{Zw:12}, there is a neighborhood $U$ of $(0,0)$, a symplectomorphism $\kappa:U\to T^*\mathbb{R}^d$  such that $\kappa^*(|\xi|^2-1)=\xi_1$, operators $T_1,T_2$ and  such that for any $u$ with $\WF(u)\subset U$, and $v$ with $\WF(v)\subset \kappa(U)$,
\begin{gather*}
T_1(-\hsc^2\Delta -1)T_2u=2\hsc D_{x_1} u+O(\hsc^{\infty})_{\Psi^{-\infty}}u,\\
T_1T_2u=u+O(\hsc^{\infty})_{\Psi^{-\infty}}u,\qquad T_2T_1v=v+O(\hsc^{\infty})_{\Psi^{-\infty}}v.\\ 
\WF(T_2u)\subset \kappa(\WF(u)).
\end{gather*}
 
 Let $\chi \in C_c^\infty((-2,2))$ with $\supp (1-\chi)\cap [-1,1]=\emptyset$ near $0$. Define 
$$
u:= \hsc^{-\frac{d-1}{4}}e^{-|x'|^2/2\hsc}\chi(\delta^{-1}x_1). 
$$
Then,
$$
2\hsc D_{x_1}u=2\hsc\delta^{-1}\hsc^{-\frac{d-1}{4}}e^{-|x'|^2/2\hsc}\chi'(\delta^{-1}x_1),
$$
so that
$$
\WF(u)\subset \{ |x_1|<2\delta, x'=0,\xi=0\},\qquad \WF(\hsc D_{x_1}u)\subset\{ \delta<|x_1|<2\delta, x'=0,\xi=0\}.
$$
Putting $v:=T_2u$, we have 
\begin{gather*}
\WF(v)= \kappa( \{ |x_1|<2\delta, x'=0,\xi=0\})\subset  \{ |x_1|<2\delta, x'=0,\xi=(1,0,\dots,0)\},\\
\WF((-\hsc^2\Delta-1)v)\subset\kappa( \{ \delta<|x_1|<2\delta, x'=0,\xi=0\})\subset  \{\delta< |x_1|<2\delta, x'=0,\xi=(1,0,\dots,0)\},
\end{gather*}
and for any $\psi\in C_c^\infty(B(0,\delta))$, with $\supp (1-\psi)\cap B(0,\delta/2)=\emptyset$,
$$
\|(-\hsc^2\Delta-1)v\|_{L^2}\leq C\hsc,\qquad \|\psi v\|_{L^2}\geq c>0.
$$
By inserting appropriate cutoffs equal to one near 0, we may assume that $v$ and hence also $g:=\hsc^{-1}(-\hsc^2\Delta-1)v$ have compact support. Hence, we have achieved~\eqref{e:R0Solve}.

Let $\psi\in C_c^\infty (0,1)$ with $\supp (1-\psi)\cap (\delta,2\delta)=\emptyset$. We claim that there is $\chi \in C_C^\infty(\overline{\Omega^+})$ such that 
\begin{equation}
\label{e:itGrows}
\frac{\|g\|_{L^2}}{\|\chi R_D\hsc\psi(x_1)g\|_{L^2}}= o(1).
\end{equation}

Suppose by contradiction that for any $\psi\in C_c^\infty(\overline{\Omega^+})$, there are $C>0$  and $\hsc_n\to 0$ such that 
$$
\|\psi R_D(\hsc_n)\hsc_n\psi(x_1)g(\hsc_n)\|_{L^2}\leq C\|g(\hsc_n)\|_{L^2}.
$$
Then, without loss of generality we may assume $\|g\|_{L^2}=1$ and, setting $w:=v-R_D h\psi g$, up to extracting a further subsequence, we may assume that $w$ has defect measure $\mu$. By Lemma~\ref{l:outgoing}, 
$$
 (0,(1,0,\dots,0))\notin {}^b\WF(\psi R_D(\hsc_n)\hsc_n\psi(x_1)g(\hsc_n))
$$
and hence, by~\eqref{e:R0Solve} and, using Lemma~\ref{l:outgoing} again
$$
\mu(\{ ((x_1,0,\dots, 0),(1,0,\dots,0)\,:\, |x_1|<\delta\})>c>0. 
$$
Now, 
$$
(-\hsc^2\Delta_g-1)w= (1-\psi)\hsc g
$$
and hence by Lemma~\ref{l:invariance}, $\mu$ is invariant under $\varphi_t^{\Omega^+}$ away from $\{ ((x_1,0,\dots,0),(1,0,\dots,0))\,:\, -2\delta<x_1<-\delta\}$. Since $q\in \Gamma_-$, this implies that there is $R>0$ such that 
$$
\mu (|x|<R)=\infty,
$$
a contradiction. Hence~\eqref{e:itGrows} holds, completing the proof.
\end{proof}

\begin{lemma}[Quasimodes for $A_k$ under geometric assumptions]
\label{l:qualitative1}
Let $\Omega^-\Subset \mathbb{R}^d$ with smooth boundary, $\Gamma$, and connected complement $\Omega^+:=\mathbb{R}^d\setminus \overline{\Omega^-}$ and suppose that $K\neq \emptyset$, Assumption~\ref{ass:poly} holds and 
\begin{equation}
\label{e:KnoGlance}
K\cap \bigcup_{t\geq 0} \varphi_t^{\mathbb{R}^d}(S^*\Gamma)=\emptyset.
\end{equation}
Then, if Assumption~\ref{ass:parameters} holds there are $v,f\in L^2(\Gamma)$ such that $f$ satisfies~\eqref{e:goodWave}, 
$$
A_{k}v=f, \qquad \|v\|_{L^2}=1,\quad \lim_{h\to 0^+,h\notin\cJ}\|f\|_{L^2}=0.
$$
Moreover, there is $B\in \Psi^{\comp}(\Omega_+)$ satisfying~\eqref{e:uIsBig}
such that for all $q\in \Gamma_-\setminus (K\cup T^*\Gamma)$, there is $g$ satisfying~\eqref{e:growing1} and
$$
\lim_{\hsc\to 0^+,\hsc \notin \tilde{\cJ}}\|BR_D\hsc g\|_{L^2}=\infty. 
$$
\end{lemma}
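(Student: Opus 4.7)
The argument combines Lemma~\ref{l:incoming} (existence of non-trapped incoming points), Lemma~\ref{l:itGrowsGamma-} (blow-up of the Dirichlet resolvent on data coming from $\Gamma_-$), and Lemma~\ref{l:bWave} (transferring a bulk Helmholtz quasimode to a BIE quasimode for $A_{k}$). The structure parallels Theorem~\ref{t:diamond}, but in place of an explicit WKB quasimode in the bulk we use the abstract blow-up of $\chi R_D \hsc g$ along a subsequence $\hsc\in \tilde\cJ^c$.

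The plan is first to choose a good base point $q_0$. Pick any $q_K\in K$, which exists by hypothesis, and use Lemma~\ref{l:incoming} together with its proof (which constructs non-trapped points in any small neighbourhood of $K$) to obtain $q_0\in \Gamma_-\setminus K$ arbitrarily close to $q_K$. If $q_0\in T^*\Gamma$, Lemma~\ref{l:incomingIsNotNormal} ensures $\xi'(q_0)\neq 0$, so a small positive-time shift under $\varphi_t^{\Omega^+}$ moves $q_0$ into the interior while remaining in $\Gamma_-\setminus K$. Because $q_0$ shadows the trapped orbit of $q_K$ over a long time window, and by \eqref{e:KnoGlance} reflections of that orbit at $\Gamma$ are transversal, the forward GBB from $q_0$ undergoes a transversal reflection at $\Gamma$ in uniformly finite time, landing in $\mc H_0$. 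Now apply Lemma~\ref{l:itGrowsGamma-} at $q_0$ to obtain $g\in L^2_{\comp}(\Omega^+)$ with $\WF(g)$ in a short backward segment of $q_0$, $\|g\|_{L^2}=1$, and $\lambda(\hsc):=\|\chi R_D \hsc g\|_{L^2}\to\infty$ along $\hsc\notin \tilde\cJ$ for some $\chi\in C_c^\infty(\overline{\Omega^+})$. Setting $u:=\lambda^{-1}R_D\hsc g$ and $\tilde g:=\lambda^{-1}g$ produces $(-\hsc^2\Delta-1)u=\hsc \tilde g$ in $\Omega^+$, $u|_\Gamma=0$, $u$ outgoing, $\|\tilde g\|_{L^2}=\lambda^{-1}=o(1)$, and $\|\chi u\|_{L^2}=1$.

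The main obstacle is the ``moreover'' assertion: finding a single pseudodifferential $B\in \Psi^{\comp}(\Omega^+)$ satisfying the wavefront condition \eqref{e:uIsBig} such that $\|BR_D\hsc g\|_{L^2}\to \infty$ for every $q$-dependent choice of $g$ from Lemma~\ref{l:itGrowsGamma-}. The key observation is that, since the data $g$ has wavefront set in $\Gamma_-$, propagation of singularities together with the outgoing condition and Lemma~\ref{l:invariance} forces any nontrivial defect-measure mass of $R_D\hsc g/\lambda$ to be carried by the forward GBB from $q$ and, crucially, to linger near the trapped set $K$ (since orbits near $K$ take arbitrarily long to escape). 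Hypothesis \eqref{e:KnoGlance} is exactly what allows a microlocal neighbourhood $U$ of $K$ to lie in the complement of $\bigcup_{t\geq 0}\varphi_t^{\mathbb{R}^d}(S^*\Gamma)$ by continuity of $K$'s disjointness from that set, so $B=\Op_\hsc(b)$ with $\sigma(b)$ elliptic on a neighbourhood of $K$ and supported in $U$ satisfies \eqref{e:uIsBig}. A contradiction argument in the style of Lemma~\ref{l:itGrowsGamma-}---supposing $\|Bu\|_{L^2}$ stays bounded and using flow-invariance of the defect measure to conclude that the mass forced onto $K$ must be trivial---then yields $\|Bu\|_{L^2}\geq c>0$ uniformly.

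With $B,u,\tilde g,q_0$ in hand, the hypotheses of Lemma~\ref{l:bWave} are met with $\mu=1$, $\mc O=\mc H_0$, $\mc B=\emptyset$: \eqref{e:uCond} holds by construction with $\|\tilde g\|_{L^2}=o(1)$; the lower bound $\|Bu\|_{L^2}\geq c$ from the previous paragraph is the second half of \eqref{e:WFCond}; and the first half, $\sup_{q'\in {}^b\WF(\tilde g)\cap \mc Z}T^+_{\mc H_0<\{q'\}}(q')<\infty$, is precisely the forward-escape property of $q_0$ built into the base-point selection (every point of $\WF(g)$ flows forward through $q_0$ within time $2\delta$ and then reaches $\mc H_0$ at the next transversal reflection). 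Lemma~\ref{l:bWave} then delivers $v, f\in L^2(\Gamma)$ with $A_{k}v=f$, $f$ satisfying \eqref{e:goodWave}, and $\|f\|_{\LtG}\leq C\|\tilde g\|_{L^2}\|v\|_{\LtG}=o(1)\|v\|_{\LtG}$; renormalising $\|v\|_{\LtG}=1$ gives the first claim of the lemma.
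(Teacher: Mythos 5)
Your overall strategy matches the paper's: get a point in $\Gamma_-\setminus K$ via Lemma~\ref{l:incoming}, invoke Lemma~\ref{l:itGrowsGamma-} to obtain $g$ with $\|\chi R_D\hsc g\|\to\infty$, and feed the normalized $u$, $\tilde g$ into Lemma~\ref{l:bWave}. Two of your intermediate steps, however, diverge from the paper's argument in ways that leave real gaps.

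For the escape-time half of the hypotheses of Lemma~\ref{l:bWave}, you argue that the forward GBB from $q_0$ lands in $\mc H_0$ because it shadows a trapped orbit of $q_K$ that reflects transversally. Transversality only puts the reflection in $\mc H$; you also need $\xi'\neq 0$, and a trapped orbit can reflect at normal incidence, so the shadowing orbit could have $\xi'\approx 0$. The paper instead notes that the forward reflection point $\varphi_t^{\Omega^+}(q)$ lies in $(\Gamma_-\setminus K)\cap T^*\Gamma$ (flow invariance of $\Gamma_-\setminus K$) and applies Lemma~\ref{l:incomingIsNotNormal} \emph{at that point} to get $\xi'\neq 0$, using \eqref{e:KnoGlance} to rule out the orbit lingering in $\mc G\subset S^*\Gamma$ forever. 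You only cite Lemma~\ref{l:incomingIsNotNormal} for $q_0$ itself; the gap is fixable by citing it at the reflection instead.

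More substantially, for the ``moreover'' claim the paper does not run a defect-measure contradiction; it proves a quantitative estimate from Theorem~\ref{t:basicPropagate}. Using \eqref{e:KnoGlance}, one chooses $U\Subset U_1$ neighbourhoods of $K$ disjoint from $\bigcup_{t\ge 0}\varphi_t^{\mathbb{R}^d}(S^*\Gamma)$ and a uniform $T$ so that every $q\in\mathcal Z\cap B(0,R)$ satisfies $\varphi_{-s}^{\Omega^+}(q)\in U\cup\{|x|>R\}$ for some $0\le s\le T$; with $B$ microlocally the identity on $U$ and $\WF(B)\subset U_1$, propagation of singularities gives $\|\chi R_D\hsc g\|\le C\|BR_D\hsc g\|+\|g\|$, and \eqref{e:growing2} then yields $\|BR_D\hsc g\|\to\infty$ directly. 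Your contradiction argument asserts that if $\|Bu\|\to 0$ then the defect measure ``forced onto $K$'' is trivial and that this is absurd, but $\mu$ vanishing near $K$ does not immediately contradict $\mu(\chi^2)=1$: a priori $\mu$ could live on the flowout of $\WF(g)$ away from $K$ or on $\Gamma_+$. Closing this requires showing that $\supp\mu\subset\bigcup_{t\ge 0}\varphi_t^{\Omega^+}({}^b\WF(f))\cup\Gamma_+$ (from Lemma~\ref{l:invariance}) together with flow invariance forces $\supp\mu$ to meet any neighbourhood of $K$, which you do not spell out. The propagation route is both more direct and gives the quantitative inequality you actually need once you normalise by $\lambda$.
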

\begin{proof}
By Lemma~\ref{l:incoming} there is $q\in \Gamma_-\setminus K$ and, since $|\varphi_{-t}^{\Omega_+}(q)|\to \infty$ as $t\to \infty$, we may assume without loss of generality that $q\notin T^*\Gamma$.  Then, by Lemma~\ref{l:itGrowsGamma-} there is $g$ satisfying~\eqref{e:growing1} and~\eqref{e:growing2}. Since $q\in \Gamma_-$, there is $T>0$ such that $\varphi_t^{\Omega_+}(q)\in T^*\Gamma \cap \Gamma_-\setminus K$ and hence, by Lemma~\ref{l:incomingIsNotNormal} $\varphi_t^{\Omega^+}(q)\notin \{\xi'=0\}$. Using that $K\cap S^*\Gamma=\emptyset$ and increasing $t$ if necessary, there is $t>0$ such that  $\varphi_t^{\Omega^+}(q)\in\mathcal{H}\cap \{\xi'\neq 0\}$. 

In particular,
$$
\sup_{q\in{}^b\WF(g)}T_{\mc{H}_0<\{q\}}^1(q)<\infty.
$$
Now, by~\eqref{e:KnoGlance} for any $R>0$ there are $U\Subset U_1\Subset T^*\Omega^+$ open with 
$$
U_1\cap \bigcup_{t\geq 0}\varphi_t^{\mathbb{R}^d}(S^*\Gamma)=\emptyset
$$  
and $T>0$ such that for all $q\in \mc{Z}\cap B(0,R)$, there is $0\leq s\leq T$ such that $\varphi_{-s}^{\Omega^+}(q)\in U\cup \{|x|>R\}$. In particular, by Theorem~\ref{t:basicPropagate} letting $B\in \Psi^{\comp}(\Omega^+)$ with $\WF(I-B)\cap U=\emptyset$ and $\WF(B)\subset U_1$,
$$
\|\chi R_D\hsc g\|_{L^2}\leq C\|B R_D\hsc g\|_{L^2}+\|g\|_{L^2}.
$$
In particular, using~\eqref{e:growing2}, we have
$$
\lim_{\hsc\to 0^+,\hsc \notin \tilde{\cJ}} \|BR_D\hsc g\|_{L^2}=\infty.
$$
Thus, we have verified the hypotheses of Lemma~\ref{l:bWave} which completes the proof.
\end{proof}

\begin{lemma}[Quasimodes for $A_k'$ under geometric assumptions]
\label{l:qualitative2}
Let $\Omega^-\Subset \mathbb{R}^d$ with smooth boundary, $\Gamma$ and connected complement $\Omega^+:=\mathbb{R}^d\setminus \overline{\Omega^-}$, that $K\neq \emptyset$, and Assumption~\ref{ass:poly} and~\eqref{e:KnoGlance} hold. Suppose in addition that there is  $q_0\in \Gamma_-\cap \mc{H}\setminus K$ such that
\begin{equation}
\label{e:impedanceEscape}
T^1_{\mc{H}\setminus (\Gamma_-\cup \{\xi'=0\})<\Gamma_-\cap \{\xi'=0\}}(q_0)<\infty.
\end{equation}
Then, if Assumption~\ref{ass:parameters} holds, there are $v,f\in L^2(\Gamma)$ such that $f$ satisfies~\eqref{e:goodWave}
$$
A'_{k}v=f,\qquad \|v\|_{L^2}=1,\quad \lim_{\hsc\to 0^+,\hsc \notin \tilde{\cJ}}\|f\|_{L^2}=0.
$$
\end{lemma}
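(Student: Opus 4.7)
The plan is to mirror the proof of Lemma~\ref{l:qualitative1}, substituting Lemma~\ref{l:bWave2} for Lemma~\ref{l:bWave} and using the interior-flow hypothesis~\eqref{e:impedanceEscape} to verify the wavefront-set condition that Lemma~\ref{l:bWave2} demands. First, by Lemma~\ref{l:incomingIsNotNormal} the hypothesized point $q_0\in \Gamma_-\cap \mathcal{H}$ automatically has $\xi'(q_0)\neq 0$, so $q_0\in \mathcal{H}_0$. I pick $T_1>0$ large enough that $q_1:=\varphi_{-T_1}^{\Omega^+}(q_0)$ lies in $T^*\Omega^+\setminus T^*\Gamma$; note $q_1\in \Gamma_-\setminus K$ because its forward exterior trajectory passes through $q_0$ (hence is bounded) while backward trajectories escape. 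Apply Lemma~\ref{l:itGrowsGamma-} at $q_1$ to produce $g\in L^2_{\comp}(\Omega^+)$ with $\|g\|_{L^2}=1$, wavefront set concentrated on a small portion of the backward exterior orbit of $q_1$, and $\|\chi R_D\hbar g\|_{L^2}\to\infty$ along $\hbar\notin \tilde{\mathcal{J}}$. Set $u:=R_D\hbar g$, so that $(-\hbar^2\Delta-1)u=\hbar g$, $u|_\Gamma=0$, and $u$ is outgoing.

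Next, exactly as in Lemma~\ref{l:qualitative1}, combine Theorem~\ref{t:basicPropagate} with the no-glancing assumption~\eqref{e:KnoGlance} to pick $B\in \Psi^{\comp}(\Omega^+)$ with $\WF(B)\cap \bigcup_{t\geq 0}\varphi_t^{\mathbb{R}^d}(S^*\Gamma)=\emptyset$ and $\|Bu\|_{L^2}\to\infty$; heuristically, any defect mass on the glancing flow-out would escape to infinity, so the growth of $\|\chi u\|_{L^2}$ must be captured by a non-glancing microlocalization. Then feed $u$, $g$ and $B$ into Lemma~\ref{l:quasimodeExteriorToDirichlet} with $\mu=1$, $\mathcal{O}$ a small open neighborhood of $q_0$ in $\mathcal{H}_0$, $\mathcal{B}=\emptyset$, and target function $T$ chosen so that the forward exterior flow of each $q\in {}^b\WF(g)\cap\mathcal{Z}$ reaches $\mathcal{O}$ at a time $\approx T_1$. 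This yields $g_1\in L^2(\Gamma)$ with $\WF(g_1)\subset B(q_0,\epsilon)\subset \mathcal{H}_0$ and $\|P_{DtN}^+ g_1\|_{L^2}/\|g_1\|_{L^2}\geq c\|Bu\|_{L^2}$.

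Normalize $\tilde g:=g_1/\|\hbar P_{DtN}^+ g_1\|_{L^2}$, so $\|\hbar P_{DtN}^+\tilde g\|_{L^2}=1$, and $\|\tilde g\|_{H^{3/2}_\hbar}\sim \|\tilde g\|_{L^2}\leq C/(\hbar\|Bu\|_{L^2})$ because $\WF(\tilde g)$ sits in a bounded portion of $\mathcal{H}_0$. The microlocal hypothesis required by Lemma~\ref{l:bWave2}, namely
\[
\sup_{q\in\WF(\tilde g)\cap\mathcal{Z}}T^1_{(\mathcal{H}\setminus (\Gamma_-\cup\{\xi'=0\}))<(\Gamma_-\cap\{\xi'=0\})\cup\{q\}}(q)<\infty,
\]
follows from~\eqref{e:impedanceEscape} at $q_0$ together with continuity of the interior flow, provided $\epsilon$ is chosen small enough. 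Applying Lemma~\ref{l:bWave2} then produces $v,f\in L^2(\Gamma)$ with $A_k'v=f$, $\|v\|_{L^2}=1$, $f$ satisfying~\eqref{e:goodWave} (since $\WF(g_1)\subset \mathcal{H}_0$ propagates through the construction), and
\[
\|f\|_{L^2}\leq C\|\tilde g\|_{L^2}\leq \frac{C'}{\hbar\|Bu\|_{L^2}}.
\]

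The main obstacle is ensuring that $\hbar\|Bu\|_{L^2}\to\infty$, not merely $\|Bu\|_{L^2}\to\infty$. Lemma~\ref{l:itGrowsGamma-} as stated is qualitative, and under the bare polynomial bound of Assumption~\ref{ass:poly} with $P_{\rm inv}\leq 1$ the factor $\hbar\|u\|$ need not diverge. To close this gap I would strengthen Lemma~\ref{l:itGrowsGamma-} by rescaling the defect-measure contradiction: if $\hbar\|\chi R_D\hbar g\|_{L^2}$ were bounded along some sequence $\hbar_n\notin \tilde{\mathcal{J}}$, then $v_n:=\hbar_n^{1/2}u_n$ has a nontrivial defect measure charging $\Gamma_-$, while $(-\hbar_n^2\Delta-1)v_n=\hbar_n^{3/2}g_n$ has wavefront forced into the incoming flow-out of a point in $\Gamma_-\setminus K$; applying Lemma~\ref{l:invariance} and the argument at the end of Lemma~\ref{l:itGrowsGamma-}'s proof then yields the desired contradiction, so $\hbar\|Bu\|_{L^2}\to\infty$ holds along $\hbar\notin\tilde{\mathcal{J}}$.
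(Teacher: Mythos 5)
Your proposal follows the paper's route exactly: flow $q_0$ backward to a point of $\Gamma_-\setminus(K\cup T^*\Gamma)$, use Lemma~\ref{l:itGrowsGamma-} to get growing interior data, use Theorem~\ref{t:basicPropagate} with~\eqref{e:KnoGlance} to produce the operator $B$ (the paper compresses these steps by invoking Lemma~\ref{l:qualitative1} as a black box), then feed the output of Lemma~\ref{l:quasimodeExteriorToDirichlet} into Lemma~\ref{l:bWave2}, with \eqref{e:impedanceEscape} plus upper semicontinuity supplying the interior-flow hypothesis. That part of your argument matches the paper.

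The issue you flag about the factor of $\hsc$ in the normalisation $\|\hsc P^+_{\rm DtN}\tilde g\|_{L^2}=1$ of Lemma~\ref{l:bWave2} is a sharp observation: taken literally, applying that lemma to the $g_1$ from Lemma~\ref{l:quasimodeExteriorToDirichlet} (which has $\|P^+_{\rm DtN}g_1\|_{L^2}=1$, $\|g_1\|_{L^2}=o(1)$) requires $\|g_1\|_{L^2}=o(\hsc)$, hence $\hsc\|Bu\|_{L^2}\to\infty$, while Lemma~\ref{l:itGrowsGamma-} only yields $\|Bu\|_{L^2}\to\infty$ — and Assumption~\ref{ass:poly} by itself does not force the stronger statement. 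The paper's own proof passes over this silently. However, your proposed repair does not close the gap. If one assumes for contradiction that $\hsc_n\|\chi u_n\|_{L^2}\leq C$, then $v_n:=\hsc_n^{1/2}u_n$ satisfies $\|\chi v_n\|_{L^2}\leq C\hsc_n^{-1/2}\to\infty$, so $v_n$ has no defect measure and the invariance argument cannot even begin. The natural renormalisation $\bar u_n:=\hsc_n u_n$ is bounded, but then the explicit compactly-supported wave-packet $v$ constructed inside Lemma~\ref{l:itGrowsGamma-}'s proof — which is what forces the limiting measure to have strictly positive mass — gets scaled by $\hsc_n$ and contributes nothing, so the defect measure of $\bar u_n$ could simply vanish and no contradiction follows. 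If the $\hsc$-factor in Lemma~\ref{l:bWave2} is genuine (and not a misprint — I cannot fully rule that out, given the $\eta$-versus-$\hsc\eta$ bookkeeping in its proof), closing the gap would require either sharpening Lemma~\ref{l:itGrowsGamma-} by a different mechanism that preserves a non-vanishing mass term after rescaling, or adding a hypothesis that $\|\chi R_D\chi\|_{L^2\to L^2}\gg\hsc^{-2}$ along the relevant sequence.
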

\begin{proof}
Since $q_0\in \mc{H}$, there is $t>0$ such that $\varphi_{-t}^{\Omega_+}(q_0)\in \Gamma_-\setminus (K\cup T^*\Gamma)$. Hence, by Lemma~\ref{l:qualitative1}, there are $B$ satisfying~\eqref{e:uIsBig} and $g$ satisfying~\eqref{e:growing1} with $q=\varphi_{-t}^{\Omega^+}(q_0)$ such that 
$$
\lim_{\hsc\to 0^+,\hsc\notin \tilde{\cJ}}\|BR_D\hsc g\|_{L^2}=\infty.
$$
Now, by construction 
$$
\sup_{q\in{}^b\WF(g)}T_{\mc{H}_0<\{q\}}^1(q)<\infty,
$$
and hence by Lemma~\ref{l:quasimodeExteriorToDirichlet} for any $\e>0$, there is $g_1\in L^2(\Gamma)$ with 
$$
\|P^{+}_{\rm DtN}g_1\|_{L^2}=1,\qquad \|g_1\|_{L^2}=o(1),\qquad \WF(g_1)\subset B(q_0,\e).
$$

By~\eqref{e:impedanceEscape}, and upper semicontinuity of $T^1_{\mc{H}\setminus (\Gamma_-\cup \{\xi'=0\})<\Gamma_-\cap \{\xi'=0\}}$ there is $\e>0$ small enough such that 
$$
\sup_{q'\in \WF(g_1)}T^1_{\mc{H}\setminus (\Gamma_-\cup \{\xi'=0\})<\Gamma_-\cap \{\xi'=0\}}(q')<\infty
$$
and hence the Lemma follows from Lemma~\ref{l:bWave2}.
\end{proof}

The following two Theorems follow directly from Lemmas~\ref{l:qualitative1},~\ref{l:qualitative2}, and ~\ref{l:waveToKOscillate}, and Theorem~\ref{t:pollutionIntro}.

\begin{theorem}
\label{t:qualitativeA}
Let $\Omega^-\Subset \mathbb{R}^d$ with smooth boundary, $\Gamma$, and connected complement $\Omega^+:=\mathbb{R}^d\setminus \overline{\Omega^-}$ and suppose that $K\neq \emptyset$, Assumptions~\ref{ass:parameters} and~\ref{ass:poly}, and~\eqref{e:KnoGlance} hold and 
$$
\text{either }\qquad p=0\qquad \text{or}\qquad P_{\rm inv}\leq p+1.
$$
Then the assumptions of Theorem~\ref{t:pollutionIntro} with $\operator=A_{k}$ hold for some $\e_0>0$, $\Xi_0=1$, all $\hsc \notin \tilde{\cJ}$, and some $\alpha =o(1)$, $\beta=O(\hsc^{\infty})$.
\end{theorem}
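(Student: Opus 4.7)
The plan is to apply Lemma~\ref{l:qualitative1} to produce a candidate quasimode $(u_n, f_n)$ for $A_{k_n}$, translate the wavefront-set information~\eqref{e:goodWave} into the frequency-localisation hypotheses~\eqref{e:oscillating} of Theorem~\ref{t:pollutionIntro} via Lemma~\ref{l:waveToKOscillate}, and finally verify the polynomial bound $\beta_n\|A_{k_n}^{-1}\|\leq c$ together with the finite-dimensional condition~\eqref{e:normalOk1} using the polynomial bound on $\|A_{k_n}^{-1}\|_{L^2\to L^2}$ furnished by Assumption~\ref{ass:poly}.

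Concretely, given any sequence $k_n\to\infty$ with $k_n\notin \cJ$, I will invoke Lemma~\ref{l:qualitative1} (whose hypotheses coincide with those of Theorem~\ref{t:qualitativeA}) to obtain $u_n,f_n\in L^2(\Gamma)$ with $A_{k_n} u_n = f_n$, $\|u_n\|_{L^2(\Gamma)}=1$, $\alpha_n:=\|f_n\|_{L^2(\Gamma)}\to 0$, and $\WF(f_n)\subset \{(x',\xi')\in\cH : |\xi'|_{g_\Gamma}>0\}$. Since $1=\|u_n\|_{L^2}\leq \|A_{k_n}^{-1}\|_{L^2\to L^2}\,\alpha_n$ and Assumption~\ref{ass:poly} (combined with the converting lemma preceding Lemma~\ref{l:waveToKOscillate}) yields $\|A_{k_n}^{-1}\|_{L^2\to L^2}\leq C k_n^{P_{\rm inv}}$ off of $\cJ$, I obtain the algebraic lower bound $\alpha_n\geq c k_n^{-P_{\rm inv}}$. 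Choosing $\Xi_0=1$ and applying Lemma~\ref{l:waveToKOscillate} to $f_n$ gives, for every $N>0$, a constant $C_N$ such that
\begin{equation*}
\big\|(1-\chi(\Xi_0^{-2}k_n^{-2}\Delta_\Gamma))f_n\big\|_{L^2}+\big\|\chi(\e_0^{-2}k_n^{-2}\Delta_\Gamma)f_n\big\|_{L^2}\leq C_N k_n^{-N}.
\end{equation*}
Setting $\beta_n:=C_N k_n^{-N}/\alpha_n$, the bounds~\eqref{e:oscillating} then hold; the lower bound on $\alpha_n$ forces $\beta_n\leq C_N c^{-1}k_n^{P_{\rm inv}-N}$, so taking $N$ large makes $\beta_n=O(k_n^{-\infty})$ and in particular ensures $\beta_n\|A_{k_n}^{-1}\|_{L^2\to L^2}\leq c$ for all large $k_n$.

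For the finite-dimensional condition~\eqref{e:normalOk1} when $p\geq 1$, I will use that $1_{(0,k_0^2]}(-\Delta_\Gamma)$ is a fixed finite-rank operator with norm on $L^2(\Gamma)$ independent of $k$, together with $\|(A_{k_n}^*)^{-1}\|_{L^2\to L^2}=\|A_{k_n}^{-1}\|_{L^2\to L^2}\leq C k_n^{P_{\rm inv}}$ off of $\cJ$, to estimate
\begin{equation*}
k_n^{-p-1}\alpha_n\big\|1_{(0,k_0^2]}(-\Delta_\Gamma)(A_{k_n}^*)^{-1}u_n\big\|_{L^2}\leq C k_n^{P_{\rm inv}-p-1}\alpha_n.
\end{equation*}
Under the hypothesis $P_{\rm inv}\leq p+1$, the right-hand side is $\leq C\alpha_n=o(1)$ and thus $\leq c$ for large $k_n$; when $p=0$ no such condition is required. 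I do not anticipate any serious obstacle: the substantive work has already been done by Lemmas~\ref{l:qualitative1} and~\ref{l:waveToKOscillate}. The only mild subtlety is extracting the algebraic lower bound $\alpha_n\gtrsim k_n^{-P_{\rm inv}}$ from $\|u_n\|_{L^2}=1$, which is needed to convert the absolute superalgebraic decay of the frequency projections of $f_n$ into the relative form $\beta_n\|f_n\|_{L^2}$ appearing in~\eqref{e:oscillating}.
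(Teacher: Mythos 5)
Your proposal is correct and follows precisely the route the paper takes: the paper's own ``proof'' is simply the assertion that Theorem~\ref{t:qualitativeA} ``follows directly from Lemmas~\ref{l:qualitative1},~\ref{l:qualitative2}, and~\ref{l:waveToKOscillate}, and Theorem~\ref{t:pollutionIntro}'', and your write-up supplies exactly the bookkeeping that assertion compresses. In particular you correctly identify the one point where the chain is not completely formal, namely that the superalgebraic decay from Lemma~\ref{l:waveToKOscillate} is absolute while~\eqref{e:oscillating} asks for a bound \emph{relative} to $\|f_n\|_{L^2}$, so that one needs the algebraic lower bound $\alpha_n\gtrsim k_n^{-P_{\rm inv}}$ coming from $1=\|u_n\|\leq\|A_{k_n}^{-1}\|\alpha_n$ and Assumption~\ref{ass:poly} to conclude $\beta_n=O(\hsc^\infty)$; and your treatment of~\eqref{e:normalOk1} via $\|(A_{k_n}^*)^{-1}\|=\|A_{k_n}^{-1}\|$ and $P_{\rm inv}\leq p+1$ matches the role of that hypothesis in the theorem statement.
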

Theorem~\ref{t:qualitativeA} implies Theorem~\ref{t:qualitative1}. 
\begin{theorem}
\label{t:qualitativeAp}
Let $\Omega^-\Subset \mathbb{R}^d$ with smooth boundary, $\Gamma$, and connected complement $\Omega^+:=\mathbb{R}^d\setminus \overline{\Omega^-}$, suppose that $K\neq \emptyset$ and Assumptions~\ref{ass:parameters} and~\ref{ass:poly} hold, ~\eqref{e:KnoGlance}, there is  $q_0\in \Gamma_-\cap \mc{H}\setminus K$ such that ~\eqref{e:impedanceEscape} holds, and 
$$
\text{either }\qquad p=0\qquad \text{or}\qquad P_{\rm inv}\leq p+1.
$$
Then the assumptions of Theorem~\ref{t:pollutionIntro} with $\operator=A'_{k,\eta}$ hold for some $\e_0>0$, $\Xi_0=1$, all $\hsc \notin \cJ$, and some $\alpha =o(1)$, $\beta=O(\hsc^{\infty})$.
\end{theorem}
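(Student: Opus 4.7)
The plan is to reduce the theorem to a direct application of Theorem~\ref{t:pollutionIntro} with $\operator = A_k'$, by producing the required $L^2$-quasimodes $(u_n, f_n)$ from the geometric hypotheses and then checking each of the frequency-localisation and low-frequency conditions in turn.

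First, I would invoke Lemma~\ref{l:qualitative2}: the hypotheses $K \neq \emptyset$, Assumption~\ref{ass:poly}, \eqref{e:KnoGlance}, and the existence of $q_0 \in \Gamma_-\cap \mc{H}\setminus K$ satisfying \eqref{e:impedanceEscape} (together with Assumption~\ref{ass:parameters}) are exactly what is needed to produce, for each $\hsc_n = k_n^{-1} \notin \tilde{\mathcal{J}}$, a pair $v_n, f_n \in L^2(\Gamma)$ with $A_{k_n}' v_n = f_n$, $\|v_n\|_{L^2(\Gamma)} = 1$, $\|f_n\|_{L^2(\Gamma)} \to 0$, and with $f_n$ satisfying the wavefront condition \eqref{e:goodWave}, i.e.~$\WF(f_n) \subset \{(x',\xi') \in \mathcal{H} : 0 < |\xi'|_{g_\Gamma}\}$. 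Setting $\alpha_n := \|f_n\|_{L^2}$, this gives \eqref{e:quasiTime} with $\alpha_n = o(1)$, so that in particular $\alpha_n \beta_n^{-1}$-type constraints can be satisfied.

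Next, to check the oscillation conditions \eqref{e:oscillating}, I would apply Lemma~\ref{l:waveToKOscillate} to the sequence $\{f_n\}$. Since $\WF(f_n) \subset \{|\xi'|_{g_\Gamma} < 1\} \cap \{|\xi'|_{g_\Gamma} > 0\}$, this lemma yields some $\e_0 > 0$ such that for any $\chi \in C_c^\infty$ with $\chi \equiv 1$ on $[-1-\e, 1+\e]$ one has $\|(1 - \chi(k_n^{-2}\Delta_\Gamma))f_n\|_{L^2} + \|\chi(\e_0^{-2} k_n^{-2}\Delta_\Gamma)f_n\|_{L^2} = O(k_n^{-\infty})$. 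Choosing $\beta_n := k_n^{-N}$ for $N$ large (larger than $P_{\rm inv}$ so that $\beta_n \|\operator^{-1}\|_{L^2 \to L^2} \leq c$ by Assumption~\ref{ass:polyboundintro}), and dividing by $\alpha_n = \|f_n\|_{L^2}$ (which is polynomially bounded below by the standard lower bound on $\|A_k'\|_{L^2 \to L^2}$), we obtain \eqref{e:oscillating} with $\Xi_0 = 1$ and this $\e_0$.

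Finally, the low-frequency condition \eqref{e:normalOk1} must be verified when $p \geq 1$. The key observation is that $\WF(f_n) \subset \mathcal{H}$ is separated from $\{\xi' = 0\}$, so Theorem~\ref{t:basicPropagate} (applied to the Helmholtz representation of $v_n$ via \eqref{e:BIEinverse}) combined with Lemma~\ref{l:awayGamma-} shows that $v_n$ is itself microlocalised in $\mathcal{H}\setminus\{\xi'=0\}$, and so is $(A_{k_n}')^{-1*} v_n = (A_{k_n})^{-1} v_n$. Any component at frequencies $\leq k_0$ is then $O(k_n^{-\infty})$, and multiplied by $k_n^{-p-1}\alpha_n$ is certainly bounded under the assumption $P_{\rm inv} \leq p+1$ (the polynomial bound on $(\operator^*)^{-1}$ contributes at most $k_n^{p+1}$, which cancels the $k_n^{-p-1}$ prefactor). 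All hypotheses of Theorem~\ref{t:pollutionIntro} are therefore satisfied.

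The main obstacle is the last step: carefully tracking the wavefront set of $(\operator^*)^{-1}u_n$ through the formula \eqref{e:BIEinverse}, in order to show that its low-frequency projection is negligible. This requires combining Theorem~\ref{t:basicPropagate}, Lemma~\ref{l:awayGamma-}, and the microlocal description of $P_{\rm ItD}^{-,\eta}$ used in the proof of Lemma~\ref{l:bWave2}, along the adjoint (i.e.~incoming) flow. The constraint $P_{\rm inv} \leq p+1$ is precisely what is needed to absorb the polynomial growth here against the $k^{-p-1}$ factor in \eqref{e:normalOk1}.
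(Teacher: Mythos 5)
Your first two steps are exactly what the paper does: the theorem is stated to "follow directly from Lemmas~\ref{l:qualitative1},~\ref{l:qualitative2}, and~\ref{l:waveToKOscillate}, and Theorem~\ref{t:pollutionIntro}," and you correctly invoke Lemma~\ref{l:qualitative2} to produce $(v_n, f_n)$ with $A_{k_n}'v_n=f_n$, $\|v_n\|_{\LtG}=1$, $\alpha_n:=\|f_n\|_{\LtG}=o(1)$, and $\WF(f_n)$ satisfying \eqref{e:goodWave}, then apply Lemma~\ref{l:waveToKOscillate} (together with the polynomial lower bound on $\|f_n\|$) to obtain \eqref{e:oscillating} with $\beta_n=O(\hsc^{\infty})$.

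The third step is where you diverge, and here you overcomplicate things. You flag "carefully tracking the wavefront set of $(\operator^*)^{-1}u_n$ through \eqref{e:BIEinverse}" as the main obstacle and propose a microlocal argument via Theorem~\ref{t:basicPropagate} and Lemma~\ref{l:awayGamma-}. That argument is not needed, and it is not obviously valid either: $v_n=(A_{k_n}')^{-1}f_n$ need not be microlocalised away from $\{\xi'=0\}$ even when $\WF(f_n)$ is, because $(A_{k_n}')^{-1}$ involves the exterior Dirichlet-to-Neumann and interior impedance-to-Dirichlet maps, which propagate singularities along the broken Hamiltonian flow and can populate the whole hyperbolic region (and $(\operator^*)^{-1}$, being a complex adjoint, is not simply $A_k^{-1}$). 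The point of the hypothesis $P_{\rm inv}\leq p+1$ in the theorem statement is precisely to avoid any such tracking: by Assumption~\ref{ass:polyboundintro},
$$
k^{-p-1}\,\alpha_n\,\big\|1_{(0,k_0^2]}(-\Delta_\Gamma)(\operator^*)^{-1}u_n\big\|_{\LtG}
\;\leq\; C\,k^{-p-1}\,\alpha_n\,k^{P_{\rm inv}}\;\leq\; C\,\alpha_n \;\longrightarrow\; 0,
$$
so \eqref{e:normalOk1} holds for all sufficiently large $n$ without any microlocal input, and is vacuous when $p=0$. Your parenthetical remark already contains this computation; the preceding microlocal claim and the designation of it as "the main obstacle" should be dropped.

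In short: the structure and the correct mechanism are both present in your write-up, but the step you single out as hardest is the one that the $P_{\rm inv}\leq p+1$ hypothesis trivializes, and the extra microlocal argument you propose for it is unsubstantiated.
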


\section{Pollution for the BIEs on the disk}\label{s:disk}

We now turn to the study of pollution for BIEs on the unit disk, $B(0,1)\subset \mathbb{R}^2$
and prove Theorems \ref{t:neumannDisk}, and~\ref{t:dirichletDisk}. 
 We parametrize use $\mathbb{R}/2\pi \mathbb{Z}$ as coordinates on $\partial B(0,1)$  with the coordinate map 
$\gamma(t)=(\cos(t),\sin(t))$. We record the following description of the single layer, double layer, and hypersingular operators as Fourier multipliers~(see, e.g., \cite{Kr:85}).
\begin{lemma}
Let $\Omega^-=B(0,1)$, $\Omega^+:=\mathbb{R}^2\setminus \overline{B(0,1)}$. Then, for any $\tau \neq 0$,
\begin{equation}
\label{e:diskOperators}
\begin{aligned}
S_{\tau}e^{imt}&=\frac{\pi i}{2} H_{|m|}^{(1)}(\tau)J_{|m|}(\tau) e^{imt}\\
\big(\tfrac{1}{2}I+K'_{\tau}\big)e^{imt}=\big(\tfrac{1}{2}I+K_{\tau}\big)e^{imt}&= \frac{\pi i}{2} \tau H_{|m|}^{(1)}(\tau)J_{|m|}'(\tau)e^{imt}\\
H_\tau e^{imt}&=i\tau^2(H_{|m|}^{(1)})'(\tau)J_{|m|}'(\tau)e^{imt}.
\end{aligned} 
\end{equation}
In particular, $A_{k,\eta}e^{imt}=A_{k,\eta}'e^{imt}=\lambda_m e^{imt}$ and $B_{k,\rm{reg}}e^{imt}=B_{k,\rm{reg}}'e^{imt}=\mu_m e^{imt}$ with
\begin{align*}
\lambda_m&:= \frac{\pi}{2}\Big(i k H_{|m|}^{(1)}(k)J_{|m|}'(k) +\eta H_{|m|}^{(1)}(k)J_{|m|}(k)  \Big)\\
\mu_m&:= i\eta (1-  \frac{\pi i}{2} k H_{|m|}^{(1)}(k)J_{|m|}'(k)) +\frac{\pi i}{2} H_{|m|}^{(1)}(ik)J_{|m|}(ik)ik^2(H_{|m|}^{(1)})'(k)J_{|m|}'(k).
\end{align*}
\end{lemma}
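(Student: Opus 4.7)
The plan is to verify each of the three formulas in \eqref{e:diskOperators} by directly computing the action of the integral operator on $e^{imt}$, exploiting the rotational symmetry of the disk, and then read off $\lambda_m$ and $\mu_m$ by substituting into the definitions of the boundary integral operators in \eqref{e:DBIEs} and \eqref{e:NBIEs}.

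The main tool is Graf's addition theorem for Hankel functions, which states that for $r_<:=\min(|x|,|y|)<r_>:=\max(|x|,|y|)$,
\begin{equation*}
H_0^{(1)}(\tau|x-y|)=\sum_{n\in\mathbb{Z}}H_n^{(1)}(\tau r_>)J_n(\tau r_<)e^{in(\theta_x-\theta_y)},
\end{equation*}
applied to the fundamental solution $\Phi_\tau(x,y)=\tfrac{i}{4}H_0^{(1)}(\tau|x-y|)$. First I would substitute this expansion into the single-layer potential $\tilde{S}_\tau e^{imt_y}(x)=\int_{\partial B(0,1)}\Phi_\tau(x,y)e^{imt_y}\,dS(y)$ for $x$ off the boundary; orthogonality of $\{e^{int}\}$ collapses the sum to the $n=m$ term, and taking the boundary trace (using continuity of the single-layer potential), together with the identity $J_{-m}H_{-m}^{(1)}=J_{|m|}H_{|m|}^{(1)}$, yields the formula for $S_\tau e^{imt}$.

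The formulas for $\tfrac{1}{2}I+K_\tau$ and $\tfrac{1}{2}I+K_\tau'$ then follow from two standard jump relations: $\tilde{D}_\tau\phi|_{\Gamma^+}=(\tfrac{1}{2}I+K_\tau)\phi$ for the double-layer potential from the exterior, and $\partial_\nu\tilde{S}_\tau\phi|_{\Gamma^-}=(\tfrac{1}{2}I+K_\tau')\phi$ for the normal derivative of the single-layer from the interior. Since the outward unit normal at $y\in\partial B(0,1)$ is the radial direction $y$, one simply differentiates Graf's expansion with respect to $|y|$ (respectively $|x|$) and evaluates at $|x|=|y|=1$, obtaining in both cases $\tfrac{\pi i}{2}\tau H_{|m|}^{(1)}(\tau)J_{|m|}'(\tau)e^{imt}$; the equality of the two eigenvalues also reflects the symmetry $K_\tau=K_\tau'$ on the circle. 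For $H_\tau$, one differentiates the double-layer formula for $|x|>1$ a second time in $|x|$ and takes the trace; any jump contribution is handled via the Wronskian $J_m(z)(H_m^{(1)})'(z)-J_m'(z)H_m^{(1)}(z)=\tfrac{2i}{\pi z}$, which also reconciles the overall multiplicative constant appearing in the statement.

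Finally, the expressions for $\lambda_m$ and $\mu_m$ follow immediately by plugging the diagonalisations of $S_k, K_k, K_k', S_{\mathrm{i}k}, H_k$ into $A_{k,\eta}=A_{k,\eta}'=\tfrac{1}{2}I+K_k-i\eta S_k$ and $B_{k,\mathrm{reg}}=B_{k,\mathrm{reg}}'=i\eta(\tfrac{1}{2}I-K_k)+S_{\mathrm{i}k}H_k$; the composition $S_{\mathrm{i}k}H_k$ is the product of the corresponding eigenvalues at each $m$ since all operators share the basis $\{e^{imt}\}$. The main technical obstacle is the careful handling of Graf's expansion on the boundary $|x|=|y|=1$, which lies on the edge of convergence, so one must first work with $x$ strictly on one side of $\Gamma$ and then pass to the boundary via the (standard) jump relations rather than substituting $|x|=1$ into the series directly.
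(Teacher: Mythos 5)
Your approach — Graf's addition theorem applied to $\Phi_\tau$, followed by the standard jump relations — is the right one and is what the cited reference (Kress) also uses; the symmetry $K_\tau=K'_\tau$ on the circle is a genuine feature of the radial normal, as you note.

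There is, however, one point that does not hold up. You claim that for $H_\tau$, "any jump contribution is handled via the Wronskian [\ldots], which also reconciles the overall multiplicative constant appearing in the statement.'' This cannot be right: the normal derivative of the double-layer potential is \emph{continuous} across $\Gamma$, so there is no jump to absorb and the Wronskian plays no role in this step. Carrying out your own recipe honestly, differentiating $\mathcal{D}\ell\,e^{im\cdot}(x)=\tfrac{\pi i}{2}\tau H^{(1)}_m(\tau|x|)J_m'(\tau)e^{im\theta_x}$ once more in $|x|$ and sending $|x|\to 1^+$, gives
\[
H_\tau e^{imt}=\tfrac{\pi i}{2}\,\tau^2\,(H^{(1)}_{|m|})'(\tau)\,J_{|m|}'(\tau)\,e^{imt},
\]
which differs from the formula printed in the lemma ($i\tau^2$, without the $\pi/2$) by a factor of $\pi/2$. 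The $\tfrac{\pi i}{2}\tau^2$ value is the one consistent with the first two eigenvalue formulas via the Calder\'on identity $S_\tau H_\tau=-\tfrac14 I+K_\tau^2$: a short computation with $J_m'H^{(1)}_m = J_m(H^{(1)}_m)'-\tfrac{2i}{\pi\tau}$ shows
\[
\lambda_{S}\,\lambda_{H}=\lambda_{\frac12+K}\bigl(\lambda_{\frac12+K}-1\bigr)
=\Bigl(\tfrac{\pi i}{2}\tau H^{(1)}_mJ_m'\Bigr)\Bigl(\tfrac{\pi i}{2}\tau J_m(H^{(1)}_m)'\Bigr),
\]
which forces $\lambda_{H}=\tfrac{\pi i}{2}\tau^2(H^{(1)}_m)'J_m'$. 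So the lemma as stated appears to carry a typographical error in the $H_\tau$ line, and the same missing $\pi/2$ propagates into the printed expression for $\mu_m$ (the coefficient of $k^2(H^{(1)}_{|m|})'(k)J_{|m|}'(k)$ should be $\tfrac{\pi i}{2}$ rather than $i$). Your instinct that the constant needed reconciling was sound, but the resolution is a typo in the statement, not an application of the Wronskian; you should either correct the constant or explicitly verify it against the Calder\'on relation rather than invoking a jump that does not exist. The rest of the argument — the trace of the single layer, the two jump relations (with the sign conventions you use, which are the Colton--Kress ones), and the assembly of $\lambda_m,\mu_m$ by multiplying eigenvalues in the common Fourier basis — is all correct.
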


We require asymptotic expansions for Bessel functions uniformly for large order~\cite[\S 10.20]{Di:25}. Define the decreasing, smooth bijection $\zeta :(0,\infty ) \to (-\infty,\infty)$ by
$$
\zeta(z):=\begin{cases}\Big(\frac{3}{2}\int_z^1\frac{\sqrt{1-t^2}}{t}dt\Big)^{2/3}&0<z\leq 1\\
-\Big(\frac{3}{2}\int_1^z\frac{\sqrt{t^2-1}}{t}dt\Big)^{2/3}&1\leq z<\infty,
\end{cases}
$$
and the Airy function by 
$$
Ai(x):=\frac{1}{\pi}\int_0^\infty \cos \Big(\frac{1}{3}t^3+xt\Big)dt.
$$
Then, for any $I\Subset (0,\infty)$, and all $z\in I$, 
\begin{align*}
J_{m}(mz)&=\Big(\frac{4\zeta}{1-z^2}\Big)^{\frac{1}{4}}\Big(m^{-1/3}Ai(m^{2/3}\zeta)(1+O_I(m^{-2}))+O_{I}(m^{-5/3}Ai'(m^{2/3}\zeta))\Big)\\
H^{(1)}_{m}(mz)&=2e^{-\pi i/3}\Big(\frac{4\zeta}{1-z^2}\Big)^{\frac{1}{4}}\Big(m^{-1/3}Ai(e^{2\pi i/3}m^{2/3}\zeta)(1+O_I(m^{-2}))+O_{I}(m^{-5/3}Ai'(e^{2\pi i/3}m^{2/3}\zeta))\Big)\\
J'_{m}(mz)&=-\frac{2}{z}\Big(\frac{4\zeta}{1-z^2}\Big)^{-\frac{1}{4}}\Big(m^{-2/3}Ai'(m^{2/3}\zeta)(1+O_I(m^{-2}))+O_{I}(m^{-4/3}Ai(m^{2/3}\zeta))\Big)\\
(H^{(1)}_{m})'(mz)&=\frac{4e^{-2\pi i/3}}{z}\Big(\frac{4\zeta}{1-z^2}\Big)^{-\frac{1}{4}}\Big(m^{-2/3}Ai'(e^{2\pi i/3}m^{2/3}\zeta)(1+O_{I}(m^{-2}))+O_{I}(m^{-4/3}Ai(e^{2\pi i/3}m^{2/3}\zeta))\Big).
\end{align*}
We also recall the following estimates for the Airy function~\cite[\S 9.8]{Di:25}
$$
|Ai(x)|\leq C\langle x\rangle^{-1/4},\qquad |Ai'(x)|\leq C\langle x\rangle^{1/4}.
$$

\begin{lemma}
\label{l:neumannBIEDisk}
Let $0<\zeta_1<\zeta_2<\dots$ such that $-\zeta_j$ are the zeros of $Ai$. Then, for all $k_0>0$ and $j\in \mathbb{N}$ there is $C>0$ such that for all $0<\e<1$, $k>k_0$, and $m\in\mathbb{Z}\setminus\{0\}$ satisfying,
$$
|\zeta(k/|m|)+\zeta_j|m|^{-2/3}|< \e k^{-2/3},
$$
we have
$$
|\mu_m|\leq C (k^{-1/3} +(\e+k^{-2/3}) |\eta|),\qquad C^{-1}k\leq |m|\leq Ck.
$$
\end{lemma}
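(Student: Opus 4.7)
The plan is to reduce the statement to a direct computation using the uniform Airy-type asymptotic expansions of $J_m(k)$, $J_m'(k)$, $H_m^{(1)}(k)$, $(H_m^{(1)})'(k)$ together with standard bounds on the modified-Bessel product coming from the $S_{\ri k}$ factor. The key observation is that the hypothesis $|\zeta(k/|m|)+\zeta_j|m|^{-2/3}|<\varepsilon k^{-2/3}$ forces the argument $m^{2/3}\zeta(k/|m|)$ of the leading Airy factor in the expansion of $J_m(k)$ to lie within $O(\varepsilon)$ of the Airy zero $-\zeta_j$, so that $|J_m(k)|$ picks up an extra small factor of size $\varepsilon+k^{-4/3}$ compared with the generic bound $O(k^{-1/3})$. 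The bound $C^{-1}k\le |m|\le Ck$ is immediate from the fact that $\zeta(z)=0$ iff $z=1$ together with the non-vanishing of $\zeta'$ there: $|\zeta(k/|m|)|\lesssim m^{-2/3}$ forces $k/|m|\to 1$.

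The first step is algebraic: using the Wronskian $J_\nu(z)(H_\nu^{(1)})'(z)-J_\nu'(z)H_\nu^{(1)}(z)=\frac{2\ri}{\pi z}$ at $z=k$, $\nu=|m|$, one rewrites
\[
1-\tfrac{\pi \ri}{2}kH_{|m|}^{(1)}(k)J_{|m|}'(k)=-\tfrac{\pi \ri k}{2}J_{|m|}(k)(H_{|m|}^{(1)})'(k),
\]
so that
\[
\mu_m=\tfrac{\pi k\eta}{2}(H_{|m|}^{(1)})'(k)J_{|m|}(k)-\tfrac{\pi k^2}{2}H_{|m|}^{(1)}(\ri k)J_{|m|}(\ri k)(H_{|m|}^{(1)})'(k)J_{|m|}'(k).
\]
This puts the two terms in a form where each factor can be handled by the uniform expansions recorded earlier in the excerpt.

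Next I would insert the Airy asymptotics with $z=k/|m|$ and $\zeta=\zeta(k/|m|)$, fixed in a compact subinterval of $(0,\infty)$ by step one. Writing $m^{2/3}\zeta=-\zeta_j+\delta$ with $|\delta|\le C\varepsilon$ and using $\Ai(-\zeta_j)=0$ together with the Lipschitz bound $|\Ai(-\zeta_j+\delta)|\le C|\delta|$ and the uniform bounds $|\Ai'(-\zeta_j+\delta)|\le C$, $|\Ai(e^{2\pi\ri/3}(-\zeta_j+\delta))|+|\Ai'(e^{2\pi\ri/3}(-\zeta_j+\delta))|\le C$, I obtain
\[
|J_{|m|}(k)|\le C(\varepsilon k^{-1/3}+k^{-5/3}),\qquad |J_{|m|}'(k)|+|(H_{|m|}^{(1)})'(k)|\le C k^{-2/3}.
\]
For the remaining factor $H_{|m|}^{(1)}(\ri k)J_{|m|}(\ri k)$, I would use the identification with $K_{|m|}(k)I_{|m|}(k)$ and the standard bound $I_\nu(k)K_\nu(k)\le C k^{-1}(1+(\nu/k)^2)^{-1/2}$ for $\nu\sim k$ (equivalently, noting that $kS_{\ri k}\in\Psi_\hsc^0$ has principal symbol $\tfrac12(|\xi'|_g^2+1)^{-1/2}$ as recalled in \eqref{e:Sik}, so its Fourier multiplier on the unit disk is $O(k^{-1})$ uniformly for $|m|\sim k$). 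This gives $|H_{|m|}^{(1)}(\ri k)J_{|m|}(\ri k)|\le Ck^{-1}$.

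Inserting these estimates into the factored expression for $\mu_m$ gives the first term $\lesssim k|\eta|\cdot k^{-2/3}\cdot(\varepsilon k^{-1/3}+k^{-5/3})\lesssim(\varepsilon+k^{-2/3})|\eta|$ and the second term $\lesssim k^2\cdot k^{-1}\cdot k^{-2/3}\cdot k^{-2/3}=k^{-1/3}$, which add to give the claimed bound. The main technical care is in tracking that the quantities $(4\zeta/(1-z^2))^{\pm 1/4}$ appearing as prefactors in the uniform expansions remain bounded above and below uniformly under the hypothesis; this is where the compactness of the $z$-range from step one (forcing $k/|m|$ into a neighbourhood of $1$ of controlled size) is essential, and I would verify it by expanding $\zeta(z)\sim 2^{1/3}(1-z)$ near $z=1$ so that $1-z^2\sim 2(1-z)\sim 2^{2/3}\zeta$ and the prefactor $4\zeta/(1-z^2)$ is bounded away from $0$ and $\infty$.
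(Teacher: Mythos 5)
Your proposal is correct and follows the same basic outline as the paper's proof: establish $|m|\sim k$ from the smoothness and nonvanishing of $\zeta'$ at $z=1$, insert the uniform Airy asymptotics, use a Wronskian identity to cancel the constant $1$ in the $\eta$-term, and use $\Ai(-\zeta_j)=0$ to extract the factor $\varepsilon$. The one genuine difference is the order in which the Wronskian is deployed. You apply the Bessel Wronskian $J_\nu(z)(H_\nu^{(1)})'(z)-J_\nu'(z)H_\nu^{(1)}(z)=2\ri/(\pi z)$ \emph{before} passing to Airy asymptotics, rewriting $1-\tfrac{\pi\ri}{2}kH_{|m|}^{(1)}(k)J_{|m|}'(k)$ as the exact product $-\tfrac{\pi\ri k}{2}J_{|m|}(k)(H_{|m|}^{(1)})'(k)$, and then estimate $|J_{|m|}(k)|\lesssim\varepsilon k^{-1/3}+k^{-5/3}$ directly from the vanishing of $\Ai$ near $-\zeta_j$. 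The paper instead substitutes the Airy expansions first and then invokes the Airy Wronskian $\Ai(e^{2\pi\ri/3}x)\Ai'(x)-(\Ai(e^{2\pi\ri/3}x))'\Ai(x)=-e^{-\pi\ri/6}/(2\pi)$ together with $\Ai(-\zeta_j)=0$ to cancel the $1$. The two Wronskians are linked through the asymptotic expansions, so the arguments are equivalent in content, but your factorisation is slightly cleaner in that it performs the cancellation exactly at the Bessel level and confines all error accounting to a single application of the Airy expansions; the paper's version mixes the $O(m^{-2})$ remainders from the expansions into the Wronskian cancellation. Your bound $|H_{|m|}^{(1)}(\ri k)J_{|m|}(\ri k)|\le Ck^{-1}$ via the symbol of $kS_{\ri k}$ is the same as the paper's, and your care about the prefactor $(4\zeta/(1-z^2))^{\pm 1/4}$ being bounded above and below is exactly the right point to flag.
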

\begin{proof}
We first observe that since 
$$
\|k S_{ik}\|_{L^2\to L^2}\leq C, 
$$
we have 
$$
|kH_{|m|}^{(1)}(ik)J_{|m|}(ik)|\leq C. 
$$
Therefore, it is enough to check that 
\begin{equation}
\label{e:toCheckNeumann}
|k(H_{|m|}^{(1)})'(k)J_{|m|}'(k)|\leq Ck^{-1/3},\qquad |(1-  \frac{\pi i}{2} k H_{|m|}^{(1)}(k)J_{|m|}'(k)) |\leq C(\e+k^{-2/3}). 
\end{equation}

To do this, we first note that $\zeta^{-1}$, is smooth,  $|\zeta'|>c>0$, and $\zeta^{-1}(0)=1$. Hence,
\begin{equation}
\label{e:whereIsM}
\bigg|\frac{k}{|m|}-1\bigg|\leq C k^{-2/3},\qquad c<\bigg|\frac{1-(\frac{k}{|m|})^2}{4\zeta(k/|m|)}\bigg|\leq C.
\end{equation}
To obtain the first inequality~\eqref{e:toCheckNeumann}, we observe that 
\begin{align*}
&k(H_{|m|}^{(1)})'(k)J_{|m|}'(k)\\
&=-\frac{8k e^{-2\pi i/3}}{(k/|m|)^2}\Big(\frac{1-(k/|m|)^2}{4\zeta(k/|m|)}\Big)^{1/2}(|m|^{-4/3} Ai'(|m|^{2/3}\zeta(k/|m|))Ai'(e^{2\pi i/3}|m|^{2/3}\zeta(k/|m|))+O(k|m|^{-2})\\
&=O(k^{-1/3})
\end{align*}
and to obtain the second inequality in~\eqref{e:toCheckNeumann}, we use the fact that ~\cite[\S 9.2]{Di:25}
$$
Ai(e^{2\pi i/3}x)Ai'(x)-(Ai(e^{2\pi i/3}x))'Ai(x)=-\frac{e^{-\pi i/6}}{2\pi}
$$
and hence, since $Ai(\zeta_j)=0$ and $Ai$ is smooth,
\begin{align*}
&(1-  \frac{\pi i}{2} k H_{|m|}^{(1)}(k)J_{|m|}'(k))\\
&=(1+  2\pi i e^{-\pi i/3}Ai(e^{2\pi i/3}|m|^{2/3}\zeta(k/|m|))Ai'(|m|^{2/3}\zeta(k/|m|))+O(k^{-2/3}))\\
&=O(\e +k^{-2/3}).
\end{align*}
\end{proof}

\begin{lemma}
\label{l:dirichletBIEDisk}
Let $0<\zeta_1'<\zeta_2'<\dots$ such that $-\zeta_j'$ are the zeros of $Ai'$. Then, for all $k_0>0$, $c>0$ there is $C>0$ such that for all $0<\e<1$, $k_0<k$, $m\in\mathbb{Z}\setminus\{0\}$, and $j\in\mathbb{N}$ satisfying, 
$$
c k^{2/3}\leq |\zeta'_j|\leq 2c^{-1} k^{2/3},\qquad  |\zeta(k/|m|)+\zeta'_j|m|^{-2/3}|< \e k^{-1},
$$
we have
$$
|\lambda_m|\leq C (\e +k^{-1}+k^{-1} |\eta|),\quad C^{-1}k\leq |m|\leq Ck.
$$
\end{lemma}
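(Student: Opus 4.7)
The strategy mirrors that of Lemma~\ref{l:neumannBIEDisk}, but now the small quantity is $Ai'(w)$ rather than $(1-\tfrac{\pi i}{2}kH^{(1)}_{|m|}(k)J'_{|m|}(k))$. Throughout, I will write $z=k/|m|$ and $w=|m|^{2/3}\zeta(z)$. The bound $|m|\sim k$ follows exactly as in Lemma~\ref{l:neumannBIEDisk}: since $|\zeta'_j|\sim k^{2/3}$ and $|\zeta(z)+\zeta'_j|m|^{-2/3}|<\epsilon k^{-1}$, $|\zeta(z)|$ is bounded; because $\zeta^{-1}:\mathbb{R}\to(0,\infty)$ is a smooth bijection with $\zeta^{-1}(0)=1$, this forces $z$ (and hence $|m|/k$) to lie in a fixed compact subset of $(0,\infty)$, yielding also $|(4\zeta/(1-z^2))^{1/2}|\leq C$.

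Next, I substitute the Airy function asymptotic expansions into the formula~\eqref{e:diskOperators} for $\lambda_m$. The leading-order contributions, computed exactly as in the proof of Lemma~\ref{l:neumannBIEDisk}, are
\begin{align*}
ikH^{(1)}_{|m|}(k)J'_{|m|}(k) &= -4ie^{-\pi i/3}Ai(e^{2\pi i/3}w)Ai'(w)+O(k^{-1}),\\
H^{(1)}_{|m|}(k)J_{|m|}(k) &= 2e^{-\pi i/3}\Big(\tfrac{4\zeta}{1-z^2}\Big)^{1/2}|m|^{-2/3}Ai(e^{2\pi i/3}w)Ai(w)+O(k^{-2}),
\end{align*}
where the $O(k^{-1})$ and $O(k^{-2})$ error terms come from the $O(m^{-2})$ corrections and the $O(m^{-5/3}Ai')$, $O(m^{-4/3}Ai)$ remainders in the uniform expansions; their sizes are obtained from the uniform bounds $|Ai(x)|\leq C\langle x\rangle^{-1/4}$, $|Ai'(x)|\leq C\langle x\rangle^{1/4}$ at $x=w$ (so of orders $k^{-1/6}$ and $k^{1/6}$ respectively), together with the standard complex Airy asymptotics showing $|Ai(e^{2\pi i/3}w)|\sim k^{-1/6}$ (since $e^{2\pi i/3}w$ has modulus $\sim k^{2/3}$ and argument near $-\pi/3$, so $e^{-\tfrac{2}{3}(e^{2\pi i/3}w)^{3/2}}$ has modulus $1$).

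The key new ingredient is the Taylor estimate on $Ai'(w)$ near $-\zeta'_j$. Because $Ai'(-\zeta'_j)=0$ and $Ai''(x)=xAi(x)$,
\[
|Ai'(w)|\leq |w+\zeta'_j|\sup_{|s+\zeta'_j|\leq |w+\zeta'_j|}|Ai''(s)|\leq C|w+\zeta'_j|\,|\zeta'_j|\,|Ai(-\zeta'_j)|.
\]
The hypothesis $|w+\zeta'_j|<\epsilon|m|^{2/3}k^{-1}\leq C\epsilon k^{-1/3}$, together with $|\zeta'_j|\sim k^{2/3}$ and $|Ai(-\zeta'_j)|\lesssim|\zeta'_j|^{-1/4}\sim k^{-1/6}$, yields $|Ai'(w)|\leq C\epsilon k^{1/6}$. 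Consequently the first leading term is bounded by $C k^{-1/6}\cdot\epsilon k^{1/6}=C\epsilon$, while the second leading term is bounded by $C|\eta|\cdot k^{-2/3}\cdot k^{-1/6}\cdot k^{-1/6}=C|\eta|k^{-1}$. Assembling the leading contributions with the $O(k^{-1})$ errors gives $|\lambda_m|\leq C(\epsilon+k^{-1}+k^{-1}|\eta|)$, as required.

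The only real obstacle is bookkeeping: making sure the sub-leading remainders in the Bessel/Hankel expansions, when multiplied by $k$ in the first term of $\lambda_m$, remain $O(k^{-1})$ even though we cannot exploit the $Ai'(w)$-cancellation for those terms. This works because each such remainder carries at least one extra factor of $m^{-2/3}\sim k^{-2/3}$ and has the two Airy factors both of size $\sim k^{-1/6}$ (for pairs $Ai(e^{2\pi i/3}w)Ai(w)$) or compensates any $Ai'$-factor by the additional $m^{-2/3}$, so the net size is $\leq Ck\cdot k^{-5/3}\cdot k^{-1/3}=Ck^{-1}$; the same tracking for the second term gives $O(k^{-2})$ which is absorbed into $O(k^{-1})$.
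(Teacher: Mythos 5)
Your proof is correct and takes essentially the same route as the paper: both substitute the uniform Airy asymptotics into \eqref{e:diskOperators} and exploit that $w=|m|^{2/3}\zeta(k/|m|)$ lies within $O(\e k^{-1/3})$ of the zero $-\zeta_j'$ of $Ai'$, so that Taylor's theorem together with $Ai''(x)=xAi(x)$ gives $|Ai'(w)|\leq C\e k^{1/6}$. The paper is terser, recording only $kH^{(1)}_{|m|}(k)J'_{|m|}(k)=-4e^{-\pi i/3}Ai(e^{2\pi i/3}w)Ai'(w)+O(k|m|^{-2})=O(\e+k^{-1})$, and your Taylor-at-the-zero step combined with the bound $|Ai(e^{2\pi i/3}w)|\leq C|w|^{-1/4}\leq Ck^{-1/6}$ supplies exactly the intended justification of the final equality.
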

\begin{proof}
To prove the lemma, we show that
\begin{equation}
\label{e:dirichletIneq}
|kH_{|m|}^{(1)}(k)J_{|m|}'(k)|\leq C(\e+k^{-2/3}),\qquad |H_{|m|}^{(1)}(k)J_{|m|}(k)|\leq Ck^{-1}.
\end{equation}

Since $\zeta:(0,\infty)\to (-\infty,\infty)$ is a smooth  decreasing bijection, with smooth inverse, and $\zeta(0)=1$, there are $c,C>0$ such that 
$$
1+c<\frac{k}{|m|}<C,\qquad -2\delta<\zeta(k/|m|)<-\delta.
$$
To obtain the first inequality in~\eqref{e:dirichletIneq}, we observe that
\begin{align*}
&kH_{|m|}^{(1)}(k)J_{|m|}'(k)\\
&=-4e^{-\pi i/3}Ai(e^{2\pi i/3}|m|^{2/3}\zeta(k/|m|))Ai'(|m|^{2/3}\zeta(k/|m|))+O(k|m|^{-2})\\
&=O(\e +k^{-1}).
\end{align*}
For the second inequality, 
\begin{align*}
&H_{|m|}^{(1)}(k)J_{|m|}(k)\\
&=2e^{-\pi i/3}\Big(\frac{4\zeta(k/|m|)}{1-(k/|m|)^2}\Big)^{\frac{1}{2}}|m|^{-2/3}Ai(|m|^{2/3}\zeta(k/|m|))Ai(e^{2\pi i/3}|m|^{2/3}\zeta(k/|m|))+O(m^{-2})\\
&\leq C|m|^{-1}\leq Ck^{-1}.
\end{align*}
\end{proof}

As an easy Corollary of Lemmas~\ref{l:neumannBIEDisk} and~\ref{l:dirichletBIEDisk} we obtain Theorems~\ref{t:neumannDisk} and~\ref{t:dirichletDisk}.

\begin{proof}[Proof of Theorem~\ref{t:neumannDisk}]
By~\cite[Theorem 2.3]{GaMaSp:21N}, for $|\eta|\sim 1$, 
$$
\|B_{k,\rm{reg}}^{-1}\|_{\LtGt}\leq Ck^{1/3}. 
$$
Hence, $B_{k,\rm{reg}}$ satisfies Assumption~\ref{ass:polyboundintro} with $\cJ=\emptyset$.  Now, given $k>k_0$, we must find $m$ such that the hypotheses of Lemma~\ref{l:neumannBIEDisk} hold. Let $|j|\leq C$, and $z_j:=\zeta^{-1}(-k^{-2/3}\zeta_j)$. Then, $z_j=1+O(k^{-2/3})$ and let $m\in\mathbb{Z}$  such that $| |m|-k/z_j|\leq \frac{1}{2}$. Then, since $ \frac{1}{4}k\leq |m|\leq \frac{3}{2}k$, 
$$
\Big|\frac{k}{|m|} -z_j\Big|\leq \frac{|z_j|}{2|m|}=\frac{1}{2|m|}(1+O(k^{-5/3}))= O(k^{-1}),
$$
and hence
\begin{equation}
\label{e:zeta1}
\zeta(k/|m|)+k^{-2/3}\zeta_j=O(k^{-1}). 
\end{equation}
Next, 
\begin{equation}
\label{e:zeta2}
||m|^{-2/3}\zeta_j-k^{-2/3}\zeta_j|=|\zeta_j||(k+O(1))^{-2/3}-k^{-2/3})|= O(k^{-5/3}).
\end{equation}
Combining~\eqref{e:zeta1} and~\eqref{e:zeta2} yields
$$
|\zeta(k/|m|)+m^{-2/3}\zeta_j|\leq Ck^{-1}.
$$
In particular, by Lemma~\ref{l:neumannBIEDisk}, 
$$
|\mu_{m}|\leq Ck^{-\frac{1}{3}}. 
$$
Choosing $\e_0=\frac{1}{8}$, $\Xi_0=2$, and using that $ \frac{1}{4}k\leq |m|\leq \frac{3}{2}k$,  and $-\Delta_{\Gamma}e^{imt}= m^2e^{imt}$, we have 
$$
\chi(-\e_0^{-2}k^{-2}\Delta_{\Gamma})e^{imt}=0,\qquad (1-\chi(-\Xi_0^{-2}k^{-2}\Delta_{\Gamma}))e^{imt}=0,
$$
and 
$$
\chi(-\e_0^{-2}k^{-2}\Delta_\Gamma) (B_{k,\rm reg}^*)^{-1}\mu_m e^{imt}=\chi(-\e_0^{-2}k^{-2}\Delta_\Gamma) \overline{\mu_m}^{-1}\mu_m e^{imt}=0.
$$
Hence, there is $C>0$ such that for any seqence $k_n\to \infty$,  the hypotheses of Theorem~\ref{t:pollutionIntro} with $\alpha_n \leq Ck_n^{-1/3}$ and $\beta_n=0$. This completes the proof.
\end{proof}

\begin{proof}[Proof of Theorem~\ref{t:dirichletDisk}]
By the bound on $P_{\rm ItD}^{-,\eta}$ from ~\cite[Theorem 4.3]{ChMo:08} and  the bound on $P_{\rm DtN}^+$ coming from the fact that $B(0,1)$ is nontrapping, for any $k_0$ there is $C>0$ such that for $k>k_0$,
$$
\|A_{k}^{-1}\|_{\LtGt}\leq C|\eta|^{-1}. 
$$
Hence, $A_{k}$ satisfies Assumption~\ref{ass:polyboundintro} with $\cJ=\emptyset$.  

Now, given $m\in\mathbb{Z}\setminus \{0\}$ large enough, we must find $k>0$ such that the hypotheses of Lemma~\ref{l:dirichletBIEDisk} hold. Let $\delta>0$ and $C_0>0$ such that 
\begin{equation}
\label{e:zetaSmall}
\zeta(x)\leq -3\delta x^{2/3},\qquad x>C_0.
\end{equation}
Such a $\delta>0$ exists since~\cite[\S 10.20]{Di:25}
$$
\zeta(x)=\Big(\frac{3}{2}(\sqrt{z^2-1}-\operatorname{arcsec}z\Big)^{2/3},\quad z>1.
$$
Now fix $j$ such that 
$$
\delta |m|^{2/3}\leq \zeta_j'\leq 2\delta |m|^{2/3},
$$
and define $f:(0,\infty)\to (-\infty,\infty)$ by $f(x):=|m|^{2/3}\zeta(x/|m|)$. Then, $f$ is smooth, $f(|m|)=0$ and, by~\eqref{e:zetaSmall}
$$
f(x)\leq -3\delta k^{2/3},\qquad x>C_0|m|
$$
and hence for there is $|m|<k_j\leq C_0|m|$ such that $f(k_j)=\zeta_j'$. 
In particular, by Lemma~\ref{l:neumannBIEDisk}, 
$$
|\lambda_{m}(k_j)|\leq C(1+|\eta|)k^{-1}. 
$$
Since $-\Delta_{\Gamma}e^{imt}= m^2e^{imt}$, and there is $C_0$ such that $C_0^{-1} k_j \leq |m|\leq C_0 k$, choosing $\e_0<\frac{1}{2}C_0^{-1}$ and $\Xi_0\geq 2C$ this implies the there are $k_m\to \infty$ and $\alpha_m\leq C|\eta_m|k_m^{-1}$ such that the hypotheses of Theorem~\ref{t:pollutionIntro} hold with $\beta_m=0$. This completes the proof.
\end{proof}

\section{Details of the numerical experiments  in \S\ref{sec:main}}
\label{sec:numerical}

We now describe the parametrizations of the trapping and nontrapping domains considered in \S\ref{sec:main} and provide a brief description of the numerical solver used for the Galerkin examples (the Nystr\"om method is discussed in Section~\ref{sec:Nystrom}).

\paragraph{Geometry parametrization:}
We solve the scattering problem for two nontrapping domains:~the unit disk and a star-shaped domain whose parametrization is given by
\begin{equation}
\begin{bmatrix}
x(t) \\
y(t)
\end{bmatrix}
= (1 + 0.3\cos{(t)}) 
\begin{bmatrix}
\cos{(t)} \\
\sin{(t)}
\end{bmatrix} \,, \qquad t\in[0,2\pi) \,. 
\end{equation} 
We consider two trapping domains. The first consists of $4$ rounded and tilted squares, which we refer to as the ``four diamonds'' geometry. Let $\Gamma_{0}$ be the boundary of the square with vertices $(\pm 4\sqrt{2}\pi/5, 0), (0, \pm 4\sqrt{2} \pi/5)$, and let $\widetilde{\Gamma}_{0}$ be $\Gamma_{0}$ but whose vertices are rounded using a Gaussian filter as described in~\cite{epstein2016smoothed}. Furthermore, let $\Gamma_{j} = (\pm\sqrt{2}\pi, \pm \sqrt{2}\pi)+ \tilde{\Gamma}_{0}$, $j=1,2,3,4$. Then the boundary of the four diamonds geometry is given by $\Gamma = \cup_{j=1}^{4} \Gamma_{j}$. The second trapping domain is a crescent shaped boundary that we refer to as the ``cavity'' geometry. Let $s \in [-\pi/2, \pi/2]$, $a=0.2$, and $b=\pi/12$. Let 
\begin{equation}
\begin{aligned}
\theta(s) &= b-a + 2\left(1-\frac{(b-a)}{\pi}\right) \left(\frac{a}{\sqrt{\pi}} e^{-(s/a)^2}+s \cdot \textrm{erf}(s/a) \right) \,, \\
r(s) &= 1 - a \cdot \textrm{erf}(s/a)\, .
\end{aligned}
\end{equation}
Finally, let $\Gamma$ be the union of the curve $(r(s) \sin{(\theta{(s)})}, r(s) \cos{(\theta(s))})$ and it's reflection about the $y$ axis. This results in a cavity whose opening is approximately a sector of $b$ radians, and whose width is approximately $2a$. 
Since this curve is not smooth for the specific choices of $a$ and $b$,
we sample the curve at $M=400$ equispaced points in the following manner. Let $s_{j} = -\frac{\pi}{2} + (j-1/2) \pi$, $j=1,2,\ldots M/2$, and consider
\begin{equation}
\begin{pmatrix}
x_{j}\\
y_{j} 
\end{pmatrix}
= 
\begin{cases}
\begin{pmatrix}
r(s_{j}) \sin(\theta{(s_{j})}) \\
r(s_{j}) \cos(\theta{(s_{j})}) \\
\end{pmatrix} \, ,& j =1,2,\ldots M/2 \vspace*{1ex}\\
\begin{pmatrix}
-r(s_{M-j+1}) \sin(\theta{(s_{M-j+1})}) \\
r(s_{M-j+1}) \cos(\theta{(s_{M-j+1})}) \\
\end{pmatrix} \, ,& j =M/2+1,2,\ldots M
\end{cases}\,.
\end{equation}
The boundary of the cavity domain is then defined to be the curve corresponding to the discrete Fourier series of the sampled curve above.

\paragraph{Galerkin Discretization:}
The geometries are discretized with $\nch$ equispaced panels in parameter space, denoted by $\Gamma_{j}$, $j=1,2,\ldots \nch$, sampled at $16$th order Gauss-Legendre nodes on each panel. Let $\gamma_{j}(t):[-1,1] \to \Gamma_{j}$ denote the parametrization of panel $\Gamma_{j}$.  
For an integral operator with kernel $K$, the Galerkin discretization requires accurate evaluation of the integrals
\begin{equation}
I_{i,j,k,\ell} = \int_{-1}^{1} \int_{-1}^{1} K(\gamma_{i}(t), \gamma_{j}(s)) P_{k}(t) P_{\ell}(s) |\gamma'_{i}(t)| |\gamma'_{j}(s)| ds\, dt \,,
\end{equation}
$i,j=1,2,\ldots \nch$, $k,\ell=0,\ldots, \ngk$, where $\ngk$ is the order of the Galerkin discretization, and $P_{\ell}(t)$ is the Legendre polynomial of degree $\ell$.
$I_{i,j,k,\ell}$ corresponds to the contribution from basis function $\ell$ on $\Gamma_{j}$ to basis function $k$ on $\Gamma_{i}$. In all the integral representations considered, the kernel $K(x,y)$ has at most a log-singularity as $|x-y| \to 0$. 

For the geometries considered and the equispaced discretization above, when panels $\Gamma_{i}$, and $\Gamma_{j}$ do not share a vertex, the integrand is smooth and a high-order Gauss-Legendre quadrature rule suffices to approximate $I_{i,j,k,\ell}$. In particular, we use a $24$th order Gauss-Legendre rule in both $t$ and $s$ to compute it. 

Suppose now that $\Gamma_{i}$ is adjacent to $\Gamma_{j}$ with $\gamma_{i}(1) = \gamma_{j}(-1)$. The integrand in $s$ has a near singularity close to $s=-1$. After computing the integral in $s$, the integrand in $t$ has a log-singularity at $t=1$.  To handle the log-singularity at the end point, we use a custom quadrature rule that accurately computes all integrals of the form
\begin{equation}
\int_{-1}^{1} \Big(\log{|1-s|} P_{\ell}(s) + \log{|1+s|} P_{m}(s) + P_{n}(s)\Big) \, ds ,
\end{equation}
with $0\leq \ell,m,n \leq q-1$. Let $Q_{\textrm{log}} = \{t_{j}, w_{j} \}$, $j=1,2,\ldots, N_{q}$, denote such a rule computed using Generalized Gaussian quadratures, see, e.g.,~\cite{bremer}. We use a rule with $q=20$ which results in a $N_{q}=24$ point rule. To handle the integrand in $s$, we subdivide the $[-1,1]$ into three equispaced panels and the panels at the ends are dyadically subdivided $4$ times. A $32$ point Gauss-Legendre quadrature rule is used on each of these panels resulting in a total $352$ quadrature nodes. 

Finally, when $\Gamma_{i} = \Gamma_{j}$, after computing the integral in $s$, the integrand in $t$ now has a log-singularity at $t=\pm1$. We use the quadrature rule $Q_{\textrm{log}}$ above to compute the integral in $t$. For any given $t_{j}$, in order to compute the 
integral in $s$, we use a mapped version of $Q_{\textrm{log}}$ on $[-1,t_{j}]$, and $[t_{j},1]$ to handle the log-singularity in the kernel.

\begin{remark}
Far fewer nodes would suffice to compute the integral in $s$, but the quadrature rule above guarantees that error in the solution computed using the Galerkin discretization above will not be dominated by the quadrature error of computing $I_{i,j,k,\ell}$. 
\end{remark}

\paragraph{Fast solver:}
The quadrature method described above can be easily coupled to fast multipole methods. 
The discretized linear system is solved using GMRES until the relative residual drops below $5 \times 10^{-8}$. The matrix vector product in each GMRES iteration is computed using \texttt{fmm2d}~\cite{fmm2d}, 
a wideband fast-multipole method which uses far-field signatures to accelerate the translation operators at high frequencies~\cite{crutchfield2006remarks}. The computational complexity of applying an $N\times N$ matrix using a wideband FMM for high-frequency problems is $O(N\log{N})$. 

\appendix

\section{Definition of the scattering problems and the standard boundary-integral operators}\label{app:A}

In this section we show how scattering of a plane-wave by an obstacle with zero Dirichlet or Neumann boundary conditions can be reformulated as a BIE involving the operators $A_k, A'_k$ \eqref{e:DBIEs} (for the Dirichlet problem) and $\Breg, \Bregp$ \eqref{e:NBIEs} (for the Neumann problem).
The proof that the general Dirichlet and Neumann problems (i.e., with arbitrary boundary data) can be reformulated via these BIEs is very similar; see, e.g., \cite[\S2.6]{ChGrLaSp:12}.


Let $\Oi \subset \Rea^d$, $d\geq 2$ be a bounded open set such that its open complement $\Oe :=\Rea^d \setminus \overline{\Oi }$ is connected.
Let $\Gamma:= \partial \Oi $. The results in the main body of the paper require that $\Gamma$ is $C^\infty$, but the results in this appendix hold when $\Gamma$ is Lipschitz.
Let $\nu$ be the outward-pointing unit normal vector to $\Oi $, and let $\gamma^\pm$ and $\partial^{\pm}_\nu$ denote the Dirichlet and Neumann traces, respectively, on $\Gamma$ from $\Omega^{\pm}$.

\begin{definition}[Plane-wave sound-soft/-hard scattering problems]\label{def:scat}
Given $k>0$ and the incident plane wave $u^I(\bx):= \exp(\ri k \bx\cdot \ba)$ for $\ba\in\Rea^d$ with $|\ba|_2=1$
find the total field $u\in H^1_{\rm loc}(\Oe)$ satisfying
$\Delta u + k^2 u =0$ in $\Oe$, 
\beqs
\text{ either }\gamma^+ u = 0 \text{ (sound-soft) } \,\,\text{ or }\,\, \partial_\nu^+u =0  \text{ (sound-hard) }\,\,\ton\Gamma,
\eeqs
and, where \(u^S := u - u^I\) is the scattered field,
\beq\label{e:src}
\dfrac{\partial u^S }{\partial r} -\ri ku^S = o \left(\frac{1}{r^{(d-1)/2}}\right)  \text{ as }r:=|x|\rightarrow \infty, \text{ uniformly in $x/r$}.
\eeq
\end{definition}

The solutions of the sound-soft and sound-hard plane-wave scattering problems exist and are unique; see, e.g., \cite[Theorem 2.12 and Corollary 2.13]{ChGrLaSp:12}.

Let $\Phi_k(x,y)$ be the fundamental solution of the Helmholtz equation defined by
\beq\label{e:fund}
\Phi_k(x,y):= \frac{\ri}{4}\left(\frac{k}{2\pi |x-y|}\right)^{(d-2)/2}H_{(d-2)/2}^{(1)}\big(k|x-y|\big)= \left\{\begin{array}{cc}
                                                                                                            \displaystyle{\frac{\ri}{4}H_0^{(1)}\big(k|x-y|\big)}, & d=2, \\
                                                                                                            \displaystyle{\frac{\re^{\ri k |x-y|}}{4\pi |x-y|}}, & d=3,
                                                                                                          \end{array}\right.
\eeq
where $H^{(1)}_\nu$ denotes the Hankel function of the first kind of order $\nu$.
The single- and double-layer potentials, $\mc{S}\ell$ and $\mc{D}\ell$ respectively, are defined for $k\in \Com$, $\phi\in L^1(\Gamma)$, and $\bx \in \Rea^d\setminus \Gamma$ by
\begin{align}\label{e:SLP}
    \mc{S}\ell \varphi (\bx) = \int_{\Gamma} \Phi_k (\bx,\by) \varphi (\by) \dif s (\by) \quad \tand\quad
    \mc{D}\ell \varphi (\bx) = \int_{\Gamma} \dfrac{\partial \Phi_k (\bx,\by)}{\partial \nu(\by)} \varphi (\by) \dif s (\by).
\end{align}
The standard single-layer, adjoint-double-layer, double-layer, and hypersingular operators are defined for $k\in \mathbb{C}$, $\phi\in \LtG$, $\psi\in H^1(\Gamma)$, and $x\in \Gamma$ by
\begin{align}\label{e:SD'}
&S_k \phi(\bx) := \int_\bound \Phi_k(\bx,\by) \phi(\by)\,\rd s(\by), \qquad
\DL_k' \phi(\bx) := \int_\bound \frac{\partial \Phi_k(\bx,\by)}{\partial \nu(\bx)}  \phi(\by)\,\rd s(\by),\\
&\DL_k \phi(\bx) := \int_\bound \frac{\partial \Phi_k(\bx,\by)}{\partial \nu(\by)}  \phi(\by)\,\rd s(\by),
\quad
H_k \psi(\bx) := \pdiff{}{\nu(\bx)} \int_\bound \frac{\partial \Phi_k(\bx,\by)}{\partial \nu(\by)}  \psi(\by)\,\rd s(\by).
\label{e:DH}
\end{align}
(We use the notation $\DL_k$, $\DL_k'$ for the double-layer and its adjoint, instead of $D_k$, $D_k'$, to avoid a notational clash with the operator $D:= -\ri \partial$ used in the rest of the paper.)

\begin{theorem}\label{thm:BIEs}

(i) If $u$ is solution of the sound-soft scattering problem of Definition \ref{def:scat}, then
\beq\label{e:Ddirect}
A_k' \partial_\nu^+ u = \partial_\nu^+ u^I - \ri \eta_D u^I \quad\text{ and } \quad u=u^I-\mc{S}\ell(\partial_\nu^+ u).
\eeq

(ii) If $v\in\LtG$ is the solution to
\beq\label{e:Dindirect}
A_k v = -\gamma^+ u^I,
\quad\text{ then } \quad
u:= u^I + (\mc{D}\ell -\ri \eta_D \mc{S}\ell)v
\eeq
is the solution of the sound-soft scattering problem of Definition \ref{def:scat}.

(iii) If $u$ is solution of the sound-hard scattering problem of Definition \ref{def:scat}, then
\beq\label{e:Ndirect}
\Breg \gamma^+ u = \ri \eta_N \gamma^+ u^I - S_{\ri k} \partial^+_\nu u^I  \quad\text{ and } \quad u=u^I+\mc{D}\ell(\gamma^+ u).
\eeq

(iv) If $v\in\LtG$ is the solution to
\beqs
\Bregp v = -\partial_\nu^+ u^I,
\quad\text{ then } \quad
u:= u^I + (\mc{D}\ell S_{\ri k} -\ri \eta_N \mc{S}\ell)v
\eeqs
is the solution of the sound-hard scattering problem of Definition \ref{def:scat}.
\end{theorem}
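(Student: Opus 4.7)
\medskip

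\noindent\textbf{Proof proposal for Theorem \ref{thm:BIEs}.}

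The proof is a direct computation using Green's representation theorem and the standard jump relations for the four boundary-integral operators. I will treat parts (i)--(ii) together and parts (iii)--(iv) together. The key analytical inputs I will use are: (a) the standard jump relations on $\Gamma$
\begin{equation*}
\gamma^\pm \mathcal{D}\ell \phi = (\pm \tfrac12 I + D_k)\phi,\qquad \partial_\nu^\pm \mathcal{S}\ell \phi = (\mp \tfrac12 I + D'_k)\phi,\qquad \gamma^\pm \mathcal{S}\ell\phi = S_k\phi,\qquad \partial_\nu^\pm \mathcal{D}\ell \phi = H_k \phi,
\end{equation*}
valid under the convention that $\nu$ is the outward normal to $\Omega^-$; (b) Green's third identity applied in $\Omega^-$ to $u^I$, which (since $u^I$ is a Helmholtz solution globally) yields the auxiliary relation $\mathcal{D}\ell(\gamma^+ u^I) = \mathcal{S}\ell(\partial_\nu^+ u^I)$ pointwise in $\Omega^+$; (c) the fact that layer potentials of $L^2(\Gamma)$ densities are radiating Helmholtz solutions in $\Omega^\pm$; and (d) the invertibility of $A_k, A_k', \Breg, \Bregp$ on $L^2(\Gamma)$ for $\eta_D,\eta_N\in\Rea\setminus\{0\}$, which is recalled in the paper.

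For part (i) I will start from Green's representation theorem applied to the radiating scattered field $u^S=u-u^I$ in $\Omega^+$, namely $u^S = \mathcal{D}\ell(\gamma^+ u^S) - \mathcal{S}\ell(\partial_\nu^+ u^S)$. Substituting $\gamma^+ u^S = -\gamma^+ u^I$ (from the sound-soft condition) and using (b) to rewrite $-\mathcal{D}\ell(\gamma^+ u^I) = -\mathcal{S}\ell(\partial_\nu^+ u^I)$ yields the representation $u = u^I - \mathcal{S}\ell(\partial_\nu^+ u)$. Taking the exterior Neumann trace and combining with $i\eta_D$ times the exterior Dirichlet trace (which enforces $S_k \partial_\nu^+ u = \gamma^+ u^I$) then produces $A_k' \partial_\nu^+ u = \partial_\nu^+ u^I - i\eta_D \gamma^+ u^I$. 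For part (ii) I will make the ansatz $u = u^I + (\mathcal{D}\ell - i\eta_D \mathcal{S}\ell)v$, impose $\gamma^+ u = 0$ via the exterior Dirichlet jump relation to obtain $A_k v = -\gamma^+ u^I$, and then verify that $u$ solves the scattering problem: the interior Helmholtz equation and the Sommerfeld radiation condition are built into the ansatz by (c), while the Dirichlet boundary condition follows from the BIE and uniqueness for $A_k$ on $L^2(\Gamma)$.

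Parts (iii) and (iv) follow the same template but using the sound-hard condition $\partial_\nu^+ u = 0$. For (iii), Green's representation combined with identity (b) collapses to $u = u^I + \mathcal{D}\ell(\gamma^+ u)$; then the exterior Dirichlet trace gives $(\tfrac12 I - D_k)(\gamma^+ u) = \gamma^+ u^I$ and the exterior Neumann trace gives $H_k(\gamma^+ u) = -\partial_\nu^+ u^I$. Taking $i\eta_N$ times the first plus $S_{ik}$ times the second produces $\Breg\gamma^+ u = i\eta_N \gamma^+ u^I - S_{ik}\partial_\nu^+ u^I$. For (iv), the ansatz $u = u^I + (\mathcal{D}\ell S_{ik} - i\eta_N \mathcal{S}\ell)v$ plugged into $\partial_\nu^+ u = 0$, using $\partial_\nu^+ \mathcal{D}\ell = H_k$ and $\partial_\nu^+ \mathcal{S}\ell = -\tfrac12 I + D_k'$, yields $\Bregp v = -\partial_\nu^+ u^I$; invertibility of $\Bregp$ and (c) then give a solution of the scattering problem.

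The only delicate point is the indirect case for the Neumann problem, where the ansatz involves the operator product $\mathcal{D}\ell S_{ik}$ and one must check that $S_{ik}v$ is regular enough for $H_k$ to act on it and for the jump relation $\partial_\nu^+ \mathcal{D}\ell \phi = H_k \phi$ to apply. This is precisely why the regulariser $S_{ik}$ is inserted: since $S_{ik} : L^2(\Gamma) \to H^1(\Gamma)$ with a bounded inverse (in the sense recalled in the paper), the density $S_{ik}v$ lies in the natural domain of $H_k$, so all manipulations are justified. Everything else reduces to verifying the jump relations and applying Fredholm-type uniqueness, both of which are standard references (e.g.\ \cite[\S2.6]{ChGrLaSp:12} and \cite[Theorem 2.2]{GaMaSp:21N}).
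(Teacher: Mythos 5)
Your proof is correct. The paper does not itself give a proof of Theorem~\ref{thm:BIEs}; it only cites \cite[Theorem 2.46, Equations 2.70--2.72]{ChGrLaSp:12} for parts (i)--(ii) and \cite[Equations 1.6, 1.8]{GaMaSp:21N} for parts (iii)--(iv). What you have written out is precisely the standard argument those references use: Green's representation in $\Omega^+$ for the radiating field, the auxiliary identity $\mathcal{D}\ell(\gamma^+ u^I)=\mathcal{S}\ell(\partial_\nu^+ u^I)$ in $\Omega^+$ coming from Green's identity in $\Omega^-$ for the entire solution $u^I$, the trace jump relations, and then a $\ri\eta$-weighted combination of the two boundary equations. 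I checked the four parts against the definitions $A_k'=\tfrac12 I + \DL_k' - \ri\eta_D S_k$, $A_k=\tfrac12 I + \DL_k - \ri\eta_D S_k$, $\Breg=\ri\eta_N(\tfrac12 I - \DL_k)+S_{\ri k}H_k$, $\Bregp=\ri\eta_N(\tfrac12 I - \DL_k')+H_k S_{\ri k}$, and your manipulations produce exactly the stated right-hand sides. Your observation about the role of $S_{\ri k}$ in the indirect Neumann ansatz is the right one: it ensures $S_{\ri k}v$ lies in $H^1(\Gamma)$ so that $\partial_\nu^+\mathcal{D}\ell(S_{\ri k}v)=H_k S_{\ri k}v$ takes values in $L^2(\Gamma)$.

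One small imprecision in part (ii): the Dirichlet boundary condition for the candidate $u=u^I+(\mathcal{D}\ell-\ri\eta_D\mathcal{S}\ell)v$ does not need ``uniqueness for $A_k$'' -- it follows directly from the exterior Dirichlet trace jump relation $\gamma^+ u=\gamma^+ u^I+A_k v$ and the assumption $A_k v=-\gamma^+ u^I$. Invertibility of $A_k$ is what guarantees existence of $v$, not that the boundary condition holds. Analogously in part (iv). This does not affect the correctness of the argument.
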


\bpf[References for the proof]
Part (i) is proved in, e.g., \cite[Theorem 2.46]{ChGrLaSp:12}. Part (ii) is proved in, e.g., \cite[Equations 2.70-2.72]{ChGrLaSp:12}. Part (iii) is proved in, e.g., \cite[Equation 1.6]{GaMaSp:21N}. Part (iv) is proved in, e.g., \cite[Equation 1.8]{GaMaSp:21N}.
\epf

\ble\label{lem:inversebound}
If $\Gamma$ is $C^1$ then
\begin{align*}
\N{A_k'}_\LtGt \geq 1/2,\qquad &\N{(A_k')^{-1}}_\LtGt \geq 2,\\
\N{\Breg}_\LtGt \geq \left(\frac{\eta_N}{2}+\frac{1}{4}\right),
\qquad&\N{(\Breg)^{-1}}_\LtGt \geq \left(\frac{\eta_N}{2}+\frac{1}{4}\right)^{-1}.
\end{align*}
\ele

\bpf
The results for $A_k'$ are proved in \cite[Lemma 4.1]{ChGrLaLi:09} using that $A_k' - (1/2)I$ is compact when $\Gamma$ is $C^1$. The results for $\Breg$ are proved in an analogous way, using that $\Breg - ( \ri\eta_N/2 -1/4)I$ is compact by, e.g., \cite[Proof of Theorem 2.2]{GaMaSp:21N}.
\epf

\section{Propagation of singularities}
\label{a:propagate}

The goal of this section is to prove Theorem~\ref{t:basicPropagate}. Since propagation of singularities away from $\Gamma$ follow from standard results (see e.g.~\cite[Appendix E]{DyZw:19}), we may work in a small neighborhood of a point on $\Gamma$.  In particular, we work in Fermi normal coordinates $(x_1,x')$ with $\Gamma=\{x_1=0\}$. Then, since $W\in\Psi^{\comp}(\Omega)$ it is, in particular, supported away from $\Gamma$,  in these coordinates the operator $-\hsc^2\Delta-1-iW$ is given by
$$
\hsc^2D_{x_1}^2 +\hsc a(x)\hsc D_{x_1}-\tilde{R}(x,\hsc D_{x'})
$$
for some $a\in C^\infty$, where $\tilde{R}\in \operatorname{Diff}_{\hsc}^2$ i.e. is a semiclassical differential operator of order $2$. Moreover, $r(x,\xi):=\sigma(\tilde{R})=1-|\xi'|_{g}.$ 
Now, setting $\psi(x_1)=\frac{1}{2}\int_0^{x_1}a(s,x')ds$, we have
$$
(-\hsc^2\Delta-1)e^{-i\psi} = \hsc^2D_{x_1}^2-\tilde{R}(x,\hsc D_{x'})+\hsc^2|\partial_{x_1}\psi|^2+i\hsc^2\partial_{x_1}^2\psi.
$$

Throughout this section, we use the notation $\Psit^m:= C^\infty((-\infty,\infty)_{x_1};\Psi^m(\Gamma))$ and $\St^m:C^\infty((-\infty,\infty);S^m(T^*\Gamma))$ and study operators of the form
\begin{equation}
\label{e:Pdef}
\begin{aligned}
P&:=\hsc^2D_{x_1}^2-R(x,\hsc D_{x'}),
\end{aligned}
\end{equation}
with $R\in \Psi_{\mathsf{T}}^2$ having real principal symbol, $r$ and $\{r=0\}\cap \{ \partial_{(x',\xi')}r=0\}=\emptyset$. 

For the boundary conditions, observe that since $\psi|_{x_1=0}=1$, the boundary conditions are then changed to 
$$
 (-Q\hsc D_{x_1}u-u)=e^{i\psi}(-Q\hsc D_{x_1})e^{-i\psi}e^{i\psi}u -e^{i\psi}u= (-Q\hsc D_{x_1}+Q\hsc \partial_{x_1}\psi u-u)
$$
Notice that, for $\hsc$ small enough, $E:=(1+Q\hsc \partial_{x_1}\psi)^{-1}\in \Psi^0$ exists with $\sigma(E)=1$. Therefore, $\tilde{Q}:=EQ\in \Psi^{-1}$ with $\sigma(EQ)=\sigma(Q)$ and it is enough to  study propagation of singularities for 
\begin{equation}
\label{e:prop}
Pu=f\text{ in }\{x_1>0\},\qquad Q\hsc D_{x_1}u+u=g\text{ on }x_1=0,
\end{equation}
where $Q\in \Psi^{-1}(\Gamma)$.

  We divide our analysis into three regions
\begin{equation*}
\mc{E}:=\{ (x',\xi')\,:\, r(0,x',\xi')<0\},\qquad \mc{H}:=\{ (x',\xi')\,:\, r(0,x',\xi')>0\},\qquad \mc{G}:=\{ (x',\xi')\,:\, r(0,x',\xi')=0\},
\end{equation*}
respectively the elliptic, hyperbolic, and glancing regions.
\subsection{Elliptic Estimates}
We start by recalling a standard factorization together with elliptic estimates.
\begin{lemma}[Lemma 4.2 \cite{FaGa:25}]
\label{l:factor}
Let $P$ as in~\eqref{e:Pdef}. Then for all $\chi \in \St^{0}$ with $\supp \chi \subset \{ |r|>0\}$, there are $\Lambda^{j}_{\pm}\in \Psit^{1}$, $j=0,1$ such that, on $\supp \chi$, $\sigma(\Lambda_{\pm})= \pm\sqrt{r}$ on $\{r>0\}$ and $\sigma(\Lambda_{\pm})= i\sqrt{|r|}$ on $\{r<0\}$ and 
\begin{equation}
\label{e:factor}
\begin{aligned}
\chi(x,\hsc D_{x'})P&=\chi(x,\hsc D_{x'})(\hsc D_{x_1}+\Lambda_-)(\hsc D_{x_1}-\Lambda_-) +O(\hsc^\infty)_{\Psit^{-\infty}}\\
&=\chi(x,\hsc D_{x'})(\hsc D_{x_1}+\Lambda_+)(\hsc D_{x_1}-\Lambda_+) +O(\hsc^\infty)_{\Psit^{-\infty}}
\end{aligned}
\end{equation}
\end{lemma}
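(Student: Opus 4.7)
The plan is to perform a standard microlocal factorization of the second-order operator $P$ into a product of two first-order operators on the support of $\chi$, valid modulo $O(\hsc^\infty)_{\Psi_{\mathsf{T}}^{-\infty}}$. Since $|r| > 0$ on $\supp \chi$, the symbol $r$ does not vanish there, so a branch of $\sqrt{r}$ (taking $+\sqrt{r}$ when $r > 0$ and $i\sqrt{|r|}$ when $r < 0$, and similarly for the other branch $\Lambda_-$) is well defined and smooth on $\supp \chi$. I would first extend these principal symbols $\lambda_{\pm, 0}$ to elements of $S_{\mathsf{T}}^1$ on all of $T^*\Gamma$ (their values off $\supp \chi$ do not matter for the final statement, which is cut down by $\chi(x, \hsc D_{x'})$).

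The second step is the formal computation. A direct application of the Weyl/standard composition formula on the tangential calculus gives
\[
(\hsc D_{x_1} + \Lambda)(\hsc D_{x_1} - \Lambda) = \hsc^2 D_{x_1}^2 - \Lambda^2 - [\hsc D_{x_1}, \Lambda],
\]
so to match $P = \hsc^2 D_{x_1}^2 - R$ on $\supp \chi$ we need $\Lambda^2 + [\hsc D_{x_1}, \Lambda] \equiv R$ modulo $\hsc^\infty \Psi_{\mathsf{T}}^{-\infty}$ after multiplication by $\chi(x, \hsc D_{x'})$. Writing $\Lambda \sim \sum_{j\geq 0} \hsc^j \Op_{\mathsf{T}}(\lambda_j)$, the $\hsc^0$ term demands $\lambda_0^2 = r$, solved by the choice above. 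The $\hsc^1$ term gives a linear transport-type equation of the form $2\lambda_0 \lambda_1 = \frac{1}{i}\partial_{x_1}\lambda_0 + (\text{lower order})$, which can be solved by division since $\lambda_0$ is elliptic on $\supp\chi$. Each subsequent $\lambda_j$ is determined algebraically (again by division by $2\lambda_0$) from $\lambda_0, \dots, \lambda_{j-1}$ and derivatives of $r$.

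The third step is assembling an honest operator from the formal asymptotic series. Using Borel summation in the parameter $\hsc$ in the tangential symbol class $S_{\mathsf{T}}^{1}$, one produces $\Lambda_\pm \in \Psi_{\mathsf{T}}^1$ whose full symbol agrees with the constructed asymptotic series to all orders. By construction, the remainder $P - (\hsc D_{x_1} + \Lambda_\mp)(\hsc D_{x_1} - \Lambda_\mp)$ has symbol vanishing to all orders in $\hsc$ on $\supp\chi$, so composition with $\chi(x, \hsc D_{x'})$ on the left kills it modulo $O(\hsc^\infty)_{\Psi_{\mathsf{T}}^{-\infty}}$; this uses the pseudolocality/wavefront composition rule \eqref{e:WFdisjoint} extended to the tangential calculus, together with the fact that off $\supp\chi$ the cutoff is already $O(\hsc^\infty)$ in the elliptic sense.

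The main obstacle I expect is purely bookkeeping rather than conceptual: making sure the branch choice of $\sqrt{r}$ is smoothly extendible off $\supp\chi$ (handled by a smooth cutoff and arbitrary smooth continuation, since the final identity is pre-multiplied by $\chi$), and checking that the Borel summation can be performed inside the tangential class $C^\infty(\mathbb{R}_{x_1}; S^m(T^*\Gamma))$ uniformly in $x_1$ on compact sets. Both points are standard but require the iterative symbols $\lambda_j$ to have uniform seminorms on the relevant $x_1$-range, which follows from the smooth dependence of $R$ on $x_1$.
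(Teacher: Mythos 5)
The paper does not prove this lemma itself; it cites it directly from \cite{FaGa:25} (the lemma is titled ``Lemma~4.2 \cite{FaGa:25}''), so there is no in-paper proof to compare against. Your proposal reconstructs the standard microlocal factorization argument, and it is essentially correct: set $\lambda_0 = \pm\sqrt{r}$ (resp.\ $i\sqrt{|r|}$) on $\supp\chi$ (well defined since each connected component of $\supp\chi$ has a definite sign of $r$), solve the hierarchy of symbol equations by repeated division by $2\lambda_0$, Borel sum, and use the wavefront composition rule to show the cutoff kills the remainder off $\supp\chi$.

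Two small bookkeeping remarks, which you anticipated would be the issue. First, the order-$\hsc$ equation should read $2\lambda_0\lambda_1 = r_1 - \tfrac{1}{i}\big(\partial_{\xi'}\lambda_0\cdot\partial_{x'}\lambda_0 + \partial_{x_1}\lambda_0\big)$, where $r_1$ is the subprincipal symbol of $R$: you dropped $r_1$ and the tangential Poisson-bracket correction from composing $\Lambda_0$ with itself, and have a sign error on $\partial_{x_1}\lambda_0$. Neither is an obstruction since everything is still determined by dividing by $2\lambda_0$, which is elliptic on $\supp\chi$. Second, the statement asserts the existence of two operators $\Lambda^j_\pm$, $j=0,1$, while your construction produces one per sign choice; the displayed identity \eqref{e:factor} uses only one $\Lambda_\pm$, and the superscript $j$ in the citation (used once later as $\Lambda_-^0$ in the proof of Lemma~\ref{l:elliptic}) distinguishes variants with the cutoff on the left vs.\ right, which need slightly different subleading corrections. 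Your argument covers the case appearing in \eqref{e:factor}; repeating the construction with $\chi$ on the other side produces the second family and costs nothing new.
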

Using energy estimates, one then obtains the following elliptic estimates.
\begin{lemma}
\label{l:elliptic}
Let $Q\in \Psi^{-1}(\Gamma)$, $k\in \mathbb{N}$, $k\geq 2$, $s,N\in \mathbb{R}$, $0<t_1<t_2$, $X,\tilde{X}\in \Psit^0$ such that $\WF(X)\subset \Ell(\tilde{X})$ and $\WF(\tilde{X})\subset \{r<0\}$ and there is $c>0$ such that, 
$$
|\sigma(Q)i\sqrt{|r|}+1|>c>0,\quad\text{ on }\WF(X).
$$ 
Then there is $C>0$ such that 
\begin{align*}
\sum_{j=0}^2\|Xu\|_{H_{\hsc}^{k-j}((0,t_1);H_{\hsc}^{s+j})}&\leq C\sum_{j=0}^{k-2}\|\tilde{X}Pu\|_{H_{\hsc}^{k-2-j}((0,t_2);H_{\hsc}^{s+j})}+C\hsc^{\frac{1}{2}}\|\tilde{X}(Q\hsc D_{x_1}u-u)(0)\|_{H_{\hsc}^{s+k-\frac{1}{2}}}\\
&\qquad +C\hsc^N(\|u\|_{H_{\hsc}^1((0,t_2);H_{\hsc}^{-N})}+\|Pu\|_{L^2((0,t_2);H_{\hsc}^{-N})}+\|u(0)\|_{H_{\hsc}^{-N}}).
\end{align*}
\end{lemma}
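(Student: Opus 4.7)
The plan is to microlocalize to the elliptic region $\{r<0\}$ via Lemma~\ref{l:factor}, reduce the second-order problem to two first-order problems with elliptic tangential symbols, and combine standard $x_1$-energy estimates for each factor with the boundary reduction coming from the impedance-type condition.

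\textbf{Step 1 (Microlocalization and factorization).} Choose $\chi\in \St^0$ with $\chi\equiv 1$ on $\WF(\tilde X)$ and $\supp\chi\subset\{r<0\}$. Lemma~\ref{l:factor} gives $\Lambda\in\Psit^1$ with $\sigma(\Lambda)=i\sqrt{|r|}$ on $\supp\chi$ such that $\chi P = \chi(\hsc D_{x_1}+\Lambda)(\hsc D_{x_1}-\Lambda) + O(\hsc^\infty)_{\Psit^{-\infty}}$. Setting $v:=(\hsc D_{x_1}-\Lambda)u$, this reads
\[
(\hsc D_{x_1}+\Lambda)v \;=\; Pu + E u \quad\text{microlocally on } \WF(\tilde X),
\]
with $E\in\hsc\Psit^{0}$ (plus a fully smoothing $O(\hsc^\infty)$ remainder whose contribution is harmless).

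\textbf{Step 2 (Boundary reduction).} At $x_1=0$, the defining relation $v(0)=\hsc D_{x_1}u(0)-\Lambda u(0)$ combined with $Q\hsc D_{x_1}u - u = g$ yields
\[
g \;=\; Qv(0) - (I - Q\Lambda)u(0).
\]
The hypothesis $|\sigma(Q)i\sqrt{|r|}+1|>c$ says exactly that $I-Q\Lambda\in\Psi^0(\Gamma)$ is elliptic on $\WF(X)$, so Corollary~\ref{cor:elliptic} inverts it without any $\hsc$-loss, giving microlocal control of $u(0)$ in terms of $v(0)$ and $\tilde X g$, and symmetrically of $v(0)$ in terms of $u(0)$ and $\tilde X g$.

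\textbf{Step 3 (First-order energy estimates).} For the factor $\hsc D_{x_1}+\Lambda$, test against $\tilde X^*\tilde X v$ and take imaginary parts: the integration by parts in $x_1$ produces a controlled boundary term $-\tfrac{\hsc}{2}\|\tilde X v(0)\|^2$, the principal interior piece is $\geq c\|\tilde X v\|_{L^2((0,t_2);H^{s+1}_\hsc)}^2$ by sharp G\aa rding applied to $\Im\sigma(\Lambda)\geq c\sqrt{|r|}>0$, and the remainder is bounded by Cauchy--Schwarz against $\|\tilde X Pu\|_{L^2(H^{s}_\hsc)}$ plus $O(\hsc)$ errors from $Eu$. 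Absorbing and taking square roots yields
\[
\|\tilde X v\|_{L^2((0,t_2);H^{s+1}_\hsc)} \;\lesssim\; \|\tilde X Pu\|_{L^2(H^{s}_\hsc)} + \hsc^{1/2}\|\tilde X v(0)\|_{H^{s+1/2}_\hsc} + \hsc^N\|u\|_{\mathrm{low}}.
\]
An entirely analogous argument for $\hsc D_{x_1}-\Lambda$ (whose symbol has strictly negative imaginary part, giving coercivity in the reverse $x_1$-direction on the interval $(0,t_1)\Subset(0,t_2)$) gives
\[
\|X u\|_{L^2((0,t_1);H^{s+2}_\hsc)} \;\lesssim\; \|\tilde X v\|_{L^2(H^{s+1}_\hsc)} + \hsc^{1/2}\|\tilde X u(0)\|_{H^{s+3/2}_\hsc} + \hsc^N\|u\|_{\mathrm{low}}.
\]

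\textbf{Step 4 (Combine and iterate in $k$).} Stringing the two inequalities of Step 3 together and substituting the boundary traces via Step 2 produces the $k=2$ case of the lemma with exactly the stated $\hsc^{1/2}$-weight on the impedance datum. The case $k\geq 3$ follows by an induction on $k$: commute tangential operators $A\in\Psi^{k-2}(\Gamma)$ through $P$ (using $[P,A]\in\hsc\Psit^{k-1}$), express each factor of $\hsc^2 D_{x_1}^2$ acting on $Au$ through $RAu + PAu + [P,A]u$, and apply the $k=2$ estimate to $Au$; the commutator terms land in the lower-order Sobolev norms already on the right-hand side.

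The main obstacle I anticipate is bookkeeping the $\hsc$-weights so that the final boundary term carries precisely the factor $\hsc^{1/2}$ written in the statement. This requires (i) using the natural trace inequality $\|w(0)\|_{H^{s+1/2}_\hsc(\Gamma)}\lesssim \hsc^{-1/2}\|w\|_{H^{s+1}_\hsc((0,t)\times\Gamma)}$ only in the direction that is absorbed into the left-hand side, and (ii) exploiting that $I-Q\Lambda$ is elliptic \emph{of order zero}, so inverting it in Step 2 introduces no $\hsc^{-1}$. Everything else is standard semiclassical bookkeeping, with the $O(\hsc^\infty)$ remainder from Lemma~\ref{l:factor} and the smoothing parts of the commutators swept into the $\hsc^N\|u\|_{H^1_\hsc((0,t_2);H^{-N}_\hsc)}$ term.
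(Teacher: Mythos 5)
Your overall strategy matches the paper's: factorize $P$ in $\{r<0\}$ via Lemma~\ref{l:factor}, exploit ellipticity of the impedance-to-Dirichlet reduction at $x_1=0$, and close with first-order energy estimates on the two factors. The paper's proof is considerably shorter: after the boundary reduction it cites \cite[(4.28), Lemma~4.8]{FaGa:25} for the interior and trace estimates that you re-derive by hand in Steps~3--4, so you are reproving a delegated result; your direct route is a reasonable alternative if those estimates were not available, but it adds nothing beyond the reference.

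Step~2, however, does not go through as written. With $v := (\hsc D_{x_1}-\Lambda)u$ and $\sigma(\Lambda) = i\sqrt{|r|}$ on $\{r<0\}$, the identity $g = Qv(0) - (I-Q\Lambda)u(0)$ requires ellipticity of $I - Q\Lambda$, whose symbol is $1 - \sigma(Q)i\sqrt{|r|}$. The stated hypothesis $|\sigma(Q)i\sqrt{|r|}+1|>c$ is ellipticity of $I + Q\Lambda$ — a different operator — so your claim that ``the hypothesis says exactly that $I-Q\Lambda$ is elliptic'' is false. The mismatch is traceable to a sign typo in the lemma statement: the boundary expression should read $Q\hsc D_{x_1}u + u$, consistent with the boundary condition in \eqref{e:prop} and with the paper's own proof, which decomposes $(-Q\Lambda_--I)u(0)$ as $-(Q\hsc D_{x_1}u+u)(0) + Q(\hsc D_{x_1}-\Lambda_-^0)u(0)$ and inverts $Q\Lambda_- + I$, not $I - Q\Lambda_-$. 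With that sign corrected, your reduction becomes $g = Qv(0) + (I+Q\Lambda)u(0)$ and the stated hypothesis supplies exactly the ellipticity you need; you should have noticed and corrected this inconsistency rather than asserting an equivalence that the hypothesis does not give.
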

\begin{proof}
Let $X'\in \Psi^0(\Gamma)$ with $\WF(X)\subset \Ell(X')$ and $\WF(X')\subset \Ell(\tilde{X})\cap\{ |\sigma(Q)i\sqrt{|r|}+1|>c/2\}$. Then, since, on $\WF(X')$,  $|\sigma(Q\Lambda_{-}+1)|=|\sigma(Q)i\sqrt{|r|}+1|\geq c>0$, 
\begin{align*}
\|X'u(0)\|_{H_{\hsc}^{s+k-\frac{1}{2}}}&\leq C\|\tilde{X}(-Q\Lambda_{-}-1)u(0)\|_{H_{\hsc}^{s+k-\frac{1}{2}}}+C\hsc^N\|u(0)\|_{H_{\hsc}^{-N}} \\
&\leq C\|\tilde{X}(-Q\hsc D_{x_1}u-u)(0)\|_{H_{\hsc}^{s+k-\frac{1}{2}}} +C\|\tilde{X}Q(\hsc D_{x_1}-\Lambda_{-}^0)u(0)\|_{H_{\hsc}^{s+k-\frac{1}{2}}}+C\hsc^N\|u(0)\|_{H_{\hsc}^{-N}}.
\end{align*}
Therefore, the lemma follows from~\cite[(4.28), Lemma 4.8]{FaGa:25}], where $iE_-$ is replaced by $\Lambda_-^0$ in our notation.
\end{proof}

\subsection{Propagation in the hyperbolic region}
We next proceed to the hyperbolic region. In this region, singularities follow broken bicharacteristics. 
\begin{lemma}
\label{l:hyperbolic}
Let $N>0$, $Q\in \Psi^{-1}(\Gamma)$, $A\in \Psit$ with $\WF(A)\cap T^*\Gamma\subset \{r>0\}\cap\{|\sigma(Q)\sqrt{r}+1|>0\}$ and $B_1\in {}^b\Psi^{0}$ with $\WF(A)\subset {}^b\Ell(B_1)$. Then, there is $\e_0>0$ small enough such that for all $B\in {}^b\Psi^{\comp}$, $B'\in \Psi^{\comp}(\Gamma)$ and $0<T<\e_0$ such that  $\WF(A)\cap T^*\Gamma\subset \Ell(B')$, for all $\rho\in \WF(A)$, there is $0\leq t\leq T$ such that 
$$
\varphi_{-t}(\rho)\in \Ell(B),\qquad \bigcup_{0\leq t\leq T}\varphi_{-t}(\rho)\subset {}^b\Ell(B_1),
$$
there is $C>0$ such that 
\begin{align*}
&\|A\hsc D_{x_1}u(0)\|_{L^2}+\|Au(0)\|_{L^2}+\|A\hsc D_{x_1}u\|_{L^2}+\|Au\|_{L^2}\\
 &\leq C(\hsc^{-1}\|B_1Pu\|_{L^2}+\|B'(-Q\hsc D_{x_1}-u)(0)\|_{L^2}+\|Bu\|_{L^2}) \\
&\qquad+C\hsc^N(\|(-Q\hsc D_{x_1}u-u)(0)\|_{H_{\hsc}^{-N}}+\|\hsc D_{x_1}u(0)\|_{H_{\hsc}^{-N}}+\|u\|_{L^2(0,2\e)}+\|\hsc D_{x_1}u\|_{L^2(0,2\e)}+\|Pu\|_{L^2(0,2\e)})
\end{align*}
\end{lemma}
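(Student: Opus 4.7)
The plan is to reduce the problem to propagation of singularities for two first-order semiclassical pseudodifferential operators by using the factorization of $P$ in the hyperbolic region from Lemma \ref{l:factor}, and then to handle the reflection off $\Gamma$ using the boundary condition. Working microlocally near a point $\rho \in \WF(A)$, I would first choose a cutoff $\chi \in \St^0$ supported in $\{r>0\}$ and equal to $1$ on a small conic neighborhood of the backward broken bicharacteristic segment $\bigcup_{0\le t\le T}\varphi_{-t}(\rho)$. Applying Lemma~\ref{l:factor}, one obtains
\begin{equation*}
\chi P = \chi(\hsc D_{x_1}+\Lambda_+)(\hsc D_{x_1}-\Lambda_+) + O(\hsc^\infty)_{\Psit^{-\infty}},
\end{equation*}
so that the ``outgoing'' and ``incoming'' microlocalized modes $u_{\mathrm{out}}:=(\hsc D_{x_1}-\Lambda_+)u$ and $u_{\mathrm{in}}:=(\hsc D_{x_1}-\Lambda_-)u$ each satisfy a first-order transport equation whose right-hand side is controlled by $Pu$ (modulo negligible errors).

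Next, I would invoke the standard semiclassical propagation-of-singularities theorem (e.g.\ \cite[Theorem~E.47]{DyZw:19}) applied to the first-order operators $\hsc D_{x_1}\mp\Lambda_+$. Their principal symbols $\xi_1\mp\sqrt{r}$ vanish on $\{\xi_1 = \pm\sqrt r\}$, and their Hamilton flows reproduce, respectively, the outgoing and incoming branches of the bicharacteristics of $P$ in $\{r>0\}$. This gives an $L^2$ estimate along the outgoing branch from $\rho$ down to $T^*\Gamma$ of the form
\begin{equation*}
\|A u_{\mathrm{out}}\|_{L^2} \lesssim \hsc^{-1}\|B_1Pu\|_{L^2} + \|\widetilde A u_{\mathrm{out}}(0)\|_{L^2} + \hsc^N\|u\|_{*},
\end{equation*}
and analogously for $u_{\mathrm{in}}$ along the incoming branch. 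The $\hsc^{-1}$ loss on $Pu$ is the standard semiclassical cost of propagation.

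The reflection step is to use the boundary condition $(Q\hsc D_{x_1}+I)u = g$ at $x_1=0$ to swap the outgoing trace for the incoming trace. On $\WF(A)\cap T^*\Gamma$ the symbol $\sigma(Q)\xi_1+1$ evaluated on the outgoing characteristic $\{\xi_1=\sqrt r\}$ equals $\sigma(Q)\sqrt r+1$, which by hypothesis is uniformly bounded away from zero. Consequently, one can construct a microlocal parametrix on the outgoing sheet which expresses $u_{\mathrm{out}}(0)$ as a bounded combination of $u_{\mathrm{in}}(0)$, $g$, and microlocally smoothing remainders, and thus transfers the trace estimate from the outgoing branch back to the incoming branch with the boundary contribution $\|B'(Q\hsc D_{x_1}u+u)(0)\|_{L^2}$ appearing on the right. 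Chaining the outgoing propagation from $\rho$ to $\Gamma$, the reflection, and the incoming propagation into the interior until the trajectory enters $\Ell(B)$ (which by hypothesis occurs within time $T<\varepsilon_0$, chosen small enough that the whole backward flow stays in $\{r>0\}\cap{}^b\Ell(B_1)$) produces the stated bound. The estimates on $A\hsc D_{x_1}u$ and on the traces at $x_1=0$ follow by composing $A$ with $\hsc D_{x_1}$, which on $\WF(A)$ acts as the scalar $\pm\sqrt r$ times identity modulo lower-order, plus the usual trace estimates for solutions of a hyperbolic equation.

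The main technical obstacle will be the bookkeeping around the boundary reflection: one must show that the microlocal inversion of $Q\hsc D_{x_1}+I$ on the outgoing mode, and the resulting substitution, preserves the sharp $\hsc^{-1}$ loss on the source term without introducing spurious powers of $\hsc$, and that the $O(\hsc)$ commutator errors produced when one pushes cutoffs through $\Lambda_\pm$ can be absorbed by a small enough choice of $\varepsilon_0$ together with the $O(\hsc^N)$ tail. A secondary point is ensuring that the incoming and outgoing propagation estimates can be closed up at the boundary despite the differing symbolic orders of $Q$ and $\hsc D_{x_1}$, which is precisely why the hypothesis $|\sigma(Q)\sqrt r+1|>0$ is essential.
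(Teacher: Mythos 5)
Your proposal is structurally the same as the paper's proof: factor $P$ via Lemma~\ref{l:factor}, propagate the two factored first-order modes, and use the ellipticity of $\sigma(Q)\sqrt{r}+1$ at $\Gamma$ to ``reflect'' one branch into the other. The one place it does not quite go through as written is the appeal to the interior propagation theorem \cite[Theorem~E.47]{DyZw:19}: that result is formulated on manifolds without boundary and does not by itself deliver the control of the boundary traces $u(0)$ and $\hsc D_{x_1}u(0)$ that you need for the reflection step. The paper instead runs an explicit energy estimate in the normal variable $x_1$ for the transport operators $\hsc D_{x_1}+\Lambda_\pm$, arriving at the two-sided bounds \eqref{e:factoredEnergy}; the key device that makes this work at the $O(\hsc^\infty)$ error level is Lemma~\ref{l:commuter}, which constructs tangential cutoffs $E^i_\pm$ satisfying $\theta[\hsc D_{x_1}+\Lambda_\pm,E^i_\pm]=O(\hsc^\infty)_{\Psit^{-\infty}}$, so that pushing the microlocalizer through the transport equation produces only negligible commutator errors rather than $O(\hsc)$ ones. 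Your proposal elides this construction by gesturing at ``the standard theorem.'' Once you replace that invocation by the direct energy estimate with commuting cutoffs, the rest of your sketch -- in particular using $|\sigma(Q)\sqrt{r}+1|>0$ to invert $(I+\Lambda_+Q)$ and recover both traces modulo $\|B'(Q\hsc D_{x_1}u+u)(0)\|_{L^2}$ -- matches the paper's argument. (A cosmetic remark: with the paper's conventions $(\hsc D_{x_1}-\Lambda_+)u$ is transported by $\hsc D_{x_1}+\Lambda_+$, whose characteristic set is $\{\xi_1=-\sqrt r\}$, so your labels $u_{\rm out}$ and $u_{\rm in}$ are swapped; this does not affect the substance.)
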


In order to prove Lemma~\ref{l:hyperbolic}, we need a pseudodifferential cutoff that nearly commutes with one factor in~\eqref{e:factor}.
\begin{lemma}
\label{l:commuter}
Let $a\in S^{\comp}(T^*\Gamma)$ with $\supp a\subset \{r>0\}$. Then, there are $\e>0$ and $E_{\pm}\in \Psit^{\comp}$ such that  $E_{\pm}|_{x_1=0}=a(x',\hsc D_{x'})$, for any $\theta\in C_c^\infty(-\e,\e)$, 
$$
\theta [\hsc D_{x_1}+\Lambda_{\pm},E_{\pm}]=O(\hsc^\infty)_{\Psit^{-\infty}},
$$
and 
$$
\WF(E_{\pm})\subset \bigcup_{0\leq x_1\leq 2\e} \{(x_1,\exp(x_1H_{\mp\sqrt{r}})(\supp (a)))\}.
$$
\end{lemma}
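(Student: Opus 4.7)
The lemma is a classical transport/WKB construction of a commutant for the first-order factor $\hsc D_{x_1}+\Lambda_\pm$, carried out in the tangential semiclassical calculus. I would seek $E_\pm$ as the tangential quantization of a symbol
$$
e_\pm(x_1,x',\xi';\hsc) \sim \sum_{k\geq 0} \hsc^k\, e_{\pm,k}(x_1,x',\xi'),\qquad e_{\pm,k}\in C^\infty([0,2\e);\St^{\comp}(T^*\Gamma)),
$$
normalized by $e_{\pm,0}|_{x_1=0}=a$ and $e_{\pm,k}|_{x_1=0}=0$ for $k\geq 1$. Since $\supp a$ is a compact subset of the open set $\{r>0\}$, the first step is to fix $\e>0$ small enough that the Hamilton flow on $T^*\Gamma$ associated with the principal symbol of $\Lambda_\pm$ is defined on a neighborhood of $\supp a$ for times $|s|\leq 2\e$ and keeps its image inside $\{r>0\}$, so that $\sqrt r$ is smooth throughout the region visited during the construction.

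Expanding $[\hsc D_{x_1}+\Lambda_\pm, \Op_\hsc(e_\pm)]$ in the tangential symbol calculus produces, at leading order in $\hsc$, a transport equation for $e_{\pm,0}$ along the Hamilton flow of $\pm\sqrt r$ on $T^*\Gamma$; the corresponding characteristic curves propagate the initial data so that $e_{\pm,0}(x_1,\cdot)$ is supported exactly in $\exp(x_1 H_{\mp\sqrt{r}})(\supp a)$ (with the sign dictated by the Poisson-bracket convention of the paper). Collecting higher powers of $\hsc$ produces, for each $k\geq 1$, an inhomogeneous linear transport
$$
\partial_{x_1} e_{\pm,k} + H_{\pm\sqrt{r}}\, e_{\pm,k} = F_{\pm,k}(e_{\pm,0},\dots,e_{\pm,k-1})
$$
along the same characteristic field, with $F_{\pm,k}$ a universal differential polynomial in the previously constructed symbols (and in the subprincipal data of $\Lambda_\pm$). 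Each such equation is solved uniquely by integration along the flow from the zero initial datum at $x_1=0$, and each $e_{\pm,k}(x_1,\cdot)$ inherits the slice-support of $e_{\pm,0}(x_1,\cdot)$.

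Applying Borel's lemma I would assemble the $e_{\pm,k}$ into a symbol $e_\pm\in C^\infty([0,2\e);\St^{\comp})$ asymptotic to $\sum_k\hsc^k e_{\pm,k}$, multiply by a cutoff $\chi\in C_c^\infty([0,2\e))$ with $\chi\equiv 1$ on $[0,\e]$, extend by zero for $x_1<0$, and set $E_\pm := \Op_\hsc(\chi\, e_\pm)\in \Psit^{\comp}$. The identity $E_\pm|_{x_1=0}=a(x',\hsc D_{x'})$ then holds (modulo $O(\hsc^\infty)_{\Psi^{-\infty}(\Gamma)}$), the wavefront-set inclusion follows from the propagated slice-supports together with $\supp\chi\subset[0,2\e]$, and for any $\theta\in C_c^\infty((-\e,\e))$ one gets $\theta[\hsc D_{x_1}+\Lambda_\pm,E_\pm]=O(\hsc^\infty)_{\Psit^{-\infty}}$: on $\supp\theta\cap\{x_1\geq 0\}$ the cutoff $\chi$ equals $1$, so the transport equations are satisfied to all orders, while on $\{x_1<0\}$ the operator $E_\pm$ vanishes.

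The only genuinely nontrivial step is the geometric one: choosing $\e$ small enough that the Hamilton flow of $\pm\sqrt{r}$ does not reach $\{r=0\}$ during times $|s|\leq 2\e$ starting from $\supp a$, so that $\sqrt r$ remains smooth and the transport coefficients are well-defined. Once that is arranged, the rest is routine bookkeeping in the tangential semiclassical calculus -- a boundary analog of the standard real-principal-type WKB commutant construction (see, e.g., \cite[Chapter 12]{Zw:12}).
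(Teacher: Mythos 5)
Your proposal is correct and takes essentially the same approach as the paper: solve the leading-order transport equation $\partial_{x_1}e_0+\{\pm\sqrt r,e_0\}=0$ with initial datum $a$ (using that $\supp a\Subset\{r>0\}$ to keep $\sqrt r$ smooth for $|x_1|\le 2\e$), iterate to solve inhomogeneous transport equations for the lower-order terms with zero initial data, and assemble asymptotically. The paper handles the cutoff bookkeeping with a nested family $\psi,\psi_0,\psi_1,\dots$ built into the induction rather than a single Borel sum followed by a final cutoff $\chi$, but this is a presentational difference, not a mathematical one.
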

\begin{proof}
We start by solving
$$
\sigma([(\hsc D_{x_1}+\Lambda_{\pm}),\tilde{e}^\pm_0(x,\hsc D_{x'})])=-i\hsc \partial_{x_1}\tilde{e}^\pm_0-i\hsc \{ \pm \sqrt{r},\tilde{e}^\pm_0\}=0\quad 0<x_1<2\e,\qquad \tilde{e}^\pm_0|_{x_1=0}=a.
$$
This is possible for $\e$ small enough since $\supp a\subset \{r>0\}$ and hence $\{x_1=0\}$ is non-characteristic for the vector-field $\partial_{x_1}\pm H_{\sqrt{r}}.$  Let $\psi, \psi_0,\psi_1\dots,\in C_c^\infty((-2\e,2\e))$ with $x_1\psi'(x_1)\leq 0$, $\supp (1-\psi)\cap [-\e,\e]=\emptyset$, $\supp \psi\cap \supp(1-\psi_j)=\emptyset$, $j\geq 0$ and $\supp \psi_{j+1}\cap \supp (1-\psi_j)=\emptyset$, $j\geq 0$.  Then, set $e^\pm_0:=\psi\tilde{e}^\pm_0$ and $E^\pm_0:=e^\pm_0(x,\hsc D_{x'})$. Then,
$$
[(\hsc D_{x_1}+\Lambda_{\pm}),E^\pm_0(x,\hsc D_{x'})]=\hsc^2 b_0(x,\hsc D_{x'}), \text{ for } x_1\notin \supp (1-\psi_0)
$$
for some $b_0\in \St^{\comp}$ with $\supp b_0\subset \supp \psi e^\pm_0\subset \{r>0\}$. 

Suppose by induction that form some $N\geq 1$ we have found $e^\pm_0,\dots,e^\pm_{N-1}\in \St^{\comp}$ with $\supp e^\pm_j\subset \supp \psi_0\tilde{e}^\pm_0\subset \{r>0\}$ such that, with $E^\pm_{N-1}:=\sum_{j=0}^{N-1}\hsc^j \psi_{N-1} e^\pm_j(x,\hsc D_{x'})$, 
\begin{equation}
\label{e:factorInduct}
[(\hsc D_{x_1}+\Lambda_{\pm}),E^\pm_{N-1}(x,\hsc D_{x'})]=\hsc^{N+1} b_N(x,\hsc D_{x'}),\qquad \text{ for }x_1\notin \supp (1-\psi_{N-1}). 
\end{equation}
for some $b_N\in \St^{\comp}$ with $\supp b_N\subset \supp \psi_0 \tilde{e}^\pm_0\subset\{r>0\}$. 
Then, let $e^\pm_{N}\in \St^{\comp}$ satisfy
$$
\partial_{x_1}e^\pm_N-\{\mp \sqrt{r},e^\pm_N\}=-b_N,\qquad e^\pm_N|_{x_1=0}=0,\qquad x_1\notin \supp (1-\psi_{N-1}).
$$
Defining $E^\pm_N:=\sum_{j=0}^N\hsc^j\psi_N e^\pm_j(x,\hsc D_{x'})$, we have~\eqref{e:factorInduct} with $N$ replaced by $N+1$. Putting $E_\pm\sim \sum_{j=0}^\infty \hsc^j\psi e^\pm_j(x,\hsc D_{x'})$, completes the proof of the lemma.
\end{proof}

We can now prove Lemma~\ref{l:hyperbolic}.

\begin{proof}[Proof of Lemma~\ref{l:hyperbolic}]
By a partition of unity argument, we may assume that $\varphi_{-T}(\rho)\in \Ell(B)$ for all $\rho\in \WF(A)$. In addition, since $\WF(A)\subset\{r>0\}$, for any $\delta>0$, may further assume that $\WF(B)\subset \{ \xi_1<0,\, \e/2<x_1<\e\}.$

Let $a_i\in C_c^\infty(T^*\Gamma)$, $i=1,2$  with $\supp a_i\subset \{ r>0\}$, $\supp a_1\cap\supp (1-a_2)=\emptyset$,
$$
\WF(A)\cap \bigcup_{-2\e\leq x_1\leq 2\e}\{ (x_1,  \exp(x_1 H_{\sqrt{r}}\supp (1-a_1)\}=\emptyset,
$$
and
$$
\bigcup_{-2\e\leq x_1\leq 2\e}\{ (x_1,\exp(x_1H_{\sqrt{r}})(\supp a_2))\}\subset {}^b\Ell(B_1).
$$ 

Let $E^i_{\pm}$ from Lemma~\ref{l:commuter} with $a$ replaced by $a_i$.  Then, using the factorizations~\eqref{e:factor}, we have
\begin{equation}
\label{e:factored}
(\hsc D_{x_1}+\Lambda_\pm)E^i_{\pm}(\hsc D_{x_1}-\Lambda_{\pm} )=E_\pm^i P +O(\hsc^\infty)_{\Psit^{-\infty}}.
\end{equation}

Since $\supp a_i\subset {}^b\Ell(B_1)$, there is $\e>0$ small enough such that for $X\in \Psi(\{x_1>0\})$ with $\WF(X)\subset \{\pm \xi_1\geq -\e, 0<x_1<\e\}$,
$$
\|XE^i_{\pm}(\hsc D_{x_1}-\Lambda_{\pm})u\|_{L^2}\leq C\|B_1Pu\|_{L^2}+C\hsc ^N\|u\|_{L^2},
$$
In particular, there is $\chi \in C_c^\infty((\e/2,\e))$ with $\chi\equiv 1 $  near $\frac{2\e}{3}$ and $X$ as above such that 
$$
\WF(E^i_{\pm}\chi)\subset (\{\pm \xi_1>0\}\cup\{ p^2+b^2>c>0\})\cap {}^b\Ell(B_1).
$$
Therefore,
\begin{equation}
\label{e:control}
\|\chi E^i_{\pm}(\hsc D_{x_1}-\Lambda_{\pm})u\|_{L^2}\leq C(\|B_1 Pu\|_{L^2} +\|Bu\|_{L^2}+\hsc ^N\|u\|_{L^2}+\hsc ^N\|Pu\|_{L^2}).
\end{equation}

Basic energy estimates together with the fact that on $\supp e_0$, $\Im\sigma(\Lambda)=0$  (see e.g.~\cite[Lemma 4.4]{FaGa:25}) imply that for $0\leq t_1\leq  \frac{\e}{2}$ and $0\leq t\leq \e$,
\begin{equation}
\label{e:factoredEnergy}
\begin{aligned}
\|E^i_{\pm}(t_1)(\hsc D_{x_1}-\Lambda_{\pm} )u(t_1)\|_{L^2}&\leq C\hsc^{-1}(\|E^i_{\pm} Pu\|_{L^2(0,\e)}+\|\chi E^i_{\pm}(\hsc D_{x_1}-\Lambda_{\pm})u\|_{L^2}),\\
\|E^i_{\pm}(\hsc D_{x_1}-\Lambda_{\pm} )u(t)\|_{L^2}&\leq C\hsc^{-1}(\|E^i_{\pm} Pu\|_{L^2(0,\e)}+\|a_i(x',\hsc D_{x'})(\hsc D_{x_1}-\Lambda_{\pm})u(0)\|_{L^2}).
\end{aligned}
\end{equation}

In particular, combining the first estimate with $t_1=0$,~\eqref{e:control} and the fact that $\WF(E^i_{\pm})\cap \{x_1<\e\}\subset {}^b\Ell(B_1)$, we obtain
\begin{align*}
&\|a_2(x',\hsc{D}_{x'})(I+\Lambda_{+}Q)\hsc D_{x_1}u(0)\|_{L^2}+\|E^2_{\pm}(t_1)(\hsc D_{x_1}-\Lambda_{\pm} )u(t_1)\|_{L^2}\\
&\leq C\|a_2(x',\hsc D_{x'})\Lambda_{+}(-Q\hsc D_{x_1}u-u)(0)\|_{L^2}+C\hsc^{-1}(\|B_1 Pu\|_{L^2}+\|Bu\|_{L^2})+C\hsc^N(\|u\|_{L^2}+\|Pu\|_{L^2})\\
&\leq C(\hsc^{-1}\|B_1Pu\|_{L^2}+\|B'(-Q\hsc D_{x_1}-u)(0)\|_{L^2}+\|Bu\|_{L^2}) +C\hsc^N(\|(-Q\hsc D_{x_1}u-u)(0)\|_{H_{\hsc}^{-N}}+\|u\|_{L^2}+\|Pu\|_{L^2})
\end{align*}
It remains to observe that $\sigma(I+\Lambda_+Q)>0$ on $\supp a_2$ and hence
\begin{align*}
&\|a_2(x',\hsc D_{x'})\hsc D_{x_1}u(0)\|+\|a_2(x',\hsc D_{x'})u(0)\|_{L^2}\\
&\leq \|a_2(x',\hsc D_{x'})\hsc D_{x_1}u(0)\|_{L^2}+\|a_2(x,\hsc D_{x'}) (-Q\hsc D_{x_1}u-u)(0)\|_{L^2}\\
&\leq  C(\hsc^{-1}\|B_1Pu\|_{L^2}+\|B'(-Q\hsc D_{x_1}-u)(0)\|_{L^2}+\|Bu\|_{L^2}) \\
&\qquad+C\hsc^N(\|(-Q\hsc D_{x_1}u-u)(0)\|_{H_{\hsc}^{-N}}+\|\hsc D_{x_1}u(0)\|_{H_{\hsc}^{-N}}+\|u\|_{L^2}+\|Pu\|_{L^2})
\end{align*} 

Now that we have control of the boundary traces, we can use the second inequality~\eqref{e:factoredEnergy} to finish the proof. Indeed, since $\supp a_1\cap \supp (1-a_2)=\emptyset$,  
\begin{align*}
&\|E^1_{\pm}(\hsc D_{x_1}-\Lambda_{-} )u(t_1)\|_{L^2}+\|E^2_{\pm}(t_1)(\hsc D_{x_1}-\Lambda_{+} )u(t_1)\|_{L^2}\\
&\leq C(\hsc^{-1}\|B_1Pu\|_{L^2}+\|B'(-Q\hsc D_{x_1}-u)(0)\|_{L^2}+\|Bu\|_{L^2})+ C\|a_1(x',\hsc D_{x'})(\hsc D_{x_1}-\Lambda_{\pm})u(0)\|_{L^2}) \\
&\qquad+C\hsc^N(\|(-Q\hsc D_{x_1}u-u)(0)\|_{H_{\hsc}^{-N}}+\|\hsc D_{x_1}u(0)\|_{H_{\hsc}^{-N}}+\|u\|_{L^2}+\|Pu\|_{L^2})\\
&\leq C(\hsc^{-1}\|B_1Pu\|_{L^2}+\|B'(-Q\hsc D_{x_1}-u)(0)\|_{L^2}+\|Bu\|_{L^2}) \\
&\qquad+C\hsc^N(\|(-Q\hsc D_{x_1}u-u)(0)\|_{H_{\hsc}^{-N}}+\|\hsc D_{x_1}u(0)\|_{H_{\hsc}^{-N}}+\|u\|_{L^2}+\|Pu\|_{L^2})
\end{align*}
and hence, using that $\WF(A)\subset \Ell(E_{\pm}^i)\cap \{ x_1<\e/2\}$ and $|\sigma(\Lambda_+-\Lambda_-)|>c>0$ on $\WF(A)$, we have 
\begin{align*}
&\|A \hsc D_{x_1}u\|_{L^2}+\|Au\|_{L^2}\\
&\leq C(\|E^1_{\pm}(\hsc D_{x_1}-\Lambda_{-} )u\|_{L^2}+\|E^2_{\pm}(\hsc D_{x_1}-\Lambda_{+} )u\|_{L^2}+ \hsc^N(\|\hsc D_{x_1}u\|_{L^2}+\|u\|_{L^2})\\
&\leq C(\hsc^{-1}\|B_1Pu\|_{L^2}+\|B'(-Q\hsc D_{x_1}-u)(0)\|_{L^2}+\|Bu\|_{L^2}) \\
&\qquad+C\hsc^N(\|(-Q\hsc D_{x_1}u-u)(0)\|_{H_{\hsc}^{-N}}+\|\hsc D_{x_1}u(0)\|_{H_{\hsc}^{-N}}+\|u\|_{L^2}+\|\hsc D_{x_1}u\|_{L^2}+\|Pu\|_{L^2})
\end{align*}

\end{proof}

As a corollary of~\eqref{e:factoredEnergy}, and Lemma~\ref{l:factor}, we obtain Lemma~\ref{l:DtN}

\begin{proof}[Proof of Lemma \ref{l:DtN}]
Let $\chi \in C_c^\infty(\mathbb{R}^n\times \mathbb{R}^{n-1})$ with $\chi(q)=1$ such that 
$$
\|\chi(x,\hsc D_{x'})Pu\|_{L^2}\leq Ch^{s+1},\qquad \|\chi(0,x',\hsc D_{x'})(\hsc D_{x_1}-\Lambda_-)u\|_{L^2}\leq Ch^{s}.
$$
Let $a\in C_c^\infty(T^*\Gamma)$ with $\supp a\subset \{\chi>0\}$ and $E_{\pm}$ as constructed in Lemma~\ref{l:commuter}, and $\e>0$ small enough that $\supp E_-\cap\{ x_1<\e\}\subset \{\chi>0\}$.  
Then, by~\eqref{e:factoredEnergy}, 
$$
\|E_{-}(\hsc D_{x_1}-\Lambda_-)u(t)\|_{L^2}\leq C\hsc^{-1}\|E_{+}Pu\|_{L^2(0,\e)}+\|a(x,\hsc D_{x;})(\hsc D_{x_1}-\Lambda_-)u\|_{L^2}\leq Ch^{s}.
$$
In particular,
$$
{}^b\WF^s_{H_{\hsc}^1}(u)\cap \{x_1>0,\xi_1>0\}\cap \{\sigma(E_-)>0\}=\emptyset, 
$$
and, since 
$$
\bigcup_{0< s< \e}\varphi_{s}(q)\subset \{\sigma(E_-)>0\},
$$
the lemma follows.
\end{proof}

\subsection{Propagation in the glancing region}

The most subtle part of the analysis is in the glancing region, where we follow the proof in~\cite[Chapter 24]{Ho:07}. Throughout this section, $B_i\in\Psit^0$, $i=0,1,$ satisfies  
\begin{equation}
\label{e:Bdef}
B(x,hD):=B_1(x,hD')\hsc D_{x_1}+B_0(x,\hsc D_{x'}),\qquad B_1^*=B_1,\qquad B_0^*=B_0-[\hsc D_{x_1},B_1].
\end{equation}
Then, integration by parts yields the following lemma.
\begin{lemma}[ Lemma 24.4.2 \cite{Ho:07}]
\label{l:intByParts}
Let $P$ as in~\eqref{e:Pdef} and $B$ as in~\eqref{e:Bdef}. Then,
$$
2\hsc^{-1}\Im \langle Pu,Bu\rangle_\Omega=\sum_{j,k=0}^1 \Re \langle E_{jk}(x',\hsc D_{x'})u_k,u_j\rangle_{\Gamma}+\sum_{j,k=0}^1\Re\langle C_{jk}u_k,u_j\rangle_\Omega,
$$
where $u_j=h^jD_{x_1}^j$ and $E_{11}=B_1$, $E_{01}^*=E_{10}=B_0$, $E_{00}=B_1(R+R^*)/2$, and the symbol, $c_{jk}$ of $C_{jk}$ is real and satisfies $c_{01}=c_{10}$,
$$
\sum_{jk}c_{jk}(x,\xi')\xi_1^{j+k}=\{p,b\}+2b\Im p_{-1},
$$
where $p_{-1}$ is the subprincipal symbol of $P$. 
\end{lemma}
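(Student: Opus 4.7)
The identity is a semiclassical version of Hörmander's Lemma~24.4.2 \cite{Ho:07} and the proof is a direct integration-by-parts computation. I would proceed as follows. First I would rewrite
$$
2\hsc^{-1}\Im\langle Pu,Bu\rangle_\Omega = \hsc^{-1}i^{-1}\bigl(\langle Pu,Bu\rangle_\Omega - \langle Bu,Pu\rangle_\Omega\bigr)
$$
and expand the two inner products using $P=\hsc^2 D_{x_1}^2 - R(x,\hsc D_{x'})$ and $B=B_1\hsc D_{x_1}+B_0$. The structural hypothesis $B_1^*=B_1$, $B_0^*=B_0-[\hsc D_{x_1},B_1]$ in \eqref{e:Bdef} is precisely what is needed to make $B^*=B$ as an operator on $L^2(\Omega)$ (i.e.\ no boundary correction arises from $B$ itself when the adjoint is taken); this identification is the first step, and reduces the computation to controlling the boundary and bulk contributions produced purely by $P$.

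Next, in each of the four resulting pairings ($\hsc^2 D_{x_1}^2$ or $-R$ on the left, against $B_1\hsc D_{x_1}$ or $B_0$ on the right) I would move all $\hsc D_{x_1}$ factors across by integration by parts in $x_1$. This produces (i) bulk terms $\langle C_{jk}u_k,u_j\rangle_\Omega$ quadratic in $(u_0,u_1)=(u,\hsc D_{x_1}u)$ and (ii) boundary terms at $\Gamma=\{x_1=0\}$ quadratic in the traces $(u_0|_\Gamma,u_1|_\Gamma)$. The coefficients $E_{jk}$ can then be read off: the pairing $\langle\hsc^2 D_{x_1}^2 u,B_1\hsc D_{x_1}u\rangle$ contributes $\langle B_1 u_1,u_1\rangle_\Gamma$ after symmetrization with its conjugate, giving $E_{11}=B_1$; the two cross pairings produce $E_{10}=B_0$ and the Hermitian adjoint $E_{01}=B_0^*$; and the tangential pairing $-\langle Ru,B_1\hsc D_{x_1}u\rangle$, upon moving $\hsc D_{x_1}$ across, combines with its conjugate to give exactly $E_{00}=B_1(R+R^*)/2$.

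For the bulk contribution, after all the integrations by parts one is left with $\hsc^{-1}i^{-1}\langle[P,B]u,u\rangle_\Omega$ modulo terms involving $P-P^*$. The principal symbol of $[P,B]/(i\hsc)$ is $\{p,b\}$ by the symbol calculus, and the contribution of $(P-P^*)/(i\hsc)$ gives $2\Im p_{-1}$ at leading order (this is the standard characterization of the subprincipal symbol). Multiplying the latter by $b$ and organizing the full expression by powers of $\xi_1^{j+k}$ reads off the $c_{jk}$ and yields the identity $\sum_{jk}c_{jk}(x,\xi')\xi_1^{j+k}=\{p,b\}+2b\,\Im p_{-1}$; reality of $c_{jk}$ and the symmetry $c_{01}=c_{10}$ then follow automatically from $B=B^*$ and from $p,r$ being real.

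The main obstacle is the careful bookkeeping of $O(\hsc)$ correction terms. Each time one commutes a tangential pseudodifferential operator past $\hsc D_{x_1}$, an $O(\hsc)$ error appears, and these corrections are exactly what must combine across the four pairings to reproduce the subprincipal term $2b\,\Im p_{-1}$ on the bulk side while simultaneously making the off-diagonal boundary pairings satisfy $E_{01}^*=E_{10}$. The condition on $B_0^*$ is engineered precisely so that this cancellation happens; once this accounting is done the remainder of the proof is a routine expansion in the semiclassical symbol calculus.
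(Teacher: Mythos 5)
The paper supplies no proof here: the lemma is the semiclassical transcription of H\"ormander's Lemma~24.4.2, and the bracketed citation in the lemma's title is the proof. So you are reconstructing an argument the paper outsources; the plan in your second and third paragraphs --- integrate every $\hsc D_{x_1}$ by parts, collect the quadratic trace form on $\Gamma$ to read off the $E_{jk}$, identify the bulk with a commutator plus the $(P-P^*)B$ correction to obtain $\{p,b\}+2b\Im p_{-1}$ --- is the H\"ormander computation and it does go through.

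The claim in your first paragraph, however, is false and should be deleted. The conditions $B_1^*=B_1$ and $B_0^*=B_0-[\hsc D_{x_1},B_1]$ make the \emph{bulk} part of the $L^2(\Omega)$ adjoint of $B$ equal to $B$, but they do not suppress a boundary contribution. Integrating by parts the $\hsc D_{x_1}$ inside $B_1\hsc D_{x_1}$ gives, for $u,v$ smooth up to $\{x_1=0\}$,
$$
\langle Bu,v\rangle_\Omega = \langle u, Bv\rangle_\Omega + i\hsc\,\langle u, B_1 v\rangle_\Gamma,
$$
so $B$ produces a trace term of its own; there is no sense in which the boundary contributions are ``produced purely by $P$''. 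The boundary operators $E_{11}=B_1$, $E_{10}=B_0$, $E_{00}=B_1(R+R^*)/2$ are built from $B_0$, $B_1$, $R$, and if you pursued the first-paragraph reduction literally --- pass $B$ across the inner product with no boundary correction and then handle only $P$ --- the $B_1$ trace terms would disappear. Your second paragraph quietly undoes the error (you there integrate by parts \emph{all} $\hsc D_{x_1}$ factors, including the one inside $B$), so the sketch as a whole is rescuable; but the stated ``first step'' does not describe what the calculation actually does, and is not what the hypotheses on $B_0,B_1$ are for --- they guarantee reality of the $c_{jk}$ and symmetry $c_{01}=c_{10}$, not the absence of a boundary contribution.
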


We use Lemma~\ref{l:intByParts} repeatedly in positive commutator type arguments.  To do this, we first record a technical lemma similar to~\cite[Lemma 24.4.5]{Ho:07}.
\begin{lemma}
\label{l:systemToEstimate}
Let $A_{jk}\in \Psi^{\comp}$ have real principal symbols $a_{jk}$ compactly supported in $x$, $a_{01}=a_{10}$ and such that 
$$
\sum_{j,k=0}^1a_{jk}(x,\xi')\xi_1^{j+k}=-\psi(x,\xi)^2,\quad \text{when }\xi_1^2=r(x,\xi'),
$$
where $\psi\in C_c^\infty$. Furthermore, suppose that $\partial_{x_1}r>0$ or $\partial_{(x',\xi')}r\neq 0$ on $\cup \supp a_{jk}$. Then, there are $\Psi_i$ $i=0,1$ with principal symbols $\psi$ and $\partial_{\xi_1}\psi$ when $\xi_1=r(x,\xi')=0$ such that for any $N$
\begin{align*}
\Re \sum \langle A_{jk}&(hD_1)^ku,(hD_1)^ju\rangle_{\Omega}+\|\Psi_0(x,\hsc D_{x'})u+\Psi_1(x,\hsc D_{x'})\hsc D_{x_1}u\|_{L^2(\Omega)}^2\\
&\leq C\hsc^{-1}\|Pu\|_{L^2}^2+C\hsc(\|u\|^2_{L^2}+\|\hsc D_{x_1}u\|_{L^2}^2+\|\hsc D_{x_1}u(0)\|_{L^2}^2+\|(Q\hsc D_{x_1}+u)(0)\|_{L^2}^2).
\end{align*}
\end{lemma}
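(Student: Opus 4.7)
My plan is to follow the standard positive commutator scheme for propagation in the glancing region, adapting the treatment in \cite[Chapter 24]{Ho:07} to our boundary setting; the key ingredient is Lemma~\ref{l:intByParts}, applied to a carefully chosen $B = B_1(x,\hsc D_{x'})\hsc D_{x_1} + B_0(x,\hsc D_{x'})$.

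\textbf{Step 1 (algebraic division).} The hypothesis $\sum a_{jk}(x,\xi')\xi_1^{j+k} = -\psi(x,\xi)^2$ on $\{\xi_1^2 = r(x,\xi')\}$ and Malgrange's preparation / division give a smooth symbol $q_0$ with
$$
\sum_{j,k=0}^1 a_{jk}(x,\xi')\xi_1^{j+k} + \psi(x,\xi)^2 \;=\; (\xi_1^2 - r(x,\xi'))\,q_0(x,\xi).
$$
Taylor-expanding $\psi$ in $\xi_1$ around the glancing set yields symbols $\psi_0(x,\xi') := \psi(x,\xi',0)$ and $\psi_1(x,\xi') := \partial_{\xi_1}\psi(x,\xi',0)$ such that $\psi(x,\xi) = \psi_0 + \psi_1 \xi_1 + \xi_1^2 \tilde\psi(x,\xi)$ for smooth $\tilde\psi$. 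Consequently, $\psi^2 = (\psi_0 + \psi_1\xi_1)^2 + \xi_1^2\cdot (\text{smooth})$, so on the characteristic variety $\psi^2$ is a perfect square plus an error absorbable into the $p\cdot q$ term.

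\textbf{Step 2 (constructing $b$).} Set $b := b_1(x,\xi')\xi_1 + b_0(x,\xi')$ with $b_0,b_1$ real and compactly supported. The nondegeneracy hypothesis ($\partial_{x_1}r > 0$ or $d_{(x',\xi')}r \neq 0$ on $\bigcup \supp a_{jk}$) implies that $H_p = 2\xi_1\partial_{x_1} + H_r$ is nonradial on the characteristic variety inside the relevant support, so we may solve the transport equation
$$
\{p, b\} + 2 b \operatorname{Im} p_{-1} \;=\; -\sum_{j,k} a_{jk}(x,\xi')\,\xi_1^{j+k} \;-\;(\psi_0 + \psi_1 \xi_1)^2 \pmod{p\cdot S^{\comp}}
$$
locally along the Hamilton flowlines, with $b$ supported in a small neighbourhood of the support of the $a_{jk}$'s. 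Matching the powers of $\xi_1$ on both sides (using $\xi_1^2 \equiv r \pmod p$) reduces this to a coupled pair of linear transport equations for $b_0, b_1$ along the projected Hamiltonian, which are solvable by the method of characteristics and a cutoff.

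\textbf{Step 3 (assembly via Lemma~\ref{l:intByParts}).} With this $B$, Lemma~\ref{l:intByParts} and the symbolic identity of Step~2 give, modulo $O(\hsc)$ errors in the weak sense,
$$
2\hsc^{-1}\operatorname{Im}\langle Pu, Bu\rangle_\Omega \;=\; \sum \Re\langle E_{jk} u_k,u_j\rangle_\Gamma \;-\; \Re\sum \langle A_{jk} u_k, u_j\rangle_\Omega \;-\; \|\Psi_0 u + \Psi_1 \hsc D_{x_1} u\|^2_{L^2(\Omega)},
$$
where $\Psi_i = \operatorname{Op}_\hsc(\psi_i)$, after applying a sharp Gårding-type step to pass from the operator with symbol $(\psi_0 + \psi_1\xi_1)^2$ to $(\Psi_0 + \Psi_1 \hsc D_{x_1})^*(\Psi_0 + \Psi_1 \hsc D_{x_1})$ (the error is $O(\hsc)$, absorbed on the RHS). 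The Cauchy--Schwarz bound $2\hsc^{-1}|\operatorname{Im}\langle Pu,Bu\rangle| \leq \hsc^{-1}\|Pu\|^2 + \hsc\|Bu\|^2$ and $\|Bu\|^2 \lesssim \|u\|^2 + \|\hsc D_{x_1} u\|^2$ give the interior part of the claimed estimate.

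\textbf{Step 4 (boundary terms) and main obstacle.} The remaining task, and the main subtlety, is to control the boundary contribution $\sum\Re\langle E_{jk}u_k,u_j\rangle_\Gamma$ in terms of the allowed quantities $\|\hsc D_{x_1}u(0)\|_{L^2}$ and $\|(Q\hsc D_{x_1}+1)u(0)\|_{L^2}$. Writing $u(0) = -(Q\hsc D_{x_1}u)(0) + (Q\hsc D_{x_1}u + u)(0)$, a direct expansion gives $\sum\Re\langle E_{jk}u_k,u_j\rangle_\Gamma$ as a sum of terms of the form $\langle R\,\hsc D_{x_1}u(0),\hsc D_{x_1}u(0)\rangle$, $\langle R\,(Q\hsc D_{x_1}u + u)(0), (\cdot)\rangle$, etc., each of which is bounded by $C\hsc(\|\hsc D_{x_1}u(0)\|^2 + \|(Q\hsc D_{x_1}u + u)(0)\|^2)$. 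The hard point is to choose $B_0, B_1$ in Step~2 so that the residual boundary contributions carry the correct sign at glancing (where the calculus is most delicate), which is exactly where the division structure of Step~1 --- combined with the fact that $R(0,x',\xi') = r(0,x',\xi')$ vanishes to first order on the glancing set --- makes the cross terms involving $E_{00}$ small. Combining all of this yields the stated inequality.
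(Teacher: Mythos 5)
Your proposal misidentifies what Lemma~\ref{l:systemToEstimate} actually asserts, and as a result the plan does not prove it. The lemma is a purely \emph{operator-positivity} fact: given operators $A_{jk}$ whose symbols satisfy $\sum a_{jk}\xi_1^{j+k}=-\psi^2$ on $\{\xi_1^2=r\}$, one must bound the quadratic form $\Re\sum\langle A_{jk}(\hsc D_1)^k u,(\hsc D_1)^j u\rangle_\Omega$ from above (up to the explicit $\Psi$-square and $O(\hsc)$/$Pu$ error terms). There is no commutant $B$ in the statement, no escape function to construct, and no transport equation to solve. Your Steps~2--3 --- choosing $b$ so that $\{p,b\}$ matches the given quadratic form and then invoking Lemma~\ref{l:intByParts} --- are the \emph{consumers} of this lemma, not its proof: in the applications (Lemmas~\ref{l:ivrii} and~\ref{l:gbb}) the $A_{jk}$ of the present lemma \emph{are} the $C_{jk}$ produced by Lemma~\ref{l:intByParts} from a chosen commutator. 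Attempting to prove Lemma~\ref{l:systemToEstimate} by another commutator construction is circular and, more concretely, your transport equation $\{p,b\}\equiv -\sum a_{jk}\xi_1^{j+k}-(\psi_0+\psi_1\xi_1)^2 \pmod p$ has right-hand side congruent to zero modulo $p$ on the characteristic set, so the ODE you set up does not contain the information you need.

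The actual proof has no dynamics in it. One first applies the division result \cite[Lemma 24.4.3]{Ho:07} (this is where the nondegeneracy of $r$ is used, to get a smooth division, not any propagation/nonradiality), producing symbols $\psi_0,\psi_1$ with the stated properties and $g\in C_c^\infty$ such that
$$
\sum_{j,k}a_{jk}(x,\xi')\xi_1^{j+k}+(\psi_0(x,\xi')+\psi_1(x,\xi')\xi_1)^2\leq g(x,\xi')\,(\xi_1^2-r(x,\xi'))
$$
\emph{globally} in $\xi_1$, not just on the characteristic set. This is a genuine inequality (not equality mod $p$), which is the step your Malgrange division misses. Then the sharp G\aa rding inequality for systems turns this symbolic inequality into the operator inequality
$$
\Re\Big(\sum\langle A_{jk}u_k,u_j\rangle_\Omega+\|\Psi_0 u+\Psi_1\hsc D_{x_1}u\|^2_{L^2}-\langle Gu_1,u_1\rangle_\Omega+\langle GRu_0,u_0\rangle_\Omega\Big)\leq C\hsc\big(\|u\|^2+\|\hsc D_{x_1}u\|^2\big).
$$
Finally one integrates by parts once in $x_1$ in the $G$-terms, using $(\hsc D_{x_1})^2-R=P$, to convert $\langle Gu_1,u_1\rangle-\langle GRu_0,u_0\rangle$ into $\langle GPu,u\rangle$ plus one boundary term $\hsc\Im\langle G(0)\hsc D_{x_1}u,u\rangle_\Gamma$ and one interior commutator term; Cauchy--Schwarz and the substitution $u(0)=(Q\hsc D_{x_1}u+u)(0)-Q\hsc D_{x_1}u(0)$ (the only place your Step~4 touches the correct mechanism) yield the stated error bound. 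You should therefore drop Steps~2 and~3 entirely; the correct argument lives at the level of sharp G\aa rding plus one integration by parts, and the propagation machinery you describe belongs in Lemmas~\ref{l:ivrii} and~\ref{l:gbb}, where the present lemma is applied as a black box.
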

\begin{proof}
Let $\psi_0$, $\psi_1\in C_c^\infty$ such that $\psi_j=\partial_{\xi_1}^j\psi$ when $\xi_1=r(x,\xi')=0$. and such that 	
$$
\sum a_{jk}(x,\xi')\xi_1^{j+k}+(\psi_0(x,\xi')+\psi_1(x,\xi')\xi_1)^2\leq g(x,\xi')(\xi_1^2-r(x,\xi')),
$$
for some $g\in C_c^\infty$. This is possible by~\cite[Lemma 24.4.3]{Ho:07}.

Let $\Psi_j$ and $G$ with principal symbols $\psi_j$ and $g$ respectively. Then, by the Sharp G\aa rding inequality for systems,
\begin{align*}
&\Re \Big(\sum \langle A_{jk}(x,\hsc D_{x'})u_k,u_j\rangle_{\Omega} +\sum \langle \Psi_j^*(x,\hsc D_{x'})\Psi_k(x,\hsc D_{x'})u_k,u_j\rangle_{\Omega}\\
&\qquad\qquad -\langle G(x,\hsc D_{x'})u_1,u_1\rangle_{\Omega} +\langle G(x,\hsc D_{x'})R(x,\hsc D_{x'})u_0,u_0\rangle_{\Omega}\Big)\\
&\leq C\hsc(\|u_0\|^2+\|\hsc D_{x_1}u\|^2)
\end{align*}
Next, 
\begin{align*}
&\Re\Big(\langle G(x,\hsc D_{x'})u_1,u_1\rangle_{\Omega} -\langle G(x,\hsc D_{x'})R(x,\hsc D_{x'})u_0,u_0\rangle_{\Omega}\Big)\\
&=\Re\Big(\langle GPu,u\rangle_{\Omega} -i\hsc\langle G(0,x',\hsc D_{x'})\hsc D_{x_1}u,u\rangle|_{\Gamma}+\langle [\hsc D_{x_1},G(x,\hsc D_{x'})]\hsc D_{x_1}u,u\rangle_{\Omega}\Big)\\
&\leq C\hsc^{-1}\|Pu\|_{L^2}^2 +C\hsc\|u\|_{L^2} +\hsc\Im\langle G(0,x',\hsc D_{x'})\hsc D_{x_1}u,u\rangle_{\Gamma}+C\hsc\|\hsc D_{x_1}u\|_{L^2(\Omega)}\|u\|_{L^2(\Omega)}.
\end{align*}
\end{proof}

We also need a microlocal estimate on the normal derivative.
We start with a microlocal estimate on the normal derivative.
\begin{lemma}
\label{l:microNormalDerivative}
There is $\delta_0>0$ such that for all $a\in S^{\comp}(T^*\Gamma)$, $\supp a\subset \{|r| \leq \delta\}$, $A:=a(x',\hsc D_{x'})$, $\psi\in C_c^\infty(-1,1)$ with $0\notin \supp (1-\psi)$ and $\chi ,\chi_1\in C_c^\infty(\mathbb{R}^\times \mathbb{R}^{n-1})$ with $\supp a(x',\xi')\psi(x_1)\cap \supp(1-\chi)=\emptyset$, $\supp (1-\chi_1)\cap\supp \chi=\emptyset$. Then, 
\begin{align*}
\|A\hsc D_{x_1}u(0)\|_{L^2}&\leq C\hsc^{-1}\|XPu\|_{L^2}+C\|Xu\|_{L^2} +C\|X(Q\hsc D_{x_1}u+u)(0)\|_{L^2}\\
&\qquad +C\hsc^N(\|(Q\hsc D_{x_1}u+u)(0)\|_{L^2}+\|\hsc D_{x_1}u(0)\|_{L^2}).\\
\|X \hsc D_{x_1}u\|_{L^2}&\leq C(\hsc^{-1}\|X_1Pu\|_{L^2}+\|X_1u\|_{L^2}+C\|X_1(Q\hsc D_{x_1}u+u)(0)\|_{L^2}\\
&\qquad +C\hsc^N(\|(Q\hsc D_{x_1}u+u)(0)\|_{L^2}+\|\hsc D_{x_1}u(0)\|_{L^2})
\end{align*}
\end{lemma}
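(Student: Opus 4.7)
\emph{Plan of proof.} The estimate has the flavor of a positive-commutator argument at the boundary, adapted to the glancing region where the operator $P = \hsc^2 D_{x_1}^2 - R$ is neither elliptic nor strictly hyperbolic. The two inequalities differ only in whether the weight sits at $x_1 = 0$ or in the interior, so I will describe the boundary bound in detail and indicate the straightforward modifications for the interior one.

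The key idea is to apply Lemma~\ref{l:intByParts} with $B = B_1 \hsc D_{x_1} + B_0$ chosen so that $B_1|_{x_1=0}$ is essentially $A^*A$. Concretely, I would take $B_1 = \phi(x_1) \, \tilde\chi(x, \hsc D_{x'})\, A^* A\, \tilde\chi(x,\hsc D_{x'})^*$, where $\phi \in C_c^\infty([-1,1))$ with $\phi(0)=1$, $\phi\geq 0$, $\phi'(x_1)\leq 0$ for $x_1>0$, and where $\tilde\chi$ is a full-space tangential cutoff chosen with $\tilde\chi\equiv 1$ on a neighborhood of $\supp a$ and $\supp\tilde\chi\subset \{\chi=1\}$; set $B_0 = \tfrac12 [\hsc D_{x_1}, B_1]$ so that $B_0^* = B_0 - [\hsc D_{x_1}, B_1]$ as required. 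The boundary contribution in Lemma~\ref{l:intByParts} then contains $\langle B_1(0)\hsc D_{x_1}u(0),\hsc D_{x_1}u(0)\rangle_\Gamma$, whose leading piece is $\|A\hsc D_{x_1}u(0)\|_{L^2}^2$.

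The remaining boundary terms are of three types: an $E_{00}$ term equal to $\tfrac12\langle B_1(0)(R+R^*)u(0), u(0)\rangle_\Gamma$, and the cross terms $E_{01}, E_{10}$ involving $\langle B_0(0)\hsc D_{x_1}u(0), u(0)\rangle_\Gamma$. Since $\supp a\subset \{|r|\leq \delta\}$, the $E_{00}$ term is $O(\delta)\|Au(0)\|^2$ plus lower order, which will be absorbed after the boundary condition is used. I trade out the boundary trace $u(0)$ via $u(0) = (Q\hsc D_{x_1}u+u)(0) - Q\hsc D_{x_1}u(0)$; the first piece is controlled by $\|X(Q\hsc D_{x_1}u+u)(0)\|_{L^2}$, and the second produces a factor of $Q$ acting on $A\hsc D_{x_1}u(0)$, which is harmless because $Q\in \Psi^{-1}(\Gamma)$ so $\|QA\hsc D_{x_1}u(0)\|\lesssim \|A\hsc D_{x_1}u(0)\|$ with a small constant on the relevant frequency range (more precisely, after a microlocal reduction to $\{|r|\leq\delta\}$ one can arrange the constant to be absorbable for $\delta$ small). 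The $E_{01}, E_{10}$ cross terms are treated the same way via Cauchy--Schwarz with a small constant in front of $\|A\hsc D_{x_1}u(0)\|^2$.

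For the interior contribution $\sum\Re\langle C_{jk}u_k, u_j\rangle_\Omega$, the principal symbol is $\{p, b\} + 2b\,\Im p_{-1}$, with $b = \sigma(B) = \phi(x_1)\xi_1|\tilde\chi a|^2$ to leading order. Computing, $\{p,b\}|_{\xi_1^2=r} = 2\xi_1^2 \phi' |\tilde\chi a|^2 + O(|r|)$ on the characteristic set, which on $\{|r|\leq \delta\}\cap\{\xi_1^2=r\}$ equals $2r\phi'|\tilde\chi a|^2 + O(\delta)$. Since $r\geq 0$ on the characteristic set and $\phi'\leq 0$, this quadratic form is non-positive modulo $O(\delta)$, so Lemma~\ref{l:systemToEstimate} applies (after absorbing the $O(\delta)$ piece into a perturbation of $A$) to give
\beqs
\Re\sum\langle C_{jk}u_k, u_j\rangle_\Omega \leq C\hsc^{-1}\|XPu\|_{L^2}^2 + C\hsc(\|Xu\|^2 + \|X\hsc D_{x_1}u\|^2 + \|\hsc D_{x_1}u(0)\|^2 + \|(Q\hsc D_{x_1}u+u)(0)\|^2),
\eeqs
and in fact the sharp G\aa rding term $\|\Psi_0 u + \Psi_1 \hsc D_{x_1}u\|^2$ appears on the favorable side. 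Assembling these pieces, Cauchy--Schwarz on the pairing $\hsc^{-1}|\langle Pu, Bu\rangle|$, and absorbing the boundary cross terms, gives the first bound.

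For the second bound $\|X\hsc D_{x_1}u\|_{L^2}$, I would run the same positive-commutator scheme but with $B_1$ supported in $\{x_1>0\}$ — say $B_1 = \tilde\phi(x_1)\tilde X^*\tilde X$ with $\tilde\phi\in C_c^\infty((0,\infty))$ satisfying $\tilde\phi \geq |\sigma(X)|^2$ near $\supp X$ and $\tilde\phi' \leq 0$ past that — so that there is no boundary contribution at all and the interior symbol inequality from Lemma~\ref{l:systemToEstimate} produces $\|X\hsc D_{x_1}u\|_{L^2}^2$ on the right via the $c_{11}\xi_1^2$ piece; the remaining terms are absorbed by $X_1$ microlocal analogues exactly as above. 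The main obstacle throughout is book-keeping rather than principle: ensuring that $\delta$ can be chosen small enough that all cross terms coming from $|r|\leq \delta$, from the non-commutativity of $Q$ and $A$, and from the $O(\delta)$ defect in the sign of $\{p,b\}|_{\xi_1^2=r}$ can simultaneously be absorbed into a single factor of $\tfrac12\|A\hsc D_{x_1}u(0)\|^2_{L^2}$ on the left.
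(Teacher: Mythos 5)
The central difficulty in your proposal is the sign claim for the interior commutator. You take $b \approx \psi(x_1)\xi_1|a|^2$ and assert that on the characteristic set
\begin{equation*}
\{p,b\}\big|_{\xi_1^2=r} = 2\xi_1^2\psi'|a|^2 + O(|r|) = 2r\psi'|a|^2 + O(\delta),
\end{equation*}
which would be non-positive modulo $O(\delta)$ since $\psi'\leq 0$ and $r\geq 0$ on $\{\xi_1^2=r\}$. But a direct computation with $p=\xi_1^2-r(x,\xi')$ gives
\begin{equation*}
\{p,b\} = 2\xi_1^2\psi'|a|^2 + \psi|a|^2\,\partial_{x_1}r - \psi\,\xi_1\, H_r|a|^2,
\end{equation*}
and neither of the last two terms is controlled by $|r|$ or by $\delta$: $\partial_{x_1}r$ is $O(1)$ and has no definite sign (it is strictly positive at diffractive points, non-positive at gliding ones), and $H_r|a|^2 = O(1)$ while $|\xi_1|=\sqrt{r}$ only gives $O(\sqrt{\delta})$, not $O(\delta)$. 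Consequently the symbol identity required for Lemma~\ref{l:systemToEstimate}, namely $\sum c_{jk}\xi_1^{j+k} = -\psi^2$ on the characteristic set, does not hold for your choice of $b$, and the positive-commutator estimate you invoke does not close. The same defect reappears in your proposed proof of the second inequality, which again relies on a favourable sign for $\{p,b\}$ after moving $B_1$ off the boundary.

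The paper's proof sidesteps this entirely. It makes the same choice $B_1 = A^*A\,\psi(x_1)$ (with $B_0$ the lower-order companion) and isolates $\|A\hsc D_{x_1}Xu(0)\|_{L^2}^2$ from the boundary $E_{11}$-term exactly as you do, but it makes no attempt to obtain a sign for the $C_{jk}$-pairing: the interior term $\Re\sum\langle C_{jk}(\hsc D_{x_1})^k Xu,(\hsc D_{x_1})^j Xu\rangle_\Omega$ is simply bounded crudely by $C(\|X\hsc D_{x_1}u\|_{L^2}^2 + \|Xu\|_{L^2}^2)$. This leaves $\|X\hsc D_{x_1}u\|_{L^2}$ on the right, and the second inequality of the lemma — which is what supplies that bound — is proved by an elementary, separate integration by parts, pairing $Pu$ against $X^*Xu$:
\begin{equation*}
\langle Pu, X^*Xu\rangle = \|X\hsc D_{x_1}u\|_{L^2}^2 - \langle R(x,\hsc D_{x'})Xu, Xu\rangle + \langle \hsc D_{x_1}u, [\hsc D_{x_1},X^*X]u\rangle + i\hsc\langle\hsc D_{x_1}u, X^*Xu\rangle_\Gamma,
\end{equation*}
and then isolating $\|X\hsc D_{x_1}u\|_{L^2}^2$; no commutator sign is needed because $\langle RXu,Xu\rangle$ is controlled by $\|Xu\|_{L^2}^2$ directly. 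To repair your argument you would need to abandon the sign claim, instead absorb the indefinite parts of $\{p,b\}$ into $\|Xu\|^2 + \|X\hsc D_{x_1}u\|^2$, and supply the direct estimate for $\|X\hsc D_{x_1}u\|$ as a separate step.

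Two smaller points: the smallness factor $\delta$ is used in the paper only to absorb the $E_{00}$ boundary term $\tfrac12\langle A(R+R^*)u(0),Au(0)\rangle_\Gamma$ and the cross term from $Q\hsc D_{x_1}u(0)$ — exactly as in your proposal — but it plays no role in the interior estimate, contrary to your plan. And the monotonicity $\psi'\leq 0$ that you require of the normal cutoff is not needed at all; the paper only asks $\psi\equiv 1$ near $x_1=0$ with compact support.
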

\begin{proof}
Let $\psi\in C_c^\infty((-1,1);[0,1])$ with $0\notin \supp (1-\psi)$ and define
$$
\begin{gathered}
B_1:=A^*A\psi(x_1),\qquad B_0:=\frac{-i\hsc}{2}B_1\psi'(x_1),
\end{gathered}
$$
Then, Lemma~\ref{l:intByParts} yields, with $B=B_1\hsc D_{x_1}+B_0$,
$$
2\hsc^{-1}\Im \langle PXu,BXu\rangle_{\Omega}=\Re\sum_{j,k=0}^1\langle  E_{jk}(\hsc D_{x_1})^kXu,(\hsc D_{x_1})^jXu\rangle_{\Gamma}+\Re\sum_{j,k=0}^1\langle  C_{jk}(\hsc D_{x_1})^kXu,(\hsc D_{x_1})^jXu_j\rangle_{\Omega},
$$
where 
\begin{gather*}
E_{01}^*=E_{10}=0,\qquad E_{11}=A^*A,\qquad E_{00}=A^*A(R+R^*)/2.
\end{gather*}
Hence,
\begin{align*}
&\| A\hsc D_{x_1}Xu(0)\|_{L^2}^2\\
&=2\hsc^{-1}\Im \langle PXu,BXu\rangle_{\Omega}-\Re\sum_{j,k=0}^1\langle  C_{jk}(\hsc D_{x_1})^kXu,(\hsc D_{x_1})^jXu\rangle_{\Omega}-\tfrac{1}{2}\langle A(R+R^*)Xu,AXu\rangle_{\Gamma}\\
&=2\hsc^{-1}\Im \langle PXu,BXu\rangle_{\Omega}-\Re\sum_{j,k=0}^1\langle  C_{jk}(\hsc D_{x_1})^kXu,(\hsc D_{x_1})^jXu\rangle_{\Omega}\\
&\qquad-\tfrac{1}{2}\langle A(R+R^*)(X(Q\hsc D_{x_1}u+u),AX(Q\hsc D_{x_1}u+u)\rangle_{\Gamma}\\
&\qquad+\tfrac{1}{2}\langle A(R+R^*)X(Q\hsc D_{x_1}u+u),AXQ\hsc D_{x_1}u\rangle_{\Gamma}+\tfrac{1}{2}\langle A(R+R^*)XQ\hsc D_{x_1}u,AX(Q\hsc D_{x_1}u+u)\rangle_{\Gamma}\\
&\qquad-\tfrac{1}{2}\langle A(R+R^*)XQ\hsc D_{x_1}u,AXQ\hsc D_{x_1}u\rangle_{\Gamma}\\
&\leq 2\hsc^{-1}\Im \langle PXu,BXu\rangle_{\Omega}+C\|X\hsc D_{x_1}u\|_{L^2}+C\|Xu\|_{L^2}\\
&\qquad +C\|X(Q\hsc D_{x_1}u+u)(0)\|_{L^2}^2+C\delta\|A\hsc D_{x_1}u(0)\|_{L^2}^2+C\hsc\|X\hsc D_{x_1}u(0)\|_{L^2}^2\\
&\qquad +O(\hsc^{\infty})(\|(Q\hsc D_{x_1}u+u)(0)\|_{L^2}^2+\|\hsc D_{x_1}u(0)\|_{L^2}^2)\\
&\leq C\hsc^{-2}\|XPu\|_{L^2}^2+C\|X\hsc D_{x_1}u\|_{L^2}+C\|Xu\|_{L^2}\\
&\qquad +C\|X(Q\hsc D_{x_1}u+u)(0)\|_{L^2}^2+C\delta\|A\hsc D_{x_1}u(0)\|_{L^2}^2+C\hsc\|X\hsc D_{x_1}u(0)\|_{L^2}^2\\
&\qquad +O(\hsc^{\infty})(\|(Q\hsc D_{x_1}u+u)(0)\|_{L^2}^2+\|\hsc D_{x_1}u(0)\|_{L^2}^2).
\end{align*}
In particular, for $\delta>0$ small, 
\begin{align*}
\|A\hsc D_{x_1}u(0)\|_{L^2}&\leq C\hsc^{-1}\|XPu\|_{L^2}+C\|X\hsc D_{x_1}u\|_{L^2}+C\|Xu\|_{L^2}\\
&\qquad +C\|X(Q\hsc D_{x_1}u+u)(0)\|_{L^2}+C\hsc^{1/2}\|X\hsc D_{x_1}u(0)\|_{L^2}\\
&\qquad +O(\hsc^{\infty})(\|(Q\hsc D_{x_1}u+u)(0)\|_{L^2}+\|\hsc D_{x_1}u(0)\|_{L^2}).
\end{align*}

Now, we need to estimate $\|X\hsc D_{x_1}u\|_{L^2}$ by $Pu$ and $u$. For this, self-adjoint.
\begin{align*}
 \langle Pu,X^*X u\rangle &= \langle ((\hsc D_{x_1})^2-r(x,\hsc D_{x'}))u,X^*Xu\rangle \\
&= -\langle r(x,\hsc D_{x'})u,X^*Xu\rangle +\|X \hsc D_{x_1}u\|_{L^2}^2\\
&\qquad \langle (\hsc D_{x_1})u,[\hsc D_{x_1},X^*X]u\rangle +i\hsc\langle \hsc D_{x_1}u,X^*Xu\rangle_{\Gamma} . \end{align*}
Hence,
\begin{align*}
\|X\hsc D_{x_1}u\|_{L^2}^2&\leq  \langle Pu,X^*Xu\rangle +\langle r(x,\hsc D_{x'})Xu, Xu\rangle \\
&\qquad +\langle [X,r]u,Xu\rangle - \langle (\hsc D_{x_1})u,[\hsc D_{x_1},X^*]X+X^*[\hsc D_{x_1},X]u\rangle\\
&\qquad -\Re hi\langle X\hsc D_{x_1}u,X(Q\hsc D_{x_1}u+u)\rangle_{\Gamma}+\Re hi\langle X\hsc D_{x_1}u,XQ\hsc D_{x_1}u\rangle_{\Gamma}\\
&\leq C\|XPu\|^2+C\|Xu\|^2+ C\hsc(\|\tilde{X}u\|_{L^2}^2+\|\tilde{X}\hsc D_{x_1}u\|_{L^2}^2)+C \hsc\|\tilde{X}\hsc D_{x_1}u(0)\|_{L^2}^2\\
&\qquad+C\hsc\|\tilde{X}(Q\hsc D_{x_1}+u)(0)\|_{L^2}^2 \\
&\qquad+O(\hsc^{\infty})(\|u\|^2_{L^2}+\|\hsc D_{x_1}u\|^2_{L^2}+\|(Q\hsc D_{x_1}u+u)(0)\|^2_{L^2}+\|\hsc D_{x_1}u(0)\|_{L^2}^2).
\end{align*}
The proof is completed by iteration.
\end{proof}

\subsection{Diffractive points}

We start by considering diffractive points. The main technical estimate is provided in the next lemma.
\begin{lemma}
\label{l:ivrii}
Let $Q\in \Psi^{-1}(\Gamma)$ with $\Re\sigma(Q)\geq 0$ near $r=0$. Then there are $\delta_0>0$ and $M_0>0$ such that for all $M>M_0$, $f\in C_c^\infty( (0,\infty)\times \mathbb{R}^{n-1}\times \mathbb{R}^{n-1})$,  $b_1,b_0,v,\chi,\chi_+\rho\in C_c^\infty(\mathbb{R}^n\times \mathbb{R}^{n-1};\mathbb{R})$, $t,t_0\in C^\infty(\mathbb{R}^{n-1}\times \mathbb{R}^{n-1};\mathbb{R})$, and $\psi\in C_c^\infty(\mathbb{R}^n\times \mathbb{R}^n;\mathbb{R})$ satisfying
\begin{gather*}
b_0|_{x_1=0}\geq 0,\qquad \supp b_i\cap \supp (1-\chi)=\emptyset,\qquad \supp \chi\cap\supp(1-\chi_+)=\emptyset \\
\supp \chi_+\subset \{|r|\leq \delta_0\}\cap \{\partial_{x_1}r>0\}, \\
\supp \rho\subset \{r>0\},\qquad\supp \rho\cap \supp (1-\chi)=\emptyset, \qquad b_1|_{x_1=0}=-t^2,\qquad  b_0|_{x_1=0}=tt_0.
\end{gather*}
 setting $b:=b_1\xi_1+b_0$,
\begin{gather*}
\{p,b\}+bM|\xi'|=-\psi^2+\rho(\xi_1-r^{1/2}),\quad b=v^2,\quad \text{when }p=0,
\end{gather*}
and for all $(x',\xi')\in \supp \chi$ either $r(x,\xi')<0$ or there are $0\leq s_{\pm}(x,\xi')\leq 1$ such that 
$$
 \varphi_{-s_{\pm}}(x,\pm \sqrt{r(x,\xi')},\xi')\in \{f>0\},\text{ and } \Big(\bigcup_{0\leq s\leq s_{\pm}}\varphi_{-s}(x,\pm\sqrt{r(x,\xi')},\xi')\Big)\cap \supp (1-\chi)=\emptyset.
$$
Then there are $C>0$, $\Psi_j\in C^\infty(\mathbb{R}_{x_1};\Psi^{\comp}(\mathbb{R}^{n-1}))$, $j=0,1$ such that for all $0<h<1$, defining $X:=\chi(x,\hsc D_{x'})$, $B_i:=b_i(x,\hsc D_{x'})$, $T:=t(x',\hsc D_{x'})$, 
\begin{equation}
\label{e:diffractive1}
\begin{aligned}
&\|\Psi_0Xu+\Psi_1\hsc D_{x_1}Xu\|_{L^2(\Omega)}+\|T\hsc D_{x_1}Xu(0)\|_{L^2}+\|BXu\|_{L^2}\\
&\leq  C\hsc^{-1}\|XPu\|+C(1+\e^{-1})\| X(Q\hsc D_{x_1}u+u)(0)\|_{L^2}+\|Fu\|_{L^2}\\
&\qquad +C\hsc^{\frac{1}{2}}(\|X_+u\|_{L^2}+\|X_+\hsc D_{x_1}u\|_{L^2}+\|\hsc D_{x_1}Xu(0)\|_{L^2}+\|(Xu+Q\hsc D_{x_1}Xu)(0)\|_{L^2})\\
&\qquad +C\hsc^N(\|u\|_{L^2}+\|\hsc D_{x_1}u\|_{L^2}+\|Pu\|_{L^2}+\|u(0)\|_{L^2}+\|\hsc D_{x_1}u(0)\|_{L^2}),
\end{aligned}
\end{equation}
and $\sigma(\Psi_j)|_{r=0}(0,x',0,\xi')=\partial_{\xi_1}^j\psi(0,x',0,\xi')|_{r=0}.$
\end{lemma}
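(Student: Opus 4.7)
The plan is to prove this by a positive commutator argument at diffractive points, adapting to the semiclassical setting with boundary damping $Q$ the classical Ivrii-style estimate of \cite[Chapter 24]{Ho:07}. The crucial observation is that the hypothesis
$\{p,b\}+bM|\xi'|=-\psi^2+\rho(\xi_1-r^{1/2})$
is exactly the symbol identity generated by computing the commutator of $P=\hsc^2D_{x_1}^2-R$ with the quantization of $b$, after conjugation by an exponential weight of the form $e^{-Mx_1\langle \hsc D_{x'}\rangle /\hsc}$; the damping term $bM|\xi'|$ arises from differentiating this weight in $x_1$. The three terms to be estimated on the LHS of \eqref{e:diffractive1} correspond respectively to the bulk positivity captured by $\psi^2$, the boundary positivity captured by $b_1|_{x_1=0}=-t^2$, and the regularity statement that $b=v^2$ on $\{p=0\}$.

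First, I would apply Lemma~\ref{l:intByParts} with $B:=B_1\hsc D_{x_1}+B_0$ and $B_i=b_i(x,\hsc D_{x'})$ to $Xu$ (after conjugating by the weight above). This yields
\begin{equation*}
2\hsc^{-1}\Im\langle PXu,BXu\rangle_\Omega=\sum_{j,k=0}^1\Re\langle E_{jk}(\hsc D_{x_1})^kXu,(\hsc D_{x_1})^jXu\rangle_\Gamma+\sum_{j,k=0}^1\Re\langle C_{jk}(\hsc D_{x_1})^kXu,(\hsc D_{x_1})^jXu\rangle_\Omega,
\end{equation*}
with $E_{11}|_{x_1=0}=-T^*T$ and $E_{10}|_{x_1=0}=TT_0$ by construction, and with the $C_{jk}$ carrying the symbol data $\{p,b\}+2b\,\Im p_{-1}$. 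The $E_{11}$ boundary contribution gives $-\|T\hsc D_{x_1}Xu(0)\|^2$, which moves to the LHS to produce the desired boundary estimate; the cross term $E_{10}$ is absorbed via Cauchy--Schwarz with a free parameter (hence the $(1+\e^{-1})$ factor on the boundary data), and the remaining boundary pieces involving $u(0)$ are re-expressed through the boundary condition as $(Q\hsc D_{x_1}u+u)(0)-Q\hsc D_{x_1}u(0)$, where the first piece is data and the second is absorbed by $\hsc^{1/2}\|\hsc D_{x_1}Xu(0)\|_{L^2}$ using that $\Re\sigma(Q)\geq 0$ prevents wrong-sign contributions.

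Second, the bulk contribution fits precisely the hypothesis of Lemma~\ref{l:systemToEstimate}: the identity $\{p,b\}+bM|\xi'|=-\psi^2+\rho(\xi_1-r^{1/2})$ together with $b=v^2$ on the characteristic set matches $\sum a_{jk}\xi_1^{j+k}=-\psi^2$ modulo a term supported in the hyperbolic region. Invoking that lemma produces the $\|\Psi_0Xu+\Psi_1\hsc D_{x_1}Xu\|^2$ bound with $\sigma(\Psi_j)=\partial_{\xi_1}^j\psi$ on $\{p=0\}$, and the $\|BXu\|$ piece is recovered from $b=v^2$ on $\{p=0\}$ by sharp G{\aa}rding for systems. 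The residual $\rho(\xi_1-r^{1/2})$ term is supported in $\{r>0\}$ and vanishes on the incoming branch $\xi_1=r^{1/2}$; on the outgoing branch it contributes a flux along hyperbolic rays. By the dynamical hypothesis, each such ray, traced backward at most a unit of time through $\{\chi>0\}$, enters $\{f>0\}$, so Lemma~\ref{l:hyperbolic} applied to $X_+u$ (together with the cutoff structure $\WF(X)\subset\Ell(X_+)$) bounds this flux by $\hsc^{-1}\|XPu\|_{L^2}+\|Fu\|_{L^2}+\|X(Q\hsc D_{x_1}u+u)(0)\|_{L^2}+\hsc^{1/2}(\|X_+u\|_{L^2}+\|X_+\hsc D_{x_1}u\|_{L^2})$, which is exactly the RHS of \eqref{e:diffractive1}. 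Boundary traces of $\hsc D_{x_1}Xu$ that appear as intermediate terms are controlled by Lemma~\ref{l:microNormalDerivative}.

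The main obstacle is the careful calibration of the exponential weight and the bookkeeping of the boundary contribution. In particular, (i) the weight parameter $M$ must be taken large enough to dominate both the subprincipal symbol $\Im p_{-1}$ and the errors from the sharp G{\aa}rding inequality, while still allowing the $E_{00}$ boundary term (which involves $B_1(R+R^*)/2$ and hence has indefinite sign at glancing) to be absorbed into $\|X(Q\hsc D_{x_1}u+u)(0)\|_{L^2}^2$ and $\hsc\|(Xu+Q\hsc D_{x_1}Xu)(0)\|_{L^2}^2$; (ii) the localization must be carried out so that the $\rho(\xi_1-r^{1/2})$ flux term is captured entirely by backward-propagating bicharacteristics that lie in $\supp\chi\subset\Ell(\chi_+)$ before entering $\{f>0\}$, which is precisely why $\chi_+$ is assumed supported in $\{\partial_{x_1}r>0\}$ (the diffractive condition, ensuring rays curve away from the boundary and the backward flow is well-defined without grazing); (iii) finally, the constant $(1+\e^{-1})$ in the estimate arises from balancing, via Cauchy--Schwarz with weight $\e$, the mixed boundary term $\Re\langle TT_0Xu(0),T\hsc D_{x_1}Xu(0)\rangle_\Gamma$ against the main boundary term $\|T\hsc D_{x_1}Xu(0)\|^2$, with $\e$ to be chosen in the subsequent application of the lemma.
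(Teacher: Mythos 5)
Your proposal identifies the correct three-part structure of the argument — the positive commutator computation from Lemma~\ref{l:intByParts}, the bulk positivity via Lemma~\ref{l:systemToEstimate}, and the treatment of the residual hyperbolic flux $\rho(\xi_1-r^{1/2})$ via Lemma~\ref{l:hyperbolic} — and also handles the boundary contributions $E_{jk}$ by rewriting $u(0)$ through the boundary condition and using $\Re\sigma(Q)\geq 0$ together with $b_0|_{x_1=0}\geq 0$, which is exactly what the paper does in \eqref{e:weird3b}–\eqref{e:weird5}. The one genuine difference in mechanics is your proposed weight conjugation $e^{-Mx_1\langle\hsc D_{x'}\rangle/\hsc}$: the paper does \emph{not} conjugate, but instead applies Lemma~\ref{l:intByParts} directly to $Xu$ and then explicitly adds the compensating term $\Re\langle(M-2\Im p_{-1})Xu,BXu\rangle$ (and a second application of Lemma~\ref{l:systemToEstimate} using $-Mb+2\Im p_{-1}b + \e|b|^2 = -(M-2\Im p_{-1}-v^2)v^2$ on $\{p=0\}$) to produce both the $\psi^2$ bound and the $\|BXu\|^2$ bound. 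Your conjugation route is plausible and leads to the same symbolic identity, but it requires you to verify that the conjugated operator still fits the hypotheses of the intermediate lemmas (the conjugation introduces non-real lower-order terms in $\hsc D_{x_1}$ and would need care near $x_1=0$), whereas the paper's explicit correction term sidesteps this. A smaller inaccuracy: you invoke Lemma~\ref{l:microNormalDerivative} for the boundary traces of $\hsc D_{x_1}Xu$, but in fact Lemma~\ref{l:ivrii} as stated simply carries $\hsc^{1/2}\|\hsc D_{x_1}Xu(0)\|_{L^2}$ on the right-hand side (it is Lemma~\ref{l:gbb}, the non-diffractive estimate, that uses Lemma~\ref{l:microNormalDerivative} to close this term); likewise the diffractive condition $\supp\chi_+\subset\{\partial_{x_1}r>0\}$ is exploited in the escape-function construction of the subsequent Proposition rather than explicitly inside the proof of the estimate itself. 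These are minor misattributions of where individual ingredients are used; the core argument you describe is the paper's argument.
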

\begin{proof}
By Lemma~\ref{l:intByParts}
\begin{equation}
\label{e:weird0}
\begin{aligned}
2\hsc^{-1}\Im \langle PXu,BXu\rangle &=\sum_{j,k=0}^1  \langle E_{jk}(x',\hsc D_{x'})(\hsc D_{x_1})^kXu,(\hsc D_{x_1})^j(Xu)\rangle_{\Gamma}\\
&\quad+\sum_{j,k=0}^1\langle C_{jk}(\hsc D_{x_1})^kXu,(\hsc D_{x_1})^j(Xu)\rangle_\Omega,
\end{aligned}
\end{equation}
where
$$
\sum c_{jk}\xi_1^{j+k}=\{p,b\}+2b\Im p_{-1}=-bM-\psi^2,\qquad \text{when }\xi_1^2=r.
$$
Hence, by Lemma~\ref{l:systemToEstimate}, 
\begin{equation}
\label{e:weird1}
\begin{aligned}
&\Re \sum_{j,k=0}^1\langle C_{jk}(\hsc D_{x_1})^kXu,(\hsc D_{x_1})^j(Xu)\rangle_\Omega\\
&\qquad +\Re \langle ( M-2\Im p_{-1}(x,\hsc D_{x'}))Xu,BXu\rangle -\Re\langle (\hsc D_{x_1}-\Lambda(x,\hsc D_{x'}))Xu,\rho(x,\hsc D_{x'})Xu\rangle\\
&\leq -\|\Psi_0Xu+\Psi_1\hsc D_{x_1}Xu\|^2 +C\hsc^{-1}\|PXu\|_{L^2}^2\\
&\qquad+C\hsc(\|Xu\|^2_{L^2}+\|\hsc D_{x_1}Xu\|_{L^2}^2+\|(Q\hsc D_{x_1}Xu+Xu)(0)\|_{L^2}^2+\|\hsc D_{x_1}Xu(0)\|_{L^2}^2).
\end{aligned}
\end{equation}
Next, observe that for $M$ large enough,
$$
-Mb+2\Im p_{-1}b+\e |b|^2=-(M-2\Im p_{-1} - v^2)v^2,\text{ when }\xi_1^2=r.
$$
Therefore, apply Lemma~\ref{l:systemToEstimate} again we obtain
\begin{equation}
\label{e:weird2}
\begin{aligned}
&-\Re \langle ( M-2\Im p_{-1}(x,\hsc D_{x'}))Xu,BXu\rangle +\|BXu\|_{L^2}^2\\
&\leq C\hsc^{-1}\|PXu\|_{L^2}^2\\
&\qquad+C\hsc(\|Xu\|^2_{L^2}+\|\hsc D_{x_1}Xu\|_{L^2}^2+\|(Q\hsc D_{x_1}Xu+Xu)(0)\|_{L^2}^2+\|\hsc D_{x_1}Xu(0)\|_{L^2}^2)
\end{aligned}
\end{equation}

Since $\supp \rho\cap \supp (1-\chi)=\emptyset$, there is $\tilde{\chi}\in C_c^\infty(\mathbb{R}^n\times \mathbb{R}^{n-1})$ such that $\supp \rho\cap \supp (1-\tilde{\chi})=\emptyset$, $\supp \tilde{\chi}\cap \supp(1-\chi)=\emptyset$. Hence, using Lemma~\ref{l:hyperbolic} with $A=\tilde{\chi}(x,\hsc D_{x'})X$, $B'=X|_{x_1=0}$, $B_1=X$, and $B=F$,   by Lemma~\ref{l:hyperbolic},
\begin{equation}
\label{e:weird3}
\begin{aligned}
&\langle (\hsc D_{x_1}-\Lambda(x,\hsc D_{x'}))Xu,\rho(x,\hsc D_{x'})Xu\rangle\\
&\leq \|(\hsc D_{x_1}-\Lambda(x,\hsc D_{x'}))\tilde{\chi}(x,\hsc D_{x'})Xu\|^2_{L^2}+\|\rho(x,\hsc D_{x'})\tilde{\chi}(x,\hsc D_{x'})Xu\|^2_{L^2}\\
&\qquad+O(\hsc^{\infty})(\|\hsc D_{x_1}u\|^2_{L^2}+\|u\|^2_{L^2})\\
&\leq  C\hsc^{-2}\|XP u\|^2_{L^2}+\|X(Q\hsc D_{x_1}u+u)(0)\|^2_{L^2}+\|Fu\|^2_{L^2} \\
&\qquad +O(\hsc^{\infty})(\|\hsc D_{x_1}u\|^2_{L^2}+\|u\|^2_{L^2}+\|Pu\|^2_{L^2}+\|(Q\hsc D_{x_1}u+u)(0)\|^2_{L^2}+\|\hsc D_{x_1}u(0)\|^2_{L^2}).
\end{aligned}
\end{equation}

Next, we consider 
$$
\sum_{j,k=0}^1  \langle E_{jk}(x',\hsc D_{x'})(\hsc D_{x_1})^kXu,(\hsc D_{x_1})^j(Xu)\rangle_{\Gamma}.
$$
First, notice that, since $\sigma(E_{11})=-t^2$,
\begin{equation}
\label{e:weird3b}
\langle E_{11}\hsc D_{x_1}Xu,\hsc D_{x_1}Xu\rangle_{\Gamma} +\|T\hsc D_{x_1}Xu(0)\|^2\leq C\hsc\|\hsc D_{x_1}Xu(0)\|_{L^2}^2,
\end{equation}
Next, since $\Re Q\geq 0$ on $\WF(B_0|_{x_1=0})$ and $\sigma(B_0)\geq 0$,
\begin{equation}
\label{e:weird4}
\begin{aligned}
&\Re (\langle E_{10}\hsc D_{x_1}Xu,Xu\rangle_{\Gamma}+\langle E_{01}Xu,\hsc D_{x_1}Xu\rangle_{\Gamma}) \\
&=2\Re\langle B_0\hsc D_{x_1}Xu,(Xu+Q\hsc D_{x_1}Xu)\rangle_{\Gamma}-2\Re\langle B_0 \hsc D_{x_1}Xu,Q\hsc D_{x_1}Xu\rangle_{\Gamma}\\
&\leq 2\Re\langle t_0(x',\hsc D_{x'})T\hsc D_{x_1}Xu,(Xu+Q\hsc D_{x_1}Xu)\rangle_{\Gamma}-2\Re\langle B_0 \hsc D_{x_1}Xu,Q\hsc D_{x_1}Xu\rangle_{\Gamma}\\
&\qquad +C\hsc\|\hsc D_{x_1}Xu(0)\|_{L^2}^2+C\hsc\|X(Q\hsc D_{x_1}u+u)(0)\|_{L^2}^2\\
&\qquad+O(\hsc^{\infty})(\|u(0)\|_{L^2}^2+\|\hsc D_{x_1}u(0)\|_{L^2}^2).\\
&\leq \e\|T\hsc D_{x_1}Xu\|^2_{L^2}+C\hsc\|\hsc D_{x_1}Xu(0)\|_{L^2}^2+C\e^{-1}\|X(u(0)+Q\hsc D_{x_1}u(0))\|_{L^2}^2\\
&\quad +O(\hsc^{\infty})(\|u(0)\|_{L^2}^2+\|\hsc D_{x_1}u(0)\|_{L^2}^2).
\end{aligned}
\end{equation}
Finally,
\begin{equation}
\label{e:weird5}
\begin{aligned}
&2\langle E_{00}Xu,Xu\rangle_{\Gamma}=\langle B_1(R+R^*)Xu,Xu\rangle_{\Gamma}\\
&=\langle B_1(R+R^*)Q\hsc D_{x_1}Xu,Q\hsc D_{x_1}Xu\rangle_{\Gamma} -\langle B_1(R+R^*)Q\hsc D_{x_1}Xu,Xu+Q\hsc D_{x_1}Xu\rangle_{\Gamma}\\
&\qquad +\langle B_1(R+R^*)(Xu+Q\hsc D_{x_1}Xu),(Xu+Q\hsc D_{x_1}Xu)\rangle_{\Gamma}\\
&\qquad - \langle B_1(R+R^*)(Xu+Q\hsc D_{x_1}Xu),Q\hsc D_{x_1}Xu\rangle_{\Gamma}\\
&\leq \langle T^*T(R+R^*)Q\hsc D_{x_1}Xu,Q\hsc D_{x_1}Xu\rangle  -\langle T^*T(R+R^*)Q\hsc D_{x_1}Xu,Xu+Q\hsc D_{x_1}Xu\rangle\\
&\qquad +\langle T^*T(R+R^*)(Xu+Q\hsc D_{x_1}Xu),(Xu+Q\hsc D_{x_1}Xu)\rangle\\
&\qquad- \langle T^*T(R+R^*)(Xu+Q\hsc D_{x_1}Xu),Q\hsc D_{x_1}Xu\rangle\\
&\qquad C\hsc(\|\hsc D_{x_1}Xu\|_{L^2}^2+\|X(u+Q\hsc D_{x_1}u)\|_{L^2}^2 +O(\hsc^{\infty})\|\hsc D_{x_1}u(0)\|_{L^2}+\|u(0)\|_{L^2})\\
&\leq C\delta \|T\hsc D_{x_1}Xu\|_{L^2}^2+ C\|X(Q\hsc D_{x_1}u+u)(0)\|_{L^2}^2\\
&\qquad  C\hsc(\|\hsc D_{x_1}Xu\|_{L^2}^2+\|X(u+Q\hsc D_{x_1}u)\|_{L^2}^2 +O(\hsc^{\infty})\|\hsc D_{x_1}u(0)\|_{L^2}+\|u(0)\|_{L^2})\\
\end{aligned}
\end{equation}

Combining~\eqref{e:weird0}, \eqref{e:weird1} \eqref{e:weird2}, \eqref{e:weird3}, \eqref{e:weird3b}, \eqref{e:weird4}, and~\eqref{e:weird5}, and choosing $\e>0$ small enough,
\begin{align*}
&2\hsc^{-1}\Im \langle PXu,BXu\rangle +\|\Psi_0Xu+\Psi_1\hsc D_{x_1}Xu\|_{L^2}^2+\|BXu\|_{L^2}^2+\|T\hsc D_{x_1}Xu(0)\|_{L^2}^2\\
&\leq C\hsc^{-1}\|PXu\|_{L^2}^2+C\hsc^{-2}\|XPu\|^2_{L^2}+C(1+\e^{-1})\|X(Q\hsc D_{x_1}u+u)(0)\|_{L^2}^2\\
&\qquad +C\hsc(\|Xu\|_{L^2}^2+\|\hsc D_{x_1}Xu\|_{L^2}^2+\|(Q\hsc D_{x_1}Xu+Xu)(0)\|_{L^2}^2+\|\hsc D_{x_1}Xu\|_{L^2}^2)\\
&\qquad +C\hsc^N(\|\hsc D_{x_1}u\|_{L^2}^2+\|u\|_{L^2}^2+\|Pu\|_{L^2}^2+\|(Q\hsc D_{x_1}u+u)(0)\|_{L^2}^2+\|\hsc D_{x_1}u(0)\|_{L^2}^2)
\end{align*}
Finally, 
\begin{align*}
 2\hsc^{-1}\Im \langle PXu,BXu\rangle&= 2\hsc^{-1}\Im \langle XP+[P,X]u,BXu\rangle\\
 &\geq 2\hsc^{-1}\langle XPu,BXu\rangle +O(\hsc^{\infty})(\|u\|_{L^2}^2+\|\hsc D_{x_1}u\|_{L^2}^2)\\
 &\geq 
-C\hsc^{-2}\|PXu\|_{L^2}^2-\frac{1}{2}\|BXu\|_{L^2}^2+O(\hsc^{\infty})(\|u\|_{L^2}^2+\|\hsc D_{x_1}u\|_{L^2}^2)
\end{align*}
and hence the estimate follows after we notice that 
\begin{align*}
\|PXu\|_{L^2}&= \|XPu\|_{L^2}+\|[P,X]u\|_{L^2}\\
&\leq \|XP\|_{L^2}+C\hsc(\|X_+u\|_{L^2}+\|X_+\hsc D_{x_1}u\|_{L^2})+C\hsc^N(\|u\|_{L^2}+\|\hsc D_{x_1}u\|_{L^2})
\end{align*}
\end{proof}

We now construct the functions required to make use of Lemma~\ref{l:ivrii}.
\begin{proposition}
Suppose that $r(0,y',\eta')=0$ and $\partial_{x_1}r(0,y',\eta')>0$ and let $\gamma(t)=\exp(tH_p)(0,y',\eta')$. Then for all $N>0$, $\chi\in C_c^\infty(\mathbb{R}^{n}\times \mathbb{R}^{n-1})$ with $\chi(0,y',\eta')=1$ there is $0<s<1$ such that for all $f\in C_c^\infty(\mathbb{R}_+\times \mathbb{R}^{n-1}\times \mathbb{R}^n)$ with $f(\gamma(-s))=1$ there are $C>0$, $a\in C_c^\infty(\mathbb{R}^n\times \mathbb{R}^{n-1})$ with $a(0,y',\eta')=1$  such that for all $0<h<1$
\begin{align*}
&\|A(x,\hsc D_{x'})u\|_{L^2} +\|A(0,x',\hsc D_{x'})\hsc D_{x_1}u\|_{L^2}\\
&\leq C(\hsc^{-1}\|XPu\|_{L^2}+\|Fu\|_{L^2}+\|X(Q\hsc D_{x_1}u+u)(0)\|_{L^2})\\
&\qquad+C\hsc^N(\|Pu\|_{L^2}+\|u\|_{L^2}+\|(Q\hsc D_{x_1}u+u)(0)\|_{L^2}).
\end{align*}
\end{proposition}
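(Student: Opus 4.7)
The plan is to deduce this proposition from Lemma~\ref{l:ivrii} by constructing a diffractive-point commutant $b = b_1\xi_1 + b_0$ adapted to $(0,y',0,\eta')$. First I would fix a small $s > 0$ so that $\gamma(-s) = ({x_1}^-,{x'}^-,{\xi_1}^-,{\xi'}^-)$ lies strictly in the interior hyperbolic region: since $r(0,y',\eta')=0$ and $\partial_{x_1}r(0,y',\eta')>0$, along $\gamma(t)$ the quantity $x_1$ has a strict minimum of $0$ at $t=0$ with $(d^2/dt^2)x_1\circ\gamma(0) = 2\partial_{x_1}r(0,y',\eta')>0$, so for $0<|t|<s$ one has $x_1\circ\gamma(t)>0$ and $r\circ\gamma(t)>0$. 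Shrinking $s$ ensures $\gamma(-s)$ lies in $\{f>0\}$ for the prescribed cutoff $f$ and remains in $\{\chi\equiv 1\}$.

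Next, I would carry out the classical Ivrii/H\"ormander diffractive-point commutant construction (see \cite[Ch.~24]{Ho:07}). Choose a small conic neighborhood $U$ of $(0,y',0,\eta')$ in $T^*\{x_1\geq 0\}$ and a smooth function $\varphi(x,\xi')$, compactly supported in a slightly larger neighborhood, that is strictly decreasing along the \emph{backward} Hamilton flow of $p$ restricted to $\{p=0\}\cap U$, equal to a positive constant at $(0,y',\eta')$, and vanishing just before the trajectory reaches $\gamma(-s)$. Using the fact that $H_p x_1 = 2\xi_1$ and that $\xi_1$ changes sign at the diffractive point, take
\[
b = \alpha(\varphi)\,\bigl(\mu - (\xi_1+\beta(x,\xi'))^2\bigr),
\]
with $\alpha\geq 0$ a cutoff, $\mu>0$ small, and $\beta$ chosen to enforce the boundary conditions $b_1|_{x_1=0} = -t^2$ and $b_0|_{x_1=0} = tt_0$ with $b_0|_{x_1=0}\geq 0$ (this uses $\partial_{x_1}r>0$ to make the quadratic form in $\xi_1$ near the boundary have the correct sign). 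The identity $b = v^2$ on $\{p=0\}$ follows because on the characteristic set $\xi_1^2 = r$, so $b$ reduces to an expression quadratic in $\sqrt{r}$ times $\alpha(\varphi)\geq 0$. The key algebraic identity
\[
\{p,b\} + M|\xi'|\,b = -\psi^2 + \rho(\xi_1 - r^{1/2})
\]
is arranged by choosing $M$ large (to dominate subprincipal corrections) and exploiting the monotonicity of $\varphi$: the term $-\psi^2$ with $\psi$ elliptic at $(0,y',0,\eta')$ arises from differentiating $\alpha(\varphi)$ where $\varphi$ has its largest positive value (at the diffractive point), while $\rho$, supported where $\alpha'(\varphi)$ localizes near the exit of $U$ into the hyperbolic region, is absorbed into the ``$\|Fu\|$'' term of Lemma~\ref{l:ivrii} via the hyperbolic propagation Lemma~\ref{l:hyperbolic}.

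With such $b$ in hand, Lemma~\ref{l:ivrii} yields control of $\|\Psi_0Xu + \Psi_1\hsc D_{x_1}Xu\|_{L^2}$ and $\|T\hsc D_{x_1}Xu(0)\|_{L^2}$. Since $\psi$ is elliptic at $(0,y',0,\eta')$ on $\{p=0\}$, for any $a\in C_c^\infty$ with $a(0,y',\eta')=1$ and sufficiently small support, the elliptic parametrix of Corollary~\ref{cor:elliptic}, combined with the factorization $\hsc^2 D_{x_1}^2 u = (P + R)u$ to eliminate $\hsc D_{x_1}$ derivatives off the characteristic variety, gives
\[
\|A(x,\hsc D_{x'})u\|_{L^2} \leq C\bigl(\|\Psi_0Xu+\Psi_1\hsc D_{x_1}Xu\|_{L^2} + \hsc^{-1}\|XPu\|_{L^2}\bigr) + \text{error}.
\]
The boundary trace estimate on $\|A(0,x',\hsc D_{x'})\hsc D_{x_1}u\|_{L^2}$ is then recovered by applying Lemma~\ref{l:microNormalDerivative} (which bounds normal derivatives at the boundary by interior norms plus the boundary datum $Q\hsc D_{x_1}u+u$), absorbing the small $\hsc^{1/2}$ losses into the left-hand side after shrinking the support of $a$ if necessary.

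The main obstacle is the commutant construction: one must simultaneously achieve (i) positivity/sign conditions at the boundary so the diffractive Dirichlet-type term $\langle B_1 R Xu, Xu\rangle_\Gamma$ in Lemma~\ref{l:ivrii} has the right sign, (ii) ellipticity of $\psi$ at the glancing point $(0,y',0,\eta')$, (iii) localization of the escape term $\rho$ to the hyperbolic region around $\gamma(-s)$ where we can close the estimate via Lemma~\ref{l:hyperbolic}, and (iv) the Fefferman--Phong/G\aa rding-compatible structure $b = v^2$ on $\{p=0\}$. Managing these constraints in the presence of the $M|\xi'|\,b$ ``exponential weight'' term (which is what allows the construction to work past the diffractive point at all) is the heart of the argument; all other steps are standard applications of the machinery already in place.
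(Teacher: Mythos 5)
Your high-level plan agrees with the paper's: localize to a conic neighborhood of the diffractive point, build an escape function $b$ with the sign and identity structure required by Lemma~\ref{l:ivrii}, pick up the hyperbolic exit point $\gamma(-s)$ via the $\rho$-term and Lemma~\ref{l:hyperbolic}, and then recover the $\hsc D_{x_1}$ trace from Lemma~\ref{l:microNormalDerivative}. The difficulty is that you explicitly defer the commutant construction (``is the heart of the argument''), and the ansatz you do sketch, $b=\alpha(\varphi)\bigl(\mu-(\xi_1+\beta)^2\bigr)$, is not in the form Lemma~\ref{l:ivrii} accepts: that lemma takes $b=b_1(x,\xi')\xi_1+b_0(x,\xi')$ with $b_0,b_1$ \emph{tangential} (independent of $\xi_1$). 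Your $b$ is quadratic in $\xi_1$, and you say nothing about how to reduce it modulo $p$. The paper handles exactly this by taking $\tilde b=\chi_2(\phi_0/\delta)^2\chi_0(1-\phi/\delta)$ with $\phi=\xi_1+\phi_0$, and then invoking the Malgrange preparation theorem to write $\tilde b = pg+\bar b_1\xi_1+\bar b_0$; only after this reduction do the boundary sign conditions $b_1|_{x_1=0}=-t^2$, $b_0|_{x_1=0}=tt_0$ even make sense, and extracting smooth $t,t_0$ requires \cite[Lemma~24.4.9, Theorem~C.4.4]{Ho:07}.

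Beyond the missing Malgrange step, you assert but do not verify that $\psi$ and $\rho$ in the identity $H_p b+Mb=-\psi^2+\rho(\xi_1-r^{1/2})$ are smooth. This is delicate: in the paper $\psi=\chi_1(\delta^{-1}\xi_1)N^{1/2}$ with $N=\chi_2^2(\chi_0'\,\delta^{-1}H_p\phi-\chi_0 M)$, and smoothness of $N^{1/2}$ uses the specific algebra $\chi_0(s)/\chi_0'(s)=s^2$ and that $(\chi_0')^{1/2}$ is smooth, together with a size restriction $\delta<1/(10M)$; it is not a generic property of cutoffs. Likewise $\rho$ is shown smooth only because its support lands in $\{r\geq\delta^2\}$, which depends on the precise interplay between the supports of $\chi_0$ and $\chi_2$. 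Your $b=v^2$ claim also does not follow ``because $b$ reduces to something quadratic in $\sqrt r$'': on $\{p=0\}$ the two branches $\xi_1=\pm\sqrt r$ give different values of $(\xi_1+\beta)^2$, and making the restriction a smooth perfect square is exactly the nontrivial content of the construction. Finally, the paper does not close the estimate for a single $\delta$; it runs the argument at scales $\delta$ and $\delta/2$, exploits $\psi_1/\psi_0\approx-c\delta^{-1}$ to cancel the $\hsc D_{x_1}u$ contribution and obtain the estimate (\ref{e:realGoal}) with an $\hsc^{1/2}$ loss on a slightly larger microlocal cutoff, and then iterates; your proposal compresses all of this into ``shrinking the support of $a$ and absorbing $\hsc^{1/2}$ losses''. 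In short, the scaffolding is right but the actual diffractive-commutant construction — the entire content of this proposition — is neither carried out nor specified precisely enough to be checked.
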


\begin{proof}
To prove the proposition, we show that given a neighborhood, $V$ of $(0,y',\eta')$ there are $a,a_1\in C_c^\infty(\mathbb{R}^{n}\times \mathbb{R}^{n-1})$ with $a(0,y',\eta')=1$ and $\supp a_1\subset V$ such that 
\begin{equation}
\label{e:realGoal}
\begin{aligned}
&\|a(x,\hsc D_{x'})u\|_{L^2}+\|a(x,\hsc D_{x'})\hsc D_{x_1}u\|_{L^2}+\|a(0,x',\hsc D_{x'})\hsc D_{x_1}u(0)\|_{L^2}\\
&\leq C(\hsc^{-1}\|XPu\|_{L^2}+\|Fu\|_{L^2}+\|X(Q\hsc D_{x_1}u+u)(0)\|_{L^2})\\
&\qquad+C\hsc^{1/2}(\|A_1u\|_{L^2}+\|A_1\hsc D_{x_1}u\|_{L^2}+\|A_1\hsc D_{x_1}u(0)\|_{L^2})\\
&\qquad+C\hsc^N(\|Pu\|_{L^2}+\|u\|_{L^2}+\|(Q\hsc D_{x_1}u+u)(0)\|_{L^2}).
\end{aligned}
\end{equation}
Then, iterating this estimate and using propagation of singularities away from the boundary implies the proposition.

Define
$$
\phi:=\xi_1+\phi_0(x,\xi'),\qquad \phi_0=x_1^2+|x'-y'|^2+|\xi'-\eta'|^2.
$$
Then,
$$
H_p\phi=\{\xi_1^2-r,\phi\}=\partial_{x_1}r +2\xi_1\partial_{x_1}\phi-H_r\phi.
$$
There is a neighborhood, $U$, of $(y',\eta')$ and $c>0$ such that 
$$
\partial_{x_1}r>4c>0,\quad |H_r\phi_0|+ |\partial_{x_1}\phi|\leq c.
$$
Hence, 
$$
H_p\phi>c>0,\qquad \text{ on }\{(x,\xi_1,\xi')\,:\, |\xi_1|\leq 1,\,(x,\xi')\in U\}.
$$
Let $\delta>0$ small enough such that 
$$
\{\phi_0(x',\xi')\leq 3\delta\}\subset U\cap V\cap \{ \chi>0\}.
$$

We start by finding a function $\tilde{b}$ that has all the required properties except that it is not linear in $\xi_1$.
Set 
$$
\tilde{b}:=\chi_2(\phi_0/\delta)^2\chi_0(1-\phi/\delta),
$$
where 
$$
\chi_0(t):=\begin{cases} \exp(-1/t)&t>0\\0&t\leq 0\end{cases},\quad  \chi_2\in C_c^\infty(-3,3;[0,1]),\, \supp (1-\chi_2)\cap [-2,2]=\emptyset.
$$
For later use, we also let $\chi_1\in C_c^\infty((-2,2);[0,1])$ with $\supp (1-\chi_1)\cap [-1,1]=\emptyset$.

We claim that 
\begin{equation}
\label{e:diffractiveClaim}
\begin{gathered}
H_p\tilde{b}+M\tilde{b}=-\psi^2+\rho(\xi_1-r^{1/2}),\quad \text{when }p=0\\
\psi:=\chi_1(\delta^{-1}\xi_1)N^{1/2},\\ -2r
^{1/2}\rho:=\begin{cases}-(1-\chi_1(\delta^{-1}\xi_1)^2)N+\chi_0(1-\delta^{-1}\phi)H_p\chi_2(\delta^{-1}\phi_0)^2|_{\xi_1=-\sqrt{r}}&r\geq0,\\
0&r<0,
\end{cases}\\
N:=\chi_2(\delta^{-1}\phi_0)^2(\chi_0'(1-\delta^{-1}\phi)\delta^{-1}H_p\phi-\chi_0(1-\delta^{-1}\phi)M).
\end{gathered}
\end{equation}
To see this, we observe that  
\begin{gather*}
\supp \tilde{b}\subset \{\phi\leq \delta\}\subset\{\xi_1\leq \delta\},\\
\supp\tilde{b}\cap\supp (\partial(\chi_2(\delta^{-1}\phi_0))\subset \{\phi_0+\xi_1\leq \delta,\,\phi_0\geq 2\delta\}\subset \{ \xi_1\leq -\delta\}.
\end{gather*}
In particular, 
$$
\supp\tilde{b}\cap\supp (\partial(\chi_2(\delta^{-1}\phi_0))\cap \{p=0\}\subset \{r\geq \delta^2\},
$$
and~\eqref{e:diffractiveClaim} follows by a direct calculation.

Now, we need to check that $\psi$ and $\rho$ are smooth function. To see that $\psi\in C^\infty$, observe that on $\supp \psi$, $|\xi_1|\leq 2\delta$ and $|\phi_0|\leq 3\delta$. Therefore $(1-\delta^{-1}\phi)\leq 3$. To use this, observe that 
\begin{align*}
N^{1/2}&=\chi_2(\delta^{-1}\phi_0)[\chi_0'(1-\delta^{-1}\phi_0)\delta^{-1}H_p\phi]^{1/2}(1-M\delta \chi_0(t)/\chi_0'(t)|_{t=1-\delta^{-1}\phi})^{1/2}\\
&=\chi_2(\delta^{-1}\phi_0)[\chi_0'(1-\delta^{-1}\phi_0)\delta^{-1}H_p\phi]^{1/2}(1-M\delta (1-\delta^{-1}\phi)^2)^{1/2}.
\end{align*}
Therefore, on $\supp \psi$, $1-M\delta (1-\delta^{-1}\phi)^2\geq 1-9M\delta$ and for $\delta<\frac{1}{10M}$, using that $(\chi_0')^{1/2}$ is smooth, we see that $N^{1/2}$ is smooth.

Next, to see that $\rho\in C^\infty$, observe that 
$$
\supp (1-\chi_1^2)N\cup \supp \chi_0(1-\delta^{-1}\phi)H_p\chi_2(\delta^{-1}\phi_0)^2\subset \{ \xi_1\leq -\delta\}.
$$
Hence, $\supp\rho\subset r\geq \delta^2$ which implies that $\rho\in C^\infty$. 

Finally, we observe that $\tilde{b}|_{p=0}=v^2$ with $v=\chi_1(\xi_1)\chi_2(\delta^{-1}\phi_0)[\chi_0(1-\delta^{-1}\phi)]^{1/2}$, provided that $\delta\leq 1$. 

With $\tilde{b}$ in hand, we construct the required function $b$. By the Malgrange preparation theorem, there are $\tilde{b}_1,b_0\in C^\infty(\mathbb{R}^n\times \mathbb{R}^{n-1})$ such that 
$$
\tilde{b}(x,\xi)=p(x,\xi)g(x,\xi)+\tilde{b}_1(x,\xi')\xi_1+\bar{b}_0(x,\xi')
$$
Define $\bar{b}:=\bar{b}_1(x,\xi')\xi_1+\bar{b}_0(x,\xi')$.

Observe that on $p=0$, $H_p\bar{b}=H_p\tilde{b}$ and $b=\bar{b}$.  Hence,
\begin{equation}
\label{e:target}
H_p\bar{b}+M\bar{b}=-\psi^2+\rho(\xi_1-r^{1/2}),\qquad \bar{b}=v^2\qquad\text{ on } p=0.
\end{equation}
In addition, 
\begin{gather*}
\bar{b}_1(x,\xi')=\frac{\tilde{q}(x,\sqrt{r(x,\xi')},\xi')-\tilde{q}(x,-\sqrt{r(x,\xi')},\xi')}{2\sqrt{r(x,\xi')}},\text{ on } r(x,\xi')>0,\\
\bar{b}_0(x,\xi')=\frac{\tilde{q}(x,\sqrt{r(x,\xi')},\xi')+\tilde{q}(x,-\sqrt{r(x,\xi')})}{2},\text{ on }r(x,\xi')\geq 0.
\end{gather*}
We first modify $\bar{b}_1$ and find $t$. Notice that $-\partial_{\xi_1}\tilde{b}=\chi_2(\delta^{-1}\phi_0)^2\chi_0'(1-\phi/\delta)\delta^{-1}\phi_1$. Hence by~\cite[Lemma 24.4.9]{Ho:07} there is $W_1\in C^\infty(\mathbb{R}\times \mathbb{R}^n\times \mathbb{R}^{n-1})$ such that $\tilde{b}_1=-W_1^2(r,x,\xi')$ when $r>0$. Setting $b_1:=-W_1^2(r(x,\xi'),x,\xi')$ modifies $\bar{b}_1$ on $r<0$ and hence does not affect~\eqref{e:target}. Therefore, we set $t(x,\xi'):=W(r(x,\xi'),0,x,\xi')$.   

Now, observe that  there is $\tilde{t}\in C^\infty$ with $\tilde{t}(0,x,\xi')> 0$ such that 
$$
t=\chi_2(\phi_0/\delta)\tilde{t}(r(x',\xi'),x,\xi').
$$
On the other hand, by \cite[Theorem C.4.4]{Ho:07}, there is $W_2\in C^\infty(\mathbb{R}\times \mathbb{R}^{n}\times \mathbb{R}^{n-1})$ such that 
$$
\bar{b}_0(x,\xi')=\chi_2^2(\phi_0/\delta)W_2(r(x,\xi'),x,\xi'),\qquad r>0.
$$
Setting $b_0(x,\xi')=\chi_2^2(\phi_0/\delta)W_2(r(x,\xi'),x,\xi')$ modifies $b_0$ only on $r<0$ and hence does not affect~\eqref{e:target}. Moreover, we have $b_0=tt_0$ with $t_0:=\chi_2(\phi_0/\delta)W_2(r(x,\xi'),x,\xi')/\tilde{t}(r(x',\xi'),x,\xi')\in C^\infty$ since $\tilde{t}$ is non-vanishing near $r=0$.

Defining $b:=b_1\xi_1+b_0$ and letting $\tilde{\chi},\tilde{\chi}_+\in C_c^\infty(\mathbb{R}^n\times \mathbb{R}^{n-1})$ with $\supp \tilde{\chi}_+ \subset U\cap \{\chi>0\}$, $\supp (1-\chi_+)\cap \supp \chi=\emptyset$, and $\supp (1-\chi)\cap \supp b_i=\emptyset$. we have verified all the hypotheses of Lemma~\ref{l:ivrii} and hence the estimate~\eqref{e:diffractive1} holds for any $\delta>0$ small enough. Now, observe that 
$$
\begin{gathered}
t(y',\eta')=(\chi_0'(1)\delta^{-1})^{1/2},\qquad \psi_0^2(0,y',\eta')=\chi_0'(1)\partial_{x_1}r(0,y',\eta')\delta^{-1}-M\chi_0(1)\\
2\psi_0\psi_1(0,y',\eta')=-\chi_0''(1)\partial_{x_1}r(0,y',\eta')\delta^{-2}+\chi_0'(1)M\delta^{-1}.
\end{gathered}
$$
In particular $t(y',\eta')>c$ and $\psi_1/\psi_0=-c\delta^{-1}+O(1).$ hence, choosing, for instance $\delta$ and $\delta/2$, there is $a\in C_c^\infty(\mathbb{R}^n\times \mathbb{R}^{n-1})$ such that $a(0,y',\eta')=1$, 
\begin{align*}
\|a(x,\hsc D_{x'})u\|_{L^2}+\|a(x,\hsc D_{x'})\hsc D_{x_1}u\|_{L^2}&\leq C (\|\Psi_0^{\delta}u+\Psi_1^{\delta}\hsc D_{x_1}u\|_{L^2}+\|\Psi_0^{\delta/2}u+\Psi_1^{\delta/2}\hsc D_{x_1}u\|_{L^2})\\
&\qquad+C\hsc^N(\|u\|_{L^2}+\|\hsc D_{x_1}u\|_{L^2})\\
\|a(0,x',\hsc D_{x'})\hsc D_{x_1}u(0)\|_{L^2}&\leq C\|T^\delta \hsc D_{x_1}u(0)\|_{L^2}+C\hsc^N\|\hsc D_{x_1}u\|_{L^2}.
\end{align*}
Hence, using that $\supp \tilde{\chi}_+\subset V\cap \{\chi>0\}$ and letting $s$ small enough that $\exp(-tH_p)(0,y',\eta')\subset \{\chi>0\}$, for $0\leq t\leq s$, we have proved~\eqref{e:realGoal} and hence the proposition.
\end{proof}

\subsection{Non-diffractive points}

Finally, we consider rays tangent to the boundary that are non-diffractive. Once again, we proceed by using a positive commutator type estimate followed by a careful construction of an escape function.
\begin{lemma}
\label{l:gbb}
Let $Q\in \Psi^{-1}(\Gamma)$ with $\Re\sigma(Q)|_{r=0}\geq 0$. Then there are $\delta_0,M_0>0$ such that for all  $N>0$, $M\geq M_0$, $b,v,a,f,\chi,\chi_+\in C_c^\infty(\mathbb{R}^n\times \mathbb{R}^{n-1};\mathbb{R})$, satisfying 
\begin{gather*}
b|_{x_1=0}\geq 0,\qquad \supp b\cap \supp (1-\chi)=\emptyset,\qquad\supp \chi\cap\supp(1-\chi_+)=\emptyset,\qquad\supp \chi_+\subset \{|r|\leq \delta_0\},\\
\{p,b\}+bM=-\psi^2-a^2+f^2,\quad b=v^2,\qquad \text{when }\xi_1^2=r,
 \end{gather*}
with $\psi \in C_c^\infty(\mathbb{R}^n\times \mathbb{R}^n;\mathbb{R})$ therehere is $C>0$ such that, defining $A:=a(x,\hsc D_{x'})$ $B:=b(x,\hsc D_{x'})$, $F=f(x,\hsc D_{x'})$, $X:=\chi(x,\hsc D_{x'})$, for all and $0<\mu<1$, $0<h<1$,
\begin{align*}
&\|\Psi_0u+\Psi_1\hsc D_{x_1}u\|_{L^2}+\|AXu\|_{L^2}\\
&\leq  C\hsc^{-1}\|X_+Pu\|+C\mu^{-1}\|X_+(u+Q\hsc D_{x_1}u)(0)\|_{L^2}+\|FXu\|_{L^2}+(\mu+\hsc^{1/2})\|X_+u\|_{L^2}\\
&+C\hsc^N(\|u(0)\|_{L^2}+\|\hsc D_{x_1}u(0)\|_{L^2}),
\end{align*}
where $\sigma(\Psi_j)(0,x',0,\xi')|_{r=0}=\partial_{\xi_1}^j\psi_j(0,x',0,\xi')|_{r=0}.$
\end{lemma}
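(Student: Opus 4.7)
The proof will follow the positive-commutator strategy of Lemma~\ref{l:ivrii} but is considerably simpler because $b$ is independent of $\xi_1$ (so in the notation of Lemma~\ref{l:intByParts} we have $B_1=0$, $B_0=B=b(x,\hsc D_{x'})$) and because the given algebraic identity has no $\rho(\xi_1-r^{1/2})$ piece, so no intermediate hyperbolic propagation via Lemma~\ref{l:hyperbolic} is required.

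First I would localize by applying Lemma~\ref{l:intByParts} to $Xu$, where $X=\chi(x,\hsc D_{x'})$, with $B_1=0$. This collapses the boundary contribution to $E_{00}=0$, $E_{11}=0$, $E_{01}=E_{10}=B$, giving $2\Re\langle B\hsc D_{x_1}Xu,Xu\rangle_\Gamma$, and the interior commutator term has symbol $\{p,b\}+2b\,\mathrm{Im}\,p_{-1}=\{p,b\}+bM$, which by hypothesis equals $-\psi^2-a^2+f^2$ on $\{p=0\}$ (the $M$ matches the subprincipal contribution from the damping in the underlying reduction). I then apply Lemma~\ref{l:systemToEstimate} twice — once with the combination giving $-\psi^2$ and once with that giving $-a^2$ — to convert the interior commutator into $-\|\Psi_0Xu+\Psi_1\hsc D_{x_1}Xu\|_{L^2}^2-\|AXu\|_{L^2}^2$, at the cost of $C\hsc^{-1}\|XPu\|_{L^2}^2$ and $O(\hsc)$ errors in $\|X_+u\|,\|X_+\hsc D_{x_1}u\|$, and boundary traces; the $\|FXu\|_{L^2}^2$ piece is kept on the right.

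For the boundary term I would substitute $Xu(0)=X(u+Q\hsc D_{x_1}u)(0)-XQ\hsc D_{x_1}u(0)$ and expand:
\begin{equation*}
2\Re\langle B\hsc D_{x_1}Xu,Xu\rangle_\Gamma
=2\Re\langle B\hsc D_{x_1}Xu,X(u+Q\hsc D_{x_1}u)(0)\rangle_\Gamma
-\langle(BQ+Q^*B)\hsc D_{x_1}Xu,\hsc D_{x_1}Xu\rangle_\Gamma+O(\hsc)\text{ errors}.
\end{equation*}
Since $b|_{x_1=0}\geq0$ and $\Re\sigma(Q)\geq0$ on the support of $b$, the operator $BQ+Q^*B$ is non-negative modulo $\hsc\Psi^{-\infty}$, so Sharp G\aa{}rding gives $-\langle(BQ+Q^*B)\hsc D_{x_1}Xu,\hsc D_{x_1}Xu\rangle_\Gamma\leq C\hsc\|X_+\hsc D_{x_1}u(0)\|_{L^2}^2$. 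The cross term is estimated by Cauchy--Schwarz with parameter $\mu$:
\begin{equation*}
|2\Re\langle B\hsc D_{x_1}Xu,X(u+Q\hsc D_{x_1}u)(0)\rangle_\Gamma|
\leq \mu\|X\hsc D_{x_1}u(0)\|_{L^2}^2+\mu^{-1}\|X_+(u+Q\hsc D_{x_1}u)(0)\|_{L^2}^2.
\end{equation*}
Then I invoke Lemma~\ref{l:microNormalDerivative} (with $X_+$ as the enlarged cutoff) to bound $\|X\hsc D_{x_1}u(0)\|_{L^2}^2$ by $C\hsc^{-2}\|X_+Pu\|_{L^2}^2+C\|X_+u\|_{L^2}^2+C\|X_+(Q\hsc D_{x_1}u+u)(0)\|_{L^2}^2$ up to $O(\hsc^\infty)$. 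The $\mu\cdot\hsc^{-2}$ piece is absorbed into the leading $\hsc^{-2}\|X_+Pu\|^2$, the $\mu\|X_+u\|^2$ piece combines with the $O(\hsc)\|X_+u\|^2$ from Step~1 to give the $(\mu+\hsc^{1/2})\|X_+u\|_{L^2}$ on the right-hand side, and the $\mu\|X_+(u+Q\hsc D_{x_1}u)(0)\|^2$ piece is dominated by the $\mu^{-1}$ boundary term already present. Combining everything and taking square roots gives the stated estimate.

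The main obstacle is step three: calibrating the bookkeeping so that the $\mu$-weighted boundary trace of $\hsc D_{x_1}u$, once re-estimated by Lemma~\ref{l:microNormalDerivative}, does not reintroduce an uncontrolled $\hsc D_{x_1}u(0)$ contribution on the right. This is exactly where the non-diffractive hypothesis enters, via the absence of a $\rho(\xi_1-r^{1/2})$ term — it is what permits the entirety of the boundary defect to be absorbed using only the positivity pairing $b|_{x_1=0}\geq0$, $\Re\sigma(Q)\geq0$, rather than the more delicate Malgrange preparation and $\tfrac12$-power hyperbolic loss that drive the diffractive estimate in Lemma~\ref{l:ivrii}.
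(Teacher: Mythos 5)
Your proposal is essentially the route the paper takes: reduce to Lemma~\ref{l:ivrii}'s machinery with $B_1=0$ so that $E_{00}=E_{11}=0$, use Lemma~\ref{l:intByParts} and Lemma~\ref{l:systemToEstimate} for the interior commutator, exploit $b|_{x_1=0}\geq 0$ together with $\Re\sigma(Q)\geq 0$ and Sharp G\aa{}rding to give the $Q$-pairing at the boundary a good sign, and then recover the missing boundary trace $\|B\hsc D_{x_1}Xu(0)\|$ via Lemma~\ref{l:microNormalDerivative}. Two corrections, both small but worth making.

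First, your identification $\{p,b\}+2b\,\Im p_{-1}=\{p,b\}+bM$ is wrong: $M$ is a free parameter one chooses large (the statement reads ``for all $M\geq M_0$'') and is unrelated to $2\Im p_{-1}$. What actually happens is that Lemma~\ref{l:intByParts} produces the interior symbol $\{p,b\}+2b\,\Im p_{-1}$, and one adds and subtracts $\Re\langle (M-2\Im p_{-1})Xu,BXu\rangle$; the subtracted piece brings the total symbol to $\{p,b\}+bM=-\psi^2-a^2+f^2$ so that Lemma~\ref{l:systemToEstimate} applies, while the added piece is handled by Sharp G\aa{}rding using $b=v^2\geq 0$ and $M$ large enough that $M-2\Im p_{-1}>0$. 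Your sketch omits this extra term entirely; without it the reduction to Lemma~\ref{l:systemToEstimate} does not close.

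Second, the Cauchy--Schwarz weight should be $\mu^2$ and $\mu^{-2}$ at the level of squared norms, not $\mu$ and $\mu^{-1}$. The final estimate is stated in un-squared norms, so your parametrization yields $\mu^{-1/2}$ on the boundary term and $\mu^{1/2}$ on $\|X_+u\|$ after taking square roots, whereas the statement requires $\mu^{-1}$ and $\mu$. Relabelling $\mu\mapsto\mu^2$ fixes this; it also means the ``$\mu\cdot\hsc^{-2}$'' piece you absorb into the leading $P$-term is really $\mu^2\hsc^{-2}$, which is harmless since $0<\mu<1$. These are bookkeeping issues rather than gaps in the idea, and once repaired your argument coincides with the paper's proof.
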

\begin{proof}
The proof is similar to that of Lemma~\ref{l:ivrii}. The changes are that we estimate
\begin{align*}
&\Re \sum \langle C_{jk}(x,\hsc D_{x'})(\hsc D_{x_1})^kXu,(\hsc D_{x_1})^jXu\rangle +\Re \langle (M-2\Im p_{-1}(x,\hsc D_{x'}))Xu,BXu\rangle \\
&\qquad +\|AXu\|_{L^2}^2-\|FXu\|_{L^2}^2\\
&\leq -\|\Psi_0Xu+\Psi_1\hsc D_{x_1}Xu\|^2 +C\hsc^{-1}\|PXu\|_{L^2}^2+C\hsc(\|Xu\|^2_{L^2}+\|\hsc D_{x_1}Xu\|_{L^2}^2)\\
&\qquad+C\hsc\|\hsc D_{x_1}Xu\|^2+C\hsc\|(uX+Q\hsc D_{x_1}Xu)\|_{L^2}^2,
\end{align*}
 that $E_{00}=E_{11}=0$, and instead of~\eqref{e:weird4} we use that $\Re Q\geq 0$ on $\WF(B|_{x_1=0})$ and $\sigma(B)\geq 0$, to obtain
\begin{equation}
\label{e:weird42}
\begin{aligned}
&\Re (\langle E_{10}\hsc D_{x_1}Xu,Xu\rangle_{\Gamma}+\langle E_{01}Xu,\hsc D_{x_1}Xu\rangle_{\Gamma}) \\
&=2\Re\langle B\hsc D_{x_1}Xu,(Xu+Q\hsc D_{x_1}Xu)\rangle_{\Gamma}-2\Re\langle B \hsc D_{x_1}Xu,Q\hsc D_{x_1}Xu\rangle_{\Gamma}\\
&\leq \mu^2\|B\hsc D_{x_1}Xu(0)\|^2_{L^2}+C\hsc\|\hsc D_{x_1}Xu(0)\|_{L^2}^2+C\mu^{-2}\|X(u(0)+Q\hsc D_{x_1}u(0))\|_{L^2}^2\\
&\quad +O(\hsc^{\infty})(\|u(0)\|_{L^2}^2+\|\hsc D_{x_1}u(0)\|_{L^2}^2).
\end{aligned}
\end{equation}
We then use Lemma~\ref{l:microNormalDerivative} to estimate $\|B\hsc D_{x_1}Xu(0)\|$ and $\|\hsc D_{x_1}Xu\|_{L^2}$ completing the proof of the lemma.
\end{proof}

\begin{lemma}
Let $N>0$ $(y',\eta')\in \mathbb{R}^{2n-2}$, $r_0(y',\eta')=0$ and $\chi\in C_c^\infty(\mathbb{R}^n\times \mathbb{R}^{n-1})$ with $\chi(0,y',\eta')=1$. Then there are $\e_0>0, C_0>0$ such that for all $0<\e<\e_0$, there is $\delta_\e>0$ such that for all $\delta \in (0,\delta_\e)$ and $f\in C_c^\infty(\mathbb{R}^{n}\times \mathbb{R}^{n-1})$ with 
$$
\{(x_1,x',\xi')\,:\, x_1<C_0\e\delta, \, (x',\xi')-(y',\eta')-\delta H_{r_0}(y',\eta')|<C_0\e \delta\}\subset \{f>0\},
$$
there is $a\in C_c^\infty(\mathbb{R}^{n}\times \mathbb{R}^{n-1})$ with $a(0,y',\eta')=1$ such that for any $\beta>0$,  there is $C>0$ such that for all $0<h<1$, 
\begin{align*}
\|Au\|_{L^2}&\leq C(\hsc^{-1}\|XPu\|_{L^2}+ \|Fu\|_{L^2}+ h^{-\beta}\|X(Q\hsc D_{x_1}u+u)(0)\|_{L^2})\\
&\qquad +C\hsc^N(\|u\|_{L^2}+\|Pu\|_{L^2} +\|(Q\hsc D_{x_1}u+u)(0)\|_{L^2}).
\end{align*}
\end{lemma}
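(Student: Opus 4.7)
The plan is to construct a symbol $b$ (depending on $\e,\delta$) satisfying the hypotheses of Lemma~\ref{l:gbb}, so that the elliptic set of the associated $a$ contains $(0,y',\eta')$ and the support of $f$ in the positive-commutator identity is contained in $\{f>0\}$ from the hypothesis. Since $r_0(y',\eta')=0$ and the point is non-diffractive, the Hamilton field $H_p=2\xi_1\partial_{x_1}-H_r$ restricted to $\{\xi_1^2=r,\,x_1=0\}$ projects to $H_{r_0}$ on $T^*\Gamma$; this is the dynamical content that replaces the diffractive mechanism used in the previous lemma.

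First I would define a ``tangential time'' coordinate. Let $\gamma(s):=(0,(y',\eta')-(\delta-s)H_{r_0}(y',\eta'))$ for $s\in[0,\delta]$, so $\gamma(\delta)=(0,y',\eta')$ and $\gamma(0)$ is the centre of the ball in $\{f>0\}$. For $\kappa\in(0,1)$ small and $M>0$ (from Lemma~\ref{l:gbb}) I would set
\begin{gather*}
\phi_0(x,\xi'):=x_1^2+|(x',\xi')-(y',\eta')+(\delta-s)H_{r_0}(y',\eta')|^2,\qquad \phi:=s+\kappa\phi_0/(\e\delta)^2,\\
b:=e^{-Ms/\kappa}\chi_2\!\Big(\frac{\phi_0}{(\e\delta)^2}\Big)^{\!2}\chi_0(1-\phi/\delta),
\end{gather*}
where $\chi_2,\chi_0$ are as in the diffractive case. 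Inside $\{\phi\le\delta\}\cap\{\phi_0\le2(\e\delta)^2\}$ one has $s\in[0,\delta]$ and a direct computation using $H_p s=1+O(x_1+\phi_0^{1/2})$ (because $H_p|_{\{\xi_1^2=r,x_1=0\}}$ projects to $H_{r_0}$) gives, on $p=0$,
\begin{equation*}
\{p,b\}+Mb=-\psi^2-a^2+f^2,
\end{equation*}
with $\psi$ coming from the derivative of the exponential weight, $a^2$ coming from differentiating the outer cutoff $\chi_0$ along $H_p$ (so $a$ is elliptic at $s=\delta$, $\phi_0=0$, i.e.~at $(0,y',\eta')$), and $f^2$ supported where $\phi_0\le2(\e\delta)^2$ and $s\in[0,C\e\delta]$, which by choice of $C_0$ is contained in the set $\{f>0\}$ of the hypothesis. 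The writing as $b=v^2$ on $p=0$ follows because $\chi_0(1-\phi/\delta)\ge0$ and $e^{-Ms/\kappa}>0$; after Malgrange preparation and the analogue of \cite[Lemma 24.4.9]{Ho:07} used in the diffractive proof, $b$ can be represented as $b_1\xi_1+b_0$ with $b_0|_{x_1=0}\ge 0$ (the non-diffractive condition $r_0=0$ with $\partial_{x_1}r$ arbitrary is what allows the product-of-squares decomposition to survive without a sign requirement on $\partial_{x_1}r$).

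With $b$ in hand, applying Lemma~\ref{l:gbb} with $\mu:=\hsc^{\beta}$ yields
\begin{equation*}
\|Au\|_{L^2}\le C\hsc^{-1}\|X_+Pu\|_{L^2}+C\hsc^{-\beta}\|X_+(u+Q\hsc D_{x_1}u)(0)\|_{L^2}+\|FXu\|_{L^2}+C(\hsc^{\beta}+\hsc^{1/2})\|X_+u\|_{L^2}+\cdots,
\end{equation*}
and the $\|X_+u\|_{L^2}$ term is absorbed by a standard iteration on overlapping microlocal balls of radius $\sim\e\delta$: cover the segment $\gamma([0,\delta])$ by $O(1/\e)$ such balls and propagate the estimate stepwise from $\gamma(0)$ (where the ball sits inside $\{f>0\}$ and so is controlled by $\|Fu\|_{L^2}$) to $\gamma(\delta)$, combining with standard interior propagation (e.g.~\cite[Appendix E]{DyZw:19}) to move between boundary touches. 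This produces $a\in C_c^\infty$ with $a(0,y',\eta')=1$ and the claimed estimate.

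The main obstacle will be verifying the positive-commutator identity $\{p,b\}+Mb=-\psi^2-a^2+f^2$ on $p=0$ with a square-like decomposition of $b$ itself, while simultaneously ensuring $b|_{x_1=0}\ge 0$ and $\supp f$ lies in the data set of the hypothesis; the non-diffractive situation (no sign on $\partial_{x_1}r$) requires the weight to gain ellipticity from the $H_{r_0}$-direction only, which forces the small anisotropic scaling $\phi_0/(\e\delta)^2$ vs.~$s/\delta$ and the exponential weight $e^{-Ms/\kappa}$, and ultimately accounts for the arbitrary polynomial loss $\hsc^{-\beta}$ via $\mu=\hsc^{\beta}$.
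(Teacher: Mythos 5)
Your overall plan is in the right spirit (construct an escape function in the tangential variables, feed it into Lemma~\ref{l:gbb} with $\mu=\hsc^\beta$, iterate to beat down the lost term), but there are several genuine gaps compared to what is actually needed.

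\textbf{The escape-function construction is not well-defined as written.} You introduce a parameter $s\in[0,\delta]$ along the curve $\gamma(s)$ but then define $\phi_0(x,\xi')$ and $\phi$ in terms of $s$ without ever making $s$ a function of $(x,\xi')$; the pair $(\phi_0,\phi)$ is therefore circular. The paper avoids this by introducing a transversal function $N$ with $H_{r_0}N=1$, $N(y',\eta')=0$, and a companion $\omega$ solving $H_{r_0}\omega=0$ with $\omega=|x'-y'|^2+|\xi'-\eta'|^2$ on $\{N=0\}$; then $\phi=-N+x_1/\e+\omega/(\e^2\delta)$ is manifestly a function on phase space. The crucial estimates $\omega\gtrsim d(\cdot,\gamma)^2$ and $|r_0|+|\partial\omega|\le C\omega^{1/2}$ are what make $H_p\phi\ge 1-O(\delta^{1/2}\e^{-1/2})$ on $\{p=0\}$; without the invariant $\omega$ these bounds are not obvious for a raw squared-distance.

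\textbf{Malgrange preparation is not needed here and signals a confusion with the diffractive case.} Lemma~\ref{l:gbb} (unlike Lemma~\ref{l:ivrii}) takes a tangential symbol $b=b(x,\xi')$ directly, with no decomposition $b_1\xi_1+b_0$. The non-diffractive construction therefore stays entirely in tangential variables, and the square-root decompositions $\psi_t,a_t,f_t$ of $-H_pb_t-Mb_t$ on $\{p=0\}$ are obtained from the explicit structure of $\chi_0'\cdot H_p\phi$, not via preparation in $\xi_1$.

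\textbf{The iteration is over nested supports in a fixed neighbourhood, not over spatial balls along the ray.} The factor $(\mu+\hsc^{1/2})$ multiplies the uncontrolled term on a slightly larger microlocal support $X_{+,t_i}$; the family $b_{t_0},\dots,b_{t_M}$ with $\supp b_{t_{j+1}}\subset\{a_{t_j}>0\}$ lets you iterate $M$ times, giving $(\mu+\hsc^{1/2})^M$. Choosing $\mu=\hsc^\beta$ and $M\ge N/\beta$ then yields the $\hsc^N$ error. If you instead tile the segment by $O(1/\e)$ spatial balls, the number of iterations is fixed by $\e$ and you get only a fixed power of $\hsc$, not the arbitrary $\hsc^N$ gain. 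The lemma is local at a single boundary touch; stitching across interior segments belongs to the proof of Theorem~\ref{t:basicPropagate}, not here.

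\textbf{The positive-commutator identity needs verification, not assertion.} Checking that $\psi_t$, $a_t$, $f_t$ are genuinely smooth squares (e.g.\ that the argument of the square root $\chi_0'\cdot(H_p\phi-1/2)\delta^{-1}-\chi_0 M$ is strictly positive on the relevant set for $\delta$ small, using $\chi_0/\chi_0'=(\cdot)^2$), that $b_t\ge 0$ on $x_1=0$, and that $\supp f_t$ lies inside the prescribed ball around $(y',\eta')+\delta H_{r_0}(y',\eta')$ is the technical heart of the proof and cannot be waved through as a ``direct computation''.

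So: right overall strategy, but you need a non-circular escape function built from $N$ and $\omega$, you should drop the Malgrange step, and you must set up the $t$-parameterised nested family of escape functions to get the arbitrary polynomial gain before invoking Lemma~\ref{l:gbb} with $\mu=\hsc^\beta$.
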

\begin{proof}
In order to start the proof, we need to define a hypersurface transversal to the flow of $H_{r_0}$. For this, we let
$N\in C^\infty(\mathbb{R}^{n-1}\times \mathbb{R}^{n-1})$ with $H_{r_0}N(x',\xi')=1$ in a neighborhoood of $(y', \eta')$ and $N(y',\eta')=0$. We then define 
$$
\Sigma_{(y',\eta')}:=\{(x',\xi')\,:\, N(x',\xi')=0\}.
$$ 
Let $\omega\in C^\infty(\mathbb{R}^{n-1}\times \mathbb{R}^{n-1})$ solve
\begin{equation}
H_{r_0}\omega=0 \text{ in neighborhood of }(y',\eta'),\qquad \omega=|x'-y'|^2+|\xi'-\eta'|^2,\qquad (x',\xi')\in \Sigma_{(y',\eta')}.
\end{equation}
Since $H_{r_0}N(y',\eta')=1$, this uniquely defines $\omega$ in a neighborhood of $(y',\eta')$. 

Let $\gamma:=\{ \exp(tH_{r_0})(y',\eta)\,:\,|t|\leq 1\}$. Then, $\omega|_{\gamma}=0$, $\partial\omega|_{\gamma}=0$, and  there is $c>0$ such that $\omega(x',\xi')\geq cd( (x',\xi'),\gamma)^2$. In particular, since $r_0|_{\gamma=0}$ and $\partial \omega$ vanish on $\gamma$, 
$$
|r_0|+|\partial \omega|\leq C\omega^{1/2}.
$$

Now, set 
$$
\phi(x,\xi'):= -N(x',\xi')+\frac{x_1}{\e}+\frac{\omega}{\e^2\delta}.
$$
Then,
$$
H_{p}\phi= -H_rN+\frac{2\xi_1}{\e}-\frac{H_{r}\omega}{\delta \e^2}.
$$
Notice that 
$$
H_r\omega= H_{r_0}\omega +(H_r-H_{r_0})\omega= O(|\partial (r(x_1,x',\xi')-r(0,x',\xi'))| |\partial\omega|)=O(x_1\omega^{1/2}),
$$
and when $\xi_1^2=r$, 
$$
\xi_1^2=r\leq r_0+O(x_1)=O(\omega^{1/2}+x_1). 
$$
Next, when $N(x',\xi')\leq 2\delta $ and $\phi(x,\xi')\leq 2\delta$, we have
$$
4\delta\e \geq N\e +2\delta\e \geq \e \phi +N\e \geq x_1+\frac{\omega}{\delta\e}.
$$
Hence, since $x_1,\omega\geq 0$, 
$$
x_1\leq 4\delta \e,\qquad \omega \leq 4\delta^2\e^2.
$$
Finally, observe that 
$$
H_rN=1+(H_r-H_{r_0})N=1+O(x_1)
$$
Therefore, for $\delta>0$ small enough and $\e\leq 1$,
\begin{equation*}
\begin{gathered}
H_p\phi =H_rN+O(\e^{-1}(\omega^{1/2}+x_1)^{1/2})+O(\delta^{-1}\e^{-2}x_1\omega^{1/2})=1+O(\delta^{1/2}\e^{-1/2})\\
\text{ on }\{p=0\}\cap \{ N\leq 2\delta\}\cap \{\phi\leq 2\delta\}.
\end{gathered}
\end{equation*}

Now, define
$$
\chi_0(t):=\begin{cases} \exp(-1/t)&t>0\\ 0&t\leq 0,\end{cases}
$$
and let $\chi_1(t)\in C^\infty(\mathbb{R})$ with $\supp \chi_1\subset (0,\infty)$, $\supp (1-\chi_1)\subset (-\infty,1)$ and such that for all $\beta <1$, there is $C_{\alpha \beta}$ such that 
$$
|D^\alpha \chi_1'|\leq C_{\alpha\beta}(\chi_1')^\beta. 
$$

Let $0\leq t\leq 1$ and set
$$
b_t(x',\xi'):=\chi_0(1+t-\delta^{-1}\phi)\chi_1(\e^{-1}\delta^{-1}(-N(x',\xi')+\delta) +t).
$$
Then,
$$
H_pb_t= \chi_0'\delta^{-1}H_p\phi \chi_1 +\chi_0\chi_1'\e^{-1}\delta^{-1}H_rN
$$
Observe that for $t_1<t_2$, 
\begin{gather*}
 \supp b_{t}\subset \{ \phi\leq \delta (1+t)\}\cap \{ N\leq \delta (1+\e t) \}\subset \{x_1\leq 4\delta \e,\ \omega \leq 4\delta^2\e^2\}\cap \{N\leq 2\delta\}\\
 \supp b_{t_1}\subset \{b_{t_2}>0\},\qquad
 \end{gather*}
In particular, for $\e,\delta$ small enough, $\supp b_t\subset \{\chi>0\}$. 

Now, define 
$$
f_t:=(\chi_0(1+t-\delta^{-1}\phi)\chi_1'(\e^{-1}\delta^{-1}(-N+\delta)+t)\e^{-1}\delta^{-1}H_rN)^{1/2}.
$$
We claim that $f_t$ is smooth and,  setting $(y'(\delta),\eta'(\delta)):=\exp(\delta H_{r_0})(y',\eta')$, 
\begin{equation}
\label{e:eSupp}
\supp f_t\subset \{x_1^2+|x'-y'(\delta)|^2+|\xi'-\eta'(\delta)|^2\leq C(\e \delta)^2\}\subset\{ f>0\}.
\end{equation}

To prove the claim first observe that 
$$
\supp f_t\subset \{|\delta -N|\leq \e \delta\}\cap \{ \phi\leq 2\delta\}\subset \{ x_1\leq 4\e \delta, \omega \leq 4\e^2\delta^2\}.
$$
Therefore, since $(\chi_1')^{1/2}$, $\chi_0^{1/2}$ are smooth functions, and $H_rN >c>0$ on $\supp f_t$, $f_t$ is smooth.   Moreover, on $\supp f_t$, 
$$
|N(y'(\delta),\eta'(\delta))-N(x',\xi')|\leq \e \delta +O(\delta^2).
$$
Now,  since $\omega\sim d( (x',\xi'),\gamma)^2$ and $H_{r_0}N=1$, 
$$
|N(x',\xi')-N(y'(\delta),\eta'(\delta))^2+\omega\sim  |x'-y'(\delta)|^2+|\xi'-\eta'(\delta)|^2
$$
and hence,  for $\delta>0$ small enough, we have proved~\eqref{e:eSupp}.

Next, we find $a_t$ and $\psi_t$ such that 
$$
\chi_1(\chi_0'(1+t-\delta^{-1}\phi)\delta^{-1}H_p\phi-M\chi_0(1+t-\delta^{-1})\phi)=\psi_t^2+a_t^2.
$$
Define
\begin{align*}
a_t&:=(\chi_1\chi_0'(1+t-\delta^{-1}\phi)/(2\delta))^{1/2},\\
\psi_t&:=\chi_1^{1/2}(\chi_0'(1+t-\delta^{-1}\phi)(H_p\phi-1/2)\delta^{-1}-\chi_0(1+t-\delta^{-1}\phi)M)^{1/2}.
\end{align*}
Now, $a_t\in C^\infty$ since $\chi_1^{1/2}\in C^\infty$ and $(\chi_0')^{1/2}\in C^\infty$. Furthermore, for $0\leq t\leq 1$ and $\e<1$,
$$
a_t(0,y',\eta')=(\chi_0'(1+t)\chi_1(\e^{-1}+t))^{1/2}(2\delta)^{-1/2}>c\delta^{-1/2}
$$ 

To see that $\psi_t$ is smooth in a neighborhood of $p=0$, observe that $\chi_0(s)/\chi_0'(s)=s^2$ and hence, 
\begin{align*}
\psi_t&=\chi_1^{1/2}(\chi_0')^{1/2}((H_p\phi-1/2)\delta^{-1}-(1+t-\delta^{-1}\phi)^2M)^{1/2}.
\end{align*}
Now, $H_p\phi\geq \frac{3}{4}$ on $\supp \chi_1\cap \supp \chi_0\cap \{p=0\}$ and $(1+t-\delta^{-1}\phi)\leq 4$. Hence
$$
(H_p\phi-1/2)\delta^{-1}-(1+t-\delta^{-1}\phi)^2M\geq \frac{1}{4\delta}-16M\geq 1
$$
for $\delta>0$ small enough. Cutting off to an appropriate neighborhood of the characteristic set, we see that $\psi_t\in C^\infty$. 

Now, let  $0\leq t_{<}<\dots<t_1<t_{0}\leq 1$, $i=1,\dots, N$, $\supp b_{t_i}\subset \{a_{t_{i-1}}>0\}$ and hence, letting $\chi_{t_i},\chi_{+,t_i}\in C_c^\infty(\mathbb{R}^{n}\times \mathbb{R}^{n-1})$ with $\supp(1-\chi_{t_i})\cap \supp b_{t_i}=\emptyset$, $\supp \chi_{t_i}\cap \supp(1-\chi_{+,t_i})=\emptyset$, and $\supp \chi_{+,t_i}\subset \{a_{t_{i-1}}>0\}$, we obtain from Lemma~\ref{l:gbb} that for any $\mu>0$, 
\begin{align*}
&\|A_{t_i}Xu\|_{L^2}\\
&\leq  C\hsc^{-1}\|X_{t_i}Pu\|+C\mu^{-1}\|X_{t_i}(u+Q\hsc D_{x_1}u)(0)\|_{L^2}+C\|F_{t_i}Xu\|_{L^2}+(\mu+\hsc^{1/2})\|X_{+,t_i}u\|_{L^2}\\
&\qquad+C\hsc^N(\|u(0)\|_{L^2}+\|\hsc D_{x_1}u(0)\|_{L^2}+\|u\|_{L^2}+\|Pu\|_{L^2})\\
&\leq  C\hsc^{-1}\|XPu\|+C\mu^{-1}\|X(u+Q\hsc D_{x_1}u)(0)\|_{L^2}+C\|Fu\|_{L^2}+(\mu+\hsc^{1/2})\|X_{+,t_i}u(0)\|_{L^2}\\
&\qquad+C\hsc^N(\|u(0)\|_{L^2}+\|\hsc D_{x_1}u(0)\|_{L^2}+\|u\|_{L^2}+\|Pu\|_{L^2})\\
\end{align*}
Using  this for $t_M<t_{M-1}<\dots <t_0$, we have and letting 
\begin{align*}
&\|A_{t_M}Xu\|_{L^2}\\
&\leq  C\hsc^{-1}\|X_Pu\|+C\mu^{-1}\|X(u+Q\hsc D_{x_1}u)\|_{L^2}+\|Fu\|_{L^2}+(\mu+\hsc^{1/2})^M\|X_{+,t_0}\hsc D_{x_1}Xu(0)\|_{L^2}\\
&\qquad+C\hsc^N(\|u(0)\|_{L^2}+\|\hsc D_{x_1}u(0)\|_{L^2}+\|u\|_{L^2}+\|Pu\|_{L^2}).
\end{align*}
Taking $\mu=\hsc^\beta$ and $M\geq N/\beta$ completes the proof.
\end{proof}


\section*{Acknowledgements}
The authors thank Ivan Graham (University of Bath) and Pierre Marchand (INRIA Paris) for useful discussions, Alex Barnett (Flatiron Institute) for the parametrization of the cavity domain, Jeremy Hoskins (University of Chicago) for the generalized Gaussian quadrature rule used in the Galerkin method, and Andras Vasy (Stanford) for a helpful discussion about the proof of propagation of singularities for boundary value problems.

JG was supported by EPSRC grants EP/V001760/1 and EP/V051636/1, Leverhulme Research Project Grant RPG-2023-325, and ERC Synergy Grant PSINumScat - 101167139, and EAS was supported by EPSRC grant EP/R005591/1 and ERC Synergy Grant PSINumScat - 101167139.

\footnotesize{
\bibliographystyle{acm}
\bibliography{biblio_combined_sncwadditions.bib}
}

\end{document}